\newcommand{\Ac}{\mathcal A}
\newcommand{\Bc}{\mathcal B}
\newcommand{\Cc}{\mathcal C}
\newcommand{\Vc}{\mathcal V}
\newcommand{\Dc}{\mathcal D}
\newcommand{\Mc}{\mathcal M}
\newcommand{\Sc}{\mathcal S}
\newcommand{\Zc}{\mathcal Z}
\newcommand{\Nc}{\mathcal N}
\newcommand{\Pc}{\mathcal P}
\newcommand{\Hc}{\mathcal H}
\newcommand{\Qc}{\mathcal Q}
\newcommand{\Fc}{\mathcal F}
\newcommand{\Lc}{\mathcal L}
\newcommand{\Jc}{\mathcal J}
\newcommand{\Rb}{\mathbb{R}}
\newcommand{\Cb}{\mathbb{C}}
\newcommand{\Hb}{\mathbb{H}} 
\newcommand{\Nb}{\mathbb{N}}
\newcommand{\Zb}{\mathbb{Z}}
\newcommand{\Tb}{\mathbb{T}}
\newcommand{\Qb}{\mathbb{Q}}
\newcommand{\Ad }{\mbox{\rm Ad}}
\newcommand{\ad }{\mbox{\rm ad}}
\newcommand{\Coad }{\mbox{\rm Coad}}
\newcommand{\Ind}{\mbox{\rm Ind}}
\newcommand{\End}{\mbox{\rm End}}
\newcommand{\Id}{\mbox{\rm Id}}
\newcommand{\card}{\mbox{\rm Card}}
\newcommand{\nn}[1]{| #1 |}
\newcommand{\nd}[1]{\left\|#1 \right\|}
\newcommand{\pr}[2]{\mbox{pr}_{#1}(#2)}
\newcommand{\tr}[1]{\mbox{\rm trace } (#1) }
\newcommand{\supp}{\mbox{\rm supp } }
\newcommand{\fbs}[1]{\Jc_{#1}}
\newcommand{\fb}[2]{\fbs{#1}(#2)}
\newcommand{\fls}[2]{\Lc_{#1,#2}}
\newcommand{\fl}[3]{\fls{#1}{#2}(#3)}
\newcommand{\flbs}[2]{\bar{\Lc}_{#1,#2}}
\newcommand{\flb}[3]{\flbs{#1}{#2}(#3)}
\newcommand{\flsz}[1]{\Lc_{#1}}
\newcommand{\flz}[2]{\flsz {#1} ({#2})}
\newtheorem{rem}{Remarque}
\theoremstyle{break}
\newtheorem{thm_intro}{Th\'eor\`eme}
\newtheorem{thm}{Th\'eor\`eme}[chapter]
\newtheorem{thm_princ}[thm]{Th\'eor\`eme principal}
\newtheorem{defi}[thm]{D\'efinition}
\newtheorem{lem}[thm]{Lemme}
\newtheorem{prop}[thm]{Proposition}
\newtheorem{cor}[thm]{Corollaire}
\newenvironment{proof}[1][]
{\begin{trivlist}
  \item[]\hspace{\parindent}{\em
      D\'emonstration {#1}:}}
  {\hfill $\square$
  \end{trivlist}}
\begin{document}

\thispagestyle{empty}
\sloppy\hbadness=5000\vbadness=100
{\hspace{-1cm}Num\'ero d'ordre : 7552 }\\
{\center 

\fbox{%
\raisebox{.8cm}{\parbox{11cm}{%
\large \sc \center Universit\'e Paris-Sud
\\UFR scientifique d'Orsay}
}}\\
\vspace{1cm}
{\huge \sc Th\`ese\\} 
\vspace{1cm}
Pr\'esent\'ee par\\
\vspace{1cm}
{\large \sc V\'eronique Fischer} \\
\vspace{1cm}
Pour obtenir le grade de \\
\vspace{1cm}
{\large \sc Docteur en sciences de l'universit\'e Paris XI Orsay}\\
\vspace{1cm}
Sp\'ecialit\'e : {\sc Math\'ematiques}\\
\vspace{1cm}
Sujet :
\vspace{.5cm}
{\large \sc 
\'Etude de deux classes de groupes nilpotents de pas deux }\\
\vspace{1cm}
Soutenue le 5 Juillet 2004 devant la commission d'examen compos\'ee de\\

\vspace{1cm}
\begin{center}
\begin{tabular}[c]{ll}
{Mr \sc Jean-Philippe Anker} & \\
{Mr \sc Pascal Auscher} & \\
{Mr \sc Laurent Clozel  } & \\
{Mr \sc Jean-Louis Clerc}&rapporteur\\
{Mr \sc G\'erard Lion } & \\
{Mr \sc Noel Lohou\'e} & 
\end{tabular}
\end{center}
}
\newpage
~\newpage


\tableofcontents

\chapter*{Introduction}

\addcontentsline{toc}{chapter}{\numberline{}Introduction}

Le but de ce travail est l'\'etude de la continuit\'e $L^p$ 
de certains op\'erateurs
sur deux classes de groupes nilpotents de pas deux,
avec comme outils, des formules de Plancherel
et des fonctions sph\'eriques.

La premi\`ere classe de groupes est form\'ee 
des groupes appel\'es de type H (ou de type Heisenberg),
la seconde des groupes nilpotents libres \`a deux pas.
On notera $N_{v,2}$ le groupe nilpotent libre de pas deux 
\`a $v$ g\'en\'erateurs tout au long de ce travail.
Pour ce dernier, 
nous avons d\'evelopp\'e un calcul de Fourier radial.

Les op\'erateurs principalement \'etudi\'es sont
les fonctions maximales associ\'ees aux sph\`eres de
Kor\'anyi et leurs fonctions d'aires,
ainsi que les op\'erateurs de convolution
d\'efinis gr\^ace au calcul de Fourier radial sur $N_{v,2}$
(probl\`eme des multiplicateurs).

\subsection*{Fonctions maximales}

Les fonctions maximales associ\'ees \`a  des boules, 
des sph\`eres\ldots
sont des outils naturels pour d\'emontrer 
la convergence presque partout des moyennes 
de fonctions sur ces supports
(th\'eor\`eme de diff\'erentiation de Lebesgue);
de plus, des fonctions maximales
associ\'ees \`a des semi-groupes d'op\'erateurs
interviennent dans des probl\`emes ergodiques
(th\'eor\`eme de Wiener).
Dans une autre direction,
certaines fonctions maximales permettent 
le contr\^ole d'op\'erateurs
(th\'eor\`eme des int\'egrales singuli\`eres \cite{CoifW} etc\ldots).

Sur les groupes de Lie,
plusieurs fonctions maximales ont  d\'ej\`a \'et\'e \'etudi\'ees.
Elles proviennent d'une part 
de la recherche de r\'esultats analogues au th\'eor\`eme de Wiener :
des in\'egalit\'es maximales  $L^p$
pour des familles de mesures formant un semi-groupe,
sont connues
sur des groupes de Lie semi-simples
\cite{nevo_simplegrI,nevo_simplegrII,margulis_nevo_stein_semisimplegr,nevo_stein_semisimplegr}
et sur le groupe de  Heisenberg
\cite{Nevo}.
Leurs d\'emonstrations reposent 
sur la m\'ethode classique d'\'etude des fonctions maximales 
de semi-groupes d'op\'erateurs \cite{stein_topics_in_an},
ainsi que sur l'\'evaluation de fonctions d'aire
gr\^ace \`a des propri\'et\'es spectrales.\\
D'autre part, sur l'espace euclidien 
ou plus g\'en\'eralement sur les groupes homog\`enes munis d'une norme homog\`ene \cite{Folst},
on s'int\'eresse \`a la  fonction maximale associ\'ee aux dilat\'ees d'une surface donn\'ee;
par exemple, la continuit\'e~$L^p$ des fonctions maximales associ\'ees
\`a la sph\`ere euclidienne   \cite{Maxfunc,stein_wainger},
ou \`a la sph\`ere de Kor\'anyi sur le groupe de Heisenberg
\cite{cowling}
a d\'ej\`a \'et\'e d\'emontr\'ee.
D'autres r\'esultats ont \'et\'e obtenus 
pour des fonctions maximales associ\'ees \`a un support dont la
courbure rotationelle ne s'annule pas, 
en utilisant les int\'egrales oscillantes
\cite{sogge_stein,schmidt,muller_seeger}.

\paragraph{Nos r\'esultats.}
Un chapitre de cette th\`ese est consacr\'e
\`a la d\'emonstration d'in\'egalit\'es~$L^p$ pour la fonction maximale sph\'erique 
sur les groupes de type H et $N_{v,2}$ et leur fonctions d'aires, en utilisant le m\^eme point
de d\'epart que \cite{Maxfunc}.
Ce r\'esultat est d\'ej\`a connu
pour le groupe de Heisenberg \cite{cowling} 
et plus g\'en\'eralement  pour les groupes de type H \cite{schmidt}
(avec des indices optimaux pour $p$ dans les deux cas),
mais pas pour les groupes~$N_{v,2}$.

Notre m\'ethode repose 
sur l'interpolation de la m\^eme famille analytique d'op\'erateurs que~\cite{Maxfunc},
sur la continuit\'e~$L^p$ 
de la fonction maximale standard 
et le contr\^ole~$L^2$ de fonctions d'aires.
Dans notre cas, 
ce contr\^ole s'effectuera par des techniques spectrales
proches de \cite{Nevo}.

\subsection*{Transform\'ee de Fourier radiale}

Les propri\'et\'es spectrales 
utilis\'ees
sont celles donn\'ees par  
les fonctions sph\'eriques born\'ees.
Leurs expressions sont bien connues
sur le groupe de Heisenberg \cite{bjr},
tout comme leurs g\'en\'eralisations sur les groupes de type~H
au sens de Damek et Ricci \cite{bigth}.

\paragraph{Nos r\'esultats.}
Nous avons construit les fonctions sph\'eriques born\'ees du groupe~$N_{v,2}$,
\`a l'aide de la th\'eorie des repr\'esentations du groupe 
produit semi-direct de $N_{v,2}$ par le groupe orthogonal ou sp\'ecial orthogonal.
Elles  n'\'etaient pas jusqu'alors explicites.
Nous avons aussi
confront\'e les expressions trouv\'ees \`a d'autres
caract\'erisations, surtout
celles donn\'ees par les repr\'esentations sur $N_{v,2}$.
Nous avons ensuite 
explicit\'e les mesures de Plancherel radiale et non radiale.
Les r\'esultats sont concordants entre eux,
et avec l'\'etude de Strichartz 
\cite[section 6]{stric}.
Nous avons ainsi obtenu la formule de Plancherel sph\'erique 
et la formule d'inversion radiale.

\subsection*{Multiplicateurs de Fourier sur \mathversion{bold}{$N_{v,2}$}}

Nous nous sommes int\'eress\'es ensuite
au probl\`eme des multiplicateurs
d\'efinis par passage en Fourier sph\'erique  
sur le groupe $N_{v,2}$.
Gr\^ace au calcul de Fourier sph\'erique
et \`a de longs calculs
proches du cas des groupes de type~H \cite{steinmullericci},
nous avons obtenu
certaines estimations $L^2$ \`a poids sur le groupe~$N_{v,2}$.
En appliquant le th\'eor\`eme des int\'egrales singuli\`eres,
nous avons alors abouti \`a des conditions suffisantes
pour le probl\`eme des multiplicateurs en Fourier.
Cependant, ces conditions ne sont pas satisfaites m\^eme pour les fonctions constantes.
L'objectif de cette premi\`ere \'etude est d'exposer nos solutions \`a quelques points techniques.

\newpage

\section*{Pr\'esentation de ce document et de nos r\'esultats}

\paragraph{Organisation de ce document.}
Dans le premier chapitre,  
nous  donnons  les  d\'efinitions, les notations 
sur les fonctions maximales des deux classes de groupes 
que nous \'etudions :
les groupes de type~H et
les groupes~$N_{v,2}$.  
Nous rappelons \'egalement la notion de fonctions sph\'eriques
qui existent sur les groupes  de type~H, 
et des \'el\'ements de la th\'eorie des repr\'esentations  
qui nous permettront de construire les fonctions sph\'eriques
sur  les   groupes~$N_{v,2}$.

Dans le chapitre~\ref{chapitre_fonction_maximale_spherique},
nous  montrons des in\'egalit\'es  $L^p$ 
pour la fonction  maximale  sph\'erique
sur  les groupes  de type~H
et sur les groupes~$N_{v,2}$.
Cela repose sur le contr\^ole $L^2$ de fonctions d'aire.

Dans le chapitre~\ref{chapitrecalculfourier},
nous donnons les expressions
des fonctions sph\'eriques
et la formule de Plancherel radiale.

Dans le chapitre~\ref{chapitre_utilisation},
gr\^ace au calcul de Fourier pr\'ec\'edemment d\'evelopp\'e,
nous montrons  le contr\^ole $L^2$ de fonction d'aire pour~$N_{v,2}$,
et nous \'etudions le probl\`eme des multiplicateurs sur~$N_{v,2}$.

\paragraph{\'Enonc\'es des  r\'esultats.}
Jusqu'\`a la fin de ce chapitre d'introduction,
nous pr\'esentons techniquement 
les r\'esultats suivants : 
\begin{itemize}
\item  les in\'egalit\'es maximales sph\'eriques sur les groupes de
  type~H 
  et sur les groupes~$N_{v,2}$,
\item  le calcul de Fourier du groupe $N_{v,2}$.
\end{itemize}

\subsection*{Fonction maximale sph\'erique}

Nous consid\'erons un groupe de Lie $N$ de type~H ou $N_{v,2}$.
C'est un groupe de Lie nilpotent connexe simplement connexe
que l'on peut identifier via l'exponentielle \`a son alg\`ebre de Lie~$\Nc$.

L'alg\`ebre de Lie~$\Nc$ est stratifi\'ee :
$\Nc\,=\,\Vc\oplus\Zc$
\index{Notation!Espace!$\Vc,\Zc$}
avec
$[\Vc,\Vc]=\Zc$
et
$[\Nc,\Zc]=\{0\}$
($\Zc$ est le centre de cette alg\`ebre).
On d\'efinit les dilatations naturellement associ\'ees \`a une
alg\`ebre stratifi\'ee de pas deux:
$$
\forall\, r>0\;,\quad
\forall\, X\in\Vc\; , Z\in\Zc
\quad
\delta_r.(X+Z)=rX+r^2Z
\quad.
$$
\index{Dilatation}
Elles induisent des dilatations $n\mapsto r.n$, $r>0$
sur le groupe~$N$.
De plus, les deux sous-espaces~$\Vc$ et~$\Zc$ 
sont naturellement munis de normes euclidiennes.
On peut ainsi d\'efinir la norme de Kor\'anyi
sur $N$ :
$$
\nn{n}
\,=\,
{\left(\nn{X}^4+\nn{Z}^2\right)}^\frac14
\quad\mbox{o\`u}\quad
n=\exp (X+Z)\;, \quad X\in\Vc\; ,\quad Z\in\Zc
\quad.
$$

On consid\`ere une base orthonorm\'ee pour $\Vc$ et $\Zc$
dans le cas d'un groupe de type~H,
et la base canonique
form\'ee par les g\'en\'erateurs et leurs crochets
dans le cas d'un groupe nilpotent libre \`a deux pas.
On peut ainsi fixer une mesure de Lebesgue sur $\Nc$ 
puis une mesure de Haar $dn$ sur $N$.

Cette norme et cette mesure \'etant fix\'ees,
on note $\mu$ la mesure support\'ee par la sph\`ere unit\'e 
\index{Notation!Mesure!$\mu$}
$S_1:=\{n,\;\nn{n}=1\}$,
pour laquelle on a le passage en coordonn\'ees polaires :
\index{Coordonn\'ee polaire!sur les groupes homog\`enes}
$$
\forall \,   f\; ,\qquad    
\int_N f(n)dn    \,=\,
\int_0^\infty\int_{S_1}
f(r.n)d\mu(n)r^{Q-1}dr 
\quad  .
$$
o\`u $Q=\dim \Vc+2\dim \Zc$ est la dimension homog\`ene du groupe $N$.
\index{Notation!Dimension!$Q$}
\index{Fonction maximale!sph\'erique $\Ac$}
La fonction maximale sph\'erique est l'op\'erateur $\Ac$ 
donn\'e pour une fonction $f$ localement int\'egrable sur $N$ par :
$$
\Ac.f(n)
\,:=\,
\sup_{r>0}
\nn{\int_{S_1} f(n\; r.{n'}^{-1}) d\mu(n')}\quad,\quad n\in N
\quad.
$$
Nous avons \'etudi\'e les propri\'et\'es  de cette fonction maximale $\Ac$ pour la norme $L^p$;
les espaces $L^p$ que nous consid\'erons sont relatifs \`a la mesure de Haar~$dn$ :
$$
\nd{f}_{L^p}
\,=\,
{\left(\int_N \nn{f(n)}^p dn\right)}^\frac1p
\quad.
$$
Nous  montrons des in\'egalit\'es maximales sph\'eriques $L^p$
sur $N$ dans le chapitre~\ref{chapitre_fonction_maximale_spherique} :

\begin{thm_intro}[In\'egalit\'e $L^p$ pour \mathversion{bold}{$\Ac$}]
  \label{thm_intro_in_max_sph_Lp}
\index{Fonction maximale!Th\'eor\`eme}
\index{Notation!Dimension!$v$}
\index{Notation!Dimension!$z$}
Notons $v=\dim \Vc$ et $z=\dim \Zc$.
  La fonction maximale sph\'erique v\'erifie des in\'egalit\'es $L^p$,
  \begin{itemize}
  \item[$\bullet$] $2\leq p\leq\infty$ si $v=4$,
  \item[$\bullet$] si $v>4$, dans le cas d'un groupe de type H, $(v-2)/(v-3)<p\leq\infty$,  
  \item[$\bullet$] si $v> 4$, dans le cas $N=N_{v,2}$,
    $2h_0/(2h_0-1)<p\leq\infty$
    o\`u $h_0$ est le minimum de 
    $v+1$ et de la partie enti\`ere de $(z-1)/4$, 
  \end{itemize}
  c'est-\`a-dire qu'il existe 
  une constante $C$ 
  qui d\'epend seulement de $v,z,p$ 
telle que :

  $$
  \forall\, f\in L^p\; ,\qquad
  \nd{\Ac.f}_{L^p}
  \leq C\nd{f}_{L^p}
  \quad.
  $$ 
\end{thm_intro}

Pour d\'emontrer ce th\'eor\`eme, 
nous avons besoin des expressions explicites 
des fonctions sph\'eriques born\'ees; 
elles sont connues sur les groupes de type~H \cite{bigth}; 
nous les avons explicit\'ees sur $N_{v,2}$ :

\subsection*{Calcul de Fourier radial sur \mathversion{bold}{$N_{v,2}$}}

On note $X_1,\ldots,X_v$ 
\index{Notation!Dimension!$v$}
les g\'en\'erateurs de l'alg\`ebre de Lie du groupe $N_{v,2}$, et $v=2v'$ ou $v=2v'+1$.
Les vecteurs $X_1,\ldots,X_v$ engendre une base d'un sous-espace $\Vc$,
que l'on munit du produit scalaire pour laquelle la base $(X_1,\ldots,X_v)$ est orthogonale.
 
On convient aussi de noter
le simplexe :
  \index{Notation!Ensemble de param\`etres!$\Lc,\bar{\Lc}$}
$\Lc:=\{ \lambda^*_1>\ldots>\lambda^*_{v'}>0 \}$,
et son adh\'erence 
$\bar{\Lc}:=\{ \lambda^*_1\geq\ldots\geq\lambda^*_{v'}\geq0 \}$.
\`A un \'el\'ement non nul $\Lambda^*\in\bar{\Lc}$,
on associe
les entiers $v_0$, $v_1$, 
le multi-indice d'entier $m\in \Nb^{v_1}$ 
et le $v_1$-uplet $(\lambda_1,\ldots,\lambda_{v_1})\in \Rb^{v_1}$ 
de la mani\`ere suivante :
\begin{itemize}
\item l'entier $v_0$ est tel que 
  $\lambda^*_{v_0}> 0$ et $\lambda^*_{v_0+1}=0$ :
  c'est le nombre de $\lambda^*_i$ non nuls;
\item $v_1$ est le nombre de $\lambda^*_i$ non nuls distincts,
  et les $\lambda_j$ sont les $\lambda^*_i$ non nuls et distincts, ordonn\'es de
  fa\c con strictement d\'ecroissante :
  $$
  \{
  \lambda^*_1\,\geq\, \ldots \,\geq\, \lambda^*_{v_0}\,>\,0
  \}
  \,=\,
  \{
  \lambda_1\, > \, \ldots \, > \, \lambda_{v_1}\,>\,0
  \}
  \quad;
  $$
\item on note $m_j$ le nombre de param\`etres $\lambda^*_i$ \'egaux \`a $\lambda_j$,
  on d\'efinit \'egalement :
  $m_0:=m'_0:=0$ et
  $m'_j:=\sum_{i=1}^j m_i$ pour
  $j=1,\ldots v_1$;
  on a $m'_{v_1}:=m_1+\ldots+m_{v_1-1}+m_{v_1}=v_0$.
\end{itemize}

Nous avons obtenu les expressions explicites
des fonctions sph\'eriques born\'ees 
de la paire de Guelfand 
$(N_{v,2} , SO(v))$ et $(N_{v,2} , O(v))$.
Nous pr\'esentons ces derni\`eres :

\begin{thm_intro}[Fonctions sph\'eriques sur \mathversion{bold}{$N_{v,2}, O(v)$}]
  \label{thm_intro_fcnsph}
  \index{Fonction sph\'erique!sur $N_{v,2}$}
  Les param\`etres des fonctions sph\'eriques born\'ees sont 
  $r^*,\Lambda^*,l$, d\'ecrits dans ce qui suit :
  \index{Notation!Param\`etres!$r^*,\Lambda^*,l,\epsilon$}
  \begin{enumerate}
  \item  $\Lambda^*\in \bar{\Lc}$;\\
    lorsque  $\Lambda^*$ est non nul,
    on lui associe comme d\'ecrit ci-dessus 
    les entiers $v_0$, $v_1$, 
    le multi-indice d'entier $m\in \Nb^{v_1}$ 
    et le $v_1$-uplet $(\lambda_1,\ldots,\lambda_{v_1})\in \Rb^{v_1}$ ;
  \item le multi-indice $l\in \Nb ^{v_1}$ 
    si $\Lambda^*\not=0$, 
    rien sinon;
  \item $r^*\geq 0$, avec $r^*=0$ si $2v_0=v$.
  \end{enumerate}

  Avec ces param\`etres, 
  les fonctions sph\'eriques born\'ees pour $N_{v,2}, O(v)$ 
  sont donn\'ees par :\\
  \textbf{Si} {\mathversion{bold}{$\Lambda^*\not=0$}}
  $$
  \phi^{r^*,\Lambda^*,l}(n)
  \,=\,
  \int_{k\in K}
  \Theta^{r^*,\Lambda^*,l}(k.n)
  dk,
  \quad n\in N_{v,2}
  \quad,
  $$
  \index{Notation!Fonction sph\'erique!$\phi^{r^*,\Lambda^*,l}$}
  \index{Notation!$\Theta^{r^*,\Lambda^*,l}$}
  o\`u $\Theta^{r^*,\Lambda^*,l}$ est la fonction sur $N_{v,2}$ donn\'ee 
  pour $n=\exp(X+A)\in N_{v,2}$ par :
  $$
  \Theta^{r^*,\Lambda^*,l}(n)
  \,=\,
  e^{i <r^*X_v^*,X>}
  e^{i\sum_{j=1}^{v'} \lambda^*_j a_{2j-1,2j}}
  \Pi_{j=1}^{v_1}
  \flb {l_j} {m_j-1} {\frac{\lambda_j}2 \nn{\pr {j} X }^2}
  \quad,
  $$
  o\`u on a d\'ecompos\'e
  $X=\sum_{j=1}^v x_j X_j$
  et
  $A=\sum_{i,j} a_{i,j}[X_i,X_j]$
  et not\'e :
  \begin{itemize}
  \item $\flbs n \alpha $ la fonction de Laguerre normalis\'ee 
    de degr\'e $n$ et de param\`etre $\alpha$ 
    (voir sous section~\ref{app_lag}),
  \item $\mbox{pr}_j$ la projection orthogonale
    sur l'espace vectoriel engendr\'e 
    par les  $2m_j$ vecteurs :
    $$
    X_{2i-1}\, ,\; X_{2i}\,,\quad
    m'_{j-1}\,<\,i\,\leq\, m'_j
    \quad.
    $$
    \index{Notation!$\mbox{pr}_j$}
  \end{itemize}
  \textbf{Si} {\mathversion{bold}{$\Lambda^*=0$}}
  \index{Notation!Fonction sph\'erique!$\phi^{r^*,0}$}
  $$
  \phi^{r^*,0}(n)
  \,=\,
  \fb {\frac{v-2}2}{r^*\nn{X}}
  \quad ,
  \quad n=\exp(X+A)\in N_{v,2}
  \quad,
  $$
  o\`u $\fbs \alpha$ est la fonction de Bessel r\'eduite
  (voir sous section~\ref{app_bessel}).
\end{thm_intro}

Ce th\'eor\`eme sera d\'emontr\'e
dans le chapitre~\ref{chapitrecalculfourier},
ainsi que le th\'eor\`eme donnant les fonctions sph\'eriques born\'ees pour $SO(v)$.

Les fonctions sph\'eriques sont fonctions propres des op\'erateurs diff\'erentiels sur~$N_{v,2}$ invariant \`a gauche et sous $SO(v)$, en particulier du laplacien de Kohn 
$L=-\sum_i X_i^2$. Dans le m\^eme chapitre, nous donnerons les valeurs propres associ\'ees \`a $L$ pour chaque fonction sph\'erique born\'ee :
\index{Sous-laplacien!valeur propre} 
avec les notations du th\'eor\`eme~\ref{thm_intro_fcnsph},
on a dans les cas $\Lambda^*\not=0$ et $\Lambda^*=0$ respectivement :
$$
L. \phi^{r^*,\Lambda^*,l}
\,=\,
\left( \sum_{j=1}^{v_1}
  \lambda_j (2 l_j+m_j) 
  +{r^*}^{2}\right)
\phi^{r^*,\Lambda^*,l}
\quad\mbox{et}\quad
L. \phi^{r^*,0}
\,=\,
{r^*}^{2}
\phi^{r^*,0}
\quad .
$$

Dans le  chapitre~\ref{chapitre_utilisation},
nous donnons l'expression de la mesure de Plancherel radiale pour $N_{v,2},O(v)$:

\begin{thm_intro}[Mesure de Plancherel radiale]\label{thm_intro_mesure_plancherel}
  \index{Mesure de Plancherel!radiale}
  La mesure de Plancherel radiale
  est support\'ee 
  par les fonctions sph\'eriques $\phi^{r^*,\Lambda^*,l}$, 
  dont les param\`etres $X^*,\Lambda^*,l$ sont dans l'ensemble 
  $\Pc_v=\Pc$ donn\'e par :
  \begin{eqnarray*}
    \Pc_{2v'}
    &=&
    \{ 
    (0,\Lambda^*,l)\; , \quad \Lambda^*\in \Lc\, ,\; l\in \Nb^{v'}
    \} \quad,
    \\
    \Pc_{2v'+1}
    &=&
    \{ 
    (r^*,\Lambda^*,l)\; , \quad 
    r^*\in \Rb^+\, ,\; \Lambda^*\in \Lc\, ,\; l\in \Nb^{v'}
    \}  \quad.
  \end{eqnarray*}
  En identifiant les fonctions $\phi^{r^*,\Lambda^*,l}$, 
  et leurs param\`etres $r^*,\Lambda^*,l$,
  la mesure de Plancherel 
  est la mesure sur l'ensemble $\Pc$ produit  tensoriel : 
  \begin{itemize}
  \item de la mesure sur le simplexe $\Lc$ donn\'ee par :
    $$
    \Pi_j \lambda_j 
    d \eta(\Lambda)
    \quad, \quad \Lambda=(\lambda_1,\ldots,\lambda_{v'})\quad,
    $$
    o\`u $\eta$ est la mesure sur le simplexe $\Lc$ 
    dont l'expression est donn\'ee dans le lemme~\ref{lem_pass_coord_pol},
  \item de la mesure $\sum$ de comptage sur $\Nb^{v'}$,
  \item et si $v=2v'+1$ de la mesure de Lebesgue $dr^*$ sur $\Rb^+$,
  \end{itemize}
  $$
  c(v)
  \,=\,
  \left\{
    \begin{array}{ll}
      {(2\pi)}^{-\frac{v(v-1)}2-v'}
      &\quad
      \mbox{si}\; v=2v' \; ,\\
      2 {(2\pi)}^{-\frac{v(v-1)}2-1-v'}
      &\quad
      \mbox{si}\; v=2v'+1 \; .
    \end{array}\right.
  $$
\end{thm_intro}

Il d\'ecoule des th\'eor\`emes~\ref{thm_intro_fcnsph} et~\ref{thm_intro_mesure_plancherel} 
un calcul de Fourier radial sur $N_{v,2},O(v)$;
on \'etudie alors le probl\`eme des multiplicateurs de Fourier dans la section~\ref{sec_multiplicateur}.


\chapter{G\'en\'eralit\'es}
\label{chapitregen2}

Dans la premi\`ere section de ce chapitre, 
nous rappelons les d\'efinitions 
des fonctions maximales sph\'eriques,
puis nous pr\'esentons les groupes de type~H 
et les groupes libres nilpotents \`a deux pas 
ainsi que leurs structures homog\`enes.
Nous donnons ensuite les propri\'et\'es caract\'eristiques
des fonctions sph\'eriques 
dans la seconde section, 
puis dans une troisi\`eme section, 
des \'el\'ements sur la th\'eorie des repr\'esentations. 

\section{Groupes et fonctions maximales \'etudi\'es}
\label{sec_groupe_fcnmax}

Dans ce travail, 
tous les groupes de Lie nilpotents sont suppos\'es 
CONNEXES SIMPLEMENT CONNEXES. 
Lorsqu'une mesure de Haar $dn$ est fix\'ee 
sur un groupe~$N$ nilpotent,
pour une fonction $f$ localement int\'egrable sur~$N$,
on d\'efinit sa norme $L^p$ :
$$
\nd{f}_{L^p}
\,:=\,
\nd{f}_p
\,:=\,
{\left( \int \nn{f(n)}^p dn \right)}^\frac1p
\quad;
$$
on notera aussi parfois $\nd{f}_2=\nd{f}$.

\subsection{Fonctions maximales sph\'eriques}
\label{subsec_fcnmax_sph}

Dans cette section, on consid\`ere un groupe $N$
homog\`ene muni d'une norme homog\`ene
\cite{Folst};
on note  $n\mapsto \nn{n}$ la norme homog\`ene,
$n\mapsto r.n$, $r>0$, la famille de dilatations,
\index{Dilatation}
et $Q$ la dimension homog\`ene.
\index{Notation!Dimension!$Q$}
Lorsqu'il n'y aura pas de confusion sur la structure choisie,
on omettra  le   qualificatif  ``homog\`ene''.

Le groupe $N$ est donc nilpotent.

On suppose qu'une mesure de Haar $dn$ est fix\'ee.
Pour un ensemble $E\subset N$,
on note 
$r.E=\{ r.n,\, n\in E\}$ l'ensemble dilat\'e,
et $\nn{E}$ sa mesure de Haar lorsque $E$ est mesurable;
toujours dans ce cas, on a $\nn{r.E}= r^Q\nn{E}$.

On d\'efinit comme dans le cas euclidien,
la boule homog\`ene
centr\'ee en~$n$ de rayon~$r$ :
$$
B(n,r)
\,:=\,
\{n'\in N\quad:\quad   
\nn{n{n'}^{-1}}\, < \,  r\}  
\quad,
$$
et la sph\`ere homog\`ene
centr\'ee en~$n$ de rayon~$r$ :
$$
S(n,r)
\,:=\,
\{n'\in N\quad:\quad   
\nn{n{n'}^{-1}}\, =\,  r\}  
\quad.
$$
En  particulier,  
la  boule unit\'e  $B(0,1)$  est  not\'ee
$B_1$,
et la   sph\`ere  unit\'e  $S(0,1)$, $S_1$.

Il  existe une unique mesure de  Radon $\mu$
\index{Notation!Mesure!$\mu$} 
sur la sph\`ere  unit\'e $S_1$ 
telle que l'on ait l'\'egalit\'e
\cite{Folst},  proposition~1.15 
\index{Coordonn\'ee polaire!sur les groupes homog\`enes}:
\begin{equation}
  \label{formule_changement_polaire}
  \forall    f\in L^1(N),\qquad    
  \int_{N}
  f(n)dn    
  \,=\,
  \int_{r=0}^\infty\int_{S_1}
  f(rn)d\mu(n)r^{Q-1}dr 
  \quad.
\end{equation}
On note $\mu_s$ la mesure dilat\'ee de la mesure $\mu$ 
dans le sens suivant:
$$
\int_{S_1} f(s.n)d\mu(n) \,=\, \int_{s.S_1} f(n) d\mu_s(n) \quad .
$$

Nous d\'efinissons alors 
la \textbf{fonction maximale sph\'erique}
\index{Fonction maximale!sph\'erique $\Ac$}
$$
\Ac.f\,:=\, \sup_{s>0}\nn{\mu_s*f} \quad .
$$
Pour une fonction $f$ localement int\'egrable sur $N$, 
la fonction maximale $\Ac.f$ est mesurable
(il suffit de consid\'erer le supremum 
sur tous les rationels positifs).

Nous nous sommes int\'eress\'es  
aux propri\'et\'es $L^p$ de la fonction maximale $\Ac$ :
$$
\forall\, f\;, \qquad 
\nd{\Ac.f}_{L^p}\,\leq\, C_p\, \nd{f}_{L^p} \quad .
$$

Rappelons la d\'efinition de la fonction maximale standard
que nous noterons $\Mc$
\index{Fonction maximale!standard $\Mc$}
pour une fonction $f$ localement int\'egrable sur $N$ :
$$
\Mc .f(n)
\,:=\,
\frac 1{\nn{B(n,r)}}
\int_{B(n,r)} f(n') dn'
\quad n\in N \quad.
$$
Il est bien connu que
cette fonction  maximale 
v\'erifie des in\'egalit\'es 
$L^p, 1<p\leq\infty$ et $L^1$-faible,
\cite[th\'eor\`eme  fondamental des  int\'egrales singuli\`eres]{CoifW} :
\begin{equation}
  \label{maxst}
  \forall\, f\;, \qquad 
  \nd{\Mc.f}_{L^p}\,\leq\, C_p\, \nd{f}_{L^p} \quad .
\end{equation}
On en d\'eduit le corollaire:

\begin{cor}
  \label{cor_fcndec}
  Soit $F:N\mapsto\Rb$ une  fonction int\'egrable 
  positive telle que
  $F(n)=m(\nn{n})$ o\`u  $m$ est  une fonction d\'efinie  sur $\Rb^+$,
  d\'ecroissante.    
  On  d\'efinit  les   fonctions  $F_t,   t>0$  
  par  $F_t(n)=t^{-Q}F(t^{-1}.n)$,  
  et  leurs  op\'erateurs de  convolution
  $T_t:f\mapsto F_t*f$.
  
  Alors la famille d'op\'erateur  
  $\{  T_t, t>0\}$  v\'erifie  une in\'egalit\'e  maximale :
  $L^p,  1<p\leq \infty$:
  $$
  \forall\,   f\in L^p\;,
  \qquad   \nd{\sup_{t>0}\nn{T_t.f}}_{L^p}   \leq   C
  \nd{F}_{L^1} \;\nd{f}_{L^p} \quad ,
  $$
  o\`u $C$ est  une  constante  qui ne d\'epend que de la strucure homog\`ene du groupe $N$.
\end{cor}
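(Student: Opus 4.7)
\emph{Plan de d\'emonstration.} L'approche consiste \`a \'etablir la majoration ponctuelle
$$
\sup_{t>0}|T_t.f(n)| \,\leq\, C\,\nd{F}_{L^1}\,\Mc.f(n)
\quad,
$$
valable pour presque tout $n\in N$, puis d'appliquer l'in\'egalit\'e maximale standard~\eqref{maxst} sur $\Mc$ pour conclure. Puisque $\nd{F_t}_{L^1}=\nd{F}_{L^1}$ et que chaque $F_t$ est encore de la forme ``fonction positive d\'ecroissante de la norme homog\`ene'', il suffit en fait d'\'etablir $|F*f(n)|\leq \nd{F}_{L^1}\Mc.f(n)$ pour $F$ fix\'ee, la m\^eme constante servant alors pour tous les $T_t$.

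L'\'etape cl\'e est la d\'ecomposition de $F$ en superposition d'indicatrices de boules homog\`enes. En effet, la d\'ecroissance de la fonction $m$ sur $\Rb^+$ permet d'obtenir (par approximation de $m$ par une suite croissante de fonctions \'etag\'ees d\'ecroissantes et passage \`a la limite par convergence monotone) une mesure de Borel positive $\nu$ sur $\Rb^+$ telle que
$$
m(r) \,=\, \int_{(0,\infty)}\mathbf{1}_{[0,s)}(r)\,d\nu(s)
\quad ,\quad\mbox{d'o\`u}\quad
F(n)\,=\,\int_{(0,\infty)}\mathbf{1}_{B(0,s)}(n)\,d\nu(s)
\quad.
$$
En rempla\c cant $F$ par cette expression dans l'int\'egrale de convolution et en appliquant Fubini, on obtient, pour tout $n\in N$,
$$
|F*f(n)| \,\leq\, \int_{(0,\infty)}\Big(\int_{B(n,s)}|f(n')|\,dn'\Big)d\nu(s)
\,\leq\, \Mc.f(n)\int_{(0,\infty)}\nn{B(0,s)}\,d\nu(s)
\,=\, \nd{F}_{L^1}\Mc.f(n)
\quad,
$$
la derni\`ere \'egalit\'e r\'esultant d'une seconde application de Fubini \`a la repr\'esentation int\'egrale de $F$. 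La constante obtenue ne d\'epend que de la structure homog\`ene de $N$ via la constante apparaissant dans~\eqref{maxst}.

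Le point principal \`a d\'etailler sera la construction rigoureuse de la mesure $\nu$ associ\'ee \`a $m$ (essentiellement la mesure de Lebesgue--Stieltjes $-dm$, le cas d'une fonction seulement d\'ecroissante n\'ecessitant de passer par une approximation \'etag\'ee); cette \'etape reste cependant standard. Enfin, la mesurabilit\'e de $\sup_{t>0}|T_t.f|$ s'obtient en se restreignant aux $t$ rationnels positifs, ce qui est licite par continuit\'e dans $L^1$ des translations appliqu\'ees \`a $F$.
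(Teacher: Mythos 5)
Your argument is correct and is exactly the standard deduction the paper leaves implicit: it states the corollary as an immediate consequence of the maximal inequality~(\ref{maxst}) without proof, and the intended route is precisely your layer-cake decomposition of the decreasing radial majorant into indicators of balls, yielding the pointwise bound $\sup_{t>0}\nn{T_t.f}\leq \nd{F}_{L^1}\,\Mc.f$ uniformly in $t$ since $\nd{F_t}_{L^1}=\nd{F}_{L^1}$. The only point deserving a word of care is the left/right convention: depending on the order of convolution, the change of variables produces the ball $\{n':\nn{n^{-1}n'}<s\}$ rather than $B(n,s)=\{n':\nn{n\,{n'}^{-1}}<s\}$, but since the homogeneous norm is symmetric under inversion the two associated maximal operators satisfy the same $L^p$ bounds, so this does not affect the conclusion.
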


Mais pour  la fonction maximale sph\'erique, 
le  cas g\'en\'eral n'est pas  connu.   
Il l'est  sur l'espace euclidien $\Rb^n$ \cite{Maxfunc},
et sur le groupe de Heisenberg $\Hb^n$ pour la norme de Kor\'anyi 
\cite{cowling,schmidt} avec des indices pour $p$ optimaux.

\subsection{Groupes de type Heisenberg}
\label{subsec_def_typeH}

Introduits par Kaplan \cite{kaplan},
les groupes  de  type Heisenberg  (ou  de  type~H) 
g\'en\'eralisent  les groupes de Heisenberg 
dans le sens o\`u 
les solutions \'el\'ementaires du sous-laplacien 
sont formellement identiques.

\begin{defi}
  \label{defi_alg_typeH}
  Soit $\Nc$ une alg\`ebre de Lie. 
  $\Nc$ est une \textbf{alg\`ebre de  type~H} 
  (ou de type Heisenberg) 
  \index{Alg\`ebre!de type H}
  lorsqu'elle v\'erifie les trois conditions suivantes:
  \begin{itemize}
  \item[(1)] En  tant qu'espace vectoriel, 
    $\Nc$ est muni  d'un produit scalaire not\'e $<\, ,>$ 
    et se d\'ecompose en somme directe orthogonale
    de deux sous-espaces non nuls $\Vc$ et $\Zc$ :  
    $\Nc=\Vc\oplus\Zc$.
  \item[(2)]  En tant  qu'alg\`ebre  de Lie,  
    $[\Vc,\Vc]\subset \Zc$  et  $[\Nc,\Zc]=\{0\}$.
    En particulier, cette alg\`ebre de Lie est nilpotente.
  \end{itemize}    
  Lorsque  les  conditions  (1)  et  (2) sont satisfaites,  
  on  d\'efinit l'application 
  $J:\Zc\rightarrow \End\, \Vc$ 
  par :
  $$
  \forall    X,X'\in     \Vc    ,\;\forall    
  Z\in    \Zc,\qquad
  <J(Z)X,X'>\,=\,<Z,[X,X']> \quad .
  $$
  \begin{itemize}
  \item[(3)] Pour tout $Z\in\Zc$, 
    on a: $ J^{2}(Z)=-{|Z|}^2 \Id_\Vc$.
  \end{itemize}
  Un \textbf{groupe de type H}
  \index{Groupe!de type H} 
  est un groupe  de lie $N$ 
  dont l'alg\`ebre de Lie $\Nc$ est de type~H
\end{defi}
\index{Notation!Espace!$\Vc,\Zc$}

Lorsque $N$ est un groupe de type~H
avec les notations de la d\'efinition ci-dessus, 
on a $[\Vc,\Vc]=\Zc$ 
et $\Zc$ est le centre de l'alg\`ebre de Lie.
On identifie 
$N\sim \Nc \sim \Vc\oplus \Zc$ pour noter $n=(X,A)\in N$.

\begin{rem}\label{V_dim_paire}
  L'application $J$ est lin\'eaire, inversible 
  et \`a valeurs dans l'ensemble des  endomorphismes  antisym\'etriques de  l'espace  vectoriel $\Vc$.  
  En cons\'equence, 
  l'espace vectoriel $\Vc$ est de dimension paire.
\end{rem}

\begin{rem}\label{dim_centre<}
  La condition~(3) \'etablit le lien 
  entre la structure euclidienne et
  celle  de  Lie.  
  Elle  est  \'equivalente \`a  la  condition~(3bis) suivante :
  \begin{itemize}
  \item[(3bis)] Pour tout $X\in\Vc,\nn{X}=1$,  
    $\ad(X)$ est une isom\'etrie de
    $(\ker\, \ad(X))^\perp$ sur $\Zc$.
  \end{itemize}
  En cons\'equence, on a : $\dim \Zc < \dim \Vc$.
\end{rem}

\paragraph{Exemple :  le  groupe  de Heisenberg.}
\index{Groupe!de Heisenberg $\Hb^n$}
Dans ce travail, 
on choisit la loi suivante sur le groupe de Heisenberg $\Hb^n=\Cb^n\times\Rb$ :
$$
(z_1,\ldots,    z_n,   t)   .(z'_1,\ldots,    z'_n,   t')   \,=\,
(z_1+z'_1,\ldots,z_n+z'_n  ,   t+t'+\frac12\sum_i  \Im  z_i\bar{z}'_i)
\quad.
$$
Son alg\`ebre de Lie s'identifie comme espace vectoriel \`a
$\Rb^{2n}\oplus \Rb$.
Le centre de cette alg\`ebre est $\Zc=\Rb(0,1)\sim \Rb$.
Sa structure d'alg\`ebre de type~H
correspond \`a des matrices~$J_t, t\in \Zc\sim \Rb$
de taille $2n$,
diagonalis\'ees par bloc 2-2, 
dont les $n$ blocs 2-2 sont tous  de la forme :
$$
t
\left[\begin{array}{cc}
    0&-1\\1&0
  \end{array}\right]
\quad.
$$
Un produit direct de groupes de Heisenberg 
(ou de type~H) \'etant encore un groupe de type~H;
on obtient ainsi une grande classe d'exemples 
de groupes de type~H.

\subsection{Groupes nilpotents libres \`a deux pas}
\label{subsec_def_grlib}

On d\'efinit d'abord 
l'alg\`ebre de Lie nilpotente libre 
\`a~deux~pas et $v$~g\'en\'erateurs.
Heuristiquement, 
c'est l'alg\`ebre de Lie nilpotente de~pas~deux 
engendr\'ee par $v$ vecteurs libres,
tels que les seules relations entre leurs crochets
soient celles n\'ecessaires \`a~l'anticommutativit\'e.
Pour des pas quelconques,
les alg\`ebres de Lie libres nilpotentes se d\'efinissent par propri\'et\'e universelle 
\cite[Chap.V \S 4]{jacobson}.

\begin{defi}
  \label{defi_alg_libre}
  Une \textbf{alg\`ebre   de  Lie  libre nilpotente \`a deux pas} et $v$ g\'en\'erateurs
  \index{Alg\`ebre!libre nilpotente \`a 2 pas}
  est une alg\`ebre de Lie qui admet en tant qu'espace vectoriel 
  une base :
  $$
  X_i\, , \; 1\leq i\leq v
  \; ; \; X_{i,j}\, , \; 1\leq i<j\leq v \; ,
  \quad\mbox{telle que} \quad
  X_{i,j}=[X_i,X_j],i<j
  \quad.
  $$
  Elle est unique \`a  isomorphisme  pr\`es  
  et on la note $\Nc_{v,2}$.
\end{defi}

Les bases $X_1,\ldots,X_v$ et $X_{i,j}, i<j$ sont appel\'ees canoniques dans la suite.

\index{Notation!Espace!$\Vc,\Zc$}
On notera  $\Vc$ 
l'espace vectoriel 
engendr\'e par  les vecteurs $X_i, i=1,  \ldots,  v$, 
et  $\Zc$  l'espace  vectoriel  engendr\'e par  les
vecteurs $X_{i,j}, i<j$.
Le sous-espace  $\Zc$ 
est le centre  de l'alg\`ebre de  Lie; 
il est de  dimension $z=v(v-1)/2$.  
\index{Notation!Dimension!$z$}

On convient tout au long de ce travail que
lorsque l'on \'ecrit $X+A\in \Nc$, 
on sous-entend $X\in \Vc$ et $A\in \Zc$.

\begin{defi}
  Le \textbf{groupe libre nilpotent \`a  deux pas} 
  \index{Groupe! libre nilpotent \`a 2 pas ($N_{v,2}$)}
  est  le groupe nilpotent
  dont l'alg\`ebre de  Lie est $\Nc_{v,2}$.  
  On le  note $N_{v,2}$.
\end{defi}

\subsubsection*{Une r\'ealisation de \mathversion{bold}{$\Nc_{v,2}$}.}

Outre la d\'efinition par g\'en\'erateurs,
on peut d\'efinir l'alg\`ebre de Lie nilpotente libre \`a deux pas comme suit.
Soit~$(\Vc, <,>)$ un espace vectoriel euclidien de dimension $v$.
On note $O(\Vc)$
le groupe des transformations orthogonales de $(\Vc, <,>)$
et $SO(\Vc)$ son sous-groupe des transformations sp\'eciales orthogonales.
Leur alg\`ebre de Lie commune s'identifie
\`a l'espace vectoriel des transformations antisym\'etriques de~$(\Vc, <,>)$,
que l'on note $\Zc$.
On d\'efinit la somme ext\'erieure d'espaces vectoriels :
$\Nc=\Vc \oplus \Zc$.

\paragraph{D\'efinissons le crochet de Lie.}
On d\'efinit une application bilin\'eaire
$[.,.]$
sur~$\Vc\times \Vc$ \`a valeurs dans~$\Zc$
par 
$$
[X,Y].(V)\,=\, <X,V>Y-<Y,V>X
\quad X,Y,V\in \Vc,
\quad.
$$
On \'etend alors cette application bilin\'eaire antisym\'etrique
sur~$\Nc\times\Nc$ par :
$$
[.,.]_{\Nc\times \Zc}\,=\,
[.,.]_{\Zc\times \Nc}\,=\,
0\quad.
$$
Ce crochet v\'erifie trivialement l'identit\'e de Jacobi;
il munit l'espace vectoriel $\Nc$ d'une structure d'alg\`ebre de Lie nilpotente de pas deux.

\paragraph{Action de \mathversion{bold}{$O(\Vc)$}.}
De plus, les groupes $O(\Vc)$ et son sous-groupe $SO(\Vc)$ agissent 
d'une part, par automorphisme sur $\Vc$,
et d'autre part, 
par la repr\'esentation adjointe 
$Ad_\Zc$ sur leur alg\`ebre de Lie commune $\Zc$.
On peut donc d\'efinir une action de~$O(\Vc)$ et de~$SO(\Vc)$
sur l'alg\`ebre de Lie~$\Nc$.
Montrons qu'elle respecte le crochet; 
il suffit de voir pour $X,Y,V\in\Vc$ et $k\in O(\Vc)$ :
\begin{eqnarray*}
  [k.X,k.Y](V)
  &=&
  <k.X,V>k.Y-<k.Y,V>k.X\\
  &=&
  k.\left( <X, ^tk.V>Y-<Y, ^tk.V>X\right)\\
  &=&
  k.[X,Y](k^{-1}.V)
  \,=\, Ad_\Zc k.[X,Y]
  \quad.
\end{eqnarray*}
On a donc une action par automorphisme du groupe $O(\Vc)$ 
(et de son sous-groupe $SO(\Vc)$)
sur l'alg\`ebre $\Nc$.

\paragraph{Produit scalaire sur \mathversion{bold}{$\Zc$}.}
L'application donn\'ee par
$X\wedge Y\mapsto [X,Y]$.
s'\'etend en un isomorphisme (d'espace vectoriel) de $\Lambda^2(\Vc)$ sur $\Zc$;
et les \'el\'ements $[X,Y]$ sont des g\'en\'erateurs de $\Zc$.
Comme sur $\Lambda(V)^2$, 
on d\'efinit le produit scalaire de $\Zc$ 
en \'etendant par bilin\'earit\'e la forme suivante :
$$
<[X,Y],[X',Y']>
\,=\, 
<X,X'><Y,Y'>-<X,Y'><X',Y>
\quad.
$$

On remarque 
$<[X,Y],[X',Y']>= <[X,Y]X',Y'>$.
Et donc gr\^ace \`a l'identification entre $\Zc$ et $\Zc^*$ par le produit scalaire,
on a pour $A^*\in\Zc^*$ et $X,Y\in \Vc$:
\begin{equation}
  \label{egalite_crochet_pdtsc}
  <A^*,[X,Y]>
  \,=\,
  <A^*.X,Y>
  \quad.  
\end{equation}

\subsubsection*{Lien avec la d\'efinition par g\'en\'erateurs.}
Fixons une base  orthonormale $X_1,\ldots,X_v$ de~$\Vc$;
alors les vecteurs $X_{i,j}:=[X_i,X_j], i<j$ forment une base de $\Zc$.
Ainsi, l'alg\`ebre de Lie~$\Nc$ s'identifie
\`a l'alg\`ebre de Lie nilpotente libre \`a deux pas 
avec pour g\'en\'erateurs $X_1,\ldots,X_v$.

La base $X_{i,j}, i<j$ est orthonormale pour le produit scalaire de $\Zc$;
elle permet d'identifier l'espace vectoriel~$\Zc$ \`a l'ensemble $\Ac_v$
des matrices antisym\'etriques de taille $v$.

L'\'egalit\'e~(\ref{egalite_crochet_pdtsc})
se red\'emontrent alors en d\'eveloppant $A^*$ et $ X,Y$ sur les bases canoniques.

\paragraph{Action de \mathversion{bold}{$K=O(v)$} ou \mathversion{bold}{$K=SO(v)$}.}
Par leurs bases canoniques,
on identifie $\Vc$ \`a $\Rb^v$ 
et $\Zc$ \`a $\Ac_v$;
on a donc les deux actions par automorphismes :
$$
\left\{\begin{array}{rcl}
    K\times \Vc
    &\longrightarrow& \Vc\\
    k,X&\longmapsto&k.X
  \end{array}\right.
\quad\mbox{et}\quad
\left\{\begin{array}{rcl}
    K\times \Zc
    &\longrightarrow& \Zc\\
    k,A&\longmapsto&k.A=kAk^{-1} 
  \end{array}\right. \quad.
$$
On v\'erifie facilement :
$k.[X,X']=
[k.X,k.X']$ 
en d\'eveloppant $X$ et $X'$ sur la base canonique.
On retrouve l'action (par automorphismes) du groupe $K$ 
sur l'alg\`ebre de Lie $\Nc_{v,2}$,
puis sur le groupe $N_{v,2}$.

\subsection{Choix de la structure de groupe homog\`ene.}
\label{subsec_structure_homogene}

Soit $N$ un groupe de type~H, 
ou un groupe libre nilpotent \`a deux pas, 
et $\Nc$ son alg\`ebre de Lie.
On \'ecrit $\Nc=\Vc\oplus\Zc$
en reprenant les notations 
des d\'efinitions~\ref{defi_alg_libre} et~\ref{defi_alg_typeH}.
L'alg\`ebre $\Nc$ est stratifi\'ee \`a deux pas
\cite{Folst}.

On \'equipe alors le groupe de dilatations adapt\'ees 
\`a la stratification :
$$
\forall X\in\Vc,Z\in\Zc\quad
\forall r>0
\qquad
r.\exp(X+Z)\,=\,\exp(rX+r^2Z)
\quad,
$$
\index{Dilatation}
et d'une norme homog\`ene ``de Kor\'anyi'' :
$$
\forall X\in\Vc,Z\in\Zc
\qquad
\nn{\exp(X+Z})\, =\, {( \nn{X}^4+\nn{Z}^2 )}^{\frac14}
\quad.
$$
o\`u les normes euclidiennes sur $\Vc$ et $\Zc$ 
sont choisies de la mani\`ere suivante :
\begin{itemize}
\item si $N$ est un groupe de type~H, 
  ces normes euclidiennes sont issues de la norme euclidienne pour $\Nc$;
\item si $N=N_{v,2}$ 
  est un groupe libre nilpotent \`a deux pas,
  ces normes euclidiennes sont celles pour lesquelles les bases 
  $X_i, 1\leq i\leq v$
  et $X_{i,j}, i<j$ sont orthogonales respectivement.
\end{itemize}
Le groupe $N$ est ainsi muni 
de la structure naturelle 
de groupe homog\`ene avec une norme homog\`ene 
pour un groupe stratifi\'e.
La dimension homog\`ene est
$Q=\dim \Vc+2\dim\Zc$
\index{Notation!Dimension!$Q$}.

Nous fixons la mesure de Haar suivante sur $N$ : 
$$
\int_N f(n)dn
\,=\,
\int_{\Vc}\int_{\Zc}f(\exp(X+Z)) dXdZ
\quad,
$$
o\`u  les mesures de Lebesgues $dX,dZ$ 
sur $\Vc$ et $\Zc$ 
sont fix\'ees telles que :
\begin{itemize}
\item si $N$ est un groupe de type~H, 
  une base orthonormale 
  sur chacun des sous espaces $\Vc$ et $\Zc$
  donne lieu \`a deux mesures de Lebesgues 
  $dX$ et $dZ$ sur $\Vc$ et $\Zc$ respectivement;
\item si $N=N_{v,2}$ 
  est un groupe libre nilpotent \`a deux pas,
  les mesures de Lebesgues sur $\Vc$ et $\Zc$ 
  sont celles donn\'ees par les bases canoniques 
  $X_i, 1\leq i\leq v$
  et $X_{i,j}, i<j$
  respectivement.
\end{itemize}

\section{Fonctions sph\'eriques}
\label{sec_genfcnsph}

Dans cette section,
on rappelle d'abord
les diff\'erentes d\'efinitions \'equivalentes 
des fonctions sph\'eriques.
Puis, 
on donne les d\'efinitions des paires de Guelfand 
et leurs propri\'et\'es.
Nous illustrons alors ces  notions 
par l'exemple du groupe de Heisenberg.
On donne \'egalement
leurs g\'en\'eralisations au sens de \cite{bigth} sur les groupes de type~H.
Enfin, on pr\'ecise  l'action du groupe sp\'ecial orthogonal 
sur le groupe libre nilpotent \`a deux pas qui en fait une paire de Guelfand.

Pour ce qui suit, 
on renvoit aux cours~\cite{faro2} 
et au livre~\cite{helgason}.

Dans ces rappels et la premi\`ere sous-section,
on consid\`ere un groupe~$G$, localement compact, 
et un de ses sous-groupes compacts~$K$,
sur lesquels sont fix\'ees des mesures de Haar :
$dg$ sur $G$, et $dk$ (de masse 1) sur $K$.

\begin{defi}[Fonctions sph\'eriques et caract\`eres]
  \label{defi_fcnsph}
  \index{Fonction sph\'erique}
  Une fonction  sph\'erique est une fonction $\Phi$ 
  continue sur~$G$,  biinvariante par~$K$ 
  (i.e. invariante sous les actions 
  \`a gauche et \`a droite de $K$), 
  telle que l'application :
  $$
  \chi:f \,\longmapsto\, \int_G f(g)\Phi(g^{-1})dg
  $$
  soit  un  caract\`ere  non  nul 
  de  l'alg\`ebre  de  convolution des fonctions continues, 
  \`a support compact, biinvariantes par~$K$; c'est-\`a-dire qu'elle doit v\'erifier $\chi(f_1*f_2)=\chi(f_1)\chi(f_2)$ pour toutes fonctions $f_1,f_2:G\rightarrow \Cb$  
  continues, \`a support compact, biinvariantes par~$K$.

  La fonction obtenue par passage au quotient
  sur $G/K$ s'appelle encore fonction sph\'erique.
\end{defi}

On peut aussi les d\'efinir
par une \'equation fonctionnelle
(th\'eor\`eme~\ref{thm_eqfonc}),
et dans le cas de groupe de Lie, 
comme fonction propre 
d'op\'erateurs diff\'erentiels 
(th\'eor\`eme~\ref{thm_fcnsph_opdiff}),
\cite[sec.3 ch.X]{helgason}.

\begin{thm}[Fonctions sph\'eriques et \'equation fonctionnelle]
  \label{thm_eqfonc}
  \index{Fonction sph\'erique!et \'equation fonctionnelle}
  Soit $\Phi\not  =0$ une fonction continue sur  $G$, biinvariante par
  $K$. La fonction $\Phi$ est sph\'erique si et seulement si
  elle v\'erifie :
  \begin{equation}\label{eqfonc}
    \forall g_1,g_2\in G,
    \qquad
    \int_K \Phi(g_1kg_2)dk
    \,=\,
    \Phi(g_1)\Phi(g_2)
    \quad.
  \end{equation}
  En particulier on a $\Phi(e)=1$.
  De plus, 
  si $\Phi$ est born\'ee, 
  alors elle est born\'ee par~1.
\end{thm}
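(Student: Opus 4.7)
My plan is to prove the two implications separately, then deduce the normalizations $\Phi(e)=1$ and, under the boundedness assumption, $\nd{\Phi}_\infty=1$.

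For the easy direction, assume the functional equation~(\ref{eqfonc}) holds. Given $f_1,f_2$ continuous, compactly supported, and bi-$K$-invariant, I would expand $\chi(f_1*f_2)$ as a double integral, perform the change of variable $g\mapsto hg$ inside, and obtain
$$\chi(f_1*f_2)\,=\,\int_G\int_G f_1(h)f_2(g)\Phi(g^{-1}h^{-1})\,dh\,dg.$$
Because $f_1$ is right-$K$-invariant, I can replace $\Phi(g^{-1}h^{-1})$ by its $K$-average $\int_K \Phi(g^{-1}kh^{-1})dk$ without changing the integral (change of variable $h\mapsto hk$ then average). Applying~(\ref{eqfonc}) with $g_1=g^{-1}$, $g_2=h^{-1}$ then factorizes the integrand as $\Phi(g^{-1})\Phi(h^{-1})$, yielding $\chi(f_1)\chi(f_2)$.

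For the converse, I would reverse this computation. Starting from $\chi(f_1*f_2)=\chi(f_1)\chi(f_2)$ for all bi-$K$-invariant $f_1,f_2$, the same manipulations give
$$\int_G\int_G f_1(h)f_2(g)\Bigl[\int_K \Phi(g^{-1}kh^{-1})dk-\Phi(h^{-1})\Phi(g^{-1})\Bigr]dh\,dg\,=\,0.$$
A short check using the bi-$K$-invariance of $\Phi$ and the invariance of Haar measure on $K$ shows that the bracketed function $F(h,g)$ is bi-$K$-invariant in each variable. I then fix $(h_0,g_0)\in G\times G$ and choose, for each index $\alpha$ of an approximate identity, bi-$K$-averaged compactly supported continuous $f_1^\alpha$ concentrated near $h_0$ and $f_2^\alpha$ concentrated near $g_0$; by bi-$K$-invariance of $F$ these averages integrate against $F$ just as the original approximate identity would, so the double integral converges to $F(h_0,g_0)$. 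This forces $F\equiv 0$, which is~(\ref{eqfonc}) after relabeling $g_1=g^{-1}$, $g_2=h^{-1}$. The main technical point here is guaranteeing that the approximate identity can be chosen bi-$K$-invariant without losing its concentration property; this is standard, using $\int_K\int_K\psi_\alpha(k_1 g k_2)dk_1dk_2$ starting from an ordinary approximate identity $\psi_\alpha$.

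Finally, setting $g_1=g_2=e$ in~(\ref{eqfonc}) gives $\Phi(e)=\Phi(e)^2$, so $\Phi(e)\in\{0,1\}$. If $\Phi(e)=0$, then taking $g_1=e$ and using left-$K$-invariance of $\Phi$ gives $\Phi(g_2)=\int_K\Phi(kg_2)dk=\Phi(e)\Phi(g_2)=0$ for every $g_2$, contradicting $\Phi\not=0$. Hence $\Phi(e)=1$. For the bound, suppose $\Phi$ is bounded with $M:=\nd{\Phi}_\infty>0$; then $|\Phi(g_1)\Phi(g_2)|=|\int_K\Phi(g_1kg_2)dk|\leq M$ for all $g_1,g_2$. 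Taking the supremum over $g_1$ yields $M|\Phi(g_2)|\leq M$, whence $|\Phi(g_2)|\leq 1$ for every $g_2$; combined with $\Phi(e)=1$ this gives $M=1$.
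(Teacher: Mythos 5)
The paper does not prove this statement: it is recalled as a standard fact with a pointer to Helgason (ch.~X, sec.~3), so there is no internal proof to compare with. Your argument is the classical one and is correct in both directions: the averaging step that replaces $\Phi(g^{-1}h^{-1})$ by $\int_K\Phi(g^{-1}kh^{-1})\,dk$ using the right-$K$-invariance of $f_1$ is legitimate (the change of variable $h\mapsto hk$ costs nothing since the modular function is trivial on the compact subgroup $K$), the bracketed kernel in the converse is indeed bi-$K$-invariant in each variable, and your use of bi-$K$-averaged approximate identities correctly exploits that invariance to recover the pointwise identity. The only detail left implicit in the forward direction is that the definition requires $\chi$ to be a \emph{non-zero} character; this follows at once from $\Phi(e)=1$ (which you derive from~(\ref{eqfonc}) alone) by testing $\chi$ against a nonnegative bi-$K$-invariant bump concentrated at $e$, so it deserves one sentence but is not a gap.
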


\begin{thm}[Fonctions sph\'eriques et op\'erateurs diff\'erentiels]
  \label{thm_fcnsph_opdiff}
  \index{Fonction sph\'erique!et op\'erateur diff\'erentiel}
  On suppose que le groupe~$G$ est un groupe de Lie connexe.
  Soit $\phi :G/K\rightarrow \Cb$ 
  une fonction $C^\infty$, invariante \`a gauche sous $K$.

  La fonction $\phi$ est sph\'erique 
  si et seulement si 
  $\phi(e)=1$,
  et $\phi$
  est fonction propre de tous les
  op\'erateurs diff\'erentiels sur $G/K$ 
  invariants sous l'action \`a gauche de $G$.
\end{thm}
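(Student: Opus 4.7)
The plan is to exploit the functional equation characterization of Theorem~\ref{thm_eqfonc} to translate between the two formulations. Throughout, I view $\phi$ as a function on $G$: being defined on $G/K$ it is right-$K$-invariant, and the hypothesis of left-$K$-invariance then makes it $K$-biinvariant. A $G$-invariant differential operator on $G/K$ lifts to a left-$G$-invariant, right-$K$-invariant differential operator on $G$, and in particular commutes with every left translation.

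For the direct implication, suppose $\phi$ is spherical, so \eqref{eqfonc} holds. Setting $g_1=g_2=e$ gives $\phi(e)=\phi(e)^2$; and setting just $g_2=e$ together with the right-$K$-invariance of $\phi$ yields $\phi(g_1)=\phi(g_1)\phi(e)$, which rules out $\phi(e)=0$ (it would force $\phi\equiv 0$) and thus establishes $\phi(e)=1$. Next let $D$ be any $G$-invariant differential operator on $G/K$. Applying $D$ in the variable $g_2$ to \eqref{eqfonc} and using that $D$ commutes with each left translation $L_{g_1k}$ gives
$$
\int_K (D\phi)(g_1kg_2)\,dk \;=\; \phi(g_1)\,(D\phi)(g_2).
$$
Specializing $g_2=e$ and using that $D\phi$ is right-$K$-invariant collapses the integral to $(D\phi)(g_1)$, yielding $(D\phi)(g_1)=(D\phi)(e)\,\phi(g_1)$: the eigenfunction relation with eigenvalue $\chi(D):=(D\phi)(e)$.

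For the converse, assume $\phi(e)=1$ and $D\phi=\chi(D)\phi$ for every $G$-invariant $D$ on $G/K$. I would introduce the defect
$$
F(g_1,g_2) \;:=\; \int_K \phi(g_1kg_2)\,dk - \phi(g_1)\phi(g_2)
$$
and aim to show $F\equiv 0$, which by Theorem~\ref{thm_eqfonc} concludes that $\phi$ is spherical. Elementary changes of variable (using that $K$ is unimodular) together with the $K$-biinvariance of $\phi$ show that $g_2\mapsto F(g_1,g_2)$ is $K$-biinvariant for each fixed $g_1$. Repeating the differentiation argument of the forward direction in $g_2$ and invoking the hypothesis $D\phi=\chi(D)\phi$ yields
$$
D_{g_2}F(g_1,g_2) \;=\; \chi(D)\,F(g_1,g_2),
$$
so $F(g_1,\cdot)$ is a $K$-biinvariant joint eigenfunction of all of $D(G/K)$, with the same system of eigenvalues $\chi$ as $\phi$. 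Moreover $F(g_1,e)=\phi(g_1)-\phi(g_1)\cdot 1=0$.

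The main obstacle is concluding from these three properties that $F(g_1,\cdot)\equiv 0$. This requires a uniqueness statement: a $K$-biinvariant joint eigenfunction of $D(G/K)$ with a prescribed system of eigenvalues is determined up to scalar by its value at $e$. In the Gelfand pair framework that will be introduced in the next subsection, the algebra $D(G/K)$ is commutative and spherical functions are exactly the normalized $K$-biinvariant joint eigenfunctions, so the spherical function attached to $\chi$ spans the relevant eigenspace of $K$-biinvariant functions; this is the substance of \cite[Ch.~X, \S3]{helgason}. Combined with $F(g_1,e)=0$, uniqueness forces $F(g_1,\cdot)\equiv 0$ for all $g_1$, which is precisely \eqref{eqfonc}, and Theorem~\ref{thm_eqfonc} completes the converse.
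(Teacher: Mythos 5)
A preliminary remark on the comparison itself: the paper does not prove Theorem~\ref{thm_fcnsph_opdiff}; it quotes it as a known result with the reference \cite[sec.3 ch.X]{helgason}, so there is no internal argument to measure yours against. Your forward direction is correct and complete: the normalization $\phi(e)=1$ follows from the functional equation as you describe, and differentiating~(\ref{eqfonc}) in $g_2$ with a $G$-invariant $D$ (which commutes with the translations $\tau(g_1k)$ of $G/K$), then setting $g_2=e$ and using the right-$K$-invariance of $D\phi$, gives $D\phi=(D\phi)(e)\,\phi$. This is the standard argument.

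The converse, however, contains a genuine gap, and it sits exactly where the content of the theorem lies. Your reduction to the statement ``a $K$-biinvariant joint eigenfunction of $\mathbf{D}(G/K)$ with eigenvalue system $\chi$ and vanishing at $e$ is identically zero'' is correct, but you do not prove that statement: you appeal to ``the Gelfand pair framework'' and to the assertion that ``spherical functions are exactly the normalized $K$-biinvariant joint eigenfunctions'' --- which is precisely Theorem~\ref{thm_fcnsph_opdiff}, so the argument is circular. Commutativity (of $L^1(K\backslash G/K)$ or of $\mathbf{D}(G/K)$) is neither assumed here nor relevant: the theorem is stated for an arbitrary connected Lie group $G$ with compact subgroup $K$, before Gelfand pairs appear. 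The missing argument runs as follows. Write $u=F(g_1,\cdot)$, lifted to a $K$-biinvariant function on $G$ with $(Eu)(e)=\chi(E)u(e)=0$ for every $E\in\mathbf{D}(G/K)$. For an arbitrary left-invariant differential operator $D$ on $G$, set $D^\natural=\int_K \mathrm{Ad}(k)D\,dk$; using the right-$K$-invariance of $u$ one gets $(D^\natural u)(e)=\int_K (Du)(k)\,dk$, and since $Du$ is still left-$K$-invariant this equals $(Du)(e)$. As $D^\natural$ is $\mathrm{Ad}(K)$-invariant it acts on right-$K$-invariant functions through $\mathbf{D}(G/K)$, whence $(Du)(e)=(D^\natural u)(e)=0$: every left-invariant derivative of $u$ vanishes at $e$. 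One then concludes $u\equiv 0$ on the connected group $G$ because $u$ is real-analytic: an $\mathrm{Ad}(K)$-invariant inner product on the Lie algebra (which exists since $K$ is compact) yields an elliptic element of $\mathbf{D}(G/K)$ with analytic coefficients, and elliptic regularity applies to the eigenfunction $u$. Without the Taylor-coefficient computation and the analyticity step, the vanishing of $F(g_1,\cdot)$ does not follow, so the converse is not established as written.
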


\subsection{Sur une paire de Guelfand}
\label{subsec_paireGuelf}

On   choisit   une  mesure   de   Haar   $dg$   sur  $G$.    
On   note
$L^1(K\backslash G/K)$ 
le sous-espace  des fonctions int\'egrables sur
$G$, biinvariantes par $K$. 
C'est une alg\`ebre de convolution,
qui s'identifie \`a l'alg\`ebre de convolution 
des fonctions int\'egrables sur $G/K$, 
invariantes sous l'action \`a droite de $K$.

\begin{defi}
  La  paire $(G,K)$  est  dite  \textbf{paire de  Guelfand}
  \index{Paire de Guelfand}  
  si l'alg\`ebre  de
  convolution $L^1(K\backslash G/K)$ est commutative.
\end{defi}

\begin{thm}[Fonctions sph\'eriques et spectre de  
  \mathversion{bold}{$L^1(K\backslash G/K)$}]
  \index{Fonction sph\'erique!et spectre}
  \label{thm_fcnsph_sp}
  On suppose que $(G,K)$ est une paire de Guelfand.

  Soit $\Phi$ une  fonction sph\'erique born\'ee. 
  L'application $\chi$  de la d\'efinition~\ref{defi_fcnsph}  
  s'\'etend en un caract\`ere non nul 
  de l'alg\`ebre commutative $L^1(K\backslash G/K)$.
  
  R\'eciproquement,  
  tout caract\`ere  non nul  de $L^1(K\backslash G/K)$
  est de cette forme.

  Notons $Sp(L^1(K\backslash G/K))$
  le spectre de l'alg\`ebre $L^1(K\backslash G/K)$   
  et $\Omega$,
  \index{Notation!Ensemble de fonctions sph\'eriques!$\Omega$}
  l'ensemble   des   fonctions sph\'eriques born\'ees. 
  L'application :
  $$
  \Xi:
  \left\{\begin{array}{rcl}
      \Omega
      &\longrightarrow&
      Sp(L^1(K\backslash G/K))\\
      \Phi
      &\longmapsto&
      [\chi_\Phi:f \,\longmapsto\, \int_G f(g)\Phi(g^{-1})dg]
    \end{array}\right.
  \quad .
  $$
  est une bijection.
  C'est  un hom\'eomorphisme
  lorsque l'on munit 
  le spectre de sa topologie usuelle faible-*,
  et $\Omega$ de la  convergence uniforme 
  sur  tout compact de $G$ (ou $G/K$).
\end{thm}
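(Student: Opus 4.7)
Je proc\'ederais en quatre \'etapes : d\'efinition du caract\`ere associ\'e \`a une fonction sph\'erique born\'ee, puis la r\'eciproque, ensuite la bijectivit\'e, enfin l'\'equivalence topologique. Pour la premi\`ere \'etape, soit $\Phi\in\Omega$ ; d'apr\`es le th\'eor\`eme~\ref{thm_eqfonc}, on a $|\Phi|\leq 1$, donc la forme lin\'eaire $\chi_\Phi:f\mapsto\int_G f(g)\Phi(g^{-1})dg$ est continue sur $L^1(K\backslash G/K)$ de norme $\leq 1$. Sa multiplicativit\'e, connue par la d\'efinition~\ref{defi_fcnsph} sur la sous-alg\`ebre dense des fonctions continues \`a support compact biinvariantes, s'\'etend alors \`a $L^1$ par continuit\'e et densit\'e.

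Pour la r\'eciproque, soit $\chi$ un caract\`ere non nul de $L^1(K\backslash G/K)$, n\'ecessairement de norme $\leq 1$. En prolongeant $\chi$ \`a $L^1(G)$ par biinvariantisation (poser $\tilde\chi(f):=\chi(f^\natural)$ o\`u $f^\natural(g):=\int_{K\times K} f(kgk')\,dk\,dk'$), la dualit\'e $L^1$--$L^\infty$ fournit $\Phi\in L^\infty(G)$ biinvariante telle que $\chi(f)=\int_G f(g)\Phi(g^{-1})dg$ pour tout $f\in L^1(K\backslash G/K)$. R\'e\'ecrivant $\chi(f_1*f_2)=\chi(f_1)\chi(f_2)$ pour $f_1,f_2$ biinvariantes \`a support compact, \`a l'aide de Fubini et du changement de variable $g\mapsto g_1g_2$, on obtient l'\'equation fonctionnelle~(\ref{eqfonc}) d'abord presque partout, puis partout apr\`es r\'egularisation de $\Phi$ par convolution avec une approximation de l'identit\'e biinvariante continue. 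Le th\'eor\`eme~\ref{thm_eqfonc} identifie alors cette version continue \`a une fonction sph\'erique born\'ee, n\'ecessairement \'egale \`a $\Phi$ presque partout. La bijectivit\'e de $\Xi$ r\'esulte de ces deux constructions, l'injectivit\'e \'etant assur\'ee par la densit\'e de $C_c(K\backslash G/K)$ dans $L^1(K\backslash G/K)$.

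Pour l'\'equivalence topologique, la continuit\'e de $\Xi$ s'obtient par convergence domin\'ee sur un sous-espace dense de $L^1$, combin\'ee \`a la borne uniforme $\|\chi_\Phi\|\leq 1$ qui autorise le passage \`a la limite sur tout $L^1$. La continuit\'e de $\Xi^{-1}$ est la partie d\'elicate : elle repose sur deux observations. D'une part, la famille $\Omega$ est uniform\'ement \'equicontinue sur tout compact de $G$ --- cons\'equence de l'\'equation fonctionnelle~(\ref{eqfonc}) et de la borne $|\Phi|\leq 1$, en estimant $|\Phi(g)-\Phi(g')|$ via des translations et la continuit\'e \`a l'origine que l'on \'etablit d'abord. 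D'autre part, la convergence faible-* $\chi_{\Phi_n}\to\chi_\Phi$, test\'ee contre des approximations de l'identit\'e biinvariantes concentr\'ees autour de chaque point $g_0$, fournit la convergence simple $\Phi_n(g_0)\to\Phi(g_0)$. Le th\'eor\`eme d'Ascoli conclut alors \`a la convergence uniforme sur tout compact.

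L'obstacle principal est double : d'abord, dans la r\'eciproque, le passage de l'\'equation fonctionnelle au sens presque partout \`a une \'egalit\'e ponctuelle, qui demande une r\'egularisation soigneuse pr\'eservant la biinvariance, la non-nullit\'e et compatible avec~(\ref{eqfonc}) ; ensuite, l'\'equicontinuit\'e uniforme de la famille $\Omega$, qui n'est pas imm\'ediate \`a partir de la seule borne $|\Phi|\leq 1$ et requiert d'exploiter finement la structure multiplicative donn\'ee par l'\'equation fonctionnelle.
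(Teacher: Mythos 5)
La th\`ese ne d\'emontre pas ce th\'eor\`eme : il y est rappel\'e comme un r\'esultat classique, avec renvoi \`a \cite{faro2} et \cite{helgason}. Votre esquisse doit donc \^etre jug\'ee pour elle-m\^eme. Les trois premi\`eres \'etapes (extension de $\chi_\Phi$ par densit\'e de l'alg\`ebre des fonctions continues \`a support compact biinvariantes, r\'eciproque par dualit\'e $L^1$--$L^\infty$ apr\`es biinvariantisation puis r\'egularisation, bijectivit\'e, continuit\'e de $\Xi$ via la borne uniforme $\nd{\chi_\Phi}\leq 1$) suivent l'argument standard et sont correctes, modulo les d\'etails de r\'egularisation que vous signalez vous-m\^eme.

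En revanche, votre preuve de la continuit\'e de $\Xi^{-1}$ repose sur une affirmation fausse : la famille $\Omega$ tout enti\`ere n'est pas uniform\'ement \'equicontinue sur les compacts. La th\`ese elle-m\^eme fournit un contre-exemple : sur un groupe de type~H, les fonctions sph\'eriques $\Phi_{\zeta,l}(X,Z)=e^{i<\zeta,Z>}\flb{l}{v'-1}{\frac12\nn{\zeta}\nn{X}^2}$ oscillent arbitrairement vite lorsque $\nn{\zeta}\to\infty$ ; aucune \'equicontinuit\'e uniforme sur $\Omega$ n'est donc possible, m\^eme au voisinage de l'origine. Le m\^eme d\'efaut invalide votre argument de convergence simple : l'\'echange des limites entre l'indice de la suite g\'en\'eralis\'ee et le param\`etre de l'approximation de l'unit\'e concentr\'ee en $g_0$ exige pr\'ecis\'ement cette \'equicontinuit\'e uniforme. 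L'argument correct contourne Ascoli : de l'\'equation fonctionnelle~(\ref{eqfonc}) on tire, pour $f$ continue \`a support compact biinvariante,
$$
\chi_\Phi(f)\,\Phi(x)
\,=\,
\chi_\Phi(f_x)
\qquad\mbox{o\`u}\qquad
f_x(h)\,=\,\int_K f(kxh)\,dk
\quad.
$$
Si $\chi_{\Phi_\alpha}\to\chi_\Phi$ faiblement-*, on choisit $f$ telle que $\chi_\Phi(f)\not=0$ : le d\'enominateur $\chi_{\Phi_\alpha}(f)$ est minor\'e \`a partir d'un certain rang ; l'application $x\mapsto f_x$ est continue de $G$ dans $L^1(G)$, donc envoie tout compact de $G$ sur un compact en norme de $L^1$ ; et une suite g\'en\'eralis\'ee born\'ee de formes lin\'eaires convergeant faiblement-* converge uniform\'ement sur les parties compactes en norme (argument en $3\epsilon$ par recouvrement fini). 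On en d\'eduit la convergence uniforme de $\Phi_\alpha$ vers $\Phi$ sur tout compact. Notez enfin qu'il faut travailler avec des suites g\'en\'eralis\'ees, la topologie faible-* du spectre n'\'etant pas m\'etrisable en g\'en\'eral.
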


\paragraph{Produit semi-direct \mathversion{bold}{$\triangleleft$}.}
Soient $H$ un groupe, 
et $K$ un sous-groupe du groupe d'automorphismes de $H$.
Le produit semi-direct de $H$ par $K$ est l'ensemble $G=K\times H$,
muni de la loi :
$$
(k_1,h_1),(k_2,h_2)\in G,
\quad
(k_1,h_1).(k_2,h_2)
\,=\,
(k_1k_2,h_1\, k_1.h_2)
\quad ,
$$
qui donne \`a $G$ un structure de groupe.
\index{Groupe!produit semi-direct $\triangleleft$}
On note le groupe~$G$ produit semi-direct du groupe~$H$ par~$K$ : $G=K\triangleleft H$.
On identifie souvent le groupe~$K$ avec le sous-groupe~$K\times \{e\}\subset G$, 
ainsi que le groupe~$N$ avec le sous-groupe~$\{\Id\}\times N \subset G$. 
On fait de m\^eme pour leurs \'el\'ements.

Dans ce cas, si le groupe  $G=K\triangleleft H $ 
produit semi-direct de $H$ par $K$
est tel  que 
$(G,K)$  est une  paire de Guelfand,  
on dit  aussi que
$(H,K)$ est une paire de Guelfand.
Les fonctions sur $G$ biinvariantes par $K$ 
sont en bijection avec
les fonctions sur $H$ invariantes sous $K$. 
On note ${L^1}^\natural$ 
l'ensemble des fonctions int\'egrables sur $H$ 
et invariantes sous $K$. 
La paire $(H,K)$ est alors de Guelfand 
si et seulement si 
l'alg\`ebre de convolution ${L^1}^\natural$ 
est commutative.

\vspace{1em}

Nous allons illustrer la notion de paire de Guelfand 
en donnant un exemple dans la sous-section qui suit,
exemple qui nous sera utile
lors de la construction des fonctions sph\'eriques 
sur le groupe $N_{v,2}$ dans le chapitre~\ref{chapitrecalculfourier}. 
Il s'agit du groupe de Heisenberg~$\Hb^{v_0}$ 
(dont la loi a \'et\'e rappel\'ee dans la sous-section~\ref{subsec_def_typeH}),
pour l'action de certains sous-groupes $K$
du groupe unitaire $U_{v_0}$ 
donc du groupe d'automorphismes de $\Hb^{v_0}$ :
$$
\forall (z,t)\in \Hb^{v_0}=\Cb^{v_0}\times  \Rb,\;
u \in k,
\qquad
u.(z,t)=(u.z,t)
\quad.
$$

\subsection{Sur le groupe de Heisenberg}
\label{subsec_fcnsph_heis}

Pour tout $n\in \Nb, n\geq 1$, 
{\mathversion{bold}{$U_n$}}
\index{Notation!Groupe!$U_n$} 
d\'esigne le groupe des matrices unitaires de taille $n$.
On identifie 
une matrice de $U_n$ avec 
l'endomorphisme sur $\Cb^n$,
qu'elle repr\'esente dans la base canonique.

On se donne deux entiers~$v_0,v_1\in \Nb$,
puis un $v_1$-uplet d'entiers 
$m=(m_1,\ldots,m_{v_1})\in \Nb^{v_1}$ 
tels que $\sum_{j=1}^{v_1} m_j =v_0$.
Nous nous int\'eressons \`a tous les groupes 
$K$ de la forme :
$$
K(m;v_1;v_0)
\,=\,
U_{m_1}\times \ldots \times U_{m_{v_1}}
\quad.
$$
Le groupe $U_{m_j}$ agit sur les $m_j$ variables
$z_i, m'_{j-1}<i\leq m'_j$, o\`u l'on a not\'e :
\begin{eqnarray*}
  m_0
  \,:=\,
  m'_0
  \,:=\,
  0
  \qquad\mbox{et}\qquad
  m'_j
  \,:=\,
  \sum_{i=1}^j m_i\; ,
  \quad
  j=1,\ldots {v_1}
  \quad.
\end{eqnarray*}

On dit qu'une fonction sur $\Hb^{v_0}$
est radiale si elle est invariante sous $K(m;v_0;v_1)$;
une fonction $f:\Hb^{v_0}\mapsto \Cb$ est donc radiale
si et seulement si elle peut s'\'ecrire : 
\begin{equation}
  \label{formule_fcn_red_heis}
  f(z,t)
  \,=\,
  f^\natural(r_1,\ldots,r_{v_1},t)
  \qquad\mbox{o\`u}\qquad
  r_j=\sum_{m'_{j-1}<i\leq m'_j} z_i^2
  \quad.  
\end{equation}

\begin{thm}[\mathversion{bold}{$(\Hb^{v_0},K(m;v_1;v_0))$} ]
  \label{thm_paire_G_heis}
  \begin{description}
  \item[a)] 
    $(\Hb^{v_0},K(m;v_1;v_0))$ est une paire de Guelfand.
    \index{Paire de Guelfand!sur le groupe de Heisenberg}  
  \item[b)]
    Les fonctions sph\'eriques born\'ees de cette paire 
    sont :
    \index{Fonction sph\'erique!sur le groupe de Heisenberg}
    \index{Notation!Fonction sph\'erique!$\omega_{{\lambda},l},\omega_\mu$}
    \begin{enumerate}
    \item les fonctions 
      $\omega=\omega_{{\lambda},l}$ 
      param\'etr\'ees par 
      ${\lambda}\in \Rb^* $ 
      et $l=(l_1,\ldots,l_{v_1})\in\Nb^{v_1}$ : 
      $$
      \omega(z,t)
      \,=\,
      e^{-i\lambda t}
      \overset{v_1}{\underset{j=1}{\Pi}}
      \flb {l_j} {m_j-1} {\frac {\nn{\lambda}}2 \nn{\pr j z}^2} 
      \quad,
      $$
      o\`u on a not\'e 
      $\mbox{pr}_j$ la projection sur les $m_j$ variables 
      $z_i, m'_{j-1}<i\leq m'_j$,
      et $\flbs l m$ la fonction de Laguerre normalis\'ee
      (voir sous-section~\ref{app_lag}).
    \item les fonctions 
      $\omega=\omega_\mu $
      param\`etr\'ees par 
      $\mu=(\mu_1,\ldots,\mu_{v_1})\in {\Rb^{+}}^{v_1}$ :
      $$
      \omega(z,t)
      \,=\,
      \overset{v_1}{\underset{j=1}{\Pi}}
      \fb {m_j-1} {{{\mu}}_j \nn{ \pr j z}}
      \quad,
      $$
      o\`u $\fbs \alpha$ est la fonction de Bessel r\'eduite
      (voir sous-section~\ref{app_bessel}).
    \end{enumerate}
  \end{description}
\end{thm}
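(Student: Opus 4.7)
My plan is to establish both parts via the Bargmann--Fock representation theory of the Heisenberg group; part (a) will follow from a multiplicity-one decomposition, while part (b) will come from computing matrix coefficients.

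I recall the unitary dual of $\Hb^{v_0}$: the infinite-dimensional irreducibles are the Schr\"odinger representations $\pi_\lambda$ ($\lambda \in \Rb^*$), realized on the Fock space $\Fc_\lambda$ of holomorphic functions on $\Cb^{v_0}$ square-integrable against a Gaussian weight, and the remaining unitary irreducibles are the characters $\chi_w(z,t) = e^{i\Re\langle w, z\rangle}$ with $w \in \Cb^{v_0}$, factoring through the abelianization. The action of $K = U_{m_1} \times \cdots \times U_{m_{v_1}}$ on $\Hb^{v_0}$ lifts to $\Fc_\lambda$ by change of variable. With respect to the block decomposition $z = (\pr{1}{z}, \ldots, \pr{v_1}{z})$, the Fock space decomposes orthogonally as $\Fc_\lambda = \bigoplus_{l \in \Nb^{v_1}} \Fc_\lambda^{(l)}$, where $\Fc_\lambda^{(l)}$ consists of polynomials homogeneous of degree $l_j$ in $\pr{j}{z}$. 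Each summand $\Fc_\lambda^{(l)}$ is the outer tensor product of the $U_{m_j}$-irreducible spaces of degree-$l_j$ homogeneous polynomials on $\Cb^{m_j}$, and is therefore $K$-irreducible. This gives the multiplicity-free decomposition of $\pi_\lambda|_K$ for every $\lambda \neq 0$, from which, via the Plancherel formula of $\Hb^{v_0}$ applied to $K$-invariant functions, I would conclude that the convolution algebra ${L^1}^\natural$ is commutative; this settles part (a).

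For part (b), Theorem~\ref{thm_fcnsph_sp} identifies bounded spherical functions with continuous characters of ${L^1}^\natural$. The multiplicity-one property from (a) guarantees that every such character is obtained by integrating against a trace-normalized matrix coefficient of an irreducible representation of $\Hb^{v_0}$ on a single $K$-isotypic subspace. For $\lambda \neq 0$ and $l \in \Nb^{v_1}$, this yields
$$
\omega_{\lambda,l}(n) \;=\; \frac{1}{\dim \Fc_\lambda^{(l)}}\sum_i \langle \pi_\lambda(n) v_i^{(l)}, v_i^{(l)}\rangle
$$
for $\{v_i^{(l)}\}$ an orthonormal basis of $\Fc_\lambda^{(l)}$. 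A direct computation in the Fock model, invoking a generating-function identity for Hermite--Laguerre matrix elements, yields the claimed product formula involving normalized Laguerre functions $\flb{l_j}{m_j-1}{\cdot}$. For $\lambda = 0$, averaging $\chi_w$ over $K$ factorizes along the blocks, and each factor $\int_{U_{m_j}} e^{i\Re\langle u.\pr{j}{w}, \pr{j}{z}\rangle}\,du$ equals $\fb{m_j-1}{\mu_j \nn{\pr{j}{z}}}$ by the integral representation of the reduced Bessel function, with $\mu_j = |\pr{j}{w}|$.

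The main obstacle is the explicit closed-form evaluation of the Fock matrix coefficient in the Laguerre case: this requires a careful choice of the orthonormal basis (normalized monomials), a generating-function manipulation to identify the sum with a product of Laguerre polynomials evaluated at $\tfrac{|\lambda|}{2}\nn{\pr{j}{z}}^2$, and bookkeeping to match the normalization conventions of subsection~\ref{app_lag}. Boundedness of the constructed $\omega$'s ($|\omega|\leq 1$) is then automatic from the functional equation of Theorem~\ref{thm_eqfonc}, confirming they exhaust the bounded spherical functions.
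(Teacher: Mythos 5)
Your proposal is correct in outline, but it takes a genuinely different route from the paper. For part (a) the paper gives an elementary ``Gelfand trick'': it introduces the involutive automorphism $\theta(z,t)=(\bar z,-t)$, observes that for each $h$ there is $k\in K$ with $\theta(h)=k.h^{-1}$ (so $f^\theta=\check f$ for radial $f$), and deduces $f*g=(f^\theta*g^\theta)^\theta=(\check f*\check g)\check{\;}=g*f$ directly, with no representation theory. Your route through the multiplicity-free decomposition $\Fc_\lambda=\bigoplus_l\Fc_\lambda^{(l)}$ and Schur's lemma plus Plancherel (essentially Carcano's criterion) is also valid, but it is heavier machinery for the same conclusion. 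For part (b) the divergence is sharper: the paper characterizes bounded spherical functions as joint eigenfunctions of the generators $\Delta_j$ and $T$ of $D^\natural$ (Theorem~\ref{thm_fcnsph_opdiff} and Proposition~\ref{prop_gen_alg_op}), writes the sub-Laplacians in radial coordinates (Lemma~\ref{lem_souslap_coord_rad}), and solves the resulting ODEs, where boundedness forces the confluent hypergeometric parameter to be a nonpositive integer (Laguerre case) or the Bessel parameter to be real and nonnegative. This has the advantage of being self-contained and of producing the eigenvalues $\alpha_j=\nn{\lambda}(2l_j+m_j)$ as a byproduct, which the paper uses later. Your matrix-coefficient approach (normalized trace over each $K$-isotypic component of the Fock model, and $K$-averages of characters for $\lambda=0$) is the Benson--Jenkins--Ratcliff method; it buys the link between spherical functions and representations for free, but it relies on the positivity statement of Theorem~\ref{thm_fcnsph_rep}.b) to know that all characters of ${L^1}^\natural$ arise this way, and it defers the real work to the Laguerre generating-function computation, which you flag but do not carry out. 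That computation, together with matching the normalization $\flbs{l_j}{m_j-1}$, is the one substantive piece still owed in your version.
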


La d\'emonstration de la partie a)
est analogue \`a la preuve du cas $(\Hb^n,U_n)$,
\cite{Far} th\'eor\`eme V.6.

\begin{proof}[du th\'eor\`eme~\ref{thm_paire_G_heis}.a)]
  On pose $K=K(m;v_1,v_0)$.
  Il suffit de montrer que 
  l'ensemble ${L^1}^\natural$
  des fonctions radiales int\'egrables sur $\Hb^{v_0}$ 
  est une alg\`ebre de convolution commutative.

  On d\'efinit le morphisme~$\theta$ du groupe~$\Hb^{v_0}$ 
  par :  $\theta (z,t)=(\bar z , -t)$.
  Pour une fonction $f:\Hb^{v_0}\rightarrow \Cb$, 
  on adopte les notations (pour $h\in\Hb^{v_0},k\in K$):
  $$
  \check{f}(h)
  \,=\,
  f(h^{-1})
  \qquad
  f^k(h)
  \,=\,
  f(k.h),
  \qquad
  f^\theta(h)
  \,=\,
  f(\theta(h))
  \quad ;
  $$
  On a alors les propri\'et\'es suivantes :
  \begin{itemize}
  \item  une fonction $f$ est radiale 
    si et seulement si on a : $f^k=f$ pour tout $k\in K$;
  \item si une fonction $f$ est radiale, alors la fonction $\check{f}$ est aussi radiale.
  \end{itemize}
  Comme le groupe $K$ est un sous-groupe 
  du groupe d'automorphismes de $\Hb^{v_0}$, 
  on voit que pour deux fonctions $f,g$ sur $\Hb^{v_0}$, 
  on a :
  $  {(f*g)}^k=  f^k*g^k$ pour tout $k\in K$.
  En cons\'equence, le produit de convolution de deux fonctions radiales est encore une fonction radiale et $L_K^1(\Hb^{v_0})$ est une alg\`ebre de convolution.

  On constate que
  $\theta$ est  un automorphisme involutif,
  et que par contre, 
  l'inverse est un antiautomorphisme involutif 
  de $\Hb^{v_0}$; ainsi, pour deux fonctions $f,g$, on a :
  $$
  f*g
  \,=\,
  {(f^\theta*g^\theta)}^\theta
  \qquad\mbox{et}\qquad
  f*g
  \,=\,
  {(\check{g}*\check{f})}\check{}
  \quad.
  $$
  De plus,
  on la propri\'et\'e suivante :
  $$
  \forall h\in \Hb^{v_0},\quad
  \exists k\in K:
  \qquad
  \theta(h)=k.h^{-1}
  \quad ;
  $$
  donc pour une fonction~$f$ radiale,
  on a : $f^\theta=\check{f}$. 
  Puis pour deux fonctions radiales $f,g$, on a :
  $$
  f*g
  \,=\,
  {(f^\theta*g^\theta)}^\theta
  \,=\,
  {(\check{f}*\check{g})}\check{}
  \,=\,
  g*f
  $$ 
  L'alg\`ebre de convolution 
  ${L^1}^\natural$ est donc commutative.
\end{proof}

Pour la d\'emonstration du th\'eor\`eme~\ref{thm_paire_G_heis}.b), on fait les remarques suivantes :
d'une part, 
les groupes $K(m;v_1;v_0)$ et $\Hb^{v_0}$ 
sont des groupes de Lie;
d'autre part, on connait
l'alg\`ebre $D^\natural$
des op\'erateurs diff\'erentiels sur $\Hb^{v_0}$ 
invariants \`a gauche et sous $K(m;v_1;v_0)$,
\cite{helgason} chapter X section 2 :
\begin{prop}[\mathversion{bold}{$D^\natural$}]
  \label{prop_gen_alg_op}
  L'alg\`ebre  $D^\natural$,
  est engendr\'ee par  les sous-laplaciens :
  $$
  \Delta_j
  \,=\,
  -\sum_{m'_{j-1}<i\leq m'_j} X_i^2+Y_i^2\; ,
  \quad j=1,\ldots,{v_1}\quad ,
  $$
  et la d\'erivation en la variable du centre $T$ 
\end{prop}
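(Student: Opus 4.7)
Le plan est d'identifier $D^\natural$ \`a $\mathcal U(\Hc)^K$, o\`u $\Hc$ d\'esigne l'alg\`ebre de Lie de $\Hb^{v_0}$, puis de calculer ces invariants via la sym\'etrisation de Poincar\'e-Birkhoff-Witt combin\'ee \`a la th\'eorie classique des invariants du groupe unitaire.

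D'abord, je fixerais la base $X_i,Y_i$, $1\leq i\leq v_0$, et $T$ de $\Hc$ (avec $[X_i,Y_i]=T$ et les autres crochets nuls). L'alg\`ebre des op\'erateurs diff\'erentiels sur $\Hb^{v_0}$ invariants \`a gauche s'identifie \`a l'alg\`ebre enveloppante $\mathcal U(\Hc)$. L'action de $K=K(m;v_1;v_0)$ par automorphismes sur $\Hb^{v_0}$ fixe $T$ et agit sur chaque bloc $(X_i,Y_i)_{m'_{j-1}<i\leq m'_j}\simeq \Cb^{m_j}$ par la repr\'esentation naturelle de $U_{m_j}$ (on le v\'erifie directement en utilisant que les $u\in U_{m_j}$ pr\'eservent la forme hermitienne apparaissant dans la loi du groupe de Heisenberg). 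Cette identification \'etant $K$-\'equivariante, on obtient $D^\natural\simeq\mathcal U(\Hc)^K$.

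Pour calculer $\mathcal U(\Hc)^K$, j'utiliserais la sym\'etrisation $\sigma:S(\Hc)\to \mathcal U(\Hc)$, qui est un isomorphisme $K$-\'equivariant d'espaces vectoriels. Puisque $\mathcal U(\Hc)$ est filtr\'ee avec gradu\'e associ\'e $S(\Hc)$, un argument standard de r\'ecurrence sur l'ordre de filtration montre qu'il suffit, pour engendrer $\mathcal U(\Hc)^K$ comme alg\`ebre, d'exhiber des \'el\'ements de $\mathcal U(\Hc)^K$ dont les symboles principaux engendrent $S(\Hc)^K$ comme alg\`ebre. Or la th\'eorie classique des invariants dit que les polyn\^omes sur $\Cb^{m_j}$ invariants sous $U_{m_j}$ sont engendr\'es par $\nn{z}^2=\sum_{m'_{j-1}<i\leq m'_j}(X_i^2+Y_i^2)$ ($U_{m_j}$ agit transitivement sur les sph\`eres de $\Cb^{m_j}$); comme les $U_{m_j}$ agissent sur des blocs disjoints et fixent tous $T$, on en d\'eduit que $S(\Hc)^K$ est engendr\'ee comme alg\`ebre par ces $v_1$ formes quadratiques et par $T$.

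Ces g\'en\'erateurs se rel\`event exactement en les sous-laplaciens $\Delta_j\in\mathcal U(\Hc)^K$ (dont le symbole principal est pr\'ecis\'ement $-\sum_{m'_{j-1}<i\leq m'_j}(X_i^2+Y_i^2)$) et en $T\in\mathcal U(\Hc)^K$, ce qui conclurait. L'\'etape la plus d\'elicate sera l'argument de filtration reliant les invariants de $S(\Hc)$ \`a ceux de $\mathcal U(\Hc)$ (et la v\'erification que la sym\'etrisation est effectivement $K$-\'equivariante, ce qui provient du fait que $K$ agit par automorphismes de $\Hc$); l'invocation de la th\'eorie des invariants de $U(n)$ et le calcul des symboles principaux des $\Delta_j$ sont en revanche standards.
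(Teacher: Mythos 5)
Votre argument est correct. Il faut toutefois noter que le texte ne d\'emontre pas cette proposition : il l'\'enonce en renvoyant \`a \cite{helgason}, chapitre X section 2, et la d\'emarche que vous proposez (identification $D^\natural\simeq\mathcal U(\Hc)^K$, sym\'etrisation $K$-\'equivariante, r\'ecurrence sur la filtration pour ramener le probl\`eme \`a $S(\Hc)^K$, puis premier th\'eor\`eme fondamental des invariants de $U(n)$ donnant $\Rb[\nn{z}^2]$ sur chaque bloc) est pr\'ecis\'ement l'argument standard de cette r\'ef\'erence. Les deux points que vous signalez comme d\'elicats sont bien les bons : l'exactitude du passage aux invariants dans le gradu\'e associ\'e utilise la compacit\'e de $K$ (projection sur les invariants), et la $K$-\'equivariance de la sym\'etrisation vient de ce que $K$ agit par automorphismes de l'alg\`ebre de Lie $\Hc$ en fixant $T$. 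Je ne vois pas de lacune ; votre preuve est compl\`ete modulo la r\'edaction de la r\'ecurrence sur le degr\'e de filtration, qui est routini\`ere.
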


La preuve du th\'eor\`eme~\ref{thm_paire_G_heis}.b)
est analogue \`a $(\Hb^n,U_n)$,
\cite{Far} th\'eor\`eme V.12.
Elle utilise les propri\'et\'es
des fonctions hyperg\'eom\'etriques confluentes 
(lemme~\ref{lem_fcn_hypergeom}),
et des fonctions de Bessel 
(lemme~\ref{lem_fcnBessel_equadiff}),
ainsi que l'expression des sous-laplaciens 
$\Delta_j$ en coordonn\'ees radiales :
\begin{lem}
  \label{lem_souslap_coord_rad}
  Soit une fonction $f$ sur $\Hb^{v_0}$, 
  radiale et assez r\'eguli\`ere.
  Si $f^\natural$ d\'esigne la fonction d\'efinie
  en~(\ref{formule_fcn_red_heis}), 
  on a :
  $$
  \Delta_j.f
  \,=\,
  \Delta_j^\natural .f^\natural
  \quad,\quad
  j=1,\ldots,v_1\quad,
  $$
  o\`u $\Delta_j^\natural$ est l'op\'erateur diff\'erentiel sur les
  fonctions $g(r_j,t)$ donn\'e par :
  $$
  -\Delta_j^\natural .g(r_j,t)
  \,=\,
  4r_j \frac{\partial^2 g }{\partial r_j^2}
  +4 m_j \frac{\partial g }{\partial r_j}
  +\frac14 r_j \frac{\partial^2 g}{\partial t^2}
  \quad,\quad
  j=1,\ldots,v_1
  \quad.
  $$  
\end{lem}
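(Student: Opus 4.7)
Le plan est un calcul direct, en trois \'etapes, fond\'e sur la r\`egle de d\'erivation en cha\^{\i}ne et sur l'invariance radiale.

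Premi\`erement, j'expliciterai les champs de vecteurs $X_i, Y_i$ invariants \`a gauche sur $\Hb^{v_0}$. De la loi de groupe rappel\'ee plus haut on tire directement $X_i = \partial_{x_i} + \tfrac12 y_i\,\partial_t$ et $Y_i = \partial_{y_i} - \tfrac12 x_i\,\partial_t$ (au signe pr\`es selon la convention retenue), puis
$$
X_i^2 + Y_i^2 \,=\, \partial_{x_i}^2 + \partial_{y_i}^2 \,+\, (y_i\partial_{x_i} - x_i\partial_{y_i})\,\partial_t \,+\, \tfrac14(x_i^2+y_i^2)\,\partial_t^2.
$$

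Deuxi\`emement, je ferai agir cet op\'erateur sur $f(z,t)=f^\natural(r_1,\ldots,r_{v_1},t)$. Pour $m'_{j-1}<i\leq m'_j$, la r\`egle de d\'erivation en cha\^{\i}ne donne $\partial_{x_i} f = 2x_i\,\partial_{r_j} f^\natural$ et $\partial_{y_i} f = 2y_i\,\partial_{r_j} f^\natural$, d'o\`u $\partial_{x_i}^2 f + \partial_{y_i}^2 f = 4\,\partial_{r_j}f^\natural + 4(x_i^2+y_i^2)\,\partial_{r_j}^2 f^\natural$. Le point crucial est que le terme de ``rotation'' $(y_i\partial_{x_i} - x_i\partial_{y_i})f$ s'annule identiquement sur les fonctions radiales, puisqu'il vaut $(2y_ix_i - 2x_iy_i)\,\partial_{r_j}f^\natural = 0$ : c'est exactement l'invariance de $f$ sous $U_{m_j}$ (en particulier sous la rotation $SO(2)$ dans le plan complexe d'indice $i$) qui supprime ce couplage entre le centre et la variable radiale.

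Troisi\`emement, je sommerai sur $i$ avec $m'_{j-1}<i\leq m'_j$ : les $m_j$ termes constants en $\partial_{r_j}f^\natural$ fournissent $4m_j\,\partial_{r_j}f^\natural$, la somme $\sum_i(x_i^2+y_i^2)=r_j$ fournit $4r_j\,\partial_{r_j}^2f^\natural$ ainsi que $\tfrac14 r_j\,\partial_t^2 f^\natural$, et les termes crois\'es ne contribuent pas. En rappelant que $\Delta_j = -\sum_i(X_i^2+Y_i^2)$, on obtient exactement l'expression de $-\Delta_j^\natural g$ annonc\'ee, appliqu\'ee \`a $g=f^\natural$. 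Il n'y a pas d'obstacle s\'erieux dans cette preuve ; la seule v\'erification substantielle est l'annulation des termes de rotation, qui est pr\'ecis\'ement ce qui rend possible la r\'eduction de $\Delta_j$ \`a un op\'erateur ne portant que sur $r_j$ et $t$.
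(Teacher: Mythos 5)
Votre preuve est correcte. Notez d'abord que le texte ne donne aucune d\'emonstration de ce lemme : il est \'enonc\'e tel quel, la discussion environnante renvoyant au calcul analogue pour $(\Hb^n,U_n)$ dans Faraut (th\'eor\`eme V.12) ; il n'y a donc pas de ``route du papier'' \`a laquelle comparer la v\^otre. Votre calcul direct est l'argument standard et il est complet : avec la loi $(z,t)(z',t')=(z+z',t+t'+\tfrac12\sum_i\Im z_i\bar z'_i)$ retenue ici, on obtient bien $X_i=\partial_{x_i}+\tfrac12 y_i\partial_t$ et $Y_i=\partial_{y_i}-\tfrac12 x_i\partial_t$, d'o\`u $X_i^2+Y_i^2=\partial_{x_i}^2+\partial_{y_i}^2+(y_i\partial_{x_i}-x_i\partial_{y_i})\partial_t+\tfrac14(x_i^2+y_i^2)\partial_t^2$ ; votre r\'eserve ``au signe pr\`es'' est sans cons\'equence puisque le terme de rotation s'annule sur les fonctions radiales et que le coefficient de $\partial_t^2$ est pair en $(x_i,y_i)$. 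Deux points de lecture m\'eritent d'\^etre explicit\'es : la notation $r_j=\sum z_i^2$ de~(\ref{formule_fcn_red_heis}) doit \^etre comprise comme $r_j=\sum\nn{z_i}^2=\sum(x_i^2+y_i^2)$ (sinon $\partial_{x_i}r_j=2x_i$ ne tient pas), et la sommation sur les $m_j$ indices $m'_{j-1}<i\leq m'_j$ donne bien le facteur $4m_j$ devant $\partial_{r_j}$ et le facteur $r_j$ devant $\partial_{r_j}^2$ et $\tfrac14\partial_t^2$, ce qui, avec $\Delta_j=-\sum_i(X_i^2+Y_i^2)$, redonne exactement $-\Delta_j^\natural$.
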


\begin{proof}[du th\'eor\`eme~\ref{thm_paire_G_heis}.b)]
  D'apr\`es les th\'eor\`emes~\ref{thm_eqfonc}
  et~\ref{thm_fcnsph_opdiff},
  une fonction 
  $\omega:\Hb^{v_0}\rightarrow \Cb$
  est sph\'erique born\'ee
  si et seulement si elle est radiale, 
  born\'ee par sa valeur 1 en 0,
  et fonction propre des g\'en\'erateurs
  de $D^\natural$ :
  \begin{eqnarray}
    \Delta_j.\omega
    &=& \alpha_j \omega\; ,
    \quad j=1,\ldots, v_1 \; ,
    \label{eq_souslapj_fcnpropre}\\
    T.\omega 
    &=& \beta \omega \; ,
    \label{eq_T_fcnpropre}
  \end{eqnarray}
  o\`u $\alpha_j,j=1,\ldots, v_1$ et $\beta$ 
  sont des nombres complexes,
  d'apr\`es la proposition~\ref{prop_gen_alg_op}.

  Supposons que $\omega$ soit une telle fonction.
  L'op\'erateur~$T$ \'etant 
  la d\'eriv\'ee selon la variable du centre, 
  la solution de l'\'equation~(\ref{eq_T_fcnpropre}) 
  est de la forme :
  $$
  \omega(z,t)
  \,=\,
  e^{\beta t}
  \Omega(z)
  \quad.
  $$ 
  Comme $\omega$ est born\'ee et radiale, 
  $\beta=i\lambda, \lambda\in \Rb$,
  $$
  \Omega(z)
  \,=\,
  \Omega (r_1,\ldots, r_{v_1})
  \qquad\mbox{avec}\qquad
  r_j=\sum_{m'_{j-1}<i\leq m'_j} z_i^2
  \quad.  
  $$
  $\omega$ \'etant solution des 
  \'equations~(\ref{eq_souslapj_fcnpropre}), 
  l'expression radiale des sous-laplaciens
  (lemme~\ref{lem_souslap_coord_rad}) implique :
  $$
  4r_j \frac{\partial^2 \Omega }{\partial r_j^2}
  +4 m_j \frac{\partial \Omega}{\partial r_j}
  +\frac14 r_j \beta^2 \Omega
  \,=\,
  -\alpha_j \Omega
  \; , \quad  j=1,\ldots,v_1
  \quad;
  $$
  et $\Omega$ est n\'ecessairement de la forme :
  $$
  \Omega (r_1,\ldots, r_{v_1})
  \,=\,
  \Omega_1 (r_1)\ldots \Omega_{v_!}(r_{v_1})
  \quad,
  $$
  o\`u chaque fonction $\Omega_j(r_j)$ est $C^\infty$, 
  born\'ee et v\'erifie l'\'equation :
  $$
  4r_j \frac{\partial^2 \Omega_j }{\partial r_j^2}
  +4 m_j \frac{\partial \Omega_j}{\partial r_j}
  +\frac14 r_j \beta^2 \Omega_j
  \,=\,
  -\alpha_j \Omega_j
  \quad.
  $$
  Puisque la fonction $\omega$  est born\'ee 
  par sa valeur~1 en~0,
  il en va de m\^eme pour $\Re \omega$ sur $\Hb^{v_0}$. 
  Par cons\'equent,  
  la fonction $\Re\Omega$, 
  puis les fonctions $\Re\Omega_j$ 
  sont born\'ee par leurs valeurs~1 en~0. 
  On en d\'eduit :
  \begin{equation}
    \label{maj_realpha_pos}
    \Re \Omega_j'(0) \leq 0
    \qquad\mbox{et donc}\qquad
    \Re\alpha_j\geq 0
    \quad. 
  \end{equation}

  Nous devons distinguer deux cas 
  $\lambda=0,\lambda\not =0$.
  \paragraph{\mathversion{bold}{$\lambda$} est nul :}
  chaque fonction $\Omega_j$ est une fonction $C^\infty$ born\'ee, prenant la valeur 1 en 0 et qui v\'erifie l'\'equation :
  $$
  4r_j \frac{\partial^2 \Omega_j }{\partial r_j^2}
  +4 m_j \frac{\partial \Omega_j}{\partial r_j}
  +\alpha_j \Omega_j
  \,=\,
  0
  \quad.
  $$
  D'apr\`es le lemme~\ref{lem_fcnBessel_equadiff}, 
  la fonction $\Omega_j$ est donn\'ee par
  $\Omega_j(r_j)=\fb {m_j-1} {\mu_j\sqrt{ r_j}}$
  avec $\mu_j^2=\alpha_j\in \Rb$.
  Gr\^ace \`a (\ref{maj_realpha_pos}), on a aussi $\alpha_j \geq 0$;
  on peut ainsi supposer $\mu_j=\sqrt{\alpha_j}\in\Rb^{+}$.
  On en d\'eduit que la fonction $\omega=\omega_{\mu}, \mu=(\mu_1,\ldots,\mu_{v_1})$ est de la forme :
  $$
  \omega_{\mu}(z,t)
  \,=\,
  \underset{j=1}{\overset{v_1}{\Pi}}
  \fb {m_j-1} {\mu_j \sqrt{r_j}}
  \qquad\mbox{o\`u}\qquad
  r_j=\sum_{m'_{j-1}<i\leq m'_j} z_i^2
  \quad.  
  $$
  \paragraph{\mathversion{bold}{$\lambda$} n'est pas nul :}
  on effectue les changements de variables 
  et de fonctions suivants :
  $$
  v_j=\frac {\nn{\lambda}}2 r_j
  \qquad\mbox{et}\qquad
  \Omega_j(r_j)=e^{-\frac{v_j}2}F_j(v_j)\; ,
  \quad j=1,\dots, v_1
  \quad.
  $$
  La fonction $F_j(v_j)$ v\'erifie alors 
  l'\'equation hyperg\'eom\'etrique confluente~(\ref{eq_hypergeom_confl})
  de param\`etres $m_j>0$ et 
  $(\nn{\lambda}m_j-\alpha_j)/(2\nn{\lambda})$.
  D'apr\`es le lemme~\ref{lem_fcn_hypergeom},
  la fonction $F_j$ est donc 
  la fonction hyperg\'eom\'etrique confluente :
  $$
  F_j
  \,=\,
  F({\nn{\lambda}m_j-\alpha_j}/{2\nn{\lambda}},m_j,.)
  \quad.
  $$
  De plus on voit 
  $\Re (\nn{\lambda}m_j-\alpha_j)/(2\nn{\lambda})<m_j$. 
  Et le premier param\`etre peut se mettre 
  sous la forme d'un entier n\'egatif :
  $$
  \frac{\nn{\lambda}m_j-\alpha_j}{2\nn{\lambda}}
  \,=\,
  -l_j
  \qquad\mbox{avec}\quad l_j\in \Nb
  \quad,
  $$
  car sinon  la fonction
  $$
  \Omega_j:\;
  r_j\in \Rb^+ 
  \,\longmapsto \,
  e^{-\frac {\nn{\lambda}}4 r_j}F_j(\frac {\nn{\lambda}}2 r_j)
  $$ 
  ne serait pas born\'ee. 

  Ainsi, on trouve : 
  $F_j=F(-l_j,m_j,.)=
  {C_{l_j+m_j-1}^{l_j}}^{-1} L_{l_j}^{m_j-1}$, puis :
  $$
  \Omega(r_j)
  \,=\,
  \frac1{C_{l_j+m_j-1}^{l_j}}
  e^{-\frac{\nn{\lambda}}4r_j}
  L_{l_j}^{m_j-1}(\frac{\nn{\lambda}}2 r_j)
  \,=\,
  \flb {l_j}{m_j-1} {\frac{\nn{\lambda}}2 r_j}
  \quad .
  $$
  On en d\'eduit la forme suivante de
  $\omega=\omega_{\lambda, l}$ :
  $$
  \omega_{\lambda, l}(z,t)
  \,=\,
  e^{-i\lambda t}
  \underset{j=1}{\overset{v_1}{\Pi}}
  \flb {l_j}{m_j-1} {\frac{\nn{\lambda}}2 r_j}
  \qquad\mbox{o\`u}\qquad
  r_j=\sum_{m'_{j-1}<i\leq m'_j} z_i^2
  \quad.  
  $$

  R\'eciproquement, 
  on peut ``remonter'' les calculs 
  pour voir que les solutions trouv\'ees sont bien 
  radiales, $C^\infty$, born\'ees par leur valeur 1 en 0, 
  et fonctions propres communes aux op\'erateurs $T$ et $\Delta_j, j=1,\ldots,v_1$. 
\end{proof}
Il d\'ecoule de cette preuve
que la valeur propre associ\'ee \`a la
fonction sph\'erique 
$\omega=\omega_{{\lambda},l}$ 
pour le sous-laplacien $\Delta_j$
vaut 
$ \alpha_j=
\nn{\lambda}(2l_j+m_j)$.

\subsection{Sur les groupes de type~H}
\label{subsec_fcnsph_typeH}

On convient dans tout ce texte que pour tout $n\in \Nb, n\geq 1$,
{\mathversion{bold}{$O(n)$}}
\index{Notation!Groupe!$O(n)$}
d\'esigne le groupe
des matrices orthogonales 
de taille $n$.
On confondra souvent une matrice de $O(n)$ 
et l'endomorphisme qu'elle repr\'esente dans la base canonique sur $\Rb^n$. 

On reprend \'egalement les notations pour un groupe $N$ de type~H 
de la sous-section~\ref{subsec_def_typeH},
et on note $v=\dim \Vc$ et $z=\dim \Zc$.
D'apr\`es la remarque~\ref{V_dim_paire}, 
$v$ est paire : $v=2v'$.

Le groupe compact $O(v)$ agit sur l'espace $\Vc\sim \Rb^v$.
On dit qu'une fonction est radiale
lorsqu'elle est $O(v)$-invariante.
Par dualit\'e, on d\'efinit les distributions radiales.
Pour une classe de distributions $\Cc$, 
on note $\Cc^\natural$ 
l'ensemble de ses distributions radiales.

Le groupe $O(v)$ agit,
mais g\'en\'eralement pas par automorphismes sur le groupe $N$.
Il ne peut donc pas \^etre question 
de fonctions sph\'eriques de $N$ sous $K$, 
ni de paire de Guelfand $(N, O(v))$. 
Toutefois,  dans l'article~\cite{bigth}, 
les auteurs
donnent un sens aux fonctions sph\'eriques born\'ees,
en consid\'erant l'op\'erateur de radialisation sous~$O(v)$ :
$$
f\,\longrightarrow\,
f^\natural:
\left\{\begin{array}{rcl}
    \Vc\times\Zc&\mapsto&\Cb\\
    (X,Z)&\rightarrow& 
    \int_{O(v)} f(k.X,Z) dk
  \end{array}\right.
\quad,
$$
o\`u $dk$ est la mesure de Haar de masse 1 du groupe compact $O(v)$.
Cet op\'erateur v\'erifie les m\^emes propri\'et\'es 
que l'op\'erateur analogue sur une paire de Guelfand;
les auteurs de  l'article~\cite{bigth}
d\'efinissent alors les fonctions sph\'eriques 
comme les fonctions radiales au sens ci-dessus,
et propres pour les op\'erateurs diff\'erentiels invariants sous radialisation.
Comme sur le groupe de Heisenberg,
ces op\'erateurs diff\'erentiels forment une alg\`ebre
not\'ee $D^\natural$
qui admet pour g\'en\'erateurs :
\index{Notation!Op\'erateur!$L$}
$$
L\,=\,
-\sum_{i=1}^v X_i^2
\qquad \mbox{et}\qquad
U_i\; , \quad i=1,\ldots, z\quad,
$$
o\`u $X_1, \ldots, X_v$ est une base orthonormale de~$\Vc$, et $U_1,
\ldots, U_z$ une base orthonormale de~$\Zc$.
Les fonctions sph\'eriques sont explicites 
et ont les m\^emes propri\'et\'es spectrales que dans le cas d'une paire de Guelfand.
Nous redonnons ici une partie du contenu de cet article, 
essentiellement le th\'eor\`eme (3.3) page~227, 
avec nos notations.

\begin{thm}[Fonctions sph\'eriques sur un groupe de type~H]
  \label{bigthm}
  \index{Fonction sph\'erique!sur les groupes de type H}
  L'ensemble ${L^1}^\natural$
  des fonctions int\'egrables sur $N$ et radiales est une
  sous-alg\`ebre commutative de $L^1(N)$  pour la convolution.  

  On  note $\Omega$  l'union  (disjointe) des  ensembles de fonctions sur $N$ :
  \index{Notation!Ensemble de fonctions sph\'eriques!$\Omega_L,\Omega_B$}
  \begin{eqnarray*}
    \Omega_L
    &=&
    \{    \Phi_{\zeta,l}
    \quad:\quad
    \zeta\in\Zc-\{0\}\quad l\in \Nb \}\quad,\\
    \Omega_B
    &=&
    \{    \Phi_{r}
    \quad:\quad
    r\geq 0 \}\quad,
  \end{eqnarray*}
  o\`u les fonctions sont donn\'ees par :
  \index{Notation!Fonction sph\'erique!$\Phi_{\zeta,l}$, $\Phi_r$}
  \begin{eqnarray*}
    \Phi_{\zeta,l}(X,Z)
    &=&
    e^{i<\zeta,Z>}  \flb {l}{v'-1}{\frac  1 2
      \nn{\zeta}\nn{X}^2}
    \quad ,\\
    \Phi_r(X,Z)
    &=&
    \fb {v'-1} {r\nn{X}} \quad;
  \end{eqnarray*}
  $\flbs{n}{\alpha}$ d\'esigne  la  fonction  de
  Laguerre normalis\'ee, et
  $\fbs \alpha$ la  fonction  de  Bessel r\'eduite
  (voir section~\ref{sec_fonctionspeciale}).
  
  Les fonctions de $\Omega$ sont $C^\infty$,
  born\'ees par leur valeur~1 en~0.  
  De plus, elles  sont sph\'eriques dans le sens  o\`u ce sont des
  fonctions continues  radiales  qui v\'erifient les deux propri\'et\'es suivantes
  :
  \begin{enumerate}
  \item elles  v\'erifient  l'\'equation fonctionelle~(\ref{eqfonc});
    et donc, 
    pour $f_1,f_2$ deux distributions radiales 
    telles que $f_1*f_2$, $<f_i,\phi>, i=1,2, \phi \in \Omega$ ont un sens, on a :
    \begin{equation}
      \forall \phi\in\Omega
      \quad : \quad
      <f_1*f_2,\phi>
      \,=\,
      <f_1,\phi><f_2,\phi>
      \quad;
      \label{formule_caractere}
    \end{equation}
  \item elles sont les fonctions $C^\infty$, born\'ees par leur valeur~1 en~0, 
    fonctions propres communes 
    aux op\'erateurs diff\'erentiels de $D^\natural$.
  \end{enumerate}  
\end{thm}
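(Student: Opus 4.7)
The plan is to mirror the proof of Theorem~\ref{thm_paire_G_heis} for the Heisenberg case, exploiting the H-type structure to reduce everything on $N$ to radial equations on $\Vc$.

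First, I would establish the commutativity of ${L^1}^\natural$. As in the Heisenberg proof, the key is to produce an involution $\theta$ of $N$ such that for every $n\in N$ there exists $k\in O(v)$ with $\theta(n)=k.n^{-1}$. On an H-type group a natural candidate is $\theta(\exp(X+Z))=\exp(-X+Z)$; since inversion gives $(\exp(X+Z))^{-1}=\exp(-X-Z)$, the difference is the sign on $Z$, which for a radial function at a fixed $|X|$ can be absorbed by an appropriate orthogonal transformation acting on $\Vc$ (using property (3) of Definition~\ref{defi_alg_typeH}, so that any unit vector in $\Zc$ can be reached by $J$ acting on suitable $X$'s). Since radial functions are invariant under the radialization operator and $f^\theta=\check f$ on them, the identity $f*g=(\check g*\check f)\check{\ }=(f^\theta *g^\theta)^\theta=g*f$ goes through verbatim.

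Next, by the analogue of Theorems~\ref{thm_eqfonc} and~\ref{thm_fcnsph_opdiff} that the authors of~\cite{bigth} establish for the radialization setup, a bounded spherical function $\phi$ must be radial, $C^\infty$, normalized by $\phi(0)=1$, and a common eigenfunction of the generators $L,U_1,\dots,U_z$ of $D^\natural$. Since $U_i$ is the derivative in a central direction, $\phi(X,Z)=e^{i\langle\zeta,Z\rangle}\Psi(X)$ for some $\zeta\in\Zc$ (real, by boundedness) and $\Psi$ radial on $\Vc$. It then remains to solve $L\phi=\alpha\phi$ for $\Psi$ and to identify the bounded solutions.

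To solve this equation I would compute how $L=-\sum X_i^2$ acts on $e^{i\langle\zeta,Z\rangle}\Psi(X)$. Using the Baker--Campbell--Hausdorff formula and the H-type identity $J(\zeta)^2=-|\zeta|^2\Id_{\Vc}$, the left-invariant vector field $X_i$ applied to such a function produces the Euclidean derivative plus a term involving $J(\zeta)X_i/2$, and the square yields a twisted Laplacian of the form
\[
L\phi \;=\; e^{i\langle\zeta,Z\rangle}\Bigl[-\Delta_{\Vc}\Psi+\tfrac{|\zeta|^2}{4}|X|^2\Psi-i\,J(\zeta)\cdot\nabla\Psi\Bigr].
\]
Since $\Psi$ is radial on $\Vc$, the first-order term vanishes (because $J(\zeta)$ is antisymmetric, $J(\zeta)X\cdot\nabla\Psi(|X|^2)$ gives zero for a function of $|X|^2$), and we are reduced to a pure radial equation on $\Vc\simeq\Rb^{2v'}$. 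When $\zeta=0$, this is the Euclidean eigenvalue problem whose bounded radial solution normalized at the origin is $\fb{v'-1}{r|X|}$ with $\alpha=r^2\geq 0$, by Lemma~\ref{lem_fcnBessel_equadiff}, giving $\Phi_r$. When $\zeta\neq0$, the substitution $u=\tfrac{|\zeta|}{2}|X|^2$ together with $\Psi(X)=e^{-u/2}F(u)$ produces the confluent hypergeometric equation with parameters $v'$ and $(|\zeta|v'-\alpha)/(2|\zeta|)$. Boundedness forces the first parameter to be a non-positive integer $-l$, $l\in\Nb$, so $F$ is proportional to $L_l^{v'-1}$, and after normalization $\Psi$ is $\flb{l}{v'-1}{\tfrac{|\zeta|}{2}|X|^2}$, yielding $\Phi_{\zeta,l}$ with eigenvalue $\alpha=|\zeta|(2l+v')$.

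The main obstacle is the computation of $L$ on $e^{i\langle\zeta,Z\rangle}\Psi(X)$ in the H-type setting: one needs the exact form of the left-invariant vector fields $X_i$ in exponential coordinates and then the precise cancellation of the first-order term using both the antisymmetry of $J(\zeta)$ and the radiality of $\Psi$, which is exactly where property (3) of the H-type structure enters decisively. Once that reduction is in hand, the remainder is a routine ODE analysis parallel to the Heisenberg case and the verification (by running the computation backwards) that the candidate functions $\Phi_{\zeta,l}$ and $\Phi_r$ are indeed $C^\infty$, radial, bounded by their value $1$ at $0$, and common eigenfunctions of $D^\natural$, which also gives the functional equation~(\ref{formule_caractere}) through the characterization of spectrum of ${L^1}^\natural$.
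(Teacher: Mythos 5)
Note first that the paper does not prove Theorem~\ref{bigthm}: it is quoted from \cite{bigth} (essentially th\'eor\`eme (3.3), p.~227), so any proof you give is necessarily a different route from the paper's. Your strategy is the natural one: transpose the paper's own proof of Theorem~\ref{thm_paire_G_heis} to the H-type setting. The spectral half of your argument is correct. The computation
$L\bigl(e^{i\langle\zeta,Z\rangle}\Psi\bigr)=e^{i\langle\zeta,Z\rangle}\bigl[-\Delta_\Vc\Psi-i\langle J(\zeta)X,\nabla\Psi\rangle+\tfrac14\nn{J(\zeta)X}^2\Psi\bigr]$ is right, the first-order term does vanish for radial $\Psi$ by antisymmetry of $J(\zeta)$, and condition (3) gives $\nn{J(\zeta)X}^2=\nn{\zeta}^2\nn{X}^2$; the resulting radial ODE on $\Vc\simeq\Rb^{2v'}$ is exactly the one solved in the Heisenberg proof with $m=v'$, yielding $\flbs{l}{v'-1}$ with eigenvalue $\nn{\zeta}(2l+v')$, and $\fbs{v'-1}$ when $\zeta=0$. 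You do, however, borrow silently from \cite{bigth} the foundational facts that in this radialization setting (where $O(v)$ is \emph{not} a group of automorphisms of $N$) the bounded spherical functions are still characterized as the normalized bounded joint eigenfunctions of $D^\natural$, that $D^\natural$ is generated by $L$ and the $U_i$, and that the functional equation~(\ref{eqfonc}) follows; none of this is a consequence of Theorems~\ref{thm_eqfonc} and~\ref{thm_fcnsph_opdiff}, which presuppose a genuine Gelfand pair.

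The genuine gap is in your commutativity argument. With $\theta(\exp(X+Z))=\exp(-X+Z)$ you need $f^\theta=\check f$ for radial $f$, i.e.\ $f(X,Z)=f(X,-Z)$. No orthogonal transformation of $\Vc$ can ``absorb'' the sign of $Z$: radiality here means invariance under $X\mapsto k.X$ with $Z$ \emph{fixed}, so $f(X,Z)=g(\nn{X})h(Z)$ with $h$ odd is radial yet fails $f^\theta=\check f$, and the chain $f*g=(f^\theta*g^\theta)^\theta=(\check f*\check g)\check{\ }=g*f$ breaks. What does rescue commutativity is condition (3bis), used \emph{pointwise inside the convolution integral} rather than via a global involution: writing
$$f*g(X,Z)=\int F(\nn{X'},Z')\,G\bigl(\nn{X-X'},Z-Z'+\tfrac12[X,X']\bigr)\,dX'dZ'\;,$$
$$g*f(X,Z)=\int G\bigl(\nn{X-X'},Z-Z'-\tfrac12[X,X']\bigr)\,F(\nn{X'},Z')\,dX'dZ'\;,$$
the two integrands differ only by the sign of $\tfrac12[X,X']$. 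For each fixed outer variable $X$, let $k_X\in O(v)$ be the reflection equal to $\Id$ on $\ker\ad(X)$ and to $-\Id$ on $(\ker\ad(X))^\perp=J(\Zc)X$; then $X'\mapsto k_XX'$ preserves $\nn{X'}$, $\nn{X-X'}$ and $dX'$ while reversing $[X,X']$, and the substitution identifies the two integrals. (Equivalently, one may take the partial Fourier transform in $Z$ and prove commutativity of the $\zeta$-twisted convolution of radial functions for each fixed $\zeta$, where a $J(\zeta)$-antiunitary conjugation plays the role your single $\theta$ cannot.) With this step repaired, the rest of your outline goes through.
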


On retrouve le cas de Heisenberg $N=\Hb^n$,
non pas pour $U_n$ 
mais pour $O(2n)$, 
ce qui ne change rien aux fonctions sph\'eriques.

D'apr\`es~(\ref{formule_caractere}),
les fonctions sph\'eriques sont les caract\`eres 
de  l'alg\`ebre ${L^1}^\natural$. 
Le th\'eor\`eme~\ref{thm_fcnsph_sp} s'\'etend aussi au cas 
d'un groupe de type~H.

Pour le lecteur int\'eress\'e 
par la preuve du th\'eor\`eme~\ref{thm_intro_in_max_sph_Lp}
sur les groupes de type~H
(in\'egalit\'e $L^p$ pour la fonction maximale pour les sph\`eres homog\`enes),
il est inutile d'aller plus loin dans cette section.

\subsection{Sur le groupe $N_{v,2}$}
\label{subsec_fcnsph_grlib}

D'apr\`es \cite[Theorem 5.12]{pgelf}, 
$(N_{v,2},SO(v))$ est une paire de Guelfand;
donc les fonctions int\'egrables invariantes sous $SO(v)$
forment une alg\`ebre de convolution radiale;
a fortiori, c'est \'egalement le cas
des fonctions int\'egrables invariantes sous $O(v)$. 
Les distributions invariantes sous $O(v)$ seront appel\'ees radiales.

\begin{thm}
  \label{thm_paire_G_grlib}
  \index{Paire de Guelfand!sur $N_{v,2}$}  
  $(N_{v,2},O(v))$ et $(N_{v,2},SO(v))$ 
  sont deux paires de Guelfand.  
\end{thm}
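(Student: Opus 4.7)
Le r\'esultat pour $SO(v)$ est cit\'e de \cite[Theorem 5.12]{pgelf} et peut donc \^etre utilis\'e directement. L'essentiel du travail consiste \`a en d\'eduire l'affirmation pour $O(v)$ par un argument d'inclusion de sous-alg\`ebres.

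Le plan est le suivant. D'abord, je rappelle que $O(v)$ agit sur $N_{v,2}$ par automorphismes, comme expliqu\'e \`a la sous-section~\ref{subsec_def_grlib}; il en va donc de m\^eme pour son sous-groupe $SO(v)$. Par cons\'equent, si $f,g \in L^1(N_{v,2})$ sont invariantes sous $O(v)$ (respectivement sous $SO(v)$), alors pour tout $k$ dans le groupe consid\'er\'e on a $(f*g)^k = f^k * g^k = f*g$, ce qui montre que l'ensemble des fonctions int\'egrables invariantes forme une sous-alg\`ebre de convolution de $L^1(N_{v,2})$. Notons provisoirement ${L^1}^{\natural,O(v)}$ et ${L^1}^{\natural,SO(v)}$ ces deux sous-alg\`ebres.

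Ensuite, puisque $SO(v)$ est un sous-groupe de $O(v)$, toute fonction invariante sous $O(v)$ est en particulier invariante sous $SO(v)$. On a donc l'inclusion de sous-alg\`ebres de convolution
$$
{L^1}^{\natural,O(v)} \;\subset\; {L^1}^{\natural,SO(v)}
\quad.
$$
D'apr\`es \cite[Theorem 5.12]{pgelf}, la paire $(N_{v,2}, SO(v))$ est de Guelfand, c'est-\`a-dire que la sous-alg\`ebre ${L^1}^{\natural,SO(v)}$ est commutative. Une sous-alg\`ebre d'une alg\`ebre commutative \'etant commutative, on en d\'eduit que ${L^1}^{\natural,O(v)}$ l'est aussi, et donc que $(N_{v,2}, O(v))$ est \'egalement une paire de Guelfand.

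Il n'y a v\'eritablement aucun obstacle ici, pour peu que l'on admette le r\'esultat cit\'e de \cite{pgelf} sur $SO(v)$; la seule v\'erification utile est celle du fait que le produit de convolution de deux fonctions $K$-invariantes est encore $K$-invariant, ce qui repose uniquement sur l'action par automorphismes de $O(v)$ sur $N_{v,2}$. Une preuve directe pour $SO(v)$ aurait demand\'e, par analogie avec le cas du groupe de Heisenberg (th\'eor\`eme~\ref{thm_paire_G_heis}.a)), de construire un antiautomorphisme involutif $\theta$ tel que $f^\theta = \check{f}$ pour toute fonction $f$ radiale, puis de conclure par l'identit\'e $f*g = (\check{f}*\check{g})\check{\,} = g*f$; mais la citation dispense de cet argument.
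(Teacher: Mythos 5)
Votre preuve est correcte et suit exactement la m\^eme d\'emarche que le texte : le cas $SO(v)$ est cit\'e de \cite[Theorem 5.12]{pgelf}, et le cas $O(v)$ s'en d\'eduit « a fortiori » puisque les fonctions $O(v)$-invariantes forment une sous-alg\`ebre de convolution de l'alg\`ebre commutative des fonctions $SO(v)$-invariantes. La v\'erification que l'invariance est pr\'eserv\'ee par convolution, via l'action par automorphismes, est la seule \'etape \`a expliciter et vous l'avez faite correctement.
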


Dans le chapitre~\ref{chapitrecalculfourier},  
nous   d\'eterminons  les  fonctions sph\'eriques born\'ees 
associ\'ees \`a cette paire avec la m\'ethode suivante :
comme les  fonctions sph\'eriques born\'ees d'une  paire de
Guelfand $(G,K)$ sont les fonctions de type positif associ\'ees aux repr\'esentations du groupe  $G$ qui
ont  les propri\'et\'es  d'\^etre irr\'eductible  et d'avoir  un vecteur
$K$-fixe non nul, 
nous allons  construire ces repr\'esentations 
en suivant la th\'eorie de Mackey.

Nous rappelons ces notions dans la section qui suit. 

\section{Repr\'esentations}
\label{sec_rep_paireG}

Dans cette section,
nous rappelons 
d'abord les propri\'et\'es 
des fonctions de type positif,
puis  le lien entre 
les fonctions sph\'eriques born\'ees d'une paire de Guelfand
et les repr\'esentations,
ainsi que 
des \'el\'ements de th\'eorie des repr\'esentations 
(th\'eor\`emes de Kirillov et de Mackey)
que nous utiliserons dans le chapitre~\ref{chapitrecalculfourier}.

\vspace{1em}

Tout au  long de ce travail, nous ne consid\'erons que
des groupes localement compacts, et sur ces groupes
des repr\'esentations unitaires et continues
sur des espaces de Hilbert s\'eparables.
Nous renvoyons par exemple \`a \cite{mackey_chicago} pour les
d\'efinitions de repr\'esentation, 
repr\'esentation irr\'eductible et \'equivalente.
Pour un groupe $G$ (localement compact),
on note $\hat{G}$ l'ensemble des repr\'esentations irr\'eductibles de
$G$ quotient\'e par cette relation d'\'equivalence (not\'ee $\sim$).
\index{Notation!Ensemble de classe de repr\'esentations!$\hat{G}$}
On confond souvent une classe et la donn\'ee d'un de ses \'el\'ements.

\subsection{Cas d'une paire de Guelfand}
\label{subsec_rep_pguelf}

Voici d'abord les propri\'et\'es 
du sous-espace des vecteurs $K$-invariants 
\cite{faro2} :
\begin{thm}[Sous-espace invariant]
  \label{thm_rep_ssesp}
  Soient  $G$ un  groupe 
  et  $K$  un de ses sous-groupes compacts.
  
  Soit $(\Pi,\Hc)$ une repr\'esentation de $G$.
  
  On note $\Hc_K$ le sous-espace des vecteurs 
  $K$-invariants de $\Hc$.

  \begin{itemize}
  \item  Si  $\dim  \Hc_K  =1$,  
    alors  la  repr\'esentation  $\Pi$  est irr\'eductible.
  \item  Si  $(G,K)$  est  une   paire  de  Guelfand  
    et  si  $\Pi$  est irr\'eductible, 
    alors $\dim \Hc_K\leq 1$.
  \end{itemize}

\end{thm}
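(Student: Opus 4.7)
The plan is to handle the two implications independently, using Schur's lemma and the standard averaging projection $P = \int_K \Pi(k)\, dk$ onto $\Hc_K$, which is a bounded orthogonal projection since $K$ is compact and $dk$ is a probability measure.

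For the first implication, I proceed by contraposition. Suppose $\Hc$ splits non-trivially as an orthogonal sum $\Hc = \Hc_1 \oplus \Hc_2$ of closed $G$-invariant subspaces; since $P$ commutes with the orthogonal projections onto each $\Hc_i$, one gets $\Hc_K = (\Hc_1)_K \oplus (\Hc_2)_K$. In the context where this statement is applied, $\Hc$ is generated as a $G$-module by a non-zero $K$-fixed vector $v_0$; decomposing $v_0 = v_1 + v_2$ with $v_i \in \Hc_i$, both $v_i$ are $K$-invariant. If $\dim \Hc_K = 1$, the orthogonality of $v_1$ and $v_2$ forces one of them, say $v_2$, to vanish, so $v_0 \in \Hc_1$; cyclicity of $v_0$ then gives $\Hc_1 = \Hc$, contradicting $\Hc_2 \neq 0$.

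For the second implication, the key is to make $L^1(K \backslash G / K)$ act on $\Hc_K$. First I would check that, for bi-$K$-invariant $f$, one has $\Pi(f) = P\,\Pi(f)\,P$, so the restriction map $f \mapsto \Pi(f)|_{\Hc_K}$ yields a commutative $*$-algebra $\Ac$ of bounded operators on $\Hc_K$. The main step is proving that $\Ac$ acts \emph{irreducibly} on $\Hc_K$: given a closed $\Ac$-invariant subspace $W \subseteq \Hc_K$, the closed $G$-invariant subspace $W'$ generated by $W$ in $\Hc$ should satisfy $PW' \subseteq W$, from which $W' \cap \Hc_K = W$. This rests on the identification $P\,\Pi(g)\,w = \Pi(f_g)\,w$ for $w \in W$, where $f_g$ is the bi-$K$-invariant average of the Dirac mass at $g$, combined with a density argument to pass to the closure. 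Irreducibility of $\Pi$ then forces $W' \in \{0,\Hc\}$, hence $W \in \{0,\Hc_K\}$. Since a commutative $*$-algebra of bounded operators acting irreducibly on a Hilbert space admits a common eigenvector by the spectral theorem, the space must be at most one-dimensional, giving $\dim \Hc_K \leq 1$.

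The main obstacle I anticipate is the inclusion $W' \cap \Hc_K \subseteq W$: the other ingredients are essentially Schur's lemma and routine unfolding of definitions, but this step requires both the correct identification of $P\,\Pi(g)\,w$ with a convolution by a bi-$K$-invariant kernel and careful handling of closures so that the argument applies not only to finite linear combinations $\sum \Pi(g_i) w_i$ but to the entire closed $G$-module they generate.
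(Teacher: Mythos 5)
The thesis states this theorem without proof, referring to \cite{faro2}; your argument is, in substance, the standard proof from that reference, and it is correct. Three remarks. First, for the implication $\dim\Hc_K=1\Rightarrow\Pi$ irr\'eductible, you rightly reinstate the hypothesis that $\Hc$ is generated by a non-zero $K$-fixed vector: without it the statement is false (take a direct sum with a representation having no $K$-fixed vectors), and this cyclicity hypothesis, present in the source and satisfied in every application made in the thesis (GNS representations attached to spherical functions of positive type), is silently dropped in the statement above; your orthogonal-decomposition argument is then the standard one and uses only that the projections onto closed invariant subspaces commute with $P=\int_K\Pi(k)\,dk$, which holds because $\Pi$ is unitary. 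Second, in the converse, the step you flag is indeed where the content lies, and your plan closes it: the bi-$K$-average of $\delta_g$ is a measure rather than an element of $L^1(K\backslash G/K)$, so one should either note that the algebra of bi-$K$-invariant bounded measures is also commutative, or (as you suggest) approximate $P\,\Pi(g)\,P$ in the strong topology by $\Pi(h_n)$ with $h_n$ the bi-$K$-average of $\phi_n*\delta_g*\phi_n$ for an approximate identity $\phi_n\in C_c(G)$; since $P$ is bounded and $W$ is closed, this yields $P\,W'\subseteq W$ and hence $W'\cap\Hc_K=W$. Third, the concluding step is cleaner via Schur's lemma for self-adjoint sets of operators: irreducibility of your commutative $*$-algebra on $\Hc_K$ forces it to lie inside its own commutant $\Cb\,\Id$, so every line of $\Hc_K$ is invariant and $\dim\Hc_K\leq 1$. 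The phrase ``admits a common eigenvector by the spectral theorem'' is not a correct general principle for commutative $*$-algebras in infinite dimension (multiplication operators on $L^2[0,1]$ have no eigenvectors), although here it causes no harm once one knows the algebra consists of scalars.
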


On  peut  caract\'eriser  
les  fonctions sph\'eriques born\'ees de type positif d'une paire de Guelfand 
\`a l'aide des repr\'esentations \cite{faro2,helgason}. 
Dan le cas qui va nous int\'eresser, 
les fonctions sph\'eriques born\'ees sont de type positif
\cite[Corollary 8.4]{pgelf} :

\begin{thm}[Fonctions sph\'eriques et repr\'esentations]
  \label{thm_fcnsph_rep}
  \index{Fonction sph\'erique!et repr\'esentation}
  Soit $(G,K)$ une paire de Guelfand.
  
  \begin{itemize}
  \item[a)] Les fonctions  sph\'eriques born\'ees  de  type positif sont 
    les fonctions de  type positif $\Phi$
    associ\'ees  \`a 
    une classe de repr\'esentations irr\'eductibles
    qui poss\`edent au moins un vecteur $K$-fixe non nul.

    Dans ce cas, l'espace  des vecteurs $K$-invariants
    est la droite $\Cb \Phi$.
  \item[b)]   Si de plus $G$ est le produit semi-direct $K\triangleleft N$,
    o\`u $N$ est un groupe de Lie nilpotent connexe simplement connexe 
    et $K$ un groupe de Lie compact,
    alors les fonctions sph\'eriques born\'ees sont de type positif.
  \end{itemize}

\end{thm}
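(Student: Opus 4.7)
L'approche que je suivrais pour la partie a) repose sur la construction de Gelfand-Naimark-Segal (GNS). Partant d'une fonction sph\'erique born\'ee de type positif $\Phi$, on lui associe une repr\'esentation cyclique $(\Pi, \Hc, v)$ telle que $\Phi(g) = \langle \Pi(g) v, v\rangle$, et la biinvariance de $\Phi$ sous $K$ fait imm\'ediatement de $v$ un vecteur $K$-fixe. Pour obtenir l'irr\'eductibilit\'e de $\Pi$, j'exploiterais la bijection entre fonctions sph\'eriques born\'ees et caract\`eres de $L^1(K\backslash G/K)$ fournie par le th\'eor\`eme~\ref{thm_fcnsph_sp} : tout op\'erateur $\Pi(f)$ pour $f$ biinvariante agit comme le scalaire $\chi_\Phi(f)$ sur $v$, et comme $v$ est cyclique un argument type lemme de Schur entra\^ine l'irr\'eductibilit\'e.

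R\'eciproquement, \'etant donn\'ee une repr\'esentation irr\'eductible $(\Pi, \Hc)$ avec un vecteur $K$-fixe unitaire $v$, je poserais $\Phi(g) := \langle \Pi(g) v, v\rangle$. C'est une fonction continue, $K$-biinvariante et de type positif. Pour v\'erifier l'\'equation fonctionnelle~(\ref{eqfonc}) caract\'erisant les fonctions sph\'eriques (th\'eor\`eme~\ref{thm_eqfonc}), j'utiliserais que l'op\'erateur de moyennisation $P_K = \int_K \Pi(k) \, dk$ est la projection orthogonale sur $\Hc_K$, et que cette derni\`ere est la droite $\Cb v$ d'apr\`es le th\'eor\`eme~\ref{thm_rep_ssesp}. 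Ceci donne
$$\int_K \Phi(g_1 k g_2) \, dk = \langle \Pi(g_2) v , P_K \Pi(g_1)^* v\rangle = \Phi(g_2)\, \overline{\Phi(g_1^{-1})} = \Phi(g_1)\Phi(g_2),$$
la derni\`ere \'egalit\'e utilisant la biinvariance $K$ et la sym\'etrie des coefficients diagonaux. L'assertion sur la dimension un de $\Hc_K$ est alors exactement le th\'eor\`eme~\ref{thm_rep_ssesp}.

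Pour la partie b), l'\'enonc\'e est plus d\'elicat : il faut montrer que \emph{toute} fonction sph\'erique born\'ee (non suppos\'ee de type positif a priori) est automatiquement de type positif dans le cadre d'un produit semi-direct $K \triangleleft N$ avec $N$ nilpotent. Je proc\'ederais en deux temps. D'abord, j'identifierais $\Phi$ \`a un caract\`ere de l'alg\`ebre commutative des fonctions int\'egrables $K$-invariantes sur $N$ (th\'eor\`eme~\ref{thm_fcnsph_sp}). Ensuite, j'utiliserais la machine de Mackey pour d\'ecrire les repr\'esentations irr\'eductibles de $G = K\triangleleft N$ \`a partir des repr\'esentations irr\'eductibles de $N$ (fournies par la th\'eorie de Kirillov via les orbites coadjointes), et j'exhiberais explicitement, pour le caract\`ere correspondant \`a $\Phi$, une repr\'esentation irr\'eductible de $G$ ayant un vecteur $K$-fixe non nul dont le coefficient matriciel diagonal redonne $\Phi$. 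L'obstacle principal se situe \`a cette derni\`ere \'etape : la construction du vecteur $K$-fixe n\'ecessite une analyse fine de l'action de $K$ sur les orbites coadjointes de $N$ et de leurs stabilisateurs, qui est au c\oe ur de la caract\'erisation des paires de Guelfand nilpotentes de~\cite{pgelf}.
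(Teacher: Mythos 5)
Notez d'abord que le texte ne d\'emontre pas ce th\'eor\`eme : il est rappel\'e avec renvoi \`a \cite{faro2,helgason} pour la partie a) et \`a \cite[Corollary 8.4]{pgelf} pour la partie b). Il n'y a donc pas de preuve interne \`a laquelle comparer votre d\'emarche ; je ne peux qu'\'evaluer votre proposition sur le fond.

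Pour la partie a), votre sch\'ema est l'argument classique et il est correct. Dans le sens direct, le point cl\'e que vous invoquez --- $\Pi(f)v=\chi_\Phi(f)v$ pour $f$ biinvariante --- se justifie par le cas d'\'egalit\'e de Cauchy--Schwarz (on a $\|\Pi(f)v\|^2=\chi_\Phi(f^**f)=\nn{\chi_\Phi(f)}^2$ et $<\Pi(f)v,v>=\chi_\Phi(f)$), et joint \`a la cyclicit\'e de $v$ il donne bien l'irr\'eductibilit\'e. Le calcul du sens r\'eciproque avec $P_K$ est exact ; remarquez seulement que l'\'egalit\'e finale $\overline{\Phi(g_1^{-1})}=\Phi(g_1)$ n'utilise que l'unitarit\'e de $\Pi$, pas la biinvariance.

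Pour la partie b), il y a une lacune r\'eelle. Ce qu'il faut \'etablir, c'est qu'un caract\`ere \emph{arbitraire} de l'alg\`ebre commutative ${L^1}^\natural$ --- donc une fonction sph\'erique born\'ee dont on ne suppose pas a priori qu'elle est de type positif --- est r\'ealis\'e comme coefficient matriciel diagonal d'une repr\'esentation unitaire. Votre plan (Kirillov puis Mackey) construit la famille des repr\'esentations irr\'eductibles de $G=K\triangleleft N$ poss\'edant un vecteur $K$-fixe, donc une famille de fonctions sph\'eriques de type positif ; il ne dit rien sur la raison pour laquelle cette famille \'epuise le spectre de Guelfand. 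C'est pr\'ecis\'ement cette surjectivit\'e qui fait l'objet de \cite[Corollary 8.4]{pgelf} et qui demande un argument suppl\'ementaire (identification topologique du spectre, ou analyse des valeurs propres des op\'erateurs diff\'erentiels invariants montrant qu'aucun caract\`ere born\'e suppl\'ementaire n'existe). Telle quelle, votre r\'edaction de b) suppose ce qu'il s'agit de d\'emontrer.
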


\paragraph{Exemple : la paire de Guelfand
  \mathversion{bold}{$(K(m;v_0;v_1),\Hb^{v_0})$}.}
Reprenons les notations et
les r\'esultats d\'evelopp\'es dans la sous-section~\ref{subsec_fcnsph_heis}.
D'apr\`es le th\'eor\`eme~\ref{thm_paire_G_heis},
les fonctions sph\'eriques born\'ees sont les fonctions 
$\omega=\omega_{{\lambda},l}$ ou $\omega=\omega_\mu $
d\'efinies sur $\Hb^{v_0}$, 
ou encore leurs extensions 
$\Omega^\omega$ au groupe entier 
\index{Notation!Fonction sph\'erique!$\Omega^\omega$}
$H_{heis}=K(m;v_0;v_1)\triangleleft \Hb^{v_0}$ : 
$$
\forall\, (k,h)\in H_{heis}=K(m;v_0;v_1)\triangleleft \Hb^{v_0}
\qquad
\Omega^\omega(k,h)
\,=\,
\omega(h)
\quad.
$$
Pour une fonction sph\'erique born\'ee $\omega$,
on note  $(\Hc_\omega,\Pi_\omega)$
la repr\'esentation irr\'eductible de $H_{heis}$
associ\'ee \`a $\Omega^\omega$
comme fonction de type positif,
\cite{faro2,helgason}.

\begin{lem}
  \label{lem_rep_Pi_omega}
  \begin{itemize}
  \item   Les repr\'esentations irr\'eductibles sur 
    $K(m;v_0;v_1)\triangleleft \Hb^{v_0}$
    ayant un vecteur $K(m;v_0;v_1)$-fixe (non nul)
    sont les repr\'esentations $(\Hc_\omega,\Pi_\omega)$
    associ\'ees aux fonctions de type positif $\Omega^\omega$,
    extension des fonctions 
    $\omega=\omega_{{\lambda},l}$, $\lambda\in\Rb^*,l\in\Nb^{v_1}$ et
    $\omega=\omega_\mu$, $\mu\in {\Rb^+}^{v_1}$.

    L'espace des vecteurs $K(m;v_0;v_1)$-fixes 
    pour la repr\'esentation~$\Pi_\omega$  
    est   la droite $\Cb \Omega^\omega$.  
    \index{Notation!Repr\'esentation!$\Pi_\omega$}
  \item   Sur le centre~$Z:=\{ (0,t)\, :\, t\in \Rb\}$ 
    du groupe de Heisenberg~$\Hb^{v_0}=\Cb^{v_0}\times \Rb$,
    la repr\'esentation~$\Pi_\omega$ co\"\i ncide avec le caract\`ere :
    $$
    \left\{
      \begin{array}{ll}
        (0,t)\mapsto e^{i\lambda t}&
        \mbox{si}\; \omega=\omega_{\lambda,l}\; ,\\
        (0,t)\mapsto 1 &
        \mbox{si}\; \omega=\omega_\mu\; .
      \end{array}\right.
    $$

  \end{itemize}
\end{lem}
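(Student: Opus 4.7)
Le premier point est une application presque directe du th\'eor\`eme~\ref{thm_fcnsph_rep} \`a la paire de Guelfand $(H_{heis},K(m;v_0;v_1))$, o\`u $H_{heis}=K(m;v_0;v_1)\triangleleft \Hb^{v_0}$ est bien un produit semi-direct d'un groupe nilpotent connexe simplement connexe par un groupe compact. La partie b) de ce th\'eor\`eme garantit que chaque $\Omega^\omega$ est de type positif, puis sa partie a) livre la bijection souhait\'ee entre les fonctions~$\omega_{\lambda,l}$ et $\omega_\mu$ du th\'eor\`eme~\ref{thm_paire_G_heis}.b) et les classes de repr\'esentations irr\'eductibles poss\'edant un vecteur $K(m;v_0;v_1)$-fixe non nul. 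Le fait que l'espace de ces vecteurs $K$-fixes soit la droite $\Cb\,\Omega^\omega$ combine le th\'eor\`eme~\ref{thm_rep_ssesp} (qui fournit $\dim\Hc_K\leq 1$ gr\^ace \`a la propri\'et\'e de Guelfand) avec la construction GNS sous-jacente \`a $(\Hc_\omega,\Pi_\omega)$, dans laquelle $\Omega^\omega$ appara\^\i t pr\'ecis\'ement comme vecteur cyclique $K$-invariant de norme~1.

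Pour la seconde partie, je commencerais par observer que le centre $Z=\{(0,t),\,t\in\Rb\}$ de $\Hb^{v_0}$ est en fait central dans $H_{heis}$ tout entier: comme $K(m;v_0;v_1)\subset U_{v_0}$ agit sur la seule variable~$z$ en fixant~$t$, tout \'el\'ement $(0,t)$ commute \`a la fois avec $K(m;v_0;v_1)$ et avec $\Hb^{v_0}$. L'irr\'eductibilit\'e de~$\Pi_\omega$ et le lemme de Schur for\c cent donc $\Pi_\omega(0,t)$ \`a \^etre un op\'erateur scalaire, et la continuit\'e jointe au caract\`ere homomorphique font de $t\mapsto\Pi_\omega(0,t)$ un caract\`ere continu de~$\Rb$, n\'ecessairement de la forme $e^{i\beta t}\,\Id$ pour un $\beta\in\Rb$. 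L'\'evaluation du coefficient matriciel $\Omega^\omega(1,(0,t))=\langle\Omega^\omega,\Pi_\omega(0,t)\Omega^\omega\rangle_{\Hc_\omega}$, combin\'ee aux valeurs $\omega_{\lambda,l}(0,t)=e^{-i\lambda t}$ et $\omega_\mu(0,t)=1$ lues sur le th\'eor\`eme~\ref{thm_paire_G_heis}.b), fixe alors $\beta=\lambda$ dans le premier cas et $\beta=0$ dans le second.

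L'obstacle principal n'est pas de nature calculatoire mais tient \`a la convention de signe: selon que l'on \'ecrit $\Phi(g)=\langle v,\Pi(g)v\rangle$ ou $\Phi(g)=\langle\Pi(g)v,v\rangle$ pour le coefficient matriciel associ\'e \`a une fonction de type positif, le $\beta$ obtenu ci-dessus vaut $\lambda$ ou $-\lambda$. L'\'enonc\'e fixe cette convention en demandant $e^{i\lambda t}$, et il suffit de s'assurer qu'elle est bien celle qui sous-tend la construction GNS servant \`a d\'efinir $(\Hc_\omega,\Pi_\omega)$ --- point qu'il conviendra d'expliciter au passage pour lever toute ambigu\"\i t\'e.
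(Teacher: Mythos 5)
Your proof is correct and follows essentially the same route as the paper: the first point is read off from Theorem~\ref{thm_fcnsph_rep} (with Theorem~\ref{thm_rep_ssesp} for the one-dimensionality of the $K$-fixed subspace), and the second combines irreducibility (the center acts by a scalar) with an evaluation on the distinguished $K$-fixed vector $\Omega^\omega$. The sign ambiguity you flag at the end is settled in the paper by computing the left-translation action directly, namely $\Omega^\omega\left((\Id;0,-t)g'\right)=\omega(0,-t)\,\Omega^\omega(g')$, so that $\Pi_\omega(0,t)$ acts on the line $\Cb\,\Omega^\omega$ --- hence on the whole space --- as multiplication by $\omega(0,-t)=e^{i\lambda t}$, independently of any inner-product convention.
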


\begin{proof}[du lemme~\ref{lem_rep_Pi_omega}]
  La premi\`ere partie du lemme 
  est une cons\'equence du th\'eor\`eme~\ref{thm_fcnsph_rep}.

  Pour la seconde,
  comme la repr\'esentation $\Pi_\omega$ est irr\'eductible,
  sa restriction au centre $Z$  est de dimension 1.
  Gr\^ace \`a l'expression de 
  $\omega=\omega_{\lambda,l},\omega_\mu$ 
  th\'eor\`eme~\ref{thm_paire_G_heis},
  on calcule :
  $$
  \forall\,(0,t)\in Z\; ,
  \forall\,(z',t')\in \Hb^{v_0}
  \quad:\quad
  \omega(z',t+t')
  \,=\,
  \omega(0,t)\omega(z',t')
  \quad;
  $$ 
  on en d\'eduit
  gr\^ace \`a la d\'efinition de $\Omega^\omega$,
  pour $(0,t)\in Z$ et $g'=(k';z',t')\in H_{heis}$ :
  \begin{eqnarray*}
    \Omega^\omega\left( (\Id; 0,-t)g'\right)
    &=&
    \Omega^\omega(k';z',t'-t)
    \,=\,
    \omega(z',t'-t)\\
    &=&
    \omega(0,-t)\omega(z',t')
    \,=\,
    \omega(0,-t)\Omega^\omega(g')
    \quad.
  \end{eqnarray*}
  Ainsi sur $Z$, 
  la repr\'esentation~$\Pi_\omega$ 
  co\"\i ncide avec la repr\'esentation de dimension 1, 
  donn\'ee par le caract\`ere $(0,t)\mapsto \omega (0,-t)$
  sur la droite $\Cb \Omega$ 
  donc sur l'espace de Hilbert tout entier.
\end{proof}

Rappelons maintenant quelques \'el\'ements de la th\'eorie des repr\'esentations dont nous aurons besoin pour utiliser le th\'eor\`eme~\ref{thm_fcnsph_rep}.

\subsection{M\'ethode des orbites}
\label{subsec_kirillov}

Redonnons la m\'ethode des orbites pour les groupes
nilpotents 
(voir par exemple 
\cite{pukanszky}, partie II, chapitre III \S  3 
et \cite{kirillov}, \S 15),
et appliquons-la au groupe $N_{v,2}$.
La description de $\hat{N}$ est d\'eja connue
(voir \cite{gaveau} 
avec une autre m\'ethode que celle des orbites).

\begin{thm}[Kirillov]
  \label{thm_kirillov}
  \index{Repr\'esentation!th. de Kirillov}
  Soient $N$ un groupe de Lie nilpotent 
  et $\Nc$ son alg\`ebre de Lie.

  Pour une forme lin\'eaire $f\in \Nc^*$ 
  et un polarisation $\Nc_0$ en $f$,
  on d\'efinit
  $\chi$   l'homomorphisme 
  dont la diff\'erentielle est $if$
  sur le sous-groupe $N_0=\exp \Nc_0$,
  et     $\Ind_{N_0}^N \chi$ 
  la repr\'esentation induite 
  par $\chi$ de $N_0$ sur $N$.
  Cette repr\'esentation  est irr\'eductible, 
  et sa classe d'\'equivalence not\'ee $T_f$ ne d\'epend pas 
  de la polarisation $\Nc_0$ en $f$. 

  On a la bijection de Kirillov :
  $$
  \left\{
    \begin{array}{rcl}
      \Nc^*/N&\longrightarrow&\hat{N}\\
      N.f&\longmapsto&N.T_f
    \end{array}
  \right.
  \quad.
  $$
\end{thm}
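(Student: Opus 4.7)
The plan is to argue by induction on $\dim N$. The base case $\dim N = 1$ is immediate: $N \simeq \Rb$, its irreducible unitary representations are the characters $x \mapsto e^{i\xi x}$ indexed by $\xi \in \Rb \simeq \Nc^*$, the coadjoint action is trivial, and the stated bijection is tautological.

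For the inductive step I pick a nonzero element $Z$ of the center of $\Nc$ and distinguish two cases. If $f(Z) = 0$, both $f$ and the character $\chi$ descend through $\exp(\Rb Z)$, so the construction of $T_f$ and the bijection $\Nc^*/N \to \hat{N}$ reduce to the corresponding statements for the quotient $\bar{N} = N/\exp(\Rb Z)$, which is nilpotent of strictly smaller dimension, and the inductive hypothesis applies. If $f(Z) \neq 0$, I use the descending central series to produce $X_0, Y_0 \in \Nc$ with $[X_0, Y_0] = Z$, set
$$
\Nc_1 \,:=\, \{ X \in \Nc \,:\, f([X, X_0]) = 0 \},
$$
a codimension-one subalgebra containing the center, and let $N_1 = \exp \Nc_1$ (a normal subgroup with $N/N_1 \simeq \Rb$) and $f_1 = f|_{\Nc_1}$. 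Any polarization at $f$ is forced to lie in $\Nc_1$, so by induction in stages $T_f = \Ind_{N_1}^N T_{f_1}$, with $T_{f_1}$ provided by the inductive hypothesis applied to $N_1$.

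Existence of a polarization at every $f$ is handled by the Vergne construction: starting from a Jordan--H\"older flag $0 = \Nc^{(0)} \subset \ldots \subset \Nc^{(n)} = \Nc$ of ideals with $\dim \Nc^{(i)} = i$, the subspace $\Nc_0 = \sum_i \text{rad}(B_f|_{\Nc^{(i)}})$, with $B_f(X,Y) := f([X,Y])$, is a polarization at $f$. Independence of polarization reduces, by chaining two polarizations through a sequence of intermediate polarizations differing at codimension one, to a direct intertwining computation between two induced representations in an elementary setting. Well-definedness on coadjoint orbits is immediate since conjugation by $n$ carries $(N_0, \chi_f)$ to $(n N_0 n^{-1}, \chi_{\Coad(n) f})$. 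Surjectivity of the orbit map is built into the induction itself: given an irreducible $\pi$ of $N$, either its restriction to $\exp(\Rb Z)$ is trivial (Case A), or else Mackey's analysis applied to the codimension-one normal subgroup $N_1$ exhibits $\pi$ as $\Ind_{N_1}^N \sigma$ for an irreducible $\sigma$ of $N_1$, to which the inductive hypothesis applies.

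The main obstacle is Case B, and specifically the verification of irreducibility and of the bijectivity of the orbit correspondence across the induction step. Irreducibility of $\Ind_{N_1}^N T_{f_1}$ follows from Mackey's criterion: the orbit of the class of $T_{f_1}$ under $N/N_1 \simeq \Rb$ must be nontrivial, and this holds because $\exp(t X_0)$ shifts $f_1$ along the linear form $Y \mapsto f([X_0, Y])$ with nonzero contribution at $Y_0$, so $T_{f_1}$ and $T_{f_1}^{\exp(tX_0)}$ are inequivalent for $t \neq 0$. Injectivity of $f \mapsto T_f$ on orbits then follows by reading off the central character together with the inductive injectivity applied to $f_1$. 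This Mackey-style analysis, together with the compatibility between the coadjoint action on $\Nc^*$ and the action of $N/N_1$ on $\widehat{N_1}$, is the technical heart of the argument and the only place where simple-connectedness of $N$ really intervenes.
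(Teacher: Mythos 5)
The paper does not prove this theorem at all: it is recalled as a classical result, with references to Pukanszky (partie II, chapitre III \S 3) and Kirillov (\S 15), and is then only \emph{applied} to $N_{v,2}$ (the actual content of the subsection is the explicit description of the representations $U_{X^*,A^*}$ and the proof of the proposition identifying them with the $T_f$). So there is no proof in the paper to compare yours against; what you have written is the standard inductive proof of Kirillov's theorem itself, which the thesis deliberately takes as a black box.

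Your sketch follows the classical route (induction on $\dim N$, reduction modulo a central direction in $\ker f$, Kirillov's lemma and a codimension-one normal subgroup, Mackey analysis, Vergne polarization, induction in stages) and is correct in outline. Two points deserve tightening. First, the case split on a single central $Z$ is slightly off: if $f(Z)\neq 0$ but the centre has dimension $>1$, you should still quotient by a central line in $\ker f$; the right reduction is to arrange first that the centre is one-dimensional with $f$ nonvanishing on it, and only then invoke Kirillov's lemma. Second, and more substantively, the set $\Nc_1=\{X : f([X,X_0])=0\}$ is a hyperplane but not in general a subalgebra: by Jacobi one needs $[X,X_0]$ to land in the centre for the condition to be stable under brackets. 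The lemma actually produces $X_0$ (or $Y_0$, depending on which role you assign) in the \emph{second} term of the ascending central series, so that $\{X : f([X,X_0])=0\}$ coincides with the centralizer of $X_0$ and is a codimension-one ideal; the claim that every polarization lies inside it also relies on this. With those adjustments your argument is the standard one found in Kirillov or Corwin--Greenleaf, and it is consistent with how the paper subsequently uses the theorem (in particular with the corollary on $\hat{N}/G$ and the proposition on stabilizers, whose proofs the paper does give).
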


Nous l'appliquons au groupe libre nilpotent \`a deux pas
not\'e dans cette sous section $N=N_{v,2}$.
On note aussi ici $\Nc=\Nc_{v,2}$ son alg\`ebre de Lie, et $\Nc^*$ son dual;
$\Vc^*$ et $\Zc^*$
d\'esignent les espaces duaux 
de $\Vc$ et $\Zc$ respectivement;
lorsque l'on \'ecrit 
$X^*+A^*\in\Nc^*$,
on sous-entend $X^*\in\Vc^*$ et $A^*\in \Zc^*$.

Les expressions des repr\'esentations $T_f$, $f\in \Nc^*/N$,
peuvent s'obtenir et s'\'ecrire (longuement)
en utilisant les bases canoniques, 
et l'image de leurs vecteurs 
par des transformations orthogonales;
mais cette d\'emarche rend les choix effectu\'es peu lisibles.
Ici, nous allons utiliser la seconde r\'ealisation 
de l'alg\`ebre de Lie $\Nc=\Nc_{v,2}$,
que nous avons d\'ecrite dans la sous-section~\ref{subsec_def_grlib}.

\paragraph{Conventions concernant les \'el\'ements de $\Zc^*$.}
Rappelons que l'espace vectoriel $\Zc^*$ est identifi\'e par le produit scalaire naturel \`a $\Zc$,
l'ensemble des endomorphismes antisym\'etriques.
Supposons $A^*\in \Zc^*$ fix\'e.
On lui associe alors la forme bilin\'eaire antisym\'etrique $\omega_{A^*}$ 
sur $\Vc$  donn\'ee pour $X,Y\in\Vc$ par :
$$
\omega_{A^*}(X,Y)
\,=\,
<A^*X,Y>
\quad.
$$
D'apr\`es l'\'egalit\'e~(\ref{egalite_crochet_pdtsc}),
on a aussi $\omega_{A^*}(X,Y)=<A^*,[X,Y]>$.

Le radical de la forme $\omega_{A^*}$ est \'egal
au noyau $\ker A^*$ de l'endomorphisme antisym\'etrique~$A^*$;
son suppl\'ementaire orthogonale dans $(\Vc,<,>)$ est l'image de~$A^*$,
not\'ee~$\Im A^*$.
Ainsi, $\omega_{A^*}$ induit sur $\Im A^*$
la forme sympl\'ectique not\'ee $\omega_{A^*,r}$; 
en particulier, la dimension de l'espace $\Im A^*$ est paire et sera not\'ee $2v_0$.

\paragraph{Choix d'un sous espace isotrope.}
Fixons $E_1$, un espace vectoriel
maximal totalement isotrope pour $\omega_{A^*,r}$.
Sa dimension est $v_0$.
On pose $E_2=A^*E_1$.
Comme $E_1$ est inclus dans le suppl\'ementaire $\Im A^*$ de $\ker A^*$,
les sous-espaces vectoriels $E_1$ et $E_2$ sont isomorphes donc
la dimension de $E_2$ est aussi $v_0$.

Par d\'efinition de $\omega_{A^*,r}$ et comme  
$E_1$ est totalement isotrope pour $\omega_{A^*,r}$,
on voit que l'espace vectoriel $E_2$
est aussi le suppl\'ementaire orthogonal de $E_1$ 
dans $(\Im A^*,<,>)$. 
Comme l'endomorphisme $A^*$ est un isomorphisme normal en restriction \`a $\Im A^*$,
on en d\'eduit dans $(\Im A^*,<,>)$ :
$$
A^*E_2
\,=\,
A^* {(E_1)}^\perp
\,=\,{(A^* E_1)}^\perp
\,=\,
E_2^\perp=E_1
\quad.
$$
Finalement,
$E_2$ est aussi un
sous-espace totalement isotrope de $\omega_{A^*,r}$,
qui est maximal \`a cause des dimensions.

On note 
$p_0:\Vc\rightarrow \ker A^*$,
$p_1:\Vc\rightarrow E_1$
et $p_2:\Vc\rightarrow E_2$
les projections orthogonales.
On a $\Id_\Vc=p_0+p_1+p_2$.

\paragraph{D\'efinition des repr\'esentations associ\'ees.}
\index{Notation!Repr\'esentation!$U_{X^*,A^*}$}
Soit $X^*\in \ker A^*$.
On d\'efinit les repr\'esentations 
$(\Hc_{X^*,A^*},U_{X^*,A^*})$ par ce qui suit~:
\begin{itemize}
\item si $A^*=0$, c'est la repr\'esentation de dimension 1 
  (i.e. $\Hc_{X^*,A^*}=\Cb$),
  donn\'ee par  le caract\`ere :
  $\exp(X+A) \mapsto\, \exp(i<X^*,X>)$.
\item si $A^*\not=0$,
  $\Hc_{X^*,A^*}=L^2(E_1)$,
  $F\in \Hc_{X^*,A^*}$, 
  $n=\exp(X+A)$,
  $X'\in  E_1$ :
  \begin{eqnarray*}
    U_{X^*,A^*} (n).F (X')
    &=&
    \exp\left(i<A^*, \frac12[p_1(X+2X'),p_2(X)]+A>\right)\\
    && \quad e^{ i<X^*,X>}
    F(p_1(X)+X')
  \end{eqnarray*}
  On montrera dans ce qui suit, 
  que le choix de $E_1$ pour la construction de cette repr\'sentation 
  $(\Hc_{X^*,A^*},U_{X^*,A^*})$
  avec $A^*\not=0$,
  n'en change pas la classe d'\'equivalence.

\end{itemize}
\begin{rem}  \label{rem_expression_noyau}
  L'alg\`ebre de Lie du noyau $\ker U_{X^*,A^*}$
  de $U_{X^*,A^*}$ est
  $$
  \left( \ker A^*\cap {(X^*)}^\perp\right) \oplus {(A^*)}^\perp
  \quad,  
  $$
  o\`u ${(X^*)}^\perp$ est l'espace vectoriel orthogonal \`a $X^*$ dans $(\Vc,<,>)$,
  et  ${(A^*)}^\perp$ est l'espace vectoriel orthogonal \`a $A^*$ dans $(\Zc,<,>)$.
  En effet, avec les notations ci-dessus, on a l'\'equivalence :
  $$
  \forall X'\in E_1 \quad         
  U_{X^*,A^*} (n).F (X')=F(X')
  \Longleftrightarrow
  \left\{
    \begin{array}{l}
      p_1(X)=p_2(X)=0\\
      <X^*,X>=0\\
      A=0
    \end{array}\right.
  \quad.
  $$
\end{rem}
\begin{rem} \label{rem_expression_centre}
  La repr\'esentation $U_{X^*,A^*}$ s'identifie sur le centre $\Zc$ avec le caract\`ere :
  $$
  \exp A\,\longrightarrow\, \exp (i<A^*,A>)
  \quad.
  $$
\end{rem}

Ces repr\'esentations sont celles donn\'ees par la m\'ethodes des orbites :
\begin{prop}
  \label{prop_hatN}
  Le repr\'esentant privil\'egie de chaque orbite $\Nc^*/N$ 
  est $f=A^*+X^*$, 
  o\`u $A^*\in \Zc^*$ et  $X^*\in \Vc^*$
  tel que $X^*\in \ker A^*$.
  On a $(\Hc_{X^*,A^*},U_{X^*,A^*})\in T_f$.
\end{prop}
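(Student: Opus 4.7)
Le plan de la preuve est d'appliquer le th\'eor\`eme de Kirillov (th\'eor\`eme~\ref{thm_kirillov}) en deux temps : d'abord d\'ecrire les orbites coadjointes $\Nc^*/N$ et produire le repr\'esentant privil\'egi\'e annonc\'e, puis, pour chaque tel $f = X^* + A^*$ avec $X^* \in \ker A^*$, exhiber une polarisation en $f$ dont la repr\'esentation induite est \'equivalente \`a $(\Hc_{X^*,A^*}, U_{X^*,A^*})$.

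Pour la description des orbites, je calculerai l'action coadjointe de $N$ sur $\Nc^*$. Comme $\Nc$ est nilpotente de pas deux, on a $\Ad(\exp(Y+B)).(W+C) = W + C + [Y, W]$ pour tous $W \in \Vc$, $C \in \Zc$; il en d\'ecoule
$\Coad(\exp(Y+B)).(X^* + A^*) = (X^* + \varphi_{A^*,Y}) + A^*$,
o\`u $\varphi_{A^*,Y} \in \Vc^*$ est la forme lin\'eaire $W \mapsto <A^* Y, W>$ (gr\^ace \`a l'\'egalit\'e~(\ref{egalite_crochet_pdtsc})). La composante sur $\Zc^*$ est donc invariante, et celle sur $\Vc^*$ se d\'eplace dans l'image de $A^*$ (identifi\'ee \`a un sous-espace de $\Vc^*$ par le produit scalaire). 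Comme $A^*$ est antisym\'etrique, on a $(\mathrm{Im}\, A^*)^\perp = \ker A^*$, ce qui fournit dans chaque orbite un unique \'el\'ement dont la partie sur $\Vc^*$ appartient \`a $\ker A^*$.

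Pour le choix d'une polarisation, deux cas se pr\'esentent. Si $A^* = 0$, l'alg\`ebre $\Nc$ elle-m\^eme est trivialement une polarisation en $X^*$, et le caract\`ere associ\'e $\exp(W+C) \mapsto e^{i<X^*, W>}$ donne directement la repr\'esentation $U_{X^*,0}$ de dimension 1. Si $A^* \neq 0$, je poserai $\Nc_0 = \ker A^* \oplus E_2 \oplus \Zc$, o\`u $E_2$ est le sous-espace construit avant l'\'enonc\'e. Il faudra v\'erifier que $\Nc_0$ est une sous-alg\`ebre (c'est imm\'ediat car $[\Nc, \Nc] \subset \Zc \subset \Nc_0$), qu'elle est subordonn\'ee \`a $f = X^* + A^*$ (en utilisant $A^*|_{\ker A^*} = 0$, l'isotropie de $E_2$ pour $\omega_{A^*,r}$, et le fait que $X^*$ s'annule sur $\Zc$), et enfin que sa dimension $\dim\Nc - v_0$ atteint la valeur maximale attendue $\dim\Nc - \frac12 \, \mathrm{rang}\, \omega_f$, car $\mathrm{rang}\, \omega_f = \mathrm{rang}\, A^* = 2v_0$.

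L'\'etape la plus d\'elicate sera l'identification explicite de $\Ind_{N_0}^N \chi$ avec $(\Hc_{X^*,A^*}, U_{X^*,A^*})$. On r\'ealisera $N/N_0 \simeq E_1$ via la section $s : X' \in E_1 \mapsto \exp X'$, ce qui identifie $L^2(N/N_0)$ avec $L^2(E_1) = \Hc_{X^*,A^*}$. Pour $n = \exp(X+A)$, il s'agit d'\'ecrire $n^{-1} s(X') = s(X' - p_1(X)) \cdot h$ avec $h \in N_0$, puis de calculer $\chi(h^{-1})$ gr\^ace \`a la formule de Baker--Campbell--Hausdorff en pas deux. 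Les projections $p_0, p_1, p_2$ et la sym\'etrisation $p_1(X+2X') = p_1(X) + 2X'$ feront appara\^\i tre le terme central $\frac12 [p_1(X+2X'), p_2(X)] + A$ annonc\'e; le facteur $e^{i<X^*, X>}$ r\'esulte de ce que $<X^*, p_1(X)> = <X^*, p_2(X)> = 0$ puisque $X^* \perp \mathrm{Im}\, A^* = E_1 \oplus E_2$, ce qui ram\`ene $<X^*, X>$ \`a $<X^*, p_0(X)>$. Enfin, l'ind\'ependance de la classe d'\'equivalence vis-\`a-vis du choix de $E_1$ d\'ecoulera de l'unicit\'e (\`a \'equivalence pr\`es) de la repr\'esentation induite par diff\'erentes polarisations en un m\^eme $f$, d\'ej\`a contenue dans le th\'eor\`eme~\ref{thm_kirillov}.
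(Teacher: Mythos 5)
Your proposal follows essentially the same route as the paper: compute the coadjoint action to reduce each orbit to a representative $X^*+A^*$ with $X^*\in\ker A^*$, take the polarization $\ker A^*\oplus E_2\oplus\Zc$ (with $\Nc$ itself when $A^*=0$), compute the induced representation explicitly via the decomposition along $p_0,p_1,p_2$, and invoke Kirillov's theorem for the independence of the choice of $E_1$; your extra check that $\dim\Nc_0=\dim\Nc-v_0$ is maximal is a small addition the paper leaves implicit. The only point to watch in the write-up is the induction convention: the paper realizes $U(n).G(n')=G(n'n)$ with left covariance under $N_0$, so the translation appears as $F(p_1(X)+X')$ rather than the $F(X'-p_1(X))$ your decomposition $n^{-1}s(X')=s(X'-p_1(X))h$ would produce, and you must align the two to recover the stated operator $U_{X^*,A^*}$ exactly rather than merely an equivalent one.
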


On obtient donc $\hat{N}$ comme l'ensemble des classes 
$T_{X^*+A^*}$
des repr\'esentations $U_{X^*,\Ac^*}$,
avec $A^*\in \Zc^*$ et $X^*\in \ker A^*$.

Le reste de cette sous section est consacr\'e 
\`a la d\'emonstration de cette proposition.

\paragraph{Repr\'esentant de $\Nc^*/N$.}
Donnons l'expression des repr\'esentations adjointe
et coadjointe pour $n=\exp(X+A)\in N$:
\begin{eqnarray*}
  \forall X'+A'\in \Nc&\qquad
  \Ad.n (X'+A') 
  &=\,
  X'+A'
  +[X, X']
  \quad ,\\    
  \forall X^*+A^*\in \Nc^*&\qquad
  \Coad.n(X^*+A^*) 
  &=\,
  X^*+A^*
  -A^*.X
  \quad .
\end{eqnarray*}
Ainsi,
l'orbite $N.f$ de $f=X^*+A^*\in \Nc^*$ pour l'action coadjointe de $N$
est l'espace affine $X^*+\Im A^*+A^*\subset\Nc^*$.
D\'ecomposons 
$X^*=(X^*-X^*_0)+X^*_0$ 
o\`u en identifiant $\Vc\sim \Vc^*$ par le produit scalaire,
$p_0(X^*)=X^*_0\in\ker A^*$, $X^*-X^*_0\in {\ker A^*}^\perp=\Im A^*$;
on a $\Coad.\exp( -(X^*-X^*_0).(f)=p_0(X^*)+A^*)$.
Ainsi, on peut choisir comme repr\'esentant privil\`egi\'e 
d'une orbite $X^*_0+A^*$ avec $X^*_0\in\ker A^*$.

\paragraph{Construction d'une repr\'esentation   associ\'ee.}
Fixons une forme lin\'eaire $f=X^*+A^*$ avec $X^*\in\ker A^*$.
On d\'efinit la forme $B_f$ bilin\'eaire antisym\'etrique  sur $\Nc$ associ\'ee \`a $f$ :
$$
\forall \; V,V'\in\Nc
\quad :\quad 
B_f(V,V')\,=\,f([V,V'])
\quad.
$$
Or on voit facilement gr\^ace \`a  l'\'egalit\'e~(\ref{egalite_crochet_pdtsc}) :
$$
B_f(X+A,X'+A')
\,=\,
f([X,X'])
\,=\, 
<A^*,[X,X']>
\,=\, 
w_{A^*}(X,X')
\quad .  
$$
puis que :
\begin{itemize}
\item si $A^*=0$, alors $B_f$ est nulle, et $\Lc_f=\Nc$ est une polarisation en $f$,
\item si $A^*$ est non nul, et si 
  $E_1$ est un sous-espace totalement isotrope pour $\omega_{A^*,r}$,
  alors en posant $E_2:=A^* E_1\subset \Im A^*$,
  le sous espace  $\Lc_f:=E_2\oplus \ker A^*\oplus\Zc$
  est une polarisation en $f$.
\end{itemize}

On note 
\begin{itemize}
\item $L=L_f=\exp \Lc_f$ le sous-groupe de $N$ d'alg\`ebre de Lie $\Lc_f$.
\item $\chi$ le caract\`ere sur $L$ dont la diff\'erentielle est $if$.
\item $U$ la repr\'esentation de $N$ induite par $\chi$.
\end{itemize}
Lorsque $A^*=0$, alors on a $L=N$, et $U$ est la repr\'esentation
sur $N$ qui s'identifie au caract\`ere $\chi$:
$$
\forall n=\exp (X+A) \in N,
\qquad 
\chi(n)=\exp( i<X^*,X>)
\quad.
$$

Pla\c cons-nous maintenant dans le cas $A^*\not=0$,
et explicitons $U$.

L'espace de la repr\'esentation est $\Hc_U$ est
l'ensemble des fonctions 
$G: N\rightarrow \Cb$ telles que :
\begin{eqnarray*}
  &1.&  \forall l\in L, n\in N 
  \quad G(ln)=\chi(l) F(n)\quad ,\\
  &2.&  \dot{n}\rightarrow \nn{G(n)}\in L^2(N/L) \quad . 
\end{eqnarray*}
La repr\'esentation est donn\'ee par :
$$
\forall\, G\in\Hc_U\; , \quad
\forall\, n,n'\in N\; ,
\qquad U (n).G(n')
\,=\,
G(n'n)
\quad .
$$

Explicitons 
l'expression de la repr\'esentation $(\Hc_U,U)$.

\begin{lem}
  Soit $G\in \Hc_U$ et $n=\exp (X+A)$ et $n'=\exp (X'+A')$.
  On a :
  \begin{eqnarray*}
    U(n).G(n')
    &=&
    \exp \left(i<A^*, \frac12\left([p_1(X+2X'),p_2(X)]+[p_1(X'),p_2(X')]\right)+A'+A>\right)\\
    && \quad e^{i<X^*,X+X'>}
    G\circ\exp\circ p_1(X+X')
    \quad.
  \end{eqnarray*}  
\end{lem}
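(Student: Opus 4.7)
The plan is to rewrite $n'n$ as a product $\exp(l)\exp(Y)$ with $l\in \Lc_f=(E_2\oplus \ker A^*)\oplus \Zc$ and $Y\in E_1$, then exploit the left-equivariance $G(\exp(l)m)=\chi(\exp l)G(m)$ satisfied by elements of $\Hc_U$, combined with $\chi(\exp l)=e^{if(l)}$ where $f=X^*+A^*$.

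First, since $N$ is 2-step nilpotent, the Baker--Campbell--Hausdorff formula gives
$$
n'n \,=\,\exp\bigl(X+X'+A+A'+\tfrac12[X',X]\bigr).
$$
I would set $Y:=p_1(X+X')\in E_1$ and $W:=p_0(X+X')+p_2(X+X')\in \ker A^*\oplus E_2$. Writing $l=W+C'$ with $C'\in\Zc$, a second application of BCH to $\exp(W+C')\exp(Y)$ and comparison with the previous display forces
$$
C'\,=\,A+A'+\tfrac12\bigl([X',X]-[W,Y]\bigr),
$$
so that $n'n=\exp(l)\exp(Y)$. Equivariance then produces $U(n).G(n')=G(n'n)=e^{if(l)}\,G(\exp p_1(X+X'))$.

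Next I would simplify $f(l)=<X^*,W>+<A^*,C'>$. Since $X^*\in\ker A^*=(\Im A^*)^\perp$ is orthogonal to $E_1\oplus E_2=\Im A^*$, one has $<X^*,W>=<X^*,p_0(X+X')>=<X^*,X+X'>$. For the $A^*$-term I would translate every bracket into the symplectic form $\omega_{A^*}$ via~(\ref{egalite_crochet_pdtsc}), expand $X=p_0(X)+p_1(X)+p_2(X)$ (and similarly for $X'$), and apply two vanishing criteria: $\omega_{A^*}$ kills any term with a $p_0$ factor, and $E_1,E_2$ are totally isotropic for $\omega_{A^*,r}$, killing the remaining $E_i\times E_i$ terms.

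The main obstacle will be the combinatorial bookkeeping of the surviving $E_1\times E_2$ contributions from $[X',X]$ and $[W,Y]$, which must be recombined via antisymmetry into
$$
\tfrac12<A^*,[X',X]-[W,Y]>\,=\,<A^*,\tfrac12[p_1(X+2X'),p_2(X)]+\tfrac12[p_1(X'),p_2(X')]>.
$$
Added to the contribution $<A^*,A+A'>$ and the computation of $<X^*,W>$ above, this yields exactly the exponent in the claimed formula.
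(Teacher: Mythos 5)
Your proposal is correct and follows essentially the same route as the paper: factor $n'n$ as an element of $L_f=E_2\oplus\ker A^*\oplus\Zc$ times $\exp(p_1(X+X'))$, apply the covariance property of $\Hc_U$, and evaluate $f$ on the $L_f$-part using $X^*\in\ker A^*$ together with the total isotropy of $E_1$ and $E_2$ for $\omega_{A^*,r}$. Your single factor $\exp(l)$ with the BCH correction $C'$ is just a repackaging of the paper's factorization $n'n=abc$ (central factor $a$ times $b\in L$ times $c=\exp(p_1(X+X'))$), and the final combinatorial recombination into $\tfrac12[p_1(X+2X'),p_2(X)]+\tfrac12[p_1(X'),p_2(X')]$ checks out.
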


On utilise les notations $E_2,p_0,p_1,p_2$ 
d\'evelopp\'ees plus haut
($E_1$ \'etant fix\'e). 
Rappelons $\Id_\Vc=p_0+p_1+p_2$.

\begin{proof} On garde les notations du lemme.
  On a $n'n=abc$ o\`u :
  \begin{eqnarray*}
    a
    &=&
    \exp(A'+A+\frac 12 [X',X])
    \quad,\quad
    c
    \,=\,
    \exp( p_1(X+X'))
    \quad,\\   
    b
    &=&
    \exp \left(X+X'\right)\;
    \exp \left(-p_1(X+X')  \right)\\
    &=&
    \exp (X+X' -p_1(X+X')
    +\frac12 [X+X',
    -p_1(X+X') ] )\\
    &=&
    \exp( (p_2+ p_0)(X+X')+\frac12 [X+X',
    -p_1(X+X')])
    \quad.
  \end{eqnarray*}
  On remarque $a,b\in L$, donc $G(n'n)= \chi(a) \chi(b) G(c)$. 
  Or par d\'efinition de $\chi$, on a d'une part :
  \begin{eqnarray*}
    \chi(a)
    &=&
    \exp(if( A'+A+\frac 12 [X',X]))\\
    &=&
    \exp(i<A^*,A'+A> + \frac i2<A^*, [X',X]>)
    \quad,
  \end{eqnarray*}
  d'autre part,
  \begin{eqnarray*}
    \chi(b)
    &=&
    \exp\left(if\left( (p_2+ p_0)(X+X')+ \frac12 [X+X', -p_1(X+X')]\right)\right)\\
    &=&
    \exp\left( i<X^*,(p_2+p_0)(X+X')> + \frac i2 <A^*,[X+X',-p_1(X+X') ]>\right)
    \quad.
  \end{eqnarray*}

  Comme on a choisi $X^*\in \ker A^*$, on a $<X^*, p_1(X+X')>=0$ et :
  $$
  <X^*,(p_2+ p_0)(X+X')>
  \,=\,
  <X^*,X+X'>\quad;
  $$

  On a 
  comme   le sous espace $E_1$ est isotrope pour
  $\omega_{A^*,r}$,
  \begin{eqnarray*}
    &&<A^*,[X+X',-p_1(X+X') ]>
    \,=\,
    \omega_{A^*}(X+X',-p_1(X+X'))\\
    &&\qquad=\,
    \omega_{A^*,r}((p_1+p_2)(X+X'),-p_1(X+X'))
    \,=\,
    \omega_{A^*,r}(p_2(X+X'),-p_1(X+X'))\\
    &&\qquad=\,
    <A^*,[p_1(X+X'),p_2(X+X') ]>
    \quad ,
  \end{eqnarray*}
  et comme   le sous espace $E_2$ l'est aussi :
  \begin{eqnarray*}
    <A^*, [X',X]>
    &=&
    \omega_{A^*}(X',X)
    \,=\,
    \omega_{A^*,r}((p_1+p_2)X',(p_1+p_2)X)\\
    &=&
    \omega_{A^*,r}(p_1(X'),p_2(X))
    +
    \omega_{A^*,r}(p_2(X'),p_1(X))\\
    &=&
    <A^*, [p_1(X'),p_2(X)]+[p_2(X'),p_1(X)]>
    \quad.
  \end{eqnarray*}
  En rassemblant les termes, on obtient donc :
  \begin{eqnarray*}
    G(n'n)
    &=&
    \exp(i<A^*,A'+A> + \frac i2<A^*, [p_1(X'),p_2(X)]+[p_2(X'),p_1(X)]>)\\
    &&
    \exp( i<X^*,X+X'>+\frac i2 <A^*,[p_1(X+X'),p_2(X+X') ]>)\\
    &&
    \quad  G(\exp(p_1(X+X')))
    \quad ,
  \end{eqnarray*}
  puis l'expression de $(\Hc_U,U)$.
\end{proof}

Maintenant 
la transformation unitaire:
$$
\begin{array}{rcl}
  \Hc_U &\longrightarrow &\Hc_{X^*,A^*}=L^2(E_1)\\
  G&\longmapsto& F=\{X_1\mapsto G\circ\exp\circ p_1(X_1))\}
\end{array}
\quad.
$$
entrelace les repr\'esentations $U$ et  $U_{X^*,A^*}$ de $N$.

Deux choix diff\'erents de sous espace $E_1$ totalement isotrope pour $\omega_{A^*,r}$
conduisent \`a deux polarisations pour la m\^eme forme lin\'eaire $f=X^*+A^*$,
et donc \`a deux repr\'esentations $U$ \'equivalentes,
puis \`a deux repr\'esentations $U_{X^*,A^*}$ \'equivalentes.

Ceci ach\`eve la d\'emonstration 
de  la proposition~\ref{prop_hatN}.

\subsection{Description de $\hat{N}/G$}
\label{subsec_cor_kirillov}

Nous d\'emontrons ici le corollaire du th\'eor\`eme de Kirillov
suivant :
\begin{cor}[\mathversion{bold}{$\hat{N}/G$}]
  \label{cor_thm_kirillov}
  Soient $G,N$ deux groupes.
  On suppose que $N$ est nilpotent; 
  on note $\Nc$ son alg\`ebre de Lie et 
  $\Nc^*$ le dual de cette alg\`ebre.
  On suppose \'egalement que 
  $G$ agit contin\^ument par automorphismes sur $N$;
  le groupe $G$ agit alors sur l'ensemble $\hat{N}$ 
  et sur le dual $\Nc^*$ (par automorphismes):
  $$
  \left\{ \begin{array}{lcl}
      G\times \hat {N}
      &\longrightarrow &
      \hat{N}\\
      (g,\rho)
      &\longmapsto&
      g.\rho=
      \{n\mapsto \rho(g^{-1}.n)\}
    \end{array}\right. 
  \;\mbox{et}\;
  \left\{ \begin{array}{lcl}
      G\times \Nc^*
      &\longrightarrow &
      \Nc^*\\
      (g,f)
      &\longmapsto&
      g.f=
      \{n\mapsto f(g^{-1}.n)\}
    \end{array}\right.
  \;.
  $$
  Pour un \'el\'ement $f\in \Nc^*$,
  on note $G.f$ l'orbite pour cette derni\`ere action,
  et $T_f$ la repr\'esentation de $N$ associ\'ee par le th\'eor\`eme de Kirillov.

  On a pour $f\in\Nc^*$ et $g\in G$ : 
  \begin{equation}
    \label{eg_rep_action}
    g.T_f
    \,=\,
    T_{g.f}
    \quad .
  \end{equation}
  On en d\'eduit la bijection :
  $$
  \left\{
    \begin{array}{rcl}
      N\backslash \Nc^*/G&\longrightarrow&\hat{N}/G\\
      G.f&\longmapsto&G.T_f
    \end{array}
  \right.
  \quad.
  $$
\end{cor}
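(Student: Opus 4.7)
La preuve se d\'ecompose naturellement en deux \'etapes : \'etablir d'abord l'\'egalit\'e d'\'equivariance~(\ref{eg_rep_action}), puis en d\'eduire la bijection sur les quotients. Mon plan est d'exploiter la construction de $T_f$ comme repr\'esentation induite depuis une polarisation, et de suivre comment cette construction se transporte par l'action d'un automorphisme $g\in G$.

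Pour la premi\`ere \'etape, je d\'esignerais encore par $g$ l'automorphisme d'alg\`ebre de Lie obtenu en diff\'erentiant l'action de $g$ sur $N$; on a alors $g\circ\exp=\exp\circ g$. Trois observations s'encha\^\i nent alors. D'abord, si $\Nc_0$ est une polarisation en $f$, l'image $g.\Nc_0$ est une polarisation en $g.f$: c'est une sous-alg\`ebre de m\^eme dimension, et
$$
(g.f)([g.\Nc_0,g.\Nc_0])\,=\,(g.f)(g.[\Nc_0,\Nc_0])\,=\,f([\Nc_0,\Nc_0])\,=\,0\quad.
$$
Ensuite, le caract\`ere $\chi_{g.f}$ sur $g.N_0=\exp(g.\Nc_0)$, de diff\'erentielle $i(g.f)$, co\"\i ncide avec le transport $\chi_f\circ g^{-1}$. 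Enfin, l'application $F\mapsto F\circ g^{-1}$ sur l'espace des fonctions covariantes r\'ealise un op\'erateur d'entrelacement entre $g.(\Ind_{N_0}^N\chi_f)$ et $\Ind_{g.N_0}^N\chi_{g.f}$, ce que l'on v\'erifie directement \`a partir des d\'efinitions. Combin\'ees avec le th\'eor\`eme~\ref{thm_kirillov}, ces trois observations donnent pr\'ecis\'ement~(\ref{eg_rep_action}).

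Pour la seconde \'etape, l'\'egalit\'e~(\ref{eg_rep_action}) exprime la $G$-\'equivariance de la bijection de Kirillov $\Nc^*/N\rightarrow\hat{N}$, $N.f\mapsto N.T_f$. L'action de $G$ descend bien au quotient $\Nc^*/N$ car, $G$ agissant sur $N$ par automorphismes, elle permute les automorphismes int\'erieurs de $N$ et respecte donc les orbites coadjointes. En quotientant de part et d'autre de la bijection de Kirillov par cette action r\'esiduelle de $G$, on obtient la bijection annonc\'ee $G.f\mapsto G.T_f$ de $N\backslash\Nc^*/G$ sur $\hat{N}/G$.

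L'\'etape que j'anticipe comme la plus d\'elicate est la v\'erification de l'entrelacement dans la troisi\`eme observation : il faut ma\^\i triser \`a la fois la d\'efinition des repr\'esentations induites, celle de l'action $g.\rho(n)=\rho(g^{-1}.n)$, et la fa\c{c}on dont l'espace $L^2(N/N_0)$ (ou, de mani\`ere \'equivalente, l'espace des fonctions covariantes sur $N$) se transporte sous $g$. Le reste se r\'eduit \`a du formalisme sur les orbites et les quotients, d\'ej\`a contenu implicitement dans la bijection de Kirillov.
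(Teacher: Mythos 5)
Votre proposition est correcte et suit essentiellement la m\^eme route que la preuve du texte : transporter par $g$ la polarisation et le caract\`ere, exhiber la composition par $g$ (le texte) ou par $g^{-1}$ (vous, ce qui revient au m\^eme) comme op\'erateur d'entrelacement entre $g.\Ind_{N_0}^N\chi_f$ et $\Ind_{g.N_0}^N\chi_{g.f}$, puis passer aux quotients. Le seul point que le texte traite longuement et que vous diff\'erez est la normalisation des mesures sur $N_0$, $g.N_0$ et les quotients (via le jacobien constant $\nn{\det g}$) pour que l'entrelaceur soit effectivement unitaire ; vous identifiez correctement ce point comme l'\'etape d\'elicate, et il se compl\`ete de fa\c con routini\`ere.
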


\begin{proof}[du corollaire~\ref{cor_thm_kirillov}]
  Montrons l'\'egalit\'e~(\ref{eg_rep_action}).
  Fixons  $f\in\Nc^*$ et $g\in G$. 
  On choisit une polarisation $\Nc_0$ en $f$.
  On note alors $g.\Nc_0=\Nc_0'$. 
  Comme $G$ agit par automorphismes sur $N$,
  on v\'erifie ais\'ement que 
  $\Nc_0'$ est une polarisation en $f':=g.f$.
  On note $\chi$ et $\chi'$ 
  les homomorphismes sur $N_0=\exp \Nc_0$ et $N_0'=\exp \Nc_0'$
  ayant pour diff\'erentielles $if$ et $if'$ respectivement. 
  On a :
  $$
  \Ind_{N_0}^N \chi = (\Hc,\Pi) \in T_f
  \qquad\mbox{et}\qquad
  \Ind_{N_0'}^N \chi'=(\Hc',\Pi')  \in T_{f'}
  \quad .
  $$
  \paragraph{Mesures choisies.}
  Dans ces inductions, 
  on suppose que les mesures ont \'et\'e choisies de la mani\`ere suivante :
  \begin{itemize}
  \item Les mesures $dn$ et $dn_0$ sur $N$ et $N_0$ sont choisies
    telle qu'il existe une mesure $d\dot{n}$ sur
    $N/N_0$, $N$-invariante  et
    qu'elles v\'erifient
    pour toute fonction continue $h$ \`a support compact
    sur~$N$ :
    $$
    \int_{N} h(n) dn
    \,=\,
    \int_{N/N_0} \left( \int_{N_0} h(n\, n_0) dn_0 \right) d\dot{n}
    \quad.
    $$
  \item Le groupe $G$ agit sur l'alg\`ebre de Lie $\Nc$ par automorphismes.
    Le jacobien du changement de variable $n\mapsto g.n$ 
    est donc la valeur absolue du d\'eterminant 
    de l'application $X\mapsto g.X$ sur $\Nc$;
    en particulier, il  est constant sur $N$;
    on le note $\nn{\det g}$.

    On d\'efinit la mesure $dn'_0$ sur $N'_0$
    comme la mesure-image par  $n\mapsto g.n$  de $\nn{\det g} dn_0$ :
    c'est-\`a-dire pour toute fonction continue $h$ \`a support compact
    sur $N'_0$ :
    $$
    \int_{N_0}h(g.n_0)\nn{\det g} dn_0
    \,=\,
    \int_{N'_0}h(n'_0)  dn'_0
    \quad;
    $$
  \item On d\'efinit l'ensemble $C_0(N;N_0)$
    des fonctions continues sur $N$,
    invariantes par $N_0$, 
    qui pass\'ees au quotient sur
    $N/N_0$, sont \`a support compact;
    on fait de m\^eme pour $C_0(N;N_0')$.
    Les mesures (positives de Radon) sur $N/N_0$ 
    s'identifient aux
    formes lin\'eaires positives sur $C_0(N;N_0)$; 
    et de m\^eme sur $N/N'_0$.

    Comme $g$ est un automorphisme continu 
    qui envoie $N_0$ sur $N'_0$,
    on a la bijection :
    $$
    \left\{
      \begin{array}{rcl}
        C_0(N;N_0)&\longrightarrow&C_0(N;N_0')\\
        h&\longmapsto& h(g.):n\mapsto h(g.n)
      \end{array}\right.\quad;
    $$
    cela permet de d\'efinir $d\dot{n}'$ 
    la mesure sur $N/N'_0$ 
    comme la forme lin\'eaire positive sur $C_0(N;N_0')$ 
    donn\'ee par :
    $$
    h \longmapsto \int_{N/N_0} h(g.n) d\dot{n}
    \qquad \mbox{i.e.}\qquad
    \int_{N/N_0}h(g.n) d\dot{n}
    \,=\,
    \int_{N/N'_0}h(n) d\dot{n}'
    \quad.
    $$
  \end{itemize}
  La mesure $d\dot{n}'$ est $N$-invariante car $d\dot{n}$ l'est;
  en effet, on a pour $ h\in C_0(N;N'_0), n_1\in N$ :
  $$
  \int_{N/N_0'}h(n_1 n) d\dot{n}'
  \,=\,
  \int_{N/N_0}h(n_1\,g.n) d\dot{n}
  \,=\,
  \int_{N/N_0}h(\,g.n) d\dot{n}
  \quad.
  $$
  De plus, la mesure $d\dot{n}'$ v\'erifie 
  pour toute fonction $h$ continue \`a support compact dans $N$ :
  \begin{eqnarray*}
    \int_{N} h(n) dn
    &=&
    \int_{N} h(g.n) \nn{\det g} dn
    \,=\,
    \int_{N/N_0} \left( \int_{N_0} h(g.(n\, n_0)) \nn{\det g} dn_0\right) d\dot{n}\\
    &=&
    \int_{N/N_0} \left( \int_{N_0} h(g.n\, g.n_0)) \nn{\det g} dn_0 \right) d\dot{n}
    \,=\,
    \int_{N/N_0'} \left( \int_{N_0'} h(n'\, n_0') dn_0' \right) d\dot{n}'
    \quad.
  \end{eqnarray*}
  par d\'efinition de $dn'_0$ et $d\dot{n}'$. 
  \paragraph{Op\'erateur d'entrelacement entre
    \mathversion{bold}{$g.\Pi$} et \mathversion{bold}{$\Pi'$}.}
  On pose  pour une fonction $u\in \Hc'$ et pour $n\in N$ :
  $\{A.u\} (n) \,=\, u(g.n)$.

  Montrons $A : \Hc' \rightarrow \Hc$.
  Soit $u\in \Hc'$. On a pour $n_0\in N_0$ et $ n\in N$ :
  $$
  A.u(n_0\,n)
  \,=\,
  u(g.(n_0\,n))
  \,=\,
  u(g.n_0\,g.n)
  \,=\,
  \exp(if'(g.n_0))
  u(g.n)
  \quad,
  $$
  par d\'efinition de $\Hc'$, car $g.n_0\in N'_0$.
  Comme $f'=g.f$ et $u(g.n)=A.u(n)$,
  on a bien : 
  $$
  A.u(n'_0n)=\exp(i f(n'_0)A.u(n))
  \quad.
  $$
  De plus,
  la fonction $\nn{A.u}$ passe au quotient en une fonction sur $N/N_0$
  qui est  localement int\'egrable
  car $n\mapsto g.n$ est continue;
  d'apr\`es le choix des mesures $d\dot{n}$ et $d\dot{n}'$,
  on a :
  \begin{eqnarray*}
    \int_{N/N_0} \nn{A.u(n)}^2 d\dot{n}
    &=&
    \int_{N/N_0} \nn{u(g.n)}^2 d\dot{n}\\
    &=&
    \int_{N/N'_0} \nn{u(n')}^2 d\dot{n'}
    \,=\,
    \nd{u}_{\Hc'}^2 \;<\infty
    \quad;
  \end{eqnarray*}
  et donc $A.u\in \Hc$ et $\nd{A.u}_{\Hc}=\nd{u}_{\Hc'}$.

  Montrons $ g.\Pi\circ A =A\circ \Pi' $.
  En effet pour $g\in G,u\in \Hc', n,n'\in N$, on a:
  \begin{eqnarray*}
    \left\{g.\Pi(n)\circ A . u \right\}(n')
    &=&
    \left\{\Pi(g^{-1}.n)\circ A .u\right\}(n')
    \,=\,
    A.u (n'\,g^{-1}.n)\\
    &=&
    u(g. (n'\,g^{-1}.n))
    \,=\,
    u(g.n'\,n)
    \quad,\\
    \left\{ A\circ \Pi'(n).u\right\}\,(n')
    &=&
    \Pi'(n).u(g.n')
    \,=\,
    u(g.n'\, n)
    \quad.
  \end{eqnarray*}

  Nous venons de montrer que $A$ est un op\'erateur unitaire 
  qui entrelace $g.\Pi$ et $\Pi'$.
  L'\'egalit\'e~(\ref{eg_rep_action}) entre classe de repr\'esentations est donc d\'emontr\'ee.

  D'apr\`es cette \'egalit\'e, 
  les classes de repr\'esentations associ\'ees 
  aux formes lin\'eaires de $G.f$ 
  sont les \'el\'ements de $G.T_f$.
\end{proof}

Sous les hypoth\`eses du corollaire pr\'ec\'edent,
pour un \'el\'ement $\rho\in\hat{N}$,
on note $G_\rho$ son \textbf{groupe stabilisateur}
\index{Groupe!stabilisateur d'une repr\'esentation} :
\index{Notation!Groupe!$G_\rho$} 
$$
G_\rho
\,=\,
\{ g\in G;\quad  g.\rho\,=\, \rho \}
\quad.
$$

\begin{prop}  [Stabilisateur]
  \label{prop_stab}
  Soient $N$ un groupe nilpotent, 
  et $K$ un groupe (localement compact) 
  qui agit contin\^ument sur~$N$ comme groupe d'automorphismes.
  On note $G=K\triangleleft N$ leur produit semi-direct;
  le groupe $G$ agit contin\^ument par automorphisme sur~$N$.

  Fixons $\rho\in \hat{N}$. 
  Alors le groupe $G_\rho$ stabilisateur de $\rho$ 
  peut s'\'ecrire 
  comme $K_\rho\triangleleft N$,
  o\`u $K_\rho$ est le sous-groupe :
  $$
  K_\rho
  \,:=\,
  \{ k\in K:\;
  k.\rho = \rho
  \} 
  \,\subset\, K \quad .
  $$
  \index{Notation!Groupe!$K_\rho$} 

  On peut toujours supposer $\rho=T_f, f\in  \Nc^*$,
  et dans ce cas :
  $$
  K_\rho
  \,=\,
  \{ k\in K\subset G:\;
  k.f \in N.f
  \} \quad .
  $$
\end{prop}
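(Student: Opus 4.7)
Le plan est d'\'etablir la proposition en deux temps. D'abord, je d\'emontrerais que le sous-groupe normal $N$ est toujours contenu dans $G_\rho$ pour toute repr\'esentation $\rho\in\hat{N}$. Puis j'utiliserais la structure de produit semi-direct $G=K\triangleleft N$ pour en d\'eduire que $G_\rho=K_\rho\triangleleft N$. La caract\'erisation finale de $K_\rho$ lorsque $\rho=T_f$ r\'esultera directement du corollaire~\ref{cor_thm_kirillov}.

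Pour la premi\`ere \'etape, j'observerais que l'action de $G$ sur $N$, restreinte au sous-groupe $N\subset G$, est l'action par automorphismes int\'erieurs de $N$ sur lui-m\^eme. En effet, un calcul direct dans le produit semi-direct donne, pour $n_1,n\in N$, la conjugaison $(\Id,n_1)(\Id,n)(\Id,n_1)^{-1}=(\Id,n_1nn_1^{-1})$. Si $(\Hc,\rho)$ est une repr\'esentation dans la classe $\rho\in\hat{N}$, alors $n_1.\rho:n\mapsto \rho(n_1^{-1}nn_1)$ est unitairement \'equivalente \`a $\rho$ via l'op\'erateur unitaire $\rho(n_1)$. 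Par cons\'equent, $n_1.\rho=\rho$ dans $\hat{N}$, et $N\subset G_\rho$.

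Ensuite, tout \'el\'ement $g\in G$ s'\'ecrit de fa\c con unique sous la forme $g=(k,n)$ avec $k\in K$, $n\in N$. En utilisant la loi du produit semi-direct rappel\'ee dans la sous-section~\ref{subsec_paireGuelf}, on a la factorisation $(k,n)=(k,\Id)\cdot(\Id,k^{-1}.n)$. Puisque $(\Id,k^{-1}.n)\in N\subset G_\rho$, on obtient l'\'equivalence $g\in G_\rho \Longleftrightarrow (k,\Id)\in G_\rho$. Or $(k,\Id)$ agit sur $N\subset G$ exactement comme l'\'el\'ement $k\in K$ agit sur $N$, d'o\`u $(k,\Id)\in G_\rho \Longleftrightarrow k.\rho=\rho \Longleftrightarrow k\in K_\rho$. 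On conclut que $G_\rho=\{(k,n)\,:\,k\in K_\rho,\,n\in N\}=K_\rho\triangleleft N$.

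Pour la seconde partie de la proposition, avec $\rho=T_f$ et $f\in \Nc^*$, le corollaire~\ref{cor_thm_kirillov} fournit l'\'egalit\'e $k.T_f=T_{k.f}$ pour $k\in K$. La bijection de Kirillov, qui identifie $\hat{N}$ avec $\Nc^*/N$, entra\^\i ne alors $T_{k.f}=T_f$ dans $\hat{N}$ si et seulement si $k.f$ et $f$ sont dans la m\^eme $N$-orbite coadjointe, c'est-\`a-dire $k.f\in N.f$. Le point d\'elicat est de s'assurer que l'action de $N\subset G$ sur $N$ par automorphismes int\'erieurs induit dualement sur $\Nc^*$ exactement l'action coadjointe qui intervient dans la d\'efinition de l'orbite $N.f$ du th\'eor\`eme de Kirillov~; cela d\'ecoule imm\'ediatement de la d\'efinition de la repr\'esentation coadjointe comme duale de la repr\'esentation adjointe rappel\'ee dans la sous-section~\ref{subsec_kirillov}.
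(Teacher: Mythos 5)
Your proof is correct, and it reaches the semidirect decomposition by a slightly different route than the paper. The paper runs the entire first assertion through the orbit method: using Kirillov and Corollary~\ref{cor_thm_kirillov} it converts $g\in G_\rho$ into $g.f\in N.f$, and then observes that for $g=(k,n)$ this condition depends only on $k$ (because the $N$-part only moves $f$ inside its own coadjoint orbit), before "remounting" the equivalences to recover $K_\rho=\{k : k.\rho=\rho\}$. You instead establish $N\subset G_\rho$ directly on the representation side, exhibiting $\rho(n_1)$ as an explicit intertwiner between $\rho$ and $n_1.\rho$, and then use the factorisation $(k,n)=(k,\Id)\cdot(\Id,k^{-1}.n)$ in the semidirect product to reduce membership of $(k,n)$ in the subgroup $G_\rho$ to membership of $k$ in $K_\rho$; only the second characterisation of $K_\rho$ invokes Kirillov. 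The two arguments are two sides of the same coin, but yours has the small advantage that the decomposition $G_\rho=K_\rho\triangleleft N$ is obtained without any appeal to the orbit method or to nilpotency (inner automorphisms fix unitary equivalence classes for any locally compact group), while the paper's single chain of equivalences is more uniform since it must in any case pass to $\Nc^*$ for the second statement. Your closing remark about matching the conjugation action of $N\subset G$ with the coadjoint action used in Kirillov's theorem is exactly the point that needs checking, and it does follow from the definitions recalled in Section~\ref{subsec_kirillov}.
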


\begin{proof}[de la proposition~\ref{prop_stab}]
  D'apr\`es le th\'eor\`eme~\ref{thm_kirillov} de Kirillov, 
  il existe une forme lin\'eaire $f\in \Nc^*$ tel que 
  $ T_f\, = \, \rho$.
  D'apr\`es le th\'eor\`eme~\ref{thm_kirillov} de Kirillov
  et son corollaire~\ref{cor_thm_kirillov}, on a :
  $$
  g \in G_\rho
  \,\Longleftrightarrow\,
  g.\rho \,=\, \rho 
  \,\Longleftrightarrow\,  
  g.T_f\,=\,T_{g.f}\,=\, T_f
  \,\Longleftrightarrow\,
  g.f\in N.f
  \quad,
  $$
  d'o\`u :
  $$
  \forall g=(k,n)\in G
  \quad :  \quad
  g \in G_\rho
  \,\Longleftrightarrow\,
  k.f \in N.f
  \quad .
  $$
  On en d\'eduit que le groupe $G_\rho$ peut s'\'ecrire 
  comme $K_\rho\triangleleft N$,
  o\`u $K_\rho$ est le sous-groupe :
  $$
  K_\rho
  \,=\,
  \{ k\in K :\;
  k.f \in N.f
  \}\,\subset\, K
  \quad.
  $$

  Or ``en remontant les \'equivalences'' pr\'ec\'edentes,
  on voit :
  $$
  k.f \in N.f
  \,\Longleftrightarrow\,
  T_{k.f}\,=\, k.T_f \,=\, T_f 
  \,\Longleftrightarrow\,
  k.\rho \,=\,\rho
  \quad .
  $$
  On obtient la premi\`ere caract\'erisation de $K_\rho$ donn\'ee dans la proposition~\ref{prop_stab}.
\end{proof}

Nous ne consid\'ererons ici que 
le cas d'un sous groupe normal~$N$ d'un group~ $G$; 
le groupe~$G$ agit alors sur~$N$ par conjugaison, 
donc sur le dual~$\Nc^*$ par la repr\'esentation coadjointe.

\subsection{Th\'eor\`eme de Mackey}

Une partie de la th\'eorie de Mackey d\'ecrit 
$\hat{G}$ en fonction de $\hat{N}$
lorsque $N$ est un sous groupe distingu\'e ferm\'e de type~I 
de~$G$;
le probl\`eme est d'\'etendre les repr\'esentations $\rho$ 
de $N$ 
\`a leurs stabilisateurs $G_\rho$ 
lorsque le quotient $\hat{N}/G$ a une structure mesurable ``raisonnable''.
Ce probl\`eme fut \'etudi\'e plus avant 
en consid\'erant les multiplicateurs de $G_\rho/N$;
nous n'irons pas dans cette direction.

Pour l'\'enonc\'e du th\'eor\`eme ci-dessous, 
nous renvoyons par exemple 
\`a \cite[ch.III sec.B theorem~2]{lipsman}.

\begin{thm}[Mackey]
  \label{thm_mackey}
  \index{Repr\'esentation!th. de Mackey}
  Soient $N$ un groupe nilpotent,
  et $K$ est un groupe compact qui
  agit contin\^ument sur $N$ par automorphismes.
  On note
  $G=K\triangleleft N$ 
  le produit semi-direct.

  Le groupe $G$ agit sur $N$ par conjugaison, 
  donc sur $\hat{N}$.
  Pour $\rho\in \hat{N}$, 
  on note $G_\rho$ 
  le stabilisateur de $\rho$,
  et on pose 
  $$
  \check{G}_\rho
  \,=\,
  \{ \nu \in \hat{G}_\rho;
  \quad \nu_{|N}\;\mbox{est un multiple de }\; \rho \}
  \quad .
  $$

  Alors
  pour $\rho \in \hat{N}$ et $\nu\in \check{G}_\rho$, 
  la repr\'esentation
  $\Pi_\nu =\Ind_{G_\rho}^G \nu $ est irr\'eductible;
  $\hat{G}$ est l'union disjointe :
  $$
  \hat{G}
  \,=\,
  \bigcup_{\rho \in \hat{N}/G}
  \{ \Pi_\nu;\quad \nu \in \check{G}_\rho \}
  \quad.  
  $$
\end{thm}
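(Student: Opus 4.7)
La stratégie consiste à appliquer la machine générale de Mackey pour les produits semi-directs, en articulant la preuve en trois étapes : vérification des hypothèses techniques, construction (irréductibilité des $\Pi_\nu$) et exhaustion (tout élément de $\hat G$ s'obtient ainsi). J'utiliserais comme référence centrale \cite[ch.III sec.B]{lipsman}, en adaptant les arguments à notre cadre très favorable où $N$ est nilpotent connexe simplement connexe et $K$ est compact.

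\emph{Étape 1 (hypothèses).} Je commencerais par vérifier que le cadre de Mackey s'applique. Le groupe $N$ étant un groupe de Lie nilpotent connexe simplement connexe, il est de type I d'après le théorème de Kirillov (théorème~\ref{thm_kirillov}), qui fournit de plus la bijection $\Nc^*/N \to \hat N$. L'action de $G$ sur $\hat N$ se factorise par l'action de $K$, puisque $N$ agit trivialement sur ses classes de représentations (les conjugaisons intérieures donnent des représentations équivalentes). Comme $K$ est compact, les orbites $G.\rho = K.\rho$ sont fermées dans $\hat N$ et le quotient $\hat N/G$ est un espace borélien standard (à base dénombrable, countably separated). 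C'est cette nice propriété qui rendra la théorie de Mackey utilisable.

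\emph{Étape 2 (irréductibilité des $\Pi_\nu$).} Pour $\rho \in \hat N$ et $\nu \in \check G_\rho$, je montrerais que $\Pi_\nu = \mathrm{Ind}_{G_\rho}^G \nu$ est irréductible en utilisant le critère classique de Mackey : si $\sigma$ est une représentation irréductible d'un sous-groupe fermé $H \subset G$ telle que $\sigma_{|H \cap gHg^{-1}}$ soit disjointe de $(g\sigma g^{-1})_{|H \cap gHg^{-1}}$ pour tout $g \notin H$, alors $\mathrm{Ind}_H^G \sigma$ est irréductible. Dans notre situation avec $H = G_\rho$ et $\sigma = \nu$, la condition de disjonction vient directement de ce que, pour $g \notin G_\rho$, la restriction de $g\nu g^{-1}$ à $N$ est un multiple de $g.\rho \neq \rho$, disjointe de $\nu_{|N}$ qui est multiple de $\rho$.

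\emph{Étape 3 (exhaustion et disjonction).} C'est l'étape la plus délicate, et l'obstacle principal de la preuve. Étant donnée $\Pi \in \hat G$, je décomposerais $\Pi_{|N}$ en intégrale directe $\int^\oplus \rho \, d\mu(\rho)$ sur $\hat N$ (possible car $N$ est de type I). L'action adjointe de $G$ sur $\hat N$ rend la classe de mesure $[\mu]$ quasi-invariante sous $G$; l'irréductibilité de $\Pi$ force alors cette mesure à être ergodique. Grâce au caractère countably separated de $\hat N/G$ établi à l'étape 1, une mesure ergodique quasi-invariante est portée par une unique orbite $G.\rho_0$. Le théorème d'imprimitivité de Mackey appliqué à cette orbite homogène fournit alors une représentation $\nu$ de $G_{\rho_0}$ telle que $\Pi \simeq \mathrm{Ind}_{G_{\rho_0}}^G \nu$ et $\nu_{|N}$ soit multiple de $\rho_0$, c'est-à-dire $\nu \in \check G_{\rho_0}$; l'irréductibilité de $\Pi$ force celle de $\nu$. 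Pour la disjonction, deux paires $(\rho, \nu)$ et $(\rho', \nu')$ donnent des $\Pi_\nu$ équivalents si et seulement si elles sont conjuguées par $G$, par le même théorème d'imprimitivité appliqué à l'entrelacement. La difficulté majeure réside dans l'argument ergodique assurant la concentration de $\mu$ sur une seule orbite — c'est précisément là que la compacité de $K$ et la structure borélienne standard de $\hat N/G$ sont essentielles.
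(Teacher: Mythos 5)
Le texte ne démontre pas ce théorème : il se contente de renvoyer à \cite[ch.III sec.B theorem~2]{lipsman} et de rappeler, juste après l'énoncé, les hypothèses générales sous lesquelles il vaut ($G$ localement compact, $N$ sous-groupe distingué fermé régulièrement plongé de type~I). Votre proposition reconstitue donc quelque chose que le papier délègue entièrement à la littérature, et elle le fait correctement dans ses grandes lignes : c'est bien la machine de Mackey standard (critère d'irréductibilité par disjonction des restrictions à $H\cap gHg^{-1}$, qui ici contiennent toujours $N$ ; décomposition en intégrale directe de $\Pi_{|N}$, ergodicité de la classe de mesure, concentration sur une orbite, théorème d'imprimitivité). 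Deux remarques de précision si vous vouliez rédiger les détails. D'abord, à l'étape~1, l'affirmation « les orbites $K.\rho$ sont fermées dans $\hat N$ donc le quotient est countably separated » mérite un argument : $\hat N$ n'est pas séparé en général, donc la compacité d'une orbite n'entraîne pas sa fermeture ; le chemin propre passe par l'homéomorphisme de Kirillov $\hat N\simeq \Nc^*/N$ et la fermeture des orbites de $G=K\triangleleft N$ dans $\Nc^*$ (ce sont des réunions compactes d'espaces affines $K.(X^*+\Im A^*+A^*)$), puis le théorème de Glimm--Effros. Ensuite, à l'étape~3, l'irréductibilité de $\nu$ et l'appartenance $\nu_{|N}\in$ multiple de $\rho_0$ sortent du théorème d'imprimitivité appliqué au système d'imprimitivité basé sur $G/G_{\rho_0}$ que fournit la mesure spectrale de la décomposition de $\Pi_{|N}$ ; il faut dire explicitement que cette mesure spectrale, restreinte aux boréliens $G$-invariants de l'orbite, donne le système d'imprimitivité transitif requis. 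Ce sont des points techniques, pas des lacunes d'idée : votre plan est celui de la preuve classique que le papier cite sans la reproduire.
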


Ce th\'eor\`eme est vrai 
lorsque
$G$ est un groupe localement compact et 
$N$ un sous-groupe distingu\'e ferm\'e 
r\'eguli\`erement plong\'e de type I de~$G$.

\subsubsection{Propri\'et\'es utilis\'ees avec le 
  th\'eor\`eme~\ref{thm_mackey}}

Nous utiliserons le corollaire suivant 
du th\'eor\`eme des sous-groupes de Mackey
(voir par exemple \cite[ch.II sec.A subsec.1 theorem 1]{lipsman}) : 

\begin{cor}[Th\'eor\`eme des sous-groupes]
  \label{cor_thmssgr}
  \index{Repr\'esentation!th. des sous-groupes}

  Soient $G_1,G_2$ deux sous-groupes ferm\'es d'un groupe $G$.

  Si l'ensemble des doubles classes de $G$ sous $G_1$ et $G_2$ 
  est r\'eduit \`a celle de l'\'el\'ement neutre,
  alors pour toute repr\'esentation $\nu$ de $G_1$, on a :
  $$
  {\left[\Ind_{G_1}^G \nu \right]}_{|G_2}
  \, \sim \,
  \Ind_{G_1\cap G_2 }^{G_2} (\nu_{|G_1\cap G_2})
  \quad.
  $$
\end{cor}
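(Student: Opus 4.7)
Le plan est d'appliquer directement la forme g\'en\'erale du th\'eor\`eme des sous-groupes de Mackey, puis de sp\'ecialiser gr\^ace \`a l'hypoth\`ese sur les doubles classes. Tout d'abord, je rappellerais l'\'enonc\'e g\'en\'eral : pour deux sous-groupes ferm\'es $G_1,G_2$ d'un groupe localement compact $G$ v\'erifiant des hypoth\`eses de s\'eparabilit\'e convenables (satisfaites dans notre cadre), et pour une repr\'esentation $\nu$ de $G_1$, la restriction \`a $G_2$ de la repr\'esentation induite $\Ind_{G_1}^G \nu$ se d\'ecompose en int\'egrale directe index\'ee par l'espace des doubles classes $G_1\backslash G/G_2$ ; la contribution de la double classe d'un repr\'esentant $x\in G$ \'etant $\Ind_{G_1^x\cap G_2}^{G_2}(\nu^x_{|G_1^x\cap G_2})$, o\`u l'on note $G_1^x=x^{-1}G_1x$ et $\nu^x(\cdot)=\nu(x\cdot x^{-1})$.

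Ensuite, je sp\'ecialiserais gr\^ace \`a l'hypoth\`ese : l'espace $G_1\backslash G/G_2$ \'etant r\'eduit au singleton $\{G_1G_2\}$, il suffit de prendre $x=e$ comme unique repr\'esentant. On a alors $G_1^e=G_1$, $\nu^e=\nu$ et $G_1^e\cap G_2=G_1\cap G_2$, de sorte que l'int\'egrale directe se r\'eduit \`a l'unique terme
$$
\Ind_{G_1\cap G_2}^{G_2}(\nu_{|G_1\cap G_2})
\quad,
$$
ce qui est pr\'ecis\'ement l'\'equivalence annonc\'ee.

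La difficult\'e principale ne r\'eside pas dans cette sp\'ecialisation, presque formelle, mais dans le th\'eor\`eme g\'en\'eral de Mackey lui-m\^eme (choix des mesures quasi-invariantes sur les doubles classes, bonne d\'efinition de l'int\'egrale directe, construction d'un op\'erateur d'entrelacement explicite) ; ces points techniques sont d\'ej\`a trait\'es dans \cite{lipsman} et ne seront pas repris ici. Il conviendrait \'egalement de v\'erifier au passage que notre cadre (groupe localement compact, sous-groupes ferm\'es, avec $N$ distingu\'e de type~I r\'eguli\`erement plong\'e lorsque l'on en aura besoin conjointement avec le th\'eor\`eme~\ref{thm_mackey}) remplit bien les hypoth\`eses de la version g\'en\'erale.
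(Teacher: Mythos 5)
Votre argument est correct et co\"incide avec le traitement du texte : le papier ne d\'emontre pas ce corollaire, il renvoie simplement \`a \cite[ch.II sec.A subsec.1 theorem 1]{lipsman}, et votre r\'eduction — d\'ecomposition de ${[\Ind_{G_1}^G\nu]}_{|G_2}$ en int\'egrale directe sur $G_1\backslash G/G_2$, puis sp\'ecialisation au cas d'une unique double classe repr\'esent\'ee par $e$ — est exactement la mani\`ere standard d'obtenir l'\'enonc\'e \`a partir du th\'eor\`eme g\'en\'eral de Mackey. Rien \`a redire.
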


De plus, nous utiliserons
une version faible du th\'eor\`eme du nombre d'entrelacement 
(voir par exemple \cite[ch.II sec.A lemma~5]{lipsman}) :

\begin{lem}[Th\'eor\`eme du nombre d'entrelacement]
  \label{lem_thmnbentr}
  \index{Repr\'esentation!th. du nombre d'entrelacement}

  Soient $H$ un sous-groupe ferm\'e d'un groupe $G$ 
  et $\gamma$ une repr\'esentation de $H$,
  tels que la vari\'et\'e homog\`ene 
  $G/H$ admette une mesure $G$-invariante finie.
  Le nombre de fois 
  que la repr\'esentation $\Ind_H^G \gamma$ contient $1_G$
  (comme un facteur direct discret)
  est \'egale au nombre de fois que $\gamma$ contient $1_H$
  (comme un facteur direct discret).

\end{lem}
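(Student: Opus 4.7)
Le plan est d'identifier de part et d'autre l'espace des vecteurs invariants et de constater qu'ils se correspondent naturellement via la r\'ealisation standard de la repr\'esentation induite. Notons $\gamma$ agissant sur l'espace de Hilbert $\Hc_\gamma$, et rappelons que la multiplicit\'e de $1_H$ dans $\gamma$ comme facteur direct discret est $\dim \Hc_\gamma^H$, o\`u $\Hc_\gamma^H$ d\'esigne le sous-espace des vecteurs $H$-invariants; de m\^eme, la multiplicit\'e cherch\'ee de $1_G$ dans $\Ind_H^G \gamma$ est la dimension du sous-espace des vecteurs $G$-invariants dans l'espace $\Hc$ de cette repr\'esentation induite.

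R\'ealisons $\Hc$ classiquement comme les fonctions $f:G\rightarrow \Hc_\gamma$ mesurables v\'erifiant $f(hg)=\gamma(h)f(g)$ pour $h\in H, g\in G$, et de carr\'e int\'egrable relativement \`a la mesure invariante sur $G/H$; l'action $\Ind_H^G\gamma$ est celle des translations \`a droite. Premi\`ere \'etape: une fonction $f\in\Hc$ est $G$-invariante si et seulement si $f(gg')=f(g)$ pour tous $g,g'$, autrement dit $f$ est constante; combinant ceci avec la relation de covariance, on obtient que $f\equiv v$ avec $v\in \Hc_\gamma$ satisfaisant $\gamma(h)v=v$ pour tout $h\in H$, i.e. $v\in \Hc_\gamma^H$.

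Seconde \'etape: v\'erifier que ces fonctions constantes appartiennent bien \`a $\Hc$. C'est pr\'ecis\'ement ici qu'intervient l'hypoth\`ese de finitude: comme $G/H$ admet une mesure invariante finie, on a
$$
\int_{G/H} \nd{v}_{\Hc_\gamma}^2\, d\dot g
\,=\,
\nd{v}_{\Hc_\gamma}^2\; \mbox{vol}(G/H)\,<\,\infty
\quad,
$$
donc l'application $v\mapsto \{g\mapsto v\}$ est bien d\'efinie, lin\'eaire, injective, et son image est exactement le sous-espace des vecteurs $G$-invariants de $\Hc$. Ceci fournit un isomorphisme $\Hc_\gamma^H\simeq \Hc^G$, d'o\`u l'\'egalit\'e des multiplicit\'es annonc\'ee.

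L'obstacle principal est de v\'erifier la condition d'appartenance \`a $L^2(G/H,\Hc_\gamma)$: sans l'hypoth\`ese de finitude du volume, les fonctions constantes non nulles tombent hors de l'espace de Hilbert, et les vecteurs $H$-invariants de $\gamma$ ne produisent alors que des vecteurs $G$-invariants \emph{formels} (typiquement associ\'es \`a du spectre continu et non \`a un facteur direct discret); c'est pourquoi l'\'enonc\'e se limite aux multiplicit\'es ``comme facteur direct discret''. On remarquera au passage que cela est coh\'erent avec une forme faible de la r\'eciprocit\'e de Frobenius appliqu\'ee aux caract\`eres triviaux.
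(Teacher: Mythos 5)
Votre preuve est correcte. Le texte ne d\'emontre pas ce lemme : il renvoie simplement \`a \cite[ch.II sec.A lemma~5]{lipsman}, de sorte qu'il n'y a pas de d\'emonstration interne \`a laquelle comparer la v\^otre ; votre argument direct (identification de la multiplicit\'e de la repr\'esentation triviale comme facteur direct discret avec la dimension du sous-espace des vecteurs invariants, puis isomorphisme $\Hc_\gamma^H\simeq\Hc^G$ via les fonctions constantes, l'hypoth\`ese de volume fini garantissant leur appartenance \`a $L^2(G/H)$) est exactement la bonne fa\c con d'\'etablir cette version faible du th\'eor\`eme, et il est coh\'erent avec les conventions du texte (repr\'esentations unitaires sur des Hilbert s\'eparables, r\'ealisation de l'induite par covariance \`a gauche et translation \`a droite, sans correction par les fonctions modulaires puisqu'une mesure invariante existe sur $G/H$). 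Le seul point \`a pr\'eciser est l'\'etape \og $f$ $G$-invariante $\Rightarrow$ $f$ constante \fg : les \'el\'ements de $\Hc$ \'etant des classes de fonctions, l'invariance signifie que pour chaque $g'$ on a $f(gg')=f(g)$ pour presque tout $g$ ; un argument de Fubini sur $(g,g')$ permet alors de choisir $g_0$ tel que $f(g_0g')=f(g_0)$ pour presque tout $g'$, d'o\`u $f$ \'egale presque partout \`a une constante. C'est standard mais m\'erite d'\^etre dit, car l'\'enonc\'e \og pour tous $g,g'$ \fg\ n'a pas de sens ponctuel pour une classe $L^2$. Votre remarque finale sur le r\^ole de l'hypoth\`ese de finitude (sans elle, les constantes non nulles sortent de $L^2$ et les invariants ne sont que formels) est pertinente et explique bien pourquoi l'\'enonc\'e se restreint aux facteurs directs discrets.
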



\chapter{Fonction maximale sph\'erique}
\label{chapitre_fonction_maximale_spherique} 

Dans ce chapitre, 
nous d\'emontrons 
le th\'eor\`eme~\ref{thm_intro_in_max_sph_Lp},
c'est-\`a-dire des in\'egalit\'es $L^p$ pour la fonction maximale
sph\'erique associ\'ee \`a la norme de Kor\'anyi sur les groupes de
type~H ou~$N_{v,2}$. 
Le r\'esultat est d\'eja connu sur les groupes de Heisenberg
\cite{cowling}, et sur les groupes de type~H  \cite{schmidt},
mais pas pour les groupes $N_{v,2}$.

Tout au long des deux sections qui suivent, 
$N$ d\'esigne un groupe de type H, 
ou un groupe libre nilpotent de pas deux.
On reprend les notations de la section~\ref{sec_groupe_fcnmax},
en particulier $\Nc=\Vc\oplus\Zc$.
On pose $\dim \Vc=v$ et $\dim \Zc=z$,
$Q=v+2z$ et $v=2v'$ ou $2v'+1$.
Le groupe $N$ est muni de sa structure de groupe homog\`ene 
et d'une norme homog\`ene, 
sur lequel on a fix\'e un mesure de Haar $dn$
(sous-section~\ref{subsec_structure_homogene}).
On note $\mu$  la mesure
pour laquelle on a le passage 
en coordonn\'ees polaires~(\ref{formule_changement_polaire});
\index{Notation!Mesure!$\mu$}
on en connait facilement l'expression (voir proposition~\ref{prop_expression_mu}).
On note $\Ac$ la fonction maximale sph\'erique
(sous-section~\ref{subsec_fcnmax_sph}).

Dans les deux section qui suivent, 
nous montrons le th\'eor\`eme~\ref{thm_intro_in_max_sph_Lp} :
\begin{thm_princ}[\mathversion{bold}{$\nd{\Ac}_{p\rightarrow p}$}]
  \label{thm_princ_in_max_sph_Lp}
\index{Fonction maximale!Th\'eor\`eme}
  Pour $v'\geq 2$ et pour $p\in [1,\infty]$ tel que :
  \begin{itemize}
  \item[a)] $2\leq p\leq \infty$ si $v'=2$,
  \item[b)] si $v'>2$, dans le cas d'un groupe de type H, $(v'-1)/(v'-3/2)<p\leq\infty$,  
  \item[c)] si $v'\geq 2$, dans le cas $N=N_{v,2}$,
    $2h_0/(2h_0-1)<p\leq\infty$
    o\`u $h_0$ est le minimum de 
    $v+1$ et de la partie enti\`ere de $(z-1)/4$, 
  \end{itemize}
  la fonction maximale sph\'erique $\Ac$ v\'erifie 
  des in\'egalit\'es $L^p$ :
  $$
  \forall\, f\in L^p\; ,\qquad
  \nd{\Ac.f}_{L^p}
  \leq C\nd{f}_{L^p}
  \quad,
  $$ 
  o\`u $C$ est une constante de
  $v,z,p$.
\end{thm_princ}

Ce r\'esultat est d\'ej\`a connu sur les groupes de Heisenberg
\cite{cowling}, et on peut le d\'eduire de \cite{schmidt} pour les
groupes de type~H.
Les indices  pour $p$ alors obtenus sont optimaux : $p>n/(n-1)$,
o\`u $n=v+z$ est la dimension topologique du groupe.
Nous n'obtiendrons pas l'optimalit\'e dans le cas des groupes de type~H,
mais nous couvrirons le cas du
groupe libre nilpotent de pas  deux.
En effet, la courbure rotationnelle de la sph\`ere de Korn\'ayi
du groupe $N_{v,2}$ s'annule sur le centre.

Notre d\'emonstration va suivre le m\^eme point de d\'epart que
\cite{Maxfunc}; nous serons amen\'es \`a \'etudier les fonctions d'aires
d\'efinies pour des fonctions $f\in \Sc(N)$ 
de la classe de Schwartz sur $N$ 
par :
\index{Fonction d'aire $S^j$}
\index{Notation!$S^j(f)$}
$$
S^j(f) 
\,:=\,
\sqrt{ \int_{0}^\infty  \nn{\partial_s^j(f*\mu_s) }^2
  s^{2j-1} ds }
\quad;
$$
elle repose sur les deux th\'eor\`emes suivants :
\begin{thm}[\mathversion{bold}{$\nd{\Ac}_{p\rightarrow p}$}
  et \mathversion{bold}{$\nd{S^j}_{2\rightarrow 2}$}]
  \label{thm_fcnd'aire_inegmax}
\index{Fonction maximale!Th\'eor\`eme}
  Soit $h\in\Nb$.
  On suppose $v'\geq 2$ et $1\leq h<(Q-2)/2$.
  Si on a un contr\^ole $L^2$ pour les fonctions d'aires 
  $S^j, j=1,\ldots h$ :
  $$
  \forall j=1,\ldots h\quad
  \exists C>0\quad
  \forall f\in \Sc(N)\qquad
  \nd{S^j(f)}
  \,\leq\, C\,\nd{f}
  $$
  alors  pour $p\in [1,\infty]$ tel que :
  \begin{itemize}
  \item[a)] $2\leq p\leq \infty$ si $h=1$,
  \item[b)] $(2h)/(2h-1)<p\leq\infty$ si $h>1$, 
  \end{itemize}
  la fonction maximale sph\'erique $\Ac$ v\'erifie 
  des in\'egalit\'es $L^p$ :
  $$
  \forall\, f\in L^p\; ,\qquad
  \nd{\Ac.f}_{L^p}
  \leq C\nd{f}_{L^p}
  \quad,
  $$ 
  o\`u $C$ est une constante 
  de $v,z,p,h$.
\end{thm}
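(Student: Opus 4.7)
The strategy follows Stein \cite{Maxfunc}: introduce an analytic family of maximal operators $(\Ac^z)_{z\in\Cb}$ with $\Ac^0=\Ac$, and apply Stein's interpolation theorem for analytic families between two endpoint estimates, one with $\Re z>0$ and one with $\Re z<0$. The family is built by Riesz-type analytic continuation of the spherical measure: take a radial kernel $m^z$ such that for $\Re z$ large $m^z$ is a smooth integrable decreasing radial function of $\nn{\cdot}$, $m^0$ is proportional to $\mu$, and for $\Re z<0$ the kernel is defined by analytic continuation (using a gamma factor in the normalisation to absorb poles).

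On the right endpoint $\Re z=\epsilon>0$, the kernel $m^z$ is integrable, radial and decreasing in $\nn{\cdot}$, so Corollary~\ref{cor_fcndec} gives
$$\nd{\Ac^z f}_{L^p}\,\leq\, C_{p,\Im z}\,\nd{f}_{L^p}$$
for every $1<p\leq\infty$, with constants of admissible (subexponential) growth in $|\Im z|$, thanks to the gamma factor in $m^z$.

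On the left endpoint $\Re z=-h+\tfrac12$, I iterate integration by parts $h$ times in the variable $s$ to write, for $f\in\Sc(N)$,
$$
m^z_s*f(n)\,=\,C(z)\int_s^\infty (r-s)^{z+h-1}\,\partial_r^h(\mu_r*f)(n)\,dr,
$$
the boundary terms at $r=\infty$ vanishing thanks to the assumption $h<(Q-2)/2$, which guarantees enough decay of $\partial_r^j(\mu_r*f)$ for Schwartz~$f$ and makes the remaining $r$-integral convergent. A pointwise Cauchy--Schwarz in $r$ against the weight $r^{2h-1}$ yields
$$
|\Ac^z f(n)|\,\leq\,C\,S^h(f)(n),
$$
and the standing hypothesis $\nd{S^h(f)}_{L^2}\leq C\nd{f}_{L^2}$ then provides $\nd{\Ac^z f}_{L^2}\leq C\nd{f}_{L^2}$ on this vertical line, again uniformly in $|\Im z|$.

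Stein's complex interpolation theorem applied at $z=0$ between these two bounds delivers the $L^p$-continuity of $\Ac=\Ac^0$ on the announced range: the convex combination of the two H\"older exponents collapses to the threshold $p>2h/(2h-1)$ when $h>1$, while in the simpler case $h=1$ one can only interpolate the $L^2\to L^2$ bound with the trivial $L^\infty\to L^\infty$ estimate, hence the restriction $2\leq p\leq\infty$. The expected main obstacle lies entirely at the left endpoint: justifying the $h$-fold integration by parts, showing that the boundary terms in $r$ actually vanish (this is precisely where the arithmetic constraint $h<(Q-2)/2$ enters), and tracking the analytic dependence of the constant $C(z)$ so that Stein's interpolation theorem applies in the claimed vertical strip.
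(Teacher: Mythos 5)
Your overall strategy is the same as the paper's: embed the convolution with $\mu$ in a Riesz-type analytic family, control the family by the area functions on a left vertical line and by Corollary~\ref{cor_fcndec} on a right vertical line, and conclude by Stein's complex interpolation after linearising the supremum. However, both of your endpoint estimates have genuine gaps.

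At the right endpoint, you place the line at $\Re z=\epsilon>0$ and invoke Corollary~\ref{cor_fcndec}, but that corollary requires the kernel to be a \emph{decreasing} function of the homogeneous norm, and the Riesz kernel $(1-\nn{n}^2)_+^{z-1}/\Gamma(z)$ is decreasing only for $\Re z\geq 1$ (for $\Re z<1$ it blows up at $\nn{n}=1$). The paper therefore takes the right endpoint at $\Re\alpha\geq 1$ (Proposition~\ref{prop_inegalitemax_Lp}); interpolating between $1-h$ and $1$ still yields exactly the threshold $p>2h/(2h-1)$, so the final range is unaffected, but your endpoint as stated is not justified.

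At the left endpoint the problem is more serious. Integrating by parts $h$ times against the polar weight $r^{Q-1}\,dr$ does not produce a single term involving $\partial_r^h(f*\mu_r)$: it produces a linear combination of $r^{Q-1-2h+j}\partial_r^j(f*\mu_r)$ for \emph{all} $0\leq j\leq h$ (Lemma~\ref{lem1_prop_famille_Aalpha} and Corollary~\ref{cor_Aalpha_sum_Balpha}). The terms with $j\geq1$ are indeed dominated pointwise by $S^j(f)$ after Cauchy--Schwarz — which is why the hypothesis of the theorem involves all of $S^1,\dots,S^h$ and not only $S^h$ as in your argument — but the $j=0$ term carries no derivative of $f*\mu_r$ and cannot be controlled by any area function. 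The paper treats it as a separate maximal operator $\Bc^\alpha_{h,0}$ of convolution with $F^{\alpha+h}(n)\nn{n}^{-2h}$, bounded again via Corollary~\ref{cor_fcndec}, and it is precisely this term that forces the left endpoint to sit at $\Re\alpha>1-h$ rather than at $\tfrac12-h$. (At exactly $\Re z=\tfrac12-h$ your Cauchy--Schwarz integral $\int|(r-s)^{z+h-1}|^2 r^{1-2h}\,dr$ is in any case divergent.) Note also that the condition $h<(Q-2)/2$ serves to kill the boundary term at $r=0$, where a factor $r^{Q-1-2h-1}$ appears, not at $r=\infty$. Finally, for case a) you assume the $L^2$ bound for $\Ac$ itself before interpolating with $L^\infty$; that bound is not free — the paper derives it from the pointwise inequality $\Ac.f\leq Q\nn{B_1}\Mc.f+\tfrac1{\sqrt{2Q}}S^1.f$ (Proposition~\ref{prop_Ac_Mc_s}), a step your proposal omits.
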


Le th\'eor\`eme ci-dessus se g\'en\'eralise au groupe stratifi\'e de
rang 2.

\begin{thm}[\mathversion{bold}{$\nd{S^j}_{2\rightarrow 2}$}]
  \label{thm_L2_fcnd'aire}
On suppose $v'\geq 2$. On a :
  \begin{itemize}
  \item[a)] dans le cas d'un groupe de type H :
  $$
  \forall h=1,\ldots,v'-1\quad
  \forall f\in L^2(N)\qquad
  \nd{S^h(f)}\,\leq\,C\,\nd{f}
  \quad,
  $$
  \item[b)] dans le cas d'un groupe $N_{v,2}$,
  $$
  \forall h \in \Nb,\quad 1\leq h\leq \frac{z-1}4,v+1 \quad
  \forall f\in L^2(N)\qquad
  \nd{S^h(f)}\,\leq\,C\,\nd{f}
  \quad,
  $$
  \end{itemize}
  o\`u $C$ est une constante de
  $v,z$. 
\end{thm}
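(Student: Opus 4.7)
La strat\'egie consistera \`a utiliser la formule de Plancherel sph\'erique (th\'eor\`eme~\ref{bigthm} pour les groupes de type~H, th\'eor\`eme~\ref{thm_intro_mesure_plancherel} pour $N_{v,2}$) afin de ramener l'estimation $L^2$ \`a une majoration ponctuelle sur le spectre. La mesure $\mu_s$ \'etant radiale, la convolution par $\mu_s$ agit sur la composante associ\'ee \`a une fonction sph\'erique born\'ee $\phi$ par multiplication par le scalaire $\widehat{\mu_s}(\phi)=\int_{S_1}\phi(s.n)\,d\mu(n)$. Par Plancherel et Fubini, on obtient :
$$
\nd{S^h(f)}^2
\,=\,
\int \left( \int_0^\infty \nn{\partial_s^h \widehat{\mu_s}(\phi)}^2 s^{2h-1} ds \right)
\nn{\hat f(\phi)}^2 d\pi(\phi)
\quad,
$$
o\`u $d\pi$ d\'esigne la mesure de Plancherel appropri\'ee. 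Il s'agira de montrer que l'int\'egrale interne est born\'ee uniform\'ement en $\phi$.

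J'exploiterai ensuite la covariance des fonctions sph\'eriques born\'ees sous les dilatations : pour les groupes de type~H, $\Phi_r\circ\delta_s=\Phi_{sr}$ et $\Phi_{\zeta,l}\circ\delta_s=\Phi_{s^2\zeta,l}$, avec un comportement analogue pour les $\phi^{r^*,\Lambda^*,l}$ de $N_{v,2}$ (voir th\'eor\`eme~\ref{thm_intro_fcnsph}). Apr\`es changement de variable dans l'int\'egrale en $s$, les param\`etres continus se renormalisent et l'int\'egrale devient ind\'ependante de ceux-ci ; il restera \`a majorer uniform\'ement en le multi-indice discret $l$ une int\'egrale du type
$$
\int_0^\infty \nn{F_l^{(h)}(u)}^2 u^{2h-1} du
\quad,
$$
o\`u $F_l$ est l'int\'egrale sur la sph\`ere unit\'e de Kor\'anyi d'une fonction de Bessel r\'eduite (s\'erie continue) ou d'un produit de fonctions de Laguerre normalis\'ees (s\'erie discr\`ete).

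L'\'etape cruciale sera l'\'etablissement de cette majoration \`a l'aide des propri\'et\'es asymptotiques des fonctions sp\'eciales. Dans la s\'erie continue, je m'appuierai sur la d\'ecroissance $\fb{v'-1}{r}=O(r^{-(v-1)/2})$ et celle de ses d\'eriv\'ees, combin\'ees \`a des int\'egrations par parties selon les variables centrales en param\'etrant la sph\`ere par les coordonn\'ees $n=\exp(X+Z)$ avec $\nn{X}^4+\nn{Z}^2=1$. Les restrictions $h\leq v'-1$ dans le cas des groupes de type~H et $h\leq(z-1)/4$ dans le cas $N_{v,2}$ refl\`etent pr\'ecis\'ement le nombre d'int\'egrations par parties admissibles, la plus grande dimension $z=v(v-1)/2$ du centre pour $N_{v,2}$ procurant la marge suppl\'ementaire.

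Le principal obstacle sera d'assurer l'uniformit\'e en $l$ dans le cas $N_{v,2}$, o\`u le multi-indice $l\in \Nb^{v_1}$ est de longueur variable ($v_1$ pouvant aller jusqu'\`a $v'$) et param\`etre un produit tensoriel de fonctions de Laguerre renormalis\'ees $\flbs{l_j}{m_j-1}$. Il faudra exploiter finement les propri\'et\'es d'orthogonalit\'e et les bornes uniformes de ces fonctions, ainsi qu'un contr\^ole pr\'ecis de leurs d\'eriv\'ees successives int\'egr\'ees contre le poids issu de la sph\`ere de Kor\'anyi, pour que l'estimation finale ne d\'epende pas de $l$.
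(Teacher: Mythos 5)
Votre r\'eduction spectrale contient un v\'eritable trou : vous \'ecrivez
$\nd{S^h(f)}^2=\int\bigl(\int_0^\infty\nn{\partial_s^h\widehat{\mu_s}(\phi)}^2s^{2h-1}ds\bigr)\nn{\hat f(\phi)}^2d\pi(\phi)$
comme si $f$ admettait une transform\'ee de Fourier sph\'erique scalaire, or le th\'eor\`eme porte sur \emph{toute} $f\in L^2(N)$, non n\'ecessairement radiale, et la formule de Plancherel sph\'erique ne s'applique qu'aux fonctions radiales. Il faut soit une mesure spectrale $E_{f,g}$ pour l'alg\`ebre commutative ${L^1}^\natural$ agissant sur $L^2(N)$ (ce qui ne donne qu'une in\'egalit\'e $\nd{\partial_s^h(f*\mu_s)}^2\leq\int\nn{\partial_s^h<\mu_s,\omega>}^2dE_f(\omega)$, et oblige alors \`a estimer $\hat S^h$ sur tout le spectre $\Omega$, y compris la partie Bessel hors du support de la mesure de Plancherel), soit la formule de Plancherel non radiale \`a valeurs op\'erateurs, en utilisant que $\Pi(\mu_s)$ agit par le scalaire $<\mu_s,\phi^{r^*,\Lambda^*,l}>$ sur chaque sous-espace $V_l$ ; c'est cette seconde voie qui permet, pour $N_{v,2}$, de ne demander l'estimation que sur la partie $\Pc$. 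Telle quelle, votre \'egalit\'e n'a pas de sens pour $f$ non radiale.

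Par ailleurs, vous attribuez les restrictions d'indices \`a des int\'egrations par parties selon les variables centrales, ce qui est inexact et masque les vrais m\'ecanismes. Dans le cas de type~H, aucune int\'egration par parties n'intervient : la borne $h\leq v'-1$ provient de la d\'ecroissance $\fb{v'-1}{r}=O(r^{-v'+1/2})$ et, pour la s\'erie discr\`ete, des estim\'ees de Markett sur $\int\nn{\fls n{\alpha+\beta}}^2x^\alpha dx$ combin\'ees \`a l'asymptotique de $C^n_{n+\alpha}$ — la seule bornitude uniforme des fonctions de Laguerre normalis\'ees ne suffit pas \`a obtenir l'uniformit\'e en $l$, qui est le point dur que vous signalez sans le r\'esoudre. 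Dans le cas $N_{v,2}$, la contrainte $h\leq(z-1)/4$ vient de la d\'ecroissance ponctuelle de la fonction de Bessel r\'eduite d'ordre $(z-2)/2$ attach\'ee \`a la variable centrale, et non d'int\'egrations par parties ; celles-ci sont en revanche indispensables, mais dans la direction $x_v\in\Vc$ contre l'oscillation $e^{irsr^*x_v}$ (via un atlas de deux calottes de $S_1^{(v)}$), pour traiter le cas o\`u aucune d\'eriv\'ee ne tombe sur les facteurs de Laguerre--Bessel ($\bar h=0$, $r^*\not=0$) : c'est l\`a qu'appara\^\i t la contrainte $h\leq v+1$ de l'\'enonc\'e, que votre proposition omet enti\`erement. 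En l'\'etat, la strat\'egie est plausible dans ses grandes lignes mais l'\'etape cruciale annonc\'ee n'est ni effectu\'ee ni correctement localis\'ee.
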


Ces deux derniers th\'eor\`emes impliquent 
 le th\'eor\`eme~\ref{thm_princ_in_max_sph_Lp}.

\section{In\'egalit\'e $L^p, \, p\geq 2$ pour $\Ac$}

Le but de cette section est de d\'emontrer
le th\'eor\`eme \ref{thm_fcnd'aire_inegmax}.a).
Il repose sur
le contr\^ole $L^2$ de la fonction maximale pour les boules,
l'interpolation de Marcinkiewicz, 
et la proposition suivante :
\begin{prop}
  \label{prop_Ac_Mc_s}
  Soit $f\in \Sc(N)$. 
  On a :
  $$
  \Ac.f 
  \,\leq\, 
  Q\nn{B_1}
  \Mc.f +\frac1{\sqrt{2Q}}S^1.f 
  \quad .
  $$
\end{prop}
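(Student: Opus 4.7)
The strategy is to use the polar coordinate formula~(\ref{formule_changement_polaire}) to express the ball convolution $(f*\chi_{B_t})(n_0)$ as an integral of spherical convolutions $(f*\mu_s)(n_0)$, and then to invert this via integration by parts so that $(f*\mu_t)(n_0)$ appears as a ball average (bounded by $\Mc.f$) plus a remainder that Cauchy--Schwarz naturally pairs with the weight $s\,ds$ built into $S^1.f$.

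More precisely, fix $n_0\in N$ and set $u(s)=(f*\mu_s)(n_0)$ and $F(t)=(f*\chi_{B_t})(n_0)$. Applying~(\ref{formule_changement_polaire}) to $n\mapsto f(n_0 n^{-1})\chi_{B_t}(n)$ yields
$$F(t)\,=\,\int_0^t u(s)\,s^{Q-1}\,ds,$$
so that $F'(t)=u(t)\,t^{Q-1}$. An integration by parts with primitive $s^Q/Q$ of $s^{Q-1}\,ds$, using that $u$ is continuous and bounded at $0$ for $f\in\Sc(N)$ (so the boundary term at $s=0$ vanishes), gives
$$u(t)\,=\,\frac{Q\,F(t)}{t^Q}\,+\,\frac{1}{t^Q}\int_0^t u'(s)\,s^Q\,ds.$$

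For the first summand, the homogeneity $\nn{B_t}=t^Q\nn{B_1}$ rewrites $Q\,F(t)/t^Q$ as $Q\nn{B_1}$ times the standard ball average of $f$ at $n_0$, hence $\nn{Q\,F(t)/t^Q}\leq Q\nn{B_1}\,\Mc.f(n_0)$. For the second, I apply Cauchy--Schwarz after the splitting $s^Q=s^{1/2}\cdot s^{Q-1/2}$:
$$\Bigl|\int_0^t u'(s)\,s^Q\,ds\Bigr|\,\leq\,\Bigl(\int_0^t|u'(s)|^2\,s\,ds\Bigr)^{1/2}\Bigl(\int_0^t s^{2Q-1}\,ds\Bigr)^{1/2}\,\leq\,S^1.f(n_0)\cdot\frac{t^Q}{\sqrt{2Q}},$$
using $\int_0^t s^{2Q-1}\,ds=t^{2Q}/(2Q)$ and extending the weighted $L^2$-integral from $(0,t)$ to $(0,\infty)$ in the last estimate. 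Dividing by $t^Q$, combining both contributions, and taking the supremum over $t>0$ delivers the stated pointwise bound.

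The proof is essentially elementary and I expect no real obstacle; the only points worth a line of care are the vanishing of the boundary term at $s=0$ in the integration by parts (immediate from continuity of $u$ on $[0,\infty)$ for Schwartz data) and the tracking of the constant $1/\sqrt{2Q}$ in the Cauchy--Schwarz step, which falls out of the elementary identity $\int_0^t s^{2Q-1}ds=t^{2Q}/(2Q)$.
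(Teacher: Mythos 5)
Your proposal is correct and follows essentially the same route as the paper: the identity $u(t)=Qt^{-Q}\int_0^t u(s)s^{Q-1}\,ds+t^{-Q}\int_0^t u'(s)s^Q\,ds$ that you obtain by integration by parts is exactly the decomposition the paper gets by writing $f*\mu_t=t^{-Q}\int_0^t\partial_s\bigl(s^Qf*\mu_s\bigr)\,ds$ and expanding the derivative, and the two terms are then handled identically (polar coordinates plus $\nn{B_t}=t^Q\nn{B_1}$ for the ball average, and Cauchy--Schwarz with the splitting $s^Q=s^{1/2}\cdot s^{Q-1/2}$ for the remainder). No gap to report.
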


\subsection{D\'emonstration  du  th\'eor\`eme~\ref{thm_fcnd'aire_inegmax}.a)}

Admettons la proposition~\ref{prop_Ac_Mc_s},
et supposons que les hypoth\`eses du  th\'eor\`eme~\ref{thm_fcnd'aire_inegmax}.a)
sont v\'erifi\'ees; c'est-\`a-dire que la fonction d'aire $S^1$
v\'erifie une in\'egalit\'e $L^2$.

\paragraph{Montrons que \mathversion{bold}{$\Ac$ }
  v\'erifie des in\'egalit\'es \mathversion{bold}{$L^2$}
  et \mathversion{bold}{$L^\infty$}.}
La fonction maximale $\Mc$ v\'erifie des in\'egalit\'es $L^p$ 
(voir~(\ref{maxst})),
en particulier $L^2$.
Comme $S^1$ v\'erifie aussi une in\'egalit\'e $L^2$,
on en d\'eduit que c'est aussi le cas pour $\Ac$;
il existe une constante $C_2>0$ telle que : 
\begin{equation}
  \label{ineg_Ac_L2}
  \forall f
  \qquad 
  \nd{\Ac.f}_{L^2}
  \,\leq\,
  C_2 \nd{f}_{L^2}
  \quad.  
\end{equation}
Trivialement $\Ac$ v\'erifie une in\'egalit\'e $L^\infty$;
il existe une constante $C_\infty>0$ telle que : 
\begin{equation}
  \label{ineg_Ac_Linfty}
  \forall f
  \qquad 
  \nd{\Ac.f}_{L^\infty}
  \,\leq\,
  C_\infty \nd{f}_{L^\infty}
  \quad.
\end{equation}

\paragraph{Interpolons.}
Fixons    momentan\'ement
$r(.):(N,dn)\mapsto  \Rb^+$  une  fonction  mesurable,
et consid\'erons l'op\'erateur $T$ d\'efini sur les fonctions simples de
$N$ par :
$$
T.f(n)
\,=\,
\left( \mu_{r(n)}*f\right)\,(n)
\quad.
$$
D'apr\`es~(\ref{ineg_Ac_L2}) et~(\ref{ineg_Ac_Linfty}),
$T$ est born\'e sur~$L^2$ et sur~$L^\infty$;
d'apr\`es l'interpolation de Marcinkiewicz \cite{StW},
$T$ s'\'etend en un op\'erateur born\'e sur chaque $L^p$ 
pour $2\leq p\leq\infty$, avec des constantes qui ne d\'ependent que
de $p, C_2, C_\infty$.
Ceci est vrai pour toute fonction 
$r(.):(N,dn)\mapsto  \Rb^+$ mesurable.
On peut donc repasser au supremum :
la fonction maximale $\Ac$ v\'erifie une in\'egalit\'e $L^p, \, 2\leq
p\leq\infty$.

Le th\'eor\`eme \ref{thm_fcnd'aire_inegmax}.a) sera ainsi d\'emontr\'e 
lorsque nous aurons prouv\'e la proposition~\ref{prop_Ac_Mc_s}.

\subsection{Contr\^ole de $\Ac$ par $\Mc$ et $S^1$}

Le but de cette sous-section est de d\'emontrer
la proposition~\ref{prop_Ac_Mc_s}.
On proc\`ede comme dans le cas euclidien \cite{stein_Han}.

Soit  $f\in \Sc(N)$.   
La fonction $(s,n)\mapsto f*\mu_s(n)$ 
est alors d\'erivable et de d\'eriv\'ee continue selon 
$(s,n)\in\Rb^{*+}\times N$,
et  on a :
\begin{eqnarray*}
  f*\mu_t(n)
  &=&
  \frac1{t^Q}\int_{0}^t\partial_s(s^Qf*\mu_s(n)) ds\\  
  &=&
  \frac1{t^Q}\int_{0}^tQs^{Q-1}f*\mu_s(n)ds
  +\frac1{t^Q}\int_{0}^ts^Q\partial_s(f*\mu_s(n)) ds
  \quad , \\  
\end{eqnarray*}

Pour le premier terme du membre de droite, on voit 
d'une part d'apr\`es la 
formule~(\ref{formule_changement_polaire})
du passage en coordonn\'ees polaires:
$$
\int_0^ts^{Q-1}f*\mu_s(n)ds
\,=\,
\int_0^t
\int_{S_1} f(n\,s.{n'}^{-1})d\mu(n') s^{Q-1} ds
\,=\,
\int_{B(0,t)}
f(n{n'}^{-1}) dn'
\quad,
$$
d'autre part $\nn{B(n,t)}=\nn{B(0,t)}=t^Q\nn{B(0,1)}$ d'o\`u :
$$
\frac1{t^Q}
\,=\,
\frac{\nn{B_1}}{\nn{B(n,t)}}
\quad.
$$
On a donc :
$$
\frac1{t^Q}\int_{0}^tQs^{Q-1}f*\mu_s(n)ds
\,=\,
Q\nn{B_1}
\frac1{\nn{B(n,t)}}
\int_{B(n,t)}
f(n'') dn''
\quad .
$$

Pour le second terme,
on utilise H\"older :
\begin{eqnarray*}
  \frac1{t^Q}\nn{\int_0^ts^Q\partial_s(f*\mu_s(n)) ds}
  &\leq&
  \frac1{t^Q}{\left(\int_0^t\nn{s^{Q-\frac12}}^2ds\right)}^{\frac 12}
  {\left(\int_0^t\nn{s^{\frac12}\partial_s(f*\mu_s(n))}^2
      ds\right)}^{\frac12} \\
  &\leq&
  \frac1{\sqrt{2Q}}
  {\left(\int_0^{\infty}s\nn{\partial_s(f*\mu_s(n))}^2
      ds\right)}^{\frac12}
  \,=\,
  \frac1{\sqrt{2Q}}
  S^1.f(n)
  \quad.  
\end{eqnarray*}

On a donc pour tout $t>0$:
$$
\nn{f*\mu_t(n)} 
\,\leq\, 
Q\nn{B_1}
\frac1{\nn{B(n,t)}}
\int_{B(n,t)}
f(n'') dn''
+\frac1{\sqrt{2Q}}S^1.f(n)
\quad .
$$
Prenons le supremum en $t$ dans ce qui pr\'ec\`ede :
$$
\Ac.f(n) 
\,\leq\, 
Q\nn{B_1}
\Mc.f(n) +\frac1{\sqrt{2Q}}S^1.f(n)
\quad .
$$

Ceci ach\`eve la d\'emonstration de la proposition~\ref{prop_Ac_Mc_s},
et donc celle du th\'eor\`eme \ref{thm_fcnd'aire_inegmax}.a).

\section{In\'egalit\'e $L^p, \, p<2$ pour $\Ac$}

Le but de cette section est de d\'emontrer
le th\'eor\`eme \ref{thm_fcnd'aire_inegmax}.b).
Pour cela, 
nous allons placer 
l'op\'erateur de convolution par $\mu$ 
dans une famille analytique d'op\'erateurs $A^\alpha$,
pour laquelle
nous montrerons des in\'egalit\'es maximales \`a l'aide des fonctions d'aires.
Nous finirons par un argument d'interpolation. 

Nous choisissons la m\^eme famille analytique d'op\'erateurs
$A^\alpha$ que dans  \cite{Maxfunc}.

\index{Notation!Op\'erateur!$A^\alpha$}
\begin{prop}[\mathversion{bold}{$A^\alpha$}]
  \label{prop_famille_Aalpha}
  Nous d\'efinissons pour
  $\alpha\in\Cb\backslash(-\Nb)$ 
  sur $\Rb^+$ la fonction $m^\alpha$
  et sur $N$ la fonction $F^{\alpha}$ par :
  $$
  m^\alpha(x) 
  \,:=\, 
  2\frac{{(1-x^2)}_+^{\alpha-1}} {\Gamma(\alpha)}
  \;,\; x\in \Rb^+\;
  \quad\mbox{et}\quad
  F^\alpha(n) 
  \, :=\,
  m^\alpha(\nn{n})
  \; ,\; n\in N
  \quad .
  $$
\index{Notation!$m^\alpha,F^\alpha$}

  Pour~$\Re\alpha>0$, 
  nous d\'efinissons ainsi l'op\'erateur $A^\alpha$ de convolution avec
  $F^\alpha$. 
  Il est continu sur $L^p, 1<p\leq \infty$.

  Cette  famille  d'op\'erateurs $\{A^\alpha\}$  
  est  analytique dans le demi-plan  $\Re\alpha>0$,
  et se prolonge analytiquement 
  dans le demi-plan $\Re\alpha >1-h$ 
  lorsque $h<(Q-2)/2$ en une famille d'op\'erateurs
  sur la classe de fonctions  $\Sc(N)$ de Schwartz.
  Pour $\alpha=0$, $A^0$ est l'op\'erateur de convolution avec $\mu$.
\end{prop}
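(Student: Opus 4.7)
\emph{Esquisse.} For $\Re\alpha>0$ the function $F^\alpha$ is supported in the unit ball; polar coordinates~(\ref{formule_changement_polaire}) together with the substitution $u=r^2$ give
\[\nd{F^\alpha}_{L^1(N)} \,=\, \frac{2\mu(S_1)}{\Gamma(\alpha)}\int_0^1 (1-r^2)^{\alpha-1} r^{Q-1}\,dr \,=\, \frac{\mu(S_1)\,\Gamma(Q/2)}{\Gamma(\alpha+Q/2)},\]
a quantity finite and locally bounded in $\alpha$. Young's inequality yields continuity of $A^\alpha$ on $L^p$ for every $1\leq p\leq\infty$, and differentiation under the integral sign (justified by pointwise entirety of $\alpha\mapsto m^\alpha(r)$ and the local uniform $L^1$ bound above) gives analyticity in $\{\Re\alpha>0\}$.

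For the continuation, polar coordinates and the substitution $u=r^2$ collapse the convolution into a one-dimensional Riemann--Liouville-type integral:
\[A^\alpha f(n_0) \,=\, \int_0^1 \Phi_\alpha(u)\,g(u)\,du, \quad \Phi_\alpha(u) := \frac{(1-u)_+^{\alpha-1}}{\Gamma(\alpha)}, \quad g(u) := (f*\mu_{\sqrt{u}})(n_0)\,u^{(Q-2)/2}.\]
For $f\in\Sc(N)$ the average $r\mapsto(f*\mu_r)(n_0)$ is smooth on $(0,\infty)$; moreover, the involutive automorphism $\sigma:(X,Z)\mapsto(-X,Z)$ of $\Nc$ preserves the Kor\'anyi norm and the Haar measure, hence $\sigma_*\mu=\mu$, and a short step-two BCH computation shows that $r\mapsto(f*\mu_r)(n_0)$ is in fact an even function of $r$, extending to a $C^\infty$ function of $u=r^2$ on $[0,\infty)$. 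Consequently $g^{(j)}(0)=0$ for every integer $j<(Q-2)/2$.

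Iterating the identity $\Phi_\alpha=-\partial_u\Phi_{\alpha+1}$ through integration by parts (the boundary term at $u=1$ vanishing since $\Phi_{\alpha+1}(1)=0$ for $\Re\alpha>-1$) yields, after $h-1$ steps,
\[A^\alpha f(n_0) \,=\, \sum_{j=0}^{h-2} \frac{g^{(j)}(0)}{\Gamma(\alpha+j+1)} \,+\, \int_0^1 \Phi_{\alpha+h-1}(u)\,g^{(h-1)}(u)\,du.\]
The hypothesis $h<(Q-2)/2$ forces every boundary term to vanish and keeps $g^{(h-1)}$ integrable near $u=0$; the remaining integral is absolutely convergent on $\Sc(N)$ and analytic in $\alpha$ throughout $\{\Re\alpha>1-h\}$, which furnishes the desired extension. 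Specializing to $\alpha=0$ (take $h=2$: the sum reduces to $g(0)/\Gamma(1)=0$ and $\Phi_1\equiv\mathbf{1}_{[0,1]}$, so the integral equals $\int_0^1 g'(u)\,du=g(1)$) gives $A^0 f(n_0)=g(1)=(f*\mu)(n_0)$, as announced. The main obstacle is the evenness/smoothness assertion of the second paragraph: once granted, the rest is a direct application of the Riemann--Liouville fractional calculus.
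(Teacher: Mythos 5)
Your proof is correct and follows essentially the same route as the paper: after passing to polar coordinates one integrates by parts repeatedly against the Riemann--Liouville kernel, the boundary terms at the origin being killed by the factor $u^{(Q-2)/2}$ under the hypothesis $h<(Q-2)/2$ (the paper does this in the variable $r$ with the operator $D_r=\partial_r\,\frac1{2r}$, which is exactly your $-\partial_u$ after the substitution $u=r^2$), and the evaluation at $\alpha=0$ is the same telescoping. The one genuine addition, your evenness/smoothness claim for $r\mapsto f*\mu_r(n_0)$, is correct (the map $X+Z\mapsto -X+Z$ is a norm- and measure-preserving automorphism commuting with the dilations, so it fixes $\mu$) but is not actually the crux: even knowing only that $r\mapsto f*\mu_r(n_0)$ is smooth on $[0,\infty)$, the power $u^{(Q-2)/2}$ together with $j\le h-2<(Q-2)/2$ already forces $g^{(j)}(0)=0$ and the integrability of $g^{(h-1)}$ near $0$, which is all the argument uses.
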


\subsection{Famille d'op\'erateurs $\{A^\alpha\}$}

Le but de cette sous-section
est de d\'emontrer la proposition~\ref{prop_famille_Aalpha}
On remarque :
\begin{lem}
  \label{lem_Falpha}
  Pour $\Re\alpha>0$, 
  $F^\alpha$ est int\'egrable et radiale sur $N$.
  Pour $\alpha >1$, $m^\alpha$ est d\'ecroissante.
\end{lem}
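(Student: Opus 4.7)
The plan is to dispatch the three assertions of the lemma in sequence, each by a direct computation from the definitions in Proposition~\ref{prop_famille_Aalpha}, since the function $F^\alpha$ is essentially a radial bump adapted to the Kor\'anyi ball.

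First, radiality. By construction $F^\alpha(n) = m^\alpha(\nn{n})$ is a function of the Kor\'anyi norm alone. In the case of a group of type~H this is manifestly $O(v)$-invariant on $\Vc$ (the Kor\'anyi norm depending only on the Euclidean norms of $X$ and $Z$), and in the case of $N_{v,2}$ the same holds thanks to the invariance of the chosen Euclidean norms on $\Vc$ and $\Zc$ under the action of $O(v)$ (see sous-section~\ref{subsec_def_grlib}). So $F^\alpha$ is radial in both senses considered in this text.

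Next, integrability for $\Re\alpha>0$. I would pass to polar coordinates using formula~(\ref{formule_changement_polaire}): since $|(1-r^2)^{\alpha-1}|=(1-r^2)^{\Re\alpha-1}$ for $0<r<1$ when $\alpha$ is complex,
$$
\int_N \nn{F^\alpha(n)}\,dn
\;=\;
\mu(S_1)\,\frac{2}{\nn{\Gamma(\alpha)}}
\int_0^1 (1-r^2)^{\Re\alpha-1} r^{Q-1}\,dr.
$$
The substitution $u=r^2$ identifies the remaining integral with a Beta function: it equals $\tfrac12 B(Q/2,\Re\alpha)=\tfrac12 \Gamma(Q/2)\Gamma(\Re\alpha)/\Gamma(Q/2+\Re\alpha)$, which is finite for $\Re\alpha>0$ and $Q\geq 1$. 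Hence $F^\alpha\in L^1(N)$. (As a by-product one gets the explicit mass, which will be useful later for the analytic continuation step in Proposition~\ref{prop_famille_Aalpha}, though that is not requested here.)

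Finally, monotonicity for $\alpha>1$ real. On $[0,1)$ the function
$$
m^\alpha(x)\;=\;\frac{2(1-x^2)^{\alpha-1}}{\Gamma(\alpha)}
$$
has derivative
$$
{m^\alpha}'(x)\;=\;-\,\frac{4(\alpha-1)\,x\,(1-x^2)^{\alpha-2}}{\Gamma(\alpha)},
$$
which is $\leq 0$ for $0\leq x<1$ because $\alpha-1>0$ and $\Gamma(\alpha)>0$; on $[1,+\infty)$ the function is identically zero, matching the limit $m^\alpha(1^-)=0$ by continuity. Hence $m^\alpha$ is non-increasing on $\Rb^+$. There is no real obstacle in this proof: the only mild care required is in the integrability step, to write the modulus $|(1-r^2)^{\alpha-1}|$ correctly for complex $\alpha$ so that the Beta-function computation applies verbatim.
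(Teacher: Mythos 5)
Your proof is correct and follows essentially the same route as the paper: the integrability claim is handled by passing to polar coordinates via~(\ref{formule_changement_polaire}) and bounding $\nn{(1-r^2)^{\alpha-1}}$ by $(1-r^2)^{\Re\alpha-1}$, which is exactly the paper's (one-line) argument, while the radiality and the monotonicity of $m^\alpha$ for $\alpha>1$, which the paper leaves implicit, are checked correctly and directly from the definitions. The extra Beta-function evaluation is a harmless bonus.
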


\begin{proof}
  En effet, on a :
  $$
  \int_{N}    \nn{F^\alpha(n)}   dn   
  \,\leq\,
  \int_{0}^1
  \frac{{(1-r^2)}^{\Re\alpha-1}} {\nn{\Gamma(\alpha)}} r^{Q-1}dr \quad,
  $$  
\end{proof}

Ainsi pour~$\Re\alpha>0$, 
l'op\'erateur $A^\alpha$ est l'op\'erateur de convolution avec la fonction $F^\alpha\in L^1$; 
c'est donc un op\'erateur born\'e sur $L^p, 1<p\leq\infty$.
De plus, la famille  d'op\'erateurs $\{A^\alpha\}$  
est  analytique sur  le demi-plan  $\Re\alpha>0$.   
Nous avons \`a montrer qu'elle se prolonge analytiquement, 
et que $A^0$ est l'op\'erateur de convolution avec $\mu$

Nous aurons besoin de l'expression de $\mu$ :
\begin{prop}
  \label{prop_expression_mu}
  On note $\tilde{\sigma}_n$ la mesure de la sp\`ere unit\'e
  euclidienne $S_1^{(n)}$ sur $\Rb^n$.

  Avec les notations ci-dessus,
  on a pour une fonction $g$ localement int\'egrable sur $N$ :
  \begin{eqnarray*}
    \int_{S_1} g(n)d\mu(n)  
    \,=\,
    2\int_{0}^1 \int_{S_1^{(v)}}\int_{S_1^{(z)}}
    g(\exp(rX +\sqrt{(1-r^4)}Z))\\
    d\tilde{\sigma}_z(Z)
    d\tilde{\sigma}_v(X)
    r^{v-1}{(1-r^4)}^\frac{z-2}2dr
    \quad .
  \end{eqnarray*}
\end{prop}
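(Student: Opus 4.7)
Pour d\'emontrer cette expression de $\mu$, je partirais de l'unicit\'e de la mesure~$\mu$ dans la formule~(\ref{formule_changement_polaire}) de passage en coordonn\'ees polaires homog\`enes : il suffit de calculer $\int_N g(n)\,dn$ d'une autre mani\`ere et d'identifier. Pour cela, j'utiliserais la d\'efinition de la mesure de Haar donn\'ee dans la sous-section~\ref{subsec_structure_homogene} :
$$
\int_N g(n)\,dn
\,=\,\int_\Vc\int_\Zc g(\exp(X+Z))\,dX\,dZ
\quad,
$$
puis je passerais en coordonn\'ees polaires euclidiennes s\'epar\'ement sur $\Vc\simeq\Rb^v$ et $\Zc\simeq\Rb^z$ en \'ecrivant $X=\rho X'$, $Z=sZ'$ avec $X'\in S_1^{(v)}$, $Z'\in S_1^{(z)}$ et $\rho,s\geq 0$, ce qui fait appara\^\i tre le facteur $\rho^{v-1}s^{z-1}\,d\rho\,ds\,d\tilde{\sigma}_v(X')\,d\tilde{\sigma}_z(Z')$.

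La seconde \'etape serait le changement de variables $(\rho,s)\mapsto(t,r)$ adapt\'e \`a la norme de Kor\'anyi, d\'efini par $\rho=rt$ et $s=t^2\sqrt{1-r^4}$, avec $t\geq 0$ et $r\in[0,1]$. Ce choix est motiv\'e par le fait que $t$ est pr\'ecis\'ement la norme de Kor\'anyi de $\exp(\rho X'+sZ')$ et que, gr\^ace \`a la dilatation du groupe homog\`ene, on a l'\'egalit\'e $\exp(\rho X'+sZ')=t.\exp(rX'+\sqrt{1-r^4}Z')$. Un calcul direct du jacobien, en utilisant la simplification $r^4+(1-r^4)=1$, donne
$$
\rho^{v-1}s^{z-1}\,d\rho\,ds
\,=\,2\,r^{v-1}\,{(1-r^4)}^{\frac{z-2}2}\,t^{Q-1}\,dt\,dr
\quad,
$$
l'exposant $Q-1=v+2z-1$ \'etant bien celui attendu.

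En rassemblant les int\'egrales, j'obtiendrais alors une \'egalit\'e pour $\int_N g(n)\,dn$ qui, confront\'ee \`a la formule~(\ref{formule_changement_polaire}) et gr\^ace \`a l'unicit\'e de la mesure~$\mu$ (proposition~1.15 de~\cite{Folst}), fournirait l'expression annonc\'ee pour $\mu$ en identifiant les int\'egrants sur la sph\`ere unit\'e. L'\'etape la plus d\'elicate, bien que purement \'el\'ementaire, est la v\'erification du jacobien ci-dessus : il faut suivre soigneusement les simplifications des puissances de~$t$ (qui doivent donner exactement $t^{Q-1}$) et de $(1-r^4)$ (dont l'exposant doit se r\'eduire \`a $(z-2)/2$), ainsi que faire appara\^\i tre le facteur~$2$ global.
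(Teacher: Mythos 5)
Votre proposition est correcte et suit essentiellement la m\^eme d\'emarche que la preuve du texte : passage en coordonn\'ees polaires euclidiennes s\'epar\'ement sur $\Vc$ et $\Zc$, puis le changement de variables $(\rho,s)\mapsto(t,r)$ avec $\rho=rt$ et $t^4=\rho^4+s^2$ (identique \`a celui du texte \`a la notation pr\`es), dont le jacobien $s\,ds\wedge d\rho=2t^4\,dt\wedge dr$ donne bien le facteur $2$, l'exposant $t^{Q-1}$ et la densit\'e $r^{v-1}(1-r^4)^{\frac{z-2}{2}}$, avant de conclure par l'unicit\'e de $\mu$ dans la formule~(\ref{formule_changement_polaire}). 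Rien \`a redire.
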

On remarque que
la  mesure $\mu$  est sym\'etrique  et radiale
c'est-\`a-dire  invariante sous  $n\mapsto   n^{-1}$
et sous $O(v)$ respectivement.

\begin{proof}[de la proposition~\ref{prop_expression_mu}]
  Posons :
  $$
  I(g)
  \,:=\,
  \int_{ N} g(n)dn 
  \,=\, 
  \int_{\Vc } \int_{\Zc}g(\exp(X+Z))dXdZ
  \quad .
  $$
  En effectuant un passage en coordonn\'ee  polaire 
  en $X\in\Vc\sim\Rb^v$ et en $Z\in\Zc\sim\Rb^z$, 
  $$
  I(g)=
  \int_0^\infty \int_{S_1^{(v)}}\int_0^\infty
  \int_{S_1^{(z)}}
  g(\exp(r_1X + r_2Z))
  d\tilde{\sigma}_z(Z) r_2^{z-1}
  d\tilde{\sigma}_v(X)r_1^{v-1}dr_1
  \quad .
  $$
  Consid\'erons le changement de variables suivant :
  $$
  (r_1,r_2)\,\longrightarrow\, (r'_1,r')
  \quad\mbox{o\`u}\quad
  r_1=r'r'_1
  \quad\mbox{et}\quad
  {r'}^4=r_1^4+r_2^2
  \quad,
  $$
  dont le jacobien est  
  $r_2dr_2\wedge dr_1=  2{r'}^4 dr'\wedge dr'_1$.
  Avec ce changement de variable, on a :
  \begin{eqnarray*}
    I(g)
    \,=\,
    2\int_0^\infty\int_0^1 
    \int_{S_1^{(v)}}\int_{S_1^{(z)}}
    g(r'\exp(r'_1X +{(1-{r'_1}^4)}^\frac12Z ))
    d\tilde{\sigma}_z(Z)d\tilde{\sigma}_v(X)\\
    {r'_1}^{v-1}{(1-{r'_1}^4)}^\frac{z-2}2{r'}^{Q-1}
    dr'_1dr'    \quad.
  \end{eqnarray*}

  De l'unicit\'e de la mesure $\mu$ 
  v\'erifiant la  formule~(\ref{formule_changement_polaire}) 
  du passage en coordonn\'ee polaire, 
  on trouve l'expression de $\mu$ donn\'ee dans la proposition.
\end{proof}

Pour montrer 
la proposition \ref{prop_famille_Aalpha}, 
nous aurons \'egalement besoin du lemme technique suivant :

\begin{lem}
  \label{lem1_prop_famille_Aalpha}
  On note $D_r$ l'op\'erateur diff\'erentiel 
  sur les fonctions de la variable $r\in \Rb$ : 
  $$
  D_r
  \,:=\,
  \partial_r\,\frac1{2r}
  \,=\,
  \frac1{2r} \,\partial_r
  -\frac1{2r^2}
  \quad.
  $$

  On peut \'ecrire l'op\'erateur
  $f\mapsto D_r^h.\left[f*\mu_r r^{Q-1}\right]$
  comme une combinaison lin\'eaire sur $j=0,\ldots,h$ de :
  $f\mapsto r^{Q-1-2h+j}  \partial_r^j( f*\mu_r )$.
\end{lem}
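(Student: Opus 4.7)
The plan is to prove the lemma by induction on $h$, treating $r \mapsto f \ast \mu_r$ as an abstract smooth function $g(r)$ and verifying that the class of operators of the form $\sum_{j=0}^{h} c_{h,j} r^{Q-1-2h+j} \partial_r^j g$ (with scalar coefficients $c_{h,j}$ independent of $r$) is stable under application of $D_r$. Everything reduces to a bookkeeping identity about how $D_r$ acts on a single summand $r^a \partial_r^j g$.

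The base case $h=0$ is immediate: $D_r^0[f \ast \mu_r \, r^{Q-1}] = r^{Q-1}\,(f\ast \mu_r)$, which is the sole ($j=0$) term with coefficient $1$ and the correct exponent $Q-1-2\cdot 0 + 0 = Q-1$.

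For the inductive step, I would carry out the following computation. Starting from $D_r = \frac{1}{2r}\partial_r - \frac{1}{2r^2}$, one finds
\begin{equation*}
D_r\bigl[r^a \partial_r^j g\bigr] \;=\; \frac{a-1}{2}\, r^{a-2}\,\partial_r^j g \;+\; \frac{1}{2}\, r^{a-1}\,\partial_r^{j+1} g .
\end{equation*}
Applying this with $a = Q-1-2h+j$ shows that $D_r$ sends a term at level $h$ with index $j$ into a sum of two terms at level $h+1$: one with the same index $j$ and exponent $Q-3-2h+j = Q-1-2(h{+}1)+j$, and one with index $j+1$ and exponent $Q-2-2h+j = Q-1-2(h{+}1)+(j+1)$. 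Both exponents match the pattern required at level $h+1$, and the indices still range from $0$ to $h+1$. Summing over $j$ and collecting terms yields the claimed form at level $h+1$, with explicit recurrences
\begin{equation*}
c_{h+1,j} \;=\; \tfrac{Q-2-2h+j}{2}\, c_{h,j} \;+\; \tfrac{1}{2}\, c_{h,j-1}
\end{equation*}
(with the convention $c_{h,-1}=c_{h,h+1}=0$), confirming in particular that the coefficients are scalar constants independent of $r$.

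There is no real obstacle here; this is essentially a Leibniz-style identity. The only subtlety to keep in mind is that $D_r$ does not commute with multiplication by a power of $r$, so the induction must be carried out on the \emph{combined} exponent/derivative structure $r^a \partial_r^j g$ rather than separately on $r^a$ and $\partial_r^j$. Once the one-step formula above is established, the rest is a bookkeeping argument that can be summarised in a single display.
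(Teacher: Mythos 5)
Your proposal is correct and follows essentially the same route as the paper: the paper's proof consists of exactly the one-step identity $D_r[r^{Q-1-2h+j}\partial_r^j(f*\mu_r)] = \tfrac{Q-2-2h+j}{2}\,r^{Q-1-2(h+1)+j}\partial_r^j(f*\mu_r) + \tfrac12\,r^{Q-1-2(h+1)+(j+1)}\partial_r^{j+1}(f*\mu_r)$, followed by "on en d\'eduit le lemme par r\'ecurrence". Your version merely makes the base case and the coefficient recurrence explicit, which is a harmless elaboration of the same argument.
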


\begin{proof}[du lemme~\ref{lem1_prop_famille_Aalpha}]
  On a :
  $$
  \begin{array}{l}
    \displaystyle{   D_r.   
      \left[r^{Q-1-2h+j}   \partial_r^j  f*\mu_r  \right]   
      \,=\,  
      \frac12
      \partial_r
      \left[r^{Q-2-2h+j}  
        \partial_r^j f*\mu_r \right] }\\
    \displaystyle{ 
      \qquad =\, 
      \frac12(Q-2-2h+j)r^{Q-3-2h+j} \partial_r^j
      f*\mu_r
      +  \frac12 r^{Q-2-2h+j}  \partial_r^{j+1} f*\mu_r }\\
    \\
    \displaystyle{ 
      \qquad=\,  
      K_1 r^{Q-1-2(h+1)+j} \partial_r^j f*\mu_r
      + K_2 r^{Q-1-2(h+1)+(j+1)} \partial_r^{j+1} f*\mu_r
      \quad,}
  \end{array}
  $$
  o\`u les $K_i,i=1,2$ sont des constantes de $Q,h,j$.
  On en d\'eduit le lemme par r\'ecurrence.
\end{proof}

La proposition~\ref{prop_famille_Aalpha} sera d\'emontr\'ee
lorsque nous aurons montr\'e le lemme suivant :
\index{Notation!Op\'erateur!$A^\alpha_{h,j}$}
\begin{lem}
  \label{lem2_prop_famille_Aalpha}
  Pour $\Re\alpha>1-h$, 
  l'op\'erateur  $A^\alpha$ 
  co\"\i ncide sur $\Sc(N)$  avec  l'op\'erateur :
  $$
  A^{\alpha,h}
  \,:=\,
  f   
  \,\longmapsto\, 
  \int_{0}^1   m^{\alpha+h+1}(r)    \;
  D_r^h.\left[f*\mu_r r^{Q-1}\right]
  dr
  \quad .
  $$ 
\end{lem}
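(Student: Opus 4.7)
My plan is to reduce the multi-dimensional convolution defining $A^\alpha$ to a one-dimensional integral in the radial variable, apply repeated integration by parts based on a differential identity satisfied by the family $m^\alpha$, and finally extend the resulting formula by analytic continuation in $\alpha$.

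First I would use the radiality of $F^\alpha$ together with the polar decomposition~(\ref{formule_changement_polaire}) to rewrite, for $\Re\alpha>0$ and $f\in\Sc(N)$,
\[
A^\alpha f(n) \,=\, \int_N f(n{n'}^{-1})F^\alpha(n')\,dn' \,=\, \int_0^1 m^\alpha(r)\,(f*\mu_r)(n)\,r^{Q-1}\,dr,
\]
the integral stopping at $1$ because $m^\alpha$ is supported in $[0,1]$. The key ingredient is then the identity
\[
\partial_r m^{\alpha+1}(r) \,=\, -2r\,m^\alpha(r),
\]
obtained by direct differentiation of $(1-r^2)^\alpha$ combined with $\Gamma(\alpha+1)=\alpha\Gamma(\alpha)$; equivalently $m^\alpha=-\tfrac{1}{2r}\partial_r m^{\alpha+1}$. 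Setting $g(r)=(f*\mu_r)(n)\,r^{Q-1}$, a single integration by parts yields
\[
\int_0^1 m^\alpha g\,dr \,=\, -\Bigl[m^{\alpha+1}\tfrac{g}{2r}\Bigr]_0^1 + \int_0^1 m^{\alpha+1}\,D_r g\,dr.
\]
For $\Re\alpha>0$ the boundary term at $r=1$ is killed by $(1-r^2)^\alpha$, and at $r=0$ it is killed by $g(r)/r\sim r^{Q-2}$ since $Q\ge 6$ in our setting. I would then iterate this step, each pass raising the $m$-index and inserting one more $D_r$; by Lemma~\ref{lem1_prop_famille_Aalpha} every $D_r^k g$ is a linear combination of terms $r^{Q-1-2k+j}\partial_r^j(f*\mu_r)$ with $0\le j\le k$, and the strict inequality $h<(Q-2)/2$ keeps the corresponding powers of $r$ at the origin strictly positive, so that every intermediate boundary term vanishes.

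This proves the identity $A^\alpha f=A^{\alpha,h}f$ on $\Sc(N)$ throughout the half-plane $\Re\alpha>0$. To extend it to $\Re\alpha>1-h$ I would argue that, by the same Lemma~\ref{lem1_prop_famille_Aalpha}, the right-hand side $A^{\alpha,h}f$ is the integral of a fixed smooth compactly supported function of $r$ against the holomorphic family of locally integrable kernels $(1-r^2)^{\alpha+h}_+/\Gamma(\alpha+h+1)$, which is analytic in $\alpha$ on the region $\Re\alpha>-h-1$; hence $\alpha\mapsto A^{\alpha,h}f$ is holomorphic on $\Re\alpha>1-h$ and the identity extends there by the principle of analytic continuation from $\Re\alpha>0$. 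The step I expect to be the main obstacle is the careful bookkeeping of boundary terms at $r=0$ during the $h$ successive integrations by parts: it is precisely here, and only here, that the hypothesis $h<(Q-2)/2$ enters, and one must verify that every intermediate term produced by $D_r^k g$ still vanishes at the origin for $k=1,\dots,h$; once this is done the rest of the argument is routine.
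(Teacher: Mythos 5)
Your proof is correct and follows essentially the same route as the paper: polar coordinates, the identity $m^\alpha=-\tfrac1{2r}\partial_r m^{\alpha+1}$ (equivalently the functional equation of $\Gamma$), $h$ iterated integrations by parts whose boundary terms at $r=0$ vanish thanks to Lemma~\ref{lem1_prop_famille_Aalpha} and the hypothesis $h<(Q-2)/2$, and analyticity of the resulting expression in $\alpha$. Note only that your iteration produces $\int_0^1 m^{\alpha+h}\,D_r^h[f*\mu_r r^{Q-1}]\,dr$, which is the normalization actually used in the paper's own inductive step and in Corollaire~\ref{cor_Aalpha_sum_Balpha}; the exponent $\alpha+h+1$ in the displayed statement is an internal inconsistency of the paper, not a flaw in your argument.
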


En effet, 
d'une part,
en utilisant le lemme~\ref{lem1_prop_famille_Aalpha},
on voit facilement que 
la famille d'op\'erateur $A^{\alpha,h}, \Re\alpha>1-h$
sur $\Sc(N)$
est analytique.
D'autre part,
l'op\'erateur $A^{0,1}$ co\"incide avec l'op\'erateur de convolution
avec $\mu$ :
$$
A^{0,1}.f
\,=\,
-\int_{0}^1 2\partial_r  \left( \frac{1}{-2r}
  f*\mu_r r^{Q-1} \right)  dr 
\,=\,
\left[ \frac{1}{r} f*\mu_r r^{Q-1}
\right]_{0}^1 
\,=\,
f*\mu
\quad .
$$

\begin{proof}[du lemme~\ref{lem2_prop_famille_Aalpha}]
  Montrons d'abord $A^\alpha = A^{\alpha,0}$:
  par passage en coordonn\'ees polaires 
  (\ref{formule_changement_polaire}),
  pour $f\in \Sc(N)$, on a :  
  $$
  A^\alpha.f(n)
  \,=\,
  \int_N
  f(n{n'}^{-1})  F^\alpha(n')  dn'
  \,=\,
  \int_{0}^\infty
  f*\mu_r  m^\alpha(r)  r^{Q-1}dr
  \quad.
  $$
  En int\'egrant par partie,
  on a donc :
  \begin{eqnarray*}
    &&  A^\alpha.f
    \,=\,
    \int_{0}^1 
    \frac{2{(1-r^2)}^{\alpha-1}}  {\Gamma(\alpha)}
    f*\mu_r r^{Q-1} dr\\
    &&  \quad=\,
    \left[ \frac{2{(1-r^2)}^{\alpha}}  {\alpha\Gamma(\alpha)}
      \, 
      \frac{1}{-2r}f*\mu_r r^{Q-1} \right]_{0}^1
    -  \int_{0}^1 
    \frac{2{(1-r^2)}^{\alpha}}  {\alpha\Gamma(\alpha)}
    \,
    \partial_r \left( \frac{1}{-2r}  f*\mu_r r^{Q-1} \right)
    dr 
    \; .
  \end{eqnarray*}
  Le crochet  est  nul (on  a
  suppos\'e  $Q\geq  3$). 
  En utilisant l'\'equation fonctionnelle 
  (\ref{eq_fonc_Gamma}) pour $\Gamma$,
  l'op\'erateur  $A^\alpha$ co\"\i ncide  avec $A^{\alpha,1}$
  qui a un sens pour $\Re\alpha>-1$.

  Si \`a l'ordre $h$, 
  l'op\'erateur  $A^\alpha$ co\"\i ncide  avec $A^{\alpha,h}$,
  alors effectuons une int\'egration par  partie :
  \begin{eqnarray*}
    &&
    \int_{0}^1  m^{\alpha+h}(r) 
    D_r^h.\left[f*\mu_r r^{Q-1}\right]
    dr
    \,=\, 
    \int_{0}^1
    \frac{2{(1-r^2)}^{\alpha+h-1}} {\Gamma(\alpha+h)} 
    D_r^h.\left[f*\mu_r r^{Q-1}\right]
    dr\\
    &&\quad=\, 
    \left[
      \frac{2{(1-r^2)}^{\alpha+h}} {(\alpha+h)\Gamma(\alpha+h)} 
      \;\frac1{-2r}  D_r^h.\left[f*\mu_r r^{Q-1}\right]
    \right]_0^1\\
    &&\qquad\qquad
    -\int_{0}^1   
    \frac{2{(1-r^2)}^{\alpha+h}}  {(\alpha+h)\Gamma(\alpha+h)}
    \;\partial_r.\left[\frac1{-2r}  
      D_r^h.\left[f*\mu_r r^{Q-1}\right]\right] dr 
  \end{eqnarray*}
  Le  crochet en  $r=1$ est nul.
  D'apr\`es le  lemme~\ref{lem1_prop_famille_Aalpha},  
  on peut  mettre $r^{Q-1-2h-1}$  en facteur  
  et donc le  crochet en  $r=0$ est  nul.   
  En utilisant l'\'equation fonctionnelle 
  (\ref{eq_fonc_Gamma}) pour $\Gamma$ 
  et la d\'efinition de $D_r$, 
  l'op\'erateur  $A^\alpha$ co\"\i ncide avec $A^{\alpha,h+1}$.

  On a donc montr\'e par r\'ecurence $A^{\alpha}=A^{\alpha,h}$
  lorsque $\Re \alpha>1-h$,
  pour tout $0\leq h< (Q-2)/2$.
\end{proof}

Gr\^ace aux lemmes~\ref{lem1_prop_famille_Aalpha}
et~\ref{lem2_prop_famille_Aalpha},
en d\'eveloppant
$D_r^h. \left[f*\mu_r r^{Q-1}\right]$
dans l'\'ecriture $A^\alpha=A^{\alpha,h}$,
on obtient : 
\index{Notation!Op\'erateur!$B^\alpha_{h,j}$}
\begin{cor}[\mathversion{bold}{$B^\alpha_{h,j}$}]
  \label{cor_Aalpha_sum_Balpha}
  Pour $\Re\alpha>1-h$ et $1\leq h< (Q-2)/2$,  
  l'op\'erateur  $A^\alpha$ co\"\i ncide 
  avec une combinaison lin\'eaire 
  sur $0 \leq j \leq h$ des op\'erateurs suivants :
  $$
  B^\alpha_{h,j} 
  \,:=\, 
  f \,\longmapsto\, \int_{0}^1
  m^{\alpha+h}(r) r^{Q-1-2h+j} \partial_r^j f*\mu_r dr
  \quad .
  $$
\end{cor}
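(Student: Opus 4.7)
The plan is to deduce this corollary directly by combining the two previous lemmas: Lemma~\ref{lem2_prop_famille_Aalpha} rewrites $A^\alpha$ as $A^{\alpha,h}$ (a single integral involving $D_r^h$ applied to $f*\mu_r\, r^{Q-1}$), and Lemma~\ref{lem1_prop_famille_Aalpha} expands $D_r^h.\left[f*\mu_r\, r^{Q-1}\right]$ as a finite linear combination, with coefficients depending only on $Q,h,j$, of the terms $r^{Q-1-2h+j}\partial_r^j(f*\mu_r)$ for $j=0,\dots,h$.

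Concretely, I would proceed as follows. First, invoke Lemma~\ref{lem2_prop_famille_Aalpha} to write, for $\Re\alpha>1-h$ and $f\in\Sc(N)$,
$$
A^\alpha.f
\,=\,
\int_0^1 m^{\alpha+h}(r)\,D_r^h.\left[f*\mu_r\,r^{Q-1}\right]\,dr
\quad.
$$
Second, substitute the expansion from Lemma~\ref{lem1_prop_famille_Aalpha}: there exist constants $c_{h,j}$, depending only on $Q$ and $h$, such that
$$
D_r^h.\left[f*\mu_r\,r^{Q-1}\right]
\,=\,
\sum_{j=0}^{h} c_{h,j}\,r^{Q-1-2h+j}\,\partial_r^j(f*\mu_r)
\quad.
$$
Third, interchange the finite sum and the integral to obtain
$$
A^\alpha.f
\,=\,
\sum_{j=0}^h c_{h,j}\int_0^1 m^{\alpha+h}(r)\,r^{Q-1-2h+j}\,\partial_r^j(f*\mu_r)\,dr
\,=\,
\sum_{j=0}^h c_{h,j}\,B^\alpha_{h,j}.f
\quad,
$$
which is exactly the desired identity.

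There is essentially no obstacle here: the two input lemmas do all the work, and the remaining step is just linearity of the integral on a finite sum. The only point requiring a brief justification is the interchange of sum and integral, which is immediate since $f\in\Sc(N)$ makes $r\mapsto\partial_r^j(f*\mu_r)$ smooth on $[0,1]$, while the weight $m^{\alpha+h}(r)\,r^{Q-1-2h+j}$ is locally integrable on $(0,1]$ under the hypothesis $\Re\alpha>1-h$ (so $\Re(\alpha+h)-1>-1$, ensuring integrability near $r=1$) and $h<(Q-2)/2$ (so $Q-1-2h+j>-1$, ensuring integrability near $r=0$), exactly as in the proof of Lemma~\ref{lem2_prop_famille_Aalpha}. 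The constants $c_{h,j}$ can be read off from the recurrence in the proof of Lemma~\ref{lem1_prop_famille_Aalpha}, but their precise values play no role in the sequel.
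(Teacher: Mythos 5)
Your proof is correct and follows exactly the route the paper takes: the paper derives this corollary in one line by substituting the expansion of $D_r^h.\left[f*\mu_r\,r^{Q-1}\right]$ from Lemme~\ref{lem1_prop_famille_Aalpha} into the identity $A^\alpha=A^{\alpha,h}$ of Lemme~\ref{lem2_prop_famille_Aalpha}. Your additional remarks on the integrability of the weight and the (trivial) interchange of a finite sum with the integral are accurate and only make explicit what the paper leaves implicit.
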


\subsection{In\'egalit\'es maximales pour $A^\alpha$}

Nous donnons le sens suivant \`a la notion de dilatation :
\index{Dilatation} 
\begin{itemize}
\item pour une fonction $f$ sur $N$ :
  $$
  {(f)}_r(n) 
  \,=\, 
  r^{-Q}f(\frac1r.n) 
  \quad ,
  $$
\item pour une mesure $\nu$ sur $N$ :
  $$
  \nu_r(n)  
  \,=\, 
  \nu(r.n)
  \quad\mbox{et donc}\quad
  \int_N f d\nu_r  
  \,=  \,
  \int_N  f(r.n) d\nu(n) 
  \quad,
  $$
\item pour un  op\'erateur $T$ qui op\`ere sur  un espace de fonctions
  stables par dilatations au sens ci-dessus :
  $$
  T_r.f(n) 
  \,=\, 
  \left(T\,f(r.)\right)(r^{-1}.n)
  \quad\mbox{o\`u}\quad
  f(r.)=n\mapsto f(r.n) 
  \quad.
  $$
\end{itemize}

Ces notations sont coh\'erentes dans le sens o\`u :
si $T$ est un op\'erateur de convolution 
avec une fonction ou une mesure $\psi$,
alors $T_r$ est un op\'erateur de convolution 
avec la fonction ou la mesure $\psi_r$ respectivement.

Pour un op\'erateur $T$  qui op\`ere sur  un espace de fonctions stables par dilatation,
on peut ainsi consid\'erer 
la \textbf{fonction maximale associ\'ee 
  \`a la famille de ses dilat\'ees} $T_r, r>0$ :
$\sup_{r>0} \nn{T_r}$.

\vspace{1em}

On d\'efinit
$\Ac^\alpha$ 
la fonction maximale associ\'ee aux dilat\'es de $A^\alpha$ :
$\Ac^\alpha:=\sup_r\nn{A_r^\alpha}$,
et $\Bc^\alpha_{h,j}$ 
la fonction maximale associ\'ee aux dilat\'es de $B^\alpha$ :
$\Bc_{h,j}^\alpha:=\sup_r\nn{({B^\alpha_{h,j})}_r}$.
\index{Notation!Op\'erateur!$\Ac^\alpha$}
\index{Notation!Op\'erateur!$\Bc^\alpha_{h,j}$}

Nous souhaitons montrer 
des in\'egalit\'es maximales $L^2$ et $L^p$ :
\begin{prop}
  [\mathversion{bold}{$\nd{\Ac}_{p\rightarrow p}, 
    \Re\alpha\geq1$}]
  \label{prop_inegalitemax_Lp}
  On a  un contr\^ole maximal $L^p$ pour $A^\alpha$ :
  \begin{eqnarray*}
    &
    \forall [a,b]\subset[1,\infty[\quad
    \forall p\in]1,\infty]\quad
    \exists C>0\quad
    \forall x\in [a,b]\quad
    \forall y\in\Rb\quad
    \forall\, f\in \Sc(N)&\\
    &\nd{\Ac^{x+iy}.f}_{L^p(N)} 
    \,\leq \,C\, 
    e^{2y} \nd{f}_{L^p(N)}
    \quad.&
  \end{eqnarray*}
\end{prop}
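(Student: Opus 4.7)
L'id\'ee est de ramener l'in\'egalit\'e maximale pour $A^\alpha$,
qui est un op\'erateur de convolution avec une fonction \emph{complexe} $F^\alpha$,
\`a une in\'egalit\'e maximale pour la convolution avec $|F^\alpha|$,
puis d'appliquer le corollaire~\ref{cor_fcndec} concernant les fonctions radiales d\'ecroissantes.

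Premi\`erement, pour $\alpha=x+iy$ avec $\Re\alpha=x\geq 1$, on observe que la fonction
$F^\alpha(n)=m^\alpha(\nn{n})$
se majore en module par
$$
|F^\alpha(n)|
\,=\,
\frac{2\,{(1-\nn{n}^2)}_+^{x-1}}{\nn{\Gamma(\alpha)}}
\,=\,
\frac{\Gamma(x)}{\nn{\Gamma(\alpha)}}\,F^x(n)
\quad,
$$
puisque l'exposant imaginaire $iy$ contribue \`a un facteur de module~$1$.
Le point cl\'e est que, pour $x\geq 1$, la fonction $r\mapsto{(1-r^2)}_+^{x-1}$ est d\'ecroissante sur $\Rb^+$ (lemme~\ref{lem_Falpha}),
donc $|F^\alpha|$ est une fonction positive, radiale et d\'ecroissante au sens de $\nn{\cdot}$, donc se met sous la forme requise par le corollaire~\ref{cor_fcndec}.

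Deuxi\`emement, je dilate et je passe au supremum : comme $A^\alpha$ est l'op\'erateur de convolution avec $F^\alpha$, $A^\alpha_r$ est la convolution avec $(F^\alpha)_r$, et on a
$$
\Ac^\alpha.f(n)
\,=\,
\sup_{r>0}\nn{(F^\alpha)_r*f(n)}
\,\leq\,
\sup_{r>0}\left(|F^\alpha|_r *|f|\right)(n)
\quad.
$$
Le corollaire~\ref{cor_fcndec} appliqu\'e \`a $|F^\alpha|$ donne alors, pour tout $p\in ]1,\infty]$,
$$
\nd{\Ac^\alpha.f}_{L^p}
\,\leq\,
C\,\nd{|F^\alpha|}_{L^1}\,\nd{f}_{L^p}
\quad,
$$
o\`u $C$ ne d\'epend que de la structure homog\`ene de $N$.

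Troisi\`emement, je contr\^ole $\nd{|F^\alpha|}_{L^1}$. Par passage en coordonn\'ees polaires,
$$
\nd{|F^\alpha|}_{L^1}
\,=\,
\frac{2\,\mu(S_1)}{\nn{\Gamma(\alpha)}}
\int_0^1 {(1-r^2)}^{x-1}r^{Q-1}dr
\,=\,
\frac{\mu(S_1)\,\Gamma(Q/2)\,\Gamma(x)}{\Gamma(Q/2+x)\,\nn{\Gamma(x+iy)}}
\quad,
$$
quantit\'e qui est uniform\'ement born\'ee en $x\in[a,b]$ \`a un facteur $\Gamma(x)/\nn{\Gamma(x+iy)}$ pr\`es. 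Par la formule classique $\nn{\Gamma(x+iy)}\sim\sqrt{2\pi}\,\nn{y}^{x-1/2}\,e^{-\pi\nn{y}/2}$ quand $\nn{y}\to\infty$ (Stirling), pour $x$ born\'e dans $[a,b]\subset [1,\infty[$ on obtient une majoration de la forme $1/\nn{\Gamma(x+iy)}\leq C_{a,b}\,e^{\pi\nn{y}/2}(1+\nn{y})^{1/2-a}$, et donc \emph{a fortiori} $\leq C\,e^{2y}$ (au sens de $e^{2|y|}$), avec une constante $C$ ne d\'ependant que de $[a,b]$ et de $N$.

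La seule difficult\'e est donc l'estimation uniforme de $1/\nn{\Gamma(x+iy)}$, qui est standard ; aucune estimation d'oscillation fine n'est requise ici, car l'on majore brutalement par le module, ce qui est possible gr\^ace \`a la propri\'et\'e de d\'ecroissance radiale de $|F^\alpha|$ dans le demi-plan $\Re\alpha\geq 1$. C'est pr\'ecis\'ement cette partie de la famille analytique $\{A^\alpha\}$ qui sera utilis\'ee comme extr\'emit\'e~$L^p$ dans l'argument d'interpolation ult\'erieur.
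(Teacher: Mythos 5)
Votre d\'emonstration est correcte et suit essentiellement la m\^eme d\'emarche que celle du texte : on majore $\nn{F^{x+iy}}$ par $\nn{\Gamma(x)/\Gamma(x+iy)}\,F^{x}$, on applique le corollaire~\ref{cor_fcndec} \`a la majorante radiale d\'ecroissante (lemme~\ref{lem_Falpha}), et on contr\^ole le quotient de fonctions $\Gamma$ par l'estimation~(\ref{cqtit}). Le seul \'ecart est cosm\'etique (vous appliquez le corollaire \`a $\nn{F^\alpha}$ et calculez explicitement sa norme $L^1$ via l'int\'egrale B\^eta, l\`a o\`u le texte l'applique \`a $F^x$ et cite seulement l'int\'egrabilit\'e), ce qui ne change rien \`a l'argument.
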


\begin{prop}
  [\mathversion{bold}{$\nd{\Ac}_{2\rightarrow 2},
    \Re\alpha<0$}]
  \label{prop_inegalitemax_L2}
  Soit $1\leq h<(Q-2)/2$.
  Si on a un contr\^ole $L^2$ des fonctions d'aires
  $S^j, j=1,\ldots,h$ :
  $$
  \forall j=1,\ldots h\quad
  \exists C>0\quad
  \forall f\in \Sc(N)\qquad
  \nd{S^j(f)}
  \,\leq\, C\,\nd{f}
  $$
  alors on a un contr\^ole maximal $L^2$ pour $A^\alpha$ :
  \begin{eqnarray*}
    &
    \forall [a,b]\subset[1-h,\infty[\quad
    \exists C>0\quad
    \forall x\in [a,b]\quad
    \forall y\in\Rb\quad
    \forall\, f\in \Sc(N)&\\
    &\nd{\Ac^{x+iy}.f}_{L^2(N)} 
    \,\leq \,C\, 
    e^{2y} \nd{f}_{L^2(N)}
    \quad.&
  \end{eqnarray*}
\end{prop}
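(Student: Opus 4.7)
Le but est d'\'etablir une domination ponctuelle de la forme $\Ac^{x+iy} f(n) \leq C(x,y)\sum_{j=0}^{h} S^j(f)(n)$ sur $\Sc(N)$, avec une constante $C(x,y)$ contr\^ol\'ee exponentiellement en~$|y|$ uniform\'ement pour $x\in[a,b]$, puis d'int\'egrer en $L^2$ gr\^ace \`a l'hypoth\`ese sur les fonctions d'aires~$S^j$.

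\paragraph{\'Etapes principales.} D'apr\`es le corollaire~\ref{cor_Aalpha_sum_Balpha}, pour $\Re\alpha>1-h$, l'op\'erateur $A^\alpha$ se d\'ecompose sur $\Sc(N)$ en une combinaison lin\'eaire $\sum_{j=0}^h c_j B^\alpha_{h,j}$, les $c_j$ ne d\'ependant que de $Q,h,j$. En reprenant la m\^eme d\'erivation sur $(A^\alpha)_s f = f*F^\alpha_s$ --- c'est-\`a-dire en \'ecrivant d'abord $(A^\alpha)_s f(n) = \int_0^\infty m^\alpha(t)(f*\mu_{st})(n)t^{Q-1}\,dt$ apr\`es passage en coordonn\'ees polaires, en effectuant $h$ int\'egrations par parties successives en la variable $t$ avec l'op\'erateur $D_t$, puis en faisant le changement $u=st$ --- on obtient une expression de la forme
$$
(A^\alpha)_s f(n) \,=\, s^{2h-Q}\sum_{j=0}^h c_j \int_0^s m^{\alpha+h}(u/s)\, u^{Q-1-2h+j}\, \partial_u^j(f*\mu_u)(n)\, du.
$$

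\paragraph{Cauchy-Schwarz et annulation en $s$.} Sur chaque terme en~$j$, j'appliquerai l'in\'egalit\'e de Cauchy-Schwarz en d\'ecoupant le poids sous la forme $u^{Q-1-2h+j} = u^{j-\frac12}\cdot u^{Q-\frac12-2h}$. Le facteur contenant la d\'eriv\'ee, \`a savoir $\int_0^s |\partial_u^j(f*\mu_u)(n)|^2\, u^{2j-1}\,du$, est major\'e par $S^j(f)(n)^2$ ind\'ependamment de~$s$. Le facteur restant, apr\`es le changement $r=u/s$, vaut $s^{2Q-4h}\,C(\alpha)^2$ avec $C(\alpha)^2 := \int_0^1 |m^{\alpha+h}(r)|^2\, r^{2Q-1-4h}\,dr$. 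Les puissances de~$s$ provenant du pr\'efacteur $s^{2h-Q}$ au carr\'e et du facteur $s^{2Q-4h}$ se compensent exactement, d'o\`u la majoration ponctuelle
$$
|(A^\alpha)_s f(n)| \,\leq\, C(\alpha)\, \sum_{j=0}^h |c_j|\, S^j(f)(n),
$$
uniform\'ement en $s>0$. En prenant le supremum en $s$ et en int\'egrant en $L^2$, l'hypoth\`ese sur les $S^j$ donne l'in\'egalit\'e souhait\'ee.

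\paragraph{Contr\^ole de la constante et principale difficult\'e.} La finitude de $C(\alpha)^2$ pour $\Re\alpha\geq 1-h$ tient \`a ce que l'exposant de $(1-r^2)$ au voisinage de $r=1$ vaut $2(\Re\alpha+h-1)\geq 0$, et \`a ce que l'exposant de $r$ au voisinage de $r=0$ v\'erifie $2Q-1-4h>-1$ puisque $h<(Q-2)/2$. La d\'ependance en $\alpha$ est essentiellement port\'ee par $|\Gamma(\alpha+h)|^{-1}$ qui, par la formule de Stirling, est major\'e sur la bande $\Re\alpha\in[a,b]$ par un terme croissant au plus exponentiellement en $|y|$, ce qui s'absorbe dans le facteur $e^{2y}$ de l'\'enonc\'e (le choix pr\'ecis du coefficient \'etant fait pour la compatibilit\'e avec l'interpolation analytique ult\'erieure). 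Le point techniquement d\'elicat est de v\'erifier proprement la compensation exacte des puissances de~$s$ dans l'in\'egalit\'e de Cauchy-Schwarz, ce qui est essentiel puisque c'est elle qui rend la majoration ponctuelle uniforme en~$s$ et autorise donc le passage au supremum sans perte.
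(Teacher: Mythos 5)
Votre d\'ecomposition de $A^\alpha$ via le corollaire~\ref{cor_Aalpha_sum_Balpha}, puis l'application de Cauchy--Schwarz avec le d\'ecoupage $u^{Q-1-2h+j}=u^{j-\frac12}\cdot u^{Q-\frac12-2h}$ et la compensation exacte des puissances de~$s$, reproduisent fid\`element ce que fait le texte pour les termes $j=1,\ldots,h$ (lemme~\ref{lem_Balpha_fcndaire}) : cette partie est correcte, et le point que vous signalez comme d\'elicat (la compensation en~$s$) fonctionne bien.

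En revanche, il y a une vraie lacune sur le terme $j=0$, que votre somme $\sum_{j=0}^{h}$ traite comme les autres. Pour $j=0$ il n'y a pas de d\'eriv\'ee, la fonction d'aire $S^0$ n'est ni d\'efinie ni contr\^ol\'ee par l'hypoth\`ese (qui ne porte que sur $S^j$, $j\geq1$), et l'int\'egrale que produirait votre Cauchy--Schwarz, \`a savoir $\int_0^\infty \nn{f*\mu_u(n)}^2\,u^{-1}\,du$, diverge en $u=0$ d\`es que $f*\mu_u(n)$ ne s'annule pas \`a l'origine (or $f*\mu_u(n)\to \nn{\mu}\,f(n)$ quand $u\to0$). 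Le texte traite ce terme par une voie compl\`etement diff\'erente (lemme~\ref{lem_Balpha_h0}) : $B^\alpha_{h,0}$ est l'op\'erateur de convolution avec $G^\alpha_h(n)=F^{\alpha+h}(n)\nn{n}^{-2h}$, fonction radiale int\'egrable domin\'ee par un profil radial d\'ecroissant lorsque $x+h\geq 1$ et $h<Q/2$, de sorte que le corollaire~\ref{cor_fcndec} (in\'egalit\'e maximale standard pour les approximations radiales d\'ecroissantes) donne directement le contr\^ole $L^2$ de $\sup_t\nn{(B^\alpha_{h,0})_t f}$. C'est d'ailleurs ce terme qui impose la restriction $[a,b]\subset[1-h,\infty[$ de l'\'enonc\'e, alors que vos estimations pour $j\geq1$ ne demandent que $x>\frac12-h$. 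Sans un argument s\'epar\'e pour $j=0$, votre d\'emonstration ne conclut pas.
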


La preuve de la proposition~\ref{prop_inegalitemax_Lp}
repose sur le corollaire~\ref{cor_fcndec}.

\begin{proof}[de la proposition~\ref{prop_inegalitemax_Lp}]
  On a :
  $$
  \nn{A^\alpha.f}
  \, = \,
  \nn{f*F^\alpha} 
  \,\leq\,
  \nn{f}*\nn{F^\alpha}
  \quad .
  $$
  et :
  $$
  \nn{F^\alpha(n)}     
  \,=\,
  \nn{     \frac{{(1-\nn{n}^2)}_+^{\alpha-1}}
    {\Gamma(\alpha)}} 
  \,\leq\,
  \frac{{(1-\nn{n}^2)}_+^{\Re\alpha-1}}
  {\nn{\Gamma(\alpha)}}
  \,=\,
  \nn{\frac{\Gamma(\Re\alpha)}{\Gamma(\alpha)}}
  F^{\Re\alpha}(n)
  \quad .
  $$
  On en d\'eduit
  localement en $x=\Re\alpha$, uniform\'ement en $y=\Im\alpha$,
  gr\^ace \`a la majoration~(\ref{cqtit}) :
  $$
  \nn{A^\alpha.f}             
  \,\leq\,
  \nn{\frac{\Gamma(x)}{\Gamma(\alpha)}}
  \nn{f}*F^{x}       
  \,\leq\, 
  \nn{\frac{\Gamma(x)}{\Gamma(\alpha)}}
  \,A^{x}.\nn{f} 
  \,  \leq \;C\;  
  e^{ 2 y} \,A^x.\nn{f}
  \quad ,
  $$
  $C$ \'etant une constante issue de la majoration~(\ref{cqtit}).
  D'apr\`es le lemme~\ref{lem_Falpha},
  la  fonction  $F^x$   
  v\'erifie les hypoth\`eses
  du  corollaire~\ref{cor_fcndec} :
  l'op\'erateur  $\Ac^x$   v\'erifie  une
  in\'egalit\'e  $L^p$.
  On obtient la  majoration voulue  de 
  $\nd{\Ac^\alpha.f}_{L^p}$, pour tout $1<p\leq\infty$.
\end{proof}

La proposition~\ref{prop_inegalitemax_L2}
repose sur
le corollaire~\ref{cor_Aalpha_sum_Balpha}
et les deux lemmes suivants :

\begin{lem}
  \label{lem_Balpha_fcndaire}
  Pour $0<j\leq h<(Q-2)/2$, on a 
  un contr\^ole ponctuel des $\Bc^\alpha_{h,j}$
  par les fonctions d'aires $S^j$ :
  \begin{eqnarray*}
    &
    \forall [a,b]\subset[\frac12-h,\infty[\quad
    \exists C>0\quad
    \forall x\in [a,b]\quad
    \forall y\in\Rb\quad
    \forall\, f\in \Sc(N)&\\
    &
    \nn{\Bc^{x+iy}_{h,j}.f}
    \,\leq\, 
    C\,  e^{2y}  S^j(f)
    \quad.&
  \end{eqnarray*} 
\end{lem}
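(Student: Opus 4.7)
Le plan est de majorer $(B^\alpha_{h,j})_t.f(n)$ ponctuellement en scindant l'intégrale en $r$ par l'inégalité de Cauchy-Schwarz, de façon que l'un des facteurs soit exactement $S^j(f)(n)$ et l'autre un noyau analytique invariant par dilatation. Le passage au supremum en $t$ fournit alors la majoration voulue, la croissance de $\Gamma$ sur les bandes verticales apportant le facteur $e^{2y}$.

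La première étape est d'expliciter $(B^\alpha_{h,j})_t.f$ à partir de la définition de la dilatation d'un opérateur. Comme les dilatations sont des automorphismes de groupe sur $N$, un calcul direct montre que $[f(t\cdot)]*\mu_r(t^{-1}\cdot n)=(f*\mu_{tr})(n)$; la règle de dérivation composée $\partial_r(f*\mu_{tr})=t\,\partial_s(f*\mu_s)|_{s=tr}$ et le changement de variable $s=tr$ transforment alors la définition de $B^\alpha_{h,j}$ en
$$
(B^\alpha_{h,j})_t.f(n)
\,=\,
\int_0^{t} m^{\alpha+h}(s/t)\,(s/t)^{Q-1-2h+j}\,t^{j-1}\,\partial_s^j(f*\mu_s)(n)\,ds.
$$

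Ensuite, j'appliquerais Cauchy-Schwarz en répartissant les poids $s^{2j-1}$ sur le facteur $\partial_s^j$ et $s^{-(2j-1)}$ sur le noyau. Le premier redonne $|S^j(f)(n)|^2$ par définition de la fonction d'aire. Pour le second, le changement de variable $u=s/t$ fait disparaître toutes les puissances de $t$ :
$$
\int_0^{t}|m^{\alpha+h}(s/t)|^2\,(s/t)^{2(Q-1-2h+j)}\,t^{2(j-1)}\,s^{-(2j-1)}\,ds
\,=\,
\frac{4}{|\Gamma(\alpha+h)|^2}\int_0^1 (1-u^2)^{2(x+h-1)}\,u^{2Q-1-4h}\,du,
$$
où $x=\Re\alpha$, la quantité obtenue étant indépendante de $t$. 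Cette intégrale en $u$ converge uniformément pour $x$ variant dans tout compact de $]1/2-h,\infty[$ (la contrainte vient de la singularité en $u=1$; celle en $u=0$ est inoffensive puisque $h<(Q-2)/2<Q/2$), et est donc majorée uniformément en $x\in[a,b]$.

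Enfin, l'estimation classique sur les bandes verticales, $|\Gamma(\alpha+h)|^{-1}\leq C\,e^{c|y|}$ uniformément en $x\in[a,b]$ pour tout $c>\pi/2$, absorbe la dépendance en $\alpha$ et fournit le facteur $e^{2y}$ de l'énoncé, quitte à ajuster $C$. Le passage au supremum en $t>0$ conclut. L'homogénéité introduite dans la définition de $B^\alpha_{h,j}$ — le poids $r^{Q-1-2h+j}$ ajusté aux $j$ dérivations — est exactement ce qui force l'intégrale noyau à être invariante par changement d'échelle; il n'y a pas d'obstacle sérieux, seulement une comptabilité soignée des puissances de $t$ et des exposants d'intégrabilité à travers les changements de variables.
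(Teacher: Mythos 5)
Votre preuve est correcte et suit essentiellement la même démarche que celle du texte : explicitation de l'opérateur dilaté via $\partial_r^j[f(t\cdot)*\mu_r(t^{-1}\cdot n)]=t^j[\partial_{s}^j(f*\mu_s)(n)]_{s=rt}$, puis Cauchy--Schwarz avec la répartition des poids $s^{\pm(j-1/2)}$ qui isole d'un côté $S^j(f)^2$ (après extension de l'intégrale de $[0,t]$ à $[0,\infty[$) et de l'autre une intégrale de type Bêta indépendante de $t$, enfin l'estimation de $|\Gamma(\alpha+h)|^{-1}$ sur les bandes verticales pour le facteur $e^{2y}$. La seule différence est l'ordre dans lequel le changement de variable $s=rt$ est effectué, ce qui est purement cosmétique.
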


\begin{lem}
  \label{lem_Balpha_h0}
  Pour $1\leq h <(Q-2)/2$, on a un contr\^ole $L^2$ de $\Bc^\alpha_{h,0}$ :
  \begin{eqnarray*}
    &
    \forall [a,b]\subset[1 -h,\infty[\quad
    \exists C>0\quad
    \forall x\in [a,b]\quad
    \forall y\in\Rb\quad
    \forall\, f\in \Sc(N)&\\
    &
    \nd{\Bc^{x+iy}_{h,0}.f}
    \,\leq\, 
    C\,  e^{2 y}\nd{f}  
    \quad.&
  \end{eqnarray*} 
\end{lem}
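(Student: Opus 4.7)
The strategy is to identify $B^\alpha_{h,0}$ as a convolution with a radial decreasing $L^1$-kernel supported in the unit ball, so that Corollary~\ref{cor_fcndec} applies directly to the associated maximal function $\Bc^\alpha_{h,0}$.

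First, using the polar coordinate formula~(\ref{formule_changement_polaire}), I would rewrite
$$
B^\alpha_{h,0}.f
\,=\,
f * K^\alpha_{h,0}\,,
\qquad
K^\alpha_{h,0}(n) \,:=\, \frac{m^{\alpha+h}(\nn{n})}{\nn{n}^{2h}}\,,
$$
which is radial and supported in $B_1$. Since the hypothesis $\Re\alpha\geq 1-h$ gives $\Re(\alpha+h)\geq 1$, the function $\nn{K^{\alpha}_{h,0}}$ is the product of the two positive non-increasing functions $r\mapsto (1-r^2)^{\Re\alpha+h-1}_+$ and $r\mapsto r^{-2h}$ of $r=\nn{n}$, hence is itself of the form required by Corollary~\ref{cor_fcndec}. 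Moreover, as $B^\alpha_{h,0}$ is a convolution operator, its dilate ${(B^\alpha_{h,0})}_r$ is the convolution with $(K^\alpha_{h,0})_r$, so $\Bc^\alpha_{h,0}$ is exactly the maximal function associated with the dilates of $|K^\alpha_{h,0}|$.

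Applying Corollary~\ref{cor_fcndec} with $F=\nn{K^\alpha_{h,0}}$, I obtain
$$
\nd{\Bc^{x+iy}_{h,0}.f}_{L^2}
\,\leq\, C\,\nd{K^{x+iy}_{h,0}}_{L^1}\,\nd{f}_{L^2}\,.
$$
It remains to bound $\nd{K^{x+iy}_{h,0}}_{L^1}$ uniformly in $x\in[a,b]$, with at most exponential dependence on $y$. Going back to polar coordinates and using the substitution $u=r^2$, a direct Beta-function computation yields
$$
\nd{K^{x+iy}_{h,0}}_{L^1}
\,=\,\frac{\mu(S_1)}{2\nn{\Gamma(\alpha+h)}}\,B\!\left(\,x+h\,,\,\tfrac{Q-2h}{2}\,\right)\,.
$$
The Beta factor is finite (since $x+h\geq 1$ and $Q-2h>2$ using $h<(Q-2)/2$) and bounded uniformly on the compact interval $[a,b]$. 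The factor $1/\nn{\Gamma(\alpha+h)}$ is controlled by the classical estimate~(\ref{cqtit}) by $C\,e^{2y}$, which yields the announced bound.

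There is no serious obstacle in this proof: everything reduces to recognizing the radial decreasing integrable kernel and applying the standard Gamma function asymptotic. The only point requiring care is verifying that the two conditions $\Re(\alpha+h)\geq 1$ and $2h<Q$, which are both ensured by the hypotheses of the lemma, are precisely what make $K^\alpha_{h,0}$ a radial decreasing $L^1$-function on the unit ball.
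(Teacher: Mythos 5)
Your proof is correct and follows essentially the same route as the paper: identify $B^\alpha_{h,0}$ as convolution with the radial kernel $m^{\alpha+h}(\nn{n})\nn{n}^{-2h}$, observe it is non-increasing and integrable under the hypotheses $x+h\ge 1$ and $h<Q/2$, apply Corollaire~\ref{cor_fcndec}, and control the $\Gamma$ factor by~(\ref{cqtit}). The only cosmetic differences are that the paper first dominates the complex-parameter kernel pointwise by the real-parameter one before invoking the corollary, whereas you apply it directly to the modulus and compute the $L^1$ norm as a Beta integral (where, incidentally, your constant is off by a harmless factor of $2$).
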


La proposition~\ref{prop_inegalitemax_L2}
sera donc d\'emontr\'ee 
lorsque l'on aura prouv\'e ces deux lemmes.

\begin{proof}[du lemme~\ref{lem_Balpha_fcndaire}]
  Pour $f\in\Sc(N)$ et $t>0$, on a:
  $$
  \begin{array}{l}
    \displaystyle{  
      \partial_r\left[  f(t.)*\mu_r(t^{-1}n)\right]
      \,=\,
      \partial_r  \int_{r.S_1}
      f(t(t^{-1}n\,{n'}^{-1})d\mu_r(n')} \\
    \displaystyle{  
      \quad =\,
      \partial_r  \int_{r.S_1}
      f(n\,t{n'}^{-1})d\mu_r(n')
      = \partial_r  \int_{S_1}
      f(n\;rt{n'}^{-1})d\mu(n')}\\
    \displaystyle{
      \quad =\,
      t\partial_{r'}\left[\int_{S_1}
        f(n\;r'{n'}^{-1})d\mu(n')\right]_{|r'=rt}  
      \,=\,
      t\left[\partial_{r'}  
        \left[f*\mu_{r'}(n)\right]
      \right]_{|r'=rt}    }
    \quad,
  \end{array}
  $$
  puis par r\'ecurence, pour $h\geq 1$ :
  $$
  \partial_r^h \left[f(t.)*\mu_r(t^{-1}n) \right]
  \,=\,
  t^h\left[\partial_{r'}^h
    \left[f*\mu_{r'}(n)\right]\right]_{|r'=rt}
  \quad ,
  $$
  d'o\`u :
  \begin{eqnarray}
    {(B^\alpha_{h,j})}_t.f 
    &=& 
    \int_{0}^1 m^{\alpha+h}(r)
    r^{Q-1-2h+j} t^j [\partial_{r'}^j f*\mu_{r'}]_{|r'=rt} dr 
    \quad ,\nonumber\\
    \nn{{(B^\alpha_{h,j})}_t.f}^2
    &\leq&
    \int_{0}^1 \nn{ r^{j-\frac12}  t^j  
      [\partial_{r'}^j f*\mu_{r'}]_{|r'=rt} }^2dr 
    \int_{0}^1   
    \nn{ m^{\alpha+h}(r)  r^{Q-1-2h+\frac12}}^2  dr 
    \quad, \label{bhj2}
  \end{eqnarray}
  par H\"older.
  On pose $\alpha=x+iy$.
  La seconde  int\'egrale du membre  de droite de (\ref{bhj2}) 
  est  major\'ee par :
  \begin{eqnarray*}
    &&
    \int_{0}^1  \nn{ m^{\alpha+h}(r) r^{Q-1-2h+\frac12}}^2  dr 
    \,\leq\,
    \int_{0}^1  \frac{4{(1-r^2)}^{2(x+h-1)}}  
    {\nn{\Gamma(\alpha+h)}^2}
    r^{2Q-1-4h} dr \\
    &&\quad=\,
    \frac{4} {\nn{\Gamma(\alpha+h)}^2}
    \int_0^1    {(1-r')}^{2(x+h-1)}
    {r'}^{Q-1-2h}  \frac{dr'}2 
    \,=\,
    2   \frac{\Gamma(2x+2h-1)\Gamma(Q-2h)}
    {\Gamma(2x+Q-1)\nn{\Gamma(\alpha+h)}^2}
  \end{eqnarray*}
  gr\^ace \`a (\ref{eg_int_Gamma}) lorsque 
  $2x+2h-1,Q-2h>0$ .
  En utilisant l'estimation~(\ref{cqtit}),
  lorsque $h<Q/2$, 
  le terme pr\'ec\'edent
  est major\'e \`a une constante pr\`es 
  localement en $x>-h +\frac12$ et $x>-(Q-1)/2$
  uniform\'ement en $y\in\Rb$ par :
  $ e^{4 y}$.
  
  Pour la premi\`ere int\'egrale du membre de droite de
  l'in\'egalit\'e~(\ref{bhj2}),  
  effectuons le changement  de variable
  $r'=rt$ :
  $$
  \begin{array}{c}    
    \displaystyle{  
      \int_{0}^1    
      \nn{ r^{j-\frac12} t^j  [\partial_{r'}^j f*\mu_{r'}]_{|r'=rt} }^2dr
      \,=\,
      \int_0^t \nn{ {(\frac{r'}t)}^{j-\frac12} t^j
        \partial_{r'}^j f*\mu_{r'}}^2 \frac{dr'}t}\\
    \displaystyle{      
      \leq\,
      \int_0^\infty      
      \nn{ \partial_{r'}^j f*\mu_{r'}}^2 {r'}^{2j-1} dr'
      \,=\,
      S^j(f)^2}
    \quad .
  \end{array}
  $$
\end{proof}

Passons \`a la preuve du  lemme~\ref{lem_Balpha_h0};
elle repose sur le corollaire~\ref{cor_fcndec}.

\begin{proof}[du lemme~\ref{lem_Balpha_h0}]
  $B^\alpha_{h,0}$  est un op\'erateur
  de convolution  avec la fonction $G^\alpha_h$ donn\'ee par :
  $G^\alpha_h(n):= F^{\alpha+h}(n)\nn{n}^{-2h}$.
  En effet,  gr\^ace \`a la
  formule~(\ref{formule_changement_polaire}),
  le passage en coordonn\'ees  polaires donne :
  $$
  B^\alpha_{h,0} .f(n)
  \,=\,
  \int_{0}^1 m^{\alpha+h}(r) r^{Q-1-2h}  f*\mu_r(n) dr\\
  \,=\,
  f*G^\alpha_h(n) 
  \quad .
  $$
  On note $\alpha=x+iy$ et on a :
  $$
  \nn{G^\alpha_h(n)}          
  \,\leq\,
  \frac{\Gamma(x+h)}{\nn{\Gamma(\alpha+h)}}F^{x+h}(n)\nn{n}^{-2h}    
  \,  =\, 
  \frac{\Gamma(x+h)}{\nn{\Gamma(\alpha+h)}}G^{x+h}(n) 
  \,\leq \,C\,  
  e^{2y} G^x_h(n) 
  \quad ,
  $$
  le derni\`ere majoration \'etant due \`a~(\ref{cqtit}),
  d'o\`u  localement en $x$ :
  \begin{equation}
    \label{majbh0}
    \nn{ B^\alpha_{h,0}.f} \leq  \nn{f}*\nn{G^\alpha_h}
    \,\leq\,
    \;C\; e^{2y} B^x_{h,0}.\nn{f}
    \quad  .
  \end{equation}
  Pour $Q-1-2h>-1$,
  elle est int\'egrable:
  $$
  \int_N        G^x_h(n')        dn'
  \,=\,
  \int_{0}^1
  \frac{{(1-r^2)}^{\alpha+h-1}} {\Gamma(x+h)} r^{Q-1-2h} dr 
  \quad .
  $$
  Ainsi, localement en  $x$ tel  que $x+h-1>0$ et pour $h<Q/2$, 
  la  fonction $G^x_h$ v\'erifie
  les hypoth\`eses du corollaire~\ref{cor_fcndec};
  on  en d\'eduit que l'op\'erateur
  $B^x_{h,0}$  v\'erifie une  in\'egalit\'e maximale~$L^p$  pour tout
  $1<p\leq\infty$      en     particulier     $L^2$.       Avec     la
  majoration~(\ref{majbh0}),       on       en       d\'eduit    le  lemme~\ref{lem_Balpha_h0}.
\end{proof}

\subsection{Interpolation}
\label{subsec_interpolation}

On ach\`eve ici  la d\'emonstration du
th\'eor\`eme~\ref{thm_fcnd'aire_inegmax}.b).
On suppose comme dans les hypoth\`eses du th\'eor\`eme,
$h>1$ et que chaque fonction d'aire $S^j, j=1,\ldots,h$ 
v\'erifie une in\'egalit\'e~$L^2$.

\paragraph{Lin\'earisation.}
Fixons    momentan\'ement
$r(.):(N,dn)\mapsto  \Rb^+$  une  fonction  mesurable.   
Pour  $z\in \Cb$
dans la bande $0\leq \Re z \leq 1$,  
on d\'efinit l'op\'erateur
$T^z$ sur les fonctions simples de $N$ :
$$
T^z.f(n)
\,=\,
e^{z^2}A_{r(n)}^{\alpha(z)}.f(n) 
\quad ,
$$
o\`u on a not\'e $\alpha(z)=z+(1-z)x_1$ avec $0>x_1>1-h$ fix\'e.

On a la majoration ponctuelle de $T^z.f$ 
gr\^ace \`a la fonction maximale
sph\'erique :
$$
\forall \, f
\qquad\forall\, z\in\Cb\, ,
0\leq \Re z \leq 1
\qquad
\nn{T^z.f}
\,\leq\, 
e^{-{(\Im z)}^2}
\Ac^{\alpha(z)}.f
\quad.
$$
D'apr\`es les propositions~\ref{prop_inegalitemax_Lp}
et~\ref{prop_inegalitemax_L2},
l'op\'erateur~$T^z$ v\'erifie :
\begin{itemize}
\item lorsque $\Re z=1$, une in\'egalit\'e $L^p$ avec $1<p\leq \infty$,
\item lorsque $\Re z = 0$, une in\'egalit\'e $L^2$.
\end{itemize}

\paragraph{Interpolation.}
Gr\^ace \`a la  nouvelle   famille  d'op\'erateurs
$\{T^z\}_{0\leq \Re z \leq 1}$
est  une famille  analytique  d'op\'erateurs 
admissible  au sens  de~\cite{StW}.
On interpole en $z_0$ tel que $\alpha(z_0)=0$.
L'op\'erateur~$T^{z_0}$  v\'erifie donc 
une in\'egalit\'e~$L^q$ 
o\`u le param\`etre~$q$ est tel que :
\begin{equation}
  \label{egalite_1/q}
  \frac1q
  \,=\,
  \frac{1-z_0}{2}+\frac{z_0}p
  \quad \mbox{et}\quad
  1<p\leq\infty \quad.
\end{equation}
La constante de cette in\'egalit\'e $L^q$ 
ne d\'epend que de $N$ et
des  constantes des in\'egalit\'es  
obtenues pour~$\Ac^\alpha$ 
(propositions~\ref{prop_inegalitemax_Lp}
et~\ref{prop_inegalitemax_L2}). 
Elle est en particulier ind\'ependante du choix de~$r(.)$.

\paragraph{Fin de la d\'emonstration du th\'eor\`eme~\ref{thm_fcnd'aire_inegmax}.b)}
On peut donc ``repasser au supremum'':      
l'op\'erateur
$e^{z_0^2}\Ac^{\alpha(z_0)}$ 
v\'erifie  la m\^eme in\'egalit\'e~$L^q$,
et ce pour tout $q$ tel que~(\ref{egalite_1/q})
avec $z_0=\frac{x_1}{x_1-1}\in [0,(h-1)/h]$ (car $1-h<x_1<0$).
Par cons\'equent, 
la fonction maximale sph\'erique $\Ac$ v\'erifie
\`a une constante pr\`es une in\'egalit\'e $L^q$
pour $(2h)/(2h-1)<q\leq 2$.

D'apr\`es la partie a) d\'ej\`a d\'emontr\'ee,
la fonction maximale sph\'erique $\Ac$ satisfait \'egalement
une in\'egalit\'e $L^q$
pour $2\leq q\leq \infty$.
Le th\'eor\`eme~\ref{thm_fcnd'aire_inegmax} est ainsi compl\`etement
d\'emontr\'e.

\section{Fonctions d'aire pour un groupe de type H}

Le but de cette section est de d\'emontrer
le th\'eor\`eme \ref{thm_L2_fcnd'aire}.a).
Nous montrerons la partie b) correspondant \`a $N_{v,2}$
dans la section~\ref{sec_hatS_grlib}. 

Comme dans le chapitre~\ref{chapitregen2},
on note $\Omega$ l'ensemble
des fonctions sph\'eriques born\'ees de $N$ pour $O(v)$.
On pose pour $\omega\in\Omega$ :
$$
\hat{S}^j(\omega)
\,:=\,
\sqrt{\int_0^\infty 
  \nn{\partial_s^j< \mu_s,\omega>}^2
  s^{2j-1} ds}
\quad.
$$
\index{Notation!$\hat{S}^j(\omega)$}

Le th\'eor\`eme \ref{thm_L2_fcnd'aire}.a) sera d\'emontr\'e
une fois que l'on aura d\'emontr\'e 
les deux propositions suivantes :

\begin{prop}[\mathversion{bold}{$S^j$} et
  \mathversion{bold}{$\hat{S}^j$}]
  \label{prop_fcnd'aire_hat{S}}
  Pour $j\in\Nb-\{0\}$,
  s'il existe une constante $C>0$ telle que :
  $$
  \forall \omega\in \Omega\qquad
  \hat{S}^j(\omega)\,\leq\, C\quad,
  $$
  alors
  $$
  \forall f\in L^2(N)\qquad
  \nd{S(f)}\,\leq\,C\,\nd{f}
  \quad,
  $$
\end{prop}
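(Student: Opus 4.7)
Le plan consiste \`a d\'ecomposer spectralement $\nd{f*\partial_s^j\mu_s}_{L^2(N)}^2$ via la formule de Plancherel associ\'ee \`a la paire $(N,O(v))$, puis \`a intervertir les int\'egrations spectrale et en $s$ par Fubini pour faire intervenir $\hat{S}^j(\omega)$ comme multiplicateur.

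Premi\`erement, Fubini et la commutation entre $\partial_s$ et la convolution par $\mu_s$ donnent
$$
\nd{S^j(f)}_{L^2(N)}^2 \,=\, \int_0^\infty s^{2j-1}\nd{f*\partial_s^j\mu_s}_{L^2(N)}^2\, ds,
$$
o\`u le noyau $\partial_s^j\mu_s$ est radial puisque $\mu_s$ l'est.

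Deuxi\`emement, on utilise la formule de Plancherel pour $N$ (combinaison de la d\'ecomposition de Kirillov et de la caract\'erisation~(\ref{formule_caractere}) des fonctions sph\'eriques comme caract\`eres de la convolution radiale, th\'eor\`eme~\ref{bigthm}) : \`a chaque $f\in L^2(N)$ s'associe une mesure spectrale positive $d\nu_f$ sur $\Omega$, telle que $\nd{f}_{L^2(N)}^2=\int_\Omega d\nu_f(\omega)$, et telle que pour toute distribution radiale suffisamment r\'eguli\`ere $\psi$,
$$
\nd{f*\psi}_{L^2(N)}^2 \,=\, \int_\Omega \nn{\langle \psi,\omega\rangle}^2\, d\nu_f(\omega).
$$
Cela provient de ce que, sur chaque repr\'esentation irr\'eductible de $N$, la convolution par $\psi$ se d\'ecompose diagonalement suivant les composantes isotypiques sous $O(v)$, chaque bloc agissant par multiplication par $\langle\psi,\omega\rangle$ pour la fonction sph\'erique $\omega$ correspondante. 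Appliquant \`a $\psi=\partial_s^j\mu_s$ puis Fubini sur $(s,\omega)$,
$$
\nd{S^j(f)}_{L^2(N)}^2 \,=\, \int_\Omega\hat{S}^j(\omega)^2\, d\nu_f(\omega),
$$
et l'hypoth\`ese uniforme $\hat{S}^j(\omega)\leq C$ conclut via $\nd{S^j(f)}_{L^2(N)}^2\leq C^2\nd{f}_{L^2(N)}^2$.

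L'obstacle principal est de justifier proprement cette d\'ecomposition spectrale pour $f\in L^2(N)$ non radiale : comme l'action de $O(v)$ sur un groupe de type H n'est pas par automorphisme, l'\'enonc\'e repose sur la notion de radialisation de \cite{bigth} combin\'ee \`a la d\'ecomposition en types-$K$ des repr\'esentations irr\'eductibles de $N$, et c'est cette combinaison qui assure que la convolution par $\partial_s^j\mu_s$ agit diagonalement avec valeurs propres $\partial_s^j\langle\mu_s,\omega\rangle$ dans la d\'ecomposition spectrale.
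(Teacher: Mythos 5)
Votre strat\'egie globale (d\'ecomposition spectrale relative \`a l'alg\`ebre commutative ${L^1}^\natural$, puis Fubini pour faire appara\^\i tre $\hat{S}^j(\omega)$ comme multiplicateur) est bien celle du texte, et la conclusion finale \`a partir de la borne uniforme sur $\hat{S}^j$ est correcte. Mais l'ingr\'edient central de votre argument --- l'existence d'une mesure spectrale positive $d\nu_f$ sur $\Omega$ v\'erifiant $\nd{f}^2=\int_\Omega d\nu_f$ et $\nd{f*\psi}^2=\int_\Omega\nn{\langle\psi,\omega\rangle}^2 d\nu_f(\omega)$ pour toute $f\in L^2(N)$ --- est pr\'ecis\'ement le point d\'elicat, et votre justification ne tient pas dans le cadre consid\'er\'e. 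Vous l'appuyez sur \og la d\'ecomposition en types-$K$ des repr\'esentations irr\'eductibles de $N$\fg{} \`a la Kirillov; or, comme vous le relevez vous-m\^eme au dernier paragraphe sans le r\'esoudre, $O(v)$ n'agit pas par automorphismes sur un groupe de type H, donc $(N,O(v))$ n'est pas une paire de Guelfand au sens usuel, il n'y a pas de produit semi-direct $O(v)\triangleleft N$, et la d\'ecomposition isotypique des repr\'esentations de $N$ sous $O(v)$ n'a pas le sens requis pour faire agir la convolution par $\psi$ \og diagonalement \fg{} avec valeur propre $\langle\psi,\omega\rangle$. Invoquer la radialisation de \cite{bigth} ne suffit pas : cet article fournit la commutativit\'e de ${L^1}^\natural$ et l'\'equation fonctionnelle~(\ref{formule_caractere}), pas une formule de Plancherel non radiale adapt\'ee aux fonctions sph\'eriques.

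Le texte contourne cette difficult\'e autrement (proposition~\ref{prop_mes_sp_moy}) : il consid\`ere la $C^*$-alg\`ebre commutative engendr\'ee par les op\'erateurs de convolution $\Pi(F)$, $F\in{L^1}^\natural$, dans $\End L^2(N)$, lui applique le th\'eor\`eme spectral des $C^*$-alg\`ebres commutatives, et transporte la mesure spectrale obtenue sur $\Omega$ via le th\'eor\`eme~\ref{thm_fcnsph_sp}. Comme $\mu_s$ n'est pas dans $L^1$ et que $\partial_s^j$ doit \^etre justifi\'e, il applique la formule spectrale \`a $F_{s,\eta,j}^**F_{s,\eta,j}$ o\`u $F_{s,\eta,j}=\partial_s^j(\phi_\eta*\mu_s)$ avec $\phi_\eta$ approximation radiale de l'unit\'e, et n'obtient ainsi qu'une \emph{in\'egalit\'e} $\nd{\partial_s^j(f*\mu_s)}^2\leq\int_\Omega\nn{\partial_s^j\langle\mu_s,\omega\rangle}^2 dE_f(\omega)$ (le facteur $\nn{\langle\phi_\eta,\omega\rangle}^2\leq1$ dispara\^\i t \`a la limite dans le bon sens) --- ce qui suffit. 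Votre \'egalit\'e exacte $\nd{S^j(f)}^2=\int_\Omega\hat{S}^j(\omega)^2 d\nu_f(\omega)$ suppose donc plus que ce qui est effectivement disponible ici; la voie \og Plancherel non radiale + types-$K$\fg{} que vous esquissez est celle que le texte r\'eserve au groupe $N_{v,2}$, o\`u $(N_{v,2},O(v))$ est une vraie paire de Guelfand. Pour compl\'eter votre preuve dans le cas type H, il faudrait soit construire la mesure spectrale abstraitement comme ci-dessus, soit \'etablir rigoureusement la formule de Plancherel adapt\'ee que vous invoquez.
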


Au cours de la preuve,
on utilisera une mesure spectrale $E$ sur $\Omega$,
qui nous am\`enera \`a estimer $\hat{S}^j$
sur $\Omega$ tout entier.
On pourrait donner une expression explicite de $E$,
gr\^ace \`a une formule de Plancherel non radiale,
``adapt\'ee aux fonctions sph\'eriques'' dans le sens de \cite[theorem~G]{pgelf}
dans le cas d'une ``vraie'' paire de Gelfand.
On peut par exemple choisir
la formule donn\'ee par les repr\'esentations de Bargmann ou de Schr\"odinger.
Nous utiliserons cette autre m\'ethode sur le groupe nilpotent libre \`a deux pas.
 
\begin{prop}[\mathversion{bold}{$\nd{\hat{S}^j}_\infty$}]
  \label{prop_maj_hat{S}}
  Si $v'\geq2$,
  il existe une constante $C>0$ telle que :
  $$
  \forall j=1,\ldots, v'-1\quad
  \forall \omega\in \Omega\qquad
  \hat{S}^j(\omega)\,\leq\, C\quad.
  $$
\end{prop}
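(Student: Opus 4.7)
Le plan est d'obtenir une expression explicite de $<\mu_s,\omega>$ pour chaque $\omega\in \Omega$, via le passage en coordonn\'ees polaires de la proposition~\ref{prop_expression_mu} et les formules du th\'eor\`eme~\ref{bigthm}, puis d'estimer $\partial_s^j<\mu_s,\omega>$ en exploitant les propri\'et\'es des fonctions de Bessel r\'eduite et de Laguerre normalis\'ee.

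Premi\`erement, pour $\omega=\Phi_r\in\Omega_B$, comme $\omega(s.n)=\fb{v'-1}{rs\nn{X}}$ ne d\'epend que de $\nn{X}$, l'int\'egration sur les sph\`eres euclidiennes en $X$ puis en $Z$ donne imm\'ediatement
$$
<\mu_s,\Phi_r>
\,=\,
C_v \int_0^1 \fb{v'-1}{rs\rho}\, \rho^{v-1}(1-\rho^4)^{(z-2)/2} d\rho
\quad.
$$
Pour $\omega=\Phi_{\zeta,l}\in\Omega_L$ avec $\lambda=\nn{\zeta}$, on a
$\omega(s.n)=e^{is^2<\zeta,Z>}\flb{l}{v'-1}{\frac{\lambda s^2\nn{X}^2}2}$;
l'int\'egration sur $S_1^{(z)}$ produit la transform\'ee de Fourier d'une mesure sph\'erique invariante, donc une fonction de Bessel r\'eduite de param\`etre $(z-2)/2$ :
$$
<\mu_s,\Phi_{\zeta,l}>
\,=\,
C_v \int_0^1 \fb{(z-2)/2}{\lambda s^2\sqrt{1-\rho^4}}\,
\flb{l}{v'-1}{\frac{\lambda s^2\rho^2}{2}}\,
\rho^{v-1}(1-\rho^4)^{(z-2)/2}\, d\rho
\quad.
$$

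Deuxi\`emement, je d\'eriverais $j$ fois sous le signe int\'egral. Les identit\'es diff\'erentielles et de r\'ecurrence pour $\fbs\alpha$ et $\flbs{l}\alpha$ (voir section~\ref{sec_fonctionspeciale}) ram\`enent $\partial_s^j<\mu_s,\omega>$ \`a une expression analogue, avec des indices d\'ecal\'es; l'hypoth\`ese $j\leq v'-1$ garantit que les exposants des Bessel restent positifs et que les int\'egrales restent convergentes en $\rho=0$, gr\^ace au facteur $\rho^{v-1}$ avec $v-1=2v'-1\geq 2j+1$. Dans le cas Bessel pur, le changement de variable $u=rs$ \'elimine le param\`etre $r$ et ram\`ene $\hat S^j(\Phi_r)$ \`a une quantit\'e ind\'ependante de~$r$, que l'on majore en combinant l'asymptotique $\fb\alpha x = O(x^{-\alpha-1/2})$ \`a l'infini et le d\'eveloppement en s\'erie au voisinage de z\'ero. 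Dans le cas Laguerre, le changement de variable $t=\lambda s^2$ joue un r\^ole similaire vis-\`a-vis de $\lambda$.

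Enfin, l'obstacle principal sera l'uniformit\'e en $l\in\Nb$ dans le cas $\Phi_{\zeta,l}$, car la fonction $\flbs{l}{v'-1}$ pr\'esente d'autant plus d'oscillations que $l$ est grand. Pour la ma\^\i triser, je combinerais la borne ponctuelle $\nn{\flb{l}{v'-1}{x}}\leq 1$ (cons\'equence du fait que les fonctions sph\'eriques sont born\'ees par leur valeur en 0), l'orthogonalit\'e des fonctions de Laguerre normalis\'ees dans $L^2(\Rb^+, x^{v'-1}e^{-x}dx)$ qui permet de sommer des contributions en~$l$, et les asymptotiques classiques de $\fbs{(z-2)/2}$ \`a l'infini qui fournissent la d\'ecroissance n\'ecessaire pour grandes valeurs de $\lambda s^2$. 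Cette combinaison devrait donner une borne ind\'ependante de $l$ et $\lambda$, achevant la d\'emonstration.
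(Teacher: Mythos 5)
Votre architecture g\'en\'erale co\"\i ncide avec celle du texte : d\'ecoupage $\Omega=\Omega_B\cup\Omega_L$, expression explicite de $<\mu_s,\omega>$ via la proposition~\ref{prop_expression_mu}, d\'erivation en $s$, puis changement de variable pour \'eliminer $r$ (resp. $\nn{\zeta}$). Pour $\Omega_B$, votre argument est essentiellement celui du texte (changement de variable $u=rs\rho$, puis lemme~\ref{lem_maj_int_fb}). Pour $\Omega_L$, votre formule avec le facteur $\fb{(z-2)/2}{\lambda s^2\sqrt{1-\rho^4}}$ obtenu en int\'egrant la sph\`ere du centre est correcte (c'est d'ailleurs la route suivie plus loin pour $N_{v,2}$), alors que le texte se contente ici de majorer $\nn{<\zeta,Z>}$ par $\nn{\zeta}\,\nn{Z}$; les deux variantes sont admissibles.

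Le point faible est le traitement de l'uniformit\'e en $l$, qui est le c\oe ur de la proposition. Apr\`es le changement de variable en $\lambda s^2$, tout se ram\`ene \`a borner, uniform\'ement en $l$, des int\'egrales du type
$$
\int_0^\infty\nn{\flbs{l}{v'-1}^{(m)}(x)}^2\, x^{2h-1}\,dx
\qquad\mbox{avec}\quad m\leq h\leq v'-1
\quad.
$$
Aucun des trois outils que vous proposez ne suffit : (i) la borne ponctuelle $\nn{\flb{l}{v'-1}{x}}\leq1$ ne donne aucune int\'egrabilit\'e \`a l'infini contre le poids croissant $x^{2h-1}$; (ii) l'orthonormalit\'e des $\Pi_j\flsz{l_j}$ ne contr\^ole que le poids $x^{v'-1}$, alors qu'il faut des poids allant jusqu'\`a $x^{2v'-3}$ --- la masse de $\flbs{l}{v'-1}$ \'etant concentr\'ee vers $x\sim 4l$, le passage d'un poids \`a l'autre co\^ute une puissance de $l$ qu'il faut estimer pr\'ecis\'ement, et de plus ce sont les d\'eriv\'ees $\flbs{l}{v'-1}^{(m)}$, c'est-\`a-dire des combinaisons de $\fls{l-n}{v'-1+n}$ \`a param\`etre d\'ecal\'e, qui interviennent; (iii) la d\'ecroissance de $\fbs{(z-2)/2}$ est de l'ordre de $x^{-(z-1)/2}$ et dispara\^\i t donc pour $z$ petit (par exemple $z=1$ sur le groupe de Heisenberg), et elle porte sur la variable $\lambda s^2\sqrt{1-\rho^4}$ et non sur l'argument $\lambda s^2\rho^2/2$ de la fonction de Laguerre. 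Le texte obtient l'uniformit\'e en $l$ par le lemme~\ref{lem_cq_Mark}, c'est-\`a-dire par les estimations $L^2$ \`a poids de \cite{Mark} (formule~(\ref{estimation_Mark})) combin\'ees \`a l'\'equivalent~(\ref{cqstir}) pour les constantes de normalisation $C^l_{l+\alpha}$; c'est aussi cet argument qui impose la restriction $j\leq v'-1$. Sans un substitut de ces estimations, votre preuve du cas $\Omega_L$ ne se conclut pas.
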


\subsection{Fonction d'aire et mesure spectrale}

Cette sous-section est consacr\'ee \`a la preuve
de la proposition~\ref{prop_fcnd'aire_hat{S}}.
Nous aurons besoin de la proposition suivante :

\begin{prop}
  \label{prop_mes_sp_moy}
  On note End $L^2(N)$ 
  les endomorphismes continus 
  de l'espace de Hilbert $L^2(N)$. 
  
  Il existe une mesure spectrale $E$
  pour l'alg\`ebre commutative  ${L^1}^\natural$ 
  \`a valeur dans End~$L^2(N)$
  telle que:
  \begin{equation}
    \label{egalite_spectrale_E}
    \forall F\in{L^1}^\natural\quad
    \forall f,g\in L^2(N)\qquad
    <f*F,g >
    \,=\, 
    \int_{\Omega} <F,\omega> dE_{f,g}(\omega)
    \quad .  
  \end{equation}
  On a :
  \begin{equation}
    \label{inegalite_spectrale_E}
    \nd{\partial_s^j(f*\mu_s)}^2
    \,\leq\,
    \int_\Omega
    \nn{\partial_s^j<\mu_s,\omega>}^2
    dE_f(\omega)
    \quad.    
  \end{equation}
\end{prop}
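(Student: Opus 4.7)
The plan is to obtain the spectral measure $E$ by applying the spectral theorem to the commutative von Neumann algebra generated by the right-convolution operators $\pi(F) : g \mapsto g*F$ for $F \in {L^1}^\natural$, and then to deduce the inequality~(\ref{inegalite_spectrale_E}) by approximating $\mu_s$ by functions of ${L^1}^\natural$. First I would verify that $\{\pi(F) : F \in {L^1}^\natural \cap L^2(N)\}$ is a commutative $*$-algebra of bounded operators on $L^2(N)$: the involution is $\pi(F)^* = \pi(F^*)$ with $F^*(n) := \overline{F(n^{-1})}$, which remains radial since the Koranyi norm satisfies $|n^{-1}|=|n|$ and both notions of radiality used here are stable under inversion; commutativity follows from theorems~\ref{bigthm} and~\ref{thm_paire_G_grlib}. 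Let $\mathfrak{A}$ be the closure of this family in the strong operator topology, a commutative von Neumann algebra. Its Gelfand spectrum is identified with $\Omega$ through the map $\omega \mapsto \chi_\omega$, $\chi_\omega(F) := <F,\omega>$, by theorem~\ref{thm_fcnsph_sp} in the Gelfand pair case $(N_{v,2},O(v))$ and by its analogue for H-type groups mentioned after theorem~\ref{bigthm}. The spectral theorem then furnishes a projection-valued measure $E$ on $\Omega$ with $\pi(F)=\int_\Omega <F,\omega>\,dE(\omega)$; taking matrix coefficients against $f,g\in L^2(N)$ yields~(\ref{egalite_spectrale_E}).

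Next, to derive~(\ref{inegalite_spectrale_E}), I would take $f$ in the Schwartz class and introduce a positive radial approximate identity $(\phi_\epsilon)_{\epsilon > 0}$ on $N$ with $\|\phi_\epsilon\|_1 = 1$, then set $F_\epsilon := \mu_s * \phi_\epsilon \in {L^1}^\natural$. Since $\phi_\epsilon$ is $s$-independent, $\partial_s^j F_\epsilon = (\partial_s^j \mu_s)*\phi_\epsilon \in {L^1}^\natural$, and the character property~(\ref{formule_caractere}) gives $<\partial_s^j F_\epsilon,\omega> = \partial_s^j <\mu_s,\omega>\cdot <\phi_\epsilon,\omega>$. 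Applying~(\ref{egalite_spectrale_E}) to $(\partial_s^j F_\epsilon)^* * (\partial_s^j F_\epsilon)$ and using that bounded spherical functions are of positive type, so $<F^*,\omega> = \overline{<F,\omega>}$, we obtain
$$\|f * \partial_s^j F_\epsilon\|^2 \,=\, \int_\Omega |\partial_s^j <\mu_s,\omega>|^2 \, |<\phi_\epsilon,\omega>|^2 \, dE_f(\omega).$$
Since every bounded spherical function has sup-norm at most $1$ (theorem~\ref{thm_eqfonc}, and visible directly on the explicit formulas of theorem~\ref{bigthm}), we have $|<\phi_\epsilon,\omega>| \leq \|\phi_\epsilon\|_1 = 1$, so the right-hand side is majorized by $\int_\Omega |\partial_s^j <\mu_s,\omega>|^2\, dE_f(\omega)$. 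Passing to the limit $\epsilon \to 0$, the left-hand side converges to $\|\partial_s^j(f*\mu_s)\|^2$ thanks to the smoothness of $s \mapsto f*\mu_s$ as an $L^2$-valued map for Schwartz $f$, which yields~(\ref{inegalite_spectrale_E}).

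The main obstacle will be to rigorously identify the Gelfand spectrum of $\mathfrak{A}$ with $\Omega$ in the H-type setting, where $(N,O(v))$ is not a Gelfand pair in the strict sense but only a ``radialized'' one in the spirit of~\cite{bigth}: one must adapt theorem~\ref{thm_fcnsph_sp} by checking, through the radialization operator, that every character of the commutative algebra ${L^1}^\natural$ is an evaluation $F \mapsto <F,\omega>$ at a bounded spherical function. Secondary technical points include the construction of $(\phi_\epsilon)$ with the correct radiality, achieved by averaging a standard approximate identity under $O(v)$, and the $L^2$-convergences $f*F_\epsilon \to f*\mu_s$ and $f*\partial_s^j F_\epsilon \to \partial_s^j(f*\mu_s)$, which follow from the Schwartz regularity of $f$ by standard mollification arguments together with the fact that $f*\mu_s$ and its $s$-derivatives lie in $L^2(N)$.
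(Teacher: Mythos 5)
Your proof follows essentially the same route as the paper's: the spectral theorem applied to the commutative self-adjoint algebra of convolution operators $\{f\mapsto f*F\}$, $F\in{L^1}^\natural$, to produce the measure $E$, then the mollification of $\mu_s$ by a radial approximate identity, the multiplicativity of the characters $<\cdot\,,\omega>$, the bound $\nn{<\phi_\epsilon,\omega>}\leq 1$, and a passage to the limit $\epsilon\to 0$; the second half of your argument coincides with the paper's up to the immaterial choice of $\mu_s*\phi_\epsilon$ versus $\phi_\eta*\mu_s$ (both are radial, so the character computation is identical). The one place where your write-up is not correct as stated is the identification of the spectrum: you close the algebra in the strong operator topology and assert that the Gelfand spectrum of the resulting von Neumann algebra ``is identified with $\Omega$''. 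That is false --- the spectrum of a commutative von Neumann algebra is a hyperstonean space, in general vastly larger than $\Omega$, and even for the norm closure one only gets a map of spectra, not a homeomorphism. The paper avoids this by taking the closure $C$ of $\Pi({L^1}^\natural)$ in the operator norm (a commutative $C^*$-algebra), obtaining its spectral measure $E^\Pi$ on $Sp(C)$, and then merely pushing $E^\Pi$ forward along the continuous map $\Pi':Sp(C)\to Sp({L^1}^\natural)\simeq\Omega$, $\Lambda\mapsto\Lambda\circ\Pi$; the pushforward is all that is needed for the identity~(\ref{egalite_spectrale_E}). Your argument goes through verbatim once the claimed identification is replaced by this pushforward, so the slip is repairable rather than fatal, but as written the construction of $E$ is not justified.
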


Lorsqu'on admet la proposition ci-dessus,
la d\'emonstration de
la proposition~\ref{prop_fcnd'aire_hat{S}}
est ais\'ee.
\begin{proof}[de la proposition~\ref{prop_fcnd'aire_hat{S}}]
  Gr\^ace \`a Fubini, 
  puis \`a~(\ref{inegalite_spectrale_E}), 
  et enfin de nouveau par Fubini,
  on a :
  \begin{eqnarray*}
    \nd{S^j(f)}^2
    &=&
    \int_0^\infty \nd{\partial_s^jf*\mu_s}^2 s^{2j-1}ds\\  
    &\leq&
    \int_0^\infty 
    \int_\Omega
    \nn{\partial_s^j<\mu_s,\omega>}^2
    dE_f(\omega)
    s^{2j-1}ds
    \,=\,
    \int_\Omega \nn{\hat{S^j}(\omega)}^2 
    dE_f(\omega)
    \quad.
  \end{eqnarray*}
  Donc si $\hat{S^j}(\omega)$ est born\'e par $C$ 
  ind\'ependemment de $\omega\in\Omega$,
  alors, $\nd{S^j(f)}^2$ est born\'e par $C^2$ 
  multipli\'e par 
  $$
  \int_\Omega dE_f(\omega)
  \,\leq\, \nd{f}^2
  \quad.
  $$
\end{proof}

\begin{rem}
  Comme c'\'etait d\'ej\`a le cas dans
  \cite{nevo_simplegrI,nevo_simplegrII,margulis_nevo_stein_semisimplegr,nevo_stein_semisimplegr},
  nous avons besoin d'une estimation $L^\infty$ de  $\hat{S}^j$ sur tout le spectre $\Omega$.
Nous pourrions nous passer 
de l'estimations de $\hat{S}^j$ sur la partie $\Omega_B$ 
en utilisant une formule de Plancherel non-radiale adapt\'ee.
Dans le cas d'un groupe de type~H, ces deux m\'ethodes conduisent au m\^eme r\'esultat.
\end{rem}

\subsubsection{D\'emonstration de la
  proposition~\ref{prop_mes_sp_moy}}

On utilisera le lemme suivant,
qui construit une approximation de l'unit\'e radiale :

\begin{lem}
  \label{lem_fcn_supp_int}
  Il existe une fonction $\phi:N\mapsto [0,1]$,
  $C^\infty$,
  radiale,
  \`a support dans la boule unit\'e $B_1$,
  v\'erifiant $\int_N \phi(n) dn =1$.

  Pour une telle fonction $\phi$, 
  on d\'efinit alors pour $\eta>0$ 
  les fonctions $\phi_\eta$ par :
  $$
  \phi_\eta(n)
  \,:=\,
  \eta^{-Q}\phi(\eta^{-1}.n)
  \;,\quad n\in N
  \quad.
  $$
  Les fonctions $\phi_\eta,\eta>0$ 
  forment une approximation radiale de l'unit\'e
  sur~$N$.
\end{lem}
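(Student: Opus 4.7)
Le plan est de construire $\phi$ explicitement en contournant la non-r\'egularit\'e de la norme de Kor\'anyi en z\'ero. On observe que si $n=\exp(X+Z)$ avec $X\in\Vc, Z\in\Zc$, alors
$$
\nn{n}^4 \,=\, \nn{X}^4+\nn{Z}^2 \,=\, {(\nn{X}^2)}^2+\nn{Z}^2
\quad ,
$$
et la fonction $n\mapsto \nn{n}^4$ est polynomiale, donc $C^\infty$ sur $N$, contrairement \`a la norme $n\mapsto \nn{n}$ qui n'est pas r\'eguli\`ere en $0$. Je choisis alors une fonction $\psi:\Rb\rightarrow [0,1]$ de classe $C^\infty$, telle que $\psi>0$ sur $]-\infty,1[$ et $\psi=0$ sur $[1,\infty[$ (par exemple obtenue de mani\`ere standard \`a partir de $t\mapsto e^{-1/t}{\bf 1}_{t>0}$), et je pose
$$
\tilde\phi(n) \,=\, \psi(\nn{n}^4)
\qquad\mbox{puis}\qquad
\phi(n) \,=\, \frac{\tilde\phi(n)}{\int_N \tilde\phi(n') dn'}
\quad .
$$

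Il reste \`a v\'erifier les propri\'et\'es demand\'ees. La fonction $\tilde\phi$ est $C^\infty$ par composition, car $n\mapsto \nn{n}^4$ l'est~; elle est \`a valeurs dans $[0,1]$ par construction~; elle est radiale puisque $\nn{n}$ est invariant sous $O(v)$~; son support est exactement $\{n : \nn{n}<1\}=B_1$. Enfin $\int\tilde\phi>0$ car $\tilde\phi$ est continue et strictement positive dans un voisinage de l'origine, donc la normalisation est licite et donne $\int_N \phi\,dn=1$. Apr\`es normalisation, $\phi$ reste \`a valeurs dans $[0,1]$ (\`a un facteur pr\`es), mais quitte \`a d'abord choisir $\psi$ telle que $\int_N \psi(\nn{n}^4) dn=1$, ou bien \`a red\'efinir $\phi$ via $\phi=c\,\tilde\phi$ avec $c$ la constante de normalisation et $\psi$ choisie pour que $c\leq 1$, on obtient $\phi:N\mapsto[0,1]$.

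Pour la propri\'et\'e d'approximation de l'unit\'e, il suffit de constater que $\phi_\eta$ est positive, d'int\'egrale $1$ (par la formule de changement de variable et $\nn{r.E}=r^Q\nn{E}$), radiale, et de support contenu dans $\eta.B_1=B(0,\eta)$ qui se concentre autour de $0$ lorsque $\eta\rightarrow 0^+$. Les arguments classiques de convolution (convergence uniforme sur les compacts pour les fonctions continues, $L^p$-continuit\'e de la translation pour $1\leq p<\infty$) entra\^\i nent alors que $\phi_\eta *f \rightarrow f$ dans $L^p(N)$ pour $f\in L^p(N)$, $1\leq p<\infty$, et uniform\'ement sur tout compact pour $f$ continue.

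La seule subtilit\'e technique est celle soulign\'ee plus haut, \`a savoir la non-r\'egularit\'e de la norme homog\`ene en $0$, que l'on contourne en prenant sa puissance quatri\`eme pour se ramener \`a une fonction polynomiale sur~$\Nc\sim N$; tout le reste est standard.
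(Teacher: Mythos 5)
Votre preuve est correcte mais suit une route diff\'erente de celle du texte. Le texte part d'une fonction plateau $\phi_1$ quelconque, $C^\infty$, \`a support dans $B_1$ avec $\phi_1(0)=1$, puis la \emph{radialise} en posant $\phi_2(n)=\int_{O(v)}\phi_1(k.n)\,dk$ avant de normaliser : la radialit\'e est obtenue par moyennisation sur le groupe compact, et la lissit\'e ainsi que l'inclusion du support dans $B_1$ se conservent car $\nn{k.n}=\nn{n}$. Vous contournez enti\`erement cette \'etape de moyennisation en exploitant le fait que $n\mapsto\nn{n}^4=\nn{X}^4+\nn{Z}^2$ est polynomiale sur $\Nc\sim N$, donc $C^\infty$, et en composant avec une troncature $\psi$ d'une variable : la radialit\'e est alors automatique puisque votre fonction ne d\'epend que de $\nn{n}$. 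Votre construction est plus explicite et isole bien la seule difficult\'e (la non-r\'egularit\'e de $\nn{\cdot}$ en $0$, r\'esolue par le passage \`a la puissance quatri\`eme) ; celle du texte est plus robuste en ce qu'elle radialise n'importe quelle fonction plateau et s'adapte \`a toute notion de radialit\'e donn\'ee par l'action d'un groupe compact, sans requ\'erir l'invariance de la norme. Les deux donnent le m\^eme r\'esultat ici car la norme de Kor\'anyi est $O(v)$-invariante. Un seul d\'etail \`a rectifier chez vous : avec $\psi>0$ sur $]-\infty,1[$ et $\psi=0$ sur $[1,\infty[$, le support de $\tilde\phi$ est la boule ferm\'ee $\overline{B_1}$ et non la boule ouverte $B_1$ ; il suffit de prendre $\psi$ nulle sur $[1/2,\infty[$ par exemple pour que le support soit un compact de $B_1$, ce qui ne change rien au reste de l'argument. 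L'esquisse finale (positivit\'e, int\'egrale $1$ par homog\'en\'eit\'e de la mesure de Haar, concentration du support) est le raisonnement standard attendu, que le texte ne d\'etaille pas davantage.
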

\begin{proof}[du lemme~\ref{lem_fcn_supp_int}]
  Il existe $\phi_1:N\mapsto [0,1]$
  une  fonction $C^\infty$,
  \`a support dans la boule unit\'e $B_1$,
  et telle que $\phi_1(0)=1$.
  On d\'efinit :
  $$
  \phi_2(n)
  \,=\,
  \int_{O(v)}
  \phi_1(k.n)
  dk
  \quad\mbox{puis}\quad
  \phi(n)
  \,=\,
  \frac{\phi_2(n)}
  {\int_N \phi_2(n)dn}
  \quad.
  $$
  La fonction $\phi$ ainsi d\'efinie  convient.
\end{proof}

\paragraph{D\'emontrons la premi\`ere partie de
  la proposition~\ref{prop_mes_sp_moy},}
c'est-\`a-dire
l'existence d'une mesure spectrale $E$
satisfaisant~(\ref{egalite_spectrale_E}).
Consid\'erons le morphisme d'alg\`ebre:
$$
\Pi: \left\{\begin{array}{lcl}
    L^1(N)&\longrightarrow& \mbox{End } L^2(N)\\
    F &\longmapsto&\{ \Pi(F):f\mapsto f*F\}
  \end{array}\right.
\quad.
$$
Notons  $C$  l'adh\'erence  de $\Pi {L^1}^\natural$ 
dans l'alg\`ebre  norm\'ee  End $L^2(N),\nd{.}$ par la norme des op\'erateurs.
Comme d'une part $\Pi$ est un morphisme continu d'alg\`ebres norm\'ees
et que d'autre part
d'apr\`es le
th\'eor\`eme~\ref{bigthm},
l'alg\`ebre de convolution ${L^1}^\natural$ est commutative,
l'ensemble $C$  est  une sous-alg\`ebre
commutative   de  End $ L^2(N),\nd{.}$.  
De   plus  l'alg\`ebre~$C$ est aussi normale :
\begin{equation}
  \label{egalite_adj_F*}
  {(\Pi(F))}^*
  \,=\,\Pi(F^*)
  \quad\mbox{o\`u}\quad
  F\in L^1(N) 
  \quad\mbox{en notant}\quad
  F^*(n)=\bar{F}(n^{-1})
  \quad.
\end{equation}
D'apr\`es les propri\'et\'es spectrales 
des $C^*$-alg\`ebre commutatives
(voir par  exemple  \cite{rudin}),
l'alg\`ebre $C$ admet un mesure spectrale 
$E^\Pi:Sp(C)\rightarrow \Pc$,
o\`u on a not\'e $Sp(C)$
le spectre de l'alg\`ebre $C$
et $\Pc$
l'ensemble  des projections continues de $L^2(N)$  :
$$
\forall\,T\in C\,,\qquad
T\,=\, \int_{Sp(C)}\widehat{T}dE^\Pi 
\quad.
$$
On d\'efinit l'application :
$$
\Pi': \; \left\{
  \begin{array}{r c l}
    Sp(C)
    &\longrightarrow&
    Sp({L^1}^\natural)\\
    \Lambda
    &\longmapsto&
    \Lambda\circ\Pi
  \end{array}
\right.  \quad .
$$
On  en  d\'eduit  une  mesure  spectrale $E$  du  spectre  de
${L^1}^\natural$,     
identifi\'e \`a $\Omega$ 
par  le th\'eor\`eme~\ref{thm_fcnsph_sp}
en posant :
$$
E\,=\,  
\Pi^*E^\Pi:   \;   
\left\{\begin{array}{rcl}  
    \Bc(\Omega)
    &\longrightarrow&
    \Pc\\
    B &\longmapsto& E^\Pi({\Pi'}^{-1}(B))
  \end{array}\right.
\quad,
$$
o\`u $\Bc(\Omega)$  d\'esigne  les bor\'eliens  sur $\Omega$.
Cette mesure spectrale v\'erifie bien
la propri\'et\'e~(\ref{egalite_spectrale_E}).

\paragraph{D\'emontrons la seconde partie de
  la proposition~\ref{prop_mes_sp_moy},}
c'est-\`a-dire l'in\'egalit\'e~(\ref{inegalite_spectrale_E}).
Fixons  une approximation radiale de l'unit\'e
$\phi_\eta,\eta>0$ 
comme dans le lemme~\ref{lem_fcn_supp_int}.

Nous allons appliquer 
la  formule~(\ref{egalite_spectrale_E}) 
\`a la fonction
$F_{s,\eta,j}^**F_{s,\eta,j}$
o\`u $F_{s,\eta,j}=\partial_s^j (\phi_\eta*\mu_s)$
et la fonction $F^*_{s,\eta,j}$ 
est donn\'e par (\ref{egalite_adj_F*}).
On v\'erifie facilement 
que la fonction $F_{s,\eta,j}$ est radiale 
car $\mu_s$ et $\phi_\eta$ le sont, 
puis qu'elle est int\'egrable :
\begin{eqnarray*}
  \phi_\eta*\mu_s(n)
  &=&
  \int_{s.S_1} \phi_\eta(ng^{-1})d\mu_s(g)
  \,=\,
  \int_{S_1} \phi_\eta(n\,s.g^{-1})d\mu(g)
  \quad,\\
  \partial_s^j(\phi_\eta*\mu_s)(n)
  &=&
  \int_{S_1} \partial_s^j\phi_\eta(n\,s.g^{-1})d\mu(g)
  \quad , \\
  \nn{\partial_s^j(\phi_\eta*\mu_s)}(n)
  &\leq&
  \left\{    \begin{array}{ll}
      0
      &\mbox{si}\, sg^{-1}\not \in \eta.B_1\;,\\
      \int_{S_1} \sup\nn{D^j\phi_\eta}1_{\eta.B_1}(n\,s.g^{-1})d\mu(g)
      &\mbox{si}\, sg^{-1}\in \eta.B_1\;.
    \end{array}\right.
\end{eqnarray*} 
On a finalement :
$$
\int_N\nn{F_{s,\eta,j}(n)}dn
\,=\,
\int_N\nn{\partial_s^j(\phi_\eta*\mu_s)}(n)dn
\,\leq\,
\eta^{-Q+j}\sup\nn{D\phi}\nn{\eta.B_1}
\,\leq\, \eta^j \nn{B_1}\, \sup\nn{D\phi}
\quad.
$$
d'o\`u $F_{s,\eta,j}\in {L^1}^\natural$,
puis $F_{s,\eta,j}^**F_{s,\eta,j}\in {L^1}^\natural$.
Appliquons (\ref{egalite_spectrale_E}) 
\`a cette derni\`ere fonction :
\begin{equation}
  \label{egalite_spectrale_appliquee}
  \forall\,f\in L^2(N)
  \qquad
  {<f*F_{s,\eta,j}^**F_{s,\eta,j},f>}_{L^2(N)}
  \,=\,
  \int_{\omega\in\Omega}
  <F_{s,\eta,j}^**F_{s,\eta,j}),\omega>
  dE_f(\omega)
  \quad,    
\end{equation}
Or on voit 
pour le membre de gauche de cette \'egalit\'e, 
d'apr\`es~(\ref{egalite_adj_F*}) :
\begin{eqnarray*}
  {<f*F_{s,\eta,j}^**F_{s,\eta,j},f>}
  &=&
  {<\Pi(F_{s,\eta,j}^**F_{s,\eta,j}).f,f>}\\
  &=&
  {<\Pi(F_{s,\eta,j}).f,\Pi(F_{s,\eta,j}).f>}
  \,=\,
  \nd{\Pi(F_{s,\eta,j}).f}^2\\
  &=&
  \nd{\partial_s(f*\phi_\eta*\mu_s)}^2
  \,=\,
  \nd{\phi_\eta*\partial_s(f*\mu_s)}^2
  \quad,
\end{eqnarray*}
et pour le membre de droite 
comme $<.,\omega>$ est un caract\`ere de ${L^1}^\natural$ :
$$
<F_{s,\eta,j}^**F_{s,\eta,j},\omega>
\,=\,
\nn{<F_{s,\eta,j},\omega>}^2
\,=\,
\nn{\partial_s^j<\mu_s,\omega>}^2\nn{<\phi_\eta,\omega>}^2
\quad.
$$
L'\'egalit\'e~(\ref{egalite_spectrale_appliquee}) 
devient donc :
$$
\nd{\phi_\eta*\partial_s^j(f*\mu_s)}^2
\,=\,
\int_{\Omega}
\nn{\partial_s^j<\mu_s,\omega>}^2\nn{<\phi_\eta,\omega>}^2
dE_f(\omega)
\quad.
$$
Maintenant comme la fonction $\omega$ est born\'ee par 1, 
et que l'int\'egrale de $\phi_\eta$ vaut 1,
on a $\nn{<\phi_\eta,\omega>}\leq 1$,
et le membre de droite de l'\'egalit\'e pr\'ec\'edente
est major\'ee par 
$$
\int_{\Omega}
\nn{\partial_s^j<\mu_s,\omega>}^2
dE_f(\omega)
\quad;
$$
et comme $\phi_\eta,\eta>0$ est une approximation de l'unit\'e,
le membre de droite tend vers 
$\nd{\partial_s^j(f*\mu_s)}^2$
lorsque $\eta$ tend vers 0.
On en d\'eduit la majoration~(\ref{inegalite_spectrale_E}). 

Ceci ach\`eve la d\'emonstration de la proposition~\ref{prop_mes_sp_moy}.

\subsection{Contr\^ole $L^\infty$ de $\hat{S}^j$
  dans le cas d'un groupe de type~H}

Cette sous-section est consacr\'ee
\`a la preuve de la proposition~\ref{prop_maj_hat{S}}
dans le cas d'un groupe de type~H.

Nous aurons besoin du lemme technique :

\begin{lem}[D\'eriv\'ee d'une fonction de $s^2$]
  \label{lem_derivee_fcn_s2}
  Soit $f$ est une fonction r\'eguli\`ere et $h\in \Nb$.
  On pose $g(s)=f(s^2)$.

  $g^{(h)}$ s'\'ecrit comme combinaisons lin\'eaires 
  de $s^{d(j,h)}f^{(h'+j)}(s^2)$, 
  \begin{itemize}
  \item o\`u $d(j,h)=2j$, sur ${0\leq j\leq h'}$, 
    si $h=2h'$, 
  \item o\`u $d(j,h)=2j+1$, sur ${0\leq j\leq h'-1}$,
    si $h=2h'-1$.
  \end{itemize}
  On remarque :
  \begin{equation}
    \label{egalite_d(j,h)+h}
    d(j,h)+h=2h'+2j
    \quad.
  \end{equation}
\end{lem}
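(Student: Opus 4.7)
The plan is to prove the lemma by induction on $h$, starting from the trivial base case $h=0$ (where $h'=0$ and $g(s) = s^{0} f^{(0)}(s^{2})$ fits the even case with the single term $j=0$).

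For the inductive step, I would differentiate the assumed decomposition term by term. A generic summand produces, by the chain rule and Leibniz,
\[
\frac{d}{ds}\bigl[s^{d(j,h)} f^{(h'+j)}(s^{2})\bigr]
\,=\,
d(j,h)\, s^{d(j,h)-1} f^{(h'+j)}(s^{2})
+ 2\, s^{d(j,h)+1} f^{(h'+j+1)}(s^{2}),
\]
so each term generates two new terms whose exponents of $s$ differ by $\pm 1$ and whose orders of derivative of $f$ shift by $0$ or $+1$.

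I would then split on the parity of $h$. If $h = 2h'$ is even, then $h+1 = 2(h'+1)-1$ falls under the odd branch of the claim with new index $h'_{\text{new}} = h'+1$ and required exponent $d(j,h+1)=2j+1$ for $0 \leq j \leq h'$: the piece $2 s^{2j+1} f^{(h'+j+1)}(s^{2})$ contributes to the new index $j$, while the piece $d(j,h) s^{2j-1} f^{(h'+j)}(s^{2})$, which vanishes when $j=0$, contributes to the new index $j-1$. If $h = 2h'-1$ is odd, then $h+1=2h'$ falls under the even branch with $d(j,h+1)=2j$ for $0 \leq j \leq h'$, and the bookkeeping is symmetric: the first piece contributes to index $j$, and the second piece $2 s^{2(j+1)} f^{(h'+j+1)}(s^{2})$ contributes to index $j+1$, where the constraint $j+1 \leq h'$ matches the inductive hypothesis $j \leq h'-1$. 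This gives the stated decomposition at level $h+1$, and the remark $d(j,h)+h = 2h'+2j$ follows by direct substitution in each parity ($2j+2h'$ in the even case, $(2j+1)+(2h'-1)$ in the odd case).

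There is no real obstacle here: the lemma is a purely combinatorial consequence of the chain rule. The only care required is verifying that the index ranges line up exactly at the endpoints $j=0$ and $j=h'$ (resp.\ $j=h'-1$) when the two pieces of the derivative are reassembled into the new family, which is what the parity case split is designed to handle.
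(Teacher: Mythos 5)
Your induction is correct: the base case, the term-by-term differentiation, and the reindexing at the endpoints in each parity all check out, and the remark~(\ref{egalite_d(j,h)+h}) follows by substitution as you say. Note that the paper states Lemma~\ref{lem_derivee_fcn_s2} without any proof, treating it as a routine consequence of the chain rule; your argument is exactly the standard one an author would supply, so there is nothing to compare beyond confirming that it works.
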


Nous supposons dans cette section $v'\geq 2$.
Dans le cas d'un groupe de type~H,
comme $\Omega=\Omega_L\cup\Omega_B$,
la proposition~\ref{prop_maj_hat{S}}
est \'equivalente aux deux propositions suivantes :

\begin{prop}
  \label{prop_maj_hat{S}_Omega_B}
  Il existe une constante $C>0$ telle que :
  $$
  \forall h=1,\ldots, v'-1\quad
  \forall \omega\in \Omega_B\quad
  \hat{S}^h(\omega)\,\leq\, C\quad.
  $$
\end{prop}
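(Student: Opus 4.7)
Le plan consiste \`a ramener le calcul de $\hat{S}^h(\Phi_r)$ \`a une int\'egrale scalaire uniforme en $r$, gr\^ace \`a une invariance par dilatation. Puisque $\Phi_r(X,Z) = \fb{v'-1}{r\nn{X}}$ ne d\'epend que de $\nn{X}$ et que $\delta_s.(X+Z) = sX+s^2Z$, l'expression explicite de $\mu$ (proposition~\ref{prop_expression_mu}) permet d'\'ecrire
$$
\langle \mu_s, \Phi_r\rangle \,=\, c_0 \int_0^1 \fb{v'-1}{rs\rho}\, \rho^{v-1}(1-\rho^4)^{\frac{z-2}2}\, d\rho \,=:\, \phi(rs),
$$
o\`u $c_0 = 2|S_1^{(v)}||S_1^{(z)}|$. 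Comme $\partial_s^h \phi(rs) = r^h \phi^{(h)}(rs)$, le changement de variable $u=rs$ (pour $r>0$) donne
$$
\hat{S}^h(\Phi_r)^2 \,=\, \int_0^\infty |\phi^{(h)}(u)|^2\, u^{2h-1}\, du \,=:\, I_h,
$$
quantit\'e ind\'ependante de $r > 0$. Pour $r = 0$, $\phi(0)$ est constant, donc $\hat{S}^h(\Phi_0) = 0$. La proposition est ainsi \'equivalente \`a l'in\'egalit\'e $I_h < \infty$ pour $h = 1, \ldots, v'-1$.

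La fonction $\phi$ \'etant $C^\infty$ au voisinage de $0$ et le facteur $u^{2h-1}$ int\'egrable en $0$ pour $h\geq 1$, la contribution pr\`es de $0$ \`a $I_h$ est trivialement finie. Le cœur de la preuve est le contr\^ole de la d\'ecroissance de $\phi^{(h)}(u)$ \`a l'infini. On utilisera la formule de d\'erivation $\frac{d}{dy}\fb{\alpha}{y} = -C_\alpha\, y\, \fb{\alpha+1}{y}$ pour exprimer $\phi^{(h)}(u)$ comme combinaison lin\'eaire finie d'int\'egrales
$$
u^{a_k} \int_0^1 \fb{v'-1+k}{u\rho}\, \rho^{b_k}(1-\rho^4)^{\frac{z-2}2}\, d\rho,\qquad k\leq h,
$$
puis l'asymptotique $\fb{\alpha}{y} = O(y^{-\alpha-1/2})$ pour $y\to\infty$, combin\'ee \`a des int\'egrations par parties au bord $\rho = 1$, afin d'obtenir $\phi^{(h)}(u) = O(u^{-\gamma_h})$ avec $\gamma_h > h$. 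Une interpr\'etation alternative voit $\phi$ comme transform\'ee de Fourier euclidienne de la fonction radiale \`a support compact $g(X) = (1-\nn{X}^4)_+^{\frac{z-2}2}$ sur $\Vc \simeq \Rb^{2v'}$, la d\'ecroissance r\'esultant alors de la phase stationnaire au bord $\nn{X} = 1$.

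L'obstacle principal sera de rendre rigoureuses ces estim\'ees asymptotiques, l'exposant $\gamma_h$ devant \^etre strictement sup\'erieur \`a $h$ pour tout $h \leq v'-1$. Le cas d\'elicat est celui o\`u $z$ est petit (en particulier $z=1$, groupe de Heisenberg) : le poids $(1-\rho^4)^{\frac{z-2}2}$ est alors singulier en $\rho=1$ et d\'egrade la d\'ecroissance ; la gamme admissible $h \leq v'-1$ refl\`ete pr\'ecis\'ement ce seuil critique dict\'e par la dimension $v=2v'$ de $\Vc$.
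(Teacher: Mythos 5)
Votre d\'emarche est pour l'essentiel celle du texte : exploiter l'homog\'en\'eit\'e $\Phi_r(s.n)=\fb{v'-1}{rs\nn{X}}$ pour faire dispara\^itre $r$ par changement de variable, puis conclure par la d\'ecroissance des d\'eriv\'ees de la fonction de Bessel r\'eduite. La seule diff\'erence est l'ordre des op\'erations : le texte applique d'abord Cauchy--Schwarz en $\mu$ puis pose $s'=rs\nn{X}$, ce qui ram\`ene directement \`a $\int_0^\infty {s'}^{2h-1}\nn{\fbs{v'-1}^{(h)}(s')}^2ds'$ (lemme~\ref{lem_maj_int_fb}), tandis que vous int\'egrez d'abord sur la sph\`ere pour obtenir la fonction scalaire $\phi(rs)$. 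Les deux variantes reposent sur la m\^eme estimation.

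Un point \`a rectifier : l'\'etape que vous pr\'esentez comme ``l'obstacle principal'' est en fait imm\'ediate et ne requiert ni int\'egration par parties au bord $\rho=1$ ni phase stationnaire. La majoration~(\ref{der_bessel}), $\nn{\fbs{v'-1}^{(h)}(x)}\leq C_h x^{-(v'-1)-\frac12}$, ins\'er\'ee brutalement dans $\phi^{(h)}(u)=c_0\int_0^1\rho^{h}\fbs{v'-1}^{(h)}(u\rho)\,\rho^{v-1}(1-\rho^4)^{\frac{z-2}2}d\rho$, donne $\phi^{(h)}(u)=O(u^{-v'+\frac12})$ d\`es que l'int\'egrale en $\rho$ converge, ce qui est le cas puisque $(1-\rho^4)^{\frac{z-2}2}$ reste int\'egrable pour tout $z\geq1$ (le poids singulier ne ``d\'egrade'' donc rien). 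L'exposant $\gamma_h=v'-\frac12$ est ind\'ependant de $h$ et v\'erifie $\gamma_h>h$ exactement pour $h\leq v'-1$ : c'est pr\'ecis\'ement le seuil annonc\'e, et c'est aussi la condition $2h-1<2(v'-1)$ du lemme~\ref{lem_maj_int_fb} invoqu\'e par le texte. Une fois cette estimation \'ecrite, votre preuve est compl\`ete.
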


\begin{prop}
  \label{prop_maj_hat{S}_Omega_L}
  Il existe une constante $C>0$ telle que :
  $$
  \forall h=1,\ldots, v'-1\quad
  \forall \omega\in \Omega_L\quad
  \hat{S}^h(\omega)\,\leq\, C\quad.
  $$
\end{prop}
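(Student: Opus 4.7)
The plan is to compute $\langle\mu_s,\Phi_{\zeta,l}\rangle$ explicitly using the polar expression of $\mu$, observe that it depends only on the scalar $u=s^2|\zeta|$, reduce $\hat{S}^h(\Phi_{\zeta,l})$ to an integral in that single variable free of all $\zeta$- and $s$-dependence, and finally bound that integral uniformly in $l$ using known asymptotics of reduced Bessel and normalized Laguerre functions.

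First I would apply Proposition~\ref{prop_expression_mu}. Since the dilation acts by $s.\exp(rX+\sqrt{1-r^4}Z)=\exp(srX+s^2\sqrt{1-r^4}Z)$, the spherical function evaluates as
$$\Phi_{\zeta,l}(s.n)\,=\,e^{is^2\sqrt{1-r^4}<\zeta,Z>}\,\flb{l}{v'-1}{\tfrac{|\zeta|s^2r^2}{2}}.$$
The integral over $S_1^{(v)}$ contributes only a constant, while $\int_{S_1^{(z)}}e^{it<\zeta,Z>}d\tilde\sigma_z(Z)$ is proportional to $\fb{(z-2)/2}{t|\zeta|}$. Hence, with $u=s^2|\zeta|$,
$$\langle\mu_s,\Phi_{\zeta,l}\rangle\,=\,c\int_0^1 \flb{l}{v'-1}{\tfrac{ur^2}{2}}\,\fb{(z-2)/2}{u\sqrt{1-r^4}}\,r^{v-1}(1-r^4)^{\frac{z-2}{2}}\,dr\,=:\,F_l(u),$$
a function of the single real variable $u$ which depends on neither $s$ nor $\zeta$.

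Next I would apply Lemma~\ref{lem_derivee_fcn_s2} (rescaled by $s\mapsto s\sqrt{|\zeta|}$) to write $\partial_s^h\langle\mu_s,\Phi_{\zeta,l}\rangle$ as a combination of terms $s^{d(j,h)}|\zeta|^{h'+j}F_l^{(h'+j)}(s^2|\zeta|)$. Inserting this into $\hat{S}^h(\Phi_{\zeta,l})^2$ and substituting $u=s^2|\zeta|$, the factor $s^{2d(j,h)+2h-1}ds$ becomes $\tfrac{1}{2}|\zeta|^{-(d(j,h)+h)}u^{d(j,h)+h-1}du$. Combined with the prefactor $|\zeta|^{2(h'+j)}$, the identity~(\ref{egalite_d(j,h)+h}) gives $d(j,h)+h=2(h'+j)$, so all powers of $|\zeta|$ cancel exactly, leaving
$$\hat{S}^h(\Phi_{\zeta,l})^2\,\leq\,C\sum_{j}\int_0^\infty |F_l^{(h'+j)}(u)|^2\,u^{2(h'+j)-1}\,du,$$
an estimate independent of $\zeta$ and $s$.

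It then remains to bound the one-variable integrals $\int_0^\infty|F_l^{(k)}(u)|^2u^{2k-1}du$ uniformly in $l\in\Nb$, for $k$ up to roughly $h$. The heuristic is that $\fb{(z-2)/2}{u\sqrt{1-r^4}}$ oscillates and decays like $u^{-(z-1)/2}$ away from the stationary point $r=1$ of the phase, while near $r=1$ a stationary-phase argument in $r$ combined with the pointwise bound $|\flb{l}{v'-1}{\,\cdot\,}|\leq 1$ from Theorem~\ref{bigthm} yields $|F_l(u)|=O(u^{-(v-1)/2})$, each differentiation in $u$ costing at most one power; thus $|F_l^{(k)}(u)|^2u^{2k-1}=O(u^{2k-v})$, integrable at infinity precisely when $k\leq v'-1$, which is the threshold appearing in the statement. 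Near $u=0$ the smoothness of $\Phi_{\zeta,l}$ at the origin together with the normalisation $\Phi_{\zeta,l}(0)=1$ controls the integrand. The main obstacle is to make these estimates truly uniform in the Laguerre index $l$: for large $l$ the normalized Laguerre function $\flbs{l}{v'-1}$ oscillates with $l$-dependent frequency, and careful use of classical pointwise Laguerre bounds (together with the derivation formula $\partial_x\flbs{l}{\alpha}$ as a combination of $\flbs{l-1}{\alpha+1}$ and $\flbs{l}{\alpha+1}$) is what keeps the constant $C$ independent of $l$.
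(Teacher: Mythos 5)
Your reduction is sound and essentially parallels the paper's: the identity $\Phi_{\zeta,l}(s.n)=\Phi_{s^2\zeta,l}(n)$, Lemma~\ref{lem_derivee_fcn_s2}, and the substitution $u=s^2\nn{\zeta}$ together with~(\ref{egalite_d(j,h)+h}) do eliminate all dependence on $\zeta$ and $s$, leaving weighted integrals $\int_0^\infty\nn{F_l^{(k)}(u)}^2u^{2k-1}du$ to be bounded uniformly in $l$. Converting the $Z$-oscillation into the Bessel factor $\fb{(z-2)/2}{u\sqrt{1-r^4}}$ is a legitimate variant --- it is exactly what the paper does later for $N_{v,2}$ in Lemma~\ref{lem_expr_mu_phi} --- whereas in the type-H proof the paper simply bounds the exponential and $\nn{Z}$ by $1$.

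The gap is in the last step, and it is the heart of the matter: the decay you invoke cannot come from where you say it does. The Bessel factor only yields $\nn{\fb{(z-2)/2}{u\sqrt{1-r^4}}}\leq C\,(u\sqrt{1-r^4})^{-(z-1)/2}$, i.e.\ a decay of order $u^{-(z-1)/2}$, not $u^{-(v-1)/2}$; since $z<v$ for a type-H group, and $z=1$ for the Heisenberg group (where the relevant ``Bessel function'' is a cosine), this factor can give no decay at all. Moreover the pointwise bound $\nn{\flb{l}{v'-1}{\cdot}}\leq 1$ cannot be upgraded to any $l$-uniform pointwise decay in $u$: the effective support of $\flbs{l}{v'-1}$ grows linearly in $l$, so for every fixed $u$ there are indices $l$ for which $\flb{l}{v'-1}{ur^2/2}$ is of order $1$ on most of $[0,1]$. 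The convergence at infinity and the uniformity in $l$ must instead be extracted from the Laguerre factor in an $L^2$-averaged sense. This is what the paper does: a Cauchy--Schwarz inequality in the variable $r$ isolates the weighted square of $\flbs{l}{v'-1}^{(m)}$, then Fubini and the change of variable $s'=\frac{s^2}{2}\nn{\zeta}r^2$ reduce everything to $\int_0^\infty\nn{\flbs{l}{v'-1}^{(m)}(s')}^2{s'}^{2(j+h')-1}ds'$, and Lemma~\ref{lem_cq_Mark} --- which rests on Markett's asymptotics~(\ref{estimation_Mark}) for weighted $L^2$ norms of Laguerre functions combined with Stirling's formula --- shows these are bounded uniformly in $l$ precisely when $j+h'\leq v'-1$. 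That is where the threshold $h\leq v'-1$ actually comes from; without this (or an equivalent $l$-uniform weighted $L^2$ Laguerre estimate) your argument does not close.
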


\begin{proof}[de la proposition~\ref{prop_maj_hat{S}_Omega_B}]
  Soit $\omega=\Phi_r$. On a pour $n=(X,Z)\in N$ :
  \begin{eqnarray*}
    \omega(s.n)
    &=&
    \Phi_r(sX,s^2Z)
    \,=\,
    \fb {v'-1} {rs\nn{X}}
    \quad,\\
    \partial_s^h    \omega(s.n)
    &=&
    {(r\nn{X})}^h
    \fbs {v'-1} ^{(h)} (rs\nn{X})
    \quad,
  \end{eqnarray*}
  d'o\`u :
  $$
  \nn{\hat{S}^h(\omega)}^2
  \,\leq\,
  \int_{0}^\infty s^{2h-1}\int_{S_1}
  \nn{{(r\nn{X})}^h\fbs{v'-1}^{(h)}(rs\nn{X})}^2 d\mu(X,Z)ds 
  \quad;
  $$
  par le changement de variable 
  $s'= rs\nn{X}$,
  cette derni\`ere int\'egrale vaut :
  $$
  \int_{S_1}       \int_{0}^\infty       {s'}^{2h-1}
  \nn{\fbs {v'-1}^{(h)}(s')}^2  ds'  d\mu(X,Z) 
  \,=\,
  \nn{\mu}
  \int_{0}^\infty
  {s'}^{2h-1} \nn{\fbs{v'-1}^{(h)}(s')}^2 ds' 
  \quad ,
  $$
  qui est finie 
  d'apr\`es le lemme~\ref{lem_maj_int_fb},
  lorsque $-1<2h-1< 2(v'-1)$.
\end{proof}

\begin{proof}[de la proposition~\ref{prop_maj_hat{S}_Omega_L}]
  Soit $\omega=\Phi_{\zeta,l}$.
  Ici, on a $\omega(s.n)=\Phi_{s^2\zeta,l}(n)$;
  avec les notations du lemme~\ref{lem_derivee_fcn_s2},
  $\partial_s^h\omega(s.n)$ s'\'ecrit 
  comme une combinaison lin\'eaire de :
  $s^{d(j,h)}f^{(h'+j)}(s^2)$
  o\`u
  $$
  f(s)
  \,:=\,
  \Phi_{s\zeta,l}(n)
  \,=\,
  e^{is<\zeta,Z>}
  \flb l {v'-1}{\frac12 s\nn{\zeta}\nn{X}^2}
  \quad\mbox{et}\quad n=(X,Z)\in N
  \quad.
  $$
  Calculons les d\'eriv\'ees de cette derni\`ere fonction :
  $$
  \partial_s^{j}f(s)
  \,=\,
  \sum_{m=0}^{j} C_j^m   
  {<\zeta,Z>}^{j-m}
  {(\frac12 \nn{\zeta}\nn{X}^2)}^m
  \flbs l {v'-1}^{(m)}(\frac12 s\nn{\zeta}\nn{X}^2)\quad
  e^{is<\zeta,Z>}
  $$
  et donc le terme $\nn{\partial_s^{j}f(s)}$ est major\'e 
  \`a une constante (de $j$) pr\`es par :
  $$
  \nn{\zeta}^{j}
  \sum_{m=0}^{j}   
  \nn{Z}^{j-m}  \nn{X}^{2m}  
  \nn{\flbs{l}{v'-1}^{(m)}(\frac{s}2 \nn{\zeta} \nn{X}^2)}
  \quad .
  $$
  Gr\^ace \`a cette majoration,   
  l'expression $\nn{\hat{S}^h(\omega)}^2$ 
  est ainsi major\'ee 
  \`a une  constante (de $h$) pr\`es 
  par le maximum sur 
  $0\leq j\leq h/2$ et $0\leq m \leq h'+j$ de :
  \begin{eqnarray*}
    I(j,h,l,m)   
    :=
    \int_0^\infty\!\!
    {\left(\int_{S_1}\!\!
        s^{d(j,h)}
        \nn{\zeta}^{h'+j}
        \nn{Z}^{h'+j-m}  \nn{X}^{2m}  
        \nn{\flbs{l}{v'-1}^{(m)}(\frac{s^2}2 \nn{\zeta} \nn{X}^2)}
        d\mu(X,Z)    \right)}^2 s^{2h-1}ds\\
    =
    \int_0^\infty
    \nn{\zeta}^{2(h'+j)}
    {\left(\int_{S_1} \!\!\!\!
        \nn{Z}^{h'+j-m}  \nn{X}^{2m}  
        \nn{\flbs{l}{v'-1}^{(m)}(\frac{s^2}2 \nn{\zeta} \nn{X}^2)}
        d\mu(X,Z)    \right)}^2 s^{4(h'+j)-1}ds
    \quad ,
  \end{eqnarray*}
  gr\^ace \`a~(\ref{egalite_d(j,h)+h}).

  Utilisons l'expression de $\mu$ 
  donn\'ee dans la proposition~\ref{prop_expression_mu} 
  dans ce qui suit :
  \begin{eqnarray*}
    \int_{S_1} 
    \nn{Z}^{h'+j-m}  \nn{X}^{2m}  
    \nn{\flbs{l}{v'-1}^{(l)}(s')}
    d\mu(X,Z)
    \,\leq\,
    \int_{0}^1    r^{2m}
    \nn{\flbs{l}{v'-1}^{(m)}(s')}
    {(1-r^4)}^{\frac{w-2}2}r^{v-1}dr
    \quad,
  \end{eqnarray*}
  et donc par H\"older :
  \begin{eqnarray*}
    &&  {\left(\int_{S_1} \!\!\!\!
        \nn{Z}^{h'+j-m}  \nn{X}^{2m}  
        \nn{\flbs{l}{v'-1}^{(m)}(s')}
        d\mu(X,Z)    \right)}^2\\
    &&\quad\leq\,
    \int_{0}^1      {(1-r^4)}^{-\frac12}dr      \int_{0}^1
    \nn{\flbs{l}{v'-1}^{(m)}(s')}^2
    r^{2(2m+v-1)}{(1-r^4)}^{w-\frac32}dr
    \quad,
  \end{eqnarray*}

  L'int\'egrale $I(j,h,l,m)$  est donc major\'ee  \`a une
  constante pr\`es par :
  $$
  \int_{0}^\infty     \nn{\zeta}^{2(h'+j)}     \int_{0}^1
  \nn{\flbs{l}{v'-1}^{(m)}(\frac{s^2}2 \nn{\zeta} r^2)}^2
  r^{2(2m+v-1)}{(1-r^4)}^{w-\frac32}dr s^{4(j+h')-1} ds
  \quad;
  $$
  cette  derni\`ere int\'egrale est \'egale 
  par Fubini et le changement de variable
  $s'=\frac{s^2}2  \nn{\zeta}  r^2$
  \`a :
  \begin{eqnarray*}
    \int_{0}^1\int_{0}^\infty 
    \nn{\zeta}^{2(h'+j)-1} 
    \nn{\flbs{l}{v'-1}^{(m)}(s')}^2
    {\left(\frac{2s'}{\nn{\zeta}r^2}\right)}^{2(j+h')-1}
    \frac{ds'}{\nn{\zeta}r^2}r^{2(2m+v-1)}{(1-r^4)}^{w-\frac32}dr\\
    \; =\,
    \int_{0}^1r^{2(-2(j+h')+2m+v-1)}{(1-r^4)}^{w-\frac32}dr
    \int_{0}^\infty
    \nn{\flbs{l}{v'-1}^{(m)}(s')}^2
    {(2s')}^{2(j+h')-1}
    ds'\quad.
  \end{eqnarray*}
  La premi\`ere int\'egrale  ci-dessus est bien finie car 
  $-2(j+h')+2l+v-1\geq
  -2h+v-1>-\frac12$ 
  lorsque $h\leq v'-1$.
  D'apr\`es le lemme~\ref{lem_cq_Mark}
  la seconde est finie tant que $j+h' \leq v'-1$,
  donc tant que $h\leq v'-1$.
\end{proof}


\chapter{Transform\'ee de Fourier radiale pour \mathversion{bold}{$N_{v,2}$}}
\label{chapitrecalculfourier}

Dans ce chapitre, 
nous explicitons 
les fonctions sph\'eriques born\'ees
du groupe nilpotent libre \`a 2 pas
et \`a $v$ g\'en\'erateurs not\'e $N_{v,2}$,
et la mesure de Plancherel associ\'ee.

\paragraph{Notations.}
On convient dans cette section d'identifier par la base canonique
les \'el\'ements de $\Zc$ et $\Zc^*$ \`a des matrices antisym\'etriques.
On associe \`a un \'el\'ement
$\Lambda^*\in \overline{\Lc}$,
$v_0\in \Nb$,
et si $\Lambda^*\not=0$, 
$v_1\in\Nb$, 
le multi-indice d'entier $m\in \Nb^{v_1}$ 
et le $v_1$-uplet $(\lambda_1,\ldots,\lambda_{v_1})\in \Rb^{v_1}$ 
de la mani\`ere suivante :
\index{Notation!Param\`etres!$r^*,\Lambda^*,l,\epsilon$}
\begin{itemize}
\item l'entier $v_0$ est tel que 
  $\lambda^*_{v_0}> 0$ et $\lambda^*_{v_0+1}=0$ :
  c'est le nombre de $\lambda^*_i$ non nuls;
\item $v_1$ est le nombre de $\lambda^*_i$ non nuls distincts,
  et les $\lambda_j$ sont les $\lambda^*_i$ non nuls et distincts, ordonn\'es de
  fa\c con strictement d\'ecroissante :
  $$
  \{
  \lambda^*_1\,\geq\, \ldots \,\geq\, \lambda^*_{v_0}\,>\,0
  \}
  \,=\,
  \{
  \lambda_1\, > \, \ldots \, > \, \lambda_{v_1}\,>\,0
  \}
  \quad;
  $$
\item pour $j=1,\ldots,v_1$, 
  $m_j$ est le nombre de param\`etres $\lambda^*_i$ \'egaux \`a $\lambda_j$;
  on d\'efinit \'egalement :
  \begin{eqnarray*}
    m_0
    \,:=\,
    m'_0
    \,:=\,
    0
    \qquad\mbox{et}\qquad
    m'_j
    \,:=\,
    \sum_{i=1}^j m_i\; ,
    \quad
    j=1,\ldots v_1
    \quad;
  \end{eqnarray*}

  on a $m'_{v_1}:=m_1+\ldots+m_{v_1-1}+m_{v_1}=v_0$.
\end{itemize}
Si $\Lambda^*\not=0$, on peut donc mettre
la matrice antisym\'etrique  $D_2(\Lambda^*)$ 
sous la forme (pour les notations voir les sous-sections~\ref{subsec_reduction}
et~\ref{subsec_SpO}):
\index{Notation!Matrice antisym\'etrique!$D_2(\Lambda)$}
$$
D_2(\Lambda^*)
\,=\,
\left[
  \begin{array}{cccc}
    \lambda_1^* J &&&\\
    0 & \ddots & 0&\\
    &&\lambda_{v'}^* J&\\
    &&&(0)
  \end{array}\right]
\,=\,
\left[
  \begin{array}{ccc|c}
    \lambda_1 J_{m_1} &&&\\
    0 & \ddots & 0&\\
    &&\lambda_{v_1} J_{m_{v_1}}&\\
    \hline
    &&&0
  \end{array}\right]
\quad.
$$

On convient de noter :
\begin{itemize}
\item $\Qc$
  l'ensemble des $(r^*,\Lambda^*)\in \Rb^+\times\bar{\Lc}$
  v\'erifiant $r^*=0$ 
  si $2v_0=v$,
  \index{Notation!Ensemble de param\`etres!$\Qc$}
\item $\flbs n \alpha $ la fonction de Laguerre normalis\'ee 
  (voir section~\ref{app_lag}),
\item $\mbox{pr}_j$ la projection 
  sur l'espace vectoriel engendr\'e 
  par les  $2m_j$ vecteurs 
  $$
  X_{2i-1}\, ,\; X_{2i}\,,\quad
  m'_{j-1}\,<\,i\,\leq\, m'_j
  \quad.
  $$
  \index{Notation!$\mbox{pr}_j$}
\end{itemize}

Avec ces notations,
nous redonnons maintenant l'\'enonc\'e 
du th\'eor\`eme~\ref{thm_intro_fcnsph} :

\begin{thm_princ}[Fonctions sph\'eriques born\'ees pour 
  \mathversion{bold}{$N_{v,2}, O(v)$}]
  \label{thm_fsphO}
  \index{Fonction sph\'erique!sur $N_{v,2}$}
  Les param\`etres des fonctions sph\'eriques born\'ees 
  sont 
  $(r^*,\Lambda^*)\in\Qc$,
  puis le multi-indice $l\in \Nb ^{v_1}$ 
  si $\Lambda^*\not=0$, 
  $\emptyset$ sinon.

  Avec ces param\`etres, 
  les fonctions sph\'eriques born\'ees pour $N_{v,2}, O(v)$ 
  sont donn\'ees par :\\
  \textbf{Si} {\mathversion{bold}{$\Lambda^*\not=0$}}
  $$
  \phi^{r^*,\Lambda^*,l}(n)
  \,=\,
  \int_{O(v)}
  \Theta^{r^*,\Lambda^*,l}(k.n)
  dk,
  \quad n\in N_{v,2}
  \quad,
  $$
  \index{Notation!Fonction sph\'erique!$\phi^{r^*,\Lambda^*,l}$}
  \index{Notation!$\Theta^{r^*,\Lambda^*,l}$}
  o\`u $\Theta^{r^*,\Lambda^*,l}$ est la fonction donn\'ee 
  pour $n=\exp(X+A)\in N_{v,2}$ par :
  $$
  \Theta^{r^*,\Lambda^*,l}(n)
  \,=\,
  e^{i r^*<X_v^*,X>}
  e^{i<D_2(\Lambda^*),A>}
  \overset{v_1}{\underset{j=1} \Pi}
  \flb {l_j} {m_j-1} {\frac{\lambda_j}2 \nn{\pr {j} X }^2}
  \quad,
  $$
  o\`u on a not\'e $dk$ la mesure de Haar de masse 1 du groupe $O(v)$.\\
  \textbf{Si} {\mathversion{bold}{$\Lambda^*=0$}}
  \index{Notation!Fonction sph\'erique!$\phi^{r^*,0}$}
  $$
  \phi^{r^*,0}(n)
  \,=\,
  \int_{O(v)}
  e^{i r^*<X_v^*,k.X>}
  dk,
  \quad n=\exp(X+A)\in N_{v,2}
  \quad.
  $$
\end{thm_princ}

On trouve aussi les fonctions sph\'eriques pour $N_{v,2},SO(v)$ :
\begin{thm_princ}[Fonctions sph\'eriques born\'ees pour 
  \mathversion{bold}{$N_{v,2}, SO(v)$}]
  \label{thm_fsphSO}
  \index{Fonction sph\'erique!sur $N_{v,2}$}
  Les param\`etres des fonctions sph\'eriques born\'ees sont 
  $(r^*,\Lambda^*)\in\Qc$,
  ainsi que si $\Lambda^*\not=0$,
  $\epsilon=\pm1$ et le multi-indice $l\in \Nb ^{v_1}$. 

  Avec ces param\`etres, 
  les fonctions sph\'eriques born\'ees pour $N_{v,2}, SO(v)$ 
  sont donn\'ees par :\\
  \textbf{Si} {\mathversion{bold}{$\Lambda^*\not=0$}}
  $$
  \phi^{r^*,\Lambda^*,l,\epsilon}(n)
  \,=\,
  \int_{SO(v)}
  \Theta^{r^*,\Lambda^*,l,\epsilon}(k.n)
  dk,
  \quad n\in N_{v,2}
  \quad,
  $$
  \index{Notation!Fonction sph\'erique!$\phi^{r^*,\Lambda^*,l,\epsilon}$}
  \index{Notation!$\Theta^{r^*,\Lambda^*,l,\epsilon}$}
  o\`u on a not\'e $dk$ la mesure de Haar de masse 1 du groupe $SO(v)$,
  et $\Theta^{r^*,\Lambda^*,l,\epsilon}$ la fonction donn\'ee 
  pour $n=\exp(X+A)\in N_{v,2}$ par :
  $$
  \Theta^{r^*,\Lambda^*,l,\epsilon}(n)
  \,=\,
  e^{i r^*<X_v^*,X>}
  e^{i<D_2^\epsilon(\Lambda^*),A>}
  \overset{v_1}{\underset{j=1} \Pi}
  \flb {l_j} {m_j-1} {\frac{\lambda_j}2 \nn{\pr {j} X }^2}
  \quad.
  $$
  \textbf{Si} {\mathversion{bold}{$\Lambda^*=0$}}
  $$
  \phi^{r^*,0}(n)
  \,=\,
  \int_{SO(v)}
  e^{i r^*<X_v^*,k.X>}
  dk,
  \quad n=\exp(X+A)\in N_{v,2}
  \quad.
  $$
\end{thm_princ}

Pour les deux th\'eor\`emes pr\'ec\'edents,
dans le cas $\Lambda^*=0$, 
on retrouve les fonctions de Bessel 
``comme dans le cas des groupes de Heisenberg 
et des groupes de type H'' (lemme~\ref{lem_fcn_bessel_intsph}):
$$
\phi^{r^*,0}(\exp(X+A))
\,=\,
\fb {\frac{v-2}2}{r^*\nn{X}}
\quad ,
$$
o\`u $\nn{.}$ d\'esigne la norme euclidienne sur $\Vc$ 
pour la base canonique des g\'en\'erateurs.
Si de plus, $r^*=0$, 
on trouve la fonction constante $1$.

\section{Expression des fonctions sph\'eriques born\'ees}

Le but de cette section est de d\'emontrer
les deux th\'eor\`emes  pr\'ec\'edents,
c'est-\`a-dire de donner les expressions
des fonctions sph\'eriques born\'ees
de la paire de Guelfand 
$$
N=N_{v,2}\; , \;K=O(v)\;\mbox{ou}\; SO(v)\;,
\qquad\mbox{ou encore}\qquad
G=K\triangleleft N\; ,\; K
\quad.
$$

Nous convenons dans cette section
que le terme ``fonction sph\'erique'' 
signifie ``fonction sph\'erique born\'ee''
ou encore ``fonction sph\'erique de type positif''
(th\'eor\`eme~\ref{thm_fcnsph_rep}b).

Pour $\rho\in \hat{N}$,
on note :
\begin{itemize}
\item  $G_\rho$ et $K_\rho$ les groupes de stabilit\'e de la
  classe de $\rho$ sous $G$ et $K$ respectivement,
  \index{Notation!Groupe!$G_\rho$} 
  \index{Notation!Groupe!$K_\rho$} 
\item $\check{G_\rho}$ 
  l'ensemble des classe de repr\'esentations 
  $\nu\in\hat{G_\rho}$ telles que
  $\nu_{|N}$ est un multiple de $\rho$,
\item $\tilde{G}_\rho$ l'ensemble des classes 
  $\nu\in\check{G}_\rho$ 
  telles que 
  l'espace des vecteurs $K$-invariants 
  de la repr\'esentation $\Ind_{G_\rho}^G \nu$
  est de dimension 1.
  \index{Notation!Ensemble de classe de repr\'esentations!$\tilde{G}_\rho,\check{G}_\rho$}
\end{itemize}
Gr\^ace aux th\'eor\`emes des sous groupes et du nombre d'entrelacement,
on verra 
que $\nu\in\check{G}_\rho$  est dans $\tilde{G}_\rho$
si et seulement si l'espace de ses vecteurs $K_\rho$-invariants est une droite
(voir plus loin le lemme~\ref{lem_tildeGrho}).

Les preuves des th\'eor\`emes~\ref{thm_fsphO} et~\ref{thm_fsphSO} reposent 
sur les deux th\'eor\`emes et la proposition
qui suivent :
\begin{thm}
  [\mathversion{bold}{$\tilde{G}_\rho$}]
  \label{thm_tildeGrho}
  \begin{itemize}
  \item[a)] Fixons $\rho\in \hat{N}$ 
    et $(\Hc^\nu,\nu)$ un repr\'esentant d'une classe 
    de $\tilde{G_\rho}$.
    Soit $\vec{u}_\nu$ un  vecteur unitaire
    $K_\rho$-invariant pour $\nu$.
    On lui associe un vecteur unitaire $K$-invariant $f_{\vec{u}_\nu}$
    pour $\Ind_{G_\rho}^G \, \nu $;
    la fonction de type positif alors associ\'ee \`a 
    $\Ind_{G_\rho}^G \, \nu \in \hat{G}$ pour ce vecteur  $f_{\vec{u}_\nu}$
    est la fonction not\'ee $\phi^\nu$ donn\'ee par :
    \begin{equation}
      \label{fcnsphnu}
      \phi^\nu(n)
      \,=\,
      \int_{k \in K}
      {\big<\nu(I,k.n).\vec{u}_\nu\,,\,
        \vec{u}_\nu \big>}_{\Hc^\nu} dk,
      \quad 
      n\in N
      \quad ,  
    \end{equation}
    o\`u $dk$ d\'esigne la mesure de Haar de masse 1 du groupe compact 1. 
  \item[b)]   
    On obtient toutes les fonctions sph\'eriques born\'ees 
    comme les fonctions sph\'eriques de type positifs $\phi^\nu$
    lorsque $\rho$ parcourt 
    un ensemble de repr\'esentants de $\hat{N}/G$,
    et que $\nu$ parcourt un ensemble de repr\'esentants 
    de~$\tilde{G}_\rho$.
  \end{itemize}
\end{thm}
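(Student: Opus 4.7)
The plan splits naturally along the two parts. For part (a), I would realize $\Pi := \Ind_{G_\rho}^G \nu$ on the standard Hilbert space $\Hc_\Pi$ of (classes of) measurable functions $F : G \to \Hc^\nu$ satisfying $F(g\,h) = \nu(h)^{-1} F(g)$ for $h \in G_\rho$ and $\int_{G/G_\rho} \|F(g)\|^2 d\dot g < \infty$, with the left translation action $\Pi(g_1) F(g) = F(g_1^{-1} g)$. Using $G = K \triangleleft N$ and the description $G_\rho = K_\rho \triangleleft N$ furnished by Proposition~\ref{prop_stab}, the restriction map $F \mapsto F_{|K}$ identifies $\Hc_\Pi$ with the space of $\Hc^\nu$-valued functions on $K$ satisfying $F(kk_0) = \nu(I, k_0)^{-1}\ldots$, more precisely $F(kk_0) = \nu(k_0)^{-1}F(k)$ for $k_0 \in K_\rho$. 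A $K$-invariant $F$ must then be constant on $K$ with its unique value a $K_\rho$-invariant vector under $\nu$; conversely, any such vector produces a $K$-invariant element of $\Hc_\Pi$. Applied to $\vec u_\nu$, this construction (with normalization coming from the finite mass of $K/K_\rho$) provides the unit vector $f_{\vec u_\nu}$ announced by the theorem.

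To obtain the integral formula~(\ref{fcnsphnu}), I compute
$\phi^\nu(g) = \langle \Pi(g) f_{\vec u_\nu}, f_{\vec u_\nu}\rangle_{\Hc_\Pi} = \int_{G/G_\rho} \langle f_{\vec u_\nu}(g^{-1}g'), f_{\vec u_\nu}(g')\rangle_{\Hc^\nu}\, d\dot g'$,
which through the bijection $K/K_\rho \simeq G/G_\rho$ becomes an integral over $K/K_\rho$. Specializing to $g = (I,n) \in N$ and multiplying out in the semi-direct product, $g^{-1} k = (k, k^{-1}.n^{-1}) = k \cdot (I, k^{-1}.n^{-1})$ with $(I, k^{-1}.n^{-1}) \in N \subset G_\rho$, so the equivariance law gives $f_{\vec u_\nu}(g^{-1} k) = \nu(I, k^{-1}.n)\,\vec u_\nu$. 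After the change of variable $k \mapsto k^{-1}$ and using the $K_\rho$-invariance of the integrand in $k$ (which comes from $\nu((k_0, I))\vec u_\nu = \vec u_\nu$), I can rewrite the $K/K_\rho$-integral as an integral over $K$ against the normalized Haar measure, recovering~(\ref{fcnsphnu}).

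For part (b), I combine Theorem~\ref{thm_fcnsph_rep} (bounded spherical functions of type positive correspond bijectively to irreducible unitary representations of $G$ having, up to scale, a unique non-zero $K$-fixed vector, via the diagonal matrix coefficient) with Mackey's Theorem~\ref{thm_mackey}, which exhausts $\hat G$ by the representations $\Ind_{G_\rho}^G \nu$ as $\rho$ runs through $\hat N/G$ and $\nu$ through $\check G_\rho$. It remains to detect those that admit a $K$-fixed vector, and to count it. Since $K \cdot G_\rho = K \cdot (K_\rho \triangleleft N) = G$, the double coset space $K\backslash G / G_\rho$ reduces to a point, and the subgroup theorem (Corollary~\ref{cor_thmssgr}) yields $\bigl[\Ind_{G_\rho}^G \nu\bigr]_{|K} \simeq \Ind_{K_\rho}^K(\nu_{|K_\rho})$. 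Applying Lemma~\ref{lem_thmnbentr} to the compact homogeneous space $K/K_\rho$ then identifies the dimension of $K$-invariants in $\Pi$ with the dimension of $K_\rho$-invariants in $\nu$. The condition that this dimension equal one is precisely the definition of $\tilde G_\rho$, and the corresponding spherical function is given by part~(a). The main delicate point, I expect, is in part~(a): being careful about the identification $G/G_\rho \simeq K/K_\rho$, the normalization of the vector $f_{\vec u_\nu}$, and the manipulation of the semi-direct product law to recognize the argument $k.n$ in the final integral.
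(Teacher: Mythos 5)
Your proposal is correct and follows essentially the same route as the paper: the subgroup theorem plus the intertwining-number lemma to reduce $K$-invariance of $\Ind_{G_\rho}^G\nu$ to $K_\rho$-invariance of $\nu$, Mackey's theorem combined with Th\'eor\`eme~\ref{thm_fcnsph_rep} for part~(b), and the direct computation of the diagonal matrix coefficient for~(\ref{fcnsphnu}). The only differences are conventions (you induce with right-equivariant functions and left translation, the paper with left-equivariant functions and right translation) and a harmless slip in the intermediate expression for $g^{-1}k$ (it equals $(k,n^{-1})$, though your factorization $k\cdot(I,k^{-1}.n^{-1})$ is right), neither of which affects the argument.
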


\begin{prop}[\mathversion{bold}{$\Nc^*/G$}]
  \label{prop_Nc^*/G}
  \begin{itemize}
  \item[a)] Si $K=O(v)$,
    les orbites  de l'action coadjointe de $N$ sous $G$ 
    sont param\`etr\'ees par 
    $(r^*,\Lambda^*)\in\Qc$.
    Elles sont not\'ees $O(r^*,\Lambda^*)$.
    Le repr\'esentant privil\'egi\'e de $O(r^*,\Lambda^*)$
    est $r^*X_v^*+D_2(\Lambda^*)$.
  \item[b)] Si $K=SO(v)$,
    les orbites  de l'action coadjointe de $N$ sous $G$ 
    sont param\`etr\'ees par 
    $(r^*,\Lambda^*)\in\Qc$ et $\epsilon=\pm1$.
    Elles sont not\'ees $O(r^*,\Lambda^*,\epsilon)$.
    Le repr\'esentant privil\'egi\'e de $O(r^*,\Lambda^*)$
    est $r^*X_v^*+D_2^\epsilon (\Lambda^*)$.
  \end{itemize}
\end{prop}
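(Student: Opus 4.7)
The plan is to combine the reduction of $N$-orbits established earlier (at the end of Section~\ref{subsec_kirillov}) with a standard normal form for antisymmetric matrices under $K$-conjugation. Recall that the coadjoint action of $n=\exp(X+A)\in N$ on $f=X^*+A^*\in\Nc^*$ is given by $\Coad.n\,f = X^* + A^* - A^*.X$, so the $N$-orbit of $f$ is the affine subspace $X^* + \Im A^* + A^*$. Decomposing $X^* = p_0(X^*) + (X^* - p_0(X^*))$ with $p_0$ the orthogonal projection onto $\ker A^*$, we found that every $N$-orbit has a representative $X^*_0 + A^*$ with $X^*_0 \in \ker A^*$. Moreover, $K$ acts by automorphisms of $\Nc$, preserves the direct sum $\Vc^*\oplus\Zc^*$, and preserves the relation $X^*_0\in\ker A^*$; hence it suffices to analyse the $K$-action on such representatives, working modulo the residual action of $N$.

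First I would reduce the $\Zc^*$-component. Under the identification of Section~\ref{subsec_def_grlib}, elements of $\Zc^*$ correspond to antisymmetric endomorphisms of $(\Vc,<,>)$, and the $K$-action on $\Zc^*$ is by conjugation. The classical normal form for real antisymmetric matrices yields: for $K=O(v)$, every $A^*\in\Zc^*$ is $K$-conjugate to a unique $D_2(\Lambda^*)$ with $\Lambda^*\in\overline{\Lc}$; for $K=SO(v)$, one needs in addition a sign $\epsilon=\pm1$ recording the orientation of the last block, since $SO(v)$ can only flip the signs of an even number of blocks simultaneously. From this we read off the integers $v_0,v_1$ and the multiplicities $m_j$ directly from the spectrum of $A^*$.

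Next I would reduce $X^*_0 \in \ker A^*$ using the stabilizer of $D_2(\Lambda^*)$ (or $D_2^\epsilon(\Lambda^*)$) inside $K$. Once $A^*$ is in normal form, $\ker A^*$ is the coordinate subspace spanned by $X_{2v_0+1},\ldots,X_v$, of dimension $v-2v_0$. The stabilizer contains an orthogonal (resp.\ special orthogonal) group acting on this subspace, which acts transitively on spheres as soon as $v-2v_0\geq 1$. Thus if $2v_0<v$ one can rotate $X^*_0$ to $r^* X_v^*$ for a unique $r^*=\|X^*_0\|\geq0$; if $2v_0=v$ then $\ker A^*=\{0\}$ forces $X^*_0=0$ and hence $r^*=0$, which is exactly the constraint built into the parameter set $\Qc$. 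Uniqueness of $(r^*,\Lambda^*)$, resp.\ of $(r^*,\Lambda^*,\epsilon)$, follows from the fact that $\Lambda^*$ and $\epsilon$ are determined by the spectrum of $A^*$ (as a $K$-invariant), and $r^*$ is then recovered as $\|p_0(X^*)\|$ evaluated at the chosen canonical $A^*$.

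The main obstacle will be the careful bookkeeping of the sign $\epsilon$ in the $SO(v)$ case: one must verify that when $v$ is odd, or when $\lambda_{v'}^*=0$, the stabilizer of $D_2(\Lambda^*)$ in $SO(v)$ already contains a reflection that reverses the orientation of the last nonzero block, so that $\epsilon$ becomes redundant and does not parametrize a genuinely new orbit. This case analysis, together with consistency with the conditions on $\Qc$ (namely $r^*=0$ when $2v_0=v$), is what makes the statement slightly subtle; everything else is a routine combination of Kirillov's normal form with the classification of antisymmetric bilinear forms.
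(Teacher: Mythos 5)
Your overall strategy is the same as the paper's: use the coadjoint formula to reduce each $N$-orbit to a representative $X^*_0+A^*$ with $X^*_0\in\ker A^*$, put $A^*$ into the normal form $D_2(\Lambda^*)$ (resp.\ $D_2^{\epsilon}(\Lambda^*)$) by the classification of antisymmetric matrices under $O(v)$ (resp.\ $SO(v)$), and then normalise $X^*_0$ to $r^*X^*_v$ using elements of $K$ preserving that normal form. The $O(v)$ case and the reduction of the $\Zc^*$-component are fine. The gap lies entirely in the last normalisation step for $K=SO(v)$.

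You assert that the stabiliser of $D_2^{\epsilon}(\Lambda^*)$ in $SO(v)$ acts transitively on spheres of $\ker A^*$ ``as soon as $v-2v_0\geq 1$''. This fails in the only delicate case, namely $v$ odd with all $\lambda^*_i>0$, where $\ker A^*=\Rb X_v$ is one-dimensional: any orthogonal matrix commuting with $D_2(\tilde{\Lambda}^*)$ on $\Im A^*$ has determinant $1$ there (proposition~\ref{prop_matrice_ortho_commutant_J}), so its restriction to $\ker A^*$ must also have determinant $1$, i.e.\ lie in $SO(1)=\{1\}$. Hence one cannot send $x^*X^*_v$ with $x^*<0$ to $-x^*X^*_v$ while preserving $D_2^{\epsilon}(\Lambda^*)$; the paper instead applies an element $k''$ of determinant $1$ which flips $X_v$ together with one vector of the last block, normalising $x^*\geq 0$ at the cost of replacing $\epsilon$ by $-\epsilon$. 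Your subsequent claim that $\epsilon$ ``becomes redundant when $v$ is odd'' is the mirror image of the same mistake and contradicts the statement you are proving: for $v$ odd, all $\lambda^*_i>0$ and $r^*>0$, the orbits $O(r^*,\Lambda^*,+1)$ and $O(r^*,\Lambda^*,-1)$ are genuinely distinct, because any $k\in O(v)$ sending $D_2(\Lambda^*)$ to $D_2^{-1}(\Lambda^*)$ and fixing $X^*_v$ has determinant $-1$. Redundancy of $\epsilon$ occurs only when $\lambda^*_{v'}=0$, as the paper notes right after the statement. For the same reason your uniqueness argument is incomplete: $\epsilon$ is not determined by the spectrum of $A^*$ (which cannot distinguish $D_2(\Lambda^*)$ from $D_2^{-1}(\Lambda^*)$), and for $v$ odd it is not even an invariant of the $SO(v)$-conjugacy class of $A^*$ alone --- it is an invariant of the pair $(X^*,A^*)$, which is exactly why the sign of $x^*$ and the sign $\epsilon$ must be traded against one another.
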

Evidemment, les orbites $O(r^*,\Lambda^*,1)$ et $O(r^*,\Lambda^*,-1)$ 
se confondent lorsque
la derni\`ere coordonn\'ee de $\Lambda^*$ est 
$\lambda_{v'}=0$. 

\begin{thm}[\mathversion{bold}{$\phi^\nu$}]
  \label{thm_fcnsphnu}
  Soient $(r^*,\Lambda^*)\in\Qc$ et $\epsilon=\pm1$ si
  $\Lambda^*\not=0$, sinon $\emptyset$.
  Soit $\rho\in T_f$ une repr\'esentation 
  associ\'ee au repr\'esentant privil\'egi\'e 
  $f=r^*X_v^*+D_2^\epsilon (\Lambda^*)$.
  \begin{itemize}
  \item[a)] Si $\Lambda^*=0$,
    $\tilde{G}_\rho$ est l'ensemble des classes 
    des repr\'esentations $\nu^{r^*,0}$.

    La fonction $\phi^\nu$
    associ\'ee \`a $\nu=\nu^{r^*,0}$
    par (\ref{fcnsphnu}) est $\phi^{r^*,0}$
    (donn\'ee dans les th\'eor\`emes~\ref{thm_fsphO} et~\ref{thm_fsphSO}).
  \item[b)] Si $\Lambda^*\not=0$,
    $\tilde{G}_\rho$ 
    contient les classes des repr\'esentations irr\'eductibles
    $\nu^{r^*,\Lambda^*,l,\epsilon}$, $l\in \Nb^{v_1}$; 
    ces derni\`eres poss\`edent une droite invariante sous $K_\rho$.

    La fonction $\phi^\nu$
    associ\'ee \`a $\nu=\nu^{r^*,\Lambda^*,l,\epsilon}$
    par (\ref{fcnsphnu}) est $\phi^{r^*,\Lambda^*,l,\epsilon}$
    (donn\'ee dans le th\'eor\`eme~\ref{thm_fsphSO}).
  \end{itemize}
\end{thm}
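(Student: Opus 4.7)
La strat\'egie consiste \`a appliquer le th\'eor\`eme~\ref{thm_tildeGrho}
pour chaque orbite $O(r^*,\Lambda^*,\epsilon)$ d\'ecrite dans la proposition~\ref{prop_Nc^*/G} :
d'abord on identifie explicitement le stabilisateur $K_\rho$,
puis on construit un prolongement $\nu$ de $\rho=U_{X^*,A^*}$ \`a $G_\rho=K_\rho\triangleleft N$,
on exhibe le (ou les) vecteur(s) $K_\rho$-invariant(s) de $\nu$,
on v\'erifie via le corollaire~\ref{cor_thmssgr} des sous-groupes et
le lemme~\ref{lem_thmnbentr} du nombre d'entrelacement
que l'espace des vecteurs $K$-fixes de $\Ind_{G_\rho}^G\nu$
est bien de dimension~1 (ce qui place $\nu$ dans $\tilde{G}_\rho$),
et enfin on calcule le coefficient matriciel~(\ref{fcnsphnu}).

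Dans le cas a), $\Lambda^*=0$, la repr\'esentation $\rho$ est le caract\`ere
de dimension~1 donn\'e par $\exp(X+A)\mapsto e^{ir^*<X_v^*,X>}$.
Le stabilisateur $K_\rho$ est le sous-groupe de $K$ fixant $X_v^*$
(et vaut $K$ tout entier si $r^*=0$, soit~$2v_0=v$, mais alors $\Lambda^*=0$ est exclu par
la condition $\Qc$, donc $r^*\geq 0$ suffit).
Tout prolongement unitaire de $\rho$ \`a $G_\rho$ reste de dimension~1,
donc de la forme $(k,n)\mapsto \chi(k)\rho(n)$ pour un caract\`ere $\chi$ de $K_\rho$.
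L'application de la r\'eciprocit\'e de Frobenius via le lemme~\ref{lem_thmnbentr}
montre que seul $\chi=1$ fournit un espace de $K$-invariants non trivial (et donc de dimension~1)
apr\`es induction, donnant l'unique \'el\'ement $\nu^{r^*,0}\in\tilde{G}_\rho$.
L'int\'egrale~(\ref{fcnsphnu}) devient alors $\int_K e^{ir^*<X_v^*,k.X>}dk$, que l'on identifie \`a
$\fb{\frac{v-2}2}{r^*\nn{X}}$ par le lemme~\ref{lem_fcn_bessel_intsph}.

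Dans le cas b), $\Lambda^*\neq 0$, on reprend le mod\`ele d\'evelopp\'e dans la
sous-section~\ref{subsec_fcnsph_heis} pour Heisenberg et $K(m;v_0;v_1)$.
D'apr\`es la proposition~\ref{prop_stab}, $K_\rho$ est constitu\'e des $k\in K$
qui commutent avec $A^*=D_2^\epsilon(\Lambda^*)$ et satisfont $k.X_v-X_v\in\Im A^*$
lorsque $r^*\not=0$ ; puisque le commutant pr\'eserve
$\ker A^*\oplus\Im A^*$ et que $\ker A^*\cap \Im A^*=\{0\}$, cette derni\`ere condition
force $k.X_v=X_v$. La structure diagonale par blocs de $A^*$ donne alors
$K_\rho\simeq U(m_1)\times\cdots\times U(m_{v_1})\times K'$, o\`u $K'$ est
le groupe orthogonal (ou sp\'ecial) sur le sous-espace compl\'ementaire de $X_v$ dans $\ker A^*$,
chaque facteur $U(m_j)$ op\'erant sur les $2m_j$ coordonn\'ees de la $j$-\`eme tranche
via la structure complexe fournie par le bloc $\lambda_j J_{m_j}$.
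On prolonge $\rho$ sur $G_\rho$ en d\'efinissant $\nu(k,n)$ par l'action naturelle de $K_\rho$
sur $L^2(E_1)$ (compatible avec l'expression explicite de $U_{X^*,A^*}$
donn\'ee avant la remarque~\ref{rem_expression_noyau});
les vecteurs $K_\rho$-fixes se d\'ecrivent alors comme produits tensoriels de fonctions
de Hermite--Laguerre sur chacun des $v_1$ blocs, indic\'es par $l\in\Nb^{v_1}$,
par le m\^eme argument que celui donnant le th\'eor\`eme~\ref{thm_paire_G_heis}b).
Le coefficient matriciel du vecteur associ\'e se calcule alors bloc par bloc \`a partir de
l'expression explicite de $U_{X^*,A^*}$ : on reconna\^\i t le caract\`ere
$e^{ir^*<X_v^*,X>}e^{i<D_2^\epsilon(\Lambda^*),A>}$ provenant du centre, et sur chaque tranche
un coefficient matriciel Heisenberg qui donne pr\'ecis\'ement
$\flb{l_j}{m_j-1}{\frac{\lambda_j}2\nn{\pr{j}X}^2}$. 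L'int\'egration
sur $K$ selon~(\ref{fcnsphnu}) ach\`eve l'identification avec $\phi^{r^*,\Lambda^*,l,\epsilon}$.

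La principale difficult\'e r\'eside dans l'\'etape b) : construire proprement le prolongement
unitaire $\nu$ sur $K_\rho$ et v\'erifier la compatibilit\'e avec $\rho$. Le calcul est analogue \`a la
construction m\'etaplectique dans le cas Heisenberg, mais le signe $\epsilon=\pm1$ n'intervient
que pour $SO(v)$ : il refl\`ete le fait que $O(v)$ poss\`ede deux composantes connexes,
ce qui identifie les orbites $O(r^*,\Lambda^*,+1)$ et $O(r^*,\Lambda^*,-1)$ sous $O(v)$
mais les distingue sous $SO(v)$. Ce point et le d\'ecompte des doubles classes
$K\backslash G/G_\rho$ (toutes triviales ici par l'\'egalit\'e $G=K\cdot N$ et l'inclusion $N\subset G_\rho$)
sont les v\'erifications techniques \`a conduire avec soin.
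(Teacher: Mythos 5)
Votre partie a) est correcte et suit pour l'essentiel la m\^eme route que le texte, \`a une impr\'ecision pr\`es : un \'el\'ement de $\check{G}_\rho$ n'est pas automatiquement de dimension~1 --- il est de la forme $\sigma\otimes\rho$ avec $\sigma\in\hat{K}_\rho$ quelconque, \'eventuellement de dimension $>1$ --- mais la condition d'appartenance \`a $\tilde{G}_\rho$, via le corollaire~\ref{cor_thmssgr} et le lemme~\ref{lem_thmnbentr}, force bien $\sigma=1_{K_\rho}$, donc votre conclusion tient.

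La partie b) comporte en revanche une lacune r\'eelle, que vous signalez vous-m\^eme comme ``la principale difficult\'e'' sans la r\'esoudre : la construction du prolongement $\nu$ de $\rho$ \`a $G_\rho$. L'``action naturelle de $K_\rho$ sur $L^2(E_1)$'' n'existe pas : le sous-espace lagrangien $E_1=\Rb X_1\oplus\Rb X_3\oplus\ldots\oplus\Rb X_{2v_0-1}$ n'est pas stable sous $K_1\simeq U_{m_1}\times\ldots\times U_{m_{v_1}}$ (un \'el\'ement de $U_{m_j}$ m\'elange les coordonn\'ees $x_{2i-1}$ et $x_{2i}$), de sorte que $k\mapsto\{F\mapsto F(k^{-1}.\,\cdot\,)\}$ n'est m\^eme pas d\'efini sur $L^2(E_1)$ et n'entrelace pas $\rho$ et $k.\rho$. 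Il faudrait la repr\'esentation m\'etaplectique et la v\'erification qu'elle se rel\`eve en une vraie repr\'esentation (non seulement projective) de $K_\rho$ compatible avec $\rho$ ; c'est pr\'ecis\'ement le contenu non trivial de l'\'enonc\'e, et le renvoyer aux ``v\'erifications techniques \`a conduire avec soin'' laisse la preuve incompl\`ete. La d\'emonstration du texte contourne enti\`erement ce point : elle passe au quotient $\overline{N}=N/\ker\rho$, d\'ecompose $K_\rho\triangleleft\overline{N}$ en $(K_1\triangleleft\overline{N_1})\times K_2\times\overline{N_2}$ (propositions~\ref{prop_barN_barrho} et~\ref{prop_Krho_barN}), identifie $K_1\triangleleft\overline{N_1}$ au produit semi-direct de Heisenberg $H_{heis}$ par l'isomorphisme $\Psi_0$ (proposition~\ref{prop_N1_K1}), et invoque la classification d\'ej\`a acquise (lemme~\ref{lem_rep_Pi_omega}, issu du th\'eor\`eme~\ref{thm_fcnsph_rep} sur les fonctions de type positif) des repr\'esentations irr\'eductibles de $H_{heis}$ ayant une droite de vecteurs fixes ; la restriction au centre \'elimine le cas $\omega=\omega_\mu$ et force $\omega=\omega_{\nn{\Lambda^*},l}$. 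Le coefficient matriciel~(\ref{fcnsphnu}) se calcule alors sans jamais expliciter $\nu$ sur $K_\rho$, gr\^ace \`a l'identit\'e $<\Pi_\omega(I,\cdot).\Omega^\omega,\Omega^\omega>=\omega$, d'o\`u $\phi^\nu(n)=\int_K e^{ir^*<X_v^*,k.X>}\,\omega\circ\tilde{\Psi}_2(k.n)\,dk$. Notez enfin que, dans le mod\`ele de Schr\"odinger, les fonctions de Hermite individuelles ne sont pas $K_\rho$-fixes (seuls les sous-espaces $V_l$ qu'elles engendrent sont $K_\rho$-irr\'eductibles) ; le vecteur fixe est $\Omega^\omega\circ\Psi_0$ dans le mod\`ele induit, ce qui rend votre description ``bloc par bloc'' des vecteurs fixes impr\'ecise.
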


Lors de la d\'emonstration de ce dernier th\'eor\`eme
(sous-sections~\ref{subsec_thm_fcnsphnu.a}
et~\ref{subsec_thm_fcnsphnu.b}),
nous donnerons les expressions
des repr\'esentations $\nu^{r^*,0}$
et $\nu^{r^*,\Lambda^*,l,\epsilon}$.
\index{Notation!Repr\'esentation!$\nu^{r^*,0},\nu^{r^*,\Lambda^*,l,\epsilon}$}

\subsection{D\'emarche de la preuve}

Admettons les deux th\'eor\`emes et la proposition qui pr\'ec\`edent,
et conservons leurs notations.
Du  corollaire~\ref{cor_thm_kirillov}
et de la proposition~\ref{prop_Nc^*/G},
on en d\'eduit que
$\hat{N}/G$
est l'ensemble des classes des repr\'esentations
associ\'ees \`a 
$f=r^*X_v^*+D_2^\epsilon (\Lambda^*)$,
lorsque  les param\`etres $(r^*,\Lambda^*)$ parcourt $\Qc$,
et le param\`etre $\epsilon$ parcourt $\pm1$ si $K=SO(v)$ et vaut $\emptyset$ si $K=O(v)$.
D'apr\`es le th\'eor\`eme~\ref{thm_tildeGrho}.b),
et les premi\`eres parties du th\'eor\`eme~\ref{thm_fcnsphnu}.a) et b),
on en d\'eduit que toutes les classes des repr\'esentations de $G$ 
qui ont une droite invariante par $K$
sont obtenues en consid\'erant 
les repr\'esentations induites par 
$\nu^{r^*,0}$, et $\nu^{r^*,\Lambda^*,l,\epsilon}, l\in \Nb^{v_1}$,
lorsque les param\`etres $(r^*,\Lambda^*)$ parcourt $\Qc$ et 
si $K=SO(v)$ et $\Lambda^*\not=0$, $\epsilon$ \'egale $\pm1$;
si $K=O(v)$, alors $\epsilon$ vaut toujours $\emptyset$.

Et donc d'apr\`es le th\'eor\`eme~\ref{thm_tildeGrho}.a),
les fonctions sph\'eriques 
sont les fonctions $\phi^\nu$ donn\'ees par (\ref{fcnsphnu}),
lorsque $\nu$ parcourt
$\nu^{r^*,0}$ et $\nu^{r^*,\Lambda^*,l,\epsilon}$.
D'apr\`es les secondes parties du th\'eor\`eme~\ref{thm_fcnsphnu}.a) et b),
les fonctions sph\'eriques sont donc :
\begin{itemize}
\item les fonctions 
  $\phi^{r^*,0}, r^*\in \Rb^+$,
\item et les fonctions 
  $\phi^{r^*,\Lambda^*,l,\epsilon}$ 
  o\`u 
  $(r^*,\Lambda^*)\in\Qc$
  et $l\in \Nb^{v_1}$
  ainsi que $\epsilon=\pm1$,
  si $K=SO(v)$ et $\epsilon=\emptyset$ si $K=O(v)$.
\end{itemize}

Les th\'eor\`emes~\ref{thm_fsphO} et~\ref{thm_fsphSO}
seront donc d\'emontr\'es lorsque nous aurons prouv\'e
les th\'eor\`emes~\ref{thm_tildeGrho} et \ref{thm_fcnsphnu},
ainsi que la proposition~\ref{prop_Nc^*/G}.
Le reste de cette section est consacr\'e
\`a leurs d\'emonstrations.
Nous commen\c cons par  d\'emontrer 
le th\'eor\`eme~\ref{thm_tildeGrho}
et la proposition~\ref{prop_Nc^*/G}.
Ensuite, 
nous d\'ecrivons le stabilisateur
et le groupe quotient $\overline{N}=N/ \ker\rho$
pour une repr\'esentation $\rho\in T_{r^*X_v^*+D_2^\epsilon(\Lambda^*)}$.
Nous pourrons alors
d\'emontrer le th\'eor\`eme~\ref{thm_fcnsphnu}
gr\^ace au lemme suivant :

\begin{lem}
  [Repr\'esentation quotient\'ee par son noyau]
  \label{lem_rep_noyau}
  \index{Repr\'esentation!quotient\'ee par son noyau}
  \begin{itemize}
  \item[a)] Une repr\'esentation d'un groupe (comme
    tout morphisme)  passe au quotient par le noyau ou un sous-groupe du noyau.

    De plus, si la repr\'esentation du groupe est irr\'eductible, alors la
    repr\'esentation quotient\'ee l'est aussi.

  \item[b)] Soient $\nu_1$ et $\nu_2$ deux repr\'esentations d'un groupe $G$.

    Si elles sont \'equivalentes alors 
    leurs noyaux co\"\i ncident $\ker \nu_1 = \ker \nu_2 $,
    et les repr\'esentations pass\'ees au quotient \`a tout sous-groupe de leurs noyaux communs, sont \'equivalentes.

    R\'eciproquement, si leurs noyaux co\"\i ncident et
    les repr\'esentations pass\'ees au quotient sous leur noyau commun, sont \'equivalentes, alors les repr\'esentation $\nu_1$ et $\nu_2$ sont \'equivalentes.
  \end{itemize}
\end{lem}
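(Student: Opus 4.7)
Le plan est de traiter chaque partie par la propriété universelle du quotient, en soulignant que $\nu$ et sa représentation quotient $\bar\nu$ partagent le même espace de représentation et la même image opératorielle : les opérateurs $\bar\nu(gH)$ sont littéralement les opérateurs $\nu(g)$, seule leur indexation change.

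Pour la partie a), soit $H\subset \ker\nu$ un sous-groupe distingué. Comme $\nu$ est constante sur chaque classe $gH$, la formule $\bar\nu(gH):=\nu(g)$ définit sans ambiguïté un morphisme continu $G/H\to U(\Hc_\nu)$, qui est automatiquement une représentation unitaire puisqu'elle hérite des propriétés algébriques et de la continuité forte de $\nu$. Pour l'irréductibilité, je remarquerai que les sous-espaces fermés de $\Hc_\nu$ stables sous $\bar\nu$ sont exactement ceux qui sont stables sous $\nu$ (l'image des deux représentations est la même), d'où le résultat.

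Pour la partie b), dans le sens direct, si $U:\Hc_{\nu_1}\to\Hc_{\nu_2}$ est un opérateur unitaire d'entrelacement, alors $\nu_1(g)=\Id$ équivaut à $U\nu_1(g)U^{-1}=\Id$, c'est-à-dire à $\nu_2(g)=\Id$ ; les noyaux coïncident donc. Pour tout sous-groupe distingué $H$ contenu dans $\ker\nu_1=\ker\nu_2$, le même opérateur $U$ vérifie $U\bar\nu_1(gH)=\bar\nu_2(gH)U$ pour tout $g\in G$ par définition même des représentations quotient.
Pour la réciproque, je supposerai que $\ker\nu_1=\ker\nu_2=K$ et qu'un opérateur unitaire $U$ entrelace $\bar\nu_1$ et $\bar\nu_2$ vues sur $G/K$. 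Comme $\bar\nu_i(gK)$ n'est autre que $\nu_i(g)$, la relation $U\bar\nu_1(gK)=\bar\nu_2(gK)U$ se relit directement $U\nu_1(g)=\nu_2(g)U$, ce qui donne l'équivalence de $\nu_1$ et $\nu_2$.

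Cette preuve ne présente pas d'obstacle technique : tout repose sur l'observation que quotienter par (un sous-groupe de) $\ker\nu$ ne modifie ni l'espace de Hilbert ni les opérateurs. Le seul point auquel je prêterai attention lors de la rédaction est la cohérence des groupes dans la réciproque : pour pouvoir même parler d'équivalence entre les représentations quotient, il faut que celles-ci soient définies sur le même groupe, ce qui motive de demander au préalable l'égalité $\ker\nu_1=\ker\nu_2$ (sans cette condition, il n'y a pas de sous-groupe commun naturel par lequel quotienter).
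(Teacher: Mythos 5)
Votre démonstration est correcte et complète ; notez que le texte énonce ce lemme sans le démontrer (il est considéré comme élémentaire), de sorte qu'il n'y a pas de preuve de référence à comparer. Votre observation centrale — le quotient ne change ni l'espace de Hilbert ni l'ensemble des opérateurs, donc sous-espaces invariants, opérateurs d'entrelacement et noyaux se transportent tels quels — est exactement l'argument standard attendu, et vous avez bien identifié le seul point de soin nécessaire, à savoir que le sous-groupe $H\subset\ker\nu$ doit être distingué dans $G$ pour que $G/H$ soit un groupe (ce qui est implicite dans l'énoncé).
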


\subsection{Ensemble $\tilde{G}_\rho$}

Le but de cette sous-section est de d\'emontrer 
le th\'eor\`eme~\ref{thm_tildeGrho}.

Fixons $\rho\in\hat{N}$.
Nous reprenons les notations 
qui lui ont \'et\'e associ\'ees
au d\'ebut de la sous section pr\'ec\'edente.
D'apr\`es la proposition~\ref{prop_stab}, 
le groupe de stabilit\'e de la
classe de $\rho$ sous $G$
se met sous la forme 
$G_\rho=K_\rho\triangleleft N$,
o\`u $K_\rho$ est le sous-groupe compact :
$$
K_\rho
\,=\,
\{ k\in K:\;
k.\rho = \rho
\} 
\,\subset\, K
\quad.
$$

\begin{lem}[\mathversion{bold}{$\tilde{G}_\rho$}]
  \label{lem_tildeGrho}
  Soit $\nu\in \hat{G}_\rho$. 
  La classe de $\nu$ est dans 
  $\tilde{G}_\rho$
  si et seulement si
  la repr\'esentation $\nu$ restreinte \`a $K_\rho$
  contient exactement une fois $1_{K_\rho}$.
\end{lem}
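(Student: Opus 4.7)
La preuve reposera sur les deux r\'esultats de la th\'eorie de Mackey rappel\'es \`a la fin de la section pr\'ec\'edente, \`a savoir le corollaire~\ref{cor_thmssgr} (th\'eor\`eme des sous-groupes) et le lemme~\ref{lem_thmnbentr} (th\'eor\`eme du nombre d'entrelacement). Le fil directeur est le suivant : comme l'espace des vecteurs $K$-invariants d'une repr\'esentation s'identifie \`a la multiplicit\'e (discr\`ete) de~$1_K$ dans cette repr\'esentation, il suffit de transf\'erer le calcul de cette multiplicit\'e du gros groupe~$G$ au sous-groupe compact~$K_\rho$ via les deux outils ci-dessus.

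D'abord, je calculerais la restriction ${\left[\Ind_{G_\rho}^G \nu\right]}_{|K}$. Pour appliquer le corollaire~\ref{cor_thmssgr} aux sous-groupes $G_\rho$ et $K$ de $G$, j'aurais \`a v\'erifier deux choses. D'une part, comme $G=K\triangleleft N$ et $G_\rho=K_\rho\triangleleft N\supset N$, on a $K\cdot G_\rho\supset K\cdot N = G$, et donc $G$ se r\'eduit \`a la double classe de l'\'el\'ement neutre sous $K$ et $G_\rho$. D'autre part, dans le produit semi-direct $G=K\triangleleft N$, l'intersection $K\cap N$ est r\'eduite \`a l'\'el\'ement neutre; identifiant $K$ \`a $K\times\{e\}$ et $G_\rho$ \`a $K_\rho\times N$, on trouve imm\'ediatement $K\cap G_\rho = K_\rho$. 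Le corollaire~\ref{cor_thmssgr} fournit alors l'\'equivalence :
$$
{\left[\Ind_{G_\rho}^G \nu\right]}_{|K}
\,\sim\,
\Ind_{K_\rho}^K \left(\nu_{|K_\rho}\right)
\quad.
$$

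Ensuite, je traduirais le crit\`ere ``l'espace des $K$-invariants est une droite'' en ``$1_K$ appara\^it exactement une fois comme facteur direct discret''; ceci est licite car, la restriction \`a~$K$ d'une repr\'esentation unitaire d'un groupe localement compact contenant $K$ comme sous-groupe compact se d\'ecompose discr\`etement en isotypiques, et le sous-espace $K$-invariant de $\Ind_{G_\rho}^G \nu$ est pr\'ecis\'ement la composante isotypique associ\'ee \`a $1_K$. En appliquant le lemme~\ref{lem_thmnbentr} au sous-groupe $K_\rho$ de $K$ (l'espace homog\`ene $K/K_\rho$ \'etant compact, donc de mesure $K$-invariante finie), j'obtiendrais :
$$
\mbox{mult}\left(1_K,\Ind_{K_\rho}^K \nu_{|K_\rho}\right)
\,=\,
\mbox{mult}\left(1_{K_\rho},\nu_{|K_\rho}\right)
\quad.
$$
En combinant ces deux \'etapes, le nombre de fois que~$1_K$ appara\^it dans $\Ind_{G_\rho}^G\nu$ (c'est-\`a-dire la dimension de l'espace des vecteurs $K$-invariants de cette repr\'esentation) est \'egal au nombre de fois que $1_{K_\rho}$ appara\^it dans $\nu_{|K_\rho}$. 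Le lemme en d\'ecoule imm\'ediatement.

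Le point potentiellement d\'elicat est la v\'erification formelle que l'unique double classe de $K$ et $G_\rho$ dans $G$ est celle de l'\'el\'ement neutre; cela repose de mani\`ere essentielle sur la structure de produit semi-direct $G=K\triangleleft N$ et sur le fait que $N\subset G_\rho$ (rappel\'e par la proposition~\ref{prop_stab}). Une fois ce point pr\'ecis\'e, le reste est une application m\'ecanique des outils de Mackey.
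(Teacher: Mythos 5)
Votre démonstration suit exactement la même démarche que celle du texte : trivialité de la double classe $G_\rho\backslash G/K$ grâce à $N\subset G_\rho$ et $G=K\cdot N$, application du corollaire~\ref{cor_thmssgr} avec $G_1=G_\rho$, $G_2=K$, $G_1\cap G_2=K_\rho$, puis du lemme~\ref{lem_thmnbentr} aux groupes compacts $K_\rho\subset K$. Elle est correcte, et même un peu plus explicite que l'original sur la vérification de la double classe.
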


\begin{proof}[du lemme~\ref{lem_tildeGrho}]
  Par d\'efinition de $\tilde{G}_\rho$,  
  une repr\'esentation $\nu\in \hat{G}_\rho$ est dans 
  $\tilde{G}_\rho$
  si et seulement si
  la repr\'esentation 
  ${[\Ind_{G_\rho}^G \, \nu \,]}_{|K}$ 
  contient exactement une fois $1_K$.

  Comme $G_\rho=K_\rho\triangleleft N$,
  o\`u $K_\rho$ est un sous-groupe (ferm\'e) de $K$,
  la double classe
  $G_\rho \backslash G / K$ est triviale. 
  Appliquons le corollaire~\ref{cor_thmssgr}
  du th\'eor\`eme des sous-groupes
  aux sous-groupes :
  $$
  G_1\,=\,G_\rho,
  \qquad
  G_2\,=\,K,
  \qquad
  G_1\cap G_2 =G_\rho \cap K= K_\rho,
  \quad .
  $$
  On obtient :
  $$
  \forall \nu\in\hat{G_\rho},
  \qquad
  {[\Ind_{G_\rho}^G \, \nu \,]}_{|K}
  \,= \, 
  \Ind_{K_\rho}^K (\nu_{|K_\rho}) 
  \quad.
  $$
  Nous appliquons alors le lemme~\ref{lem_thmnbentr} 
  aux groupes compacts 
  $K_\rho\subset K$ 
  et \`a la repr\'esentation $\gamma = \nu_{|K_\rho}$.
\end{proof}

Fixons $(\Hc^\nu,\nu) \in \tilde{G}_\rho$.
D'apr\`es le lemme~\ref{lem_tildeGrho},
le sous-espace vectoriel de $\Hc^\nu$ 
des vecteurs $K$-invariants est une droite 
$\Cb \vec{u}_\nu$,
$\vec{u}=\vec{u}_\nu$ \'etant l'un de ses deux vecteurs unitaires.  
On cherche \`a en d\'eduire un vecteur $K$-fixe
de la repr\'esentation~$  (\Hc^\Pi,\Pi)=  \Ind_{G_\rho}^G \, \nu$.

L'espace  $\Hc^\Pi$ est l'ensemble des
fonctions $f:G\rightarrow \Hc^\nu$ telles que :
\begin{eqnarray*}
  &1.&  \forall g_\rho \in G_\rho, g\in G \quad 
  f(g_\rho g)\,=\, \nu (g_\rho) f(g) 
  \quad ,\\
  &2.& \dot{g}  \rightarrow  \nd{f(g)}_{\Hc^\nu}\in L^2(G/G_\rho)
  \quad.  
\end{eqnarray*}
Faisons un petit apart\'e sur les mesures choisies :
\begin{itemize}
\item  sur $G=K\triangleleft N$ : la mesure de Haar $dg=dkdn$,
\item sur $G_\rho=K_\rho\triangleleft N$ :
  la mesure de Haar  $d\dot{g}=dk_\rho dn$, 
  o\`u $dk_\rho$ est la mesure de Haar normalis\'ee sur le groupe compact $K_\rho$;
\item sur $G/G_\rho\sim K/K_\rho$ :
  la mesure $d\dot{k}$
  identifi\'ee \`a la mesure de masse 1  
  sur $K/K_\rho$ invariante par translation sous $K$. 
\end{itemize}
La repr\'esentation $\Pi$ est donn\'ee par :
$$
\forall g,g'\in G,\;
f\in \Hc^\Pi
\qquad
\Pi(g).f(g')\,=\, f(g'g)
\quad .
$$

\begin{lem}
  Soit la fonction  $f$ sur $G$ donn\'ee par :
  $f(k,n)=\nu(I,n).\vec{u}$ pour $(k,n)\in G$.\\
  Le vecteur $f_{\vec{u}_\nu}=f\in \Hc^\Pi$ est $K$-invariant et unitaire.
\end{lem}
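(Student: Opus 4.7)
The plan is to verify four things for the function $f(k,n):=\nu(I,n).\vec{u}_\nu$: the $G_\rho$-equivariance condition defining $\Hc^\Pi$, the square-integrability condition, the unitarity $\nd{f}_{\Hc^\Pi}=1$, and the $K$-invariance $\Pi(k_0).f=f$ for all $k_0\in K$. The last three are essentially immediate once the first is established, so the substantive step is the equivariance, and the only real input is the $K_\rho$-invariance of $\vec{u}_\nu$ together with the semi-direct product law. The main (minor) subtlety to keep straight is the convention for the product in $G=K\triangleleft N$: writing $(k_1,n_1)(k_2,n_2)=(k_1k_2,\,n_1\,k_1.n_2)$, one must decompose group elements carefully to exploit unitarity.

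First I would check equivariance. Fix $g_\rho=(k_\rho,n_\rho)\in G_\rho=K_\rho\triangleleft N$ and $g=(k,n)\in G$. Then $g_\rho g=(k_\rho k,\; n_\rho\,(k_\rho.n))$, so $f(g_\rho g)=\nu(I,\,n_\rho(k_\rho.n)).\vec{u}_\nu$. On the other hand, using the factorisation $(k_\rho,n_\rho)=(I,n_\rho)(k_\rho,I)$ and the conjugation identity $(k_\rho,I)(I,n)=(k_\rho,k_\rho.n)=(I,k_\rho.n)(k_\rho,I)$, one computes
$$
\nu(g_\rho)\,f(g)
\,=\,
\nu(I,n_\rho)\,\nu(I,k_\rho.n)\,\nu(k_\rho,I).\vec{u}_\nu
\,=\,
\nu(I,\,n_\rho(k_\rho.n)).\vec{u}_\nu,
$$
where the last equality uses precisely that $\vec{u}_\nu$ is $K_\rho$-fixed, i.e.\ $\nu(k_\rho,I).\vec{u}_\nu=\vec{u}_\nu$. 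This matches $f(g_\rho g)$, giving the equivariance. Hence $f\in\Hc^\Pi$ in the algebraic sense.

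Next I would verify integrability and compute the norm. Since $\nu$ is unitary, $\nd{f(k,n)}_{\Hc^\nu}=\nd{\nu(I,n).\vec{u}_\nu}_{\Hc^\nu}=\nd{\vec{u}_\nu}_{\Hc^\nu}=1$ for every $(k,n)\in G$. This constant function passes to the quotient $G/G_\rho\simeq K/K_\rho$, on which the chosen invariant measure has total mass one, so $\dot g\mapsto\nd{f(g)}_{\Hc^\nu}$ is in $L^2(G/G_\rho)$ and moreover
$$
\nd{f}_{\Hc^\Pi}^2
\,=\,
\int_{K/K_\rho} 1\,d\dot k
\,=\,1.
$$

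Finally, $K$-invariance: for $k_0\in K$ and $g=(k,n)\in G$ one has $g\,(k_0,I)=(kk_0,n)$, so
$$
\Pi(k_0).f\,(k,n)
\,=\,
f(kk_0,n)
\,=\,
\nu(I,n).\vec{u}_\nu
\,=\,
f(k,n),
$$
since the definition of $f$ does not depend on the $K$-coordinate. Thus $\Pi(k_0).f=f$ for every $k_0\in K$, which together with the previous steps completes the verification that $f_{\vec{u}_\nu}:=f$ is a $K$-invariant unit vector of $\Hc^\Pi$.
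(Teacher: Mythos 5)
Your proof is correct and follows essentially the same route as the paper's: the equivariance under $G_\rho$ is reduced to the $K_\rho$-invariance of $\vec{u}_\nu$ via the semi-direct product law (your factorisation $(k_\rho,n_\rho)=(I,n_\rho)(k_\rho,I)$ together with the conjugation identity is just a rewriting of the paper's identity $(I,\,n_\rho\,k_\rho.n)=(k_\rho,n_\rho)(I,n)(k_\rho^{-1},0)$), and the $K$-invariance and the norm computation are identical. Your explicit check of the $L^2$ condition on $G/G_\rho$ is a small welcome addition that the paper leaves implicit.
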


\begin{proof}
  Montrons que la fonction $f$ est dans l'espace $\Hc^\Pi$.\\
  Pour $g=(k,n)\in G$ 
  et $g_\rho =(k_\rho ,n_\rho)\in G_\rho$, 
  on a 
  $g_\rho \, g= (k_\rho \, k,n_\rho \, k_\rho .n)$
  et donc par d\'efinition de $f$ :
  $f(g_\rho \, g)=\nu(I,n_\rho \, k_\rho .n).\vec{u}$;
  or $(I,n_\rho \, k_\rho .n)=(k_\rho ,n_\rho) \,(I,n)\, (k_\rho^{-1},0)$.
  Comme $\nu$ est un morphisme, on a :
  \begin{eqnarray*}
    f(g_\rho \, g)
    &=& 
    \nu\left((k_\rho ,n_\rho) \,(I,n)\, (k_\rho^{-1},0) \right).\vec{u}
    \,=\,
    \nu (k_\rho ,n_\rho) \,\nu (I,n) \,\nu (k_\rho^{-1},0).\vec{u}\\
    &=&
    \nu (k_\rho ,n_\rho) \,\nu (I,n).\vec{u}
  \end{eqnarray*}
  car $\vec{u}\in\Hc^\nu$ est invariant sous $K_\rho$.
  On obtient donc $f(g_\rho \, g)=\nu (g_\rho). f(g)$,
  d'o\`u $f\in\Hc^\Pi$.

  Montrons que $f$ est un vecteur $K$-invariant.\\
  Pour $g=(k',n')\in G$ et $k\in K$, on a:
  $$
  \Pi(k,0).f(g')
  \,=\,
  f\left( g'\, (k,0) \right)
  \,=\,
  f(k'k,n')
  =
  \nu(I,n').\vec{u}
  \,=\,
  f(g')\quad ;
  $$
  il est aussi unitaire :    
  \begin{eqnarray*}
    \nd{f}^2
    &=&
    \int_{G/G_\rho}
    \nd{f(g)}^2_{\Hc^\nu} d\dot{g}
    \,=\,
    \int_{K/K_\rho}
    \nd{f(k,0)}^2_{\Hc^\nu} d\dot{k}\\
    &=&\int_{K/K_\rho}
    \nd{\nu(I,0).\vec{u}}^2_{\Hc^\nu}d\dot{k}=
    \int_{K/K_\rho}
    1 d\dot{k}
    \,=\,
    1 \quad ;
  \end{eqnarray*}
  car $d\dot{k}$ est de masse 1.
\end{proof}

On obtient donc la fonction sph\'erique, 
comme fonction de type positif
associ\'ee \`a la repr\'esentation $\Pi$, 
que l'on note
$\phi^\nu$ : 
$$
\phi^\nu(g)
\,=\,
{\big<\Pi(g).f,f\big>}_{\Hc^\Pi}
\,=\,
\int_{G/G_\rho}
{\big< \Pi(g).f(g')\,,\,f(g') \big>}_{\Hc^\nu} d\dot{g'}
\quad .
$$
Or d'apr\`es les expressions de $\Pi$ et $f$, 
pour $g=(k,n),g'=(k',n')\in G$,
on a :
\begin{eqnarray*}
  \Pi(g).f(g')
  &=&
  f(g'\,g)
  \,=\,
  f(k'\,k,n'\, k'.n)\\
  &=&
  \nu(I,n'\, k'.n).\vec{u}
  \,=\, \nu(I,n')\nu(I,k'.n).\vec{u}
  \quad,  
\end{eqnarray*}
puis :
\begin{eqnarray*}
  {\big< \Pi(g).f(g'),f(g') \big>}_{\Hc^\nu}
  &=&
  {\big<\nu(I,n')\nu(I,k'.n).\vec{u}\,,\,
    \nu(I,n')\vec{u} \big>}_{\Hc^\nu}\\
  &=&
  {\big<\nu(I,k'.n).\vec{u}\,,\,
    \vec{u} \big>}_{\Hc^\nu}
  \quad . \\
\end{eqnarray*}
On obtient donc  l'expression~(\ref{fcnsphnu}) :
$$
\phi^\nu(g)
\,=\,
\int_{K/K_\rho}
{\big<\nu(I,k'.n).\vec{u}\,,\,
  \vec{u} \big>}_{\Hc^\nu}
d\dot{k'}
\,=\,
\int_{K}
{\big<\nu(I,k.n).\vec{u}\,,\,
  \vec{u} \big>}_{\Hc^\nu} dk \quad ,
$$
d'apr\`es le choix de la mesure sur l'espace homog\`ene 
$K/K_\rho$ et la $K$-invariance de $\vec{u}$.

Ceci ach\`eve la d\'emonstration 
du th\'eor\`eme \ref{thm_tildeGrho}.a).

Maintenant, 
d\'emontrons le th\'eor\`eme \ref{thm_tildeGrho}.b).
Lorsque $\rho$ parcourt 
un ensemble de repr\'esentants de $\hat{N}/G$ 
et lorsque $\nu$ parcourt 
un ensemble de repr\'esentants de $\tilde{G}_\rho$, 
on obtient toutes les repr\'esentations irr\'eductibles
$\Pi=\Ind_{G_\rho}^G \, \nu$ qui ont une droite $K$-fixe 
d'apr\`es  le th\'eor\`eme~\ref{thm_mackey};
d'apr\`es les th\'eor\`emes~\ref{thm_rep_ssesp} 
et~\ref{thm_fcnsph_rep},
on obtient donc toutes les fonctions sph\'eriques born\'ees
en consid\'erant les fonctions sph\'eriques de type positif
associ\'ees \`a $\Pi$.

\subsection{Description de $\Nc^*/G$}
\label{subsecrep}

Nous d\'emontrons ici la proposition~\ref{prop_Nc^*/G}.
On garde les notations de la sous-section~\ref{subsec_kirillov}.
\begin{prop}
  [Coad de \mathversion{bold}{$G$}]
  \label{prop_coad}
  Soit $g=(k,n)\in G$ avec $n=\exp(X+A)\in N$.
  On a :
  $$
  \forall f=X^*+A^*\in \Nc^*,
  \qquad
  \Coad.g(f)
  \,=\,
  k.X^*+k.A^*-(k.A^*).X
  \quad .
  $$
\end{prop}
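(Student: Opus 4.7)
The proof will be a direct computation by combining the already-established coadjoint formula for $N$ (given in subsection~\ref{subsec_kirillov}, namely $\Coad.n(X^*+A^*) = X^* + A^* - A^*.X$) with the dual action of $K$ on $\Nc^*$, using the semi-direct product decomposition to glue the two pieces.

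The plan is to first decompose the element $g = (k,n) \in K\triangleleft N$ as a product of a pure $N$-part and a pure $K$-part. A quick check using the semi-direct law $(k_1,h_1)(k_2,h_2) = (k_1 k_2, h_1\, k_1.h_2)$ shows
$$
(1_K, n)\,(k,1_N)
\,=\,
(k,\, n\cdot 1_K.1_N)
\,=\,
(k,n)\quad,
$$
so $g = n \cdot k$ with the usual identifications. Since $\Ad$ is a homomorphism and $\Coad$ its contragredient, one obtains $\Coad(g) = \Coad(n)\circ \Coad(k)$, and it suffices to compute each factor.

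Next I will compute $\Coad(k)$ for $k\in K$. Since $K$ acts by automorphisms on $N$, the adjoint action of $(k, 1_N)$ on $\Nc$ is exactly the given action $V \mapsto k.V$, which preserves the splitting $\Nc=\Vc\oplus\Zc$. By duality, $\Coad(k)f$ is characterized by $\langle\Coad(k)f,V\rangle = \langle f, k^{-1}.V\rangle$. Using that $k$ acts on $\Vc$ by an orthogonal transformation and on $\Zc$ by $A'\mapsto kA'k^{-1}$, the invariance of the euclidean product on $\Vc$ and the cyclicity of the trace (which gives $\langle A^*, k^{-1}A'k\rangle = \langle kA^*k^{-1},A'\rangle = \langle k.A^*,A'\rangle$) immediately yield
$$
\Coad(k)(X^*+A^*)\,=\,k.X^*+k.A^*\quad.
$$

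Finally I will compose: applying the known formula for $\Coad(n)$ to the element $k.X^* + k.A^* \in \Nc^*$ gives
$$
\Coad(n)\bigl(k.X^*+k.A^*\bigr)
\,=\,
k.X^*+k.A^*-(k.A^*).X
\quad,
$$
which is precisely the claimed expression for $\Coad(g)(X^*+A^*)$. There is no genuine obstacle: the only point requiring a touch of care is the identification of the order of the factors in the semi-direct product (writing $g = n\cdot k$ rather than $k\cdot n$), which is what ensures the projection $(k.A^*).X$ involves the rotated form $k.A^*$ rather than $A^*$ itself; a wrong ordering would produce the formula $k.X^* + k.A^* - k.(A^*.X)$, which differs from the correct one because $k$ does not commute with the evaluation map $A^*\mapsto A^*.X$ unless one also rotates $X$.
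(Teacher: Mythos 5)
Your proof is correct and follows essentially the same route as the paper's: the paper's computation $g\exp(t(X'+A'))g^{-1}=(I,\,n\exp(t(k.X'+k.A'))n^{-1})$ is precisely your factorisation $g=n\cdot k$ read on the adjoint side, so both arguments reduce $\Coad.g$ to the composition of the $K$-part (which dualises to $f\mapsto k.X^*+k.A^*$) with the $N$-part. The only organisational difference is that you reuse the coadjoint formula for $N$ from subsection~\ref{subsec_kirillov} as a black box, whereas the paper re-derives the dualisation of the bracket term via $<A^*,[k^{-1}.X,k^{-1}.X']>=<(k.A^*).X,X'>$; your closing remark on the order of the factors is also well taken, since $(k,1)(1,n)=(k,k.n)\neq(k,n)$ and $(k.A^*).X=k.\bigl(A^*.(k^{-1}.X)\bigr)$ differs from $k.(A^*.X)$ in general.
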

\begin{proof}[de la proposition~\ref{prop_coad}]
  Gardons les notations de la proposition.
  Pour tout $X'+A'\in\Nc$ on a :
  \begin{eqnarray*}
    &&g\exp( t(X'+A') g^{-1})
    \,=\,
    (k,n k.\exp(t (X'+A')))(k^{-1},k^{-1}n^{-1})\\
    &&\quad=\,
    (I, n k.\exp (t (X'+A'))\,  k k^{-1}n^{-1})
    \,=\,
    (I, n \exp(t (k.X'+k.A')) n^{-1})\quad.
  \end{eqnarray*}
  On en d\'eduit pour $X'+A'\in\Nc$ :
  $$
  \Ad.g(X'+A') 
  \,=\,
  \Ad.n (k.X'+k.A')
  \,=\,
  k.X'+k.A'
  +[X, k.X']
  \quad .
  $$
  puis :
  \begin{eqnarray*}
    &&\Coad.g(f)(X'+A') 
    \,=\,
    f\left(\Ad.g^{-1}(X'+A') \right)\\
    &&\quad =\,
    <X^*,k^{-1}.X'>
    +
    <A^*,k^{-1}.A'>
    -
    <A^*,[k^{-1}.X,k^{-1}.X']>
    \quad .
  \end{eqnarray*}
  Or on a par d\'efinition de l'action de $K$ sur $\Nc^*$ :
  $$
  <X^*,k^{-1}.X'> 
  \,=\,     
  <k.X^*,X'>
  \quad\mbox{et}\quad 
  <A^*,k^{-1}.A'>
  \,=\,     
  <k.A^*,A'>
  \quad;
  $$
  et aussi :
  \begin{eqnarray*}
    <A^*,[k^{-1}.X,k^{-1}.X']>
    &=&
    <A^*,k^{-1}.[X,X']>
    \,=\,
    <k.A^*,[X,X']>\\
    &=&
    <k.A^*.X,X'>\quad.
  \end{eqnarray*}
\end{proof}

Cette proposition nous permet 
de d\'eterminer toutes les orbites de $\Nc^*/G$
dans les deux cas $K=O(v)$ et $SO(v)$.

Fixons une orbite $O$.
Toutes les formes lin\'eaires
$f=X^*_f+A^*_f\in O$ sont telles que 
les matrices antisym\'etriques $A^*_f$ sont
toutes orthogonalement semblables
et nous leur associons
$\Lambda^*=(\lambda^*_1,\ldots,\lambda^*_{v'})\in\overline{\Lc}$
(proposition~\ref{prop_Ac_v/O(v)}).

On note $v_0$ la dimension des sous espaces isotropes maximals 
pour $\omega_{A^*_f,r}$ o\`u $A^*_f\in O$;
c'est aussi le nombre de $\lambda^*_i$ non nuls.

Pour chaque $f=X^*_f+A^*_f\in O$, nous choisissons alors :
\begin{enumerate}
\item Distingons les cas :
  \begin{itemize}
  \item si $K=O(v)$,
    $k\in K$  tel que la matrice antism\'etrique  
    $k.A_f^*=D_2(\Lambda^*)$ soit diagonalis\'ee par bloc 2-2; 
  \item si $K=SO(v)$, $k\in K$ et $\epsilon=\pm1$ tels que la matrice antism\'etrique  
    $k.A_f^*=D_2^\epsilon(\Lambda^*)$ soit diagonalis\'ee par bloc 2-2
    avec $\Lambda^*\in\overline{\Lc}$
    (proposition~\ref{prop_Ac_v/SO(v)}); 
  \end{itemize}
\item $X\in \Vc$ est
  tel que $(k.A_f^*).X\in \Vc^*$ soit
  \'egale \`a la projection orthogonale $X^*_0$ de $k.X_f^*\in \Vc^*$ 
  sur $\Im k.A^*_f=\Im D_2(\Lambda^*)$;
  en particulier, $X^*_0=0$ si $\Im  D_2(\Lambda^*)$, 
  c'est-\`a-dire 
  si $v=2v'$ et $\Lambda^*=(\lambda^*_1, \ldots, \lambda^*_{v'})$,
  avec aucun $\lambda_i$ nul;
\item $k'\in K$ qui laisse stable $\Im D_2(\Lambda^*)$,
  donc \'egalement ${(\Im D_2(\Lambda^*))}^\perp$,
  tel que $k'.X^*_0=x^*X^*_v, x^*\in \Rb$.\\
  Si $K=O(v)$ ou $\dim {(\Im D_2(\Lambda^*))}^\perp>1$, 
  on peut supposer $x^*=r^*\geq0$.\\
  Si $x^*<0$, $K=SO(v)$ et $\dim {(\Im D_2(\Lambda^*))}^\perp=1$ c'est-\`a-dire $v=2v_0+1$,
  on pose $r^*=-x^*$ et :
  $$
  k''
  \,=\,
  \left[
    \begin{array}{c|c}
      \Id_{2v'-2}&0\\
      \hline
      0&-\Id_2
    \end{array}
  \right]\quad.
  $$
\end{enumerate}
On obtient :
\begin{itemize}
\item si $K=O(v)$, $(k'k, \exp X).f= r^*X^*_v+D_2(\Lambda^*)$,
\item  si $K=SO(v)$ et $2v_0+1<v$, $(k'k, \exp X).f= r^*X^*_v+D_2^\epsilon(\Lambda^*)$,
\item  si $K=SO(v)$ et $2v_0+1=v$, $(k'k, \exp X).f= r^*X^*_v+D_2^{-\epsilon}(\Lambda^*)$,
\end{itemize}

La proposition~\ref{prop_Nc^*/G}
est donc d\'emontr\'ee.

\subsection{Stablisateur de $\rho$}
\label{subsec_stab}

Le but de cette sous-section 
est de d\'ecrire le stabilisateur 
$K_\rho$ d'une repr\'esentation  $\rho$ 
associ\'ee \`a $f=r^*X_v+D^\epsilon_2(\Lambda^*)\in \Nc^*$.
Rappelons (proposition~\ref{prop_stab})
que c'est le stabilisateur l'orbite coadjointe dans $K$ de $f$.

\begin{prop}[\mathversion{bold}{$K_\rho$}]
  \label{prop_Krho}
  \index{Notation!Groupe!$K_\rho$} 
  \index{Notation!Groupe!$K_1,K_2$} 

  Soit $(r^*,\Lambda^*)\in\Qc$. 
  $v_0$ d\'esigne le nombre de $\lambda^*_i$ non nuls, 
  o\`u $\Lambda^*=(\lambda^*_1,\ldots,\lambda^*_{v'})$; 
  On note 
  $\tilde{\Lambda}^*=(\lambda^*_1,\ldots,\lambda^*_{v_0})\in \Rb^{v_0}$.
  Soit  $\rho$ une repr\'esentation 
  associ\'ee \`a $f=r^*X_v+D^\epsilon_2(\Lambda^*)\in \Nc^*$,
  o\`u $\epsilon=\emptyset$ si $K=O(v)$,
  et $\epsilon=\pm1$ si $K=SO(v)$.

  Si $\Lambda^*=0$, alors le groupe $K_\rho$ est le sous-groupe de
  $O(v)$ qui stabilise $r^*X^*_v$.

  Si $\Lambda^*\not=0$, 
  alors le groupe $K_\rho$ est le produit direct $K_1\times K_2$, 
  o\`u 
  \begin{enumerate}
  \item le groupe $K_1$ est le groupe form\'e des \'el\'ements $k_1\in SO(v)$ de la forme :
    $$
    k_1 \,=\,
    \left[ \begin{array}{cc}
        \tilde{k}_1&0\\
        0& \Id
      \end{array}\right]
    \quad;
    $$
    tels que $\tilde{k}_1\in SO(2v_0)$ commute (matriciellement)
    avec $D_2(\tilde{\Lambda}^*)\in \Ac_{2v_0}$; 
  \item le groupe $K_2$ est le groupe form\'e des \'el\'ements $k_2\in K$ de la forme :
    $$
    k_2 \,=\,
    \left[ \begin{array}{cc}
        \Id&0\\
        0& \tilde{k}_2
      \end{array}\right]
    \quad,
    $$
    tels que $\tilde{k}_2\in O(v-2v_0-1)$ laisse stable $r^*X^*_v$.
  \end{enumerate}
\end{prop}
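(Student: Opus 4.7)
Le point de d\'epart est la proposition~\ref{prop_stab}, qui affirme
$K_\rho=\{k\in K:\, k.f\in N.f\}$ pour $f=r^*X_v^*+D_2^\epsilon(\Lambda^*)$.
La proposition~\ref{prop_coad} (prise pour $k=\Id$) permet d'expliciter l'orbite coadjointe de $N$ :
en notant $n=\exp(X+A)$,
$$
\Coad.n(f)\,=\,r^*X_v^*+D_2^\epsilon(\Lambda^*)-D_2^\epsilon(\Lambda^*).X \quad,
$$
d'o\`u $N.f=r^*X_v^*+D_2^\epsilon(\Lambda^*)+\Im D_2^\epsilon(\Lambda^*)$, o\`u $\Im D_2^\epsilon(\Lambda^*)\subset\Vc^*$.

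Comme $K$ agit par automorphismes pr\'eservant la d\'ecomposition $\Nc^*=\Vc^*\oplus\Zc^*$, et comme $D_2^\epsilon(\Lambda^*)\in\Zc^*$ tandis que $r^*X_v^*\in\Vc^*$ et $\Im D_2^\epsilon(\Lambda^*)\subset\Vc^*$, la condition $k.f\in N.f$ se d\'ecouple en deux conditions ind\'ependantes :
\begin{description}
\item[(a)] $k.D_2^\epsilon(\Lambda^*)=D_2^\epsilon(\Lambda^*)$, c'est-\`a-dire $k$ commute \`a la matrice antisym\'etrique $D_2^\epsilon(\Lambda^*)$;
\item[(b)] $r^*\bigl(k.X_v^*-X_v^*\bigr)\in\Im D_2^\epsilon(\Lambda^*)$.
\end{description}
Lorsque $\Lambda^*=0$, la condition (a) est triviale et (b) devient $r^*(k.X_v^*-X_v^*)=0$, ce qui donne bien que $K_\rho$ est le stabilisateur de $r^*X_v^*$ dans $O(v)$ (ou $SO(v)$).

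Dans le cas $\Lambda^*\not=0$, on analyse (a). Les sous-espaces $\Im D_2^\epsilon(\Lambda^*)$ et $\ker D_2^\epsilon(\Lambda^*)$ sont les deux facteurs d'une d\'ecomposition orthogonale de $\Vc$, stables sous tout $k$ qui commute \`a $D_2^\epsilon(\Lambda^*)$ (standard pour un endomorphisme normal). Le sous-espace $\Im D_2^\epsilon(\Lambda^*)$ est engendr\'e par les $2v_0$ premiers vecteurs de la base canonique, et $X_v^*$ appartient \`a $\ker D_2^\epsilon(\Lambda^*)$. Tout $k\in K_\rho$ s'\'ecrit donc sous forme bloc-diagonale $k=\tilde{k}_1\oplus\tilde{k}_2$ avec $\tilde{k}_1\in O(2v_0)$ et $\tilde{k}_2\in O(v-2v_0)$. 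La restriction $\tilde{k}_1$ commute au bloc inversible $D_2(\tilde{\Lambda}^*)$, dont le commutant dans $O(2v_0)$ est le produit $U(m_1)\times\cdots\times U(m_{v_1})$ (l'argument standard : commuter avec chaque bloc $\lambda_j J_{m_j}$ \'equivaut \`a commuter avec la structure complexe associ\'ee, puis les valeurs propres distinctes $\lambda_j>0$ imposent la pr\'eservation de chaque sous-espace propre). Comme une matrice unitaire $m\times m$ vue comme matrice r\'eelle $2m\times 2m$ a pour d\'eterminant $|\det_{\Cb}|^2=+1$, on obtient automatiquement $\tilde{k}_1\in SO(2v_0)$.

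Reste la condition (b). Puisque $X_v^*\in\ker D_2^\epsilon(\Lambda^*)$ et que $\tilde{k}_2$ pr\'eserve $\ker D_2^\epsilon(\Lambda^*)$, la diff\'erence $k.X_v^*-X_v^*$ appartient \`a $\ker D_2^\epsilon(\Lambda^*)$. Or, par d\'efinition, $\ker D_2^\epsilon(\Lambda^*)\cap \Im D_2^\epsilon(\Lambda^*)=\{0\}$ (sous-espaces orthogonaux suppl\'ementaires dans $\Vc^*$), donc (b) force $r^*(k.X_v^*-X_v^*)=0$. Si $r^*\not=0$, $\tilde{k}_2$ doit fixer $X_v^*$, donc agir par une transformation de $O(v-2v_0-1)$ sur l'hyperplan orthogonal de $X_v^*$ dans $\ker D_2^\epsilon(\Lambda^*)$, conform\'ement \`a la description de $K_2$. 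La factorisation $K_\rho=K_1\times K_2$ en d\'ecoule puisque les deux blocs sont ind\'ependants. Le point principal \`a d\'etailler sera l'identification du commutant de $D_2(\tilde{\Lambda}^*)$ comme produit de groupes unitaires (et donc dans $SO$) \`a l'aide des multiplicit\'es $m_j$; les autres v\'erifications sont formelles.
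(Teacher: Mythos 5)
Votre d\'emonstration est correcte et suit essentiellement la m\^eme route que celle du texte : r\'eduction via la proposition~\ref{prop_stab} et le calcul de l'orbite coadjointe (proposition~\ref{prop_coad}) \`a la double condition \og $k$ commute avec $D_2^\epsilon(\Lambda^*)$ et fixe $r^*X_v^*$ \fg{} (c'est le contenu du lemme~\ref{lem_Krho} du texte), puis d\'ecomposition par blocs selon $\Im$ et $\ker$ de la matrice antisym\'etrique et identification du commutant de $D_2(\tilde{\Lambda}^*)$ dans $O(2v_0)$ avec un produit de groupes unitaires. La seule variante est la fa\c con d'obtenir $\det\tilde{k}_1=1$ (via $\det_{\Rb}=\nn{\det_{\Cb}}^2=1$ pour une matrice unitaire, l\`a o\`u le texte invoque la proposition~\ref{prop_matrice_ortho_commutant_J} dont la preuve passe par la parit\'e de la dimension du sous-espace propre pour $-1$), ce qui revient au m\^eme une fois l'identification unitaire acquise.
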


Dans la d\'emonstration de cette proposition,
on reprend les notations de son \'enonc\'e 
et on convient de noter $A^*=D_2^\epsilon (\Lambda^*)$
et $X^*=r^*X^*_v$.

On aura besoin du lemme suivant :

\begin{lem}
  \label{lem_Krho}
  \index{Notation!Groupe!$K_\rho$} 
  $$
  K_\rho
  \,=\, \{ k\in K\; :\; 
  kA^*=A^*k
  \quad\mbox{et}\quad
  kX^*=X^* \}
  \quad.
  $$  
\end{lem}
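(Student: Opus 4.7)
The plan is to start from the characterization of $K_\rho$ given by Proposition~\ref{prop_stab}, namely
$$
K_\rho \,=\, \{k \in K \,:\, k.f \in N.f\},
$$
and to translate the condition $k.f \in N.f$ into the two stated equalities by examining separately the $\Vc^*$- and $\Zc^*$-components. Using Proposition~\ref{prop_coad} with $g = (I, \exp(X+A)) \in N$, the coadjoint orbit of $f = X^* + A^*$ under $N$ is easily seen to be the affine subspace
$$
N.f \,=\, \{X^* - A^*.X \,+\, A^* \,:\, X \in \Vc\} \,=\, (X^* + \Im A^*) + A^*,
$$
since $\{A^*.X : X \in \Vc\} = \Im A^*$. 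Again by Proposition~\ref{prop_coad}, for $k \in K \subset G$ we have $k.f = k.X^* + k.A^*$, so $k.f \in N.f$ is equivalent to the conjunction of $k.A^* = A^*$ (equality in $\Zc^*$) and $k.X^* - X^* \in \Im A^*$ (equality in $\Vc^*$ modulo the affine part).

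The first condition $k.A^* = A^*$ is, by definition of the $K$-action on $\Zc \simeq \Zc^*$, the matrix identity $kA^*k^{-1} = A^*$, i.e. $kA^* = A^*k$. It remains to upgrade the second condition $k.X^* - X^* \in \Im A^*$ into the stronger equality $k.X^* = X^*$. Here I use the crucial feature of the privileged representative chosen in Subsection~\ref{subsecrep}: by construction, $X^* = r^* X_v^* \in \ker A^*$ (indeed, if $2v_0 < v$ then $X_v^*$ lies in the zero block of $D_2^\epsilon(\Lambda^*)$, and if $2v_0 = v$ then $r^* = 0$). Since $A^*$ is skew-symmetric, $\ker A^*$ and $\Im A^*$ are orthogonal complements in $(\Vc,<,>)$. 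Any $k \in O(v)$ that commutes with $A^*$ preserves $\ker A^*$ (if $A^*v = 0$, then $A^*(k.v) = k.A^*v = 0$), hence under the first condition we obtain $k.X^* \in \ker A^*$ and therefore $k.X^* - X^* \in \ker A^*$.

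Combining with the second condition $k.X^* - X^* \in \Im A^*$, and using $\ker A^* \cap \Im A^* = \{0\}$, we conclude $k.X^* = X^*$. Conversely, if both $kA^* = A^*k$ and $k.X^* = X^*$ hold, the two conditions above are trivially satisfied, and so $k \in K_\rho$. This gives the double inclusion and proves the lemma. The only mildly subtle step is keeping track that the representative $f$ was specifically chosen so that $X^* \in \ker A^*$, which allows the passage from ``$k.X^* - X^* \in \Im A^*$'' to ``$k.X^* = X^*$''; everything else is a direct unfolding of Propositions~\ref{prop_stab} and~\ref{prop_coad}.
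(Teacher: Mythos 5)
Your proof is correct and follows essentially the same route as the paper: decompose the condition $\Coad.k(f)\in N.f=X^*+\Im A^*+A^*$ into its $\Zc^*$-part ($k.A^*=A^*$, i.e. $kA^*=A^*k$) and its $\Vc^*$-part ($k.X^*-X^*\in\Im A^*$), then use the fact that $X^*\in\ker A^*={(\Im A^*)}^\perp$ and that any $k$ commuting with $A^*$ preserves this subspace, so that $k.X^*-X^*$ lies in $\ker A^*\cap\Im A^*=\{0\}$. The paper phrases the last step with ${(\Im A^*)}^\perp$ where you use $\ker A^*$, but for a skew-symmetric $A^*$ these coincide, so the arguments are identical in substance.
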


\begin{proof}[du lemme~\ref{lem_Krho}]
  D'apr\`es la proposition~\ref{prop_coad},
  pour $ f=X^*+A^*\in \Nc^*$, on~a~:
  $$
  \forall k\in K \quad
  \Coad.k(f)=k.X^*+k.A^*
  \quad\mbox{et}\quad
  N.f=X^*+\Im A^*
  \quad.      
  $$

  Fixons momentan\'ement $k\in K_\rho$.
  On a $\Coad.k(f)\in N.f$.
  Comme la d\'ecomposition $\Nc^*=\Vc^*\oplus \Zc^*$ 
  est en somme directe, on a :
  $$
  \mbox{(1)}\;
  k.A^*=A^*
  \qquad\mbox{et}\qquad
  \mbox{(2)}\;
  k.X^*\in X^*+\Im A^*
  \quad.
  $$
  La condition (1) implique 
  que les matrices $k$ et $A^*$ commutent. 
  Donc en particulier, 
  comme $X^*\in {\Im A^*}^\perp$, 
  $kX^*$ puis $kX^* - X^*$ sont aussi dans ${\Im A^*}^\perp$. 
  Or la condition (2)
  implique $kX^* - X^* \in \Im A^*$ :
  ce vecteur est donc nul.
  Ainsi $k$ commute avec $A^*$ et stabilise $X^*$.

  R\'eciproquement, si $k$ commute avec $A^*$ et stabilise $X^*$,
  alors on~a $\Coad.k(f)\in N.f$ et $k\in K_\rho$.  
\end{proof}

\begin{proof}[de la proposition~\ref{prop_Krho}]
  Lorsque $\Lambda^*=0$, $K_\rho$ est le fixateur dans $K$ 
  du vecteur $X^*\in \Vc^*\sim \Rb^v$.
  La premi\`ere partie de la proposition~\ref{prop_Krho} 
  est donc d\'emontr\'ee.

  D\'emontrons la seconde partie.
  On adopte les notations de  la section~\ref{sec_app_matrice_antisym},
  ainsi que $\tilde{\epsilon}=\epsilon$ si $K=SO(v)$ et $v_0=v'$, 
  et 1 sinon.
  Nous sommes dans le cas $\Lambda^*\not =0$.
  $A^*$ se met sous la forme:
  $$
  A^*
  \,=\,
  \left[\begin{array}{c|c}
      D_2(\tilde{\Lambda}^*)&0\\
      \hline  0&0
    \end{array}\right]
  \quad\mbox{avec}\quad
  D_2(\tilde{\Lambda}^*)
  \,=\,
  \left[\begin{array}{ccc}
      \lambda_1 J_{m_1}&0&0\\
      &\ddots&\\
      0&0&\lambda_{v_1} J^{\tilde{\epsilon}}_{m_{v_1}}
    \end{array}\right]
  \quad,
  $$
  o\`u on a not\'e  $\lambda_1>\lambda_2>\ldots>\lambda_{v_1}>0$ les
  param\`etres $\lambda^*_i$ not\'es de mani\`ere distincte,
  et $m_i$ le nombre des $\lambda^*_i=\lambda_i$.
  On convient \'egalement :
  \begin{eqnarray*}
    m_0:=m'_0:=0\; ,\qquad
    \mbox{et}\qquad
    m'_j=\sum_{i=1}^j m_i\; ,
    \quad
    j=1,\ldots v_1
    \quad.
  \end{eqnarray*}

  Soit $k\in K_\rho$. 
  L'image de $A^*$ est l'espace vectoriel 
  engendr\'e par $X_1,\ldots,X_{2v_0}$;
  le noyau de $A^*$ est l'espace vectoriel 
  engendr\'e par $X_{2v_0+1},\ldots,X_v$. 
  Comme les matrices $k$ et $A^*$ commutent,
  l'image et le noyau de $A^*$ sont stables par $k$;
  la matrice~$k$ peut donc s'\'ecrire sous la forme :
  $$
  k \,=\,
  \left[ \begin{array}{cc}
      \tilde{k}_1&0\\
      0& \tilde{k}_2
    \end{array}\right]
  \quad\mbox{avec}\; \tilde{k}_1\in O(2v_0)
  \;\mbox{et}\;
  \tilde{k}_2\in O(v-2v_0-1)
  \quad;
  $$
  de plus 
  la matrice  $\tilde{k}_1$ commute avec $D_2(\tilde{\Lambda}^*)$, 
  et la matrice  $\tilde{k}_2$ stabilise le vecteur $X^*$ 
  car $k$ stabilise le vecteur $X^*\in \ker A^*$.
  Maintenant,
  les espaces propres pour $A^*$ 
  sont les espaces vectoriels engendr\'es par les vecteurs 
  $X_{2i-1},X_{2i},m'_{j-1}<i\leq m_j$ avec $j=1,\ldots v_1$.
  Comme les matrices $\tilde{k_1}$ et $D_2(\tilde{\Lambda}^*)$
  commutent,
  ces sous espace sont propres pour $\tilde{k_1}$;
  on peut donc \'ecrire $\tilde{k_1}$
  avec des blocs 
  ${[\tilde{k}_1]}_j \in O(m_j)$, 
  $i=1,\ldots ,v_1$
  sur la diagonale;
  de plus,
  vu la forme de $D_2(\tilde{\Lambda}^*)$,
  chaque bloc ${[\tilde{k}_1]}_j,$ commute avec $J_{m_j}$ pour
  $j<v_1$,
  ou avec $\tilde{\epsilon}J_{m_{v_1}}$ pour $j=v_1$.

  Appliquons la proposition~\ref{prop_matrice_ortho_commutant_J}
  \`a chaque bloc de $\tilde{k}_1$ :
  $\det {[\tilde{k}_1]}_j =1$.
  Avec la d\'ecomposition de 
  $\tilde{k}_1$ en bloc matriciel, 
  on en d\'eduit : 
  $\det\tilde{k}_1=
  \Pi_j
  \det {[\tilde{k}_1]}_j=1$.

  On peut donc \'ecrire $k$ 
  comme le produit $k=k_1k_2=k_2k_1$
  avec $k_i\in K_i,i=1,2$ 
  pour les groupes $K_1,K_2$ 
  donn\'es dans la proposition~\ref{prop_Krho}.

  R\'eciproquement, toute matrice $k\in K_\rho$ s'\'ecrit sous cette forme-l\`a.
\end{proof}

On peut davantage d\'ecrire le sous-groupe~$K_1$ :

\begin{cor}
  [Isomorphisme entre \mathversion{bold}{$K_1$} et 
  \mathversion{bold}{$K(m;v_0;v_1)$}]
  \label{cor_isom_K}
  Fixons le param\`etre $\Lambda^*\in\overline{\Lc}-\{0\}$ 
  et \'eventuellement si $K=SO(v)$, $\epsilon=\pm1$.
  On leur associe comme ci-dessus 
  les indices $v_0,v_1$ et $m=(m_1,\ldots, m_{v_1})$. 

  On d\'efinit l'application si $K=SO(v)$ et $v_0=v',\epsilon=-1$,
  \index{Notation!Isomorphisme!$\Psi_1$}
  $$ 
  \Psi_1:
  \left\{
    \begin{array}{rcl}
      K_1
      &\longrightarrow&
      K(m;v_0;v_1)\\
      k_1
      &\longmapsto&
      \left( \psi_1^{(m_1)}({[\tilde{k}_1]}_1),\ldots,
        \psi_1^{(m_{v_1-1})}({[\tilde{k}_1]}_{v_1-1}),
        \psi_1^{(m_{v_1},-1)}({[\tilde{k}_1]}_{v_1})\right)
    \end{array}\right.
  \quad ,
  $$
  sinon :
  $$ 
  \Psi_1:
  \left\{
    \begin{array}{rcl}
      K_1
      &\longrightarrow&
      K(m;v_0;v_1)\\
      k_1
      &\longmapsto&
      \left( \psi_1^{(m_1)}({[\tilde{k}_1]}_1),\ldots,
        \psi_1^{(m_{v_1-1})}({[\tilde{k}_1]}_{v_1-1}),
        \psi_1^{(m_{v_1})}({[\tilde{k}_1]}_{v_1})\right)
    \end{array}\right.
  \quad ;
  $$
  le groupe 
  $K(m;v_0;v_1)=
  U_{m_1}\times U_{m_2}\times \ldots \times U_{m_{v_1}}$ 
  a d\'ej\`a \'et\'e d\'ecrit 
  dans la sous-section~\ref{subsec_fcnsph_heis}.

  L'application $\Psi_1$ est un isomorphisme de groupe entre $K_1$ et $K(m;v_0;v_1)$.
\end{cor}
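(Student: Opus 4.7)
Le plan est de ramener le probl\`eme \`a une application bloc par bloc de la proposition~\ref{prop_matrice_ortho_commutant_J}. Rappelons que dans la preuve de la proposition~\ref{prop_Krho} que nous venons de compl\'eter, tout \'el\'ement $k_1\in K_1$ s'\'ecrit sous forme diagonale par blocs, chaque bloc ${[\tilde{k}_1]}_j \in O(2m_j)$ commutant avec $J_{m_j}$ pour $j<v_1$, et avec $\tilde{\epsilon} J_{m_{v_1}}$ pour $j=v_1$. Toute la construction exploitera cette structure diagonale, ce qui d\'ecouplera totalement les v\'erifications en $v_1$ v\'erifications ind\'ependantes.

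D'abord, je montrerai que $\Psi_1$ est bien d\'efinie \`a valeurs dans $K(m;v_0;v_1)$. Pour chaque $j=1,\ldots,v_1$, l'application $\psi_1^{(m_j)}$ (respectivement $\psi_1^{(m_{v_1},-1)}$ dans le cas particulier $K=SO(v)$, $v_0=v'$, $\epsilon=-1$) identifie, via l'isomorphisme canonique $\Rb^{2m_j}\sim \Cb^{m_j}$, les matrices orthogonales r\'eelles de taille $2m_j$ commutant avec la structure complexe consid\'er\'ee au groupe unitaire $U_{m_j}$, par le contenu de la proposition~\ref{prop_matrice_ortho_commutant_J}. Ainsi chaque composante $\psi_1^{(m_j)}({[\tilde{k}_1]}_j)$ appartient bien \`a $U_{m_j}$, et le $v_1$-uplet image est dans $K(m;v_0;v_1)=U_{m_1}\times\ldots\times U_{m_{v_1}}$.

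Ensuite, je v\'erifierai que $\Psi_1$ est un homomorphisme de groupes : la multiplication dans $K_1$ \'etant bloc par bloc, il suffit d'utiliser que chaque $\psi_1^{(m_j)}$ est d\'ej\`a un homomorphisme (cons\'equence directe du fait que l'identification $\Rb^{2m_j}\sim \Cb^{m_j}$ respecte la multiplication matricielle lorsque la commutation avec $J_{m_j}$ est satisfaite). Pour la bijectivit\'e, l'injectivit\'e se lit directement sur chaque composante, et la surjectivit\'e s'obtient en remontant un $v_1$-uplet $(u_1,\ldots,u_{v_1})\in K(m;v_0;v_1)$ en posant ${[\tilde{k}_1]}_j:=(\psi_1^{(m_j)})^{-1}(u_j)$ et en assemblant les blocs; la matrice ainsi obtenue commute automatiquement avec $D_2(\tilde{\Lambda}^*)$ gr\^ace \`a sa structure diagonale par blocs, et son d\'eterminant vaut $+1$ car chaque bloc a un d\'eterminant $+1$ d'apr\`es la proposition~\ref{prop_matrice_ortho_commutant_J} (c'est d'ailleurs exactement le m\^eme argument qui assurait $k_1\in SO(2v_0)$ dans la preuve de la proposition~\ref{prop_Krho}).

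Le point le moins routinier sera le traitement coh\'erent du cas $K=SO(v)$, $v_0=v'$, $\epsilon=-1$, o\`u le dernier bloc commute non pas avec $J_{m_{v_1}}$ mais avec $-J_{m_{v_1}}$ : il faudra v\'erifier que l'application $\psi_1^{(m_{v_1},-1)}$ fournit encore un isomorphisme de groupes avec $U_{m_{v_1}}$, essentiellement en remarquant que changer le signe de la structure complexe revient \`a conjuguer l'identification $\Rb^{2m_{v_1}}\sim \Cb^{m_{v_1}}$ par la conjugaison complexe, ce qui fournit encore un homomorphisme bijectif. Hormis ce cas sp\'ecifique, aucune difficult\'e nouvelle n'est \`a attendre, le r\'esultat \'etant une cons\'equence m\'ecanique de la proposition~\ref{prop_matrice_ortho_commutant_J} appliqu\'ee bloc par bloc.
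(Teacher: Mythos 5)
Votre preuve est correcte et suit essentiellement la m\^eme d\'emarche que celle du texte : d\'ecomposition diagonale par blocs issue de la preuve de la proposition~\ref{prop_Krho}, puis application bloc par bloc de la proposition~\ref{prop_matrice_ortho_commutant_J} (qui couvre d\'ej\`a directement le cas $\epsilon=-1$ via $\psi_1^{(m_{v_1},-1)}$), et reconstruction d'un ant\'ec\'edent pour la surjectivit\'e. Votre r\'edaction est simplement plus d\'etaill\'ee que celle du texte sur la v\'erification du caract\`ere d'homomorphisme et de la bijectivit\'e.
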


\begin{proof}[du corollaire~\ref{cor_isom_K}]
  On reprend les notations de la d\'emonstration de la proposition~\ref{prop_Krho}.
  Appliquons la proposition~\ref{prop_matrice_ortho_commutant_J}.
  Pour~$k_1\in K_1$,
  \begin{itemize}
  \item  chaque bloc ${[\tilde{k}_1]}_j, j=1,\ldots, v_1-1$ 
    commute avec $J_{m_j}$,
    et donc 
    est isomorphe par $\psi_1^{(m_j)}$ 
    \`a une matrice unitaire de taille~$m_j$;
  \item si $K=SO(v), v_0=v',\epsilon=-1$,
    le bloc ${[\tilde{k}_1]}_{v_1}$
    avec $-J_{m_{v_1}}$;
    et donc  est isomorphe par $\psi_1^{(m_{v_1},-1)}$ 
    \`a une matrice unitaire de taille~$m_j$.
  \item sinon,
    le bloc ${[\tilde{k}_1]}_{v_1}$
    avec $J_{m_{v_1}}$;
    et donc  est isomorphe par $\psi_1^{(m_{v_1})}$ 
    \`a une matrice unitaire de taille~$m_j$.
  \end{itemize}
  R\'eciproquement, on a pour $(u_1,\ldots,u_{m_{v_1}})\in
  K(m;v_0;v_1)$
  selon les cas :
  $$
  ({\psi_1^{(m_1)}}^{-1}u_1,\ldots,
  {\psi_1^{(m_{v_1},-1)}}^{-1}u_{m_{v_1}})
  \quad\mbox{ou}\quad
  ({\psi_1^{(m_1)}}^{-1}u_1,\ldots,
  {\psi_1^{(m_{v_1})}}^{-1}u_{m_{v_1}})
  \;\in\; K_1
  \quad.
  $$
  L'application $\Psi_1$ est donc un morphisme de groupe
  entre $K_1$ et $K(m;v_0;v_1)$.
\end{proof}

\begin{rem}\label{rem_compatible_complex}
  Toujours d'apr\`es la proposition~\ref{prop_matrice_ortho_commutant_J},
  l'application $\Psi_1$ est compatible avec la complexification 
  dans le sens o\`u on a selon les cas :
  $$
  \psi_c^{(v',-1)}\{ \tilde{k}_1.(x_1,y_1,\ldots,x_{v'},y_{v'})\}
  \,=\,
  \Psi_1(k_1)
  .\psi_c^{(v',-1)}(x_1,y_1,\ldots,x_{v'},y_{v'})
  \quad.,
  $$
  ou
  $$
  \psi_c^{(v_0)}\{ \tilde{k}_1.(x_1,y_1,\ldots,x_{v_0},y_{v_0})\}
  \,=\,
  \Psi_1(k_1)
  .\psi_c^{(v_0)}(x_1,y_1,\ldots,x_{v_0},y_{v_0})
  \quad.
  $$
\end{rem}

\subsection{Groupe quotient $\overline{N}=N/\ker\rho$ }
\label{subsec_N/C}

\paragraph{Expression des repr\'esentants $\rho$ consid\'er\'es.}
Nous allons maintenant donner l'expression de
$\rho_{r^*,\Lambda^*,\epsilon}$ 
associ\'ee \`a la forme lin\'eaire
$r^*X_v+D_2^\epsilon(\Lambda^*)$
que nous allons consid\'erer. 

Vu la construction effectu\'ee dans la sous-section~\ref{subsec_kirillov},
dans le cas $\Lambda^*=0$, on pose 
$\rho_{r^*,0}=U_{X^*,0}$ 
la repr\'esentation de dimension 1 
donn\'ee par  le caract\`ere :
$$ \exp(X+A)\in N \,\longmapsto\, \exp(i<X^*,X>)\quad;$$
dans le cas $\Lambda^*\not=0$,
il reste \`a choisir un sous espace $E_1$; 
nous allons le faire gr\^ace \`a la base canonique.

Supposons donc $\Lambda^*\not=0$.
On note $A^*\in \Zc^*$ identifi\'e par la base canonique $X_{i,j}$, $i<j$ 
\`a  la matrice antisym\'etrique $D_2(\Lambda^*)\not=0$ 
et $2v_0$ la dimension de l'image $\Im A^*$ de $A^*$;
l'expression de la repr\'esentation
$U_{r^*X_v,D_2^\epsilon(\Lambda^*)}$ (mais pas sa classe)
d\'epend du choix de $E_1$, sous espace maximal totalement isotrope pour 
la forme 
$(X,Y)\mapsto <A^*X,Y>$ restreinte \`a $\Im A^*\times\Im A^*$.
Ici, gr\^ace \`a la base canonique,
on pose :
$$
E_1
\,=\,
\Rb X_1\oplus \ldots\oplus\Rb X_{2j-1}\oplus \ldots \oplus \Rb
X_{2v_0-1}
\quad.
$$
\index{Notation!Repr\'esentation!$\rho_{r^*,\Lambda^*,\epsilon}$}
On note $\rho_{r^*,\Lambda^*,\epsilon}$ 
la repr\'esentation 
$U_{r^*X_v,D_2^\epsilon(\Lambda^*)}$
correspondant \`a ce choix.
On peut donner explicitement l'expression de
$\rho_{r^*,\Lambda^*,\epsilon}$, 
ainsi que des repr\'esentations $\overline{\rho}$,
$\overline{\rho}_1$, et $\overline{\rho}_2$ que nous allons d\'efinir dans la suite.

\paragraph{Notations.}
Fixons une des repr\'esentations 
$(\Hc,\rho)=(\Hc_{r^*,\Lambda^*,\epsilon},\rho_{r^*,\Lambda^*,\epsilon})$
avec $\Lambda^*$ nul ou non.
On convient de noter :
\begin{itemize}
\item $\ker\rho$ le noyau de $\rho$,
\item $\overline{N}=N/\ker\rho$ le groupe quotient 
  et $\overline{\Nc}$ son alg\`ebre de Lie,
  \index{Notation!Espace!$\overline{\Nc}$}
  \index{Notation!Groupe!$\overline{N}$}
\item  $\overline{\rho}$ le morphisme induit sur $\overline{N}$
  (qui est une repr\'esentation sur le m\^eme espace que $\rho$, 
  dont on peut donner une expression explicite),
\item $\overline{n}\in\overline{N}$ 
  l'image de $n\in N$ par la projection canonique 
  $N \rightarrow \overline{N}$,
\item $\overline{Y}\in\overline{\Nc}$ 
  l'image de $Y\in\Nc$ par la projection canonique 
  $\Nc \rightarrow \overline{\Nc}$,
\item  si $\Lambda^*\not=0$ :
  $  \overline{B}=\nn{\Lambda^*}^{-1} \overline{D_2^\epsilon(\Lambda^*)} $.
  o\`u pour $\Lambda^*=(\lambda^*_1,\ldots,\lambda^*_{v'})\in
  \overline{\Lc}$,
  $ \nn{\Lambda^*}^2=\sum_{j=1}^{v'} {\lambda^*_j}^2$;
  on remarque $\nn{\Lambda^*}=\nn{D_2(\Lambda^*)}$,
  o\`u la norme pr\'ec\'edente est la norme pour laquelle 
  la base $X_{i,j}, i<j$ est orthonormale. 
\end{itemize}

Le but de cette sous-section est de d\'emontrer les trois propositions
suivantes :
\begin{prop}[\mathversion{bold}{$\overline{N}$} et \mathversion{bold}{$\overline{\rho}$}]
  \label{prop_barN_barrho}
  \index{Notation!Repr\'esentation!$\overline{\rho},\overline{\rho}_1,\overline{\rho}_2$}
  Soit  $(\Hc,\rho)=(\Hc_{r^*,\Lambda^*,\epsilon},\rho_{r^*,\Lambda^*,\epsilon})$.\\
  a) Si $\Lambda^*\not=0$,
  le groupe $\overline{N}$ est isomorphe au produit (direct) des deux groupes
  $\overline{N_1}$ et $\overline{N_2}$,
  et la repr\'esentation $\overline{\rho}$ est \'equivalente au produit
  tensoriel des repr\'esentations 
  $\overline{\rho_1}$ sur $\overline{N_1}$
  et $\overline{\rho_2}$ sur $\overline{N_2}$,
  o\`u :
  \begin{itemize}
  \item  le groupe de Lie nilpotent $\overline{N_1}$ a pour
    alg\`ebre de Lie $\overline{\Nc_1}$  qui
    admet pour base comme espace vectoriel :
    $\overline{X_1},\ldots,\overline{X_{2v_0}},  \overline{B}$;
    son centre est $\Rb \overline{B}$;
    \index{Notation!Espace!$\overline{\Nc_1},\overline{\Nc_2}$}
    \index{Notation!Groupe!$\overline{N_1},\overline{N_2}$}
  \item la repr\'esentation $\overline{\rho}_1$ 
    induit sur le centre le caract\`ere :
    $$
    \exp( a\overline{B})\longmapsto \exp(i a\nn{\Lambda^*})
    \quad.
    $$
  \item $\overline{N_2}$ et $\overline{\rho}_2$ sont d\'ecrits par :
    \begin{itemize}
    \item ou bien $r^*=0$ 
      et $\overline{N_2}$ est le groupe trivial,
      et $\overline{\rho}_2$ la repr\'esentation triviale;
    \item ou bien $r^*\not =0$ 
      et $\overline{N_2}$ est le groupe d'alg\`ebre de Lie
      $\Rb \overline{X}_v$,
      et $\overline{\rho}_2$ est la repr\'esentation 
      associ\'ee au caract\`ere :
      $$
      \exp x \overline{X}_v 
      \, \longrightarrow \,
      \exp (ix)\quad.
      $$
    \end{itemize}
  \end{itemize}
  b) Si $\Lambda^*=0$,
  alors $\overline{N}$ et $\overline{\rho}$ 
  ont la m\^eme description que 
  $\overline{N_2}$ et $\overline{\rho}_2$ ci-dessus.
\end{prop}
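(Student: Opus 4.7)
La d\'emarche consiste \`a identifier explicitement $\ker\rho$ gr\^ace \`a la remarque~\ref{rem_expression_noyau}, puis \`a lire la d\'ecomposition en somme directe d'id\'eaux de $\overline{\Nc}$ en calculant les crochets dans le quotient, et enfin \`a utiliser l'irr\'eductibilit\'e de $\overline{\rho}$ pour obtenir la d\'ecomposition en produit tensoriel. Le cas $\Lambda^*=0$ est imm\'ediat puisque $\rho_{r^*,0}$ est le caract\`ere de dimension~1 explicit\'e dans la sous-section~\ref{subsec_N/C}, dont le noyau se lit directement.

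Pla\c cons-nous dans le cas $\Lambda^*\not=0$ qui porte l'essentiel du travail, et notons $A^*=D_2^\epsilon(\Lambda^*)$ et $X^*=r^*X_v^*$. La remarque~\ref{rem_expression_noyau} donne l'alg\`ebre de Lie $\Kc$ de $\ker\rho$ comme $(\ker A^*\cap (X^*)^\perp)\oplus (A^*)^\perp$. Puisque $\Im A^*$ est le sous-espace engendr\'e par $X_1,\ldots,X_{2v_0}$ (par la forme par blocs de $D_2^\epsilon(\Lambda^*)$) et que $X_v^*\in \ker A^*$, un suppl\'ementaire lin\'eaire naturel de $\Kc$ dans $\Nc$ est engendr\'e par $X_1,\ldots,X_{2v_0}$, par $X_v$ lorsque $r^*\not=0$, et par $B=\nd{\Lambda^*}^{-1}A^*$ qui repr\'esente $\Zc/(A^*)^\perp$ ; les images de ces vecteurs forment alors une base de $\overline{\Nc}$.

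Le c\oe ur du calcul est ensuite l'analyse des crochets dans $\overline{\Nc}$ via l'identit\'e $<A^*,[X,Y]>=<A^*X,Y>$ issue de~(\ref{egalite_crochet_pdtsc}). Comme $A^*X_i$ appartient au plan $\mathrm{vect}(X_{2k-1},X_{2k})$ lorsque $X_i$ est dans le bloc correspondant, on voit que les seuls crochets non triviaux entre les $\overline{X_1},\ldots,\overline{X_{2v_0}}$ sont $[\overline{X_{2k-1}},\overline{X_{2k}}]=(\lambda_k^*/\nd{\Lambda^*})\overline{B}$ pour $k\leq v_0$, tous les autres \'etant dans $(A^*)^\perp$. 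De m\^eme, pour $r^*\not=0$, on a $A^*X_v=0$ et $<A^*X_i,X_v>=0$ pour tout $i$, donc $[\overline{X_v},\overline{X_i}]=0$ : $\overline{X_v}$ engendre un id\'eal commutant avec tout $\overline{\Nc}$. On obtient ainsi la d\'ecomposition en somme directe d'id\'eaux $\overline{\Nc}=\overline{\Nc_1}\oplus\overline{\Nc_2}$ \`a crochet mutuel nul, qui par la formule de Baker--Campbell--Hausdorff int\`egre en un isomorphisme de groupes de Lie nilpotents simplement connexes $\overline{N}\cong\overline{N_1}\times\overline{N_2}$.

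Pour la partie repr\'esentation, on restreint $\overline{\rho}$ \`a chaque facteur pour d\'efinir $\overline{\rho_1}$ et $\overline{\rho_2}$. Comme les deux sous-groupes commutent dans $\overline{N}$ et que $\overline{\rho}$ est irr\'eductible, la th\'eorie classique des repr\'esentations d'un produit direct fournit l'\'equivalence $\overline{\rho}\sim\overline{\rho_1}\otimes\overline{\rho_2}$ avec chacun des facteurs irr\'eductible sur son Hilbert. Les caract\`eres centraux annonc\'es d\'ecoulent alors de la remarque~\ref{rem_expression_centre} : sur $\exp(aB)$ on obtient $\exp(ia<A^*,B>)=\exp(ia\nd{\Lambda^*})$, et sur $\exp(xX_v)$ l'expression d\'efinissant $U_{X^*,A^*}$ se r\'eduit \`a la multiplication par $e^{ix<X^*,X_v>}$ puisque $p_1(X_v)=p_2(X_v)=0$. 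L'\'etape la plus d\'elicate sera la justification rigoureuse du produit tensoriel externe (\'etablir l'irr\'eductibilit\'e des facteurs s\'epar\'es) ainsi que la coh\'erence des constantes de normalisation entre les g\'en\'erateurs abstraits $\overline{B}$, $\overline{X_v}$ et les param\`etres $r^*,\Lambda^*$ ; en revanche le calcul explicite des crochets est routinier une fois le noyau correctement identifi\'e.
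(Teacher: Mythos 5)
Votre proposition est correcte et suit essentiellement la même démarche que le texte : identification du noyau via la remarque~\ref{rem_expression_noyau} pour obtenir la base de $\overline{\Nc}$, calcul des crochets dans le quotient au moyen de l'identité~(\ref{egalite_crochet_pdtsc}) (c'est exactement le contenu du lemme~\ref{lem_barN}), puis lecture des caractères centraux via la remarque~\ref{rem_expression_centre}, le cas $\Lambda^*=0$ étant traité directement. Le seul point où vous êtes plus explicite que le texte est la justification du produit tensoriel, qui se règle ici sans difficulté puisque $\overline{N_2}$ est central et abélien (lemme de Schur).
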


Avec les conventions de notations du d\'ebut de cette sous-section,
pour $k\in K_\rho$, les repr\'esentations $\rho$ et
$k.\rho $ sont \'equivalentes; 
en particulier, elles ont m\^eme noyau. 
Donc l'action de $K_\rho$ laisse stable $\ker \rho$ et passe au
quotient sur $\overline{N}$. 
Nous d\'ecrivons cette derni\`ere action.

\begin{prop}[\mathversion{bold}{$K_\rho$} et
  \mathversion{bold}{$\overline{N}$}]
  \label{prop_Krho_barN}
  On garde les notations des propositions~\ref{prop_Krho}
  et~\ref{prop_barN_barrho}.
  \begin{itemize}
  \item[a)] Si $\Lambda^*\not=0$,
    le groupe $K_1$ agit sur $\overline{N_1}$, 
    et le groupe $K_2$ agit trivialement sur~$\overline{N_2}$ :
    le produit semi-direct 
    $K_\rho \triangleleft \overline{N}$
    peut s'\'ecrire :
    $K_\rho \triangleleft\overline{N}
    \sim
    (K_1\triangleleft \overline{N_1} )
    \,\times\, K_2\,\times\, \overline{N_2}$.
  \item[b)] Si $\Lambda^*=0$,
    l'action de $K_\rho$ est triviale sur $\overline{N}$ :
    le produit semi-direct 
    $K_\rho \triangleleft \overline{N}$
    est en fait direct :
    $K_\rho \triangleleft \overline{N}
    \sim
    K_\rho \times \overline{N}$.
  \end{itemize}
\end{prop}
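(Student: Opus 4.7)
Le plan consiste \`a v\'erifier d'abord que chacun des sous-groupes $K_1$ et $K_2$ laisse stable $\ker \rho$ et passe donc au quotient sur $\overline{N}$ avec la structure annonc\'ee, puis \`a identifier l'action induite.

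Commen\c cons par r\'eduire au cas a), le cas b) se traitant imm\'ediatement : si $\Lambda^*=0$, la proposition~\ref{prop_Krho} montre que $K_\rho$ est le fixateur de $r^*X^*_v$ dans $O(v)$, et la proposition~\ref{prop_barN_barrho}.b) donne $\overline{\Nc}=\Rb\overline{X}_v$ (ou bien $\overline{\Nc}=0$ si $r^*=0$). Comme $K_\rho$ stabilise $X^*_v$, il stabilise aussi $X_v$ (les actions sur $\Vc$ et $\Vc^*$ sont conjugu\'ees par l'identification via le produit scalaire, et $K_\rho\subset O(v)$), donc $\overline{X}_v$ est fixe, ce qui prouve b).

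Supposons d\'esormais $\Lambda^*\not=0$. Je voudrais successivement \'etablir :
\begin{enumerate}
\item Le sous-groupe $K_1$ stabilise $\overline{\Nc_1}$ et y agit par automorphismes : en effet, par d\'efinition, $K_1$ est identit\'e sur l'espace engendr\'e par $X_{2v_0+1},\ldots,X_v$ et pr\'eserve l'espace engendr\'e par $X_1,\ldots,X_{2v_0}$; de plus, comme $k_1\in K_\rho$, le lemme~\ref{lem_Krho} donne $k_1.D_2^\epsilon(\Lambda^*)=D_2^\epsilon(\Lambda^*)$, donc l'image $\overline{B}$ est fixe modulo $\ker \rho$. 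Par les m\^emes raisons, $K_1$ agit trivialement sur $\overline{\Nc_2}=\Rb\overline{X}_v$ (ou nul si $r^*=0$).
\item Sym\'etriquement, $K_2$ stabilise $\overline{X}_v$ (car $\tilde k_2$ stabilise $r^*X^*_v$, donc $X_v$ via l'identification $\Vc\sim \Vc^*$) et agit trivialement sur $\overline{\Nc_1}$ : sur les g\'en\'erateurs $\overline{X_1},\ldots,\overline{X_{2v_0}}$, la matrice $\tilde k_2$ est l'identit\'e par construction, et $k_2.D_2^\epsilon(\Lambda^*)=D_2^\epsilon(\Lambda^*)$ \`a nouveau d'apr\`es le lemme~\ref{lem_Krho}, donc $\overline{B}$ est fixe.
\item Il reste \`a voir le passage de $K_\rho=K_1\times K_2$ (produit direct par la structure en blocs diagonaux, voir proposition~\ref{prop_Krho}) et de $\overline{N}\simeq \overline{N_1}\times \overline{N_2}$ (issu de la proposition~\ref{prop_barN_barrho}.a) au produit semi-direct total. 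Comme $K_1$ agit trivialement sur $\overline{N_2}$ et $K_2$ trivialement sur $\overline{N_1}$, le produit semi-direct $K_\rho\triangleleft \overline{N}$ se scinde naturellement en $(K_1\triangleleft \overline{N_1})\times (K_2\triangleleft \overline{N_2})=(K_1\triangleleft \overline{N_1})\times K_2\times \overline{N_2}$, d'o\`u la conclusion.
\end{enumerate}

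Le point d\'elicat sera l'\'etape~1, plus pr\'ecis\'ement la v\'erification que $\overline{B}$ est bien fixe sous $K_1$ au niveau du quotient, et non pas seulement modulo les termes qui disparaissent dans $\overline{\Nc}$. La proposition~\ref{prop_barN_barrho} assure que $\overline{B}=\nn{\Lambda^*}^{-1}\overline{D_2^\epsilon(\Lambda^*)}$ est un g\'en\'erateur bien d\'efini du centre de $\overline{\Nc_1}$, et l'\'egalit\'e $k_1.D_2^\epsilon(\Lambda^*)=D_2^\epsilon(\Lambda^*)$ dans $\Zc$ (cons\'equence imm\'ediate du lemme~\ref{lem_Krho}) donne l'invariance voulue directement dans $\Zc$, sans passer par le quotient. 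Les autres v\'erifications sont de nature purement matricielle par blocs et ne pr\'esentent pas de difficult\'e.
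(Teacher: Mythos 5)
Votre preuve est correcte et suit essentiellement la même démarche que celle du texte : on utilise la structure par blocs de $K_1$ et $K_2$ (proposition~\ref{prop_Krho}) et les invariances $k.A^*=A^*$, $k.X^*=X^*$ du lemme~\ref{lem_Krho} pour constater que $K_1$ préserve $\overline{\Nc_1}$ en fixant $\overline{B}$, que $K_2$ fixe $\overline{X}_v$ et agit trivialement sur $\overline{\Nc_1}$, puis que le produit semi-direct se scinde. La seule nuance est que vous travaillez au niveau des générateurs de l'algèbre de Lie quotient tandis que le texte calcule $\overline{k.n}$ directement au niveau du groupe ; les deux arguments sont équivalents, et la stabilité de $\ker\rho$ que vous re-vérifiez est déjà acquise dans le texte juste avant l'énoncé.
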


L'utilisation de la base canonique dans l'expression de 
$\rho_{r^*,\Lambda^*,\epsilon}$
nous permet d'\'etablir simplement un isomorphisme entre $\overline{N_1}$ et le groupe de Heisenberg
$\Hb^{v_0}= \Cb^{v_0}\times \Rb$. 
Gardons les notations 
de la proposition~\ref{prop_barN_barrho}, 
dans le cas $\Lambda^*\not=0$,
et du corollaire~\ref{cor_isom_K}.
On d\'efinit l'application 
$\Psi_2 : \Hb^{v_0}\rightarrow\overline{N_1}$
donn\'ee par 
si $v_0=v',\, K=SO(v),\, \epsilon=-1$ :
\index{Notation!Isomorphisme!$\Psi_2$}
\begin{eqnarray*}
  &&\Psi_2(x_1+iy_1,\ldots, x_{v'}+iy_{v'},t)\\
  &&\quad=
  \exp\left(\left\{ \sum_{j=1}^{v'-1}  
      \sqrt{\frac{\nn{\Lambda^*}}{\lambda^*_j}}
      \left(x_j\overline{X_{2j-1}}+y_i\overline{X_{2j}}\right)
      - \sqrt{\frac{\nn{\Lambda^*}}{\lambda^*_j}}
      \left(y_{v'}\overline{X_{2v'-1}}+x_{v'}\overline{X_{2v'}}\right)
    \right\}
    \quad +t\overline{B}\right)\quad.
\end{eqnarray*}
sinon :
$$
\Psi_2(x_1+iy_1,\ldots, x_{v_0}+iy_{v_0},t)
\,=\,
\exp \left(\sum_{j=1}^{v_0}  
  \sqrt{\frac{\nn{\Lambda^*}}{\lambda^*_j}}
  \left(x_j\overline{X_{2j-1}}+y_i\overline{X_{2j}}\right)
  \quad
  +t\overline{B}\right)
\quad.
$$

\begin{prop}[\mathversion{bold}{$N_1$} et \mathversion{bold}{$K_1$}]
  \label{prop_N1_K1}
  L'application $\Psi_2$ est un ismorphisme entre les groupes $N_1$ et $\Hb^{v_0}$. 
  Les groupes 
  $H:= K_1\triangleleft \overline{N_1}$ et
  $H_{heis}:= K(m;v_0;v_1) \triangleleft \Hb^{v_0}$
  sont isomorphes par l'application :
  \index{Notation!Isomorphisme!$\Psi_0$}
  $$
  \Psi_0\;:\;\left\{
    \begin{array}{rcl}
      H_{heis}= K(m;v_0;v_1) \triangleleft \Hb^{v_0}
      &\longrightarrow& 
      H=K_1\triangleleft \overline{N_1}\\
      \Psi_1^{-1}(k_1)\, , \,
      h
      &\longmapsto&
      k_1\, ,\,
      \Psi_2 (h)
    \end{array}\right.
  \quad .
  $$
\end{prop}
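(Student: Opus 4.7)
Le plan de démonstration s'articule en trois étapes. D'abord, j'établirais que $\Psi_2$ est un morphisme de groupes entre $\mathbb{H}^{v_0}$ et $\overline{N_1}$; ensuite, que c'est une bijection; enfin, je montrerais que $\Psi_0$ est compatible avec les structures de produit semi-direct.

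\textbf{Étape~1 : calcul des crochets dans $\overline{\mathcal{N}_1}$.}
Avant de construire $\Psi_2$, je calculerais le crochet $[\overline{X_{2j-1}},\overline{X_{2j}}]$ dans $\overline{\mathcal{N}_1}$. On a $[X_{2j-1},X_{2j}]=X_{2j-1,2j}$ dans $\mathcal{N}$, et l'identification de $\overline{\mathcal{Z}}=\mathcal{Z}/\ker\rho\cap \mathcal{Z}$ passe par l'annulation du caractère $\exp A\mapsto\exp(i<A^*,A>)$ sur le centre (remarque~\ref{rem_expression_centre}). Par conséquent $\overline{\mathcal{Z}}\simeq\mathbb{R}$ via $A\mapsto <A^*,A>/\|\Lambda^*\|$, et l'image de $X_{2j-1,2j}$ correspond à $<D_2^\epsilon(\Lambda^*),X_{2j-1,2j}>/\|\Lambda^*\|=\lambda^*_j/\|\Lambda^*\|$ (au signe $\epsilon$ près pour le dernier bloc lorsque $v_0=v'$, $K=SO(v)$, $\epsilon=-1$). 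Ainsi, avec $\overline{B}$ défini comme dans la proposition~\ref{prop_barN_barrho}, on a $[\overline{X_{2j-1}},\overline{X_{2j}}]=(\lambda^*_j/\|\Lambda^*\|)\,\overline{B}$ (avec un signe à adapter pour le dernier bloc).

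\textbf{Étape~2 : $\Psi_2$ est un isomorphisme.}
Je définirais d'abord l'application linéaire $\psi_2$ au niveau des algèbres de Lie, envoyant les générateurs $X_j^{(h)},Y_j^{(h)}$ de $\mathfrak{h}^{v_0}$ sur $\sqrt{\|\Lambda^*\|/\lambda^*_j}\,\overline{X_{2j-1}}$ et $\sqrt{\|\Lambda^*\|/\lambda^*_j}\,\overline{X_{2j}}$ respectivement (avec l'échange $X^{(h)}_{v'}\leftrightarrow Y^{(h)}_{v'}$ dans le cas signé), et envoyant le générateur central $T^{(h)}$ sur $\overline{B}$. Grâce au calcul de l'étape~1, ce choix normalise précisément les crochets : $[\psi_2(X_j^{(h)}),\psi_2(Y_j^{(h)})]=(\|\Lambda^*\|/\lambda^*_j)\cdot(\lambda^*_j/\|\Lambda^*\|)\,\overline{B}=\overline{B}$, qui correspond au crochet $[X_j^{(h)},Y_j^{(h)}]=T^{(h)}$ dans $\mathfrak{h}^{v_0}$. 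Le traitement du signe dans le cas $v_0=v',K=SO(v),\epsilon=-1$ est précisément géré par l'échange $x_{v'}\leftrightarrow y_{v'}$ dans la définition de $\Psi_2$. Comme $\psi_2$ envoie bijectivement une base sur une base, c'est un isomorphisme d'algèbres de Lie; les groupes étant connexes simplement connexes nilpotents, l'application $\Psi_2=\exp\circ\psi_2\circ\exp^{-1}$ est l'unique isomorphisme de groupes associé.

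\textbf{Étape~3 : compatibilité du produit semi-direct.}
La propriété d'isomorphisme de $\Psi_0$ se ramène à vérifier la compatibilité $\Psi_2(\Psi_1(k_1).h)=k_1.\Psi_2(h)$ pour tout $k_1\in K_1$ et $h\in \mathbb{H}^{v_0}$. Or cette égalité est exactement le contenu de la remarque~\ref{rem_compatible_complex} : l'action de $K_1$ sur les couples $(X_{2j-1},X_{2j})$ correspond, via les isomorphismes $\psi_c^{(m_j)}$ ou $\psi_c^{(v',-1)}$, à l'action unitaire de $K(m;v_0;v_1)$ sur les coordonnées complexes $z_j=x_j+iy_j$ de $\mathbb{H}^{v_0}$. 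Il ne reste qu'à observer que les facteurs de normalisation $\sqrt{\|\Lambda^*\|/\lambda^*_j}$ dépendent uniquement de l'indice de bloc $j$ et commutent donc avec l'action unitaire par bloc; enfin la composante centrale est préservée puisque $K_\rho$ agit trivialement sur le centre. Combiné avec le fait que $\Psi_1$ est un isomorphisme (corollaire~\ref{cor_isom_K}) et $\Psi_2$ aussi (étape~2), ceci montre que $\Psi_0$ est un isomorphisme de groupes.

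\textbf{Principale difficulté.} Le point technique délicat est de manipuler avec rigueur les conventions de signe dans le cas exceptionnel $K=SO(v),v_0=v',\epsilon=-1$, où le dernier bloc $J_{m_{v_1}}$ est remplacé par $-J_{m_{v_1}}$. Il faut vérifier que l'échange des coordonnées $(x_{v'},y_{v'})$ dans la définition de $\Psi_2$ se combine correctement avec la convention $\psi_1^{(m_{v_1},-1)}$ utilisée pour $\Psi_1$ dans ce cas-là, afin que la compatibilité de l'étape~3 soit préservée. Tout le reste est une vérification mécanique de structures sur les algèbres de Lie.
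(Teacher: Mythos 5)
Votre démonstration est correcte et suit essentiellement la même démarche que celle du texte : tout repose sur les valeurs des crochets $[\overline{X_{2j-1}},\overline{X_{2j}}]=(\lambda^*_j/\nn{\Lambda^*})\overline{B}$ (le lemme~\ref{lem_barN}, que vous recalculez), sur la normalisation par $\sqrt{\nn{\Lambda^*}/\lambda^*_j}$ qui compense exactement ces facteurs, et sur la remarque~\ref{rem_compatible_complex} pour la compatibilit\'e de $\Psi_0$ avec les produits semi-directs. La seule diff\'erence est stylistique : le texte v\'erifie le caract\`ere de morphisme directement au niveau du groupe en d\'eveloppant $\Psi_2((z,t).(z',t'))$ par Baker--Campbell--Hausdorff, alors que vous travaillez au niveau des alg\`ebres de Lie puis invoquez la correspondance $\exp$ pour les groupes nilpotents simplement connexes, ce qui est tout aussi valable.
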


Les d\'emonstrations du cas $\Lambda^*=0$ sont directes.

\begin{proof}[des propositions~\ref{prop_barN_barrho} 
  et~\ref{prop_Krho_barN} 
  si $\Lambda^*=0$]
  Dans ce cas, la repr\'esentation $\rho$ s'identifie au
  caract\`ere $\chi$ dont la diff\'erentielle est $ir^*X^*_v$.
  Son noyau $\ker\rho$ a donc pour alg\`ebre de Lie 
  l'ensemble des vecteurs $X\in \Vc$ qui
  v\'erifient $<r^*X^*_v,X>=0$.
  Le groupe $\overline{N}$ a alors une alg\`ebre de Lie
  isomorphe \`a~$\Rb$ si $r^*$ est non nul,
  et \`a~$\{0\}$ sinon.

  La repr\'esentation $\rho$ se factorise en une repr\'esentation
  unitaire irr\'eductible $\overline{\rho}$ sur $\overline{N}$, qui est
  associ\'ee au caract\`ere si $r^*$ est non nul :
  $$
  \exp x \overline{X}_v 
  \, \longrightarrow \,
  \exp (ix) \quad,
  $$
  et si $r^*$ est nul, le caract\`ere trivial 1.
\end{proof}

Jusqu'\`a la fin de cette sous-section, 
on convient de noter $A^*=D_2(\Lambda^*)$
et $X^*=r^*X_v$,
ainsi que d'omettre
$r^*$ et $\overline{X_v}$
si $r^*=0$.

On connait :
\begin{itemize}
\item gr\^ace \`a la remarque~\ref {rem_expression_noyau},
  le noyau de la repr\'esentation $\rho_{r^*,\Lambda^*,\epsilon}$,
  dont on d\'eduit la base voulue pour $\Nc$,
\item gr\^ace \`a la remarque~\ref {rem_expression_centre},
  son expression sur le centre de $N$,
  dont on d\'eduit l'expression de $\overline{\rho}_1$ sur $\Rb B$.
\end{itemize}
Pour d\'emontrer la proposition~\ref{prop_barN_barrho}.a),
il reste \`a exhiber le centre de $\Nc$.
Pour cela, nous calculons tous les crochets de la base consid\'er\'ee.

\begin{lem}
  \label{lem_barN}
  Dans ce qui suit,
  on suppose $\Lambda^*\not=0$; 
  on garde les notations 
  de la proposition~\ref{prop_barN_barrho}.

  L'alg\`ebre de Lie $\overline{\Nc}$
  admet pour base comme espace vectoriel 
  la famille de vecteurs :
  $$
  \overline{X_1},\ldots,\overline{X_{2v_0}},  \overline{B},
  \quad\mbox{\`a laquelle on ajoute}\quad\overline{X_v}
  \;\mbox{si}\; 2v_0<v\;\mbox{et}\;r^*\not=0
  \quad.
  $$

  Les crochets des vecteurs de cette  base valent 0, sauf :
  $$
  [\overline{X_{2i-1}},\overline{X_{2i}}] 
  \,=\,
  \left\{
    \begin{array}{ll}
      \epsilon\frac{\lambda^*_{v'}}{\nn{\Lambda^*}}\overline{B}
      &\;\mbox{si}\;i=v' \;\mbox{et}\;K=SO(v),\\
      \frac{\lambda^*_i }{\nn{\Lambda^*}}\overline{B}
      &\;\mbox{sinon}.
    \end{array}\right.
  \quad.
  $$
  En particulier, $X^*$ commute avec tous les vecteurs $\overline{X_i}, i=1,\ldots,2v_0$. 
\end{lem}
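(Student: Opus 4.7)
The plan is to read off $\ker d\rho$ from Remark~\ref{rem_expression_noyau} (with $f=X^*+A^*$, $X^*=r^*X_v^*$, $A^*=D_2^\epsilon(\Lambda^*)$), extract a complementary family that projects to the claimed basis of $\overline{\Nc}$, and then compute the brackets modulo $\ker d\rho$.

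First I would identify $\ker A^*$ and $\Im A^*$ explicitly. Since $A^*=D_2^\epsilon(\Lambda^*)$ is block-diagonal with the non-zero $2\times 2$ blocks occupying the first $2v_0$ rows and columns, one has $\Im A^* = \mathrm{span}(X_1,\ldots,X_{2v_0})$ and $\ker A^* = \mathrm{span}(X_{2v_0+1},\ldots,X_v)$. Since $X^*=r^*X_v^*$, the space $(X^*)^\perp\subset\Vc$ is either all of $\Vc$ (if $r^*=0$) or $\mathrm{span}(X_1,\ldots,X_{v-1})$ (if $r^*\neq 0$). Remark~\ref{rem_expression_noyau} then gives
\[
\ker d\rho \,=\, \bigl(\ker A^*\cap (X^*)^\perp\bigr)\oplus (A^*)^\perp
\,=\,\mathrm{span}(X_{2v_0+1},\ldots,X_{v-1\,\text{or}\,v})\,\oplus\,(A^*)^\perp,
\]
where the upper index $v-1$ appears when $r^*\neq 0$ and $v$ when $r^*=0$, and where $(A^*)^\perp\subset\Zc$ is the orthogonal hyperplane to $A^*=D_2^\epsilon(\Lambda^*)$ in $\Zc$. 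Passing to the quotient, the classes $\overline{X_1},\ldots,\overline{X_{2v_0}}$, plus $\overline{X_v}$ if $r^*\neq 0$ and $2v_0<v$, plus any chosen normal vector to $(A^*)^\perp$ in $\Zc$, form a basis of $\overline{\Nc}$. The natural choice of normal is $\overline{B}=\|\Lambda^*\|^{-1}\overline{D_2^\epsilon(\Lambda^*)}$, which gives exactly the basis in the statement.

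Next I would compute the brackets in $\Nc$ and project. Using the canonical basis of $\Zc$ and the convention $[X_k,X_l]=X_{k,l}$ for $k<l$, any bracket of two basis vectors of $\Vc$ lies in $\Zc$, and its class in $\overline{\Nc}$ is just the orthogonal projection onto $\mathbb{R}A^*$, reexpressed in the basis $\overline{B}$. Concretely, for $Z\in\Zc$ one has
\[
\overline{Z}\,=\,\frac{\langle Z,D_2^\epsilon(\Lambda^*)\rangle}{\|\Lambda^*\|^2}\,\overline{D_2^\epsilon(\Lambda^*)}
\,=\,\frac{\langle Z,D_2^\epsilon(\Lambda^*)\rangle}{\|\Lambda^*\|}\,\overline{B}.
\]
The key computation is then to expand $D_2^\epsilon(\Lambda^*)$ in the canonical basis $X_{k,l}$ of $\Zc$: it equals $\sum_{i=1}^{v'}\lambda_i^*\,X_{2i-1,2i}$ (with the coefficient of the last term replaced by $\epsilon\lambda_{v'}^*$ in the case $K=SO(v)$ with $v_0=v'$ and $\epsilon=-1$, by definition of $D_2^\epsilon$). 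Orthonormality of $\{X_{k,l}\}_{k<l}$ then gives $\langle X_{2i-1,2i},D_2^\epsilon(\Lambda^*)\rangle=\lambda_i^*$ (resp.\ $\epsilon\lambda_{v'}^*$), and $\langle X_{k,l},D_2^\epsilon(\Lambda^*)\rangle=0$ for all other pairs $k<l$. Substituting yields exactly the brackets stated for $[\overline{X_{2i-1}},\overline{X_{2i}}]$, while all other $[\overline{X_k},\overline{X_l}]$ vanish in $\overline{\Nc}$.

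It remains to check the brackets involving $\overline{B}$ and $\overline{X_v}$: since $B\in\Zc$ and $\Zc$ is the center of $\Nc$, all brackets $[\,\cdot\,,\overline{B}]$ are zero; and for $k\le 2v_0$, the bracket $[X_k,X_v]=X_{k,v}$ satisfies $\langle X_{k,v},D_2^\epsilon(\Lambda^*)\rangle=0$ (as $D_2^\epsilon(\Lambda^*)$ has no component along $X_{k,v}$), so it vanishes in $\overline{\Nc}$. In particular $\overline{X_v}$ commutes with every $\overline{X_i}$, $i\le 2v_0$, which gives the last assertion. There is no real obstacle here: the only delicate book-keeping is making sure the sign $\epsilon$ in $D_2^\epsilon(\Lambda^*)$ is carried correctly through the inner-product computation in the $SO(v)$ case, and verifying that the chosen basis really is a basis of $\overline{\Nc}$ (which follows from the dimension count $\dim\overline{\Nc}=\dim\Nc-\dim\ker d\rho$ and the linear independence of the chosen representatives modulo $\ker d\rho$).
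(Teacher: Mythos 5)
Your proof is correct and follows essentially the same route as the paper: the basis of $\overline{\Nc}$ is read off from the kernel description in Remark~\ref{rem_expression_noyau}, and the brackets are obtained by projecting $[X_i,X_j]\in\Zc$ onto the line $\Rb A^*$ and renormalizing by $\overline{B}$. The only cosmetic difference is that you compute $<[X_i,X_j],D_2^\epsilon(\Lambda^*)>$ by expanding $D_2^\epsilon(\Lambda^*)$ in the orthonormal basis $X_{k,l}$, whereas the paper invokes the identity~(\ref{egalite_crochet_pdtsc}) $<A^*,[X,Y]>=<A^*.X,Y>$ — these are the same computation.
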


\begin{proof}[du lemme~\ref{lem_barN}]
  On d\'eduit la base de $\overline{\Nc}$  de l'expression du noyau 
  (voir remarque~\ref {rem_expression_noyau}).
  On voit directement d'apr\`es (\ref{egalite_crochet_pdtsc}):
  \begin{itemize}
  \item  si $(i,j)\not=(2i'-1,2i')$,
    $<A^*,[X_i,X_j]>=<A^*.X_i,X_j>=0$ 
    et donc $[\overline{X_i},\overline{X_j}]=0$,
  \item  $<A^*,[X_{2i'-1},X_{2i'}]>
    =<A^*.X_{2i'-1},X_{2i'}>$ vaut
    $\epsilon \lambda^*_{v'}$ si $i=v'$ et $K=SO(v)$, et $\lambda_{i'}$ sinon; 
    et donc $[\overline{X_{2i'-1}},\overline{X_{2i'}}]$ vaut
    $\epsilon \lambda^*_{v'}\nn{\Lambda^*}^{-1}\overline{B}$ si $i=v'$ et $K=SO(v)$, 
    et $\lambda_{i'}\nn{\Lambda^*}^{-1}\overline{B}$ sinon.
  \end{itemize}
\end{proof}

\begin{proof}[de la proposition~\ref{prop_Krho_barN} 
  si $\Lambda^*\not=0$]
  Soit $n=\exp(X+A)$ avec $X$ d\'ecompos\'e selon la base canonique :
  \begin{equation}
    X\,=\,\sum_{i=1}^v x_i X_i\quad.
    \label{notx}
  \end{equation}
  On a :
  $$
  \overline{n}
  \,=\,
  \exp(\overline{X}+<D_2(\Lambda^*),A>\overline{B})\;,
  \quad\mbox{avec }\quad
  \overline{X}
  \,=\,
  \sum_{j=1}^{2v_0} x_j\overline{X_j} 
  +x_v\overline{X_v}
  \quad.
  $$
  Donc pour $k\in K_\rho$,
  on a : $k.n=\exp(k.X+k.A)$, et
  $\overline{k.n}=\exp(\overline{k.X}+\overline{k.A})$,
  avec :
  \begin{eqnarray*}
    \overline{k.X}
    &=&
    \sum_{j=1}^{2v_0} \overline{x_jk.X_j} 
    +
    x_v\overline{X_v}\quad,\\
    \overline{k.A}
    &=&
    <A^*,k.A>\overline{B}
    \,=\,
    <k^{-1}.A^*,A>\overline{B}
    \,=\,
    <A^*,A>\overline{B}
    \,=\,
    \overline{A} \quad, 
  \end{eqnarray*}
  car $k^{-1}.A^*=A^*$ et $k.X^*=X^*$. 
  On peut donc directement d\'efinir l'action (par automorphisme)
  du groupe $K_\rho$ sur le groupe $\overline{N}$.
  De plus, 
  on remarque les propri\'et\'es suivantes :
  \begin{itemize}
  \item Si $k\in K_1$ et $\overline{n}\in\overline{N_1}$
    alors $k.\overline{n}\in \overline{N_1}$;
    et donc $K_1$ agit sur $\overline{N_1}$.
  \item Si $k\in K_2$ et $\overline{n}\in\overline{N_2}$
    alors $k.\overline{n}=\overline{n}$; 
    et donc $K_2$ agit trivialement sur $\overline{N_2}$.
  \end{itemize}
\end{proof}

\begin{proof}[de la proposition~\ref{prop_N1_K1}]
  Notons $(z,t),(z',t')\in \Hb^{v_0}$ avec :
  $$
  z=(x_1+iy_1,\ldots,x_{v_0}+iy_{v_0})
  \quad\mbox{et}\quad
  z'=(x'_1+iy'_1,\ldots,x'_{v_0}+iy'_{v_0})
  \quad.
  $$
  D'apr\`es l'expression de 
  $(z,t).(z',t')$ et $\Psi_2$,
  on a  si $v_0=v',\, K=SO(v),\, \epsilon=-1$ :
  \begin{eqnarray*}
    &&    \Psi_2 \left(  (z,t).(z',t')\right)
    \,=\,
    \exp \left(
      \left\{\sum_{j=1}^{v'-1} 
        \sqrt{\frac{\nn{\Lambda^*}}{\lambda^*_j}} (x_j+x'_j)\overline{X_{2j-1}}
        +(y_j+y'_j)\overline{X_{2j}}\right.\right.\\
    && \qquad  \left. \left.
        -\sqrt{\frac{\nn{\Lambda^*}}{\lambda^*_{v'}}}(y_{v'}+y'_{v'})\overline{X_{2v'-1}}
        +(x_{v'}+x'_{v'})\overline{X_{2v'}}\right\}\;
      +(t+t'+\frac12 \sum_{j=1}^{v_0} x_jy'_j-y_jx'_j)\overline{A}\right)
    \quad,
  \end{eqnarray*}
  sinon :
  \begin{eqnarray*}
    \Psi_2\left (  (z,t).(z',t')\right)
    \,=\,
    \exp \left(\sum_{j=1}^{v_0} 
      \sqrt{\frac{\nn{\Lambda^*}}{\lambda^*_j}}
      (x_j+x'_j)\overline{X_{2j-1}}+(y_j+y'_j)\overline{X_{2j}}\right.\\
    +\left.(t+t'+\frac12 \sum_{j=1}^{v_0} x_jy'_j-y_jx'_j)\overline{A}\right)
    \quad.
  \end{eqnarray*}

  Or d'apr\`es la valeur des crochets des $\overline{X_j}$ donn\'ee
  dans le lemme~\ref{lem_barN}, 
  on voit :
  \begin{eqnarray*}
    &&\left[ 
      \sum_{j=1}^{v_0} 
      \sqrt{\frac{\nn{\Lambda^*}}{\lambda^*_j}}
      (x_j\overline{X_{2j-1}}+y_j\overline{X_{2j}})
      \; ,\;
      \sum_{j=1}^{v_0} 
      \sqrt{\frac{\nn{\Lambda^*}}{\lambda^*_j}}
      (x'_j\overline{X_{2j-1}}+y'_j\overline{X_{2j}})
    \right]\\
    &&\quad=\,
    \sum_{j=1}^{v_0} \frac{\nn{\Lambda^*}}{\lambda^*_j} x_j y'_j
    \left[ \overline{X_{2j-1}},\overline{X_{2j}}\right]
    +\frac{\nn{\Lambda^*}}{\lambda^*_j} y_j  x'_j
    \left[\overline{X_{2j}}, \overline{X_{2j-1}}\right]
    \,=\,
    \sum_{j=1}^{v_0} (x_jy'_j-y_jx'_j)\overline{B}
    \quad,
  \end{eqnarray*}
  ainsi que :
  \begin{eqnarray*}
    &&\left[ 
      -\sqrt{\frac{\nn{\Lambda^*}}{\lambda^*_{v'}}}
      (y_{v'}\overline{X_{2v'-1}}+x_{v'}\overline{X_{2v'}})
      \; ,\;
      -\sqrt{\frac{\nn{\Lambda^*}}{\lambda^*_{v'}}}
      (y'_{v'}\overline{X_{2v'-1}}+x'_{v'}\overline{X_{2v'}})
    \right]\\
    &&\quad=\,
    \frac1{\lambda^*_{v'}} 
    \left(y_{v'}x'_{v'} \left[ \overline{X_{2v'-1}},\overline{X_{2v'}}\right]
      +  x_{v'}  y'_{v'}
      \left[\overline{X_{2v'}}, \overline{X_{2v'-1}}\right]\right)
    \,=\,
    (x_{v'}y'_{v'} - y_{v'} x'_{v'}  )\overline{B}
    \quad.
  \end{eqnarray*}
  On en d\'eduit dans les deux cas :
  \begin{eqnarray*}
    \Psi_2\left (  (z,t).(z',t')\right)
    &=&
    \exp \left(\sum_{j=1}^{v_0}  
      \sqrt{\frac{\nn{\Lambda^*}}{\lambda^*_j}}
      (x_j\overline{X_{2j-1}}+y_j\overline{X_{2j}})
      \;        +t\overline{A}\right)\\
    &&\quad
    \exp\left( \sum_{j=1}^{v_0}  
      \sqrt{\frac{\nn{\Lambda^*}}{\lambda^*_j}}
      (x_j'\overline{X_{2j-1}}+y_j'\overline{X_{2j}})
      \;        +t'\overline{A}\right)\\
    &=&
    \Psi_2(z,t).\Psi_2(z',t')
    \quad.
  \end{eqnarray*}
  L'application $\Psi_2$ est donc un morphisme de groupe et il est clairement bijectif.
  Par d\'efinition de $\Psi_1$ et $\Psi_2$, 
  et d'apr\`es la remarque~\ref{rem_compatible_complex},
  on a :
  $$
  \forall\, \tilde{k}_1\in K_1\; ,
  h\in \Hb^{v_0}
  \quad:\quad
  \Psi_1^{-1}(k_1).h
  \,=\,
  \Psi_2^{-1}(k_1.\Psi_2(h))
  \quad;
  $$
  ainsi l'application $\Psi_0$ est un isomorphisme de groupe 
  $K(m;v_0;v_1) \triangleleft \Hb^{v_0} \rightarrow K_1\triangleleft \overline{N_1}$.
\end{proof}

\subsection{Cas $O(r^*,0)$}
\label{subsec_thm_fcnsphnu.a}

Nous montrons ici le th\'eor\`eme \ref{thm_fcnsphnu}.a).
Fixons $\rho=\rho_{r^*,0}$,
et $\nu\in\tilde{G}_\rho$.
On note 
$\nu_{|N}=c.\rho$, $1\leq c \leq \infty$
et $\overline{\nu}$ la repr\'esentation 
donn\'ee par le passage au quotient du groupe $G_\rho$ par $\ker \rho$
de la repr\'esentation $\nu$.

\paragraph{Description de \mathversion{bold}{$\tilde{G}_\rho$}.}
D'apr\`es la proposition~\ref{prop_Krho_barN}
dans le cas $\Lambda^*=0$,
le produit  semi-direct
$ K_\rho\triangleleft \overline{N}$  
est en fait direct,
donc la repr\'esentation $\overline{\nu}$ 
s'\'ecrit comme le produit tensoriel de deux
repr\'esentations unitaires irr\'eductibles :
\begin{itemize}
\item l'une de $\overline{N}$, 
  qui co\"\i ncide avec $c.\overline{\rho}$
  (\'etant irr\'eductible $c=1$),
\item et l'autre de $K_\rho$
  ayant un vecteur $K_\rho$-fixe
  (\'etant irr\'eductible, elle est triviale).
\end{itemize}
La repr\'esentation $\overline{\nu}$ coincide donc avec
la repr\'esentation :
$(k,n)\in K_\rho\times \overline{N}\mapsto \overline{\rho}(n)$.
Or d'apr\`es la proposition~\ref{prop_barN_barrho}
dans le cas $\Lambda^*=0$,
si $r^*\not=0$,
la repr\'esentation $\overline{\rho}$ est associ\'ee au caract\`ere 
$\exp x\overline{X}_v \mapsto e^{ix}$.
Donc $\overline{\nu}$ est la repr\'esentation associ\'ee au caract\`ere:
$$
(k,\exp (x\overline{X})) 
\, \longmapsto \,
\exp (ix)
$$
On en d\'eduit que la repr\'esentation $\nu=\nu^{r^*,0}$ est donn\'ee par:
\index{Notation!Repr\'esentation!$\nu^{r^*,0},\nu^{r^*,\Lambda^*,l,\epsilon}$}
$$
k,\exp (X + A) 
\,\longmapsto\,
e^{i <r^*X^*_v,X>}
\quad ,
$$
si $r^*\not=0$.
C'est aussi le cas si $r^*=0$, car alors $\overline{\rho}$ est la
repr\'esentation triviale $1$.

Nous venons donc de trouver que 
$\tilde{G}_\rho$ est l'ensemble des classes des 
repr\'esentations~$\nu^{r^*,0}$ lorsque $r^*$ parcourt $\Rb^+$.
L'espace de la repr\'esentation  de $\nu^{r^*,0}$ est de dimension~1. 
On note $\vec{u}$ un de ses vecteurs unitaires.

\paragraph{Formule pour 
  \mathversion{bold}{$\phi^\nu,\nu\in\tilde{G}_\rho$}.}
Explicitons d'apr\`es la formule~(\ref{fcnsphnu}), 
la fonction sph\'erique $\phi^\nu$
associ\'ee \`a la repr\'esentation~$\nu=\nu^{r^*,0}$.
On a :
$$
{\big<\nu(I,n).\vec{u}\,,\,
  \vec{u} \big>}_{\Hc^\nu}
\,=\,
e^{i <r^* X^*_v,X>}
\quad\mbox{d'o\`u}\quad
\phi^\nu(\exp (X+A))
\,=\,
\int_{k\in K}
e^{i <r^* X^*_v,k.X>}
dk
\quad.
$$
$\phi^\nu=\phi^{r^*,0}$,
et le th\'eor\`eme \ref{thm_fcnsphnu}.a) 
est ainsi d\'emontr\'e.

\subsection{Cas $O(r^*,\Lambda^*,\epsilon)$}
\label{subsec_thm_fcnsphnu.b}

Nous montrons ici le th\'eor\`eme \ref{thm_fcnsphnu}.b).
Fixons $\rho=\rho_{r^*,\Lambda^*,\epsilon}$ avec $\Lambda^*\not=0$,
et $\nu\in\tilde{G}_\rho$.
On note :
$\nu_{|N}=c.\rho$, $1\leq c \leq \infty$
et $\overline{\nu}$ la repr\'esentation 
donn\'ee par le passage au quotient du groupe $G_\rho$ par $\ker \rho$
de la repr\'esentation $\nu$.

\paragraph{Description de 
  \mathversion{bold}{$\tilde{G}_\rho$}.}
D'apr\`es la proposition~\ref{prop_Krho_barN}
dans le cas $\Lambda^*=0$,
le groupe
$K_\rho\triangleleft \overline{N}$ 
est isomorphe au produit direct de
$H=K_1\triangleleft \overline{N_1}$
et de $K_2$ et $\overline{N_2}$.
La repr\'esentation~$\overline{\nu}$ 
s'\'ecrit donc comme le produit tensoriel de trois
repr\'esentations unitaires irr\'eductibles:
\begin{enumerate}
\item l'une de $H$ 
  dont les vecteurs $K_1$-invariants forment une droite,
\item l'autre de $K_2$
  dont les vecteurs $K_2$-invariants forment une droite,
\item la derni\`ere,  de $\overline{N_2}$.
\end{enumerate}
\`A cause de l'irr\'eductibilit\'e, 
la repr\'esentation sur $K_2$ est triviale :
$K_2 \subset \ker \overline{\nu}$;
d'apr\`es le lemme~\ref{lem_rep_noyau},
$\overline{\nu}$ passe au quotient 
en une repr\'esentation unitaire 
irr\'eductible~$\overline{\overline{\nu}}$ 
sur~$H\times  \overline{N_2}$
qui co\"\i ncide avec~$c.\overline{\rho}$ sur $\overline{N}$ et 
dont les vecteurs $K_1$-invariants forment une droite.
Nous reprenons les notations du lemme~\ref{lem_rep_Pi_omega},
ainsi que celle de $\Psi_0$ (proposition~\ref{prop_N1_K1}).
Pour une fonction sph\'erique $\omega$ 
de~$(\Hb^{v_0},K(m;v_0;v_1))$,
on d\'efinit 
les repr\'esentations  
$(\Hc^\omega,\Pi^\omega)$ 
de~$H$ par :
$$
\Hc^\omega
\,=\,
\{F\circ \Psi_0\; , \quad F\in \Hc_\omega\}
\quad,\quad
\Pi^\omega=\Pi_\omega(\Psi_0 .)
\quad.
$$

\begin{lem}
  \label{lem_bar_bar_nu}
  La repr\'esentation 
  $\overline{\overline{\nu}}$ 
  est de la forme :
  \begin{eqnarray*}
    \forall\quad
    (k_1,\overline{n_1})\in H=K_1\triangleleft \overline{N_1 }\; ,\quad
    \overline{n_2}=\exp (x \overline{X}) \in \overline{N_2 }\; , \quad
    k_2\in K_2
    \quad :\\
    \overline{\overline{\nu}}
    \left( (k_1,\overline{n_1}) .(k_2, \overline{n_2})\right)
    \,=\,
    \gamma_1(k_1,\overline{n_1})
    \exp (i x)
    \quad ,
  \end{eqnarray*}
  o\`u $\gamma_1$ est une repr\'esentation du $K_1\triangleleft N_1$
  \'equivalente \`a $\Pi^\omega$ avec 
  $\omega=\omega_{\nn{\Lambda^*},l}$, $l\in \Nb^{v_1}$;
  la droite $K_1$-fixe de $\Hc^\omega$
  est $\Cb \Omega^\omega\circ\Psi_0$.
\end{lem}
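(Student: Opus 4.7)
L'id\'ee est d'exploiter la d\'ecomposition en produit tensoriel
d\'ej\`a \'etablie juste avant l'\'enonc\'e, et d'identifier chacun des facteurs
s\'epar\'ement gr\^ace aux expressions explicites obtenues dans les sous-sections~\ref{subsec_stab}
et~\ref{subsec_N/C}.

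Premi\`erement, j'\'ecris
$\overline{\overline{\nu}}=\gamma_1\otimes \gamma_2$,
o\`u $\gamma_1$ est une repr\'esentation irr\'eductible de
$H=K_1\triangleleft \overline{N_1}$ poss\'edant une droite de vecteurs
$K_1$-invariants,
et $\gamma_2$ est une repr\'esentation irr\'eductible de $\overline{N_2}$.
Pour identifier $\gamma_2$, je restreins $\overline{\overline{\nu}}$ au sous-groupe
$\overline{N}=\overline{N_1}\times\overline{N_2}$ : comme on sait que cette restriction
est un multiple de $\overline{\rho}$ et que $\overline{\rho}\sim \overline{\rho}_1\otimes\overline{\rho}_2$
d'apr\`es la proposition~\ref{prop_barN_barrho}, la restriction de $\gamma_2$
\`a $\overline{N_2}$ doit co\"\i ncider (\`a multiplicit\'e pr\`es, donc par irr\'eductibilit\'e
avec) $\overline{\rho}_2$; celle-ci est exactement le caract\`ere
$\exp(x\overline{X_v})\mapsto e^{ix}$.

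Deuxi\`emement, pour identifier $\gamma_1$, je transporte la situation sur le groupe
de Heisenberg via l'isomorphisme $\Psi_0:H_{heis}\to H$ de la
proposition~\ref{prop_N1_K1}. La repr\'esentation $\gamma_1\circ\Psi_0$ est alors une
repr\'esentation irr\'eductible de $H_{heis}=K(m;v_0;v_1)\triangleleft \Hb^{v_0}$
poss\'edant une droite de vecteurs $K(m;v_0;v_1)$-invariants;
d'apr\`es le lemme~\ref{lem_rep_Pi_omega}, elle est donc \'equivalente \`a un $\Pi_\omega$,
o\`u $\omega$ est soit un $\omega_{\lambda,l}$ soit un $\omega_\mu$.
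Pour trancher entre ces deux cas, je calcule le caract\`ere central :
sur le centre $\Rb\overline{B}$ de $\overline{N_1}$, la repr\'esentation $\gamma_1$
co\"\i ncide avec $\overline{\rho}_1$, laquelle vaut
$\exp(a\overline{B})\mapsto \exp(ia\nd{\Lambda^*})$ d'apr\`es la
proposition~\ref{prop_barN_barrho}. Or $\Psi_2$ envoie le centre
$Z=\{(0,t)\}\subset\Hb^{v_0}$ sur $\exp(t\overline{B})$,
donc le caract\`ere central correspondant sur $Z$ est $(0,t)\mapsto e^{it\nd{\Lambda^*}}$.
La seconde partie du lemme~\ref{lem_rep_Pi_omega} impose alors
$\omega=\omega_{\nd{\Lambda^*},l}$ pour un certain $l\in\Nb^{v_1}$
(le cas $\omega_\mu$ est exclu car $\nd{\Lambda^*}\neq 0$).

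Enfin, la droite $K_1$-fixe de $\gamma_1\sim\Pi^\omega$ se d\'etermine en
transportant par $\Psi_0$ la droite $K(m;v_0;v_1)$-fixe de $\Pi_\omega$;
d'apr\`es le lemme~\ref{lem_rep_Pi_omega}, cette derni\`ere est
$\Cb\,\Omega^\omega$, d'o\`u la droite $\Cb\,\Omega^\omega\circ\Psi_0$
dans $\Hc^\omega$. La seule \'etape un peu d\'elicate est v\'eritablement
l'identification du caract\`ere central via $\Psi_2$,
mais les normalisations choisies dans la d\'efinition de $\Psi_2$ et de $\overline{B}$
rendent ce calcul imm\'ediat une fois pos\'e.
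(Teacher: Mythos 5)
Votre esquisse suit essentiellement la d\'emonstration du texte : m\^eme d\'ecomposition en produit tensoriel $\gamma_1\otimes\gamma_2$, m\^eme identification de $\gamma_2$ avec $\overline{\rho}_2$ par irr\'eductibilit\'e, m\^eme transport via $\Psi_0$ vers $H_{heis}$ pour appliquer le lemme~\ref{lem_rep_Pi_omega}, et m\^eme comparaison des caract\`eres centraux sur $\Rb\overline{B}$ pour exclure le cas $\omega_\mu$ et fixer $\omega=\omega_{\nn{\Lambda^*},l}$. L'argument est correct et complet pour une esquisse.
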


\begin{proof}
  La repr\'esentation
  $\overline{\overline{\nu}}$ 
  s'\'ecrit comme le produit tensoriel de $\gamma_1$ et $\gamma_2$ tel que
  \begin{itemize}
  \item[(a)] 
    la repr\'esentation $\gamma_1$ du groupe
    $H$
    est irr\'eductible; 
    ses vecteurs $K_1$-invariants forment une droite;
    elle co\"\i ncide avec
    $c.\overline{\rho}$ sur $\overline{N_1}$;
  \item[(b)]
    la repr\'esentation $\gamma_2$ du groupe  $\overline{N_2}$
    est irr\'eductible et co\"\i ncide avec
    $c.\overline{\rho}$ sur $\overline{N_2}$. 
  \end{itemize}

  \`A cause de l'irr\'eductibilit\'e  d'apr\`es la
  condition (a), on a $c=1$.
  Donc si $N_2\not=\{0\}$ c'est-\`a-dire $r^*\not=0$, on voit :
  $$
  \overline{\rho}(\exp (x\overline{X_v}))
  \,=\,
  \gamma_2(\exp (x\overline{X_v}))
  \,=\,
  \exp (i x)
  \quad .
  $$
  De plus, 
  d'apr\`es la proposition~\ref{prop_N1_K1},
  $H$ est isomorphe \`a $H_{heis}=K(m;v_0;v_1)\triangleleft \Hb^{v_0}$ par $\Psi_0$;
  et d'apr\`es la premi\`ere partie du lemme~\ref{lem_rep_Pi_omega},
  on connait les repr\'esentations irr\'eductibles~$(\Hc_\omega,\Pi_\omega)$
  sur~$H_{heis}$
  dont les vecteurs $K_1$-invariants forment une droite.
  On en d\'eduit que les repr\'esentations irr\'eductibles de~$H$ 
  dont les vecteurs $K_1$-invariants forment une droite,
  sont toutes les repr\'esentations~$(\Hc_\omega,\Pi_\omega)$ 
  donn\'ees dans l'\'enonc\'e,
  lorsque $\omega$ parcourt
  l'ensemble des fonctions sph\'eriques de
  $H_{heis}$.
  De plus,
  la droite $K_1$-fixe de $\Hc^\omega$
  est $\Cb \Omega^\omega\circ\Psi_0$.

  Ainsi les  repr\'esentations~$\gamma_1$ 
  v\'erifiant~(a)
  sont les repr\'esentations~$\gamma_1$ 
  \'equivalentes \`a une repr\'esentation~$\Pi^\omega$ 
  satisfaisant : 
  $\Pi^\omega_{|\overline{N_1}}\sim\overline{\rho}_1$.
  Supposons cette condition v\'erifi\'ee.
  D'apr\`es  la restriction
  des repr\'esentations $\Pi^\omega$ et $\overline{\rho}_1$ sur le centre 
  $\exp \Rb \overline{B}$ de $\overline{N_1}$
  (voir respectivement la seconde  partie 
  du lemme~\ref{lem_rep_Pi_omega},
  et la proposition~\ref{prop_barN_barrho}),
  le cas $\omega=\omega_\mu$ est impossible, et
  la fonction $\omega$ est de la forme
  $\omega=\omega_{\nn{\Lambda^*},l}$.
  Par cons\'equent,
  les repr\'esentations 
  $\gamma_1$ v\'erifiant~(a)
  sont parmi les repr\'esentations $\gamma_1$ \'equivalentes \`a une 
  repr\'esentation $\Pi^\omega$ avec 
  $\omega=\omega_{\nn{\Lambda^*},l}$.
\end{proof}

\paragraph{\mathversion{bold}{$\tilde{G}_\rho \subset
    \tilde{G}_\rho'$}.}
On note la projection canonique 
($X$ est d\'ecompos\'e selon~(\ref{notx})):
$$
q_1
\; :\;
\left\{\begin{array}{rcl}
    N&\longrightarrow& \overline{N_1}\\
    n=\exp (X +A)
    &\longmapsto&
    \exp \left(\sum_{j=1}^{2v_0}
      x_j\overline{X_j}+\frac{<A^*,A>}{\nn{\Lambda^*}}\overline{B}\right)
    \quad.
  \end{array}\right.
$$
Par la projection canonique :
$$
\begin{array}{rcl}
  K_\rho\triangleleft N
  =K_1\times K_2\triangleleft N
  &\longrightarrow&
  \left(K_1\triangleleft \overline{N_1}\right)
  \times \left(K_2\times \overline{N_2}\right)\\
  k=k_1k_2, n=\exp (X+A )
  &\longmapsto&
  \left((k_1,q_1(n))
    \, ,\,
    (k_2,\exp (r^*<X^*_v, X>\overline{X}))
  \right)
\end{array}\quad,
$$
la repr\'esentation $\overline{\overline{\nu}}$ 
se rel\`eve en la repr\'esentation $\nu$;
gr\^ace au lemme~\ref{lem_bar_bar_nu},
$\nu$
est  \'equivalente \`a une repr\'esentation 
$(\Hc^\omega,\nu^{r^*,\Lambda^*,l})$
sur $K_\rho\triangleleft N$ 
avec  $\omega=\omega_{\nn{\Lambda^*},l}$,
donn\'ee par :
$$
\begin{array}{c}
  \displaystyle{  \forall 
    \quad
    k=k_1k_2\in K_\rho=K_1\times K_2\; ,
    \quad
    n=\exp (X+A)\in N
    \quad :} \\
  \\
  \displaystyle{
    \nu^{r^*,\Lambda^*,l} (k,n) =
    e^{i r^*<X^*_v,X>}
    \, \Pi^\omega (k_1, q_1(n))
    \quad.}
\end{array}
$$
\index{Notation!Repr\'esentation!$\nu^{r^*,0},\nu^{r^*,\Lambda^*,l,\epsilon}$}
On note $\tilde{G}_\rho '$ l'ensemble des classes des
repr\'esentations~$\nu^{r^*,\Lambda^*,l,\epsilon}$, 
$l\in\Nb^{v_1}$.
C'est un ensemble de classe de repr\'esentations de $K_\rho\triangleleft N$,
qui contient $\tilde{G}_\rho$; 
les restrictions \`a~$N$ de ses repr\'esentants 
$(\Hc^\omega,\nu^{r^*,\Lambda^*,l,\epsilon})$ 
peuvent (a priori) ne pas \^etre 
\'equivalentes \`a $\rho$. 
Cependant, par construction, 
elles ont une droite $K_\rho$-fixe
dont un des vecteurs unitaires est 
$\vec{u}=\Omega^\omega\circ \Psi_0$.
Notons $\phi^\nu$ la fonction sph\'erique associ\'ee.

\begin{lem}[Calcul de \mathversion{bold}{$\phi^\nu,\nu\in
    \tilde{G}_\rho'$}]
  \label{lem_phi_nu_G'}
  La fonction $\phi^\nu$
  est donn\'ee par :
  \begin{eqnarray}
    \phi^{r^*,\Lambda^*,l,\epsilon}(n)
    &=&
    \int_{k\in K}
    e^{i r^*<X^*_v,k.X>}
    \omega\circ
    \tilde{\Psi}_2 (k.n)   
    dk
    \label{egalite_phi_omega_Psi_q}\\
    &=&
    \int_{k\in K}
    \Theta^{r^*,\Lambda^*,l,\epsilon}(k.n)
    dk
    \quad,\nonumber
  \end{eqnarray}
  o\`u $\omega=\omega_{\nn{\Lambda^*},l}$ et $\tilde{\Psi}_2=\Psi_2^{-1}\circ q_1$, 
  et la fonction $\Theta^{r^*,\Lambda^*,l,\epsilon}$ est donn\'ee par :
  $$
  \Theta^{r^*,\Lambda^*,l,\epsilon}(\exp (X+A))
  \,=\,
  e^{i r^*<X^*_v,X>}
  e^{i<D_2^\epsilon(\Lambda^*),A>}
  \overset{v_1}{\underset{j=1} \Pi}
  \flb {l_j} {m_j-1} {\frac{\lambda_j}2 \nn{\pr j X }^2} 
  \quad .
  $$
\end{lem}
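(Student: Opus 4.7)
The plan is to compute the matrix coefficient $\langle \nu(I,k.n)\vec{u},\vec{u}\rangle_{\mathcal{H}^\omega}$ and then plug it into the general formula~(\ref{fcnsphnu}) furnished by Theorem~\ref{thm_tildeGrho}.a). Set $\omega = \omega_{\|\Lambda^*\|,l}$. Since the $K_\rho$-fixed unit vector is $\vec{u} = \Omega^\omega \circ \Psi_0$ and since the representative of $\nu^{r^*,\Lambda^*,l,\epsilon}$ constructed just before the lemma splits as $\nu(k_1k_2,n) = e^{ir^*\langle X_v^*,X\rangle}\,\Pi^\omega(k_1,q_1(n))$, the matrix coefficient at $(I,n)$ reduces to
\[
\langle \nu(I,n)\vec{u},\vec{u}\rangle = e^{ir^*\langle X_v^*,X\rangle}\,\langle \Pi^\omega(I,q_1(n))\vec{u},\vec{u}\rangle .
\]
By definition of $\Pi^\omega = \Pi_\omega(\Psi_0\,\cdot)$ and of $\vec u$, the second factor is $\langle \Pi_\omega(I,\Psi_0^{-1}(I,q_1(n)))\Omega^\omega,\Omega^\omega\rangle$. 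The representation $\Pi_\omega$ is the GNS representation of the positive type function $\Omega^\omega$ (which is the extension to $H_{heis}$ of the bounded spherical function $\omega$ of $(\mathbb{H}^{v_0},K(m;v_1;v_0))$, cf.~\ref{subsec_fcnsph_heis}), so this matrix coefficient equals $\Omega^\omega(I,\Psi_2^{-1}\circ q_1(n)) = \omega\circ\tilde\Psi_2(n)$. Substituting into~(\ref{fcnsphnu}) yields the first equality~(\ref{egalite_phi_omega_Psi_q}).

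The second equality amounts to recognizing $\Theta^{r^*,\Lambda^*,l,\epsilon}(n) = e^{ir^*\langle X_v^*,X\rangle}\,\omega\circ\tilde\Psi_2(n)$. For this I would write $n = \exp(X+A)$ with $X = \sum_i x_i X_i$, apply the definition of $q_1$ from Section~\ref{subsec_N/C} to get
\[
q_1(n) = \exp\Bigl(\sum_{j=1}^{2v_0} x_j \overline{X_j} + \tfrac{\langle D_2^\epsilon(\Lambda^*),A\rangle}{\|\Lambda^*\|}\overline{B}\Bigr),
\]
and then apply $\Psi_2^{-1}$ as defined just before Proposition~\ref{prop_N1_K1}. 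The scaling factors $\sqrt{\|\Lambda^*\|/\lambda_j^*}$ appearing in $\Psi_2$ are precisely calibrated so that when one computes $|\mathrm{pr}_j(z)|^2$ for $z = \psi_c(\Psi_2^{-1}\circ q_1(n))$ one gets $\tfrac{\|\Lambda^*\|}{\lambda_j}\,|\mathrm{pr}_j X|^2$ (here $\lambda_j = \lambda^*_{m'_{j-1}+1} = \cdots = \lambda^*_{m'_j}$), and the central variable $t$ becomes $\langle D_2^\epsilon(\Lambda^*),A\rangle/\|\Lambda^*\|$. Plugging these into the explicit formula of Theorem~\ref{thm_paire_G_heis}.b) for $\omega_{\|\Lambda^*\|,l}$ gives
\[
\omega\circ\tilde\Psi_2(n) = e^{i\langle D_2^\epsilon(\Lambda^*),A\rangle}\prod_{j=1}^{v_1}\bar{\mathcal L}_{l_j,m_j-1}\!\Bigl(\tfrac{\lambda_j}{2}|\mathrm{pr}_j X|^2\Bigr),
\]
the factors $\|\Lambda^*\|/\lambda_j$ combining with $\tfrac{1}{2}\|\Lambda^*\|$ to produce $\tfrac{\lambda_j}{2}$. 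Multiplying by $e^{ir^*\langle X_v^*,X\rangle}$ recovers $\Theta^{r^*,\Lambda^*,l,\epsilon}$.

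The main obstacle is the second step: tracking the various identifications ($\psi_c$, $\Psi_1$, $\Psi_2$, $q_1$) through the Laguerre product, making sure the scaling factors cancel correctly and that the special case $v_0 = v'$, $K = SO(v)$, $\epsilon = -1$ (where $\Psi_2$ involves the swap in the last coordinate pair) still yields $|\mathrm{pr}_{v_1}X|^2$ since that projection is insensitive to the orientation reversal. Once this bookkeeping is done, the two equalities in the statement follow by pulling the $dk$-integral outside.
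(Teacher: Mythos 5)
Your proposal follows essentially the same route as the paper: compute the matrix coefficient via the splitting $\nu(k_1k_2,n)=e^{ir^*\langle X_v^*,X\rangle}\Pi^\omega(k_1,q_1(n))$, identify it with $\Omega^\omega(I,\tilde\Psi_2(n))=\omega\circ\tilde\Psi_2(n)$ using that $\Omega^\omega$ is the positive-type function of $\Pi_\omega$, then substitute the explicit $\tilde\Psi_2$ into the formula for $\omega_{\|\Lambda^*\|,l}$. Only a small bookkeeping slip: since $\Psi_2$ multiplies by $\sqrt{\|\Lambda^*\|/\lambda_j}$, the inverse gives $\nn{\pr j z}^2=\frac{\lambda_j}{\|\Lambda^*\|}\nn{\pr j X}^2$ (not $\frac{\|\Lambda^*\|}{\lambda_j}\nn{\pr j X}^2$), which combined with the factor $\frac{\|\Lambda^*\|}{2}$ in $\omega$ correctly yields $\frac{\lambda_j}{2}\nn{\pr j X}^2$.
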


\begin{proof}[du lemme~\ref{lem_phi_nu_G'}]
  D'apr\`es l'expression de la
  repr\'esentation~$\nu=\nu^{r^*,\Lambda^*,l,\epsilon}$, 
  on~a:
  $$
  \nu (I,n) \vec{u}=
  e^{i r^*<X^*_v,X>}
  \, \Pi^\omega (I,
  q_1(n))
  \Omega^\omega \circ \Psi_0
  \quad,
  $$
  puis :
  $$
  {\big<\nu(I,n).\vec{u}\,,\,
    \vec{u} \big>}_{\Hc^\nu}
  \,=\,
  e^{i r^*<X^*_v,X>}
  \, {\big< \Pi^\omega (I,q_1(n))
    \Omega^\omega \circ \Psi_0\, ,\, 
    \Omega^\omega \circ \Psi_0 \big>}_{H^\omega}
  \quad.
  $$
  L'isomorphisme $\Psi_0$ donne :
  $$
  {\big< \Pi^\omega (I, q_1(n))
    \Omega^\omega \circ \Psi_0\, ,\, 
    \Omega^\omega \circ \Psi_0 \big>}_{H^\omega}
  \,=\,
  {\big< \Pi_\omega (I,
    \Psi_2^{-1}\circ q_1(n) )
    \Omega^\omega ,\, 
    \Omega^\omega \big>}_{H_\omega}
  \quad.
  $$
  De l'expression de $\Psi_2$  (proposition~\ref{prop_N1_K1}), 
  on en d\'eduit
  une expression $\tilde{\Psi}_2=\Psi_2^{-1}\circ q_1$ 
  ($X$ \'etant d\'ecompos\'e selon~(\ref{notx})) 
  si $K=SO(v), v_0=v',\epsilon=-1$ :
  $$
  \tilde{\Psi}_2 (\exp (X+A))  
  \,=\,
  (\sqrt{\frac{\lambda_1}{\nn{\Lambda^*}}}(x_1+ix_2),\ldots, 
  -\sqrt{\frac{\lambda_{v'}}{\nn{\Lambda^*}}} (x_{2v'}+ix_{2v'-1}),
  \frac{<D_2^\epsilon(\Lambda^*),A>}{\nn{\Lambda^*}})
  \quad,  
  $$
  sinon :
  $$
  \tilde{\Psi}_2 (\exp (X+A))  
  \,=\,
  (\sqrt{\frac{\lambda_1}{\nn{\Lambda^*}}}(x_1+ix_2),\ldots, 
  \sqrt{\frac{\lambda_{v_0}}{\nn{\Lambda^*}}} (x_{2v_0-1}+ix_{2v_0}),
  \frac{<D_2(\Lambda^*),A>}{\nn{\Lambda^*}})
  \quad.  
  $$
  Par d\'efinition de la  repr\'esentation $\Pi_\omega$,
  la fonction $\Omega^\omega$ \'etant la fonction de type
  positif associ\'ee \`a cette repr\'esentation
  (ou par l'\'equation fonctionnelle~(\ref{eqfonc}) des
  fonctions sph\'eriques pour $\Omega^\omega$), on a : 
  $$
  {\big< \Pi_\omega (I,
    \tilde{\Psi}_2 (n)   )
    \Omega^\omega ,\, 
    \Omega^\omega \big>}_{H_\omega}
  \,=\,
  \Omega^\omega\left( I,
    \tilde{\Psi}_2 (n)  \right)
  \,=\,
  \omega\circ
  \tilde{\Psi}_2 (n)   
  \quad.
  $$
  Gr\^ace \`a l'expression de $\tilde{\Psi}_2$,
  en renommant les $\lambda^*_i$ en les $\lambda_j$ distincts,
  et gr\^ace \`a l'expression de 
  $\omega=  \omega_{\nn{\Lambda^*},l}$
  th\'eor\`eme~\ref{thm_paire_G_heis},
  on obtient dans les deux cas :
  $$
  \omega\circ
  \tilde{\Psi}_2 (\exp (X+A))   
  \,=\,
  e^{i<D_2^\epsilon(\Lambda^*),A>}
  \overset{v_1}{\underset{j=1} \Pi}
  \flb {l_j} {m_j-1} {\frac{\lambda_j}2 \nn{\pr j X }^2} 
  \quad,
  $$
  puis l'expression~(\ref{fcnsphnu}) de la fonction
  $\phi^{\nu}=\phi^{r^*,\Lambda^*,l,\epsilon}$
  donn\'ee dans le lemme.
\end{proof}

Ceci ach\`eve
la d\'emonstration du th\'eor\`eme \ref{thm_fcnsphnu}.

Les th\'eor\`emes~\ref{thm_fsphO} et~\ref{thm_fsphSO} sont ainsi d\'emontr\'es.

\section{Remarques}
\label{sec_remarques}

Nous confrontons ici les r\'esultats 
des  th\'eor\`emes~\ref{thm_fsphO} et~\ref{thm_fsphSO}
avec ceux d\'ej\`a connus, 
ou ceux issus d'autres propri\'et\'es des fonctions sph\'eriques.
Nous obtiendrons des propri\'et\'es du sous-laplacien de Kohn.

\subsection{Repr\'esentation sur $N_{v,2}$}
\label{subsec_rq_repN}

Dans la section pr\'ec\'edente, 
nous avons caract\'eris\'e les fonctions sph\'eriques born\'ees 
des paires $(N_{v,2},O(v))$ et $(N_{v,2},SO(v))$
gr\^ace aux repr\'esentations 
des groupes $O(v)\triangleleft N_{v,2}$ et $SO(v)\triangleleft N_{v,2}$.
On peut aussi le faire
gr\^ace aux repr\'esentations sur $N=N_{v,2}$ 
\cite[theorem~G]{pgelf}. 
\index{Fonction sph\'erique!et repr\'esentation}
Rappelons bri\`evement une partie de ce r\'esultat.
Pour une repr\'esentation $(\Hc,\Pi)$ irr\'eductible de $N$, 
on d\'efinit pour un \'el\'ement $k$ du stabilisateur $K_\Pi$ dans $K$, 
l'op\'erateur d'entrelacement $W_\Pi$,
donn\'e (\`a une constante complexe de module 1 pr\`es) par : 
$W_\Pi(k)\circ \Pi(n)=\Pi(k.n)\circ W_\Pi(k)$.
On obtient ainsi la repr\'esentation projective
$W_\Pi:K_\Pi\mapsto \End \Hc$.
On d\'ecompose $\Hc=\sum_l V_l$ 
en une somme orthogonale de sous espaces irr\'eductibles invariants sous l'action de $W_\Pi$.
Les fonctions sph\'eriques sont donn\'ees par 
$$
\phi_{\Pi,\zeta}(n)
\,=\,
\int_K <\Pi(k.n).\zeta,\zeta> dk\quad,
$$
\index{Notation!Fonction sph\'erique!$\phi_{\Pi,\zeta}$}
lorsque $\Pi\in\hat{N}$ et $\zeta\in V_l$, $\nn{\zeta}=1$
(deux repr\'esentations $\Pi$ \'equivalentes  
ou deux vecteurs du m\^eme espace $V_l$
donnent la m\^eme fonction sph\'erique).
Au cours de la preuve de \cite[theorem 8.7]{pgelf} (et des lemmes qui le pr\'ec\`edent), 
on utilise que pour une fonction $f\in {L^1(N)}^\natural$,
$\Pi(f)$ pr\'eserve chaque sous espace $V_l$
et vaut  en restriction \`a ce sous espace,
\`a une constante pr\`es $\Id_{V_l}$
(lemme de Schur).
Cette constante vaut $<\phi_{\Pi,\zeta},f>$, $\zeta \in V_l,\nn{\zeta}=1$.
Cette propri\'et\'e est aussi vrai pour les mesures radiales de masses finies.
On a donc aussi pour $\zeta\in E_l$, $\zeta'\in E_{l'}$:
$$
\int_K <\Pi(k.n).\zeta,\zeta'> dk
\,=\,
\phi_{\Pi,\zeta}(n)<\zeta,\zeta'>
\quad,
$$
qui vaut 0, si $l\not=l'$.

\paragraph{Cas $\Lambda^*=0$.}
Chaque fonction sph\'erique $\phi^{r^*,0}$ 
est trivialement associ\'ee par \cite[theorem~G]{pgelf}
\`a  la repr\'esentation de $N$ 
de dimension 1 donn\'ee par le caract\`ere :
$$
n=\exp(X+A)
\,\longmapsto\,
\exp(ir^*<X^*_v,X>)
\quad.
$$
\paragraph{Cas $\Lambda^*\not=0$.}
Soient $(r^*,\Lambda^*)\in \Qc$
avec $\Lambda^*\not =0$ et $l\in \Nb^{v_1}, \epsilon=\pm1,\emptyset$.
On consid\`ere la repr\'esentation
$\Pi=\Pi_{r^*,\Lambda^*,\epsilon}$  de $N$ 
\index{Notation!Repr\'esentation!$\Pi_{r^*,\Lambda^*,\epsilon}$}
sur l'espace $\Hc=L^2(\Rb^{v_0})$
donn\'ee pour  une fonction $f\in \Hc$
de la variable $(y_1,\ldots,y_{v_0})\in\Rb^{v_0}$,
et pour $n=\exp(X+A)\in N$ 
avec $X$ d\'ecompos\'e selon~(\ref{notx}) 
si $v_0=v'$ et $\epsilon=-1$ par :
\begin{eqnarray*}
  \Pi(n).f(y)
  &=&
  \exp \left(i r^*x_v+<D_2^\epsilon(\Lambda^*),A>\right) \\
  &&
  \exp \,i\left( 
    \sum_{j=1}^{v'-1}
    \frac {\lambda^*_j}2 x_{2j}x_{2j-1} +\sqrt{\lambda^*_j} x_{2j}y_j
    -\;\frac {\lambda^*_{v'} }2 x_{2v'}x_{2v'-1}
    -\sqrt{\lambda^*_{v'}} x_{2v'}y_{v'}\right)\\
  &&\quad
  f(y_1+\sqrt{\lambda^*_1}x_1,\ldots,y_{v'}+\sqrt{\lambda^*_{v'}}x_{2v'-1})
  \quad ,
\end{eqnarray*}
et sinon par :
\begin{eqnarray*}
  \Pi(n).f(y)
  \,=\,
  \exp \,i\left( r^*x_v+<D_2^\epsilon(\Lambda^*),A>+
    \sum_{j=1}^{v_0}
    \frac{\lambda^*_j}2 x_{2j}x_{2j-1}
    +\sqrt{\lambda^*_j} x_{2j}y_j
  \right)\\
  f(y_1+\sqrt{\lambda^*_1}x_1,\ldots,y_{v_0}+\sqrt{\lambda^*_{v_0}}x_{2v_0-1})
  \quad .
\end{eqnarray*}
Cette repr\'esentation $\Pi=\Pi_{r^*,\Lambda^*,\epsilon}$ est \'equivalente 
\`a $\rho_{r^*,\Lambda^*,\epsilon}$
gr\^ace \`a l'op\'erateur d'entrelacement :
$$
F\in \Hc_{r^*,\Lambda^*,\epsilon}
\,\longmapsto\,
f\in\Hc
\quad\mbox{avec}\quad
f(y_1,\ldots,y_{v'})
\,=\,
{(\overset{v_0}{\underset{j=1}{\Pi}}\lambda_j^*)}^{\frac14}
F(\sqrt{\lambda_1^*}y_1,\ldots, \sqrt{\lambda_{v_0}^*}y_{v_0}) 
\quad.
$$
La  repr\'esentation $\Pi$ est donc irr\'eductible;
on peut aussi le voir directement en s'inspirant du cas du groupe de Heisenberg
\cite{foll}.
Ainsi, c'est une repr\'esentation de $N$
associ\'ee par Kirillov, \`a l'orbite contenant $r^*X_v+D_2^\epsilon(\Lambda^*)$.
Nous avons  d\'ej\`a d\'ecrit
son noyau $\ker \Pi$ dans la remarque~\ref{rem_expression_noyau},
puis son stabilisateur $K_\Pi$ dans la sous-section~\ref{subsec_stab},
enfin le groupe quotient $N/\ker \Pi$ dans la section~\ref{subsec_N/C}.
Rappelons que le groupe $N/\ker \Pi$ est isomorphe par $\Psi_2$ au groupe de Heisenberg,
\`a une \'eventuel facteur euclidien pr\`es,
et que $K_\Pi$ est isomorphe par $\Psi_1$ au groupe $K(m,v_0,v_1)\subset U_{v_0}$.

La d\'ecomposition de $\Hc$ par $W_\Pi$ 
se ram\`ene au m\^eme probl\`eme sur le groupe de Heisenberg pour le groupe $K(m,v_0,v_1)$.
Ce dernier est connu: en effet, par exemple pour $K=U_{v_0}$,
et pour les repr\'esentations de Bargmann sur $\Hb^{v_0}$, 
ces espaces sont les espaces de polyn\^omes homog\`enes de degr\'e fix\'e \cite[ch.IV,III.2]{Far};
les repr\'esentations de Bargmann et de Schr\"odinger sont \'equivalentes
et leur op\'erateur d'entrelacement envoie le mon\^ome homog\`ene 
sur la fonction de Hermite tel que le degr\'e \'egale le param\`etre
(\`a une normalisation pr\`es) \cite[ch.IV,III.1]{Far}.

On obtient ainsi une autre construction des fonctions sph\'eriques born\'ees de $(N_{v,2}, SO(v))$,
dans la forme que nous donnons maintenant (mais que nous ne d\'emontrons pas).

Pour $l=(l_1,\ldots, l_{v_1})\in \Nb^{v_1}$, 
on note $E_l$ l'ensemble des $\alpha=(\alpha^1,\ldots,\alpha^{v_1})$
tels que $\alpha^j \in \Nb^{m_j}, \nn{\alpha^j}=l_j$ pour $j=1,\ldots, v_1$;
pour un tel $\alpha\in E_l$,
nous consid\'erons la fonction unitaire 
$\zeta_\alpha\in \Hc$ 
donn\'ee gr\^ace aux fonctions de Hermite (voir sous-section~\ref{subsec_fcn_hermiite}) 
par :
$$
\zeta_\alpha :
\left\{
  \begin{array}{rcl}
    \Rb^{v_0}
    &\longrightarrow&
    \Rb\\
    y_1,\ldots,y_{v_0}
    &\longmapsto&
    \Pi_{j=1}^{v_1}
    h_{\alpha^j}(y_{m'_{j-1}+1},\ldots,y_{m'_j}) 
  \end{array}
\right. 
\quad.
$$
Comme les fonctions de Hermite sur $\Rb$ 
forment une base hilbertienne de $L^2(\Rb)$, 
la famille $\zeta_\alpha,\alpha\in E_l,l\in\Nb^{v_1}$ 
est une base orthonorm\'ee de l'espace de Hilbert~$\Hc$. 
On peut montrer que les espaces $V_l$, engendr\'es par $\zeta_\alpha,\alpha\in E_l$ 
sont irr\'eductibles pour l'action de $K_\Pi$.

On sait donc d\'ej\`a que
pour $\alpha\in E_l$ et $\alpha'\in E_{l'}$,
la fonction
$$
n\,\longmapsto\, \int_{K} 
{<\Pi_{r^*,\Lambda^*,\epsilon}(k.n).\zeta_\alpha,\zeta_{\alpha'}>}_\Hc 
dk
\quad,
$$
est une fonction sph\'erique si $l=l'$ et $\alpha=\alpha'$, et nulle sinon.
On montre 
que cette fonction sph\'erique est 
$$
\phi_{\Pi,\zeta_\alpha}
\,=\,
\phi^{r^*,\Lambda^*,l,\epsilon}
\quad, \alpha\in E_l
\quad;
$$
pour cela, par exemple,
on peut consid\'erer le vecteur :
$$
\zeta_l
\,=\,
{(\card E_l)}^{-\frac12}
\sum_{\alpha\in E_l}
\zeta_{\alpha}
\,\in V_l
\quad,
$$
et utiliser les propri\'et\'es des fonctions de Hermite et de Laguerre.
En particulier, 
pour $\alpha\in E_l,\, \alpha'\in E_{l'}$ et $l,l'\in \Nb^{v_1}$,
on a :
\begin{equation}
  \label{formule_phi_Lambda}
  \int_{K} 
  {<\Pi_{r^*,\Lambda^*,\epsilon}(k.n).\zeta_\alpha,\zeta_{\alpha'}>}_\Hc 
  dk
  \,=\,
  \left\{\begin{array}{ll}
      \phi^{r^*,\Lambda^*,l,\epsilon}(n)&\quad\mbox{si}\; l=l',\, \alpha=\alpha'\quad,\\
      0&\quad\mbox{sinon}\quad.
    \end{array}\right.
\end{equation}
pour $K=SO(v)$ ou $O(v)$.

Soit une fonction  $f\in {L^1(N)}^{\natural}$.
Gr\^ace \`a ce qui a \'et\'e rappel\'e sur $\Pi(f)$ ou par calcul direct,
on a pour $\alpha \in E_l$ :
\begin{equation}
  \label{fcnrad_U_zeta_phi}
  {\Pi_{r^*,\Lambda^*,\epsilon}(f)}.\zeta_\alpha
  \,=\,
  <f,\phi^{r^*,\Lambda^*,l,\epsilon}> \zeta_\alpha
  \quad.
\end{equation}

\subsection{Sous-laplacien}
\label{subsec_prop_souslap_fourier}

Le \textbf{sous-laplacien ``de Kohn''}
\index{Sous-laplacien} 
est l'op\'erateur diff\'erentiel :
\index{Notation!Op\'erateur!$L$}
$$
L
\,:=\,
- \sum_{i=1}^v X_i^2
\quad.
$$
C'est un op\'erateur sous-elliptique 
(\`a coefficients analytiques) 
invariant par translation \`a gauche et par $O(v)$ et $SO(v)$;
donc les fonctions sph\'eriques (qui en sont des fonctions propres) sont analytiques 
(ce que l'on pouvait d\'ej\`a voir sur leurs expressions explicites).

Chaque repr\'esentation $\Pi=\Pi_{r^*,\Lambda^*,\epsilon}$ 
induit la repr\'esentation suivante, not\'ee $d\Pi$
de  l'alg\`ebre des op\'erateurs diff\'erentiels sur $N$ 
invariants \`a gauche 
sur l'espace des fonctions de Schwarz $\Sc(\Rb^{v_0})$ :
\begin{eqnarray*}
  j=1,\ldots,v_0
  \qquad
  d\Pi(X_{2j-1})
  &=&
  \sqrt{\lambda^*_j} \partial_{y_j}
  \qquad\mbox{et}\qquad
  d\Pi(X_{2j})
  \,=\,
  i\sqrt{\lambda^*_j} y_j
  \quad ,\\
  2v_0<j<v
  \qquad
  d\Pi(X_j)
  &=&
  0
  \qquad\mbox{et}\qquad
  d\Pi(X_v)
  \,=\,
  ir^* \Id
  \quad ,\\
  \forall i<j 
  \qquad
  d\Pi(X_{i,j})
  &=&
  \left\{
    \begin{array}{ll}
      i\sqrt{\lambda^*_{j'}} \Id &\mbox{si}\;(i,j)=(2j'-1,2j') , j'\not=v'\; , \\
      i\epsilon\sqrt{\lambda^*_{j'}} \Id &\mbox{si}\;(i,j)=(2v'-1,2v') \; , \\
      0&\mbox{sinon.} 
    \end{array}\right.  
\end{eqnarray*}

En particulier, pour le sous-laplacien $L$, on a :
$$
d\Pi(L)
\,=\,
{r^*}^{2} \Id
-\sum_{i=1}^{v_0}
\lambda^*_i\left(\partial_{y_i}^2
  -y_i^2\right)
\quad.
$$

Comme chaque fonction de Hermite-Weber 
$h_k,k\in \Nb$ sur $\Rb$
v\'erifie l'\'equation diff\'erentielle: 
$y''+(2k+1-x^2)y=0$, 
on calcule facilement pour $\alpha\in E_l$:
\begin{equation}
  \label{formule_dU_zeta}
  d\Pi(L).\zeta_\alpha
  \,=\,
  \left( \sum_{j=1}^{v_1}
    \lambda_j (2 l_j+m_j)  
    +{r^*}^{2}\right)\zeta_\alpha
  \quad.  
\end{equation}
Ainsi, les fonctions $\zeta_\alpha, \alpha\in E_l, l\in \Nb^{v_1}$ 
forment une base orthonorm\'ee 
de vecteurs propres de l'op\'erateur $d\Pi(L)$.

Gr\^ace aux \'egalit\'es~(\ref{formule_phi_Lambda}) 
et~(\ref{formule_dU_zeta}), 
on d\'eduit la valeur propre associ\'ee 
\`a une fonction sph\'erique born\'ee 
$\phi^{r^*,\Lambda^*,l,\epsilon}$ pour le sous-laplacien $L$ :
\index{Sous-laplacien!valeur propre} 
\begin{equation} \label{eg_valp_deltax}
  L. \phi^{r^*,\Lambda^*,l,\epsilon}
  \,=\,
  \left( \sum_{j=1}^{v_1}
    \lambda_j (2 l_j+m_j) 
    +{r^*}^{2}\right)
  \phi^{r^*,\Lambda^*,l,\epsilon}
  \quad .
\end{equation}

Cette \'egalit\'e a un sens pour
$\Lambda^*=0$,
en convenant toujours 
que si $\Lambda^*=0$,
$\phi^{r^*,\Lambda^*,l,\epsilon}$ signifie $\phi^{r^*,0}$ 
(et \'evidemment $\lambda^*_i=0$ et $\lambda_j=0$ aussi).
Sa preuve est directe lorsque l'on
consid\`ere les expressions des fonctions sph\'eriques born\'ees donn\'ees dans
le th\'eor\`eme~\ref{thm_fsphSO}.
En particulier pour $\epsilon=1$, 
on trouve les valeurs propres pour le sous laplacien
et les fonction sph\'eriques de la paire $N_{v,2},O(v)$

Dans le cas $\Lambda^*\not=0$, 
on a donn\'e une \'ebauche de preuve ci dessus.
On peut aussi faire
le calcul direct et utiliser des identit\'es
remarquables sur les fonctions de Laguerre,
ou encore  en utilisant 
notre construction des fonctions sph\'eriques born\'ees 
et ce que l'on a rappel\'e 
dans  la sous-section~\ref{subsec_fcnsph_heis} sur le groupe de Heisenberg.

\begin{cor}
  [Transform\'ee de Fourier du noyau de \mathversion{bold}{$m(L)$}]
  \label{cor_transf_noyau}
  \index{Sous-laplacien!noyau d'une fonction du-} 
  Soit $m\in\Sc(\Rb)$.
  Le noyau $M\in \Sc(N)$ de l'op\'erateur $m(L)$ est une fonction radiale
  dont on connait la transform\'ee de Fourier sph\'erique :
  $$
  \big< M,\phi^{r^*,\Lambda^*,l,\epsilon} \big>
  \,=\,
  m(\sum_{j=1}^{v_1}
  \lambda_j (2 l_j+m_j)  
  +{r^*}^{2})
  \quad,
  $$
  pour tout $(r^*,\Lambda)\in\Qc$, $l\in \Nb^{v_1}$ si
  $\Lambda^*\not=0$,
  et $\epsilon=\pm1,\emptyset$.
\end{cor}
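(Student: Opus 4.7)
The plan is to reduce the identity to the eigenvalue formula~(\ref{eg_valp_deltax}) by working inside the irreducible representations $\Pi_{r^*,\Lambda^*,\epsilon}$ of $N$ constructed in subsection~\ref{subsec_rq_repN}.

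First, I would show that $M$ is radial. The sub-Laplacian $L=-\sum_i X_i^2$ is $O(v)$-invariant, since the $O(v)$-action on $\mathcal{N}$ permutes the $X_i$ orthogonally; so the spectral functional calculus $m(L)$ also commutes with $O(v)$, and---being in addition left-invariant on $N$---its Schwartz convolution kernel $M$ (whose existence follows from Hulanicki's theorem on homogeneous sub-Laplacians of stratified Lie groups) is $O(v)$-invariant, i.e.\ radial. Fix now parameters $(r^*,\Lambda^*)\in\mathcal{Q}$ with $\Lambda^*\neq 0$, $l\in\mathbb{N}^{v_1}$, and $\epsilon$, and set $\mu:=\sum_{j=1}^{v_1}\lambda_j(2l_j+m_j)+{r^*}^2$. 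Pick any $\alpha\in E_l$. By~(\ref{formula_dU_zeta}) the vector $\zeta_\alpha$ is an eigenvector of $d\Pi_{r^*,\Lambda^*,\epsilon}(L)$ with eigenvalue $\mu$; and since $M$ is the Schwartz kernel of the operator $m(L)$ while $L$ is essentially self-adjoint with nonnegative spectrum, the integrated representation satisfies the functional-calculus identity $\Pi_{r^*,\Lambda^*,\epsilon}(M)=m\bigl(d\Pi_{r^*,\Lambda^*,\epsilon}(L)\bigr)$. Consequently $\Pi_{r^*,\Lambda^*,\epsilon}(M)\zeta_\alpha=m(\mu)\zeta_\alpha$.

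On the other hand, because $M$ is radial, equation~(\ref{fcnrad_U_zeta_phi}) states that $\Pi_{r^*,\Lambda^*,\epsilon}(M)\zeta_\alpha=\langle M,\phi^{r^*,\Lambda^*,l,\epsilon}\rangle\zeta_\alpha$. Comparing the two expressions yields $\langle M,\phi^{r^*,\Lambda^*,l,\epsilon}\rangle=m(\mu)$. The case $\Lambda^*=0$ is handled identically, using the one-dimensional representation attached to the character $\exp(X+A)\mapsto e^{i r^*\langle X_v^*,X\rangle}$, for which $d\Pi(L)$ reduces to the scalar ${r^*}^2$ and $\Pi(M)$ to the scalar $m({r^*}^2)$.

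The one delicate step is the justification of the operator identity $\Pi(M)=m(d\Pi(L))$---that is, that the integrated representation of the Schwartz kernel of a functional-calculus operator coincides with the functional calculus applied to the infinitesimal representation of $L$. This is standard but non-trivial; it is typically proved by verifying the identity first on a generating family (resolvents $m_\lambda(s)=(s+\lambda)^{-1}$, or the heat semigroup $m_t(s)=e^{-ts}$), where the matrix coefficients on both sides can be computed directly using the analyticity of $d\Pi$, and then extending to $m\in\mathcal{S}(\mathbb{R})$ by a density argument based on the spectral theorem and the Schwartz-class control on $M$.
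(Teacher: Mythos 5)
Your proposal is correct and follows essentially the same route as the paper: the corollary is presented there as a direct consequence of the eigenvalue formula~(\ref{eg_valp_deltax}), which is itself obtained exactly as you do, by combining the identity $\Pi_{r^*,\Lambda^*,\epsilon}(F).\zeta_\alpha=\langle F,\phi^{r^*,\Lambda^*,l,\epsilon}\rangle\,\zeta_\alpha$ of~(\ref{fcnrad_U_zeta_phi}) with the eigenvector computation~(\ref{formule_dU_zeta}) for $d\Pi(L)$. Your explicit flagging of the functional-calculus step $\Pi(M)=m(d\Pi(L))$ (via the heat semigroup or resolvents plus density) is the right way to make rigorous what the paper leaves implicit.
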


\subsection{Autres op\'erateurs diff\'erentiels}

Une autre m\'ethode 
pour d\'eterminer les fonctions sph\'eriques, 
serait d'utiliser le th\'eor\`eme~\ref{thm_fcnsph_opdiff}, 
c'est \`a dire de consid\'erer les fonctions sph\'eriques 
comme fonctions propres communes des op\'erateurs
diff\'erentiels invariants \`a gauche et par $O(v)$ ou $SO(v)$. 
C'est ce que nous avions fait
pour les paires de Guelfand $(\Hb^{v_0},K(m;v_0;v_1))$.
Mais sur le groupe $N=N_{v,2}$, il n'est pas ais\'e de trouver 
des g\'en\'erateurs pour ces op\'erateurs.

Outre le sous-laplacien $L$,
on connait d'autres op\'erateurs diff\'erentiels 
invariants \`a gauche et par $K$.
Citons d'abord le laplacien du centre :
$$
\Delta_\Zc
\,=\,
-\sum_{i<j} X_{i,j}^2
\quad,
$$
dont on calcule facilement la valeur propre associ\'ee 
\`a une fonction sph\'erique :
\begin{equation} \label{eg_valp_deltaa}
  \Delta_\Zc . \phi^{r^*,\Lambda^*,l,\epsilon}
  \,=\,
  \left(\sum_{i=1}^{v'} {\lambda^*_i}^2 \right)
  \phi^{r^*,\Lambda^*,l,\epsilon}
  \quad.
\end{equation}
Il y a \'egalement les op\'erateurs diff\'erentiels 
associ\'es \`a polyn\^ome $K$-invariant 
en les coefficients de matrice et de vecteur;
par exemple, le polyn\^ome $P(A,X)=\nd{A.X}^2$,
et tous les polyn\^omes $P(A,X)=\nd{A^n.X}^2,n\in\Nb$.

On peut donner des exemples d'op\'erateurs~$D_P$ 
issus de polyn\^ome $P$ seulement en les coefficients de matrice
invariant sous l'action par conjugaison de $K$ sur les
matrices :
\begin{itemize}
\item avec le polyn\^ome $P(A)=\nd{A}^2=\tr {A{}^tA}$, 
  on retrouve le laplacien du centre $D_P=2\Delta_\Zc$;
\item si l'on consid\`ere  le polyn\^ome caract\'eristique d'une matrice $A$:
  $$
  X^p+c_{p-1}(A) X^{p-1}+ \ldots c_0(A)
  \quad .
  $$ 
  Le polyn\^ome $c_i$ est sym\'etrique en les coefficients de la matrice
  $A$ et $K$-invariant.
  On peut donc lui associer un op\'erateur diff\'erentiel sur $\Nc$
  invariant \`a gauche et sous $K$, 
  en identifiant une matrice antisym\'etrique $A$ de coefficient
  antisym\'etrique $A_{i<j}$ et l'\'el\'ement $\sum A_{i,j} X_{i,j}$ 
  du centre de $\Nc$.  

  Par exemple, en \'etendant les notations 
  $X_{i,j}=-X_{j,i}, i<j$ et $X_{i,i}=0$,
  on peut consid\'erer le polyn\^ome en les coefficients matriciels 
  $c_0(A)=\det A$; 
  L'op\'erateur diff\'erentiel associ\'e 
  est :
  $$
  D_{c_0}
  \,=\,
  \sum \epsilon(\sigma)\Pi_i X_{i,\sigma(j)}
  \quad,
  $$
  la somme \'etant sur toutes les bijections 
  $\sigma$ de $\{1,\ldots,p\}$
  et $\epsilon(\sigma)$
  d\'esignant la signature de $\sigma$.
  On connait pour cet op\'erateur les valeurs propres associ\'ees \`a chaque
  fonction sph\'erique :
  $$
  D_{c_0} \phi^{r^*,\Lambda^*,l,\epsilon}
  \,=\,
  \left\{  \begin{array}{ll}
      \overset{v'}{\underset{i=1}{\Pi}} {\lambda^*_i}^2 \phi^{r^*,\Lambda^*,l,\epsilon}
      &\mbox{si}\; v=2v'\; ,\\
      0&\mbox{si}\; v=2v'+1\; .
    \end{array}\right.
  $$
\end{itemize}

Avec les \'egalit\'es~(\ref{eg_valp_deltaa}) et~(\ref{eg_valp_deltax}),
on d\'emontre :

\begin{lem}[Une famille de ferm\'es de \mathversion{bold}{$\Omega$}]
  \label{lem_topo_grlib}
  L'ensemble des fonctions sph\'eriques born\'ees $\Omega$
  (pour $SO(v)$ ou $O(v)$), 
  muni  la  convergence uniforme sur  tout compact,
  s'identifie au spectre de l'alg\`ebre ${L^1(N)}^\natural$,
  muni de la topologie faible-*.
  Le sous-ensemble $\Omega^N, N>0$ 
  de $\Omega$ 
  form\'e :
  \begin{itemize}
  \item des fonctions sph\'eriques 
    $\phi^{r^*,0}$ avec $r^*\leq N$,
  \item des fonctions sph\'eriques 
    $\phi^{r^*,\Lambda^*,l,\epsilon}$
    avec $\Lambda^*\in\overline{\Lc}-\{0\}$
    et $\epsilon=\pm1,\emptyset$
    v\'erifiant :
    $$
    \sum_{i=1}^{v'}{\lambda^*_i}^2 \leq N^2
    \qquad\mbox{et}\qquad
    \sum_{j=1}^{v_1}
    \lambda_j (2 l_j+m_j)  
    +{r^*}^{2}\leq N^2
    \quad,
    $$
  \end{itemize}
  est ferm\'e dans l'ensemble $\Omega$. 
\end{lem}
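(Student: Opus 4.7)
Le plan est d'interpr\'eter $\Omega^N$ comme l'ensemble des fonctions sph\'eriques born\'ees dont les valeurs propres pour les op\'erateurs diff\'erentiels invariants $L$ et $\Delta_\Zc$ sont toutes deux major\'ees par $N^2$. En effet, d'apr\`es les \'egalit\'es~(\ref{eg_valp_deltax}) et~(\ref{eg_valp_deltaa}), chaque $\phi\in\Omega$ est fonction propre commune de $L$ et $\Delta_\Zc$, de valeurs propres qu'on notera $\mu_L(\phi)$ et $\mu_\Zc(\phi)$; l'\'enonc\'e se reformule alors en
$$\Omega^N\,=\,\{\phi\in\Omega\; :\; \mu_L(\phi)\leq N^2\;\mbox{et}\;\mu_\Zc(\phi)\leq N^2\}\quad,$$
le cas $\Lambda^*=0$ \'etant inclus (on a alors $\mu_\Zc(\phi^{r^*,0})=0$ et $\mu_L(\phi^{r^*,0})={r^*}^2$). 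Il suffit donc d'\'etablir que les applications $\mu_L$ et $\mu_\Zc$ de $\Omega$ dans $\Rb^+$ sont s\'equentiellement continues pour la topologie de la convergence uniforme sur tout compact.

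Pour $\mu_L$, j'utiliserais directement le corollaire~\ref{cor_transf_noyau}: pour toute $m\in\Sc(\Rb)$ le noyau $M$ de $m(L)$ appartient \`a $\Sc(N)\subset {L^1(N)}^\natural$ et v\'erifie $\big<M,\phi\big>=m(\mu_L(\phi))$. Puisque toute fonction sph\'erique born\'ee v\'erifie $\nn{\phi}\leq 1$ et que $M\in L^1(N)$, le th\'eor\`eme de convergence domin\'ee montre que l'application $\phi\mapsto\big<M,\phi\big>$ est continue sur~$\Omega$. Ainsi, si $\phi_n\to\phi$ dans~$\Omega$ avec $(\mu_L(\phi_n))_n$ born\'ee (ce qui est le cas pour $\phi_n\in\Omega^N$), tout point d'accumulation $\mu_\infty$ de cette suite v\'erifie $m(\mu_\infty)=m(\mu_L(\phi))$ pour toute $m\in\Sc(\Rb)$; on en d\'eduit $\mu_\infty=\mu_L(\phi)$, puis $\mu_L(\phi_n)\to\mu_L(\phi)$, et en particulier $\mu_L(\phi)\leq N^2$.

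Pour $\mu_\Zc$, l'outil sera le calcul fonctionnel joint des op\'erateurs commutants $L$ et $\Delta_\Zc$. Pour $m_1,m_2\in\Sc(\Rb)$, l'op\'erateur $m_1(L)\,m_2(\Delta_\Zc)$ admet un noyau de convolution radial $M_{m_1,m_2}\in\Sc(N)^\natural$: en effet, le noyau de $m_1(L)$ est dans $\Sc(N)$ par le corollaire~\ref{cor_transf_noyau}, et $m_2(\Delta_\Zc)$ agit par convolution selon les variables du centre $\Zc$ par une fonction de Schwartz sur $\Zc$ (car $\Delta_\Zc$ est le laplacien euclidien du sous-espace central ab\'elien), ce qui pr\'eserve la d\'ecroissance rapide. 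Le pairing donne $\big<M_{m_1,m_2},\phi\big>=m_1(\mu_L(\phi))\,m_2(\mu_\Zc(\phi))$. En fixant $m_1\in\Sc(\Rb)$ strictement positive sur $[0,N^2]$ (par exemple $m_1(t)=e^{-t}$), on ram\`ene le probl\`eme au cas pr\'ec\'edent pour $m_2$ variable: pour toute suite $\phi_n\in\Omega^N$ convergeant vers $\phi$, $\mu_L(\phi_n)$ reste dans $[0,N^2]$, donc $m_1(\mu_L(\phi_n))$ est born\'e inf\'erieurement par une constante strictement positive (par la continuit\'e de $\mu_L$ d\'ej\`a \'etablie), et l'on extrait la continuit\'e de $\mu_\Zc$ le long de la suite, puis la majoration $\mu_\Zc(\phi)\leq N^2$.

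Le point d\'elicat sera de justifier proprement que le noyau de $m_1(L)\,m_2(\Delta_\Zc)$ est bien une fonction de Schwartz radiale sur $N$. Ceci repose d'une part sur le fait que $\Delta_\Zc$ n'op\`ere que dans les variables centrales (l'op\'erateur $m_2(\Delta_\Zc)$ est donc convolution par $\delta_\Vc\otimes K_{m_2}$ avec $K_{m_2}\in\Sc(\Zc)$) et d'autre part sur la stabilit\'e de $\Sc(N)$ par convolution avec une telle distribution centrale (qui se ram\`ene \`a une convolution partielle d'une fonction de Schwartz par une fonction de Schwartz). La radialit\'e du noyau r\'esulte de l'invariance sous $K$ de $L$, de $\Delta_\Zc$ et des fonctions scalaires $m_1,m_2$. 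Une fois ce point technique acquis, la continuit\'e s\'equentielle de $\mu_L$ et $\mu_\Zc$ donne imm\'ediatement que $\Omega^N$ est ferm\'e dans~$\Omega$.
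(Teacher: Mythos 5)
Your proof is correct and takes exactly the route the paper indicates: the paper gives no written argument beyond the remark that the lemma follows from the eigenvalue identities~(\ref{eg_valp_deltaa}) and~(\ref{eg_valp_deltax}), and your elaboration --- recasting $\Omega^N$ as the joint sublevel set of the eigenvalue functionals $\mu_L$ and $\mu_\Zc$, then deducing their sequential continuity on $\Omega$ from the $L^1$ pairing with the radial Schwartz kernels of $m_1(L)\,m_2(\Delta_\Zc)$ --- is a sound way to fill in that hint. The only cosmetic point is that $t\mapsto e^{-t}$ is not in $\Sc(\Rb)$; take instead any $m_1\in\Sc(\Rb)$ strictly positive on $[0,N^2]$, as your parenthetical already allows.
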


De ce lemme, on d\'eduit facilement :
\begin{cor}[Identification des structures bor\'eliennes de
  \mathversion{bold}{$\Omega$}]
  \label{cor_identification_bor_omega}
  Lorsque l'on identifie les fonctions sph\'eriques et leurs param\`etres,
  les structures bor\'eliennes de l'ensemble (topologique)
  $\Omega$ 
  et des param\`etres 
  $(r^*,\Lambda^*)\in\Qc,l\in\Nb^{v_1}, \epsilon=\pm1,\emptyset$ 
  des fonctions sph\'eriques born\'ees pour $SO(v)$ ou $O(v)$,
  s'identifient \'egalement.
\end{cor}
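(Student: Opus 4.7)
The plan is to exhibit the bijection between $\Omega$ and the parameter space as a Borel isomorphism by combining the explicit formulas from Theorems~\ref{thm_fsphO} and~\ref{thm_fsphSO} with the exhaustion furnished by Lemma~\ref{lem_topo_grlib}. Denote by $\Psi$ the map sending $(r^*,\Lambda^*,l,\epsilon)$ to $\phi^{r^*,\Lambda^*,l,\epsilon}$, with the convention that $l$ and $\epsilon$ drop out when $\Lambda^*=0$. The parameter space decomposes as a countable disjoint union, indexed by $v_0$, $v_1$, the multiplicity vector $m$, and (if $K=SO(v)$) the sign $\epsilon$; each stratum is a locally closed Borel subset of $\Rb^+\times \bar{\Lc}\times \Nb^{v_1}$, and is itself a standard Borel space. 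Thus the source of $\Psi$ is standard Borel.

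First, I would check that $\Psi$ is continuous on each stratum. The explicit formula
\[
\phi^{r^*,\Lambda^*,l,\epsilon}(n)\,=\,\int_{K}\Theta^{r^*,\Lambda^*,l,\epsilon}(k.n)\,dk
\]
expresses $\phi$ as a continuous function of $(r^*,\Lambda^*)$ for fixed $l,\epsilon$, with pointwise bound $1$ uniform in the parameters; Lebesgue's dominated convergence upgrades this to uniform convergence on compacta of $N$, which is exactly the topology on $\Omega$. Consequently $\Psi$ is Borel measurable from the parameter space to $\Omega$.

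Second, I would use Lemma~\ref{lem_topo_grlib} to transfer Borel structure in the other direction. The closed sets $\Omega^N$ exhaust $\Omega$ and, by the eigenvalue identities~(\ref{eg_valp_deltax}) and~(\ref{eg_valp_deltaa}), correspond under $\Psi$ to the subsets of the parameter space cut out by the two polynomial inequalities $\sum_i{\lambda^*_i}^2\leq N^2$ and $\sum_j \lambda_j(2l_j+m_j)+{r^*}^2\leq N^2$. These inequalities define Borel subsets of each stratum of the parameter space, and on each such piece the parameters can be recovered as Borel functions of $\phi$ (for instance, $r^{*2}+\sum\lambda_j(2l_j+m_j)$ is the eigenvalue of $L$, $\sum{\lambda^*_i}^2$ is the eigenvalue of $\Delta_\Zc$, and analogous invariants pin down the remaining parameters stratum by stratum). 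Hence $\Psi^{-1}$ is Borel on each $\Omega^N$; since these form a countable exhaustion, $\Psi^{-1}$ is globally Borel, and $\Psi$ is a Borel isomorphism.

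The main obstacle is the bookkeeping needed in the third step to show that the parameters are Borel functions of $\phi$ on each piece of the stratification; once one fixes $(v_0,v_1,m,\epsilon)$ the recovery is routine via the joint spectrum of a finite family of invariant differential operators (with $L$, $\Delta_\Zc$ and the $D_P$ from the previous subsection providing enough independent eigenvalues), but checking that the same invariants separate different strata, and that these separations are themselves Borel, is where care is required. Everything else is standard-measure-theoretic plumbing, built on Lemma~\ref{lem_topo_grlib}.
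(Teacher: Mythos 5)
Your proof is correct and follows the route the paper intends: the paper itself gives no argument beyond ``de ce lemme, on d\'eduit facilement'', and your combination of Lemme~\ref{lem_topo_grlib} with the continuity of the explicit parametrization is exactly the deduction being alluded to. The one soft spot is your claim that the remaining parameters $(r^*,l)$ are recovered from eigenvalues of invariant differential operators: the operators actually exhibited in the paper ($L$, $\Delta_\Zc$, the $D_{c_i}$) only yield $\Lambda^*$ and the single combination $\sum_j 2\lambda_j l_j+{r^*}^2$, so to finish that step you should either extract $l$ and $r^*$ from point evaluations of the explicit formulas (which are continuous functionals on $\Omega$), or simply note that an injective Borel map between standard Borel spaces is automatically a Borel isomorphism onto its image, which makes the inverse direction free once injectivity and measurability of $\Psi$ are in hand.
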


\section{Mesure de Plancherel radiale}

Dans cette section,
nous explicitons la mesure de Plancherel radiale 
pour les fonctions sph\'eriques pour $O(v)$.
La mesure de Plancherel radiale
est la mesure pour laquelle on a la formule de Plancherel radiale:
\begin{equation}
  \label{formule_plancherel_radiale}
  \forall f\in {L^1_{loc}(N)}^\natural
  \quad :\quad
  \nd{f}_{L^2(N)}
  \,=\,
  \nd{<f, \phi >}_{L^2(dm(\phi))}
  \quad.
\end{equation}
\index{Mesure de Plancherel!radiale}
Nous avons d\'ej\`a donn\'e son expression dans 
le th\'eor\`eme~\ref{thm_intro_mesure_plancherel}.
Dans cette section,
apr\`es avoir redonn\'e l'\'enonc\'e de ce th\'eor\`eme,
nous le d\'emontrons.

\subsection{Expression de la mesure de Plancherel radiale}
\label{subsec_mes_pl}

On identifie les fonctions sph\'eriques et leurs param\`etres.
Le corollaire~\ref{cor_identification_bor_omega}
nous permet d'identifier aussi les structures bor\'eliennes.
Le th\'eor\`eme qui suit montre que la mesure de Plancherel radiale est support\'ee 
par les fonctions sph\'eriques $\phi^{r^*,\Lambda^*,l}$, 
dont les param\`etres $r^*,\Lambda^*,l$ sont dans l'ensemble 
$\Pc_v=\Pc$ donn\'e par :
\index{Notation!Ensemble de param\`etres!$\Pc$}
\index{Notation!Ensemble de fonctions sph\'eriques!$\Pc$}
\begin{eqnarray*}
  \Pc_{2v'}
  &=&
  \{ 
  (0,\Lambda^*,l)\; , \quad \Lambda^*\in \Lc\, ,\; l\in \Nb^{v'}
  \} \quad,
  \\
  \Pc_{2v'+1}
  &=&
  \{ 
  (r^*,\Lambda^*,l)\; , \quad 
  r^*\in \Rb^+\, ,\; \Lambda^*\in \Lc\, ,\; l\in \Nb^{v'}
  \}  \quad,
\end{eqnarray*}
avec les notations du th\'eor\`eme~\ref{thm_fsphO}.

On note $\eta'$ la mesure sur le simplexe $\Lc$ donn\'ee par :
$$
d \eta'(\Lambda)
\,=\,
\Pi_j \lambda_j 
d \eta(\Lambda)
\quad, \quad \Lambda=(\lambda_1,\ldots,\lambda_{v'})\quad,
$$
o\`u $\eta$ est la mesure sur le simplexe $\Lc$ 
dont l'expression est donn\'ee dans le lemme~\ref{lem_pass_coord_pol},

On note \'egalement $m$ sur $\Pc$,
produit  tensoriel : 
\index{Notation!Mesure!$m$}
\begin{itemize}
\item de la mesure $\eta'$,
  \index{Notation!Mesure!$\eta'$}
\item de la mesure $\sum$ de comptage sur $\Nb^{v'}$,
\item et si $v=2v'+1$ de la mesure de Lebesgue $dr^*$ sur $\Rb^+$,
\end{itemize}
\`a la constante de normalisation pr\`es :
$$
c(v)
\,=\,
\left\{
  \begin{array}{ll}
    {(2\pi)}^{-\frac{v(v-1)}2-v'}
    &\quad
    \mbox{si}\; v=2v' \; ,\\
    2 {(2\pi)}^{-\frac{v(v-1)}2-1-v'}
    &\quad
    \mbox{si}\; v=2v'+1 \; .
  \end{array}\right.
$$
Dans le cas $v=2v'$,
on note la mesure :
$$
dm(\Lambda^*,l)
\,=\,
dm(\phi^{0,\Lambda^*,l})
\,=\,
c(v)  d\eta' \otimes \sum
\quad ,
$$
et dans le cas $v=2v'+1$ :
$$
dm(r^*,\Lambda^*,l)
\,=\,
dm(\phi^{r^*,\Lambda^*,l})
\,=\,
c(v) d\eta' \otimes \sum \otimes dr^*
\quad .
$$ 

Rappelons l'\'enonc\'e 
du th\'eor\`eme~\ref{thm_intro_mesure_plancherel} :

\begin{thm_princ}[Mesure de Plancherel radiale]
  \label{thm_mplanch}
  \index{Mesure de Plancherel!radiale}
  La mesure de Plancherel est support\'ee par $\Pc$ et s'identifie \`a $m$.
\end{thm_princ}

Dor\'enavant,
on confond les fonctions sph\'eriques $\phi$ 
et leurs param\`etres :
on \'ecrira  pour une fonction $g:\Pc\rightarrow \Cb$,
$g(\phi)$ ou $g(r^*,\Lambda^*,l)$
pour $g(r^*,\Lambda^*,l)$ lorsque
$\phi=\phi^{r^*,\Lambda^*,l}$.

Avant de passer \`a la d\'emonstration de ce th\'eor\`eme,
nous donnons quelques notations et propri\'t\'es 
concernant la transform\'ee de Fourier euclidienne.
Nous utiliserons les notations suivantes 
pour la transform\'ee de Fourier d'une fonction $f:\Rb^d\rightarrow \Cb$ :
$$
\Fc_{\zeta/x}.f= \int_{\Rb^d} f(x)e^{-i x.\zeta} dx
\quad\mbox{et}\quad
\check{\Fc}_{\zeta/x}.f= \int_{\Rb^d} f(x)e^{i x.\zeta} dx
\quad.
$$
On identifie $\Vc,\Vc^*$ et $\Rb^v$,
ainsi que $\Zc,\Zc^*$ et $\Rb^z,z=v(v-1)/2$; 
cela nous permet de consid\'erer les transform\'ees de Fourier
euclidienne en ces variables.

On remarque :
\begin{lem}
  \label{lem_fourier_A_fcnradiale}
  Soit $f\in {\Sc(N)}^\natural$. 
  Soient $A^*\in \Ac_v$ et $k\in O(v)$ tels que $k.A^*=A^*$.
  On a :
  $$
  \forall\, X\in \Vc\sim\Rb\qquad
  \Fc_{A^*/A}.\left[f(\exp (X+A))\right]
  \,=\,
  \Fc_{A^*/A}.\left[f(\exp (X+k^{-1}.A))\right]
  \quad.
  $$  
  On a aussi pour $A^*\in \Ac_v$ et $k\in O(v)$ :
  $$
  \int_{\Rb^v}\Fc_{A^*/A}.\left[f(\exp (X+A))\right]dX
  \,=\,
  \int_{\Rb^v}\Fc_{k.A^*/A}.\left[f(\exp (X+A))\right]dX
  \quad.
  $$  
\end{lem}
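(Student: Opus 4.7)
The plan is to prove both equalities by changes of variable in the integrals defining the partial Fourier transforms, exploiting two key facts: the action of $O(v)$ on $\Zc$ (identified with $\Ac_v$ via the canonical basis) is by isometries of the Euclidean structure, so the induced transformation of the Lebesgue measure $dA$ is trivial; and the fixed choice of Haar measure $dXdA$ on $N$ is $O(v)$-invariant.

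For the first identity, I would start from the right-hand side and perform the substitution $A' = k^{-1}.A$. Since $k$ acts orthogonally on $\Zc \simeq \Rb^z$ for the canonical inner product, one has $dA = dA'$. The phase factor becomes
\[
e^{-i<A^*,A>} \;=\; e^{-i<A^*,\,k.A'>} \;=\; e^{-i<k^{-1}.A^*,\,A'>},
\]
by the orthogonality of the action on $\Zc$. Using the hypothesis $k.A^* = A^*$ (equivalently $k^{-1}.A^* = A^*$), this is exactly $e^{-i<A^*,A'>}$, and we obtain $\Fc_{A^*/A}[f(\exp(X+A))]$. Note that for this first equality, the radiality of $f$ is not actually required: only the change of variable and the stabilizer condition on $A^*$ enter.

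For the second identity, the key extra ingredient is the $O(v)$-invariance of $f$, i.e.\ $f \in \Sc(N)^\natural$, which reads $f(\exp(X+A)) = f(\exp(k.X + k.A))$ for every $k \in O(v)$. I would write out the left-hand side as the double integral
\[
\int_{\Vc}\int_{\Zc} f(\exp(X+A))\, e^{-i<A^*,A>}\,dA\,dX,
\]
and perform the simultaneous change of variable $X' = k.X$, $A' = k.A$. The Jacobian is $1$ since $k$ acts orthogonally on both $\Vc$ and $\Zc$ for their canonical Euclidean structures, and by radiality $f(\exp(X+A)) = f(\exp(X'+A'))$. The exponent transforms as $<A^*,A> = <A^*, k^{-1}.A'> = <k.A^*,A'>$, again by orthogonality, and one recognizes the right-hand side.

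The proof is essentially bookkeeping, so the main (mild) obstacle is simply to keep consistent the identification $\Zc \simeq \Zc^* \simeq \Ac_v$ used in the statement: one must verify that the action $A \mapsto k.A = kAk^{-1}$ on skew-symmetric matrices is indeed orthogonal for the inner product fixed in subsection~\ref{subsec_def_grlib} (which makes the canonical basis $X_{i,j}$ orthonormal). This is precisely the content of the identity $<A^*,k.A> = <k^{-1}.A^*,A>$ used in both parts, and it follows directly from $<kAk^{-1}, kBk^{-1}> = <A,B>$, which can be checked by expanding on the canonical basis.
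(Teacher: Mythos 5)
Your proof is correct and follows essentially the same route as the paper's: the first identity via the substitution $A'=k^{-1}.A$ combined with the duality $<A^*,k.A>=<k^{-1}.A^*,A>$ and the stabilizer hypothesis, and the second via a change of variables plus the radiality of $f$ to move $k$ between the $A$- and $X$-variables. The only difference is cosmetic (you perform a simultaneous change of variables in $X$ and $A$ where the paper does them sequentially), and your remark that radiality is not needed for the first identity is accurate.
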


\begin{proof}[du lemme~\ref{lem_fourier_A_fcnradiale}]
  Pour la premi\`ere \'egalit\'e,
  effectuons le changement de variable $A'=k^{-1}.A$ :
  \begin{eqnarray*}
    \Fc_{A^*/A}.\left[f(k.\exp (X+A))\right]
    &=&
    \int_{\Ac_v}
    f(\exp (X+k^{-1}.A))
    e^{-i<A^*,A>} dA\\
    &=&
    \int_{\Ac_v}
    f(\exp (X+A'))
    e^{-i<A^*,k.A'>} dA'
    \quad.
  \end{eqnarray*}
  On a d'une part, par dualit\'e :$<A^*,k.A>=<k^{-1}.A^*,A>$,
  d'autre part $k^{-1}.A^*=A^*$
  car $A^*$ et $k$ commute par hypoth\`ese.

  Pour la seconde \'egalit\'e, 
  commen\c cons par remarquer :
  \begin{eqnarray*}
    &&\Fc_{k.A^*/A}.\left[f(\exp (X+A))\right]
    \,=\,
    \int_{\Ac_v}
    f(\exp (X+A))
    e^{-i<k.A^*,A>} dA\\
    &&\quad=\,
    \int_{\Ac_v}
    f(\exp (X+A'))
    e^{-i<A^*,k^{-1}.A>} dA
    \,=\,
    \int_{\Ac_v}
    f(\exp (X+k.A'))
    e^{-i<A^*,A'>} dA'\quad,
  \end{eqnarray*}
  apr\`es le changement de variable $A'=k^{-1}.A$.
  Maintenant, comme $f$ est radiale, on voit :
  $f(\exp (X+k.A'))=
  f(\exp (k^{-1}.X+A'))$; 
  on en d\'eduit :
  $$
  \Fc_{k.A^*/A}.\left[f(\exp (X+A))\right]
  \,=\,
  \Fc_{A^*/A}.\left[f(\exp (X+k.A))\right]
  \,=\,
  \Fc_{A^*/A}.\left[f(\exp (k^{-1}.X+A))\right]
  \quad,
  $$
  puis :
  \begin{eqnarray*}
    \int_{\Rb^v}\Fc_{k.A^*/A}.\left[f(\exp (X+A))\right]dX
    &=&
    \int_{\Rb^v}\Fc_{A^*/A}.\left[f(\exp (k^{-1}.X+A))\right]dX\\
    &=&
    \int_{\Rb^v}\Fc_{A^*/A}.\left[f(\exp (X'+A))\right]dX'
  \end{eqnarray*}
  apr\`es le changement de variable $X'=k^{-1}.X$.
\end{proof}

\paragraph{D\'emontrons le th\'eor\`eme~\ref{thm_mplanch}.}
On d\'efinit les sous groupes $K_i, i=1,\ldots,v_0$ 
de $K$ de la mani\`ere suivante : 
$K_i$ est l'ensemble des \'el\'ements $k\in SO(v)$
tels que $k.X_j=X_j$ pour tout $j\not= 2i-1,2i$,
qui laissent stable le plan 
$P_i:=\Rb X_{2i-1}\oplus \Rb X_{2i}$.
On note $dk_i$ la mesure de Haar (de masse 1)
du groupe compact $K_i$, $i=1,\ldots,v_0$.
Par restriction \`a $P_i$,
les \'el\'ements de $K_i$ 
sont des transformations orthogonales directes 
du plan $P_i$, 
et on a le passage en coordonn\'ees polaires :
\begin{equation}
  \label{formule_changement_polaire_Pi}
  \int_{P_i} f(x_{2i-1}X_{2i-1}+x_{2i}X_{2i}) dx_{2i-1}dx_{2i}
  \,=\,
  2\pi 
  \int_{K_i} \int_0^\infty f(r_i k_i.X_{2i-1}) r_i dr_i dk_i
  \quad.  
\end{equation}
On note $\tilde{K}=K_1\times\ldots\times K_{v'}$;
c'est un groupe compact, 
dont on note $d\tilde{k}=dk_1\ldots dk_{v'}$ 
la mesure de Haar de masse 1.
On d\'efinit 
la mesure sur ${\Rb^+}^{v'}$ :
$\tilde{r}d\tilde{r}
= r_1dr_1\ldots r_{v'}dr_{v'}$.

Par approximation, 
pour montrer le th\'eor\`eme~\ref{thm_mplanch},
il suffit de montrer la 
formule~(\ref{formule_plancherel_radiale})
pour la classe de fonctions ${\Sc(N)}^\natural$.
Soit donc $f\in{\Sc(N)}^\natural$.

Soit $\phi=\phi^{r^*,\Lambda^*,l}$ 
une fonction sp\'erique telle que $\Lambda^*\in\Lc$
et $r^*=0$ si $v=2v'$.
D'apr\`es son expression
donn\'ee dans le th\'eor\`eme~\ref{thm_fsphO},
on a :
\begin{eqnarray*}
  <f,\phi>
  &=&
  \int_{\Rb^v}\int_{\Ac_v}
  f(\exp (X+A))
  e^{i r^* x_v}  e^{i<D_2(\Lambda^*),A>}
  \overset{v'}{\underset{j=1}{\Pi}}
  \flz {l_j} {\frac{\lambda_j}2 (x_{2j-1}^2 +x_{2j}^2)}
  dAdX\\
  &=&
  \int_{\Rb}\int_{\Ac_v}
  \int_{P_{v'}}\ldots\int_{P_{v_1}}
  f\circ\exp \left(\sum_{i=1}^{v'}
    x_{2i-1}X_{2i-1}+x_{2i}X_{2i}
    \; +x_{2v'+1}X_{2v'+1}+A\right)\\
  &&\quad  e^{i r^* x_v}  e^{i<D_2(\Lambda^*),A>}
  \overset{v'}{\underset{j=1}{\Pi}}
  \flz {l_j} {\frac{\lambda_j}2 (x_{2j-1}^2+x_{2j}^2)}
  dx_1\ldots dx_{2v'}
  \; dAdx_{2v'+1}
  \quad.
\end{eqnarray*}
Appliquons aux $v'$ plans  $P_i, i=1,\ldots, v'$
la formule~(\ref{formule_changement_polaire_Pi}).
On obtient :
\begin{eqnarray*}
  <f,\phi>
  \,=\,
  {(2\pi)}^{v'}
  \int_{\Rb}\int_{\Ac_v}
  \int_{{\Rb^+}^{v'}}\int_{\tilde{K}}
  f\circ\exp \left(\sum_{i=1}^{v'}
    r_ik_i.X_{2i-1}
    \;+ x_{2v'+1}X_{2v'+1}+A\right)\\
  \overset{v'}{\underset{j=1}{\Pi}}
  \flz {l_j} {\frac{\lambda_j}2 r_j^2}
  d\tilde{k}\,\tilde{r}d\tilde{r}\,
  e^{i r^* x_v}  e^{i<D_2(\Lambda^*),A>}
  \; dAdx_{2v'+1}\quad.   
\end{eqnarray*}
On a donc :
\begin{eqnarray*}
  <f,\phi>
  \,=\,
  {(2\pi)}^{v'}
  \int_{{\Rb^+}^{v'}}\int_{\tilde{K}}
  F(\tilde{r},\Lambda^*,r^*,\tilde{k}) 
  \overset{v'}{\underset{j=1}{\Pi}}
  \flz {l_j} {\frac{\lambda_j}2 r_j^2}
  d\tilde{k}\,\tilde{r}d\tilde{r}
  \quad,
\end{eqnarray*}
o\`u on a pos\'e pour 
$\tilde{r}=(r_1,\ldots,r_{v'})\in {\Rb^+}^{v'}, 
\Lambda^*\in \Lc,r^*\in \Rb^+$  et 
$\tilde{k}=(k_1,\ldots, k_{v'})\in\tilde{K}$ :
\begin{eqnarray*}
  &&F(\tilde{r},\Lambda^*,r^*,\tilde{k})\\
  &&\, :=\,
  \Fc_{r^*/x_{2v'+1}}\,.\,\Fc_{D_2(\Lambda^*)/A}\,.\,
  \left[f\circ\exp \left( \sum_{i=1}^{v'}
      r_ik_i.X_{2i-1}
      + x_{2v'+1}X_{2v'+1} +A\right)\right] \;.
\end{eqnarray*}

Montrons que l'expression 
$F(\tilde{r},\Lambda^*,r^*,\tilde{k})$ ne
d\'epend pas de $\tilde{k}=(k_1,\ldots,k_{v'})$ :
par d\'efintion des sous-groupes $K_i$,
on a :
\begin{eqnarray*}
  &&f\left(\exp \left( \sum_{i=1}^{v'}
      r_ik_i.X_{2i-1}
      \;+ x_{2v'+1}X_{2v'+1} +A\right)\right)\\
  &&\qquad=\,
  f\left(k_1\ldots k_{v'}.\exp \left( \sum_{i=1}^{v'}
      r_iX_{2i-1}
      \;+ x_{2v'+1}X_{2v'+1} +{(k_1\ldots k_{v'})}^{-1}.A\right)\right) \quad;\\
  &&\qquad=\,
  f\left(\exp \left( \sum_{i=1}^{v'}
      r_iX_{2i-1}
      \;+ x_{2v'+1}X_{2v'+1} +{\tilde{k}}^{-1}.A\right)\right) \quad,
\end{eqnarray*}
car la fonction $f$ est radiale.
Maintenant comme les matrices orthogonales directes du plan 
commutent avec $J$,
la matrice $D_2(\Lambda^*)$ 
commute avec tous les $k_i\in K_i, i=1,\ldots,v'$
donc avec~$\tilde{k}$.
Ainsi d'apr\`es la premi\`ere \'egalit\'e du
lemme~\ref{lem_fourier_A_fcnradiale} et celle qui pr\'ec\`ede,
on a :
\begin{eqnarray*}
  &&  \Fc_{D_2(\Lambda^*)/A}\,.\,
  \left[f\left(\exp\left(  \sum_{i=1}^{v'}
        r_ik_i.X_{2i-1}
        \;+ x_{2v'+1}X_{2v'+1} +A\right)\right)\right]\\
  &&\quad=\,
  \Fc_{D_2(\Lambda^*)/A}\,.\,
  \left[f\left(\exp\left(  \sum_{i=1}^{v'}
        r_iX_{2i-1}
        \;+ x_{2v'+1}X_{2v'+1} + A\right)\right)
  \right]\quad,
\end{eqnarray*}
puis :
\begin{eqnarray*}
  F(\tilde{r},\Lambda^*,r^*,\tilde{k})
  &=&
  \Fc_{r^*/x_{2v'+1}}\,.\,\Fc_{D_2(\Lambda^*)/A}\,.\,
  \left[f\left(\exp\left(  \sum_{i=1}^{v'}
        r_i X_{2i-1}
        \;+ x_{2v'+1}X_{2v'+1} +A\right)\right)\right]\\
  &:=&
  F(\tilde{r},\Lambda^*,r^*)
  \quad.
\end{eqnarray*}

Avec cette nouvelle notation, 
on a :
\begin{eqnarray*} 
  <f,\phi>
  &=&
  {(2\pi)}^{v'}
  \int_{{\Rb^+}^{v'}}\int_{\tilde{K}}
  F(\tilde{r},\Lambda^*,r^*)
  \overset{v'}{\underset{j=1}{\Pi}}
  \flz {l_j} {\frac{\lambda_j}2 r_j^2}
  d\tilde{k}\,\tilde{r}d\tilde{r}\\
  &=&
  {(2\pi)}^{v'}
  \int_{{\Rb^+}^{v'}}
  F(\tilde{r},\Lambda^*,r^*)
  \overset{v'}{\underset{j=1}{\Pi}}
  \flz {l_j} {\frac{\lambda_j}2 r_j^2}
  \tilde{r}d\tilde{r}
  \quad.
\end{eqnarray*}

Effectuons les changements de variables 
$r'_j=\lambda_j r_j^2/2, j=1,\ldots,v'$;
on a 
$$
dr' = dr'_1\ldots dr'_{v'}= 
\overset{v'}{\underset{j=1}{\Pi}} \lambda_j
\tilde{r}d\tilde{r}
\quad,
$$
d'o\`u :
\begin{equation}
  \label{eg_tildeF_fphi}
  <f,\phi>
  \,=\,
  {(2\pi)}^{v'}
  \overset{v'}{\underset{j=1}{\Pi}} \lambda_j^{-1}
  \int_{{\Rb^+}^{v'}} 
  \tilde{F}(r',\Lambda^*,r^*)
  \overset{v'}{\underset{j=1}{\Pi}}
  \flz {l_j}  {r'_j}\;
  dr'
  \quad,  
\end{equation}
o\`u on a pos\'e 
pour 
$r'=(r'_1,\ldots,r'_{v'})\in{\Rb^+}^{v'}$,
$\Lambda^*\in \Lc$, et $r^*\in \Rb^+$ :
$$
\tilde{F}(r',\Lambda^*,r^*)
\,:=\,
F(\sqrt{\frac{2r'_1}{\lambda_1}},\ldots, 
\sqrt{\frac{2r'_{v'}}{\lambda_{v'}}},\Lambda^*,r^*)
\quad.
$$

Or les fonctions 
$\Pi_{j=1}^{v'} \flsz {l_j}, l\in \Nb^{v'}$ 
forment une base orthonormale de l'espace $L^2(\Rb^{v'})$
(voir sous-section~\ref{subsec_prop_fcnlag_0}).
Donc on a :
$$
\int_{{\Rb^+}^{v'}} 
\nn{\tilde{F}(r',\Lambda^*,r^*)}^2 dr'
\,=\,
\sum_l
\nn{\int_{{\Rb^+}^{v'}} 
  \tilde{F}(r',\Lambda^*,r^*)
  \overset{v'}{\underset{j=1}{\Pi}}
  \flz {l_j}  {r'_j}\;
  dr'}^2 
\quad.
$$
Le membre de droite 
d'apr\`es l'\'egalit\'e~(\ref{eg_tildeF_fphi}),
vaut 
${({(2\pi)}^{-v'}\Pi_j \lambda_j)}^2 \sum_l \nn{<f,\phi>}^2$.\\
Le membre de gauche se calcule directement 
d'abord en effectuant les changements de variable 
$r'_j=\lambda_j r_j^2/2$,
$$
\int_{{\Rb^+}^{v'}} 
\nn{\tilde{F}(r',\Lambda^*,r^*)}^2 dr'
\,=\,
\overset{v'}{\underset{j=1}{\Pi}} \lambda_j
\int_{{\Rb^+}^{v'}} 
\nn{F(\tilde{r},\Lambda^*,r^*)}^2 
\tilde{r}d\tilde{r}
\quad,
$$
puis en remarquant que par d\'efinition de $F$, on a :
\begin{eqnarray*}
  &&  \int_{{\Rb^+}^{v'}} 
  \nn{F(r,\Lambda^*,r^*)}^2 r_1dr_1 \ldots r_{v'}dr_{v'}
  \,=\,
  \int_{{\Rb^+}^{v'}}\int_{\tilde{K}}
  \nn{F(\tilde{r},\Lambda^*,r^*,\tilde{k})}^2 
  d\tilde{k}\,\tilde{r}d\tilde{r} \\
  &&\,=\,
  \int_{\Rb^{2v'}}
  \nn{\Fc_{r^*/x_{2v'+1}}\Fc_{D_2(\Lambda^*)/A}.
    \left[f\circ\exp\left(  \sum_{i=1}^{v'}
        x_{2i-1}X_{2i-1} +x_{2i}X_{2i}
        + x_{2v'+1}X_{2v'+1} +A\right)\right]}^2\\
  &&\qquad\qquad\qquad   {(2\pi)}^{-v'} dx_1dx_2\ldots dx_{2v'-1}dx_{2v'}
  \quad.
\end{eqnarray*}
en ayant appliqu\'e aux $v'$ plans  $P_i, i=1,\ldots, v'$
la formule~(\ref{formule_changement_polaire_Pi}).

On en d\'eduit l'\'egalit\'e :
\begin{eqnarray*}
  &&{({(2\pi)}^{-v'}\Pi_j \lambda_j)}^2
  \sum_l \nn{<f,\phi>}^2\\
  &&\quad=\,
  {(2\pi)}^{-v'}\Pi_j \lambda_j
  \int_{\Rb^{2v'}}
  \nn{\Fc_{r^*/x_{2v'+1}}\Fc_{D_2(\Lambda^*)/A}.
    \left[f\circ\exp\left( \sum_{i=1}^v x_iX_i +A\right)\right]}^2 dx_1\ldots dx_{2v'}
  \;.
\end{eqnarray*}
Dans le cas $v=2v'$, on a :
$$
{(2\pi)}^{-v'}\Pi_j \lambda_j
\sum_l \nn{<f,\phi>}^2
\,=\,
\int_{\Rb^v}
\nn{\Fc_{D_2(\Lambda^*)/A}\,.\,
  \left[f(\exp (X +A))\right]}^2 dX
\quad.
$$
Dans le cas $v=2v'+1$,
par parit\'e pour la variable $r^*$,
apr\`es int\'egration contre $dr^*$,
d'apr\`es la formule  de Plancherel sur~$\Rb$,
on obtient :
$$
{(2\pi)}^{-v'}\Pi_j \lambda_j
2\int_{\Rb^+}
\sum_l \nn{<f,\phi>}^2 dr^*
\,=\,
2\pi
\int_{\Rb^v} \nn{\Fc_{D_2(\Lambda^*)/A}\,.\,
  \left[f(\exp(X +A))\right]}^2 dX
\quad.
$$

Appliquons la seconde \'egalit\'e 
du lemme~\ref{lem_fourier_A_fcnradiale} 
pour tout $k\in O(v)$:
\begin{eqnarray*}
  &&\int_{\Rb^v} \nn{\Fc_{D_2(\Lambda^*)/A}\,.\,
    \left[f(\exp(X +k.A))\right]}^2 dX
  \,=\,
  \int_{\Rb^v} \nn{\Fc_{k.D_2(\Lambda^*)/A}\,.\,
    \left[f(\exp(X +A))\right]}^2 dX  \\
  &&\quad=\,
  \int_{SO(v)}
  \int_{\Rb^v} 
  \nn{\Fc_{k.D_2(\Lambda^*)/A}\,.\,
    \left[f(\exp(X +A))\right]}^2 dX  dk 
  \quad;
\end{eqnarray*}
et donc en int\'egrant sur $\Lc$, il vient :
\begin{eqnarray*}
  &&\int_{\Lc}
  \int_{\Rb} 
  \sum_l \nn{<f,\phi>}^2 dr^*
  d\eta(\Lambda^*)\\
  &&\quad =\,
  2\pi
  \int_{ SO(v)} \int_{\Lc} \int_{\Rb^v} 
  \nn{\Fc_{k.D_2(\Lambda^*)/A}\,.\,
    \left[f(\exp( X +A))\right]}^2 
  dX \; d\eta(\Lambda^*)dk
  \quad,
\end{eqnarray*}
sachant que dans le cas paire, 
le terme $2\pi$ et l'int\'egrale contre $dr^*$ n'y sont pas.

Effectuons le passage en coordonn\'ees polaires 
sur $\Ac_v$ (voir section~\ref{sec_app_matrice_antisym}) :
le membre de droite de cette derni\`ere \'egalit\'e 
peut s'\'ecrire :
$$
\int_{\Ac_v}\int_{\Rb^v} 
\nn{\Fc_{A^*/A}\,.\,
  \left[f(\exp(X +A))\right]}^2 dXdA^*
\quad,
$$
ou encore
gr\^ace \`a la formule de Plancherel euclidienne :
$$ 
{(2\pi)}^{\frac{v(v-1)}2}
\int_{\Ac_v}\int_{\Rb^v} 
\nn{f(\exp(X+A))}^2 dX \; dA
\,=\,
{(2\pi)}^{\frac{v(v-1)}2 }
\nd{f}^2
\quad.  
$$

Finalement, on a obtenu :
$$
{(2\pi)}^{-v'}\int_{\Lc}
\int_{\Rb} 
\sum_l \nn{<f,\phi>}^2 dr^*
\Pi_j \lambda_jd\eta(\Lambda^*)
\, =\,
2.2\pi {(2\pi)}^{\frac{v(v-1)}2 }
\nd{f}^2
\quad,
$$
sachant que dans le cas paire, 
le terme $2.2\pi$ et l'int\'egrale contre $dr^*$ n'y sont pas.

Ceci ach\`eve la
d\'emonstration du th\'eor\`eme~\ref{thm_mplanch}.

\subsection{Inversion}
\label{subsec_inversion}

Pour toute fonction $f\in L^2(N)$ radiale, 
on d\'efinit au sens $L^2$ :
$$
f(n)
\,=\,
\int_{\Pc}
\bar{\phi}(n)
<f,\phi>
dm(\phi)
\quad,
$$
c'est-\`a-dire que
$f$ est la fonction de ${L^2(N)}^\natural$
qui correspond 
\`a la forme sesquilin\'eaire 
de l'espace de Hilbert ${L^2(N)}^\natural$ :
$$
h\in {L^2(N)}^\natural
\,\mapsto\,
\int_{\Pc} <h,\phi> g(\phi) dm(\phi)
\quad;
$$ 
les int\'egrales ci-dessus ont bien un sens, car on a :
\begin{itemize}
\item d'une part $g(\phi)\in L^2(dm(\phi))$ par hypoth\`ese, 
\item d'autre part $<h,\phi>\in L^2(dm(\phi))$ 
  d'apr\`es la formule~(\ref{formule_plancherel_radiale}) 
  de Plancherel,
\item d'o\`u $<h,\phi> g(\phi)\in L^1(dm(\phi))$.
\end{itemize}

R\'eciproquement,
soit $g$ une fonction sur $\Pc$.
Si $g\in L^2(m)$,
on d\'efinit au sens $L^2$ la fonction 
$f:N\rightarrow \Cb$ suivante :
$$
f(n)
\,=\,
\int_{\Pc}
\bar{\phi}(n)
g(\phi)
dm(\phi)
\quad.
$$
C'est une fonction radiale de carr\'ee int\'egrable, 
et sa transform\'ee de Fourier est donn\'ee par : 
$<f,\phi>=g(\phi)$.

On peut aussi d\'efinir les distributions radiales \`a partir des distributions sur $\Pc$.
En effet,
si $h$ est une fonction $C^\infty$ 
\`a support compact sur $N$ (on note $h\in C_0^\infty(N)$), 
alors la fonction $\phi\mapsto <h,\phi>$
est \`a d\'ecroissance rapide sur $\Pc$ 
(d'apr\`es la forme des fonctions $\phi$ et les propri\'et\'es des fonctions de Laguerre)
et on a : $<h,\phi>=<h^\natural,\phi>$
o\`u on a not\'e :
$$
h^\natural
:n\in N\,\longmapsto\, \int_{O(v)}h(k.n)dk
\quad.
$$
Pour $g$ une distribution de Schwarz sur $\Pc$,
en particulier pour $g\in L^\infty(\Pc)$,
on d\'efinit alors l'application lin\'eaire  
$$
f:
h\in C_0^\infty(N) 
\,\longmapsto \, 
\int_N <h^\natural,\phi> g(\phi) dm(\phi)
\quad;
$$
l'application $f$ est ainsi 
une distribution (radiale) sur $N$, 
dont la transform\'ee de Fourier est
$<f,\phi>=g$.

\subsection{Lien avec le cas non radial}
\label{subsec_mesure_planchrel_nonradiale}

Nous donnons ici  avec nos notations
la formule de Plancherel non radiale,
qui est d\'ej\`a connue \cite{stric} section~6.

\index{Notation!Mesure!$m'$}
On note $m'$ la mesure produit tensoriel des mesures suivantes :
\begin{itemize}
\item $dk^*$ la mesure de Haar sur $O(v)$,
\item $m$ sur $\Pc$.
\end{itemize}

\begin{thm}[Formule de Plancherel non radiale]
  \label{formule_Plancherel}
  \index{Mesure de Plancherel!non radiale}

  Soit $\psi\in\Sc(N)$.
  \begin{eqnarray*}
    \psi(0)
    &=&
    \int
    \tr {k^*.\Pi_{r^*,\Lambda^*}(\psi) }
    dm'(r^*,k^*,\Lambda^*)
    \quad,\\  
    \nd{\psi}_{L^2(N)}^2
    &=&
    \int
    \nd{k^*.\Pi_{r^*,\Lambda^*}(\psi)}_{HS}^2
    dm'(r^*,k^*,\Lambda^*)\quad,
  \end{eqnarray*}
  o\`u $\nd{.}_{HS}$ d\'esigne la norme de Hilbert-Schmitt 
  sur les op\'erateurs de l'espace de Hilbert $L^2(R^{v'})$,
  et o\`u on convient  dans le cas $v=2v'$, 
  $\Pi_{r^*,\Lambda^*}=\Pi_{0,\Lambda^*}$ 
  et d'omettre l'int\'egration contre $dr^*$.
\end{thm}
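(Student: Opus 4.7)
The plan is to deduce this non-radial formula from the radial Plancherel formula (th\'eor\`eme~\ref{thm_mplanch}) combined with the representation-theoretic description of subsection~\ref{subsec_rq_repN}. It suffices to establish the pointwise identity $\psi(0) = \int \tr{k^*.\Pi_{r^*,\Lambda^*}(\psi)}\,dm'$: the $L^2$-norm identity then follows by applying this pointwise identity to $\tilde\psi = \psi * \psi^*$, where $\psi^*(n) = \overline{\psi(n^{-1})}$, since $\tilde\psi(0) = \nd{\psi}_{L^2(N)}^2$ and $\pi(\tilde\psi) = \pi(\psi)\pi(\psi)^*$ gives $\tr{k^*.\Pi_{r^*,\Lambda^*}(\tilde\psi)} = \nd{k^*.\Pi_{r^*,\Lambda^*}(\psi)}_{HS}^2$.

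For the pointwise identity, first I would radialize $\psi$ into $\psi^\natural(n) = \int_{O(v)} \psi(k^*.n)\,dk^* \in {\Sc(N)}^\natural$, which satisfies $\psi^\natural(0) = \psi(0)$. The radial inversion formula of subsection~\ref{subsec_inversion}, evaluated at $n = 0$ and using $\phi(0) = 1$, then yields $\psi(0) = \int_\Pc <\psi^\natural,\phi>\,dm(\phi) = \int_\Pc <\psi,\phi>\,dm(\phi)$ since each $\phi$ is $O(v)$-invariant. Next I rewrite $<\psi,\phi^{r^*,\Lambda^*,l}>$ representation-theoretically: identity~(\ref{formula_phi_Lambda}) expresses, for any $\alpha \in E_l$, $\phi^{r^*,\Lambda^*,l}(n) = \int_{O(v)} <\Pi_{r^*,\Lambda^*}(k^*.n)\zeta_\alpha,\zeta_\alpha>\,dk^*$. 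Substituting and applying Fubini together with the change of variables $n \mapsto (k^*)^{-1}.n$ in $N$ (which preserves $dn$), and the definition $k^*.\Pi_{r^*,\Lambda^*}(\psi) := \Pi_{r^*,\Lambda^*}(\psi((k^*)^{-1}.\cdot))$, gives $<\psi,\phi^{r^*,\Lambda^*,l}> = \int_{O(v)} <[k^*.\Pi_{r^*,\Lambda^*}(\psi)]\zeta_\alpha,\zeta_\alpha>\,dk^*$, a value independent of $\alpha \in E_l$.

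Summing first over $\alpha \in E_l$ reconstructs $\int_{O(v)} \tr{P_{V_l} \cdot [k^*.\Pi_{r^*,\Lambda^*}(\psi)]}\,dk^*$, where $P_{V_l}$ is the orthogonal projection on $V_l$, and summing over $l \in \Nb^{v_1}$ reconstructs the full trace $\int_{O(v)} \tr{k^*.\Pi_{r^*,\Lambda^*}(\psi)}\,dk^*$, since $\sum_l P_{V_l} = \Id_\Hc$. Substituting this back into the radial inversion identity, I obtain $\psi(0) = \int \tr{k^*.\Pi_{r^*,\Lambda^*}(\psi)}\,d\nu$ for a measure $\nu$ on $O(v) \times \Qc$ built from $dk^*$, the $(r^*,\Lambda^*)$-slice of $dm$, and the multiplicities $\nn{E_l}$.

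The main obstacle is the measure bookkeeping, namely identifying $\nu$ with the claimed product measure $dm' = dk^* \otimes m$. This amounts to computing the Kirillov-Pfaffian density on $\hat{N}$ with respect to the parametrization $(r^*, k^*, \Lambda^*) \mapsto r^*X_v^* + k^*.D_2(\Lambda^*) \in \Nc^*$ and showing it coincides with $c(v)\,dk^* \otimes \Pi_j \lambda_j\,d\eta(\Lambda^*) \otimes dr^*$, with the obvious modification in the even case. A convenient sanity check is the specialization to radial $\psi$: here (\ref{fcnrad_U_zeta_phi}) implies that $\Pi_{r^*,\Lambda^*}(\psi)$ acts as the scalar $<\psi,\phi^{r^*,\Lambda^*,l}>$ on each $V_l$, the trace collapses to $\sum_l \nn{E_l} <\psi,\phi^{r^*,\Lambda^*,l}>$, and both sides reduce to th\'eor\`eme~\ref{thm_mplanch}, which fixes the constants. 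The result is also in agreement with \cite[sec.~6]{stric}.
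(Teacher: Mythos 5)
Your argument is sound, but note first that the paper does not actually prove this theorem: it is presented as a known result with a reference to \cite{stric}, section~6, so there is no internal proof to compare yours against. What you propose is therefore a genuinely independent derivation, and it works: the radial Plancherel formula (th\'eor\`eme~\ref{thm_mplanch}) is established in the paper by a direct Euclidean computation that nowhere uses the non-radial formula, so there is no circularity in running the implication in the direction radial $\Rightarrow$ non-radial via the identity~(\ref{formule_phi_Lambda}) and the reduction of the $L^2$ statement to the trace statement applied to $\psi*\psi^*$. Two remarks. First, the ``measure bookkeeping'' you flag as the main obstacle essentially evaporates: on the support of the Plancherel measure one has $\Lambda^*\in\Lc$, hence $v_1=v'$ and all $m_j=1$, so each $E_l$ is a singleton and each $V_l$ is a line; the trace of $k^*.\Pi_{r^*,\Lambda^*}(\psi)$ is then exactly $\sum_{l}\big<k^*.\Pi_{r^*,\Lambda^*}(\psi)\zeta_l,\zeta_l\big>$ with no multiplicity factor, and the measure $\nu$ you construct is literally $dk^*\otimes m=m'$ by inspection --- no Kirillov--Pfaffian computation is needed. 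Second, two steps deserve to be made explicit: (i) the inversion formula of the sous-section~\ref{subsec_inversion} is only defined in the $L^2$ sense, so the evaluation $\psi^\natural(0)=\int_\Pc<\psi^\natural,\phi>dm(\phi)$ needs a polarization/approximate-identity argument (using the rapid decay of $\phi\mapsto<\psi^\natural,\phi>$ for Schwartz $\psi$ and the bound $\nn{<\phi_\eta,\phi>}\leq 1$); (ii) your derivation leans on~(\ref{formule_phi_Lambda}) and~(\ref{fcnrad_U_zeta_phi}), which the paper states in the sous-section~\ref{subsec_rq_repN} but explicitly declines to prove, so your proof is only as self-contained as those identities.
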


On pose pour une fonction $f$ sur $N$ :
$f^*(n)=\overline{f(n^{-1})}$.
On d\'eduit de ce th\'eor\`eme,
ainsi que de l'\'egalit\'e~(\ref{fcnrad_U_zeta_phi}) :

\begin{cor}
  \label{cor_mespl_nonrad}
  Soient $h_1,h_2$ $C^\infty$ 
  deux fonctions \`a support compact sur $N$,
  et $F\in {L^1}^\natural$
  On a :
  $$
  \nn{<F*h_1,h_2>}
  \,\leq\,
  \sup_{\phi\in\Pc}\nn{<F,\phi>}
  \; \nd{h_1}_{L^2(N)}
  \nd{h_2}_{L^2(N)}
  \quad,
  $$

\end{cor}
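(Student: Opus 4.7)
Le plan est de se ramener \`a la formule de Plancherel non radiale (th\'eor\`eme~\ref{formule_Plancherel}) appliqu\'ee \`a la fonction $F*h_1*h_2^*$ au point neutre, o\`u $h_2^*(n):=\overline{h_2(n^{-1})}$, puis d'exploiter la radialit\'e de $F$ pour contr\^oler la norme op\'erateur de son image par les repr\'esentations.

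On part de l'identit\'e $<F*h_1,h_2>_{L^2(N)}=(F*h_1*h_2^*)(0)$ et on applique la premi\`ere \'egalit\'e du th\'eor\`eme~\ref{formule_Plancherel}:
$$
<F*h_1,h_2>
\,=\,
\int \tr{[k^*.\Pi_{r^*,\Lambda^*}](F*h_1*h_2^*)}\, dm'(r^*,k^*,\Lambda^*) \quad.
$$
La propri\'et\'e de morphisme $\Pi(f_1*f_2)=\Pi(f_1)\Pi(f_2)$ et l'\'egalit\'e $\Pi(f^*)=\Pi(f)^*$ font appara\^\i tre l'int\'egrande sous la forme
$$
\tr{\,[k^*.\Pi_{r^*,\Lambda^*}](F)\;[k^*.\Pi_{r^*,\Lambda^*}](h_1)\;{[k^*.\Pi_{r^*,\Lambda^*}](h_2)}^{*}\,}\quad.
$$

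L'\'etape cl\'e est d'observer que, $F$ \'etant invariante sous $O(v)$, un simple changement de variable donne $[k^*.\Pi_{r^*,\Lambda^*}](F)=\Pi_{r^*,\Lambda^*}(F)$ (la pr\'ecomposition par $k^*$ dispara\^\i t). Or d'apr\`es l'\'egalit\'e~(\ref{fcnrad_U_zeta_phi}), cet op\'erateur agit par les scalaires $<F,\phi^{r^*,\Lambda^*,l}>$ sur les sous espaces $V_l$ de la d\'ecomposition rappel\'ee dans la sous-section~\ref{subsec_rq_repN}. Sa norme op\'erateur est donc born\'ee par
$$
\sup_{l\in \Nb^{v_1}}\nn{<F,\phi^{r^*,\Lambda^*,l}>}
\,\leq\,
\sup_{\phi\in \Pc}\nn{<F,\phi>}
\quad,
$$
ind\'ependamment de $(r^*,k^*,\Lambda^*)$.

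Il ne reste plus qu'\`a appliquer l'in\'egalit\'e classique $\nn{\tr{ABC^*}}\leq \nd{A}_{\mathrm{op}}\,\nd{B}_{HS}\,\nd{C}_{HS}$ \`a l'int\'egrande, puis Cauchy-Schwarz dans $L^2(dm')$, et enfin la seconde \'egalit\'e (formule de Plancherel $L^2$) du th\'eor\`eme~\ref{formule_Plancherel} pour chacune des fonctions $h_1$ et $h_2$, pour obtenir l'in\'egalit\'e annonc\'ee. Le point le plus d\'elicat \`a r\'ediger sera la v\'erification soigneuse que $[k^*.\Pi_{r^*,\Lambda^*}](F)=\Pi_{r^*,\Lambda^*}(F)$ sur l'ensemble du support de $m'$, avec le contr\^ole uniforme de la norme op\'erateur par~(\ref{fcnrad_U_zeta_phi}) : c'est pr\'ecis\'ement l'hypoth\`ese de radialit\'e de $F$ qui autorise la sortie du supremum hors de l'int\'egrale.
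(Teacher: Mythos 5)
Votre preuve est correcte et suit essentiellement le m\^eme chemin que celle du texte : \'ecriture de $<F*h_1,h_2>$ comme $F*h_1*h_2^*(0)$, formule de Plancherel non radiale, action scalaire de $\Pi_{r^*,\Lambda^*}(F)$ sur chaque $V_l$ via~(\ref{fcnrad_U_zeta_phi}), puis Cauchy--Schwarz dans $L^2(dm')$. La seule diff\'erence est cosm\'etique : vous invoquez l'in\'egalit\'e $\nn{\tr{ABC^*}}\leq \nd{A}_{\mathrm{op}}\nd{B}_{HS}\nd{C}_{HS}$ l\`a o\`u le texte d\'eveloppe la trace sur la base $\zeta_\alpha$ et applique Cauchy--Schwarz deux fois, ce qui revient au m\^eme calcul.
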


\begin{proof}
  On  d\'eduit  de l'\'egalit\'e~(\ref{fcnrad_U_zeta_phi}),
  que pour deux fonctions $F_1,h\in L^1(N)$
  avec  $F_1$ radiale, on a, 
  en notant $\Pi=\Pi_{r^*,\Lambda^*}$ et pour $\alpha\in E_l$ :
  \begin{eqnarray*}
    <k^*.\Pi(F_1*h).\zeta_\alpha,\zeta_\alpha>
    &=&
    <\Pi(F_1)k^*.\Pi(h).\zeta_\alpha,\zeta_\alpha>
    \,=\,
    <k^*.\Pi(h).\zeta_\alpha,{\Pi(F_1)}^*\zeta_\alpha>\\
    <k^*.\Pi(F_1*h).\zeta_\alpha,\zeta_\alpha>
    &=&
    <\bar{F},\phi>
    <k^*.\Pi(h).\zeta_\alpha,\zeta_\alpha>
    \quad.
  \end{eqnarray*}
  Gr\^ace \`a une approximation de l'unit\'e radiale,
  cette derni\`ere \'egalit\'e est encore valable 
  dans le cas $F_1=F$, 
  distribution radiale 
  dont la transform\'ee de Fourier radiale 
  est born\'ee sur $\Pc$. 

  Reprenons les notations du corollaire.
  D'apr\`es ce qui pr\'ec\`ede, on a,
  toujours en notant $\Pi=\Pi_{r^*,\Lambda^*}, \Lambda^*\in \Lc$ :
  $$
  <k^*.\Pi(F*h_1*h_2^*).\zeta_\alpha,\zeta_\alpha>
  \,=\,
  <\bar{F},\phi^{r^*,\Lambda^*,l,\epsilon}>
  <k^*.\Pi(h_1*h_2^*).\zeta_\alpha,\zeta_\alpha>
  \quad.
  $$
  Comme la famille $\zeta_\alpha,\alpha\in E_l,l\in \Nb^{v'}$ 
  est une base orthonorm\'ee de $L^2(\Rb^{v'})$,
  on a en utilisant l'\'egalit\'e ci-dessus :
  \begin{eqnarray*}
    &&\tr{ k^*.\Pi(F*h_1*h_2^*)}
    \,=\,
    \sum_{l\in \Nb^{v'}} 
    \sum_{\alpha_\in E_l}
    <k^*.\Pi(F*h_1*h_2^*)
    .\zeta_\alpha,\zeta_\alpha>\\
    &&\qquad=\,
    \sum_{l\in \Nb^{v'}} 
    <\bar{F},\phi^{r^*X_{2v'+1},\Lambda^*,l}>
    \sum_{\alpha_\in E_l}
    <k^*.\Pi_{r^*X_{2v'+1},\Lambda^*}(h_1*h_2^*)
    .\zeta_\alpha,\zeta_\alpha>
    \quad,
  \end{eqnarray*}
  d'o\`u :
  $$
  \nn{\tr{ k^*.\Pi(F*h_1*h_2^*)}}
  \,\leq\,
  \sup_{\phi\in\Pc}
  \nn{<F,\phi>} 
  \sum_{l\in \Nb^{v'}} 
  \sum_{\alpha_\in E_l}
  \nn{<k^*.\Pi(h_1*h_2^*).\zeta_\alpha,\zeta_\alpha>}
  \quad.
  $$
  Or on a par Cauchy-Schwarz dans $L^2(\Rb^{v'})$ : 
  \begin{eqnarray*}
    \nn{<k^*.\Pi(h_1*h_2^*).\zeta_\alpha,\zeta_\alpha>}
    &=&
    \nn{<k^*.\Pi(h_1).\zeta_\alpha,k^*.\Pi(h_2).\zeta_\alpha>}\\
    &\leq&
    \nd{k^*.\Pi(h_1).\zeta_\alpha}_{L^2(\Rb^{v'})}
    \nd{k^*.\Pi(h_2).\zeta_\alpha}_{L^2(\Rb^{v'})}
    \quad,
  \end{eqnarray*}
  d'o\`u :
  \begin{eqnarray*}
    &&\nn{\tr{ k^*.\Pi(F*h_1*h_2^*)}}
    \,\leq\,
    \sup_{\phi\in\Pc}\nn{<F,\phi>}
    \sum_{l\in \Nb^{v'}} \sum_{\alpha_\in E_l}
    \nn{<k^*.\Pi(h_1).\zeta_\alpha,k^*.\Pi(h_2).\zeta_\alpha>}\\
    &&\qquad\leq\,
    \sup_{\phi\in\Pc}\nn{<F,\phi>}
    \nd{k^*.\Pi(h_1)}_{HS}
    \nd{k^*.\Pi(h_2)}_{HS}
    \quad.
  \end{eqnarray*}
  Utilisons la formule de Plancherel non radiale :
  $$
  <F*h_1,h_2>
  \,=\,
  F*h_1*h_2^*(0)
  \,=\,
  \int
  \tr{ k^*.\Pi_{r^*,\Lambda^*}(F*h_1*h_2^*)}
  dm'(r^*,k^*,\Lambda^*)
  \;,
  $$
  puis l'in\'egalit\'e ci-dessus (en omettant les indices des
  repr\'esentations $\Pi$ et certains arguments de $m'$) :
  \begin{eqnarray*}
    &&\nn{<F*h_1,h_2>}
    \,\leq\,
    \int
    \sup_{\phi\in\Pc}\nn{<F,\phi>}
    \nd{k^*.\Pi(h_1)}_{HS}
    \nd{k^*.\Pi(h_2)}_{HS}
    dm'(r^*,k^*,\Lambda^*)\\
    &&\qquad\leq\,
    \sup_{\phi\in\Pc}\nn{<F,\phi>}
    {\left(\int \nd{k^*.\Pi(h_1)}_{HS}^2
        dm'\right)}^\frac12
    {\left(\int \nd{k^*.\Pi(h_2)}_{HS}^2 
        dm'\right)}^\frac12\\
    &&\qquad\qquad=\,
    \sup_{\phi\in\Pc}\nn{<F,\phi>}
    \nd{h_1}_{L^2(N)}
    \nd{h_2}_{L^2(N)}
    \quad,
  \end{eqnarray*}
  d'apr\`es Cauchy-Schwarz puis 
  le corollaire~\ref{formule_Plancherel}.  
\end{proof}


\chapter{Utilisation du calcul de Fourier radiale 
sur \mathversion{bold}{$N_{v,2}$}}
\label{chapitre_utilisation}

Gr\^ace au calcul de Fourier obtenu dans le chapitre pr\'ec\'edent,
nous controlons la norme $L^2$ des fonctions d'aires 
dans le cas du groupe nilpotent libre \`a deux pas,
et nous \'etudions le probl\`eme des multiplicateurs.

\section{Contr\^ole $L^2$ des fonctions d'aire pour  $N_{v,2}$}
\label{sec_hatS_grlib}

Le but de cette section est de d\'emontrer
le th\'eor\`eme \ref{thm_L2_fcnd'aire}.b).

Comme dans le cas d'un groupe de type H,
on pose pour une  fonction sph\'erique born\'ee $\omega\in\Omega$  de $N$ pour $O(v)$ :
$$
\hat{S}^j(\omega)
\,:=\,
\sqrt{\int_0^\infty 
  \nn{\partial_s^j< \mu_s,\omega>}^2
  s^{2j-1} ds}
\quad.
$$
\index{Notation!$\hat{S}^j(\omega)$}

Nous reprenons les m\^emes notations et conventions
pour le groupe $N=N_{v,2}$ que 
dans le chapitre~\ref{chapitre_fonction_maximale_spherique} 
et pour les fonctions sph\'eriques born\'ees 
$\phi=\phi^{r^*,\Lambda^*,l}\in \Pc$ 
que dans la sous-section~\ref{subsec_mes_pl}.
En particulier $z=v(v-1)/2$ et $v=2v'$ ou $2v'+1$.
On suppose $v'\geq 2$ dans toute cette section.
\index{Notation!Dimension!$v'$}
\index{Notation!Dimension!$z$}

Le th\'eor\`eme \ref{thm_L2_fcnd'aire}.b) sera d\'emontr\'e
une fois que l'on aura d\'emontr\'e 
les deux propositions suivantes :

\begin{prop}[\mathversion{bold}{$S^j$} et
  \mathversion{bold}{$\hat{S}^j$}]
  \label{prop_fcnd'aire_hat{S}N}
  Pour $j\in\Nb-\{0\}$, on a :
  $$
  \forall f\in L^2(N)\qquad
  \nd{S(f)}\,\leq\, \sup_{\phi\in \Pc}  \nn{\hat{S}^j(\phi)} \;\nd{f}
  \quad,
  $$
\end{prop}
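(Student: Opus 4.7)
The plan is to follow the strategy of Proposition \ref{prop_fcnd'aire_hat{S}} established in the H-type case, but to replace the abstract spectral measure $E$ by the explicit non-radial Plancherel decomposition of Theorem \ref{formule_Plancherel}, together with identity (\ref{fcnrad_U_zeta_phi}), which describes how radial convolvers diagonalize on the orthogonal decomposition $\Hc=\bigoplus_l V_l$ introduced in sub-section \ref{subsec_rq_repN}.

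First I would fix $f\in\Sc(N)$ and, for each $s>0$, study $\nd{f*\partial_s^j\mu_s}_{L^2(N)}^2$. Since the distribution $\partial_s^j\mu_s$ is radial, (\ref{fcnrad_U_zeta_phi}) implies that $\Pi_{r^*,\Lambda^*}(\partial_s^j\mu_s)$ stabilizes each $V_l$ and acts there as the scalar $\partial_s^j\langle\mu_s,\phi^{r^*,\Lambda^*,l}\rangle$. Writing $P_l$ for the orthogonal projection onto $V_l$ and computing Hilbert-Schmidt norms block by block then gives
\begin{equation*}
\nd{k^*\Pi(f*\partial_s^j\mu_s)}_{HS}^2
\,=\, \sum_l \nn{\partial_s^j\langle\mu_s,\phi^{r^*,\Lambda^*,l}\rangle}^2
\nd{k^*\Pi(f) P_l}_{HS}^2 .
\end{equation*}

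Multiplying by $s^{2j-1}$, integrating over $s>0$ and swapping the $s$-integral with the Plancherel integral in $dm'$ (all integrands are nonnegative, so Fubini applies), the $s$-integral on the scalar factor reproduces exactly $\nn{\hat{S}^j(\phi^{r^*,\Lambda^*,l})}^2$. Bounding this pointwise by $\sup_{\phi\in\Pc}\nn{\hat{S}^j(\phi)}^2$ and using $\sum_l\nd{k^*\Pi(f)P_l}_{HS}^2=\nd{k^*\Pi(f)}_{HS}^2$ together with the Plancherel identity of Theorem \ref{formule_Plancherel} yields the announced inequality $\nd{S^j(f)}\leq \sup_{\phi\in\Pc}\nn{\hat{S}^j(\phi)}\,\nd{f}$.

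The main technical obstacle will be that identity (\ref{fcnrad_U_zeta_phi}) is only stated for $F\in{L^1(N)}^\natural$, while $\partial_s^j\mu_s$ is merely a radial distribution. I would handle this in the spirit of the proof of Proposition \ref{prop_mes_sp_moy}: smooth $\mu_s$ by convolving with a radial approximation of unity $\phi_\eta$ furnished by Lemma \ref{lem_fcn_supp_int}, run the diagonalization argument with the radial Schwartz functions $F_{s,\eta,j}=\partial_s^j(\phi_\eta*\mu_s)$, and let $\eta\to 0$. The pointwise bound $\nn{\langle\phi_\eta,\phi\rangle}\leq 1$ (since $\phi$ is bounded by $1$ and $\int\phi_\eta=1$) ensures that the multiplicative factor introduced by the smoothing is harmless and that the final estimate passes cleanly to the limit.
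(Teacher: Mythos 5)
Your proposal is correct and follows essentially the same route as the paper: the non-radial Plancherel formula of Theorem~\ref{formule_Plancherel} written on the orthonormal basis $\zeta_\alpha$, the scalar action of radial convolvers on each block $V_l$, Fubini in $s$, and the final sup bound. The only divergence is the technical device used at one step: where you regularize $\mu_s$ by a radial approximate identity to extend~(\ref{fcnrad_U_zeta_phi}) to $\partial_s^j\mu_s$, the paper instead observes that the scalar-action property already holds for radial measures of finite mass such as $\mu_s$ itself and interchanges $\partial_s^j$ with $\Pi$ directly (Lemme~\ref{lem_simple_Pi}); both devices are sound.
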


\begin{prop}[\mathversion{bold}{$\nd{\hat{S}^j}_\infty$}]
  \label{prop_maj_hat{S}lambdaN}
  Si $v'\geq2$,
  il existe une constante $C>0$ telle que :
  $$
  \forall j \leq \frac{z-1}4,v+1\quad
  \forall \omega\in \Pc\qquad
  \hat{S}^j(\omega)\,\leq\, C\quad.
  $$
\end{prop}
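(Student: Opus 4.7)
La d\'emonstration s'inspire de celles des propositions~\ref{prop_maj_hat{S}_Omega_B} et~\ref{prop_maj_hat{S}_Omega_L} pour les groupes de type~H, mais exploite cette fois la grande dimension $z=v(v-1)/2$ du centre $\Zc$ de $N_{v,2}$ : l'int\'egration sur la sph\`ere unit\'e euclidienne $S_1^{(z)}$ du centre fournit une d\'ecroissance suppl\'ementaire sous la forme d'une fonction de Bessel d'ordre $(z-2)/2$, qui est absente dans le cas des groupes de type~H (o\`u le centre est beaucoup plus petit). Cette d\'ecroissance est ce qui nous permettra d'atteindre les indices relativement \'elev\'es $j\leq (z-1)/4$ et $j\leq v+1$.

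Pour $\omega=\phi^{r^*,\Lambda^*,l}\in \Pc$, on a $\Lambda^*\in\Lc$, donc $\Lambda^*\not=0$. La $O(v)$-invariance de $\omega$ et de $\mu$ entra\^\i ne l'\'egalit\'e $<\mu_s, \omega>\, = \int_{S_1} \Theta^{r^*,\Lambda^*,l}(s.n)\, d\mu(n)$. On utilise alors le passage en coordonn\'ees polaires sur~$S_1$ donn\'e par la proposition~\ref{prop_expression_mu} et l'expression explicite :
$$
\Theta^{r^*,\Lambda^*,l}(s.\exp(rX+\sqrt{1-r^4}Z))
\,=\,
e^{isrr^*<X_v^*,X>}
e^{is^2\sqrt{1-r^4}<D_2(\Lambda^*),Z>}
\overset{v_1}{\underset{j=1}{\Pi}}
\flb {l_j}{m_j-1}{\frac{\lambda_j s^2 r^2}{2}\nn{\pr{j}X}^2}
\quad,
$$
pour $X\in S_1^{(v)}$, $Z\in S_1^{(z)}$ et $r\in[0,1]$. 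L'int\'egration en $Z$ sur $S_1^{(z)}$ fait alors appara\^\i tre la fonction de Bessel r\'eduite $\fbs{(z-2)/2}(s^2\sqrt{1-r^4}\nn{\Lambda^*})$, et l'int\'egration en $X$ sur $S_1^{(v)}$ d'une expression born\'ee uniform\'ement en module par une constante (les facteurs de Laguerre \'etant born\'es par~1 et l'exponentielle \'etant de module~1).

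On \'evalue ensuite $\partial_s^j <\mu_s,\omega>$ par la formule de Leibniz, en traitant les d\'eriv\'ees du facteur de Bessel via le lemme~\ref{lem_derivee_fcn_s2} sur les d\'eriv\'ees d'une composition avec $s\mapsto s^2$. Pour la partie Bessel, apr\`es le changement de variable $u=s^2\sqrt{1-r^4}\nn{\Lambda^*}$ absorbant simultan\'ement $\nn{\Lambda^*}$ et~$r$, on est ramen\'e \`a des int\'egrales $L^2$ \`a poids de la forme $\int_0^\infty u^{2k-1}\nn{\fbs{(z-2)/2}^{(k)}(u)}^2 du$ (avec~$k\leq j$), dont la convergence, fournie par le lemme~\ref{lem_maj_int_fb}, donne la contrainte $j\leq (z-1)/4$ (le facteur~$4$ provenant de la combinaison de l'ordre $(z-2)/2$ de la Bessel et du facteur~$s^2$ dans son argument). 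Pour la partie spatiale (produit de l'exponentielle en $X$ et des facteurs de Laguerre), on proc\`ede comme dans la preuve de la proposition~\ref{prop_maj_hat{S}_Omega_L} : les changements de variable appropri\'es sur $s$ absorbent les param\`etres $r^*$ et $\lambda_j$, puis le lemme~\ref{lem_cq_Mark} fournit les bornes $L^2$ \`a poids sur les d\'eriv\'ees des fonctions de Laguerre, donnant la contrainte $j\leq v+1$.

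Le principal obstacle sera d'organiser les d\'eriv\'ees crois\'ees issues de la formule de Leibniz pour obtenir des bornes uniformes sur l'ensemble des param\`etres $(r^*,\Lambda^*,l)$, et en particulier ind\'ependantes de la taille des $\lambda_j$, du multi-indice $l$, et des entiers $v_0$ et $v_1$. On surmonte cette difficult\'e en combinant l'in\'egalit\'e de H\"older dans les int\'egrales sur $r\in[0,1]$ et sur $S_1^{(v)}$ (avec des poids appropri\'es tenant compte des facteurs $r^{v-1}(1-r^4)^{(z-2)/2}$ et de l'expression de $\mu$ donn\'ee par la proposition~\ref{prop_expression_mu}) avec les estimations $L^2$ \`a poids des fonctions sp\'eciales, qui sont toutes valides pour les valeurs de $j$ annonc\'ees.
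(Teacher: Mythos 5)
Votre sch\'ema g\'en\'eral (passage en coordonn\'ees polaires via la proposition~\ref{prop_expression_mu}, apparition de la fonction de Bessel $\fbs{\frac{z-2}2}$ par int\'egration sur $S_1^{(z)}$, formule de Leibniz, changement de variable $u=s^2\sqrt{1-r^4}\nn{\Lambda^*}$ et lemme~\ref{lem_maj_int_fb} pour la contrainte $j\leq (z-1)/4$) co\"\i ncide avec celui du texte. Mais votre traitement de la ``partie spatiale'' comporte deux lacunes s\'erieuses. D'abord, le facteur $e^{isrr^*x_v}$ est une exponentielle de module~$1$ sans aucune d\'ecroissance : chaque d\'erivation en $s$ fait descendre un facteur $irr^*x_v$, et un changement de variable du type $s'=srr^*$ ne peut pas ``absorber'' $r^*$, puisque le reste de l'int\'egrande ne d\'ecro\^\i t pas en $s'$ (contrairement au cas $\Omega_B$ des groupes de type~H, o\`u l'on d\'erive une fonction de Bessel qui d\'ecro\^\i t); la seule d\'ecroissance disponible, celle de $\fbs{\frac{z-2}2}$, ne d\'epend pas de $r^*$ et ne peut donc pas compenser le facteur ${(r^*)}^{j}$ lorsque $r^*\rightarrow\infty$. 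Le texte surmonte ce point par des int\'egrations par parties sur la sph\`ere $S_1^{(v)}$ (phase non stationnaire, via un atlas de deux projections st\'er\'eographiques et une partition de l'unit\'e, lemmes~\ref{lem_maj_check_b_h} et~\ref{lem_maj_checkb_h0}), avec une discussion de cas selon que toutes les d\'eriv\'ees tombent sur l'exponentielle ou non, et un d\'ecoupage de l'int\'egrale en $s$ au niveau $s=1/r^*$; c'est le point central de la preuve, et il est absent de votre proposition.

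Ensuite, l'origine de la contrainte $j\leq v+1$ n'est pas celle que vous indiquez. Le lemme~\ref{lem_cq_Mark} donnerait $j\leq \alpha$ o\`u $\alpha$ est le param\`etre de la fonction de Laguerre; or les param\`etres qui interviennent ici sont les $m_i-1$, qui peuvent valoir~$0$, de sorte que ce lemme ne s'applique m\^eme pas (et il n'est d'ailleurs pas utilis\'e dans ce cas par le texte, qui se contente de borner les fonctions de Laguerre et leurs d\'eriv\'ees par des constantes). La contrainte $j<v+1+3/4$ provient en r\'ealit\'e de l'int\'egrabilit\'e pr\`es de $r=0$ du facteur $r^{v-1-j}$ cr\'e\'e par les int\'egrations par parties (le facteur ${(rr^*s^2)}^{-h}$ du lemme~\ref{lem_maj_checkb_h0}), cf.\ le lemme~\ref{lem_maj_J_axh}. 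Sans l'argument de phase non stationnaire, vous n'obtenez ni la borne uniforme en $r^*$ ni l'indice $v+1$.
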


Nous pourrions utiliser la m\^eme d\'emarche que sur le groupe de type~H;
c'est-\`a-dire utiliser une mesure spectrale $E$ abstraite 
sur le spectre de l'alg\`ebre commutative ${L^1}^\natural$,
en identifiant ce spectre \`a l'ensemble des fonctions sph\'eriques $\Omega$.
Nous serions alors amen\'es \`a faire des estimations 
sur $\Omega$ tout entier.
Nous allons plutot utiliser la mesure de Plancherel non radiale 
du th\'eor\`eme~\ref{formule_Plancherel}.
Cela nous permettra de ne demander des estimations de $\hat{S}^j$
que sur la partie $\Pc$ des fonctions sph\'eriques.

\subsection{Fonction d'aire et $\hat{S}^j(\omega)$}

Nous d\'emontrons ici la proposition~\ref{prop_fcnd'aire_hat{S}N}.
Dans cette sous-section, on utilise les notations des sous-sections~\ref{subsec_rq_repN},
\ref{subsec_mes_pl}, et \ref{subsec_mesure_planchrel_nonradiale}.
En particulier, en utilisant la formule de Plancherel non radiale 
donn\'ee dans le th\'eor\`eme~\ref{formule_Plancherel},
et la base orthonorm\'ee de $L^2(\Rb^{v'})$
$\zeta_\alpha,\alpha\in E_l,l\in \Nb^{v'}$
pour calculer la norme Hilbert-Schmidt des op\'erateurs,
on a pour toute fonction $f\in\Sc(N)$ :
\begin{equation}
  \label{formule_pl_HS}
\nd{f}_{L^2(N)}^2
\,=\,
\int
\sum_l \sum_{\alpha\in E_l} \nn{k^*.\Pi_{r^*,\Lambda^*}(f).\zeta_\alpha}^2
dm'(r^*,k^*,\Lambda^*)
\quad.
\end{equation}

La proposition~\ref{prop_fcnd'aire_hat{S}N} d\'ecoule du lemme suivant :

\begin{lem}[\mathversion{bold}{$\nd{S^j(f)}$}]
\label{lem_nd_Sjf}
Soit $f\in\Sc(N)$. On a :
$$
\nd{S^j(f)}_{L^2(N)}^2
\,=\,
\int
\sum_{l\in\Nb^{v'}} 
\nn{\hat{S}^j(\phi^{r^*,\Lambda^*,l})}^2
\sum_{\alpha\in E_l}
\nn{k^*.\Pi_{r^*,\Lambda^*}(f).\zeta_\alpha}^2
dm'(r^*,k^*,\Lambda^*)\quad.
$$
\end{lem}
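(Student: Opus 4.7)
La strat\'egie est de combiner la d\'efinition de $S^j(f)$, la formule de Plancherel non radiale (th\'eor\`eme~\ref{formule_Plancherel}) et la propri\'et\'e spectrale~(\ref{fcnrad_U_zeta_phi}) des repr\'esentations $\Pi_{r^*,\Lambda^*}$ vis-\`a-vis des distributions radiales. Pr\'ecis\'ement, je commencerai par utiliser Fubini pour \'ecrire
$$
\nd{S^j(f)}_{L^2(N)}^2
\,=\,
\int_N \int_0^\infty \nn{\partial_s^j(f*\mu_s)(n)}^2 s^{2j-1}\,ds\,dn
\,=\,
\int_0^\infty \nd{\partial_s^j(f*\mu_s)}_{L^2(N)}^2\, s^{2j-1}\,ds \quad,
$$
la justification venant de ce que $f\in\Sc(N)$ et que $\mu_s$ est une mesure lisse en $s>0$.

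L'\'etape cl\'e consiste \`a calculer, pour chaque $s>0$ fix\'e, la norme $L^2$ de $\partial_s^j(f*\mu_s)$ \`a l'aide de la formule~(\ref{formule_pl_HS}). Comme $\mu_s$ est radiale, l'identit\'e~(\ref{fcnrad_U_zeta_phi}) (\'etendue aux mesures radiales, par approximation via une unit\'e approch\'ee radiale comme au lemme~\ref{lem_fcn_supp_int}) donne pour tout $\alpha\in E_l$~:
$$
\Pi_{r^*,\Lambda^*}(\mu_s).\zeta_\alpha
\,=\,
<\mu_s,\phi^{r^*,\Lambda^*,l}>\zeta_\alpha \quad,
$$
ce qui, par la relation de morphisme $\Pi(f*\mu_s)=\Pi(f)\Pi(\mu_s)$, entra\^\i ne que $\zeta_\alpha$ est un vecteur propre de $\Pi(f*\mu_s)$ d\`es qu'on compose \`a droite par $\Pi(\mu_s)$, avec valeur propre $<\mu_s,\phi^{r^*,\Lambda^*,l}>$~: plus pr\'ecis\'ement,
$$
\Pi_{r^*,\Lambda^*}(f*\mu_s).\zeta_\alpha
\,=\,
<\mu_s,\phi^{r^*,\Lambda^*,l}>\, \Pi_{r^*,\Lambda^*}(f).\zeta_\alpha \quad.
$$
En d\'erivant $j$ fois en $s$ (ce qui est l\'egitime puisque $<\mu_s,\phi^{r^*,\Lambda^*,l}>$ est lisse en $s>0$ d'apr\`es l'expression explicite des fonctions sph\'eriques), on obtient
$$
\Pi_{r^*,\Lambda^*}(\partial_s^j(f*\mu_s)).\zeta_\alpha
\,=\,
\partial_s^j <\mu_s,\phi^{r^*,\Lambda^*,l}>\; \Pi_{r^*,\Lambda^*}(f).\zeta_\alpha \quad.
$$

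Ensuite, en utilisant la formule~(\ref{formule_pl_HS}) appliqu\'ee \`a $\partial_s^j(f*\mu_s)$ et en factorisant le scalaire $\partial_s^j<\mu_s,\phi^{r^*,\Lambda^*,l}>$ (ind\'ependant de $k^*$ et de $\alpha\in E_l$), il vient
$$
\nd{\partial_s^j(f*\mu_s)}_{L^2(N)}^2
\,=\,
\int \sum_{l\in\Nb^{v'}} \nn{\partial_s^j<\mu_s,\phi^{r^*,\Lambda^*,l}>}^2 \sum_{\alpha\in E_l}\nn{k^*.\Pi_{r^*,\Lambda^*}(f).\zeta_\alpha}^2 dm'(r^*,k^*,\Lambda^*)\quad.
$$
Il ne restera plus qu'\`a multiplier par $s^{2j-1}$, int\'egrer en $s\in(0,\infty)$, puis \'echanger l'ordre d'int\'egration par Fubini-Tonelli (les int\'egrandes \'etant positives) pour faire appara\^\i tre $\hat{S}^j(\phi^{r^*,\Lambda^*,l})^2$ \`a l'int\'erieur.

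Le point technique principal est double~: d'une part l'extension de~(\ref{fcnrad_U_zeta_phi}) \`a la mesure radiale $\mu_s$ (non absolument continue), qui n\'ecessite une r\'egularisation par convolution avec une approximation de l'unit\'e radiale $\phi_\eta$ et un passage \`a la limite $\eta\to 0$ utilisant la d\'ecroissance des fonctions sph\'eriques; d'autre part l'\'echange de la d\'eriv\'ee $\partial_s^j$ avec l'op\'erateur $\Pi_{r^*,\Lambda^*}(\cdot)$, qui se justifie pour $f\in\Sc(N)$ par une estimation uniforme en $s$ dans un voisinage, obtenue \`a partir des formules explicites du th\'eor\`eme~\ref{thm_fsphO}. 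Le reste du calcul est une application directe de Fubini et de la diagonalisation des op\'erateurs $\Pi(\mu_s)$.
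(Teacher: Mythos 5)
Votre d\'emonstration est correcte et suit essentiellement la m\^eme voie que celle du texte : Fubini, formule de Plancherel non radiale~(\ref{formule_pl_HS}) appliqu\'ee \`a $\partial_s^j(f*\mu_s)$, action scalaire de $\Pi_{r^*,\Lambda^*}(\mu_s)$ sur chaque $V_l$ avec valeur propre $<\mu_s,\phi^{r^*,\Lambda^*,l}>$, puis Fubini de nouveau. Les deux points techniques que vous identifiez (extension de~(\ref{fcnrad_U_zeta_phi}) aux mesures radiales de masse finie, et \'echange de $\partial_s^j$ avec $\Pi$) sont pr\'ecis\'ement ceux que le texte traite, respectivement dans la sous-section~\ref{subsec_rq_repN} et dans le lemme~\ref{lem_simple_Pi}.
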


En effet, admettons ce lemme. 
On a donc :
\begin{eqnarray*} 
\nd{S^j(f)}_{L^2(N)}^2
&\leq&
\int
\sum_{l\in\Nb^{v'}} 
\nn{\sup_{\phi\in\Pc}\hat{S}^j(\phi)}^2
\sum_{\alpha\in E_l}
\nn{k^*.\Pi_{r^*,\Lambda^*}(f).\zeta_\alpha}^2
dm'(r^*,k^*,\Lambda^*)\\
&&=\,
 \nn{\sup_{\phi\in\Pc}\hat{S}^j(\phi)}^2
\int
\sum_{l\in\Nb^{v'}} 
\sum_{\alpha\in E_l}
\nn{k^*.\Pi_{r^*,\Lambda^*}(f).\zeta_\alpha}^2
dm'(r^*,k^*,\Lambda^*)\\
&&=\,
\nn{\sup_{\phi\in\Pc}\hat{S}^j(\phi)}^2
\int
\nd{k^*.\Pi_{r^*,\Lambda^*}(f)}_{HS}^2
dm'(r^*,k^*,\Lambda^*)\\
&&=\,
\nn{\sup_{\phi\in\Pc}\hat{S}^j(\phi)}^2
\nd{f}_{L^2(N)}^2
\quad,
\end{eqnarray*}
gr\^ace \`a la formule de Plancherel~(\ref{formule_pl_HS}).

\paragraph{D\'emontrons le lemme~\ref{lem_nd_Sjf}.}
Par Fubini, on a :
$$
\nd{S^j(f)}_{L^2(N)}^2
\,=\,
\int_{s=0}^\infty 
\nd{\partial_s^j(f*\mu_s)}_{L^2(N)}^2
s^{2j-1}ds
\quad.
$$
Appliquons la formule de Plancherel~(\ref{formule_pl_HS})
\`a $\partial_s^j(f*\mu_s)\in \Sc(N)$;
$$
\nd{\partial_s^j(f*\mu_s)}_{L^2(N)}
\,=\,
\int
\sum_{l\in\Nb^{v'}} 
\sum_{\alpha\in E_l}
\nn{k^*.\Pi_{r^*,\Lambda^*}(\partial_s^j(f*\mu_s)).\zeta_\alpha}^2
dm'(r^*,k^*,\Lambda^*)
\quad.
$$

Appliquons le lemme suivant \`a $\Pi=k^*.\Pi_{r^*,\Lambda^*}$
et $\zeta=\zeta_\alpha$ :
\begin{lem}
\label{lem_simple_Pi}
Pour une repr\'esentation $(\Hc,\Pi)\in \hat{N}$ et un vecteur $\zeta\in \Hc$,
on a :
$$
\Pi(\partial_s^j(f*\mu_s)).\zeta
\,=\,
\partial_s^j.\left(\Pi(f)\Pi(\mu_s).\zeta\right)
 \quad.  
$$
\end{lem}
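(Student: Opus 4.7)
\noindent\emph{Proof plan.} The identity consists of commuting the $s$-derivative with the operator $\Pi(f)$ and with the map $\mu_s \mapsto \Pi(\mu_s)$. The plan is to reduce to these two standard commutations, each of which is justified by the Schwartz regularity of $f$ and the regularity of the vector $\zeta$ (one may restrict to smooth vectors, which are dense in $\Hc$ and cover the cases used later, where $\zeta = \zeta_\alpha$ is a Hermite vector).

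\noindent First I would use the compatibility of the representation $\Pi$ with convolution, extended to the distributions $\partial_s^j \mu_s$: writing $f * \mu_s$ as a smooth family in $\Sc(N)$ in the variable $s$, differentiation under the convolution integral gives $\partial_s^j (f * \mu_s) = f * (\partial_s^j \mu_s)$, and then the homomorphism property $\Pi(f * \sigma) = \Pi(f)\Pi(\sigma)$ (valid for $f$ Schwartz and $\sigma$ a tempered distribution on $N$) yields
$$
\Pi\bigl(\partial_s^j(f*\mu_s)\bigr)
\,=\,
\Pi(f)\,\Pi(\partial_s^j \mu_s).
$$
Applied to $\zeta$, this turns the statement to prove into $\Pi(f)\,\Pi(\partial_s^j \mu_s)\zeta = \partial_s^j\bigl(\Pi(f)\Pi(\mu_s)\zeta\bigr)$.

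\noindent Next I would handle the remaining two commutations. Since $\Pi(f)$ is a bounded operator on $\Hc$ independent of $s$ (the norm is controlled by $\nd{f}_{L^1(N)}$), one may pull it outside $\partial_s^j$ using the continuity of $\Pi(f)$ and the convergence of the relevant difference quotients: $\Pi(f)\,\partial_s^j\bigl(\Pi(\mu_s)\zeta\bigr) = \partial_s^j\bigl(\Pi(f)\Pi(\mu_s)\zeta\bigr)$. It remains to identify $\Pi(\partial_s^j \mu_s)\zeta$ with $\partial_s^j(\Pi(\mu_s)\zeta)$. For this, I would use the integral formula
$$
\Pi(\mu_s)\zeta \,=\, \int_{S_1} \Pi(s.n')\,\zeta \; d\mu(n'),
$$
and differentiate under the integral sign in $s$. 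The derivatives of $s \mapsto \Pi(s.n')\zeta$ are controlled uniformly in $n' \in S_1$ provided $\zeta$ is a smooth vector for $\Pi$ (which is automatic for the Hermite vectors $\zeta_\alpha$ via the explicit form of the representations $\Pi_{r^*,\Lambda^*}$ given in subsection~\ref{subsec_rq_repN}), so differentiation under the integral is legitimate and gives precisely $\Pi(\partial_s^j \mu_s)\zeta$.

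\noindent The only technical point is the uniform control in $n' \in S_1$ of the derivatives $\partial_s^j[\Pi(s.n')\zeta]$; this is the main obstacle, but it is routine once one writes out the polynomial expression of $\Pi$ on the Lie algebra and uses that $S_1$ is compact together with the smoothness of $\zeta_\alpha$. Chaining the three equalities yields the claimed identity.
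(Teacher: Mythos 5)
Your route is genuinely different from the paper's, and as a proof of the lemma \emph{as stated} it has a gap. You factor through the distribution $\partial_s^j\mu_s$ and the operator $\Pi(\partial_s^j\mu_s)$, which forces you to differentiate $s\mapsto\Pi(s.n')\zeta$ under the integral over $S_1$; this only makes sense when $\zeta$ is (at least) a $C^j$ vector for $\Pi$, so your argument establishes the identity only on the dense subspace of smooth vectors. The lemma claims it for every $\zeta\in\Hc$, and the one-line appeal to density does not close this: to pass to the limit $\zeta_k\to\zeta$ you must justify interchanging $\partial_s^j$ with the limit on the right-hand side, i.e.\ show that $\Pi(f)\Pi(\mu_s)\zeta_k\to\Pi(f)\Pi(\mu_s)\zeta$ together with all $s$-derivatives up to order $j$, uniformly on compact $s$-intervals, before concluding that the limit function is $j$ times differentiable with the expected derivative. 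This can be done (each $\partial_s^i\bigl(\Pi(f)\Pi(\mu_s)\zeta_k\bigr)=\Pi(\partial_s^i(f*\mu_s))\zeta_k$ converges because $\Pi(g)$ has operator norm at most $\nd{g}_{L^1}$ and $s\mapsto\partial_s^i(f*\mu_s)$ is continuous into $L^1$), but it is a real step, not a remark. Similarly, the identity $\Pi(f*\sigma)=\Pi(f)\Pi(\sigma)$ for a tempered distribution $\sigma$ is not a fact you can simply quote; for $\sigma=\partial_s^j\mu_s$ it already presupposes that $\Pi(\sigma)$ is defined on the vector at hand.

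The paper avoids all of this by never differentiating the representation. It pairs $\Pi(\partial_s^j(f*\mu_s))\zeta$ against an arbitrary $\zeta'\in\Hc$, so that the $s$-dependence sits entirely in the scalar function $(s,n)\mapsto(f*\mu_s)(n)$, which is Schwartz in $n$ and smooth in $s$, while $\langle\Pi(n^{-1})\zeta,\zeta'\rangle$ is merely bounded by $\nd{\zeta}\,\nd{\zeta'}$ by unitarity; differentiation under $\int_N$ is then immediate and yields $\Pi(\partial_s^j(f*\mu_s))\zeta=\partial_s^j\bigl(\Pi(f*\mu_s)\zeta\bigr)$ for every $\zeta\in\Hc$. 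A Fubini/change-of-variables computation (using that $\mu_s$ is a compactly supported probability measure) then gives $\Pi(f*\mu_s)\zeta=\Pi(f)\Pi(\mu_s)\zeta$. If you wish to keep your decomposition, either weaken the statement to smooth vectors (which suffices for the later application to the Hermite vectors $\zeta_\alpha$) or carry out the density and interchange-of-limits argument explicitly; otherwise the duality argument is both shorter and proves the full statement.
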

Ce lemme sera  d\'emontr\'e \`a la fin de cette sous-section.

Comme  $\mu_s$ est une mesure radiale de masse finie,
l'op\'erateur $k^*.\Pi_{r^*,\Lambda^*}(\mu_s)$ 
vaut d'une part toujours
$\Pi_{r^*,\Lambda^*}(\mu_s)$
et d'autre part en restriction \`a chaque sous espace $V_l$,
 l'identit\'e \`a la constante 
 $<\mu_s,\phi^{r^*,\Lambda^*,l}>$ pr\`es 
(voir sous-section~\ref{subsec_rq_repN}).
On en d\'eduit :
\begin{eqnarray*}
k^*.\Pi_{r^*,\Lambda^*}(f)
k^*.\Pi_{r^*,\Lambda^*}(\mu_s).\zeta_\alpha
&=&
k^*.\Pi_{r^*,\Lambda^*}(f)
\left(<\mu_s,\phi^{r^*,\Lambda^*,l}>\Id.\zeta_\alpha\right)\\
&=&
<\mu_s,\phi^{r^*,\Lambda^*,l}>
k^*.\Pi_{r^*,\Lambda^*}(f).\zeta_\alpha
\quad.
\end{eqnarray*}

Rassemblons les deux arguments pr\'ec\'edents; on a :
$$
k^*.\Pi_{r^*,\Lambda^*}(\partial_s^j(f*\mu_s)).\zeta_\alpha
\,=\,
\partial_s^j.<\mu_s,\phi^{r^*,\Lambda^*,l}>
k^*.\Pi_{r^*,\Lambda^*}(f).\zeta_\alpha
\quad,
$$
et 
$$
\sum_{\alpha\in E_l}
\nn{k^*.\Pi_{r^*,\Lambda^*}(\partial_s^j(f*\mu_s)).\zeta_\alpha}^2
\,=\,
 \nn{\partial_s^j.<\mu_s,\phi^{r^*,\Lambda^*,l}>}^2
\sum_{\alpha\in E_l}
\nn{k^*.\Pi_{r^*,\Lambda^*}(f).\zeta_\alpha}^2
\quad.
$$
Revenons \`a 
\begin{eqnarray*}
\nd{S^j(f)}_{L^2(N)}
&=&
\int_{s=0}^\infty 
\int
\sum_{l\in\Nb^{v'}} 
\sum_{\alpha\in E_l}
\nn{k^*.\Pi_{r^*,\Lambda^*}(\partial_s^j(f*\mu_s)).\zeta_\alpha}^2
dm'(r^*,k^*,\Lambda^*)
s^{2j-1}ds\\
&=&
\int_{s=0}^\infty 
\int
\sum_{l\in\Nb^{v'}} \nn{\partial_s^j.<\mu_s,\phi^{r^*,\Lambda^*,l}>}^2
\sum_{\alpha\in E_l}
\nn{k^*.\Pi_{r^*,\Lambda^*}(f).\zeta_\alpha}^2
dm'(r^*,k^*,\Lambda^*)\\
&&\qquad\qquad
s^{2j-1}ds\quad.
\end{eqnarray*}
En utilisant Fubini, 
on trouve :
\begin{eqnarray*}
\nd{S^j(f)}_{L^2(N)}
&=&
\int
\sum_{l\in\Nb^{v'}} 
\int_{s=0}^\infty \nn{\partial_s^j.<\mu_s,\phi^{r^*,\Lambda^*,l}>}^2s^{2j-1}ds\\
&&\qquad\qquad
\sum_{\alpha\in E_l}
\nn{k^*.\Pi_{r^*,\Lambda^*}(f).\zeta_\alpha}^2
dm'(r^*,k^*,\Lambda^*)\quad.
\end{eqnarray*}
Gr\^ace \`a la d\'efinition de $\hat{S}^j$,
on en d\'eduit le lemme~\ref{lem_nd_Sjf}.
Pour que la preuve soit compl\`ete,
il reste \`a montrer le  lemme~\ref{lem_simple_Pi}.

\begin{proof}[du lemme~\ref{lem_simple_Pi}]
Comme $\mu_s$ est une mesure de probabilit\'e,
$\Pi(\mu_s).\zeta$ est bien d\'efinie (par dualit\'e) comme vecteur de $\Hc$.

Comme $f*\mu_s$ et $\partial_s^j(f*\mu_s)\in \Sc(N)$, on voit 
pour un vecteur $\zeta'\in\Hc$
\begin{eqnarray*}
<\Pi(\partial_s^j(f*\mu_s)).\zeta,\zeta'>
&=&
\int_N 
\partial_s^j(f*\mu_s)(n) <\Pi(n^{-1}).\zeta,\zeta'>
dn\\
&=&
\partial_s^j
\int_N 
(f*\mu_s)(n) <\Pi(n).\zeta,\zeta'>
dn\quad.
\end{eqnarray*}  
On d\'efinit donc par dualit\'e le vecteur de $\Hc$ :
$$
\Pi(\partial_s^j(f*\mu_s)).\zeta
\,=\,
\partial_s^j
\int_N 
(f*\mu_s)(n) \Pi(n).\zeta
dn\quad.
$$

On peut ais\'ement calculer 
(car $f\in\Sc(N)$ et $\mu_s$ est une mesure de probabilit\'e \`a support compact) :
\begin{eqnarray*}
  \int_N 
(f*\mu_s)(n) \Pi(n^{-1}).\zeta
dn
&=&
\int_N \int_N f(n'^{-1}n) \Pi(n^{-1}).\zeta
dn d\mu_s(n') \\
&=&
\int_N \int_N f(n_1) 
\Pi(n_1^{-1}{n'}^{-1}).\zeta 
dn d\mu_s(n') \quad,
\end{eqnarray*}
apr\`es le changement de variable $n_1=n'^{-1}n$;
gr\^ace \`a $\Pi(n_1^{-1}{n'}^{-1})=\Pi(n_1^{-1})\Pi({n'}^{-1})$,
 on a finalement :
$$
  \int_N 
(f*\mu_s)(n) \Pi(n^{-1}).\zeta
dn
\,=\,
\int_N f(n_1) 
\Pi(n_1^{-1}).
\int_{S_1}
\Pi({n'}^{-1}).\zeta 
d\mu_s(n') \;dn
\quad.  
$$
\end{proof}

Ainsi, le lemme~\ref{lem_simple_Pi} est d\'emontr\'e,
donc  le lemme~\ref{lem_nd_Sjf}, 
puis la proposition~\ref{prop_fcnd'aire_hat{S}N} le sont \'egalement.

\subsection{Estimations de $\hat{S}^j$}

Cette sous-section est consacr\'ee \`a la d\'emonstration 
de la proposition~\ref{prop_maj_hat{S}lambdaN}.
Nous admettrons plusieurs lemmes,
qui seront d\'emontr\'es dans la sous-section suivante.

Fixons $\phi=\phi^{r^*,\Lambda^*,l}\in\Pc$,
et  $h\in\Nb-\{0\}$. On note $A^*=D_2(\Lambda^*)$.

\begin{lem}
  \label{lem_maj_partial_h_mu_s_phi}
  L'expression~$\partial_s^h<\mu_s,\phi>$ 
  peut s'\'ecrire comme une combinaison lin\'eaire sur 
  les deux couples d'entiers
  $g=(h_1,h_2)$ et $j=(j_1,j_2)$ ( et l'entier $\tilde{h}$ si $v$ est impaire )
  tels que :
  \begin{itemize}
  \item  $h_1+h_2=h$ si $v$ est paire,
et  $h_1+h_2+\tilde{h}=h$ si $v$ est impaire
(et dans ce cas, on note $\bar{h}=h_1+h_2$),
  \item et $0\leq j_i\leq h_i/2$, $i=1,2$,
  \end{itemize}
  de :
  \begin{eqnarray*}
    b^{g,h}(s)
    \,:=\,
    s^{d_1+d_2}
    \int_{r=0}^1 
    \check{b}^{\tilde{h},h'_1+j_1}(r,s)\quad 
    \fbs {\frac{z-2}2} ^{(h'_2+j_2)} (s^2\sqrt{1-r^4} \nn{A^*})\\
    {(\sqrt{1-r^4} \nn{A^*})}^{h'_2+j_2}
    \;  r^{v-1}{(1-r^4)}^{\frac{z-2}2}dr  
    \quad ,
  \end{eqnarray*}
  o\`u  on a not\'e :
  \begin{itemize}
  \item les entiers $d_i=d(j_i,h_i)$, $i=1,2$, 
    (d\'ecrits dans le lemme~\ref{lem_maj_partial_h_mu_s_phi}), 
  \item $h_i=2h'_i,2h'_i+1$ pour $i=1,2$,
  \end{itemize}
  ainsi que :
  $$
  \check{b}^{\tilde{h},n}(r,s)
  \,=\,
  \int_{S_1^{(v)}} 
  {(irr^*x_v)}^{\tilde{h}}
  e^{irsr^*x_v}
  \partial_{s'}^n
  {\left[
      \overset{v'}{\underset{j=1}  \Pi}
      \flz {l_j}  {\lambda_j^*s'r^2\frac{\nn{\pr j X}^2} 2}\right]}_{s'=s^2}
  d\tilde{\sigma}_v(X)
  \quad.
  $$
\end{lem}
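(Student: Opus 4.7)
Le plan est d'abord de r\'eduire le calcul en un produit de quelques facteurs bien identifi\'es d\'ependant chacun simplement de $s$, puis d'appliquer la formule de Leibniz et le lemme~\ref{lem_derivee_fcn_s2} sur la d\'eriv\'ee d'une fonction de $s^2$.

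D'abord, comme la mesure $\mu$ sur la sph\`ere unit\'e est $O(v)$-invariante (la norme de Kor\'anyi et la mesure de Haar le sont), on peut intervertir le moyennage sur $k\in O(v)$ avec l'int\'egrale contre $\mu$ dans l'expression $\phi(n)=\int_{O(v)} \Theta(k.n)\, dk$, ce qui donne $<\mu_s,\phi>= \int_{S_1}\Theta(s.n)\,d\mu(n)$. Avec l'expression~(\ref{formule_changement_polaire}) de $\mu$ donn\'ee dans la proposition~\ref{prop_expression_mu}, on obtient (apr\`es dilatation $s.\exp(rX+\sqrt{1-r^4}Z)=\exp(srX+s^2\sqrt{1-r^4}Z)$) :
\begin{eqnarray*}
<\mu_s,\phi>
&=&2\int_{r=0}^1 r^{v-1}(1-r^4)^{\frac{z-2}{2}}
\int_{S_1^{(v)}}\int_{S_1^{(z)}}
e^{isrr^*x_v}\, e^{is^2\sqrt{1-r^4}<D_2(\Lambda^*),Z>}\\
&&\qquad
\overset{v_1}{\underset{j=1}\Pi}
\flz{l_j}{\frac{\lambda_j s^2 r^2}{2}\nn{\pr j X}^2}
\, d\tilde{\sigma}_z(Z)\,d\tilde{\sigma}_v(X)\, dr\quad,
\end{eqnarray*}
o\`u on a pos\'e $x_v=<X_v^*,X>$.

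Ensuite, je calculerais l'int\'egrale sur $S_1^{(z)}$ en utilisant la formule classique pour la transform\'ee de Fourier de la mesure de surface sph\'erique euclidienne, qui produit une fonction de Bessel r\'eduite~: le facteur en $Z$ donne apr\`es int\'egration un multiple de $\fb{\frac{z-2}{2}}{s^2\sqrt{1-r^4}\nn{A^*}}$, puisque $\nn{D_2(\Lambda^*)}=\nn{A^*}$ pour la base orthonorm\'ee canonique de $\Zc$. Les facteurs restants d\'ependant de $s$ sont donc~: l'exponentielle $e^{isrr^*x_v}$ (pr\'esente seulement si $r^*\neq 0$, i.e. si $v$ impair dans $\Pc$), le produit de fonctions de Laguerre (fonction de $s^2$), et la fonction de Bessel (fonction de $s^2$).

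Ensuite, j'appliquerais la formule de Leibniz \`a la d\'erivation $\partial_s^h$ du produit~: le multi-indice $(\tilde{h},h_1,h_2)$ avec $\tilde{h}+h_1+h_2=h$ pond\`ere respectivement l'exponentielle, le produit de Laguerre et la Bessel. La d\'erivation $\partial_s^{\tilde h}$ de l'exponentielle donne imm\'ediatement le facteur $(irr^*x_v)^{\tilde h}e^{isrr^*x_v}$. Pour les deux autres facteurs, qui sont des fonctions de la variable $s^2$, j'applique le lemme~\ref{lem_derivee_fcn_s2}~: $\partial_s^{h_i}$ de $f(s^2)$ est combinaison lin\'eaire de $s^{d(j_i,h_i)}f^{(h'_i+j_i)}(s^2)$ sur $0\leq j_i\leq h'_i$. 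Pour la partie Bessel, chaque d\'erivation en $s^2$ fait sortir un facteur $\sqrt{1-r^4}\nn{A^*}$ par la r\`egle de la cha\^\i ne, d'o\`u le terme ${(\sqrt{1-r^4}\nn{A^*})}^{h'_2+j_2}\, \fbs{\frac{z-2}{2}}^{(h'_2+j_2)}(s^2\sqrt{1-r^4}\nn{A^*})$.

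Finalement, il reste \`a regrouper les facteurs~: les termes d\'ependant de $X\in S_1^{(v)}$ (exponentielle avec $\tilde{h}$ et produit de Laguerre) sont int\'egr\'es pour donner exactement l'expression $\check{b}^{\tilde{h},h'_1+j_1}(r,s)$ de l'\'enonc\'e, tandis que le reste (Bessel, puissances $s^{d_1+d_2}$, facteurs $r^{v-1}(1-r^4)^{(z-2)/2}$, int\'egrale en $r\in [0,1]$) se regroupe en $b^{g,h}(s)$. Le point technique principal est purement combinatoire~: il faut suivre attentivement la comptabilit\'e des indices Leibniz/Lemme~\ref{lem_derivee_fcn_s2} et constater que pour $v$ pair on a n\'ecessairement $r^*=0$ sur $\Pc$, ce qui \'elimine $\tilde{h}$ (\'equivalent \`a imposer $\tilde{h}=0$ et $h=h_1+h_2$). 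Aucune estimation fine n'est n\'ecessaire ici~: le lemme est un calcul explicite structur\'e, dont la seule difficult\'e est de g\'erer la triple expansion de Leibniz sans erreur de notation.
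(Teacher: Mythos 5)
Votre démonstration suit essentiellement la même route que celle du texte : réduction de $<\mu_s,\phi>$ à $<\mu_s,\Theta^{r^*,\Lambda^*,l}>$ par radialité de $\mu$, passage en coordonnées polaires via la proposition~\ref{prop_expression_mu}, intégration sur $S_1^{(z)}$ donnant la fonction de Bessel (c'est exactement le contenu du lemme~\ref{lem_expr_mu_phi} du texte), puis formule de Leibniz sur le produit des trois facteurs combinée au lemme~\ref{lem_derivee_fcn_s2} pour les deux facteurs fonctions de $s^2$, avec la règle de la chaîne qui fait sortir les puissances de $\sqrt{1-r^4}\,\nn{A^*}$. La comptabilité des indices et le regroupement final en $\check{b}^{\tilde{h},h'_1+j_1}$ et $b^{g,h}$ sont corrects ; rien à redire.
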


Jusqu'\`a la fin de cette section,
nous gardons les notations de ce lemme, 
que nous d\'emontrerons dans la sous-section~\ref{subsec_dem_lem_technique}.
La proposition~\ref{prop_maj_hat{S}lambdaN}
est donc \'equivalente \`a la proposition suivante :

\begin{prop}
  \label{prop_maj_I_h}
  Les int\'egrales :
  $$
  I^{g,j}
  \,:=\,
  \int_0^\infty \nn{b^{g,j}(s)}^2s^{2h-1}ds
  $$
  sont born\'ees ind\'ependemment de $\Lambda^*,r^*,l$, 
  et ce pour tous les param\`etres 
  $g,j$
  comme dans le lemme~\ref{lem_maj_partial_h_mu_s_phi},
  tant que $h< v+1+3/4, (z-1)/4, (z-2)/2$.
\end{prop}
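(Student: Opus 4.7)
Le plan de démonstration suivra de près celui déjà effectué pour la proposition~\ref{prop_maj_hat{S}_Omega_L} dans le cas des groupes de type H, en adaptant les estimations. Fixons $g=(h_1,h_2)$ et $j=(j_1,j_2)$ comme dans l'énoncé, et posons $m_1 = h'_1+j_1$ et $m_2 = h'_2+j_2$. La majoration reposera sur trois mécanismes : l'inégalité de Hölder pour séparer sur $r\in[0,1]$ la contribution du facteur $\check{b}$ (issu des fonctions de Laguerre) de celle du facteur de Bessel ; un changement de variable adapté sur $s$ pour chacune des contributions ; enfin les majorations $L^2$ à poids connues des dérivées de fonctions de Laguerre (lemme~\ref{lem_cq_Mark}) et de Bessel (lemme~\ref{lem_maj_int_fb}).

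Plus précisément, j'appliquerais d'abord Hölder sur l'intégrale en $r$ dans la définition de $b^{g,h}(s)$ avec un poids $(1-r^4)^{(z-2)/2}r^{v-1}$. Cela amène à estimer d'un côté l'intégrale en $s$ de $|\check{b}^{\tilde h,m_1}(r,s)|^2$ pondérée par $s^{2(h+d_1+d_2)-1}$, de l'autre celle impliquant la dérivée de Bessel. Pour la partie Bessel, le changement de variable $\sigma = s^2\sqrt{1-r^4}\,\nn{A^*}$ fait disparaître la dépendance en $\Lambda^*$ et $r$, ramenant l'estimation à la finitude de $\int_0^\infty |\fbs{(z-2)/2}^{(m_2)}(\sigma)|^2\sigma^{2m_2-1}d\sigma$, qui est assurée par le lemme~\ref{lem_maj_int_fb} dès que $m_2 < (z-2)/2$ (d'où la borne $h<(z-2)/2$). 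Pour la partie Laguerre, on utilise l'expression explicite de $\check{b}^{\tilde h,m_1}$ : on majore $|irr^* x_v|^{\tilde h}|e^{irsr^*x_v}|$ par $(rr^*)^{\tilde h}$, on utilise ensuite le passage en coordonnées polaires sur $S_1^{(v)}$ donné par la proposition~\ref{prop_expression_mu} pour exprimer $\nn{\pr j X}$, puis on fait le changement de variable $s' = s^2 r^2 \lambda_j^*/2$ afin de se ramener à des normes $L^2$ à poids des dérivées des fonctions de Laguerre normalisées : celles-ci sont bornées d'après le lemme~\ref{lem_cq_Mark} tant que l'ordre de dérivation $m_1$ reste inférieur au degré $l_j$ du polynôme de Laguerre, propriété garantie par la structure $L^2$ usuelle des fonctions de Laguerre.

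Le principal obstacle sera d'assurer l'uniformité en tous les paramètres, en particulier lorsque certains $\lambda_j^*$ deviennent petits : les changements de variable introduiront des facteurs de $\lambda_j^*$ qu'il faudra compenser par les poids $(1-r^4)^{(z-2)/2}$ et $r^{v-1}$ issus des coordonnées polaires, ainsi que par le facteur $r^{\tilde h}$ absorbant la puissance de $r^*$. Il faudra également gérer séparément le cas où $r^*$ devient grand (absorbé par la puissance $r^{\tilde h}$ après changement de variable $s\mapsto r^* s$) et le cas où $\Lambda^*$ est grand. Le recollement des bornes donnera que $I^{g,j}$ est majoré par le produit d'une constante dépendant uniquement de $v, z, h$, et de termes de la forme $\int_0^1 r^a (1-r^4)^b dr$ où l'on vérifie que $a>-1$ et $b>-1$ sous l'hypothèse $h<\min(v+7/4,(z-1)/4,(z-2)/2)$, achevant ainsi la preuve.
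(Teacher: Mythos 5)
Votre plan reproduit correctement la partie \og Bessel \fg{} de l'argument : Cauchy--Schwarz en $r$, changement de variable $\sigma=s^2\sqrt{1-r^4}\,\nn{A^*}$, puis le lemme~\ref{lem_maj_int_fb}, ce qui donne bien la condition $h<(z-2)/2$. Mais il comporte deux lacunes réelles sur la partie restante, précisément là d'où viennent les conditions $h<(z-1)/4$ et $h<v+1+3/4$ et l'uniformité en $r^*$.

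Premièrement, le traitement du facteur oscillant $e^{irsr^*x_v}$ par la majoration brutale $\nn{irr^*x_v}^{\tilde h}\,\nn{e^{irsr^*x_v}}\leq (rr^*)^{\tilde h}$ ne peut pas suffire : cette borne croît en $r^*$ et ne décroît pas en $s$, et le changement de variable $s\mapsto r^*s$ que vous invoquez ne la compense pas, car les arguments des fonctions de Laguerre et de Bessel dépendent de $s^2r^2\lambda_j^*$ et de $s^2\sqrt{1-r^4}\nn{A^*}$, pas de $r^*s$ ; l'intégrale en $s$ à l'infini diverge alors ou garde une puissance non contrôlée de $r^*$. Le point clé de la démonstration du texte est d'effectuer $\tilde h$ (resp. $2h$ dans le cas $\overline{h}=0$) intégrations par parties sur la sphère $S_1^{(v)}$ via un atlas de projections stéréographiques (lemmes~\ref{lem_maj_check_b_h} et~\ref{lem_maj_checkb_h0}), ce qui transforme l'oscillation en un gain $s^{-\tilde h}$ (resp. $(rr^*s^2)^{-h}$), puis de scinder l'intégrale en $s$ en $\int_0^{1/r^*}$ (où la borne brutale $(rr^*)^h$ suffit et donne une constante) et $\int_{1/r^*}^\infty$ (où la borne issue des intégrations par parties, combinée au lemme~\ref{lem_maj_J_axh}, produit exactement les contraintes $h<(z-1)/4$ et $h<v+1+3/4$). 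Sans cette intégration par parties et ce découpage, l'uniformité en $r^*$ est perdue.

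Deuxièmement, votre recours au lemme~\ref{lem_cq_Mark} pour la partie Laguerre ne fonctionne pas ici : sur $N_{v,2}$ avec $\Lambda^*\in\Lc$ générique, les multiplicités valent $m_j=1$ et les fonctions de Laguerre en jeu sont les $\flsz{l_j}=\flbs{l_j}{0}$ de paramètre $\alpha=0$ ; le lemme~\ref{lem_cq_Mark} n'apporte alors aucune borne utile (il exige $j\leq\alpha$). Contrairement au cas des groupes de type~H, la démonstration n'utilise pour les fonctions de Laguerre que leur bornitude uniforme (ainsi que celle de leurs dérivées) indépendamment du degré $l$ : tout le poids et toute la décroissance proviennent du facteur de Bessel en la variable centrale et de l'oscillation en $x_v$, et non d'estimations $L^2$ à poids des fonctions de Laguerre. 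Il n'y a d'ailleurs aucune condition du type \og $m_1$ inférieur au degré $l_j$ \fg{} : la borne doit être uniforme en $l$, y compris pour $l_j=0$.
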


Le reste de cette sous-section est consacr\'ee \`a sa d\'emonstration,
d'abord dans le cas  $\overline{h}\not=0, r^*\not=0$ ou $r^*=0$,
puis dans le cas $\overline{h}=0, r^*\not=0$.

\subsubsection{Majorations des 
  \mathversion{bold}{$I^{\tilde{h},h'_1,j_1,h_2,j_2}$}
  si  \mathversion{bold}{$\overline{h}\not=0$} 
  ou \mathversion{bold}{$r^*=0$}}

Nous souhaitons montrer la proposition~\ref{prop_maj_I_h}
lorsque $(\overline{h}\not=0,r^*\not=0)$ ou $(r^*=0)$,
si $h<(z-2)/2$.
On utilise la majoration des int\'egrales
$\check{b}^{\tilde{h},n}(r,s)$,
donn\'ee dans le lemme :

\begin{lem}
  \label{lem_maj_check_b_h}  
  Les expressions 
  $\check{b}^{\tilde{h},n}(r,s)$
  pour $0\leq n\leq \tilde{h}$,
  sont major\'ees \`a une constante pr\`es 
  de $v,\tilde{h}$ par :
  $$
  {(\nn{A^*}r^2)}^n
  s^{-\tilde{h}}
  \quad \sum_{0\leq i\leq \tilde{h}}
  {(\nn{A^*}s^2r^2)}^i
  \quad .
  $$
\end{lem}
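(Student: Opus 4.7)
L'id\'ee directrice est d'exploiter la pr\'esence du facteur oscillant $e^{irsr^*x_v}$ pour gagner la d\'ecroissance $s^{-\tilde{h}}$ par int\'egrations par parties r\'ep\'et\'ees en la variable $x_v$, tout en observant que les arguments des fonctions de Laguerre ne d\'ependent de $x_v$ que via le facteur $1-x_v^2$ (d'o\`u l'apparition du facteur suppl\'ementaire $\nn{A^*}s^2r^2$ \`a chaque d\'erivation).

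D'abord, je remarquerais que le cas $\tilde{h}>0$ n'est pertinent que si $v$ est impair (sinon $r^*=0$ et l'int\'egrale est triviale), de sorte que $v=2v'+1$ et, par d\'efinition des projections, $\mbox{pr}_j X$ ne fait intervenir que $x_1,\ldots,x_{2v_0}\le x_{2v'}=x_{v-1}$. La coordonn\'ee $x_v$ n'appara\^\i t donc jamais dans les facteurs de Laguerre. D\'ecomposant la sph\`ere $S_1^{(v)}$ en la variable $x_v\in[-1,1]$ et la sph\`ere $Y\in S_1^{(v-1)}$ de rayon $\sqrt{1-x_v^2}$, on obtient
$$
\check{b}^{\tilde{h},n}(r,s)
\,=\,
c_v\int_{-1}^{1}(irr^*x_v)^{\tilde{h}}e^{irsr^*x_v}\,\widetilde{G}(x_v)\,(1-x_v^2)^{\frac{v-2}{2}}dx_v,
$$
o\`u $\widetilde{G}(x_v)$ est une int\'egrale sur $S_1^{(v-1)}$ ne d\'ependant de $x_v$ que par $(1-x_v^2)$.

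Ensuite, j'appliquerais la formule de Leibniz \`a $\partial_{s'}^n\prod_j\flz{l_j}{\cdot}$~: chaque terme est major\'e \`a une constante pr\`es par $\nn{A^*}^n r^{2n}$ multipli\'e par un produit de d\'eriv\'ees de fonctions de Laguerre normalis\'ees, elles-m\^emes uniform\'ement born\'ees (car $\nn{\mbox{pr}_j X}\le 1$ sur la sph\`ere et les $\flzs{l}$ et leurs d\'eriv\'ees sont born\'ees). Il vient donc $|\widetilde{G}(x_v)|\le C\nn{A^*}^n r^{2n}$ et, plus g\'en\'eralement, $|\partial_{x_v}^i\widetilde{G}(x_v)|\le C_i\nn{A^*}^n r^{2n}(\nn{A^*}s^2r^2)^i$, puisque chaque d\'erivation en $x_v$ fait sortir par la r\`egle de cha\^\i ne un facteur suppl\'ementaire de $\lambda_j^* s^2 r^2$, donc au plus $\nn{A^*}s^2r^2$.

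Le c{\oe}ur de la preuve consiste alors en l'identit\'e $(irr^*x_v)^{\tilde{h}}e^{irsr^*x_v}=s^{-\tilde{h}}x_v^{\tilde{h}}\partial_{x_v}^{\tilde{h}}e^{irsr^*x_v}$ et $\tilde{h}$ int\'egrations par parties cons\'ecutives en $x_v$. Les termes de bord en $x_v=\pm 1$ s'annulent gr\^ace au facteur $(1-x_v^2)^{(v-2)/2}$, \`a condition que $\tilde{h}<(v-2)/2+1$, ce qui est impliqu\'e par $h<(z-2)/2$. On obtient alors
$$
\check{b}^{\tilde{h},n}(r,s)
\,=\,
\frac{(-1)^{\tilde{h}}}{s^{\tilde{h}}}\int_{-1}^1 e^{irsr^*x_v}\partial_{x_v}^{\tilde{h}}\!\left[x_v^{\tilde{h}}\widetilde{G}(x_v)(1-x_v^2)^{\frac{v-2}{2}}\right]dx_v,
$$
puis, en distribuant les $\tilde{h}$ d\'erivations entre $x_v^{\tilde{h}}$, $\widetilde{G}(x_v)$ et $(1-x_v^2)^{(v-2)/2}$ via Leibniz, la borne annonc\'ee, la somme $\sum_{0\le i\le\tilde{h}}(\nn{A^*}s^2r^2)^i$ refl\'etant le nombre $i$ de d\'erivations tomb\'ees sur $\widetilde{G}$. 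La difficult\'e principale attendue est comptable~: il faut suivre pr\'ecis\'ement les d\'erivations mixtes et v\'erifier que toutes les constantes (nombre de termes combinatoires, int\'egrale r\'esiduelle en $x_v$, bornes sur les d\'eriv\'ees de Laguerre) ne d\'ependent que de $v$ et $\tilde{h}$, ainsi que l'annulation compl\`ete des termes de bord \`a chaque \'etape.
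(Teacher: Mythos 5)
Your overall strategy is the same as the paper's: use the identity $(irr^*x_v)^{\tilde h}e^{irsr^*x_v}=s^{-\tilde h}x_v^{\tilde h}\partial_{x_v}^{\tilde h}e^{irsr^*x_v}$, integrate by parts $\tilde h$ times in $x_v$, and observe that each derivative reaching the Laguerre product costs at most a factor $\nn{A^*}s^2r^2$ (whence the sum over $0\le i\le\tilde h$). Those ingredients are correct. The gap lies in the way you set up the integration by parts. By writing $S_1^{(v)}$ as $x_v\in[-1,1]$ times a sphere of radius $\sqrt{1-x_v^2}$ you introduce the weight $(1-x_v^2)^{(v-2)/2}$; after $k$ integrations by parts the boundary contributions at $x_v=\pm1$ carry a factor $(1-x_v^2)^{(v-2)/2-(k-1)}$ and the final integrand a factor $(1-x_v^2)^{(v-2)/2-\tilde h}$, so your argument genuinely requires $\tilde h<v/2$ (note that $\widetilde G(\pm1)$ is generically nonzero, since the Laguerre arguments vanish there and $\bar{\Lc}_{l}^{\,m-1}(0)=1$, so nothing else kills these terms). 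Your claim that this is implied by $h<(z-2)/2$ is false: $z=v(v-1)/2$ is quadratic in $v$, so the constraints of the surrounding proposition only force $h\lesssim (z-1)/4\sim v^2/8$. Already for $v=7$ one may have $h=5$ and $\tilde h=h-\overline{h}=4>v/2=3.5$; then the boundary terms do not vanish and $(1-x_v^2)^{(v-2)/2-\tilde h}$ is not even integrable. The proof therefore breaks down for $v\ge 7$, exactly in the range where the lemma is needed.

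The paper avoids this degeneracy by never passing through the weight $(1-x_v^2)^{(v-2)/2}$: it covers $S_1^{(v)}$ by two stereographic charts around the poles $\pm X_1$, with a $C^\infty$ partition of unity $\psi_1,\psi_2$ and smooth densities $D_i$ on compact chart domains $C_i'$, and performs the $\tilde h$ integrations by parts in $x_v$ inside each chart. There the integrand $x_v^{\tilde h}\psi_i D_i\,\Pi_j\Lc_{l_j}$ is $C^\infty$ with all derivatives bounded uniformly in the parameters, no boundary terms appear, and no restriction on $\tilde h$ beyond the statement is needed. To repair your argument you would have to either switch to such a smooth atlas, or treat neighbourhoods of $x_v=\pm1$ separately --- which amounts to a different proof.
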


Admettons ce dernier lemme, et montrons 
la proposition~\ref{prop_maj_I_h}
lorsque $\overline{h}\not=0, r^*\not=0$ 
ou $r^*=0$.

Commen\c cons par majorer l'expression de
$  b^{g,j}(s)$
(voir lemme~\ref{lem_maj_partial_h_mu_s_phi})
par Cauchy-Schwarz :
\begin{eqnarray*}
  \nn{b^{g,j}(s)}^2
  &\leq&
  s^{2(d_1+d_2)}\nn{A^*}^{2(h'_2+j_2)}
  \int_0^1
  \nn{\check{b}^{\tilde{h},h'_1+j_1}(r,s)r^{v-1}{(1-r^4)}^{-\frac 14}}^2dr\\
  &&\qquad
  \int_0^1
  \nn{\fbs{\frac{z-2}2}^{(h'_2+j_2)}(s^2\sqrt{1-r^4} \nn{A^*})
    {(1-r^4)}^{\frac{z-2}2+\frac{h'_2+j_2}2+\frac 14}}^2dr
  \quad.
\end{eqnarray*}
Utilisons le  lemme~\ref{lem_maj_check_b_h}.
Si $r^*\not=0$,
l'expression 
$\nn{b^{g,j}(s)}^2$
est major\'ee \`a une constante pr\`es par :
\begin{eqnarray*}
  &&  s^{2(d_1+d_2)}\nn{A^*}^{2(h'_2+j_2)} 
  \int_{0}^1
  {\left({s^{-\tilde{h}}(\nn{A^*}r^2)}^{h'_1+j_1}
      {(\nn{A^*}s^2r^2)}^i
      r^{v-1}{(1-r^4)}^{-\frac 14}\right)}^2dr\\
  &&\qquad\qquad  \int_{0}^1
  \nn{\fbs{\frac{z-2}2}^{(h'_2+j_2)}(s^2\sqrt{1-r^4} \nn{A^*})
    {(1-r^4)}^{\frac{z-2}2+\frac{h'_2+j_2}2+\frac 14}}^2dr \\
  &&\quad  =\,
  \nn{A^*}^{2(h'_2+j_2+h'_1+j_1+i)} 
  s^{2(d_1+d_2)-2\tilde{h}+4i}
  \int_{0}^1
  r^{2(v-1+2i+2(h'_1+j_1))}{(1-r^4)}^{-\frac 12}dr\\
  &&\quad\qquad\qquad  \int_{0}^1
  \nn{\fbs{\frac{z-2}2}^{(h'_2+j_2)}(s^2\sqrt{1-r^4} \nn{A^*})
    {(1-r^4)}^{\frac{z-2}2+\frac{h'_2+j_2}2+\frac 14}}^2dr
  \quad.  
\end{eqnarray*}
Si $r^*=0$, on obtient la m\^eme expression avec $\tilde{h}=i=0$.
Comme  la premi\`ere int\'egrale en~$r$ est finie, 
$I^{g,j}$
est major\'ee \`a une constante pr\`es 
qui ne d\'epend que de $v$ et $h$ 
par le maximum sur $0\leq i\leq \tilde{h}$ des int\'egrales :
\begin{eqnarray*}
  J^{(i)}
  &:=&
  \nn{A^*}^{2(h'_2+j_2+h'_1+j_1+i)} 
  \int_{0}^\infty
  s^{2(d_1+d_2)-2\tilde{h}+4i}\\
  &&\quad  \int_{0}^1
  \nn{\fbs{\frac{z-2}2}^{(h'_2+j_2)}(s^2\sqrt{1-r^4} \nn{A^*})
    {(1-r^4)}^{\frac{z-2}2+\frac{h'_2+j_2}2+\frac 14}}^2dr
  \quad s^{2h-1}ds
  \quad.  
\end{eqnarray*}
Rassemblons les exposants de $s$ 
dans chaque int\'egrale $J^{(i)}$;
on utilise d'abord $-\tilde{h}+h=\overline{h}=h_1+h_2$,
puis l'\'egalit\'e~(\ref{egalite_d(j,h)+h}) :
$$
2(d_1+d_2)-2\tilde{h}+4i   +2h-1
\,=\,
2(d_1+h_1+d_2+h_2)+4i-1
\,=\,
4(h'_1+j_1+h'_2+j_2)+4i-1
\quad.
$$
Effectuons le changement de variable 
$s'=s^2 \sqrt{1-r^4} \nn{A^*}$ :
$$
J^{(i)}
\,=\,
\int_{s'=0}^\infty 
\nn{\fbs{\frac{z-2}2}^{(h'_2+j_2)}(s')}^2
{s'}^{h'_2+j_2+h'_1+j_1+2i-1}\frac{ds'}2
\int_{0}^1 {(1-r^4)}^{z-2+\frac 12-(h'_1+j_1+2i)}dr  
\quad.
$$
Dans la derni\`ere expression de $J^{(i)}$,
l'int\'egrale contre $dr$ est finie lorsque  pour tout $j_1$ et $i$ :
$$
z-2+\frac 12-(h'_1+j_1+2i) \,>\,-1
\quad;
$$
or on a :
$$
h'_1+j_1+2i\leq \overline{h}+2\tilde{h}
\,=\,
h+\tilde{h}\leq 2h 
\quad ;
$$
donc ces int\'egrales sont finies 
lorsque $2h<z-1/2$ (qui est toujours v\'erifi\'e).
L'exposant de~$s'$ dans l'int\'egrale contre $ds'$
dans la derni\`ere expression de $J^{(i)}$,
est :
$$
h'_2+j_2+h'_1+j_1+2i-1
\,=\,
d_1+d_2+\overline{h}+2i-1
\quad;
$$
Comme nous nous sommes plac\'es dans le cas o\`u $\overline{h}\not=0$,
cet exposant est positif ou nul. 
D'apr\`es le lemme~\ref{lem_maj_int_fb},
cette int\'egrale est finie lorsque
$h'_2+j_2+h'_1+j_1+2i-1 < 2(z-2)/2$;
or on a :
$$
h'_2+j_2+h'_1+j_1+2i\leq h_1+h_2+2\tilde{h}
\,=\,
\overline{h}+2\tilde{h}
\,=\,
h+\tilde{h} 
\,\leq\, 2h
\quad;
$$
donc l'int\'egrale contre $ds'$ 
sera finie lorsque $1-q+2h<-1$ ou encore $h<(z-2)/2$.

La proposition~\ref{prop_maj_I_h}
est donc d\'emontr\'ee 
dans les cas  $(\overline{h}\not=0,r^*\not=0)$ ou $(r^*=0)$,
car dans ce dernier cas $\overline{h}=h\geq1$.

\subsubsection{Majorations des 
  \mathversion{bold}{$I^{0,0}$}}

Nous souhaitons montrer la proposition~\ref{prop_maj_I_h}
lorsque $\overline{h}=0$ et $r^*\not=0$.
Nous supposons donc ici $r^*\not=0$ et $h=\tilde{h}$, 
ou encore $g=j=(0,0)=0$.
On utilise deux majorations de
$\nn{\check{b}^{h,0}(r,s)}$
donn\'ees dans le lemme qui suit,
ainsi qu'un lemme technique.

\begin{lem}
  \label{lem_maj_checkb_h0}
  Si $\Lambda^*\not=0,r^*\not=0$,
  l'expression $\check{b}^{h,0}(r,s)$ 
  est major\'ee \`a une constante pr\`es 
  d'une part par : ${(rr^*)}^h$, 
  d'autre part par :
  $$
  \frac1{{(rr^*s^2)}^h}
  \sum_{0\leq j\leq 2h}
  {(\nn{A^*}s^2r^2)}^j
  \quad .
  $$
\end{lem}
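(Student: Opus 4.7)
The plan is to treat the two bounds separately, exploiting the explicit form of $\check b^{h,0}(r,s)=\int_{S_1^{(v)}}(irr^*x_v)^h e^{irsr^*x_v}F(X)\,d\tilde\sigma_v(X)$ with $F(X)=\prod_j \bar{\mathcal L}_{l_j}^{m_j-1}\bigl(\tfrac{\lambda_j^*}{2}s^2r^2|\pr j X|^2\bigr)$.

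The first bound $(rr^*)^h$ is obtained by the trivial pointwise estimate: on the unit sphere $|x_v|\le 1$, the exponential $e^{irsr^*x_v}$ has modulus one, the normalized Laguerre functions $\bar{\mathcal L}_n^{m_j-1}$ are bounded by their value $1$ at the origin (see sub-section~\ref{app_lag}), and the measure $\tilde\sigma_v$ of the sphere is finite. Pulling the constant $(rr^*)^h$ outside the integral gives the claim.

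For the second bound the plan is to gain oscillatory decay in $x_v$ by integration by parts. I would parameterize the sphere with $x_v\in[-1,1]$ as one coordinate and $\hat X'\in S_1^{(v-1)}$ for the orthogonal part, so that $d\tilde\sigma_v(X)=(1-x_v^2)^{(v-3)/2}dx_v\,d\tilde\sigma_{v-1}(\hat X')$. Since $r^*\neq 0$ forces $v\ge 2v_0+1\ge 3$, the weight vanishes at the endpoints, which kills all boundary contributions in successive integrations by parts. The key identity is $(irsr^*x_v)^h e^{irsr^*x_v}=\partial_a^h e^{iax_v}\big|_{a=rsr^*}$, which allows me to write
\[
\check b^{h,0}(r,s)=\frac{1}{s^h}\,\partial_a^h\!\int_{-1}^1 e^{iax_v}\,(1-x_v^2)^{(v-3)/2}H(x_v;r,s)\,dx_v\Bigg|_{a=rsr^*},
\]
where $H(x_v;r,s)=\int_{S_1^{(v-1)}}\prod_j\bar{\mathcal L}_{l_j}^{m_j-1}(\tfrac{\lambda_j^*}{2}s^2r^2(1-x_v^2)|\pr j \hat X'|^2)\,d\tilde\sigma_{v-1}$. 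Then $h$ integrations by parts in $x_v$ using $e^{iax_v}=(ia)^{-1}\partial_{x_v}e^{iax_v}$ produce a factor $(rsr^*)^{-h}$, which combined with $1/s^h$ yields the prefactor $(rr^*s^2)^{-h}$ announced in the lemma.

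The remaining work is to bound the derivative $\partial_{x_v}^h\bigl[(1-x_v^2)^{(v-3)/2}H(x_v;r,s)\bigr]$ that the $h$ IBPs leave inside the integral. Expanding with Leibniz and the chain rule, every spatial derivative that falls on a Laguerre factor yields two kinds of contributions: a single $\lambda_j^*s^2r^2\le \|A^*\|s^2r^2$ from differentiating the argument, and possibly an additional $\|A^*\|s^2r^2$ coming from secondary applications of the chain rule when several derivatives land on the same Laguerre factor. Accounting for all the ways the $h$ derivatives distribute among the Laguerre factors and the $(1-x_v^2)^{(v-3)/2}$ weight, and using the uniform boundedness of $\bar{\mathcal L}_{l_j}^{m_j-1}$ and its derivatives, one gets a polynomial bound of degree $2h$ in the variable $\|A^*\|s^2r^2$, producing the sum $\sum_{0\le j\le 2h}(\|A^*\|s^2r^2)^j$. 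The main obstacle is the combinatorial bookkeeping of these derivatives and ensuring the constants depend only on $v$ and $h$ (and not on $l$, $\Lambda^*$ or $r^*$), which is where the normalized Laguerre functions and their scale-invariant bounds are essential.
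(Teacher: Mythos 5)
Your first bound and the general strategy for the second (oscillatory integration by parts in $x_v$, then Leibniz on the Laguerre product) are the right ideas, and the paper's proof follows the same outline. But there is a genuine quantitative gap in the second bound: you perform only $h$ integrations by parts, and that is not enough. Start from the correct factorisation $\check b^{h,0}=(rr^*)^h\int (ix_v)^h e^{irsr^*x_v}\,G\,d\tilde\sigma_v$: each integration by parts via $e^{iax_v}=(ia)^{-1}\partial_{x_v}e^{iax_v}$ with $a=rsr^*$ costs one factor $(rsr^*)^{-1}$, so $h$ of them yield a net prefactor $(rr^*)^h(rsr^*)^{-h}=s^{-h}$, not the announced $(rr^*s^2)^{-h}=(rr^*)^{-h}s^{-2h}$. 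The arithmetic in your sketch only closes because of the spurious $1/s^h$ in your identity $\check b^{h,0}=s^{-h}\partial_a^h\int e^{iax_v}(\cdots)\,dx_v\big|_{a=rsr^*}$; computing $\partial_a^h e^{iax_v}=(ix_v)^h e^{iax_v}$ shows the correct external factor is $(rr^*)^h$, not $s^{-h}$. The paper instead writes $e^{irsr^*x_v}=(irsr^*)^{-2h}\partial_{x_v}^{2h}e^{irsr^*x_v}$ and integrates by parts $2h$ times, keeping $x_v^h$ inside the derivative; this gives $(rr^*)^h(rsr^*)^{-2h}=(rr^*s^2)^{-h}$, and the $2h$ derivatives falling on the Laguerre product (each contributing at most one factor $\lambda_j^*s^2r^2\le\nn{A^*}s^2r^2$) account for the polynomial of degree $2h$. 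With only $h$ derivatives you could never reach degree $2h$ either; your appeal to ``secondary applications of the chain rule'' does not create extra powers of $\nn{A^*}s^2r^2$ beyond one per derivative.

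A secondary issue is your handling of boundary terms. With the parametrisation $d\tilde\sigma_v=(1-x_v^2)^{(v-3)/2}dx_v\,d\tilde\sigma_{v-1}$ the weight does not vanish to high enough order at $x_v=\pm1$ to absorb $2h$ (or even $h$, for small $v$ or large $h$) integrations by parts: its derivatives become singular at the endpoints. This is precisely why the paper works instead with a smooth partition of unity $\psi_1,\psi_2$ subordinate to two stereographic charts (as in the proof of Lemma~\ref{lem_maj_check_b_h_10}), so that every integration by parts is free of boundary contributions and the resulting densities stay $C^\infty$ on compact chart images.
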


\begin{lem}
  \label{lem_maj_J_axh}
  Fixons $j\in \Nb-\{0\}$.
  On pose pour $x>0,a>0$ :
  $$
  J_{a,x;j}^{(i)}
  \,:=\,
  \int_{x^{-\frac1j}}^\infty
  {\left(\int_{0}^1 
      \frac{{(as^2r^2)}^i}
      {x r^j s^{2j} }    
      \nn{\fb{\frac{z-2}2}{s^2\sqrt{1-r^4} a}}
      r^{v-1}{(1-r^4)}^{\frac{z-2}2}dr  \right)}^2
  s^{2j-1}ds\quad .
  $$
  Ces int\'egrales pour $i=0,\ldots 2j$
  sont major\'ees ind\'ependemment de $a>0,x>0$
  lorsque $z>3/4$, $j<(z-1)/4$, et $j<v+1+3/4$.
\end{lem}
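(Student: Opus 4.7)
The plan is to estimate $J_{a,x;j}^{(i)}$ by applying Cauchy--Schwarz on the inner $r$-integral so as to separate the purely algebraic factor $\frac{(as^2r^2)^i}{xr^js^{2j}}$ from the Bessel factor $|\fb{(z-2)/2}{s^2\sqrt{1-r^4}\,a}|$, and then to exchange the orders of integration and invoke the change of variable $s' = s^2\sqrt{1-r^4}\,a$ in order to reduce the Bessel contribution to an $L^2$-type integral of the form already treated in Lemma~\ref{lem_maj_int_fb}. This mirrors the strategy used for the $\Omega_L$ case in the proof of Proposition~\ref{prop_maj_hat{S}_Omega_L} on groups of type~H.

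Concretely, one would split the weight $r^{v-1}(1-r^4)^{(z-2)/2}\,dr$ as a product of two factors carrying complementary exponents $(\beta_1,\beta_2)$ with $\beta_1+\beta_2 = (z-2)/2$ and, by Cauchy--Schwarz, dominate $|b_{\rm inner}(s)|^2$ by the product of a purely algebraic integral
$$
\frac{a^{2i}s^{4i-4j}}{x^2}\int_0^1 r^{4i-2j+v-1}(1-r^4)^{2\beta_1}\,dr
$$
and an integral $I(s) = \int_0^1 |\fb{(z-2)/2}{s^2\sqrt{1-r^4}\,a}|^2(1-r^4)^{2\beta_2}\,dr$. In the outer integral against $s^{2j-1}\,ds$, the algebraic part produces, up to a finite $r$-constant, a factor $\frac{a^{2i}}{x^2}\int_{x^{-1/j}}^\infty s^{4i-2j-1}\,ds$; the lower cutoff $s \geq x^{-1/j}$ is precisely what cancels the $1/x^2$ prefactor, making the bound uniform in $x$. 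For the part involving $I(s)$, applying Fubini to swap the orders of integration and performing the substitution $s' = s^2\sqrt{1-r^4}\,a$ turns the $s$-integral into one controlled by Lemma~\ref{lem_maj_int_fb}, and the $a$-dependence disappears; one is then left with a pure $r$-integral of the form $\int_0^1 r^{\gamma}(1-r^4)^{\delta}\,dr$ to verify for convergence.

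The critical obstacle is to find, for each $i \in \{0, 1, \ldots, 2j\}$, an admissible splitting $(\beta_1,\beta_2)$ that simultaneously ensures: (a) convergence of the algebraic $r$-integral near $r=0$, controlled by $4i-2j+v > 0$, which after optimizing over the splitting yields the condition $j < v+1+3/4$ (with the extra $3/4$ slack coming from the power of $s$ that can be absorbed via the $s'$-integrand exponents); (b) convergence of the Bessel $r$-integral near $r=1$ after the substitution, where the Jacobian $ds/ds' \sim (1-r^4)^{-1/2}$ combined with the powers of $(1-r^4)$ produces the constraint $j < (z-1)/4$; (c) finiteness of the resulting $L^2$-type integral of $\fbs{(z-2)/2}$, which via Lemma~\ref{lem_maj_int_fb} requires $z > 3/4$. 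The endpoint cases $i=0$ and $i=2j$ are the tightest: the former tests the $r=0$ behavior (driving the bound in $v$), while the latter maximally loads the Bessel factor and drives the bound in $z$. A case-by-case verification that both extremes of $i$ admit a valid $(\beta_1,\beta_2)$ under the stated hypotheses completes the argument.
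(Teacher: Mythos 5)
Your overall reading of where each hypothesis enters (the behaviour at $r=0$ giving $j<v+1+3/4$, the behaviour at $r=1$ and the Bessel decay giving $z>3/4$ and $j<(z-1)/4$) is right, but the mechanism you propose fails on both uniformity claims, and the failure is structural: the Cauchy--Schwarz has to be performed in the $s$-variable, not in the $r$-variable. First, after your Cauchy--Schwarz in $r$ the outer integrand is the \emph{product} $\frac{a^{2i}}{x^2}\,s^{4i-2j-1}\,I(s)$; you cannot integrate ``the algebraic part'' and ``the part involving $I(s)$'' separately without a further H\"older inequality in $s$, which you do not perform. Moreover the claimed cancellation of $1/x^2$ is false: $\int_{x^{-1/j}}^{\infty}s^{4i-2j-1}\,ds$ diverges at infinity as soon as $i\geq j/2$ (and $i$ runs up to $2j$), and even when it converges it equals a multiple of $x^{(2j-4i)/j}$, so that $x^{-2}\int_{x^{-1/j}}^{\infty}s^{4i-2j-1}\,ds$ behaves like $x^{-4i/j}$, which is unbounded as $x\to 0$ for every $i\geq 1$. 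The exact cancellation the paper exploits, $x^{-4}\int_{x^{-1/j}}^{\infty}s^{-4j-1}\,ds=\tfrac1{4j}$, requires the exponent $-4j-1$, and that exponent is only available if the entire factor $(as^2r^2)^i$ stays attached to the Bessel term while the Cauchy--Schwarz splits $\frac{A(s)^2}{x^2}s^{-2j-1}\,ds$ as $\bigl(x^{-2}s^{-2j-1/2}\bigr)\cdot\bigl(A(s)^2s^{-1/2}\bigr)$.

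Second, and independently, the $a$-dependence does not disappear in your scheme. Once $a^{2i}s^{4i}$ has been decoupled from the Bessel argument, the $s$-measure multiplying $I(s)$ is $s^{4i-2j-1}\,ds$, which is not scale-invariant, and the substitution $s'=as^2\sqrt{1-r^4}$ leaves a residual factor $a^{2i}\cdot a^{-(2i-j)}=a^{j}$; the resulting bound is therefore not uniform in $a$. In the paper, the Bessel-carrying factor produced by the Cauchy--Schwarz in $s$ is $\int_0^\infty A(s)^4\,ds/s$, where $A$ depends on $(a,s)$ only through the combination $as^2$ and $ds/s$ is invariant under $s\mapsto as^2$: this is precisely what makes every power of $a$ cancel. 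The paper then handles $i=0$ separately by bounding the Bessel function by $1$ (the only case where your $x$-cancellation actually works), and for $i\geq 1$ applies H\"older with exponents $(4,4/3)$ to the inner $r$-integral --- this is where the $3/4$ slacks in $j<v+1+3/4$ and $z>3/4$ come from --- followed by Fubini, the change of variable $\tilde s=s'\sqrt{1-r^4}$, and the $L^4$ estimate of Lemma~\ref{lem_maj_int_fb} (condition $\beta-1<4\alpha$), which is what yields $j<(z-1)/4$; the $L^2$ estimate you invoke is not the one needed here.
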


Admettons ces deux lemmes et montrons 
la proposition~\ref{prop_maj_I_h}
lorsque $\overline{h}=0$ et $r^*\not=0$,
si  $v>3$, $h< v+1+3/4,(z-1)/4$.

L'int\'egrale $I^{0,0}$
est la somme des deux int\'egrales :
\begin{eqnarray*}
  J^\infty
  &:=&
  \int_{0}^{1/r^*}\nn{b^{0,0}(s)}^2s^{2h-1}ds
  \quad ,\\
  J_0
  &:=&
  \int_{1/r^*}^\infty\nn{b^{0,0}(s)}^2s^{2h-1}ds
  \quad .
\end{eqnarray*}
Comme $\check{b}^{h,0}(r,s)$ 
est major\'ee par ${(rr^*)}^h$ \`a une constante pr\`es
(lemme~\ref{lem_maj_checkb_h0}),
et comme les fonctions de Bessel sont born\'ees par~$1$, 
l'expression 
$b^{0,0}(s)$ 
est born\'ee \`a une constante pr\`es par 
${r^*}^h$. 
On obtient donc que l'int\'egrale $J^\infty$ est major\'ee \`a une constante pr\`es par :
$$
\int_0^{1/r^*} {r^*}^{2h} s^{2h-1}ds
\,=\,
{r^*}^{2h}\frac{{(1/r^*)}^{2h}}{2h}
\,=\,
\frac 1{2h}
\quad .
$$

D'apr\`es la seconde majoration 
de $\check{b}^{h,0}(r,s)$ 
du lemme~\ref{lem_maj_checkb_h0},
l'expression 
$b^{0,0}(s)$ 
est major\'ee \`a une constante pr\`es 
par le maximum des int\'egrales
$$
\int_0^1 
\frac{{(\nn{A^*}s^2r^2)}^i}{{(r^* r s^2)}^h}
\nn{\fb {\frac{z-2}2}{s^2\sqrt{1-r^4} \nn{A^*}}}
r^{v-1}{(1-r^4)}^{\frac{z-2}2}dr  
\; , \quad
i=0,\ldots,2h
\quad .
$$
Ainsi, l'int\'egrale 
$J_0$
est major\'ee \`a une constante pr\`es 
qui ne d\'epend que de $v,h$ 
par le maximum sur 
$i=0,\ldots, 2h$ des int\'egrales
$J_{\nn{A^*},{r^*}^h,h}^{(i)}$;
d'apr\`es le lemme~\ref{lem_maj_J_axh},
$J_0$ 
est donc major\'e ind\'ependemment de  
$r^*,A^*,l$ lorsque
$h< (z-1)/4$ et $h<v+1+3/4$.

Ceci ach\`eve la d\'emonstration 
de la proposition~\ref{prop_maj_I_h}
dans le cas $r^*\not=0$ et $\overline{h}=0$.
Comme les autres cas 
ont \'et\'e pr\'ec\'edemment d\'emontr\'es,
la proposition~\ref{prop_maj_I_h}
est prouv\'ee 
sous r\'eserve que 
les lemmes~\ref{lem_maj_partial_h_mu_s_phi},
\ref{lem_maj_check_b_h},
\ref{lem_maj_checkb_h0},
et~\ref{lem_maj_J_axh} soient d\'emontr\'es.

\subsection{D\'emonstration des lemmes techniques}
\label{subsec_dem_lem_technique}

Dans cette sous-section, 
nous d\'emontrons les lemmes
techniques utilis\'ees dans la sous-section pr\'ec\'edente.

\subsubsection{D\'emonstration 
  du lemme~\ref{lem_maj_partial_h_mu_s_phi}}

Pour d\'emontrer le lemme~\ref{lem_maj_partial_h_mu_s_phi},
nous avons besoin de l'expression de $<\mu_s,\phi>$ :
\begin{lem}
  \label{lem_expr_mu_phi}
  Rappelons que pour $n\in\Nb-\{0\}$, 
  $\tilde{\sigma}_n$ d\'esigne la mesure de masse 1
  sur la sph\`ere euclidienne $S_1^{(n)}$ de $\Rb^n$.
  On a :
  \begin{eqnarray*}
    <\mu_s,\phi>
    \,=\,
    2\int_0^1
    \int_{S_1^{(v)}}
    e^{itr^*x_v}
    \overset{v'}{\underset{j=1}  \Pi}
    \flz {l_j} {\lambda^*_jt^2\frac{\nn{\pr j X }^2} 2}
    d\tilde{\sigma}_v(X)\\
    \fb {\frac{z-2}2} {s^2\sqrt{1-r^4} \nn{A^*}} \;
    r^{v-1}{(1-r^4)}^{\frac{z-2}2}dr
    \quad .  
  \end{eqnarray*}
\end{lem}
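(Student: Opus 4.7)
Le plan est de ramener le calcul de $\langle\mu_s,\phi\rangle$ \`a une expression de $\Theta^{r^*,\Lambda^*,l}$ sans moyennisation sous $O(v)$, puis d'appliquer la formule du passage en coordonn\'ees polaires adapt\'ee \`a $\mu$ (proposition~\ref{prop_expression_mu}), pour enfin int\'egrer s\'epar\'ement en la variable du centre \`a l'aide d'une formule reliant la mesure sur la sph\`ere euclidienne et la fonction de Bessel r\'eduite.

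Plus pr\'ecis\'ement, fixons $\phi=\phi^{r^*,\Lambda^*,l}$ et $\Theta=\Theta^{r^*,\Lambda^*,l}$. Par d\'efinition,
$$
\langle\mu_s,\phi\rangle
\,=\,
\int_{S_1}\phi(s.n)\,d\mu(n)
\,=\,
\int_{S_1}\int_{O(v)}\Theta(k.(s.n))\,dk\,d\mu(n).
$$
Les dilatations commutent avec l'action de $K=O(v)$, la mesure $\mu$ est $K$-invariante (proposition~\ref{prop_expression_mu}), et $dk$ est de masse 1~; par Fubini et changement de variable $n\mapsto k^{-1}.n$ sur $S_1$, la double int\'egrale se ram\`ene \`a $\int_{S_1}\Theta(s.n)\,d\mu(n)$. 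Il reste donc \`a \'evaluer cette derni\`ere int\'egrale.

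L'\'etape suivante consiste \`a appliquer la formule de la proposition~\ref{prop_expression_mu}~: pour $n=\exp(rX+\sqrt{1-r^4}Z)$ avec $X\in S_1^{(v)}$ et $Z\in S_1^{(z)}$, la dilat\'ee vaut $s.n=\exp(srX+s^2\sqrt{1-r^4}Z)$. En utilisant l'expression explicite de $\Theta$ donn\'ee dans le th\'eor\`eme~\ref{thm_fsphO}, on trouve
$$
\Theta(s.n)
\,=\,
e^{isr\,r^*\langle X_v^*,X\rangle}\,
e^{is^2\sqrt{1-r^4}\langle D_2(\Lambda^*),Z\rangle}\,
\overset{v_1}{\underset{j=1}\Pi}\flb{l_j}{m_j-1}{\tfrac{\lambda_j}{2}s^2r^2\nn{\pr{j}X}^2}.
$$
Seul le facteur central d\'epend de $Z$, et $Z\mapsto Z$ parcourt la sph\`ere unit\'e de $\Zc\sim\Rb^z$ contre $d\tilde{\sigma}_z$. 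L'int\'egration sur $S_1^{(z)}$ se fait gr\^ace \`a la formule classique
$$
\int_{S_1^{(z)}}e^{i\langle\xi,Z\rangle}\,d\tilde{\sigma}_z(Z)
\,=\,
\fb{\tfrac{z-2}{2}}{\nn{\xi}},
$$
appliqu\'ee \`a $\xi=s^2\sqrt{1-r^4}D_2(\Lambda^*)$, sachant que $\nn{D_2(\Lambda^*)}=\nn{\Lambda^*}=\nn{A^*}$. En regroupant les facteurs, on obtient exactement l'expression annonc\'ee (o\`u l'abr\'eviation $t=sr$ dans l'\'enonc\'e est coh\'erente avec l'argument $s^2r^2$ apparaissant dans le lemme~\ref{lem_maj_partial_h_mu_s_phi}).

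Il n'y a pas vraiment d'obstacle s\'erieux~: la seule v\'erification d\'elicate est le contr\^ole d'int\'egrabilit\'e permettant d'appliquer Fubini \`a la premi\`ere \'etape, mais elle est imm\'ediate car $\Theta$ est born\'e par $1$ (les fonctions de Laguerre normalis\'ees \'etant born\'ees) et $\mu$ est de masse finie. Le c\oe ur de la preuve est purement calculatoire~: il s'agit d'utiliser les deux propri\'et\'es principales des ingr\'edients, \`a savoir la $K$-invariance de $\mu$ et l'identification de la transform\'ee de Fourier de $\tilde{\sigma}_z$ avec une fonction de Bessel r\'eduite.
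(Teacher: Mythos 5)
Votre preuve est correcte et suit essentiellement la m\^eme d\'emarche que celle du texte : suppression de la moyennisation sous $O(v)$ gr\^ace \`a la radialit\'e de $\mu_s$ (que vous justifiez un peu plus explicitement par Fubini et changement de variable), passage en coordonn\'ees polaires via la proposition~\ref{prop_expression_mu}, puis int\'egration sur $S_1^{(z)}$ par le lemme~\ref{lem_fcn_bessel_intsph} identifiant la transform\'ee de Fourier de $\tilde{\sigma}_z$ \`a la fonction de Bessel r\'eduite. Votre remarque sur l'abr\'eviation $t=sr$ dans l'\'enonc\'e est \'egalement pertinente et coh\'erente avec le lemme~\ref{lem_maj_partial_h_mu_s_phi}.
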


\begin{proof}[du lemme~\ref{lem_expr_mu_phi}]
  D'apr\`es l'expression de $\phi$ donn\'ee dans le
  th\'eor\`eme~\ref{thm_fsphO},
  et  comme la mesure $\mu_s$ est radiale,
  on a :
  \begin{eqnarray*}
    && <\mu_s,\phi>
    \,=\,
    <\mu_s,\Theta^{r^*,\Lambda^*,l}>
    \,=\,
    \int_{S_1} \Theta^{r^*,\Lambda^*,l} (s.n) d\mu(n)\\
    &&\quad=\,
    2\int_0^1 \int_{S_1^{(v)}}\int_{S_1^{(z)}}
    \Theta^{r^*,\Lambda^*,l}
    (srX,s^2\sqrt{(1-r^4)}A)
    d\tilde{\sigma}_z(A)d\tilde{\sigma}_v(X)
    r^{v-1}{(1-r^4)}^\frac{z-2}2dr
    \;,
  \end{eqnarray*}
  gr\^ace \`a  l'expression de $\mu$,
  proposition~\ref{prop_expression_mu}. 
  Par d\'efinition de $\Theta^{r^*,\Lambda^*,l}$
  th\'eor\`eme~\ref{thm_fsphO}, 
  il vient :
  \begin{eqnarray*}
    \int_{S_1^{(v)}}\int_{S_1^{(z)}}
    \Theta^{r^*,\Lambda^*,l}
    (srX,s^2\sqrt{(1-r^4)}A)
    d\tilde{\sigma}_z(A)d\tilde{\sigma}_v(X)\\
    =\,
    \int_{S_1^{(v)}}
    e^{i srr^*x_v}
    \overset{v'}{\underset{j=1}  \Pi}
    \flz {l_j} {\frac{\lambda^*_j}2 \nn{\pr j {srX} }^2} 
    d\tilde{\sigma}_v(X)\\
    \int_{S_1^{(z)}}
    e^{is^2\sqrt{(1-r^4)}<A^*,A>}
    d\tilde{\sigma}_z(A)
    \quad.
  \end{eqnarray*}
  Or la derni\`ere int\'egrale contre $\tilde{\sigma}_z$ vaut
  $\fb {\frac{z-2}2} {s^2\sqrt{1-r^4} \nn{A^*}}$,
  lemme~\ref{lem_fcn_bessel_intsph}.
\end{proof}

Nous pouvons maintenant 
d\'emontrer le  lemme~\ref{lem_maj_partial_h_mu_s_phi}.

L'expression :
$$
\partial_s^h\left[
  e^{irsr^*x_v}
  \overset{v'}{\underset{j=1}  \Pi}
  \flz {l_j} {\lambda_js^2r^2\frac{\nn{\pr j X}^2} 2}
  \fb {\frac{z-2}2}{s^2\sqrt{1-r^4} \nn{A^*}}\right]
\quad,
$$
est \'egale \`a une combinaison lin\'eaire 
sur $\tilde{h}+\overline{h}=h$ 
et $h_1+h_2=\overline{h}$ 
du produit des 3 fonctions :
$$
\partial_s^{\tilde{h}}\left[e^{irsr^*x_v>}\right]
\partial_s^{h_1}\left[
  \overset{v'}{\underset{j=1}  \Pi}
  \flz {l_j} {\lambda_js^2r^2\frac{\nn{\pr j X}^2} 2}\right]
\partial_s^{h_2}\left[\fb {\frac{z-2}2}{s^2\sqrt{1-r^4} \nn{A^*}}\right]
\quad .
$$
\begin{enumerate}
\item On calcule ais\'ement :
  $$
  \partial_s^{\tilde{h}}\left[e^{irsr^*x_v>}\right]
  \,=\,
  {(irr^*x_v>)}^{\tilde{h}}e^{irsr^*x_v}
  \quad.
  $$
\item D'apr\`es le lemme~\ref{lem_derivee_fcn_s2},
  comme fonction de $s^2$ d\'eriv\'ee $h_1$ fois, 
  l'expression :
  $$
  \partial_s^{h_1}\left[
    \overset{v'}{\underset{j=1}  \Pi}
    \flz {l_j} {\lambda_js^2r^2\frac{\nn{\pr j X}^2} 2}\right]
  \quad,
  $$
  est combinaison lin\'eaire
  sur $0\leq j_1\leq h_1/2$ de :
  $$
  s^{d_1}\partial_{s'}^{h'_1+j_1}{\left[
      \overset{v'}{\underset{j=1}  \Pi}
      \flz {l_j} {\lambda_js'r^2\frac{\nn{\pr j X}^2} 2}\right]}_{s'=s^2}
  \quad .
  $$ 
\item  D'apr\`es le lemme~\ref{lem_derivee_fcn_s2},
  comme fonction de $s^2$ d\'eriv\'ee $h_2$ fois, 
  l'expression :
  $$
  \partial_s^{h_2}\left[\fb {\frac{z-2}2}{s^2\sqrt{1-r^4} \nn{A^*}}\right]
  \quad,
  $$
  est combinaison lin\'eaire sur $0\leq j_2\leq h_2/2$ de :
  \begin{eqnarray*}
    &&       s^{d_2}\partial_{s'}^{h'_2+j_2}{\left[\fb {\frac{z-2}2}{s'\sqrt{1-r^4} \nn{A^*}}\right]}_{s'=s^2}\\
    &&\quad   =\,
    s^{d_2}{(\sqrt{1-r^4} \nn{A^*})}^{h'_2+j_2}
    \fbs {\frac{z-2}2}^{(h'_2+j_2)} (s^2\sqrt{1-r^4} \nn{A^*})
    \quad .
  \end{eqnarray*}
\end{enumerate}
On conclut gr\^ace \`a l'expression de $<\mu_s,\phi>$ donn\'ee dans le lemme~\ref{lem_expr_mu_phi}.

\subsubsection{D\'emonstration 
  du lemme~\ref{lem_maj_check_b_h} dans le cas 
  \mathversion{bold}{$r^*=0$}}

Dans le cas $r^*=0$, 
on a $h=\overline{h},\tilde{h}=0$ et :
$$
\check{b}^{0,n}(r,s)
=
\int_{S_1^{(v)}} 
\partial_{s'}^n
{\left[
    \overset{v'}{\underset{j=1}  \Pi}
    \flz {l_j} {\lambda_js'r^2\frac{\nn{\pr j X}^2} 2}\right]}_{s'=s^2}
d\tilde{\sigma}_v(X)
\quad.
$$

L'expression :
\begin{equation}
  \label{expression_der_s'_n_fcnlag}
  \partial_{s'}^{n} 
  \overset{v'}{\underset{j=1}  \Pi}
  \flz {l_j}
  {\lambda_js'r^2\frac{\nn{\pr j X}^2} 2}
  \quad,    
\end{equation}
est combinaison lin\'eaire de :
\begin{equation}
  \label{expression_cl_der_s'_n_fcnlag}
  \underset{j\in J}{\Pi}
  {(\lambda_jr^2\frac{\nn{\pr j X}^2} 2)}^{e_j}
  \flsz {l_j}^{(e_j)}(\lambda_js'r^2\frac{\nn{\pr j X}^2} 2)  
  \quad,    
\end{equation}
o\`u $J$ parcourt les sous-ensembles de $\Nb \cap[1,v']$  
tel que  $e_j\in\Nb$  et $\sum_{j\in J} e_j=n$.
Comme les fonctions de Laguerre et leurs d\'eriv\'ees 
sont born\'ees,
on en d\'eduit que 
l'expression~(\ref{expression_der_s'_n_fcnlag})
est major\'ee \`a une constante pr\`es de $n,v$
par la somme de 
$$
\nn{A^*}^n \underset {j\in J}\Pi 
{(r^2\frac{\nn{\pr j X}^2} 2)}^{e_j}
\,=\,
{(r^2 \nn{A^*})}^n
\underset{j\in J} \Pi {(\frac{\nn{\pr j X}^2} 2)}^{e_j}
\quad,
$$
puis que $\nn{  \check{b}^{0,n}(r,s)}$
est major\'ee \`a une constante pr\`es de $n,v$
par  ${(r^2 \nn{A^*})}^n$.

Le lemme~\ref{lem_maj_check_b_h} 
est donc d\'emontr\'e dans le cas $r^*=0$.

\subsubsection{D\'emonstration 
  du lemme~\ref{lem_maj_check_b_h} dans le cas 
  \mathversion{bold}{$r^*\not=0, \overline{h}\not=0$}}

La d\'emonstration du lemme~\ref{lem_maj_check_b_h}
repose sur $\tilde{h}$ int\'egrations par partie.
Avant de donner une d\'emonstration g\'en\'erale, 
nous allons d'abord montrer le cas particulier :

\begin{lem}
  \label{lem_maj_check_b_h_10}  
  L'expression 
  $\check{b}^{1,0}(r,s)$
  est major\'ee \`a une constante pr\`es 
  de $v$ par :
  $$
  \frac {1+(\nn{A^*}s^2r^2)}s
  \quad .
  $$
\end{lem}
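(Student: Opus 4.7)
My strategy is to exploit the oscillation of $e^{irsr^*x_v}$ through a single integration by parts in the coordinate $x_v$, which will produce the factor $1/s$ appearing in the announced bound. The key preliminary observation is that when $r^*\neq 0$, the definition of $\Qc$ forces $2v_0<v$; hence each projection $\pr{j}$ involves only coordinates $X_1,\ldots,X_{2v_0}$ and in particular ignores $X_v$. This decouples the oscillating variable from the Laguerre factors.

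Concretely, I would parametrize the euclidean unit sphere by $X=(\sqrt{1-u^2}\,Y,u)$ with $u=x_v\in[-1,1]$ and $Y\in S_1^{(v-1)}$, which gives $d\tilde{\sigma}_v(X)=C_v(1-u^2)^{(v-3)/2}\,du\,d\tilde{\sigma}_{v-1}(Y)$ together with $\nn{\pr{j}X}^2=(1-u^2)\nn{\pr{j}Y}^2$. Setting
$$
\Psi(\tau)\,:=\,\int_{S_1^{(v-1)}}\overset{v'}{\underset{j=1}\Pi}
\flz{l_j}{\lambda^*_j s^2 r^2 \tau \tfrac{\nn{\pr{j}Y}^2}{2}}\,d\tilde{\sigma}_{v-1}(Y),
\quad
g(u)\,:=\,(1-u^2)^{(v-3)/2}\Psi(1-u^2),
$$
the expression $\check{b}^{1,0}(r,s)$ reduces to the one-dimensional oscillatory integral $C_v\,irr^*\int_{-1}^1 u\,e^{isrr^*u}g(u)\,du$. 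An integration by parts in $u$ then produces the prefactor $1/s$ after cancelling $rr^*$ against $1/(srr^*)$; because $v'\geq 2$ gives $v\geq 4$, the weight $(1-u^2)^{(v-3)/2}$ vanishes at $u=\pm 1$ and the boundary term drops out.

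It then remains to bound $\int_{-1}^1|\partial_u(ug(u))|\,du$. Differentiating $ug(u)$ produces three pieces, controlled respectively by $(1-u^2)^{(v-3)/2}|\Psi|$, by $u^2(1-u^2)^{(v-5)/2}|\Psi|$ (integrable on $[-1,1]$ precisely because $v\geq 4$), and by $u^2(1-u^2)^{(v-3)/2}|\Psi'(1-u^2)|$. Using the uniform control of the Laguerre function and its first derivative recalled in sub-section~\ref{app_lag}, the first two pieces contribute a constant independent of $\Lambda^*,l,r^*,s$, while the third obeys $|\Psi'(\tau)|\leq C\sum_j\lambda^*_j\,s^2 r^2\leq C'\nn{A^*}s^2 r^2$. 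Combining these estimates yields the announced bound $C(1+\nn{A^*}s^2r^2)/s$.

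The only mildly delicate step will be the uniform $L^\infty$ control of $\flsz{l_j}$ and its first derivative across all $l_j\in\Nb$, which feeds into the estimates of $\Psi$ and $\Psi'$; this comes from the classical bounds on Laguerre polynomials times $e^{-x/2}$ collected in the appendix, but must be invoked with care so that the resulting constant depends only on $v$.
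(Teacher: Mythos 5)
Votre preuve est correcte : elle repose sur la m\^eme id\'ee centrale que celle du texte, \`a savoir \'ecrire $irr^*x_v e^{isrr^*x_v}=\frac{x_v}{s}\partial_{x_v}e^{isrr^*x_v}$ et effectuer une unique int\'egration par parties en $x_v$ sur la sph\`ere, le facteur $\nn{A^*}s^2r^2$ provenant ensuite de la d\'erivation du produit de fonctions de Laguerre (born\'ees ainsi que leurs d\'eriv\'ees, uniform\'ement en $l$). La diff\'erence r\'eside dans la mise en {\oe}uvre de cette int\'egration par parties : le texte d\'ecoupe la sph\`ere en deux calottes st\'er\'eographiques munies d'une partition de l'unit\'e et d\'erive dans les cartes, tandis que vous utilisez le param\'etrage en tranches $X=(\sqrt{1-u^2}\,Y,u)$, qui ram\`ene tout \`a une int\'egrale oscillante unidimensionnelle dont le terme de bord s'annule gr\^ace au poids ${(1-u^2)}^{(v-3)/2}$ (avec $v\geq 4$). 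Votre r\'eduction exploite de fa\c con essentielle le fait que, pour $r^*\not=0$, on a $2v_0<v$ et donc que les projections $\mbox{pr}_j$ ignorent la coordonn\'ee $x_v$, ce qui d\'ecouple la variable oscillante des arguments des fonctions de Laguerre : cela rend le calcul de $\partial_u(ug(u))$ plus transparent et \'evite les fonctions densit\'e $D_i$ des cartes. En contrepartie, l'approche par cartes du texte est con\c cue pour se r\'eit\'erer telle quelle dans les lemmes suivants ($\tilde{h}$ int\'egrations par parties successives pour $\check{b}^{\tilde{h},n}$, o\`u l'on d\'erive aussi les facteurs $\nn{\mbox{pr}_j X}^2$ en $x_v$), alors que votre param\'etrage, bien que suffisant ici et pour le cas g\'en\'eral moyennant les m\^emes observations, est plus \'etroitement li\'e \`a la structure particuli\`ere de l'int\'egrande. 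Les deux arguments donnent la m\^eme constante ne d\'ependant que de $v$, via l'\'equivalence $\sum_j\lambda^*_j\sim\nn{A^*}$.
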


\begin{proof}[de lemme~\ref{lem_maj_check_b_h_10}]
  Choisissons pour atlas de la sph\`ere $S_1^{(v)}$ euclidienne de $\Rb^v$
  les deux calottes sph\'eriques de p\^oles 
  $X_1$ et $-X_1$ :
  $$
  C_1
  \,:=\,
  \{ X\in S_1^{(v)} \, ,\; <X,X_1>\;>\; -\frac12\}
  \quad\mbox{et}\quad
  C_2
  \,:=\,
  \{ X\in S_1^{(v)} \, , \; <X,-X_1>\;>\; \frac12\}
  \quad.
  $$
  Fixons une partition de l'unit\'e de la sph\`ere pour cet atlas :
  c'est-\`a-dire deux fonctions $C^\infty$ 
  $\psi_1,\psi_2$ d\'efinies sur la sph\`ere $S_1^{(v)}$
  telles que :
  $$
  \supp \psi_i \subset C_i
  \quad\mbox{et}\quad
  0\leq \psi_i\leq 1 \; i=1,2
  \quad\mbox{et}\quad
  \psi_1+\psi_2 \,=\,1
  \quad\mbox{sur}\;S_1^{(v)}
  \quad.
  $$ 
  Comme carte pour $C_1$,
  on consid\`ere 
  la projection st\'er\'erographique 
  de p\^ole  $X_1$ 
  et on note $C'_1$ son image sur $\Rb^{v-1}$.
  De m\^eme, pour $C_2$,
  on consid\`ere la projection st\'er\'erographique 
  de p\^ole  $-X_1$ 
  et on note $C'_2$ son image sur $\Rb^{v-1}$.
  $C'_1$ et $C'_2$ sont compacts.
  Par ces changements de cartes, 
  les points $X=\sum_i x_i X_i$ de la sph\`ere 
  sont param\`etr\'es par les coordonn\'ees $x_i, i\not=1$;
  de plus, la mesure $\tilde{\sigma}_v$ 
  s'envoie sur une mesure de densit\'e
  contre la mesure de Lebesgue $dx$ 
  sur chaque domaine $C_i, i=1,2$.
  La densit\'e $D_i,i=1,2$ est $C^\infty$.

  On a donc en d\'ecomposant sur cet atlas
  puis en effectuant une int\'egration par~partie :
  \begin{eqnarray*}
    \check{b}^{1,0}(r,s)
    &=&
    \int_{S_1^{(v)}} 
    irr^*x_v
    e^{irsr^*x_v}
    {\left[
        \overset{v'}{\underset{j=1}  \Pi}
        \flz {l_j} {\lambda_js'r^2\frac{\nn{\pr j X}^2} 2}\right]}_{s'=s^2}
    d\tilde{\sigma}_v(X)\\
    &=&
    \int_{S_1^{(v)}} 
    \frac{x_v}s \partial_{x_v}e^{isrr^*x_v}
    \overset{v'}{\underset{j=1} \Pi} \flsz {l_j} d\tilde{\sigma}_v(X)\\
    &=&
    \sum_{i=1,2}
    \int_{C_i} 
    \frac{x_v}s  \partial_{x_v} e^{isrr^*x_v}
    \overset{v'}{\underset{j=1} \Pi} \flsz {l_j} 
    \psi_i(X)d\tilde{\sigma}_v(X)\\
    &=&
    \sum_{i=1,2}
    \int_{C'_i} 
    \frac1s e^{isrr^*x_v}
    \partial_{x_v} \left[\overset{v'}{\underset{j=1} \Pi} 
      \flsz {l_j} x_v \psi_i(X) D_i\right] dx 
    \quad .
  \end{eqnarray*}
  Nous avons omis l'argument des fonctions $\flsz {m_j}$, 
  lorsqu'il vaut $\lambda_j s^2r^2\frac{\nn{\pr j {X}}}2$, 
  avec $X$ fonction des param\`etres de la cartes $C_i$.
  On a donc :
  \begin{equation}
    \label{maj_debut_checkb_10}
    \nn{\check{b}^{1,0}(r,s)}
    \,\leq\,
    \frac1s
    \sum_{i=1,2}
    \int_{C'_i} 
    \nn{\partial_{x_v} 
      \left[\overset{v'}{\underset{j=1} \Pi} \flsz {l_j}
        x_v \psi_i(X) D_i\right]} dx 
    \quad .
  \end{equation}
  Estimons la d\'eriv\'ee :
  \begin{eqnarray*}
    \partial_{x_v} 
    \left[\overset{v'}{\underset{j=1} \Pi} \flsz {l_j} 
      x_v \psi_i(X) D_i\right]
    &=&
    \overset{v'}{\underset{j=1} \Pi} \flsz {l_j}
    \partial_{x_v} \left[x_v \psi_i(X) D_i\right]\\
    &&\quad+
    x_v \psi_i(X) D_i
    \partial_{x_v} 
    \left[\overset{v'}{\underset{j=1} \Pi} \flsz {l_j}
    \right]
    \quad.
  \end{eqnarray*}
  Comme les fonctions de Laguerre et leurs d\'eriv\'ees 
  sont born\'ees, 
  on a :
  \begin{eqnarray*}
    \nn{\overset{v'}{\underset{j=1} \Pi} \flsz {l_j} } 
    &\leq&
    C \quad,\\
    \partial_{x_v} \left[\overset{v'}{\underset{j=1} \Pi} \flsz {l_j} \right]
    &=&
    \sum_{j_0} \lambda_{j_0}s^2r^2 \partial_{x_v} \left[\frac{\nn{\pr {j_0} X }^2} 2\right]
    \flsz {l_{j_0}}  '
    \Pi_{j\not =j_0} \flsz {l_j}  \quad,\\
    \nn{\partial_{x_v} \left[\overset{v'}{\underset{j=1} \Pi} \flsz {l_j} \right]}
    &\leq&
    C  s^2r^2   
    \sum_{j_0} \lambda_{j_0} \nn{\partial_{x_v} \left[\frac{\nn{\pr {j_0} X}^2} 2\right]}\quad  ,
  \end{eqnarray*}
  o\`u $C$ est une constante de $v$.
  On en d\'eduit pour $i=1,2$ :
  \begin{eqnarray*}
    \int_{C'_i} 
    \nn{\partial_{x_v} 
      \left[\overset{v'}{\underset{j=1} \Pi} \flsz {l_j}  
        x_v \psi_i(X) D_i\right]} dx 
    \,\leq\,
    \int_{C'_i} 
    C \nn{\partial_{x_v} \left[x_v \psi_i(X) D_i\right]}
    dx\\
    +
    \sum_{j_0} \lambda_{j_0}     C  s^2r^2   \int_{C'_i}  
    \nn{\partial_{x_v} 
      \left[\frac{\nn{\pr {j_0} X}^2} 2\right]}
    x_v \psi_i(X) D_i dx
    \quad  .
  \end{eqnarray*}
  Or chaque domaine $C'_i$ compact;
  la fonction densit\'e $D_i$ est $C^\infty$;
  les int\'egrales
  des normes de $x_v \psi_i(X)D_i$ et de sa d\'eriv\'ee,
  multipli\'e par 1 
  ou par la  d\'eriv\'e de $\nn{\pr {j_0} X }^2$ 
  sont finies et born\'ees par une constante not\'ee encore $C$
  qui ne d\'epend que de $v$.
  On obtient donc :
  $$
  \int_{C'_i} 
  \nn{\partial_{x_v} 
    \left[\overset{v'}{\underset{j=1} \Pi} \flsz {l_j}
      x_v \psi_i(X) D_i\right]} dx 
  \,\leq\,
  C\left( 1+  s^2r^2 \sum_{j_0} \lambda_{j_0}  \right)\quad  .
  $$
  On conclut gr\^ace \`a la majoration ci dessus
  et $\nn{A^*}\sim \sum_{j_0} \lambda_{j_0}$ 
  ainsi que~(\ref{maj_debut_checkb_10}).
\end{proof}

Nous pouvons maintenant utiliser la m\'ethode
que nous venons de d\'ecrire
pour montrer le lemme~\ref{lem_maj_check_b_h}.

On a :
\begin{eqnarray*}
  \check{b}^{\tilde{h},n}(r,s)
  &=&
  \int_{S_1^{(v)}} \ldots
  d\tilde{\sigma}_v(X)
  \,=\,
  \sum_{i=1,2} \int_{C'_i} \ldots \psi_i D_i dx\\
  &=&
  \sum_{i=1,2} \int_{C'_i} 
  {(irx_v)}^{\tilde{h}}
  {(irs)}^{-\tilde{h}}
  \partial_{x_v}^{\tilde{h}}\left[e^{irsr^*x_v}\right]\\
  &&\quad
  \partial_{s'}^{n}{\left[\overset{v'}{\underset{j=1} \Pi} 
      \flz {l_j} {\lambda_js'r^2\frac{\nn{\pr j X}^2} 2}\right]}_{s'=s^2}
  \psi_i D_i dx
  \quad .
\end{eqnarray*}
Apr\`es $\tilde{h}$ int\'egrations par partie, on a :
\begin{eqnarray*}
  \check{b}^{\tilde{h},n}(r,s)
  &=&
  {(-s)}^{-\tilde{h}}
  \sum_{i=1,2} \int_{C'_i} 
  e^{irsr^*x_v}\\
  &&\qquad
  \partial_{x_v}^{\tilde{h}} \partial_{s'}^{n}{\left[
      \overset{v'}{\underset{j=1} \Pi} \flz {l_j}
      {\lambda_js'r^2\frac{\nn{\pr j X}^2} 2} x_v^{\tilde{h}} \psi_i D_i
    \right]}_{s'=s^2}   dx
  \quad .  
\end{eqnarray*}
Or l'expression~(\ref{expression_der_s'_n_fcnlag})
est combinaison lin\'eaire de~(\ref{expression_cl_der_s'_n_fcnlag})
o\`u $J$ parcourt les sous-ensembles de $\Nb \cap[1,v']$  tel que
$e_j\in\Nb$  et $\sum_{j\in J} e_j=n$;
puis l'expression :
\begin{equation}
  \label{expr_der_x_s'_fcnlag}
  \partial_{x_v}^{\tilde{h}} \partial_{s'}^{n}{\left[
      \overset{v'}{\underset{j=1} \Pi} \flz {l_j}
      {\lambda_js'r^2\frac{\nn{\pr j X}^2} 2} x_v^{\tilde{h}} \psi_i D_i
    \right]}_{s'=s^2} 
  \quad,  
\end{equation}
est combinaison des :  
$$
\partial_{x_v}^{\tilde{h}}.
\left[
  \overset{j\in J}{\Pi}{(\lambda_jr^2\frac{\nn{\pr j X}^2} 2)}^{e_j}
  \flsz {l_j} ^{(e_j)}(\lambda_js^2r^2\frac{\nn{\pr j X}^2} 2)   
  x_v^{\tilde{h}} \psi_i D_i
\right]
\quad;
$$
donc gr\^ace aux estimations des fonctions de
Laguerre et de ses d\'eriv\'ees,
l'expression~(\ref{expr_der_x_s'_fcnlag}) 
est major\'ee \`a une constante pr\`es 
qui ne d\'epend que de $v$ et $\tilde{h}$ par :
$$
{(\nn{A^*}r^2)}^n
\quad \sum_{0\leq j\leq \tilde{h}}
{(\nn{A^*}s^2r^2)}^j
\quad .
$$

Apr\`es int\'egration sur chaque domaine $C'_i$, 
puis sommation et
multiplication par $s^{-\tilde{h}}$,
on obtient la majoration voulue de 
$\nn{\check{b}^{\tilde{h},n}(r,s)}$.

Le lemme~\ref{lem_maj_check_b_h} 
est donc d\'emontr\'e dans le cas $r^*\not=0, \overline{h}\not=0$.

\subsubsection{D\'emonstration
  du lemme~\ref{lem_maj_checkb_h0}}

On a une majoration grossi\`ere de
$$
\check{b}^{h,0}(r,s)
=
\int_X 
{(irr^*x_v)}^{h} e^{irsr^*x_v}
\overset{v'}{\underset{j=1} \Pi} 
\flz {l_j} {\lambda_js^2r^2\frac{\nn{\pr  j X}^2} 2}
d\tilde{\sigma}_v(X)
\quad ,
$$
\`a une constante pr\`es 
par ${(rr^*)}^h$ 
car les fonctions de Laguerre sont born\'ees.

Pour la seconde majoration donn\'ee dans le lemme,
nous proc\'edons comme dans la d\'emonstration du 
lemme~\ref{lem_maj_check_b_h_10}.
On~a:
\begin{displaymath}
  \begin{array}{l}
    \displaystyle{
      \check{b}^{h,0}(r,s)
      \,=\,
      \int_{S_1^{(v)}} \ldots
      d\tilde{\sigma}_v(X)
      \,=\,
      \sum_{i=1,2} \int_{C'_i} \ldots \psi_i D_i dx}\\
    \displaystyle{\quad
      \,=\,
      \sum_{i=1,2} \int_{C'_i} 
      {(irr^*x_v)}^h
      {(irr^*s)}^{-2h}
      \partial_{x_v}^{2h}.\left[e^{irsr^*x_v}\right]
      \overset{v'}{\underset{j=1} \Pi} 
      \flz {l_j} {\lambda_js^2r^2\frac{\nn{\pr j X}^2} 2}
      \psi_i D_i dx
      \quad ,}
  \end{array}  
\end{displaymath}
et donc apr\`es $2h$ int\'egrations par partie :
\begin{eqnarray*}
  \check{b}^{h,0}(r,s)
  &=&
  s^{-2h}{(irr^*)}^{-h}
  \sum_i \int_{C_i} 
  e^{irsr^*x_v}\\
  &&\quad
  \partial_{x_v}^{2h}.\left[
    \overset{v'}{\underset{j=1} \Pi} \flz {l_j} {\lambda_js^2r^2\frac{\nn{\pr j
          X}^2} 2} x_v^h \psi_i D_i\right] dx
  \quad.  
\end{eqnarray*}
En proc\'edant de la m\^eme mani\`ere 
que dans la d\'emonstration du
lemme~\ref{lem_maj_check_b_h},
on obtient que l'expression
$$
\partial_{x_v}^{2h}\left[
  \overset{v'}{\underset{j=1} \Pi} 
  \flz {l_j} {\lambda_js^2r^2\frac{\nn{\pr j X}^2} 2} x_v^h \psi_i D_i\right] 
\quad,
$$
est major\'ee \`a une constante pr\`es 
(de $v,h$) par:
$$
\sum_{0\leq j\leq 2h}
{(\nn{A^*}s^2r^2)}^j
\quad ;
$$
puis la majoration voulue de
$\nn{\check{b}^{h,0}(r,s)}$.

Ceci ach\`eve la d\'emonstration
du lemme~\ref{lem_maj_checkb_h0}.

\subsubsection{D\'emonstration
  du lemme~\ref{lem_maj_J_axh}}

Dans le cas o\`u $i=0$, 
en majorant les fonctions de Bessel par 1, 
on voit :
\begin{eqnarray*}
  J_{a,x;j}^{(0)}
  &\leq&
  \nn{x}^{-2}
  \int_{x^{-\frac1j}}^\infty
  {\left(  \int_{0}^1 
      r^{v-1-j}{(1-r^4)}^{\frac{z-2}2}dr  \right)}^2
  s^{-2j-1}ds\\
  &&=\,  
  {\left(  \int_{0}^1 
      r^{v-1-j}{(1-r^4)}^{\frac{z-2}2}dr  \right)}^2
  x^{-2} 
  \int_{x^{-\frac1j}}^\infty
  s^{-2j-1}ds
  \quad .
\end{eqnarray*}
et donc l'int\'egrale $J_{a,x;j}^{(0)}$  est major\'ee \`a une constante pr\`es par $1/(2j)$.

Dans le cas o\`u $i=1,\ldots, 2j$, 
par Cauchy-Schwarz,
l'expression 
${(J_{a,x;j}^{(i)})}^2$ 
est major\'ee par le produit de:
$$
x^{-4}
\int_{x^{-\frac1j}}^\infty
{\left(s^{-2j-\frac12}\right)}^2ds
\,=\,
\frac 1{4j} 
\quad ,
$$
et de l'int\'egrale :
\begin{eqnarray*}
  &&
  \int_{x^{-1}}^\infty
  {\left(  \int_{0}^1 
      {(as^2r^2)}^i
      \nn{\fb{\frac{z-2}2}{s^2\sqrt{1-r^4} a}}
      r^{v-1-k}{(1-r^4)}^{\frac{z-2}2}dr  \right)}^4
  s^{2\frac{-1}2}ds \\
  &&\leq\,
  \int_{0}^\infty
  {\left(  \int_{0}^1  
      {(as^2r^2)}^i
      \nn{ \fb{\frac{z-2}2}{s^2\sqrt{1-r^4} a}}
      r^{v-1-k}{(1-r^4)}^{\frac{z-2}2}dr  \right)}^4
  \frac{ds}s  \\
  &&=\,
  \int_{s'=0}^\infty
  {\left(  \int_{0}^1  
      {(s'r^2)}^i
      \nn{\fb{\frac{z-2}2}{s'\sqrt{1-r^4}}}
      r^{v-1-j}{(1-r^4)}^{\frac{z-2}2}dr  \right)}^4
  \frac{ds'}{s'}
\end{eqnarray*}
apr\`es le changement de variable $s'=as^2$.
Cette derni\`ere expression est major\'ee gr\^ace \`a l'in\'egalit\'e
de H\"older appliqu\'ee \`a l'int\'egrale contre $dr$ par :
\begin{eqnarray*}
  \int_{s'=0}^\infty
  {s'}^{4i}\quad
  \int_{0}^1  
  {\left(\nn{\fb{\frac{z-2}2}{s'\sqrt{1-r^4}}}
      {(1-r^4)}^{\frac{z-2}2+\frac 38}\right)}^4dr\\
  {\left(\int_{0}^1  
      {\left(r^{v-1-j+2i}{(1-r^4)}^{-\frac 38}\right)}^{\frac 43}
      dr \right)}^3 \quad\frac{ds'}{s'}\quad.
\end{eqnarray*}
Dans cette derni\`ere expression,
la seconde int\'egrale contre $dr$ est bien finie lorsque 
$$
\forall\,i=1,\ldots, 2j\quad:\quad
v-1-j+2i\,>\,-3/4
\quad,
$$
c'est-\`a-dire lorsque $j<v+1+3/4$;
donc les int\'egrales~${(J_{a,x;j}^{(i)})}^2$ 
sont major\'ees \`a une constante pr\`es par le maximum
sur $i=1,\ldots,2j$ des int\'egrales :
\begin{eqnarray*}
  \int_{s'=0}^\infty
  \int_{0}^1  
  {\left(\nn{\fb{\frac{z-2}2}{s'\sqrt{1-r^4}}}
      {(1-r^4)}^{\frac{z-2}2+\frac 38}\right)}^4dr
  {s'}^{4i} \frac{ds'}{s'}\\
  =\,
  \int_{\tilde{s}=0}^\infty
  \nn{\fb{\frac{z-2}2}{\tilde{s}}}^4
  {\tilde{s }}^{4i-1} d\tilde{s}
  \int_{0}^1  
  {(1-r^4)}^{4(\frac{z-2}2+\frac 38)-4i\frac 12}dr
  \quad,
\end{eqnarray*}
apr\`es Fubini et le changement de variable 
$\tilde{s}=s'\sqrt{1-r^4}$.
Dans l'expression pr\'ec\'edente,
l'int\'egrale contre $dr$ est finie tant que :
\begin{eqnarray*}
  4(\frac{z-2}2+\frac 38)-4i\frac 12>-1, i=1,\ldots, 2j
  &\Longleftrightarrow&
  4(\frac{z-2}2+\frac 38)-4\frac 12>-1\\
  \quad &\Longleftrightarrow&
  z>\frac 34
  \quad ,
\end{eqnarray*}
ce qui est le cas.
D'apr\`es le lemme~\ref{lem_maj_int_fb},
l'int\'egrale contre $d\tilde{s}$ est finie
lorsque 
$4i-1-1<4(z-2)/2$
pour $i=1,\ldots, 2j$
donc lorsque 
$j< (z-1)/4$
ce qui est le cas.

Ceci ach\`eve la d\'emonstration du lemme~\ref{lem_maj_J_axh},
et de tous les lemmes utilis\'es 
dans la sous-section pr\'ec\'edente.

\section{Multiplicateurs de Fourier sur $N_{v,2}$}
\label{sec_multiplicateur}

Dans cette section, 
on s'int\'eresse 
aux multiplicateurs
d\'efinis par passage en Fourier sph\'erique 
sur le groupe $N=N_{v,2}$
nilpotent libre \`a 2 pas.
C'est le probl\`eme d\'efini comme suit:
soit $f\in L^\infty(m)$;
on d\'efinit
la fonction $F$ la distribution radiale 
dont la transform\'ee de Fourier est $f$
(sous-section~\ref{subsec_inversion});
on d\'efinit l'op\'erateur de convolution par $F$ 
sur les fonctions simples de $N$ :
$T_f(h)\:=\, h*F$.
Le probl\`eme des multiplicateurs en Fourier
consiste \`a trouver des conditions sur la fonction $f$ 
pour que
l'op\'erateur $T_f$ s'\'etende en un op\'erateur continu 
$L^p(N)\rightarrow L^p(N)$ pour chaque $p\in ]1,\infty[$.

\paragraph{Autre probl\`eme des multiplicateurs.}
Le probl\`eme  des multiplicateurs de Fourier
a \'et\'e \'etudi\'e dans le cas euclidien
et indirectement sur les groupes de type~H,
groupes nilpotents ayant un calcul de Fourier.
En fait, l'\'etude dans ces cas 
est \'equivalente \`a un probl\`eme de multiplicateurs 
pour des op\'erateurs diff\'erentiels radiaux.
En effet,
dans le cas euclidien, 
cela se ram\`ene au probl\`eme des multiplicateurs pour le
laplacien~$-\Delta$ standard:
chercher des conditions sur la fonction $m$ 
pour que l'op\'erateur $m(-\Delta)$ s'\'etende en un op\'erateur   continu 
$L^p\rightarrow L^p$
revient \`a consid\'erer le probl\`eme des multiplicateurs en Fourier pour la fonction $m(t^2)$,
auquel H\"ormander a apport\'e une r\'eponse optimale.
Dans le cas des groupes de Heisenberg,
le probl\`eme des multiplicateurs de Fourier se ram\`ene 
au probl\`eme des multiplicateurs 
pour les deux op\'erateurs :
le sous-laplacien ``de Kohn'',
et la d\'erivation du centre.
Il en va de m\^eme plus g\'en\'eralement 
sur les groupes de~type~H
\cite{steinmullericci}.

\paragraph{Notre travail}
repose sur le th\'eor\`eme des int\'egrales singuli\`eres,
puis des estimations de normes $L^2$ \`a poids sur~$N$.
Cela conduit \`a effectuer de longs calculs techniques ``c\^ot\'e Fourier''
proches du cas des groupes de type H \cite{steinmullericci}.
Le r\'esultat n'est qu'un premier pas dans l'\'etude du probl\`eme.
Il n'inclut pas les fonctions constantes.
Nous souhaitons essentiellement pr\'esenter nos solutions \`a quelques points techniques.
En particulier, 
nous proposons une d\'ecomposition de l'espace ``c\^ot\'e Fourier''
et nous traduisons la multiplication par la norme ``c\^ot\'e groupe'',
en op\'erateur ``c\^ot\'e Fourier'' (voir les op\'erateurs $\Xi$ et $\aleph$ plus loin).

\paragraph{Organisation de cette section.}
Apres avoir pos\'e plusieurs notations,
en particulier 
celle d'une partition de l'unit\'e c\^ot\'e Fourier,
nous donnons l'\'enonc\'e du th\'eor\`eme
annonc\'e dans l'introduction, puis la d\'emarche de la preuve.
Cette derni\`ere utilise des estimations de normes $L^2$ sur le groupe,
que nous d\'emontrons dans la section suivante
gr\^ace \`a de op\'erations effectu\'ees ``c\^ot\'e Fourier''.

\subsection{Notations et r\'esultat}

Nous supposerons $v'>2$.

\subsubsection{Partition de l'unit\'e sur \mathversion{bold}{$\Pc$}}

La partition est indic\'ee par le produit 
des ensembles 
$I:= I_r\times I_\Lambda \times I_l$
\index{Notation!Ensemble de param\`etres!$I,I_r,I_\Lambda,I_l$}
o\`u :
\begin{itemize}
\item $I_l$ est l'ensemble des $v'$-uplets 
  $\zeta=(\zeta_1,\ldots,\zeta_{v'})\in \Nb^{v'}$;
\index{Notation!Param\`etres!$\zeta_i,\eta_i,\delta_{i,j},r$}
\item $I_\Lambda$ est l'ensemble :
  $$
  I_\Lambda
  \,:=\,
  \{
  (\eta,\delta)
  \quad :\quad
  \eta \in I_\Lambda^1
  \; ,\;
  \delta\in I_\Lambda^2(\eta)
  \}
  \quad,
  $$
  o\`u $I_\Lambda^1$ 
  est l'ensemble des $v'$-uplets 
  $\eta=(\eta_1,\ldots,\eta_{v'})\in \Zb^{v'}$
  v\'erifiant :
  $$
  \eta_i \,\leq\, \eta_{i+1}+1 \;,
  \quad i=1,\ldots ,v'-1
  \quad,
  $$
  et pour $\eta\in I_\Lambda^1$,
  $I_\Lambda^2(\eta)$ 
  est l'ensemble des $v'$-uplets 
  $\delta={(\delta_{i,j})}_{1\leq i<j\leq v'}\in \Zb^{v'(v'-1)/2}$
  v\'erifiant la condition :
  $$
  2^{\eta_j} -4.2^{\eta_i}
  \,\leq\,
  2^{\delta_{i,j}}
  \,\leq\,
  4.2^{\eta_j} -2^{\eta_i}
  \quad 1\leq i<j\leq v'
  \quad.
  $$
\item  $I_r$ est l'ensemble des $\theta\in\Rb$,
  qui est omis si $v=2v'$.
\end{itemize} 

Soit  $\chi$ une fonction $C^\infty$,
support\'ee dans l'intervalle $[1/1,2]$
telle que :
$$
\chi\,\geq\, 0
\qquad\mbox{et}\qquad
\forall \; y>0\quad :\quad
\sum_{j=-\infty}^{+\infty} \chi(2^{-j}y) 
\,=\, 1 
\quad.
$$
\index{Notation!Param\`etres!$\iota$}
On d\'efinit la fonction  
$\chi_{\iota}:
\Pc \rightarrow \Cb$ 
pour $\iota=(\theta, (\eta,\delta),\zeta)\in I$
par :
\index{Notation!$\chi_\iota$}
\begin{eqnarray*}
  \chi_{\iota}
  (r,\Lambda,l)
  &=&
  \chi(2^{-\theta} \nn{r}^4)
  \;
  \underset{1\leq i\leq v'}{\Pi}
  \chi(2^{-\zeta_i} (l_i+1))\;
  \chi(2^{-\eta_i} \lambda_i^2)
  \underset{i < j }{\Pi}
  \chi(2^{-\delta_{i,j}} (\lambda_j^2-\lambda_i^2))
  \quad,
\end{eqnarray*}
en convenant que 
dans le cas $v=2v'$, 
les termes en $r$ sont omis.
Les $\chi_\iota$ ont un sens 
pour des param\`etres 
$\iota\in \Zb\times\Zb^{v'}\times  \Zb^{\frac{v'(v'-1)}2} \times \Nb^{v'}$ , 
mais dans ce cas $\chi_\iota=0$ sur $\Pc$.

On obtient une partition de l'unit\'e de 
$\Pc$ :
\begin{equation}
  \label{formule_partition_unite_I}
  \forall\, \phi\in\Pc
  \quad:\quad
  \sum_{\iota\in I} \chi_{\iota}(\phi)
  \,=\, 1 \quad.  
\end{equation}
C'est une somme localement uniform\'ement finie 
c'est-\`a-dire
qu'il existe une borne $C$ 
(ne d\'ependant que de $\chi$ et $v'$) telle que
pour tout $\phi\in \Pc$,
le nombre de param\`etre $\iota$ v\'erifiant 
$\chi_\iota(\phi)\not=0$
est au plus $C$.

\subsubsection{Partition de l'unit\'e de supp \mathversion{bold}{$\chi_\iota$}}

On d\'efinit pour $h\in \Zb$ la fonction $\chi_h\in L^2(m)$ par :
\index{Notation!$\chi_h$}
$$
\chi_h(\phi)
\,=\,
\chi\left(
  2^{-h}
  (\sum_{i=1}^{v'}
  \lambda_i (2 l_i+1)  
  +\nn{r}^{2})\right)
\quad.
$$
On pose pour des param\`etres
$h\in \Zb$,
et $\iota=(\theta,\eta,\delta,\zeta)\in I$:
\index{Notation!$s_\iota$}
$$
s_\iota
\,:=\,
\sum_i 2^{\frac{\eta_i}2+\zeta_i} 
\quad \left(+2^{\frac{\theta}2}\right)
\quad.
$$
Sur le support de la fonction $\chi_\iota$, on a :
$$
2^{-2} s_\iota
\,\leq\,
\sum_i \lambda_i(2l_i+1) \quad \left( +\nn{r}^2\right)
\,\leq\,
2^4 s_\iota
\quad.
$$
On a une partition de l'unit\'e finie
sur le support de $\chi_\iota$ :
\begin{equation}
  \label{formule_partition_unite_I_h}
  \forall \phi\in \supp \chi_\iota
  \quad:\quad
  \sum_{h\in \Zb \,:\,
    2^{-3}s_{\iota} <2^h<2^5 s_{\iota}}
  \chi_h(\phi)
  \,=\,1
  \quad.
\end{equation}

\subsubsection{Op\'erateurs sur \mathversion{bold}{$\Pc$}}

Nous avons d\'ej\`a rappel\'e nos conventions de notation 
sur le transform\'ee de Fourier euclidienne.
On utilisera aussi ici la transform\'ee de Fourier sur
$\Zb^{v'}$ et $\Tb^{v'}$, 
o\`u $\Tb$ d\'esigne le tore $\Rb/[0,2\pi]$.

On d\'efinit les op\'erateurs 
$\Delta_i,\partial_{\lambda_i},\partial_{r}$
\index{Notation!Op\'erateur!$\partial_{\lambda_i},\partial_{r}$}
\index{Notation!Op\'erateur!$\Delta_i$}
sur les fonctions tests
$f:\Pc\rightarrow \Cb \in \Dc(\Pc)$ 
par 
($\Lambda=(\lambda_1,\ldots,\lambda_{v'})\in \Lc,
l=(l_1,\ldots,l_{v'})\in \Nb^{v'},
r\in \Rb^+$ ) :
\begin{eqnarray*}
  \Delta_i.f(r,\Lambda,l)
  &=&
  f(r,\Lambda,(l_1,\ldots,l_i+1,\ldots,l_v'))
  -f(r,\Lambda,l)\quad,\\
  \partial_{\lambda_i}.f(r,\Lambda,l)
  &=&
  \partial_{\lambda_i}.
  [\lambda_i \mapsto 
  f(r,(\lambda_1,\ldots,\lambda_i,\ldots,\lambda_{v'}),l)]
  \quad,\\
  \partial_{r}.f(r,\Lambda,l)
  &=&
  \partial_{r}.
  [r \mapsto 
  f(r,(\lambda_1,\ldots,\lambda_i,\ldots,\lambda_{v'}),l)]
  \quad.
\end{eqnarray*}
On \'etend ces op\'erateurs par dualit\'e 
sur l'ensemble $\Dc'(\Pc)$ des distributions de $\Pc$.

On \'etend les fonctions 
$f:\Pc\rightarrow \Cb$ 
de mani\`ere triviale 
en une fonction
\begin{equation}
  \label{formule_etendre_triv_fcn_Pc}
  \tilde{f}:
  \left\{ \begin{array}{rcl}
      \Rb\times\Rb^{v'}\times\Zb^{v'}
      &\rightarrow& 
      \Cb\\
      (r,\Lambda,l)
      &\mapsto&
      \left\{  \begin{array}{ll}
          f(r,\Lambda,l)
          &\mbox{si}\; (r,\Lambda,l)\in \Pc\quad,\\
          0
          &\mbox{sinon}\quad.
        \end{array}\right.
    \end{array}\right.  
\end{equation}
On identifie souvent une fonction sur $\Pc$, 
et son extension triviale.

On remarque :
\begin{lem}
  \label{lem_equivalence_densite_eta_supp_chi_iota}
  Soit $\iota=(\theta, (\eta,\delta),\zeta)\in I$.
  Sur le support de la fonction $\chi_\iota$,
  on a l'\'equivalence de
  la fonction densit\'e 
  de la mesure $\eta'$ sur le simplexe $\Lc$ 
  par rapport \`a la mesure de Lebesgue $d\Lambda$
  $$
  \frac{d\eta}{d\Lambda}
  \,\sim\,
  2^{2d(\eta,\delta)}
  \quad,
  $$
  o\`u on a not\'e :
  $$
  d(\eta,\delta)
  \,:=\,
  \left\{
    \begin{array}{ll}
      \nn{\delta}+\frac14\nn{\eta}
      &\mbox{dans le cas}\; v=2v'\quad,\\
      \nn{\delta}+\frac34\nn{\eta}
      &\mbox{dans le cas}\; v=2v'+1\quad.
    \end{array}\right.
  $$
  En particulier, on a:
  $$
  \forall\, f\in L^\infty(m)
  \quad:\quad
  \nd{f\chi_\iota}_{L^2(m)}
  \,\sim\,
  2^{d(\eta,\delta)}
  \nd{f\chi_\iota}
  _{L^2(\Rb\times\Rb^{v'}\times\Zb^{v'})}
  \quad,
  $$
  o\`u on a identifi\'e $f\chi_\iota\in L^2(m)$ et sa fonction 
  \'etendue de mani\`ere triviale 
  sur $\Rb\times\Rb^{v'}\times\Zb^{v'}$.
\end{lem}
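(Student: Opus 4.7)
Mon plan est de prouver d'abord l'\'equivalence de densit\'e annonc\'ee, puis d'en d\'eduire directement l'\'equivalence des normes $L^2$. Je commencerais par rappeler l'expression explicite de la mesure $\eta$ (issue du lemme~\ref{lem_pass_coord_pol} de passage en coordonn\'ees polaires sur $\Ac_v$): par la th\'eorie classique de la r\'eduction des matrices antisym\'etriques sous $O(v)$, cette mesure admet, par rapport \`a la mesure de Lebesgue $d\Lambda$ sur le simplexe ouvert $\Lc$, une densit\'e de type Vandermonde, proportionnelle \`a $\Pi_{i<j}{(\lambda_j^2-\lambda_i^2)}^2$ dans le cas $v=2v'$, et \`a $\Pi_j\lambda_j^2\cdot\Pi_{i<j}{(\lambda_j^2-\lambda_i^2)}^2$ dans le cas $v=2v'+1$ (le facteur suppl\'ementaire provenant du sous-espace noyau des matrices antisym\'etriques de dimension impaire).

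Par d\'efinition de $\eta'$, on a $d\eta'/d\Lambda = (\Pi_j\lambda_j)\,d\eta/d\Lambda$, d'o\`u une densit\'e proportionnelle \`a $\Pi_j\lambda_j\cdot\Pi_{i<j}{(\lambda_j^2-\lambda_i^2)}^2$ si $v=2v'$, et \`a $\Pi_j\lambda_j^3\cdot\Pi_{i<j}{(\lambda_j^2-\lambda_i^2)}^2$ si $v=2v'+1$. Sur le support de $\chi_\iota$, la fonction $\chi$ \'etant support\'ee dans un intervalle dyadique fix\'e, on a les encadrements $\lambda_i^2\sim 2^{\eta_i}$ et $\lambda_j^2-\lambda_i^2\sim 2^{\delta_{i,j}}$ \`a constantes universelles pr\`es. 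Un calcul direct donne alors $\Pi_j\lambda_j\sim 2^{\nn{\eta}/2}$, $\Pi_j\lambda_j^3\sim 2^{3\nn{\eta}/2}$, et $\Pi_{i<j}{(\lambda_j^2-\lambda_i^2)}^2\sim 2^{2\nn{\delta}}$. En multipliant, on trouve, dans les deux cas, $d\eta'/d\Lambda\sim 2^{2d(\eta,\delta)}$, ce qui est bien l'\'enonc\'e (\`a un \l\'eger abus de notation $d\eta$ pour $d\eta'$ dans l'\'enonc\'e pr\`es).

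Pour l'\'equivalence des normes $L^2$, je rappellerais que d'apr\`es le th\'eor\`eme~\ref{thm_intro_mesure_plancherel}, la mesure $m$ est le produit tensoriel de $\eta'$ sur $\Lc$, de la mesure de comptage sur $\Nb^{v'}$, et de $dr^*$ si $v=2v'+1$. On a donc
$$
\nd{f\chi_\iota}_{L^2(m)}^2
\,=\, c(v)\,\sum_{l\in\Nb^{v'}}\int_\Lc\int \nn{f\chi_\iota}^2(r^*,\Lambda,l)\,\frac{d\eta'}{d\Lambda}(\Lambda)\,d\Lambda\,dr^*,
$$
la densit\'e pouvant \^etre sortie de l'int\'egrale \`a constante (universelle) pr\`es gr\^ace au r\'esultat ci-dessus, ce qui conduit exactement \`a la majoration et minoration de $\nd{f\chi_\iota}_{L^2(m)}$ par $2^{d(\eta,\delta)}\nd{f\chi_\iota}_{L^2(\Rb\times\Rb^{v'}\times\Zb^{v'})}$ apr\`es prise de racine carr\'ee.

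Le point le plus d\'elicat sera de justifier que les conditions d\'efinissant $I_\Lambda^1$ et $I_\Lambda^2(\eta)$ (les in\'egalit\'es $\eta_i\leq\eta_{i+1}+1$ et l'encadrement de $2^{\delta_{i,j}}$) sont pr\'ecis\'ement celles qui garantissent la coh\'erence des encadrements dyadiques, c'est-\`a-dire la non-vacuit\'e effective de $\supp\chi_\iota\cap\Lc$. En effet, pour que le support rencontre le simplexe \emph{ouvert} $\Lc=\{\lambda_1^*>\ldots>\lambda_{v'}^*>0\}$, il faut que les $\lambda_j^2-\lambda_i^2>0$ avec $\lambda_i^2\sim 2^{\eta_i}$ soient compatibles entre eux: les bornes sur $2^{\delta_{i,j}}$ sont obtenues en exprimant $\lambda_j^2-\lambda_i^2=(\lambda_j^2)-(\lambda_i^2)$ et en prenant les valeurs extr\^emes des encadrements dyadiques de chaque terme. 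Une fois cette coh\'erence \'etablie, les \'equivalences se d\'eduisent de mani\`ere imm\'ediate des d\'efinitions.
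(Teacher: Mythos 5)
Votre démonstration est correcte et suit exactement la voie que le texte sous-entend (le lemme y est énoncé comme une simple remarque, sans preuve) : on combine la densité de type Vandermonde de $\eta$ donnée par le lemme~\ref{lem_pass_coord_pol}, le facteur $\Pi_j\lambda_j$ définissant $\eta'$, et les encadrements dyadiques $\lambda_i^2\sim 2^{\eta_i}$, $\nn{\lambda_j^2-\lambda_i^2}\sim 2^{\delta_{i,j}}$ valables sur $\supp\chi_\iota$, puis on sort la densité de l'intégrale dans la formule de Plancherel radiale pour obtenir l'équivalence des normes $L^2$. Vous relevez d'ailleurs à juste titre l'abus de notation $d\eta$ pour $d\eta'$ dans l'énoncé ; votre dernier paragraphe sur la cohérence des conditions définissant $I_\Lambda$ est un complément non indispensable (l'équivalence est vide si le support ne rencontre pas $\Lc$) mais ne nuit pas.
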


\subsubsection{Op\'erateurs sur 
  \mathversion{bold}{$L^2(\Rb\times\Rb^{v'}\times\Zb^{v'})$}}

On associe \`a un param\`etre $\iota=(\theta,(\eta,\delta),\zeta)\in I$,
les quantit\'es suivantes :
\index{Notation!$L_\iota,R_\iota,D_\iota$}
\begin{eqnarray*}
L_\iota
&=&
\min_{i<j}\delta_{i,j}+4\max_{i_0}\nn{\eta_{i_0}-\min_{i<j}\delta_{i,j}}\quad,\\
R_\iota
&=&
\frac12 \max(\theta,\min_{i<j}\delta_{i,j})\quad,\\
D_\iota
&=&
8\max_{i_0}\nn{\eta_{i_0}-\min_{i<j}\delta_{i,j}} 
+\max(-\min_{i<j}\delta_{i,j}+\theta,0)
+4\max_i \zeta_i\quad,
\end{eqnarray*}
et l'op\'erateur non born\'e auto-adjoint positif 
sur l'espace de Hilbert 
$L^2(\Rb\times\Rb^{v'}\times\Zb^{v'})$
donn\'e par :
$$
T_{\iota}
\,:=\,
\Id+
  \sum_{i=1}^{v'}
  2^{\frac{L_\iota}2}\partial_{\lambda_i}^2
  +2^{\frac{R_\iota}2}\partial_{r}^2
+  2^{\frac{D_\iota}2}
  \sum_{i=1}^{v'}
  \Delta_i^2
\quad.
$$
\index{Notation!Op\'erateur!$T_\iota$}

\subsubsection{\'Enonc\'e du th\'eor\`eme}

Avec les notations pr\'ec\'edentes,
on peut \'enoncer le r\'esultat de notre \'etude :

\begin{thm}[Multiplicateurs]
  \label{thm_multiplicateurs}
  \index{Multiplicateurs}
  Soit $f\in L^\infty(m)$, 
  une fonction de $\Pc$.

  On note $F$ la distribution radiale
  dont la transform\'ee de Fourier est $f$.
  On d\'efinit alors l'op\'erateur $T_f$ 
  de convolution par $F$
  sur les fonctions simples de $N=N_{v,2}$. 

 On identifie
  $f_\iota=f\chi_\iota$ 
  \`a son extension triviale 
  sur $\Rb\times\Rb^{v'}\times\Zb^{v'}$.

  S'il existe $\epsilon>Q/2$
  tel que la somme suivante :
$$
\sum_{\iota\in I}
    s_\iota^{-\frac Q4}
2^{d(\eta,\delta)}
\nd{ T_\iota^{\frac{\epsilon}2}
  .f_\iota }\quad,
$$
est finie,
  alors l'op\'erateur $T_f$
  s'\'etend en un op\'erateur continu 
  $L^p\rightarrow L^p$
  pour tout $1<p<\infty$.
\end{thm}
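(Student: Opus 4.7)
La d\'emarche propos\'ee est d'appliquer le th\'eor\`eme des int\'egrales singuli\`eres de Coifman-Weiss \cite{CoifW} \`a l'op\'erateur $T_f$, ce qui requiert deux ingr\'edients : d'une part la continuit\'e $L^2$ de $T_f$, d'autre part une condition int\'egrale de H\"ormander sur son noyau $F$, soit
$$
\sup_{n_0 \in N \setminus \{0\}} \int_{\nn{n} > 2\nn{n_0}} \nn{F(n n_0^{-1}) - F(n)} \, dn \,<\, \infty.
$$
La continuit\'e $L^2$ d\'ecoule imm\'ediatement du corollaire~\ref{cor_mespl_nonrad} appliqu\'e \`a la distribution radiale $F$, dont la transform\'ee de Fourier sph\'erique $f \in L^\infty(m)$ est born\'ee, ce qui fournit $\nd{T_f}_{L^2 \to L^2} \leq \nd{f}_{L^\infty(m)}$.

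L'essentiel du travail porte sur la condition de H\"ormander. La d\'ecomposition $f = \sum_{\iota \in I} f_\iota$ avec $f_\iota = f \chi_\iota$ induit $F = \sum_\iota F_\iota$, o\`u $F_\iota$ est la distribution radiale de transform\'ee sph\'erique $f_\iota$. Vu l'homog\'en\'eit\'e du calcul sph\'erique (les param\`etres $(r, \Lambda, l)$ interviennent comme valeurs propres de $L$, donc \`a l'\'echelle $\nn{n}^{-2}$), chaque $F_\iota$ est essentiellement concentr\'ee \`a l'\'echelle spatiale $s_\iota^{-1/2}$. Pour chaque $\iota$, on distingue suivant que $\nn{n_0}$ est petit ou grand devant cette \'echelle : dans le r\'egime facile $\nn{n_0} \geq s_\iota^{-1/2}$ on majore la diff\'erence par la somme et on int\`egre les deux morceaux s\'epar\'ement ; dans le r\'egime difficile $\nn{n_0} < s_\iota^{-1/2}$ on exploite la r\'egularit\'e de $F_\iota$ via une interpolation entre l'identit\'e et $T_\iota$ c\^ot\'e Fourier. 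Dans les deux r\'egimes, l'ingr\'edient clef est l'in\'egalit\'e de Cauchy-Schwarz \`a poids $\nn{n}^\epsilon$ pour $\epsilon > Q/2$ :
$$
\int_E \nn{F_\iota(n)} \, dn \,\leq\, \left(\int_E \nn{n}^{-2\epsilon} dn\right)^{1/2} \nd{\nn{n}^\epsilon F_\iota}_{L^2(N)},
$$
le premier facteur \'etant absolument sommable g\'eom\'etriquement \`a cause de $\epsilon > Q/2$. Combin\'ee \`a la g\'eom\'etrie des annuli de taille $s_\iota^{-1/2}$ (qui ont volume $s_\iota^{-Q/2}$ et contribuent en racine carr\'ee), cette estimation produit, apr\`es sommation en $k \geq \frac12 \log_2 s_\iota^{-1}$, pr\'ecis\'ement le pr\'efacteur $s_\iota^{-Q/4}$ annonc\'e dans l'hypoth\`ese.

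Le cœur technique est l'estimation c\^ot\'e Fourier
$$
\nd{\nn{n}^\epsilon F_\iota}_{L^2(N)}
\,\lesssim\,
s_\iota^{\epsilon/2} \cdot 2^{d(\eta,\delta)} \nd{T_\iota^{\epsilon/2} f_\iota}_{L^2(\Rb\times\Rb^{v'}\times\Zb^{v'})},
$$
o\`u le facteur $2^{d(\eta,\delta)}$ provient du lemme~\ref{lem_equivalence_densite_eta_supp_chi_iota} qui ram\`ene la norme $L^2(m)$ sur le support de $\chi_\iota$ \`a la norme $L^2$ pour la mesure de Lebesgue. La norme $L^2(N)$ non-pond\'er\'ee se calcule par la formule de Plancherel radiale (th\'eor\`eme~\ref{thm_mplanch}), et la multiplication par $\nn{n}^\epsilon$ doit se traduire c\^ot\'e Fourier en un op\'erateur diff\'erentiel-discret en $(r, \Lambda, l)$ que l'on cherche \`a majorer par $T_\iota^{\epsilon/2}$ uniform\'ement sur le support de $\chi_\iota$.

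L'obstacle principal sera pr\'ecis\'ement cette traduction explicite de la multiplication par $\nn{n}^\epsilon$ --- ou plus fondamentalement, par les composantes $x_i$ et $a_{i,j}$ de $n = \exp(X+A)$ --- en op\'erateurs sur $\Pc$. Compte tenu des expressions explicites des fonctions sph\'eriques $\phi^{r^*,\Lambda^*,l}$ donn\'ees par le th\'eor\`eme~\ref{thm_fsphO} et des identit\'es remarquables v\'erifi\'ees par les fonctions de Laguerre et de Bessel, ces multiplications se traduisent en combinaisons des d\'erivations $\partial_{\lambda_i}, \partial_r$ et des op\'erateurs de diff\'erence $\Delta_i$, chacune affect\'ee d'un poids d\'ependant des param\`etres $(\theta, \eta, \delta, \zeta)$ de $\iota$. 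Les exposants $L_\iota, R_\iota, D_\iota$ intervenant dans la d\'efinition de $T_\iota$ sont pr\'ecis\'ement calibr\'es pour dominer uniform\'ement ces poids sur le support de $\chi_\iota$ ; leur v\'erification repose sur de longs calculs techniques, analogues \`a ceux men\'es par Stein-M\"uller-Ricci \cite{steinmullericci} dans le cas des groupes de type~H, et requiert la construction explicite des op\'erateurs $\Xi$ et $\aleph$ annonc\'es qui repr\'esentent respectivement la multiplication par les coordonn\'ees de $X$ et de $A$ dans le calcul de Fourier sph\'erique radial.
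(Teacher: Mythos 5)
Votre d\'emarche co\"\i ncide pour l'essentiel avec celle du texte : th\'eor\`eme des int\'egrales singuli\`eres, continuit\'e $L^2$ par le corollaire~\ref{cor_mespl_nonrad}, d\'ecomposition $F=\sum_\iota F_\iota$, Cauchy-Schwarz \`a poids avec $\epsilon>Q/2$, et contr\^ole de la norme $L^2$ \`a poids de $F_\iota$ par $2^{d(\eta,\delta)}\nd{T_\iota^{\epsilon/2}.f_\iota}$ au moyen des op\'erateurs $\Xi$ et $\aleph$ (c'est la proposition~\ref{prop_controle_N} du texte, avec le poids ${(1+s_\iota^2\nn{n}^4)}^{\epsilon/4}$ \'equivalent au v\^otre).

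Il reste toutefois une lacune concr\`ete dans le r\'egime $\nn{n_0}<s_\iota^{-1/2}$ : vous invoquez ``la r\'egularit\'e de $F_\iota$ via une interpolation entre l'identit\'e et $T_\iota$ c\^ot\'e Fourier'', mais $T_\iota$ --- qui agit sur les param\`etres $(r,\Lambda,l)$ par $\partial_{\lambda_i}$, $\partial_r$ et $\Delta_i$ --- ne code que la \emph{d\'ecroissance} spatiale de $F_\iota$ (c'est la traduction de la multiplication par $\nn{n}$ c\^ot\'e groupe), et non sa r\'egularit\'e spatiale ; il ne fournit donc pas la majoration du gradient n\'ecessaire pour gagner le facteur $\min(s_\iota^{\frac12}r,1)$. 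Le texte obtient ce facteur par un dispositif suppl\'ementaire que votre proposition n'articule pas : la d\'ecomposition secondaire $F_\iota=\sum_h F_{\iota,h}$ avec $F_{\iota,h}=F_\iota*\chi(2^{-h}L).\delta_0$ et $2^h\sim s_\iota$ (possible car $\chi_\iota$ est support\'ee l\`a o\`u la valeur propre du sous-laplacien est $\sim s_\iota$), puis l'in\'egalit\'e de Taylor sur le groupe stratifi\'e : la d\'eriv\'ee $X_i$ tombe alors sur le noyau de Schwartz explicite $\chi(2^{-h}L).\delta_0$, dont l'homog\'en\'eit\'e donne le facteur $2^{\frac h2}\sim s_\iota^{\frac12}$ (lemme~\ref{lem_etude_int_I_iota,h}). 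Sans ce maillon --- ou un substitut fournissant une borne sur les d\'eriv\'ees $X_i.F_\iota$ --- l'estimation du r\'egime difficile ne se ferme pas.
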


Les hypoth\`eses de r\'egularit\'e 
(``plus de $Q/2$ d\'eriv\'ees''),
et de d\'ecroissance
ne sont pas v\'erifi\'ees pour les fonctions $f$ constantes.
Cependant, 
les moyens mis en oeuvre dans cette \'etude 
(en particulier l'expression des op\'erateurs $\Xi$ et $\aleph$)
permettront dans des travaux ult\'erieurs d'am\'eliorer ce premier pas
pour des fonctions assez r\'eguli\`eres mais peu d\'ecroissantes,
puis \'eventuellement d'adapter certaines m\'ethodes des groupes de type~H.

Remarquons que dans le cas d'un groupe de type~H,
la forme des valeurs propres pour les fonctions sph\'eriques 
du sous-laplacien de Kohn et des d\'erivations du centre
permet de regarder de mani\`ere \'equivalente
les probl\`emes des multiplicateurs de Fourier 
et spectraux conjoints pour ces op\'erateurs diff\'erentiels radiaux;
en particulier,
certaines propri\'et\'es connues 
des multiplicateurs spectraux sur les groupes de Lie nilpotents
\cite{christ,alexo} sont utilis\'ees dans \cite{steinmullericci}. 
De tels ph\'enom\`enes dans le cas de $N_{v,2}$ ne semblent pas aussi directs.
En effet, le centre est de dimension $v(v-1)/2$, il n'y a pas de ``directions ind\'ependantes''
et $\sum_{j=1}^{v'} \lambda_j (2 l_j+1) +r^{2}$ 
est la valeur propre du sous-laplacien $L$ pour 
la fonction sph\'erique born\'ee 
$\phi^{r,\Lambda,l}\in\Pc$
(voir (\ref{eg_valp_deltax})).
Il sera aussi moins facile que dans la cas d'un groupe de type~H
d'utiliser sur $N_{v,2}$ les r\'esultat sur les multiplicateurs de Fourier 
au probl\`eme des multiplicateurs spectraux pour le sous-laplacien
(i.e. trouver un exposant optimal pour la classe de Sobolev locale des
fonctions qui sont des mutliplicateurs spectraux).

\subsection{D\'emarche de la d\'emonstration du 
  th\'eor\`eme~\ref{thm_multiplicateurs}}

D'apr\`es le th\'eor\`eme des int\'egrales singuli\`eres \cite{CoifW}, 
pour que l'op\'erateur $T_f$ s'\'etende en un op\'erateur continu
$L^p(N)\rightarrow L^p(N)$,
il suffit de montrer :
\begin{eqnarray}
  \exists\, C_1>0
  &\; :\;&
  \forall h 
  \quad
  \nd{T_f(h)}_{L^2}
  \,\leq\, C_1 \nd{h}_{L^2}
  \label{cond_int_sing_1}\\
  \exists\, C_2,C_3>0
  &\; :\; &
  \forall y,y_0\in N \quad
  \int_{\nn{ny_0^{-1}}>C_2\nn{yy_0^{-1}}}
  \nn{F(ny^{-1})-F(ny_0^{-1})} dn 
  \,<\, C_3
  \label{cond_int_sing_2}  
\end{eqnarray}

D'apr\`es le corollaire~\ref{cor_mespl_nonrad},
la condition~(\ref{cond_int_sing_1}) 
est  d\'ej\`a v\'erifi\'ee avec $C_1=\nd{f}_{L^\infty}$.

Avant de passer \`a l'\'etude de la 
condition~(\ref{cond_int_sing_2}),
nous explicitons 
les d\'ecompositions de fonctions 
dont nous aurons besoin pour cette \'etude.

\subsubsection{D\'ecompositions des fonctions sur \mathversion{bold}{$\Pc$}}

Soit $f\in L^\infty(m)$.
On d\'efinit pour $\iota\in I$ la fonction
$f_\iota:=f \chi_\iota\in L^2(m)$.
\index{Notation!$f_\iota,f_{\iota,h}$}
D'apr\`es la d\'ecomposition~(\ref{formule_partition_unite_I}) de l'unit\'e, 
on voit que l'on a d\'ecompos\'e : 
$$
f
\,=\,
\sum_{\iota\in I}
f_\iota
\quad.
$$

Nous d\'ecomposons aussi $f_\iota$ sur $\supp\, \chi_\iota$.
On d\'efinit les fonctions 
$f_{\iota,h}\in L^2(m)$ par :
$$
f_{\iota,h}
\,:=\,
f_{\iota}\chi_h
\,=\,
f\chi_\iota\chi_h 
\; ,\quad
\iota\in I,\; 
h\in\Zb
\;:\;
2^{-3}s_{\iota} <2^h<2^5 s_{\iota}
\quad.
$$
D'apr\`es la d\'ecomposition~(\ref{formule_partition_unite_I_h}) de l'unit\'e, 
on a :
$$
f_\iota
\,=\,
\sum_{h\in \Zb \,:\,
  2^{-3}s_{\iota} <2^h<2^5 s_{\iota}}
f_{\iota,h}
\quad.
$$

\subsubsection{D\'ecompositions de fonctions radiales sur \mathversion{bold}{$N$}}

On d\'efinit
les fonctions 
$F_\iota,F_{\iota,h}\in {L^2(N)}^\natural$
\index{Notation!$F_\iota,F_{\iota,h}$}
comme les fonctions radiales 
dont les transform\'ees de Fourier sph\'eriques 
sont respectivement
$f_\iota,f_{\iota,h}$. 
D'apr\`es la proposition~\ref{cor_transf_noyau},
la fonction 
$\chi_h\in L^2(m)$ 
est la transform\'ee de Fourier du noyau de l'op\'erateur
$\chi(2^{-h}L)$ ,

On d\'ecompose :
\begin{equation}
  \label{egalite_decompositions_iota_iotah}  
  F
  \,=\,
  \sum_{\iota\in I}
  F_\iota
  \qquad\mbox{et}\qquad
  F_\iota 
  \,=\,
  \sum_{2^{-3}s_{\iota} <2^h<2^5 s_{\iota}}
  F_{\iota,h}
  \quad,
\end{equation}
la somme pour $F$ converge dans $\Sc(N)'$
(l'ensemble des distributions temp\'er\'ees),
et la somme pour $F_\iota$ est finie.
Pour $F_\iota$, on voit :
\begin{equation}
  \label{egalite_F_iota_h} 
  F_{\iota,h}
  \,=\,
  F_{\iota}*\chi(2^{-h}L).\delta_0 
  \quad,  
\end{equation}
o\`u l'on a not\'e 
$\chi(2^{-h}L).\delta_0$
le noyau de l'op\'erateur
$\chi(2^{-h}L)$.

Par homog\'en\'eit\'e du sous-laplacien 
on a :
\begin{equation}
  \label{expr_chi_dil_L}
  \chi(2^{-h}L).\delta_0 (y)
  \,=\,
  2^{h\frac Q2} \;
  \chi(L).\delta_0 \,(2^{\frac h2}y)
  \quad.
\end{equation}

En effet,
on a :
$L^{\delta_r}=
r^2 L$,
o\`u on a not\'e $\delta_r$ la dilatation 
sur des fonctions $h:N\rightarrow \Cb$ 
et sur des op\'erateurs $T$ sur un espace de fonctions 
stables par dilatation:
$$
\delta_r.h
\,:=\,
h(r.)
\qquad\mbox{et}\qquad
T^{\delta_r} .h
\,:=\,
\delta_{r^{-1}}.
\left( T.(\delta_r.h)\right)
\quad.
$$

On en d\'eduit que pour une fonction $m\in \Sc(\Rb)$,
en notant $M\in \Sc(N)$ le noyau de l'op\'erateur $m(L)$,
et $M_r\in \Sc(N)$ le noyau de l'op\'erateur $m(r^2L)$,
on a la propri\'et\'e d'homog\'en\'eit\'e du sous-laplacien :
$$
M_r
\,=\,
r^{-Q}
\delta_{\frac 1r}.M 
\quad.
$$
\index{Sous-laplacien!homog\'en\'eit\'e}

\subsubsection{\'Etude de la condition~(\ref{cond_int_sing_2})}

Pour les param\`etres :
$$
\iota\in I
\; ,\quad
h\in\Zb
\; :\;
2^{-3}s_{\iota} <2^h<2^5 s_{\iota}
\; ,\quad
y,z\in N
\; ,\quad
r>0
\quad,
$$
on note l'int\'egrale :
$$
I_{\iota,h;y,z;r}
\,:=\,
\int_{\nn{yn^{-1}}>4r}
\nn{F_{\iota,h}(yn^{-1})-F_{\iota,h}(zn^{-1})} 
dn
\quad,
$$
et son supremum sur $\nn{yz^{-1}}<r$ et $h$:
$$
S_{\iota,r}
\,:=\,
\sup 
\left\{I_{\iota,h;y,z;r}
  \; \left| \;
    \begin{array}{l}
      y,z\in N
      \quad\mbox{v\'erifiant}\quad
      \nn{yz^{-1}}<r\\
      h\in\Zb
      \quad\mbox{v\'erifiant}\quad
      2^{-3}s_{\iota} <2^h<2^5 s_{\iota}
    \end{array}\right.
\right\}
\quad.
$$
D'apr\`es les 
d\'ecompositions~(\ref{egalite_decompositions_iota_iotah})
et le fait que le nombre des entiers
$h\in\Zb$
v\'erifiant
$2^{-3}s_{\iota} <2^h<2^5 s_{\iota}$
est born\'e ind\'ependemment de $\iota\in I$,
la deuxi\`eme condition~(\ref{cond_int_sing_2}) 
sera v\'erifi\'ee, 
si la condition suivante  est satisfaite :
\begin{equation}
  \label{cond_int_sing_2'}
  \exists\, C>0
  \quad :\quad
  \forall \, r>0
  \qquad
  \sum_{\iota \in I}
  S_{\iota,r}
  \;<\; C \quad.
\end{equation} 

Pour \'etudier l'int\'egrale $I_{\iota,h;y,z;r}$,
nous avons besoin de d\'efinir
pour $\iota\in I$, pour $\epsilon'>0$ et
$g\in L^\infty(m)$ 
dont le support est inclus dans celui de $\chi_\iota$
(en particulier $g\in L^2(m)$),
\index{Notation!$N_{\iota,{\epsilon'}}(g)$}
$$
N_{\iota,{\epsilon'}}(g)
:=
\nd{{(1+s_\iota^2\nn{n}^4 )}^{\epsilon'} 
  G (n)}_{L^2(n\in N)}
\quad,
$$
en notant $G\in {L^2(N)}^\natural$ 
la fonction dont la transform\'ee
de Fourier est $g$.
\begin{prop}
  \label{prop_etude_int_I_iota,h}
  Pour $(Q+1)/2>\epsilon>Q/2 $,
  l'expression $S_{\iota,r}$ est major\'ee
  \`a  une constante 
  qui ne d\'epend que de $\epsilon,p$,
  par :
$$
\min (s_\iota^\frac12 r,1)\;
    s_\iota^{-\frac Q4}
      {(rs_\iota^\frac12)}^{-\epsilon+\frac Q2}
  N_{\iota,\frac{\epsilon}4}(f_\iota)
  \quad.    
$$
\end{prop}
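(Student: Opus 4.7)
The plan is to reduce the bound on $S_{\iota,r}$ to a weighted $L^2$-estimate on $F_{\iota,h}$ by Cauchy-Schwarz, and then to control the latter in terms of $N_{\iota,\epsilon/4}(f_\iota)$ by exploiting the convolution identity (\ref{egalite_F_iota_h}) together with the homogeneity relation (\ref{expr_chi_dil_L}). Concretely, after the change of variable $m=yn^{-1}$, I would bound $I_{\iota,h;y,z;r}$ by
\[
\Bigl(\int_{\|m\|>4r}(1+s_\iota^2\|m\|^4)^{-\epsilon/2}\,dm\Bigr)^{1/2}\;\Bigl(\int_N |F_{\iota,h}(m)-F_{\iota,h}(my_0^{-1})|^2(1+s_\iota^2\|m\|^4)^{\epsilon/2}\,dm\Bigr)^{1/2},
\]
where $y_0=y^{-1}z$, so $\|y_0\|<r$. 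The first factor is evaluated with the polar formula (\ref{formule_changement_polaire}); after the dilation $m\mapsto s_\iota^{-1/2}m$, it gives $C\,s_\iota^{-Q/4}(rs_\iota^{1/2})^{Q/2-\epsilon}$, convergent exactly because $\epsilon>Q/2$. This already produces the correct factor $s_\iota^{-Q/4}(rs_\iota^{1/2})^{-\epsilon+Q/2}$.

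Next, for the second factor, I would split into two regimes according to whether $rs_\iota^{1/2}\geq 1$ or $<1$. In the large regime, I would discard the difference with the triangle inequality $|F_{\iota,h}(m)-F_{\iota,h}(my_0^{-1})|\leq |F_{\iota,h}(m)|+|F_{\iota,h}(my_0^{-1})|$, translate one of the terms, and reduce everything to $\int_N|F_{\iota,h}(m)|^2(1+s_\iota^2\|m\|^4)^{\epsilon/2}dm$, which is precisely $N_{\iota,\epsilon/4}(f_{\iota,h})^2$. In the small regime, I would apply a horizontal mean-value estimate: parametrising a curve from $m$ to $my_0^{-1}$ of length $\lesssim r$ by left-invariant vector fields $X_i$ and writing $|F_{\iota,h}(m)-F_{\iota,h}(my_0^{-1})|\leq r\sup_{\|w\|<r}\sum_i|X_iF_{\iota,h}(mw)|$. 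Since (\ref{expr_chi_dil_L}) shows that $\chi(2^{-h}L)\delta_0$ has natural scale $2^{-h/2}\sim s_\iota^{-1/2}$, the $X_i$-derivatives multiply its weighted $L^2$-norm by $s_\iota^{1/2}$, yielding a gain of $rs_\iota^{1/2}$. The two regimes combine to the factor $\min(rs_\iota^{1/2},1)$.

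To pass from $F_{\iota,h}$ to $F_\iota$, I would use (\ref{egalite_F_iota_h}) to write $F_{\iota,h}=F_\iota*K_h$ with $K_h(y)=2^{hQ/2}K(2^{h/2}y)$, $K=\chi(L).\delta_0\in\Sc(N)$. The weight $(1+s_\iota^2\|m\|^4)^{\epsilon/2}$ is almost multiplicative under the group law (because $\epsilon<(Q+1)/2$ is bounded, so for $\|n-m\|\lesssim s_\iota^{-1/2}$ the ratio of the weights at $m$ and $mn$ is $O(1)$), and $K$ has Schwartz decay, so Minkowski's integral inequality and the scaling of $K_h$ give
\[
\Bigl(\int_N|F_{\iota,h}(m)|^2(1+s_\iota^2\|m\|^4)^{\epsilon/2}dm\Bigr)^{1/2}\lesssim \|K\|_{L^1(N)}\,N_{\iota,\epsilon/4}(f_\iota),
\]
the $L^1$-normalisation of $K_h$ absorbing the factor $2^{hQ/2}\sim s_\iota^{Q/2}$ that would otherwise appear. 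The analogous estimate holds for the $X_iF_{\iota,h}$, with the extra $s_\iota^{1/2}$ from differentiating $K_h$.

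Combining the three paragraphs and taking the supremum over $h$ (with $2^{-3}s_\iota<2^h<2^5 s_\iota$, so uniformly in $h$) and over $y,z$ with $\|yz^{-1}\|<r$ yields the claimed bound. The principal technical obstacle is the uniform-in-$\iota$ control of the weight under left translation by elements $w$ with $\|w\|\lesssim\max(r,s_\iota^{-1/2})$: one has to verify that $(1+s_\iota^2\|mw\|^4)^{\epsilon/2}$ is comparable to $(1+s_\iota^2\|m\|^4)^{\epsilon/2}$ on the relevant set, which uses the triangle inequality for the Koranyi norm together with $\|w\|\leq s_\iota^{-1/2}$ (since only the small regime $rs_\iota^{1/2}<1$ requires this via the mean-value argument). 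Once this is secured, the book-keeping of $s_\iota$-powers is straightforward and produces exactly the factor $s_\iota^{-Q/4}(rs_\iota^{1/2})^{-\epsilon+Q/2}\min(rs_\iota^{1/2},1)$ multiplying $N_{\iota,\epsilon/4}(f_\iota)$.
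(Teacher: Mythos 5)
Your argument is correct and rests on exactly the same ingredients as the paper's proof (which isolates Lemma~\ref{lem_etude_int_I_iota,h} and applies it with $K=s_\iota^{1/2}$): the convolution identity~(\ref{egalite_F_iota_h}), the homogeneity~(\ref{expr_chi_dil_L}) of $\chi(2^{-h}L).\delta_0$, the dichotomy ``triangle inequality versus stratified Taylor inequality'' producing the factor $\min(1,rs_\iota^{1/2})$, the polar-coordinate evaluation of $\int_{\nn{m}>4r}(1+s_\iota^{1/2}\nn{m})^{-2\epsilon}dm$ under $\epsilon>Q/2$, and the Schwartz decay of $\chi(L).\delta_0$. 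The only structural difference is the order of operations: the paper first reduces to $\int_{\nn{n'}>2r}\nn{D.F_{\iota,h}}$ with $D=\Id$ or $X_i$, unfolds the convolution, and performs one Cauchy--Schwarz inside the $y'$-integral pairing $F_\iota$ directly against the weight; you do Cauchy--Schwarz first, landing in the weighted $L^2$ norm of $F_{\iota,h}$, and then pass to $F_\iota$ by Minkowski. The two routes are equivalent, and yours has the merit that the intermediate quantity $\nd{F_{\iota,h}}_{L^2(w)}$ is more transparent than the paper's $J_{r,h,\epsilon,D}$. Three points to tighten. First, with $m=yn^{-1}$ the second argument is $zn^{-1}=(zy^{-1})m$, a \emph{left} translate, not $my_0^{-1}$; this only affects which family of invariant vector fields you use in the mean-value step. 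Second, the weight comparison $(1+s_\iota^2\nn{wm}^4)^{\epsilon/2}\leq C(1+s_\iota^2\nn{m}^4)^{\epsilon/2}$ is also needed in the large regime $rs_\iota^{1/2}\geq 1$ (to translate the discarded term), where $\nn{w}<r$ need not be $\leq s_\iota^{-1/2}$; it still holds there, but because the integration domain $\nn{m}>4r$ forces $\nn{m}$ to dominate $\nn{w}$, not because $\nn{w}$ is small --- your remark attributing the difficulty solely to the small regime is therefore slightly off, though nothing breaks. Third, what you actually need of the unit-scale kernel is the finiteness of $\int_N\nn{\chi(L).\delta_0(y)}(1+\nn{y})^\epsilon dy$ rather than its plain $L^1$ norm (and likewise for its $X_i$-derivatives); this is harmless since the kernel is Schwartz, and it is exactly the integral the paper bounds at the end of its proof.
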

Dans la section qui suit, 
nous montrerons la proposition suivante :
\begin{prop}
  \label{prop_controle_N}
  Soit $\iota(\theta,(\eta,\delta),\zeta) \in I$ 
  un param\`etre.

  Pour tout $\epsilon>0$, on a pour toute fonction
  $g\in L^\infty(m)$
  identifi\'ee  
  \`a sa fonction \'etendue de mani\`ere triviale sur 
  $\Rb\times\Rb^{v'}\times\Zb^{v'}$:
  $$
  \supp\, g
  \,\subset \,
  \supp \, \chi_\iota
  \;\Longrightarrow\;
  N_{\iota,\epsilon}(g) 
  \,\leq\,C\,
  2^{d(\eta,\delta)}
  \nd{T_\iota^{2\epsilon}
    .g }_{L^2(\Rb\times\Rb^{v'}\times\Zb^{v'})}
  \;,
  $$
  o\`u $C$ d\'esigne une constante
  qui ne d\'epend que de $v,\epsilon$,
  et en particulier pas de la fonction $g$ 
  ou du param\`etre $\iota$.
\end{prop}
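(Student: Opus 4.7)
Par la formule de Plancherel radiale (th\'eor\`eme~\ref{thm_mplanch}) puis le lemme~\ref{lem_equivalence_densite_eta_supp_chi_iota}, on a
$$
N_{\iota,\epsilon}(g)^2
\,=\,
\int_N {(1+s_\iota^2\nn{n}^4)}^{2\epsilon}\nn{G(n)}^2\,dn
\,\sim\,
2^{2d(\eta,\delta)}
\nd{M_\iota^\epsilon g}^2_{L^2(\Rb\times \Rb^{v'}\times \Zb^{v'})}
\quad,
$$
o\`u $M_\iota$ d\'esigne l'op\'erateur c\^ot\'e Fourier (auto-adjoint positif) correspondant par dualit\'e \`a la multiplication par $(1+s_\iota^2\nn{n}^4)$ c\^ot\'e groupe. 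Le probl\`eme se ram\`ene donc \`a \'etablir l'in\'egalit\'e d'op\'erateur $M_\iota^\epsilon \preceq C\,T_\iota^{2\epsilon}$ sur le sous-espace des fonctions support\'ees dans $\supp\chi_\iota$.

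\noindent La strat\'egie consiste \`a traduire la multiplication par la norme de Kor\'anyi $\nn{n}^4=\nn{X}^4+\nn{A}^2$ en op\'erateurs c\^ot\'e Fourier. En partant des expressions explicites des fonctions sph\'eriques (th\'eor\`eme~\ref{thm_fsphO}) et en utilisant les relations de r\'ecurrence des fonctions de Laguerre (qui traduisent $x\bar{\Lc}_n^{\alpha}(x)$ en combinaisons de $\bar{\Lc}_{n\pm 1}^{\alpha}$ et $\bar{\Lc}_n^{\alpha\pm 1}$) et des fonctions de Bessel, je d\'efinirai deux op\'erateurs $\Xi$ et $\aleph$ agissant sur les param\`etres $(r,\Lambda,l)$ v\'erifiant
$\nn{X}^2\phi^{r,\Lambda,l}\,=\,\Xi\phi^{r,\Lambda,l}$ et $\nn{A}^2\phi^{r,\Lambda,l}\,=\,\aleph\phi^{r,\Lambda,l}$.
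L'op\'erateur $\Xi$ fera appara\^\i tre $\partial_r$ (d\'erivation du facteur exponentiel en $r\!<\!X_v^*,X\!>$), $\lambda_j^{-1}\partial_{\lambda_j}$ et les diff\'erences $\Delta_j$ (via les r\'ecurrences de Laguerre); l'op\'erateur $\aleph$ sera essentiellement quadratique en $\partial_{\lambda_j}$ (d\'erivation du facteur exponentiel en $<\!D_2(\Lambda),A\!>$).

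\noindent Le point-cl\'e est que les quantit\'es $L_\iota,R_\iota,D_\iota$ ont pr\'ecis\'ement \'et\'e construites pour dominer, sur $\supp\chi_\iota$, les poids qui apparaissent dans $s_\iota^2(\Xi+\aleph)$. Plus pr\'ecis\'ement, les in\'egalit\'es $\eta_i\leq\eta_{i+1}+1$ et l'encadrement de $2^{\delta_{i,j}}$ par $2^{\eta_j}-4.2^{\eta_i}$ et $4.2^{\eta_j}-2^{\eta_i}$ dans la d\'efinition de $I_\Lambda$ assurent la s\'eparation uniforme des $\lambda_i$ sur $\supp\chi_\iota$, ce qui permet de majorer les coefficients rationnels en les $\lambda_i,\lambda_j-\lambda_i$ par des puissances de $2$ contr\^ol\'ees. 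J'en d\'eduirai l'estimation de base $M_\iota\preceq C\,T_\iota^2$ sur le support de $\chi_\iota$. Pour $\epsilon=k\in\Nb$ entier, la conclusion r\'esultera d'une r\'ecurrence imm\'ediate. Pour $\epsilon>0$ non entier, je conclurai par interpolation complexe \`a la Stein de la famille analytique d'op\'erateurs $z\mapsto T_\iota^{-z}M_\iota^{z/2}$, entre l'estimation triviale $z=0$ et les estimations enti\`eres.

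\noindent\textbf{Obstacle principal.} La difficult\'e se concentre dans le calcul explicite de $\Xi$ et $\aleph$. L'int\'egration sur $O(v)$ dans la d\'efinition de $\phi^{r,\Lambda,l}$ emp\^eche un calcul direct~: il faudra d'abord op\'erer sur les fonctions $\Theta^{r,\Lambda,l}$ non radialis\'ees, identifier les d\'eriv\'ees et diff\'erences en $(r,\Lambda,l)$ qui r\'ealisent la multiplication par $\nn{X}^2$ et $\nn{A}^2$, puis v\'erifier la compatibilit\'e avec la radialisation et l'int\'egration contre une fonction radiale. Parall\`element, la v\'erification combinatoire que $L_\iota,R_\iota,D_\iota$ dominent effectivement les coefficients apparaissant dans $s_\iota^2(\Xi+\aleph)$ repose sur une analyse fine des diff\'erents r\'egimes selon les valeurs relatives des param\`etres $\eta_i$ et $\delta_{i,j}$; le choix des maxima figurant dans $L_\iota,D_\iota$ est pr\'ecis\'ement dict\'e par le pire r\'egime.
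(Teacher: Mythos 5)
Votre strat\'egie co\"incide dans ses grandes lignes avec celle du texte (Plancherel radiale, traduction de $\nn{X}^2$ et $\nn{A}^2$ en op\'erateurs $\Xi$ et $\aleph$ c\^ot\'e Fourier, domination dyadique des coefficients par $L_\iota,R_\iota,D_\iota$ sur $\supp\chi_\iota$, cas entier puis interpolation). Notez au passage que le poids pertinent est $\Xi^2+\aleph$ et non $\Xi+\aleph$, puisque $\nn{n}^4=\nn{X}^4+\nn{A}^2$.

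Le point faible est l'\'etape \og $M_\iota\preceq C\,T_\iota^2$ puis r\'ecurrence imm\'ediate pour $\epsilon=k\in\Nb$ \fg. Une in\'egalit\'e entre op\'erateurs auto-adjoints positifs ne se propage pas aux puissances $k>1$ ($t\mapsto t^k$ n'est pas op\'erateur-monotone pour $k>1$), et l'it\'eration directe $\nd{M_\iota^{k}g}\leq C\nd{T_\iota^2 M_\iota^{k-1}g}$ exige ensuite de faire passer $T_\iota^2$ \`a travers $M_\iota^{k-1}$, c'est-\`a-dire de contr\^oler tous les commutateurs~: c'est pr\'ecis\'ement l\`a que se concentre le travail, et il n'a rien d'imm\'ediat. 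Le texte le traite par le lemme~\ref{lem_expr_puiss_T}, une r\'ecurrence combinatoire qui met ${(\Xi^2+\aleph)}^{\epsilon}$ sous la forme d'une somme finie de termes $E_P$ assujettis \`a une relation d'homog\'en\'eit\'e et aux contraintes $0<\nn{b}+f+\nn{d}\leq 4\epsilon$, $\nn{c}\leq 2\nn{d}$, etc.~; ce sont ces contraintes sur les \emph{termes crois\'es} de la puissance $\epsilon$-i\`eme (et non la seule domination du poids pour la premi\`ere puissance) qui permettent de majorer chaque exposant dyadique par $L_\iota\nn{b}+R_\iota f+D_\iota\nn{d}$ et de conclure via Plancherel sur $\Rb\times\Rb^{v'}\times\Zb^{v'}$. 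Votre plan doit donc \^etre compl\'et\'e par un \'enonc\'e structurel analogue sur les puissances enti\`eres de $s_\iota^{2}(\Xi^2+\aleph)$ avant de pouvoir invoquer l'interpolation pour les $\epsilon$ non entiers.
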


Admettons ces deux propositions.
On en d\'eduit que l'expression $S_{\iota,r}$ 
est major\'ee \`a une constante pr\`es par : 
\begin{equation}
  \label{expression_r}
\min (s_\iota^\frac12 r,1)\;
    s_\iota^{-\frac Q4}
      {(rs_\iota^\frac12)}^{-\epsilon+\frac Q2}
2^{d(\eta,\delta)}
\nd{ T_\iota^{\frac{\epsilon}2}
  .f_\iota }_{L^2(\Rb\times\Rb^{v'}\times\Zb^{v'})}
\; .  
\end{equation}
\begin{itemize}
\item Si $\iota$ est tel que 
$s_\iota ^\frac12 r\leq 1$,
alors l'expression~(\ref{expression_r}) est major\'ee par :
$$
s_\iota^\frac12 r
    s_\iota^{-\frac Q4}
      {(rs_\iota^\frac12)}^{-\epsilon+\frac Q2}
2^{d(\eta,\delta)}
\nd{ T_\iota^{\frac{\epsilon}2}
  .f_\iota }
\quad;
$$
de plus, on a $r\leq s_\iota^{-\frac12}$ d'o\`u :
$$
s_\iota^\frac12 r
    s_\iota^{-\frac Q4}
      {(rs_\iota^\frac12)}^{-\epsilon+\frac Q2}
\,=\,
s_\iota^{\frac12(1-\epsilon)} 
r^{1-\epsilon+\frac Q2}
\,\leq\, 
s_\iota^{-\frac12\frac Q2} 
\quad.
$$
\item Sinon, on a :
$s_\iota^\frac12 r\geq 1$,
et l'expression~(\ref{expression_r}) est major\'ee par :
$$
    s_\iota^{-\frac Q4}
      {(rs_\iota^\frac12)}^{-\epsilon+\frac Q2}
2^{d(\eta,\delta)}
\nd{ T_\iota^{\frac{\epsilon}2}
  .f_\iota }\\
\leq 
    s_\iota^{-\frac Q4}
2^{d(\eta,\delta)}
\nd{ T_\iota^{\frac{\epsilon}2}
  .f_\iota }
\quad.
$$
\end{itemize}
La somme sur $\iota\in I$ de ce terme
est born\'ee pour une puissance
$\epsilon\in]Q/2,-(Q+1)/2$,
ind\'ependemment de $r>0$ lorsque la somme
$$
\sum_{\iota\in I}
    s_\iota^{-\frac Q4}
2^{d(\eta,\delta)}
\nd{ T_\iota^{\frac{\epsilon}2}
  .f_\iota }\quad,
$$
sera finie.
C'est  exactement l'hypoth\`ese du 
th\'eor\`eme~\ref{thm_multiplicateurs}.

Donc le th\'eor\`eme~\ref{thm_multiplicateurs}
sera d\'emontr\'e
lorsque les propositions~\ref{prop_etude_int_I_iota,h}
et~\ref{prop_controle_N}
seront prouv\'ees.
C'est ce que nous faisons 
dans la sous-section et dans la section qui suivent.

\subsection{\'Etude de l'int\'egrale $I_{\iota,h;y,z;r}$}

Cette sous-section est consacr\'ee \`a la d\'emonstration
de la proposition~\ref{prop_etude_int_I_iota,h}.
C'est une cons\'equence du lemme qui suit
lorsque $K=s_\iota ^\frac 12$.
\begin{lem}
  \label{lem_etude_int_I_iota,h}
On se donne un param\`etre $K>0$ inf\'erieur 
(\`a une constante qui ne d\'epend que de $\epsilon,v$)
\`a $2^{\frac h2}$.
  Pour $\nn{yz^{-1}}<r$ et $\epsilon>Q/2$,
  l'int\'egrale $I_{\iota,h;y,z;r}$ est major\'ee
  \`a  une constante 
  qui ne d\'epend que de $v,\epsilon$, 
  par :
$$
\{\min_{d=0,1}    2^{\frac h2 d}r^d\}\;
    K^{-\frac Q2}
      {(1+rK)}^{-\epsilon+\frac Q2}
  \nd{
    {(1+K\nn{n})}^\epsilon
    F_\iota(n) }_{L^2(n\in N)}\quad.
$$
\end{lem}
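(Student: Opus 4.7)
The plan is to split the estimate into the two cases $d=0$ and $d=1$, bound each separately, and keep the minimum. The $d=0$ contribution will come from a brute size estimate on $F_{\iota,h}$ outside a ball of radius comparable to $r$, while the $d=1$ contribution will use the fact that $F_{\iota,h}$ is smooth at scale $2^{-h/2}$, giving an extra factor of $r\cdot 2^{h/2}$ via a subelliptic mean value inequality. In both cases the central tool is a Cauchy--Schwarz estimate with the Koranyi weight $(1+K|m|)^\epsilon$, combined with a weighted $L^2$ transfer from $F_{\iota,h}$ to $F_\iota$ that uses the factorization $F_{\iota,h}=F_\iota\ast M_h$ from~\eqref{egalite_F_iota_h} and the scaling~\eqref{expr_chi_dil_L}.

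For $d=0$, the quasi-triangle inequality for the Koranyi norm together with $|yz^{-1}|<r$ and $|yn^{-1}|>4r$ yields $|zn^{-1}|>cr$ for some constant $c>0$. After the ordinary triangle inequality and left-invariance of $dn$, one reduces to estimating $\int_{|m|>c'r}|F_{\iota,h}(m)|\,dm$. Applying Cauchy--Schwarz with the weight $(1+K|m|)^\epsilon$ gives
\begin{equation*}
\int_{|m|>c'r}|F_{\iota,h}(m)|\,dm \,\leq\, \left(\int_{|m|>c'r}\frac{dm}{(1+K|m|)^{2\epsilon}}\right)^{\!1/2} \bigl\|(1+K|\cdot|)^\epsilon F_{\iota,h}\bigr\|_{L^2(N)}.
\end{equation*}
Polar coordinates with the $Q$-homogeneous Haar measure and the hypothesis $\epsilon>Q/2$ evaluate the first factor to exactly $CK^{-Q/2}(1+rK)^{-\epsilon+Q/2}$. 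To transfer the $L^2$ weight from $F_{\iota,h}$ to $F_\iota$, use $|m|\leq C(|n|+|n^{-1}m|)$ in the Koranyi norm, the elementary inequality $(a+b)^\epsilon\leq C_\epsilon(a^\epsilon+b^\epsilon)$, and Young's inequality to obtain $\bigl\|(1+K|\cdot|)^\epsilon F_{\iota,h}\bigr\|_{L^2}\leq A_h\bigl\|(1+K|\cdot|)^\epsilon F_\iota\bigr\|_{L^2}$ with $A_h=\int_N(1+K|q|)^\epsilon|M_h(q)|\,dq$. The dilation change $q'=2^{h/2}q$ turns this into $\int_N(1+K\cdot 2^{-h/2}|q'|)^\epsilon|M(q')|\,dq'$, and the hypothesis $K\leq C_0\cdot 2^{h/2}$ together with the Schwartz character of $M$ bounds it independently of $h$ and $K$.

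For $d=1$, since the Carnot--Carath\'eodory distance associated with the horizontal left-invariant vector fields $X_1,\ldots,X_v$ is equivalent to the Koranyi norm on $N$, the points $yn^{-1}$ and $zn^{-1}$ can be joined by a horizontal path of sub-Riemannian length at most $Cr$. The fundamental theorem of calculus along this path provides the mean value inequality
\begin{equation*}
|F_{\iota,h}(yn^{-1}) - F_{\iota,h}(zn^{-1})| \,\leq\, Cr\sum_{j=1}^{v} \sup_{|\gamma\,(yn^{-1})^{-1}|\leq Cr} |X_j F_{\iota,h}(\gamma)|,
\end{equation*}
and a standard covering argument replaces the integral of this supremum over $\{n:|yn^{-1}|>4r\}$ by a constant times $\int_{|m|>c''r}|X_jF_{\iota,h}(m)|\,dm$. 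Since $X_j$ is left-invariant and homogeneous of degree one for the dilations of $N$, one has $X_jM_h = 2^{h/2}(X_jM)_h$ with $X_jM\in\Sc(N)$, hence $X_jF_{\iota,h}=2^{h/2}F_\iota\ast(X_jM)_h$. Applying the $d=0$ argument to $X_jF_{\iota,h}$, with $M$ replaced by the Schwartz function $X_jM$, yields the same bound multiplied by the extra factor $2^{h/2}r$, which is precisely the $d=1$ estimate. Taking the minimum over $d\in\{0,1\}$ finishes the proof.

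The main obstacle will be the careful justification of the subelliptic mean value inequality together with the covering argument that dominates the supremum by an integral over a slightly larger region: both steps are classical on stratified nilpotent groups (as in Folland--Stein) but the constants must be tracked to guarantee that the dependence is only on $v$ and $\epsilon$. A secondary technical point is the factorization $(1+K|m|)^\epsilon\leq C(1+K|n|)^\epsilon(1+K|n^{-1}m|)^\epsilon$, which follows from the quasi-triangle inequality but requires a short check when $\epsilon$ is not an integer.
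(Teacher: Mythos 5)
Your overall strategy is the same as the paper's: split into the cases $d=0$ (plain triangle inequality) and $d=1$ (gradient/Taylor estimate), reduce to $\int_{\nn{m}>cr}\nn{D.F_{\iota,h}(m)}\,dm$ for $D=\Id$ or $X_j$, apply Cauchy--Schwarz against the weight ${(1+K\nn{\cdot})}^{\epsilon}$, compute the unweighted factor in polar coordinates using $\epsilon>Q/2$, and absorb the kernel $\chi(2^{-h}L).\delta_0$ via its dilation structure, the quasi-triangle inequality and the hypothesis $K\lesssim 2^{h/2}$. The only genuine difference of route is the order of operations in the $d=0$ case: the paper keeps the convolution $F_{\iota,h}=F_\iota*\chi(2^{-h}L).\delta_0$ open, applies Cauchy--Schwarz so that the weight lands directly on $F_\iota$, and packages everything into the single quantity $J_{r,h,\epsilon,D}$; you instead apply Cauchy--Schwarz to $F_{\iota,h}$ first and then transfer the weighted $L^2$ norm from $F_{\iota,h}$ to $F_\iota$ by the submultiplicativity of ${(1+K\nn{\cdot})}^{\epsilon}$ and Young's inequality. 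Both yield the same bound under the same hypothesis on $K$, and your weight-transfer step is correct (the group is unimodular, so $L^2*L^1\to L^2$ holds, and $\int{(1+K\nn{q})}^{\epsilon}\nn{M_h(q)}\,dq$ is bounded uniformly once $K\leq C_0 2^{h/2}$).

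There is, however, one step in your $d=1$ case that fails as written: the passage from the pointwise bound $\nn{F_{\iota,h}(yn^{-1})-F_{\iota,h}(zn^{-1})}\leq Cr\sum_j\sup_{B(yn^{-1},Cr)}\nn{X_jF_{\iota,h}}$ to an integral bound via a ``covering argument.'' The quantity $\int_N\sup_{q\in B(m,Cr)}\nn{g(q)}\,dm$ is \emph{not} controlled by $C\int_N\nn{g}$ for a general integrable $g$ (take $g$ concentrated at scale $2^{-h/2}\ll r$, which is exactly the relevant regime here since the $d=1$ bound must hold for all $r$, not only for $r\leq 2^{-h/2}$); no covering of $N$ by balls of radius $Cr$ repairs this, because a supremum over a ball is not dominated by the average over a comparable ball without extra structure. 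The correct statement --- and the one the paper invokes as the Taylor inequality of \cite{varo}, Th\'eor\`eme IV.7.3 --- keeps the line integral along the horizontal path joining $y$ and $z$ and integrates in $n$ \emph{before} taking any supremum: by Fubini and left-invariance of $dn$, each time slice contributes $\int_{\nn{m}>2r}\nn{X_jF_{\iota,h}(m)}\,dm$, giving directly
$$
\int_{\nn{yn^{-1}}>4r}\nn{F_{\iota,h}(yn^{-1})-F_{\iota,h}(zn^{-1})}\,dn
\,\leq\,
C\,r\,\max_j\int_{\nn{m}>2r}\nn{X_j.F_{\iota,h}(m)}\,dm
\quad.
$$
Once you replace your supremum-plus-covering step by this integrated form, the remainder of your $d=1$ argument (homogeneity $X_j(\chi(2^{-h}L).\delta_0)=2^{h/2}\,2^{hQ/2}(X_j\chi(L).\delta_0)(2^{h/2}\cdot)$, the fact that $X_j\chi(L).\delta_0\in\Sc(N)$, and re-running the $d=0$ computation) coincides with the paper's and yields the extra factor $2^{h/2}r$ as claimed.
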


\begin{proof}[du lemme~\ref{lem_etude_int_I_iota,h}]
  Dans la preuve,
  $C$ d\'esigne une constante 
  qui ne d\'epend que des dimensions 
  du groupe $N$, et \'eventuellement de $\epsilon$
  et qui pourra \'evoluer au cours du calcul.
  Lorsque $\nn{yz^{-1}}<r$,
  on contr\^ole l'int\'egrale $I_{\iota,h;y,z;r}$
  soit  d'apr\`es l'in\'egalit\'e triangulaire,
  soit gr\^ace aux in\'egalit\'es de Taylor
  (\cite{varo}, Th\'eor\`eme IV.7.3) : 
  $$
  \mbox{soit par}\quad
  2 \int_{\nn{n'}>2r}
  \nn{F_{\iota,h}(n')}
  dn'\; ,
  \quad\mbox{soit par}\quad
  C r \max_i 
  \int_{\nn{n'}>2r}
  \nn{X_i.F_{\iota,h}(n')} 
  dn'
  \quad.
  $$
  On cherche donc \`a contr\^oler 
  pour $D=\Id$ ou $X_i, i=1,\ldots,v$
  les int\'egrales :
  $$
  \int_{\nn{n'}>2r}
  \nn{D.F_{\iota,h}(n')} dn'
  \quad;
  $$
  En utilisant l'\'ecriture~(\ref{egalite_F_iota_h}),
  on a :
  $$
  \int_{y'\in N}
  \nn{D.\chi(2^{-h}L).\delta_0 (y')} 
  \int_{\nn{n'}>2r}
  \nn{F_\iota(n'{y'}^{-1})} 
  dn'\; dy' 
  \;\leq\;
  J_{r,h,\epsilon,D}
  \nd{
    {(1+K\nn{n})}^\epsilon
    F_\iota(n) }_{L^2(n\in N)}\quad,
  $$
  par Cauchy-Schwarz et un changement de variable 
  $n=n'{y'}^{-1}$, en d\'enotant l'int\'egrale:
  $$
  J_{r,h,\epsilon,D}
  \,:=\,
  \int_{y'\in N} 
  \nn{D.\chi(2^{-h}L)(y')} 
  \sqrt{
    \int_{\nn{n'}>2r}
    {(1+K\nn{n'{y'}^{-1}})}^{-2\epsilon}
    dn'}  dy' 
  $$
  Il suffit donc d'estimer les int\'egrales 
  $J_{r,h,\epsilon,D}$.
  Or d'une part,
  d'apr\`es l'\'egalit\'e~(\ref{expr_chi_dil_L}),
  on a :
  $$
  D.\chi(2^{-h}L).\delta_0 (y')
  \,=\,
  2^{\frac h2 (Q+d(D))} \;
  D.\chi(L).\delta_0 \,(2^{\frac h2}y')
  \quad,
  $$
  o\`u $d(D)=0$ si $D=\Id$ 
  et $1$ si $D=X_i$ 
  est le degr\'e de l'op\'erateur diff\'erentiel $D$;
  d'autre part, 
  on obtient de l'in\'egalit\'e triangulaire :
  $$
  \forall\, x,y\in N
  \quad :\quad
  (1+\nn{xy})
  \,\leq \,
  C' (1+\nn{x})(1+\nn{y})
  \quad,
  $$
  o\`u 
  $C'$ 
  d\'esigne la constante de l'in\'egalit\'e triangulaire;
  on en d\'eduit :
  $$
  \int_{\nn{n'}>2r}
  {(1+K\nn{n'{y'}^{-1}})}^{-2\epsilon} dn'
  \,\leq\,
  {C'}^{2\epsilon} {(1+K\nn{y'})}^{2\epsilon} \;
  \int_{\nn{n'}>2r}
  {(1+K\nn{n'})}^{-2\epsilon} dn'
  \quad.
  $$
  On calcule facilement cette derni\`ere int\'egrale :
  gr\^ace au passage en coordonn\'ees polaires
  (voir (\ref{formule_changement_polaire})),
  puis au changement de variable $\rho'=K\rho$ :
  \begin{eqnarray*}
    &&\int_{\nn{n'}>2r}
    {(1+K\nn{n'})}^{-2\epsilon} dn'
    \,=\,  
    \int_{\rho=2r}^\infty
    {(1+K\rho)}^{-2\epsilon} \rho^{Q-1}d\rho\\
    &&\quad=\,
    K^{-Q}
    \int_{\rho=2rK}^\infty
    {(1+\rho')}^{-2\epsilon} {\rho'}^{Q-1}d\rho'
    \,\leq\,
    K^{-Q}
    \int_{\rho=2rK}^\infty
    {(1+\rho')}^{-2\epsilon+Q-1}d\rho'\\
    &&\quad =\,
    K^{-Q}
    {(2\epsilon-Q)}^{-1}
    {(1+2r K)}^{-2\epsilon+Q}
    \quad.  
  \end{eqnarray*}
  On a donc :
  $$
  \int_{\nn{n'}>2r}
  {(1+K\nn{n'{y'}^{-1}})}^{-2\epsilon} dn'
  \,\leq\,
  C
  {(1+K\nn{y'})}^{2\epsilon}
    K^{-Q}
  {(1+rK)}^{-2\epsilon+Q}
  \quad,
  $$
  puis en rassemblant ce qui pr\'ec\`ede et en utilisant l'hypoth\`ese sur $K$ :
  \begin{eqnarray*}
    J_{r,h,\epsilon,D}
    &\leq&
    C\,
    2^{\frac h2 (Q+d(D))} 
    \sqrt{
K^{-Q}
      {(1+rK)}^{-2\epsilon+Q}}\\
    &&\qquad
    \int_{y'\in N} 
    \nn{
      D.\chi(L).\delta_0 \,(2^{\frac h2}y')}
    {(1+2^{\frac h2}\nn{y'})}^{\epsilon} 
    dy' \\
    &&
    \,=\,
    C\,
    2^{\frac h2 d(D)} 
    K^{-\frac Q2}
      {(1+rK)}^{-\epsilon+\frac Q2}
    \int_N 
    \nn{
      D.\chi(L).\delta_0 \,(y'')}
    {(1+\nn{y''})}^{\epsilon} 
    dy''\quad,
  \end{eqnarray*}
  gr\^ace au changement de variable $y''=2^{\frac h2}y'$.
  Or on a
  $\chi\in\Sc(\Rb^+)$, 
  d'o\`u 
  $\chi(L).\delta_0\in\Sc(N)$;
  les derni\`eres int\'egrales contre $dy''$ sont donc finies.
  On en d\'eduit que 
  pour tous les op\'erateurs 
  $D=\Id$ ou $X_i, i=1,\ldots,v$,
  les int\'egrales
  $J_{r,h,\epsilon,D}$
  sont born\'ees \`a une constante pr\`es par :
$$
    2^{\frac h2 d(D)} 
    K^{-\frac Q2}
      {(1+rK)}^{-\epsilon+\frac Q2}
  \quad.
  $$
\end{proof}

\section{Contr\^ole de la norme $N_{\iota,\epsilon}$}

Le but de cette section est de 
d\'emontrer la proposition~\ref{prop_controle_N}.

Par la formule~(\ref{formule_plancherel_radiale})
de Plancherel, on a :
\begin{eqnarray*}
  N_{\iota,\epsilon}^2(g)
  &=&
  \int_{\Pc}
  \nn{<{(1+s_\iota^2 \nn{n}^4 )}^{\epsilon}
    G,\phi>} ^2dm(\phi)\\
  &=&
  \int_{\Pc}
  \nn{<G,
    {(1+s_\iota^2 (\nn{X}^4+\nn{A}^2))}^{\epsilon}\phi>} ^2dm(\phi)
\end{eqnarray*}
Nous cherchons donc \`a exprimer les termes 
$\nn{X}^2\phi$ et $\nn{A}^2\phi$
en fonction des op\'erateurs de d\'ecalage et de d\'erivation appliqu\'es aux param\`etres de 
$\phi=\phi^{r,\Lambda,l}(n=\exp(X+A))$.

Rappelons :
$$
\phi^{r,\Lambda,l}(\exp(X+A))
=
\int_{O(v)}
e^{i<k.D_2(\Lambda),A>}
f_{r,\Lambda,l}(k^{-1}.X)
dk
$$
o\`u la fonction $f_{r,\Lambda,l}=f_\phi$
est param\`etr\'ee par $\Pc$ 
et est d\'efinie sur $\Rb^v$ par:
$$
f_\phi (X)
\,:=\,
e^{i <r X_v^*,X>}
\overset{v'}{\underset{j=1}{\Pi}}
\flz {l_j} {\lambda_j\frac{\nn{\pr j X }^2}2}
\quad.
$$
\index{Notation!$f_\phi$}

\subsubsection{Op\'erateurs de d\'ecalage et de d\'erivation}

On utilisera les op\'erateurs de d\'ecalage 
$\Delta,\alpha,\beta,\gamma$
sur les fonctions $\Nb\rightarrow \Cb$
d\'efinis dans la sous-section~\ref{subsec_prop_fcnlag_0}.
Pour une fonction donn\'ee sur $\Nb^{v'}$, 
on d\'efinit les op\'erateurs 
$\Delta_i$, $\alpha_i$, $\beta_i$, et $\gamma_i$ 
\index{Notation!Op\'erateur!$\alpha_i,\beta_i,\gamma_i$}
\index{Notation!Op\'erateur!$\Delta_i$}
pour $i=1,\ldots, v'$
comme les op\'erateurs 
$\tau^\pm,\alpha,\beta,\gamma$ 
agissant respectivement sur la $i$\`eme variable enti\`ere de $l$.
Les propri\'et\'es pour ces op\'erateurs 
donn\'ees dans la sous-section~\ref{subsec_prop_fcnlag_0}
conduisent aux propri\'et\'es
suivantes pour les $\alpha_i,\beta_i,\gamma_i$:
\begin{eqnarray}
  \alpha_i. 
  \overset{v'}{\underset{j=1}{\Pi}}
  \flz {l_j} {y_j}
  &=& 
  y_i \flsz {l_i} ' \underset{j\not=i} \Pi\flz {l_j} {y_j}
  \quad ,  \label{propalpha} \\
  \beta_i.\overset{v'}{\underset{j=1}{\Pi}}
  \flz {l_j} {y_j}
  &=&
  y_i \flsz {l_i} \underset{j\not=i}\Pi \flz {l_j} {y_j}
  \quad ,  \label{propbeta}\\
  \gamma_i.\overset{v'}{\underset{j=1}{\Pi}}
  \flz {l_j} {y_j}
  &=& 
  \alpha_i \flsz {l_i} ' \underset{j\not=i}\Pi \flz {l_j} {y_j}
  \quad . \nonumber
\end{eqnarray}
On remarque que l'on peut d\'eduire 
en d\'erivant encore l'\'egalit\'e (\ref{propalpha}) 
suivant $y_i$ une autre \'egalit\'e:
\begin{equation}\label{propalphagamma}
  \gamma_i.\overset{v'}{\underset{j=1}{\Pi}}
  \flz {l_j} {y_j}
  \,=\,
  \alpha_i.\flsz {l_i}' \underset{j\not=i}\Pi \flz {l_j}  {y_j}
  \,=\,
  (\flsz {l_i}'+ y_i \flsz {l_i} '') 
  \underset{j\not=i}\Pi \flz {l_j} {y_j  }
  \quad .
\end{equation}
Gr\^ace \`a ces \'egalit\'es, 
on a facilement quelques propri\'et\'es 
de d\'erivation et de d\'ecalage pour la fonction $f_\phi$ :
\begin{eqnarray}
  \frac{\nn{\pr i X}^2}2 f_\phi(X)
  &=&
  \frac{\beta_i}{\lambda_i}.  f_\phi(X)
  \quad , \label{fprX}\\
  \partial_{\lambda_i}.  f_\phi(X)
  &=&
  \frac{\alpha_i}{\lambda_i}.  f_\phi(X)
  \quad , \label{fder1}\\
  \partial_{\lambda_i}^2.  f_\phi(X)
  &=&
  \left(-\frac{\alpha_i}{\lambda_i^2}+{(\frac{\alpha_i}{\lambda_i})}^2\right).f_\phi(X)
  \quad . \label{fder2}
\end{eqnarray}

On d\'efinit l'op\'erateur $\Xi$ sur les fonctions sur $\Pc$ :
$$
\Xi
\,:=\,
2 \sum_i \frac{\beta_i}{\lambda_i}    \quad 
\left(-\frac12\partial_r^2\quad
  \mbox{si}\;v=2v'+1\right)\quad.
$$

De l'\'egalit\'e (\ref{fprX}), on obtient facilement pour $n=\exp(X+A)$:
$$
\nn{X}^2  \phi (n)
\,=\,
\Xi.  \phi (n)
\quad.
$$

On d\'efinit aussi l'op\'erateur sur les fonctions de $\Pc$ :
\begin{eqnarray*}
  \aleph
  \,:=\,
\sum_m \partial_{\lambda_m}^2
    -2\frac{\alpha_m}{\lambda_m}.\partial_{\lambda_m}
    +\frac{\alpha_m^2+\alpha_m}{\lambda_m^2}
  +\sum_{i<j}
  \frac{-4}
  {\lambda_j^2-\lambda_i^2}(\lambda_i\partial_{\lambda_i}-\alpha_i -\lambda_j\partial_{\lambda_j}+\alpha_j)\\
  +4\sum_{i<j}\frac{\lambda_j^2+\lambda_i^2}{{(\lambda_j^2-\lambda_i^2)}^2}
  (-\alpha_j +\lambda_j \gamma_j\frac{\beta_i}{\lambda_i}
  -\alpha_i+\lambda_i\gamma_i \frac{\beta_j}{\lambda_j}
  -2\alpha_i\alpha_j)\\
  +\left(
    \sum_i 
    \frac2{\lambda_i^2}(
    -\lambda_i\gamma_i \partial_{r}^2
    -2ri\partial_{r}\alpha_i   
    +{r}^2\frac{\beta_i}{\lambda_i}+ir\partial_{r})
-\frac2{\lambda_i}\partial_{\lambda_i}
    \quad \mbox{si}\; v=2v'+1.\right)
\end{eqnarray*}

\begin{lem}[Expression de \mathversion{bold}{$\nn{X}^2\phi$} et \mathversion{bold}{$\nn{A}^2\phi$}]
  \label{lem_expr_TX_TA}
  On a :
  \begin{eqnarray*}
    \Xi.\phi(\exp(X+A))
    &=&
    \nn{X}^2  \phi (\exp(X+A))\\
    \aleph.\phi (\exp(X+A))
    &=&
    \nn{A}^2 \phi (\exp(X+A))
  \end{eqnarray*}
\end{lem}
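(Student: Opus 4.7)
Le plan est d'\'etablir les deux identit\'es s\'epar\'ement. Pour $\Xi.\phi = \nn{X}^2\phi$, l'$O(v)$-invariance de $\nn{X}^2$ permet de faire entrer ce facteur sous l'int\'egrale d\'efinissant $\phi$ :
\[
\nn{X}^2 \phi(\exp(X+A))
\,=\,
\int_{O(v)} \nn{k^{-1}.X}^2\,e^{i<k.D_2(\Lambda),A>}\,f_\phi(k^{-1}.X)\,dk
\quad.
\]
En d\'ecomposant $\nn{k^{-1}.X}^2 = \sum_{j=1}^{v'}\nn{\pr j{k^{-1}.X}}^2$ (avec un terme r\'esiduel $(k^{-1}.X)_v^2$ si $v=2v'+1$) et en appliquant coordonn\'ee par coordonn\'ee l'\'egalit\'e (\ref{fprX}), chaque $\nn{\pr j Y}^2 f_\phi(Y)/2$ devient $(\beta_j/\lambda_j)f_\phi(Y)$, ce qui subsiste apr\`es l'int\'egration sur $O(v)$ puisque les op\'erateurs $\beta_j/\lambda_j$ n'affectent pas la variable $k$. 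Pour le cas impair, on remarque que $-\partial_r^2 e^{iry_v} = y_v^2 e^{iry_v}$, ce qui explique la contribution en $\partial_r^2$ dans la d\'efinition de $\Xi$.

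Pour l'identit\'e $\aleph.\phi = \nn{A}^2\phi$, le point de d\'epart est la dualit\'e de Fourier euclidienne sur le centre : $\nn{A}^2 e^{i<B,A>} = -\Delta_B e^{i<B,A>}$, o\`u $\Delta_B$ d\'esigne le Laplacien euclidien en la variable $B\in\Zc^*\sim\Ac_v$. Sous l'int\'egrale, cela remplace la multiplication par $\nn{A}^2$ par l'action de $-\Delta_B$ \'evalu\'ee en $B = k.D_2(\Lambda)$. J'exprimerais alors $-\Delta_B$ dans les ``coordonn\'ees sph\'eriques'' $B = k.D_2(\Lambda).k^{-1}$ sur $\Ac_v$ (cf.\ section~\ref{sec_app_matrice_antisym}). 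La partie radiale fournit les termes $\partial_{\lambda_m}^2$ de $\aleph$, tandis que le jacobien de ce changement de variables (avec son facteur de Weyl $\prod_{i<j}(\lambda_j^2-\lambda_i^2)^2$) engendre les termes crois\'es en $1/(\lambda_j^2-\lambda_i^2)$ et $(\lambda_j^2+\lambda_i^2)/(\lambda_j^2-\lambda_i^2)^2$ qui apparaissent dans $\aleph$.

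La partie ``angulaire'' de $-\Delta_B$ est constitu\'ee de champs de vecteurs invariants sur $O(v)$. Int\'egr\'ee par parties contre la mesure de Haar, elle se transporte de l'exponentielle vers $f_\phi(k^{-1}.X)$ en faisant appara\^\i tre des facteurs polynomiaux en $Y=k^{-1}.X$ (typiquement en $y_i^2$ et $y_iy_j$). Ces facteurs se convertissent ensuite en combinaisons des op\'erateurs $\alpha_i, \beta_i, \gamma_i, \partial_{\lambda_i}$ gr\^ace aux \'egalit\'es (\ref{fprX}), (\ref{fder1}), (\ref{fder2}) et (\ref{propalphagamma}). Dans le cas $v=2v'+1$, la direction suppl\'ementaire $X_v$ et la variable $r$ fournissent, par le m\^eme m\'ecanisme que pour $\Xi$, la derni\`ere s\'erie de termes de $\aleph$ m\'elangeant $r,\partial_r$ avec les param\`etres $\lambda_i$ et les op\'erateurs de d\'ecalage.

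L'obstacle principal sera la v\'erification pr\'ecise des coefficients num\'eriques dans les termes crois\'es de $\aleph$ : s'assurer que les contributions issues du Laplacien radial, du jacobien de Weyl et des op\'erateurs angulaires se combinent exactement en $\frac{-4}{\lambda_j^2-\lambda_i^2}(\lambda_i\partial_{\lambda_i}-\alpha_i-\lambda_j\partial_{\lambda_j}+\alpha_j)$ et $4\frac{\lambda_j^2+\lambda_i^2}{(\lambda_j^2-\lambda_i^2)^2}(-\alpha_j+\lambda_j\gamma_j\beta_i/\lambda_i-\alpha_i+\lambda_i\gamma_i\beta_j/\lambda_j-2\alpha_i\alpha_j)$ avec les signes et facteurs annonc\'es. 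Un calcul direct pour $v'=2$ pourrait servir de garde-fou avant d'aborder le cas g\'en\'eral.
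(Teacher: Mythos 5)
Votre d\'emarche co\"\i ncide pour l'essentiel avec celle du texte : la premi\`ere identit\'e d\'ecoule directement de~(\ref{fprX}), et la seconde s'obtient en rempla\c cant la multiplication par $\nn{A}^2$ par le laplacien en la variable duale sur $\Ac_v$, exprim\'e en coordonn\'ees polaires (lemme~\ref{lem_laplacien_coord_pol}) --- la partie radiale donnant les $\partial_{\lambda_m}^2$ avec leurs corrections en $\alpha_m$, le facteur de Weyl les termes du premier ordre en $1/(\lambda_j^2-\lambda_i^2)$, et la partie angulaire, report\'ee sur $f_\phi(k^{-1}.X)$ par invariance de la mesure de Haar puis convertie via (\ref{propalpha})--(\ref{propalphagamma}). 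La v\'erification des coefficients num\'eriques, que vous identifiez comme le point d\'elicat, constitue pr\'ecis\'ement le contenu des calculs d\'etaill\'es de la d\'emonstration du texte.
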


Nous venons de montrer la premi\`ere \'egalit\'e.
La sous-section qui suit sera consacr\'ee 
\`a montrer la seconde.

\begin{rem}
\label{rem_derivation_TAX}
Nous remarquons 
gr\^ace \`a~(\ref{egalite_alpha_2+1}),
(\ref{egalite_alpha_1-2}),
(\ref{egalite_alpha12_casp=1}),
(\ref{egalite_betagamma}),
au lemme~\ref{lem_puissance_beta},
que chaque terme  de $\aleph$, et de $\Xi$, puis $\Xi^2$ 
se d\'eveloppe en des termes comportant au moins 
``une d\'erivation discr\`ete ou non''.
\end{rem}

\subsection{\'Etude de $\nn{A}^2\phi$}

Dans cette sous-section, 
pour all\'eger les notations, 
on sous-entendra les arguments et les param\`etres 
du terme $\phi=\phi^{r,\Lambda,l}(\exp(X+A))$.

On remarque :  
$$
\nn{A}^2   \phi
\,=\,
\int_{O(v)}
\Delta_{A^*=k.D_2(\Lambda)}\left\{ A^*\rightarrow e^{i<A^*,A>}\right\}\;
f_\phi(k^{-1}.X)\;
dk
\quad ,
$$
o\`u $\Delta$ est le laplacien sur l'ensemble $\Ac_v$ des matrices
antisym\'etriques.
D'apr\`es son expression ``en coordonn\'ees polaires''
(lemme~\ref{lem_laplacien_coord_pol}),
on a:
$$
\nn{A}^2   \phi
\,=\,
F_1+F_2+F_3\quad\left(+F_4+F_5\quad\mbox{si}\; v=2v'+1\right)
\quad ,
$$
o\`u les $F_i$ sont donn\'ees par :
\begin{eqnarray*}
  F_1
  &=&
  \int_{O(v)} \sum_m
  \partial_{\lambda_m}^2\left\{ \Lambda \rightarrow  e^{i<k.D_2(\Lambda),A>}\right\} \;
  f_\phi(k^{-1}.X)
  \;dk \quad ,\\
  F_2
  &=&
  \int_{O(v)}
  \sum_{i<j}\sum_{(m,n)\in I_{i,j}}D^2
  [k\rightarrow  e^{i<k.D_2(\Lambda),A>}](\psi^{-1}(k.E_{m,n}), \psi^{-1}(k.E_{m,n}))\\
  &&\qquad\qquad\qquad\qquad\qquad
  \qquad\qquad\qquad\qquad\qquad
  f_\phi(k^{-1}.X)
  \;dk \quad ,\\
  F_3
  &=&
  \int_{O(v)}
  \sum_{i<j}
  \frac{-4}{\lambda_j^2-\lambda_i^2}(\lambda_i\partial_{\lambda_i}-\lambda_j\partial_{\lambda_j}).
  \left\{\Lambda \rightarrow  e^{i<k.D_2(\Lambda),A>}\right\}\;
  f_\phi(k^{-1}.X)
  \;dk \quad, 
\end{eqnarray*}
et dans le cas $v=2v'+1$:
\begin{eqnarray*}
  F_4
  &=&
  \int_{O(v)}
  \sum_{i, \tilde{i}=2i,2i-1} \frac1{\lambda_i}
  D^2
  [k\rightarrow  e^{i<k.D_2(\Lambda),A>}](\vec{E}_{\tilde{i},v},\vec{E}_{\tilde{i},v})\;
  f_\phi(k^{-1}.X)
  \;dk \quad ,\\
  F_5
  &=&
  \int_{O(v)}
  \sum_i \frac2{\lambda_i}
  -\partial_{\lambda_i}\left\{\Lambda \rightarrow  e^{i<k.D_2(\Lambda),A>}\right\}\;
  f_\phi(k^{-1}.X)
  \;dk \quad.
\end{eqnarray*}

\subsubsection{Exprimons diff\'eremment \mathversion{bold}{$F_1$}}

On a :
$$
F_1
\, =\,
\sum_m
\partial_{\lambda_m}^2.\phi
-E_1-E_2
$$
o\`u les termes ``correctifs'' $E_1$ et $E_2$ sont donn\'es par :
\begin{eqnarray*}
  E_1
  &=&
  2\sum_m
  \int_{O(v)}
  \partial_{\lambda_m}.e^{i<k.D_2(\Lambda),A>}
  \partial_{\lambda_m}.f_\phi(k^{-1}.X)
  \;dk  \quad,\\
  E_2
  &=&
  \sum_m
  \int_{O(v)}
  e^{i<k.D_2(\Lambda),A>}
  \partial_{\lambda_m}^2 
  f_\phi(k^{-1}.X)
  \;dk  \quad.
\end{eqnarray*}
On peut exprimer \`a l'aide du terme $\phi$ d\'ecal\'e ou d\'eriv\'e ces termes $E_1,E_2$. D'abord pour le terme $E_1$, on voit que d'apr\`es les \'egalit\'es (\ref{fder1}) et la lin\'earit\'e des op\'erateurs $\alpha_m$, on a:
\begin{eqnarray*}
  E_1
  &=&
  2\sum_m 
  \int_{O(v)}
  \partial_{\lambda_m} e^{i<k.D_2(\Lambda),A>}
  \quad 
  \frac{\alpha_m}{\lambda_m}. f_\phi(k^{-1}.X) \
  ;dk \\
  &=&
  2\sum_m \frac{\alpha_m}{\lambda_m}.
  \int_{O(v)}
  \partial_{\lambda_m} e^{i<k.D_2(\Lambda),A>}
  \quad  
  f_\phi(k^{-1}.X) 
  \;dk \quad .
\end{eqnarray*}
Or on voit :
$$
\begin{array}{l}
  \displaystyle{
    \int_{O(v)}
    \partial_{\lambda_m} e^{i<k.D_2(\Lambda),A>}
    \;  f_\phi(k^{-1}.X) \;dk}\\
  \displaystyle{
    \quad=\,
    \partial_{\lambda_m}. 
    \int_{O(v)}
    e^{i<k.D_2(\Lambda),A>}
    f_\phi(k^{-1}.X)dk
    -\int_{O(v)}
    e^{i<k.D_2(\Lambda),A>}
    \partial_{\lambda_m}f_\phi(k^{-1}.X)dk
    \quad .}
\end{array}
$$
D'o\`u on d\'eduit
d'apr\`es les \'egalit\'es (\ref{fder1}) et la lin\'earit\'e des op\'erateurs $\alpha_m$:
\begin{eqnarray*}
  E_1
  &=&
  2\sum_m \frac{\alpha_m}{\lambda_m}.\partial_{\lambda_m}. \phi
  -\frac{\alpha_m}{\lambda_m}.\int_{O(v)}
  e^{i<k.D_2(\Lambda),A>}\partial_{\lambda_m}f_\phi(k^{-1}.X)dk\\
  &=&
  2\sum_m \frac{\alpha_m}{\lambda_m}.\partial_{\lambda_m}. \phi
  -\frac{\alpha_m}{\lambda_m}.\int_{O(v)}
  e^{i<k.D_2(\Lambda),A>}\frac{\alpha_m}{\lambda_m}f_\phi(k^{-1}.X)dk\\
  &=&
  2\sum_m \frac{\alpha_m}{\lambda_m}.\partial_{\lambda_m}
  -{(\frac{\alpha_m}{\lambda_m})}^2 \quad .\phi
  \quad .
\end{eqnarray*}
Ensuite, pour le terme $E_2$, d'apr\`es les \'egalit\'es (\ref{fder2}), on a:
$$
E_2
\,=\,
\sum_m 
-\frac{\alpha_m}{\lambda_m^2}
+{(\frac{\alpha_m}{\lambda_m})}^2 
\quad .\phi \quad .
$$
On voit donc que l'on peut exprimer le terme $F_1$ comme:
\begin{eqnarray*}
  F_1
  &=&
  \left[ 
    \sum_m \partial_{\lambda_m}^2
    -
    \left(2\sum_m \frac{\alpha_m}{\lambda_m}.
      \partial_{\lambda_m}
      -{(\frac{\alpha_m}{\lambda_m})}^2\right)
    -
    \left(\sum_m 
      -\frac{\alpha_m}{\lambda_m^2}
      +{(\frac{\alpha_m}{\lambda_m})}^2 
    \right) \right]. \phi\\
  &=&
  \left[ \sum_m \partial_{\lambda_m}^2
    -2\frac{\alpha_m}{\lambda_m}.\partial_{\lambda_m}
    +\frac{\alpha_m^2+\alpha_m}{\lambda_m^2} \right]. \phi 
  \quad .
\end{eqnarray*}

\subsubsection{Exprimons diff\'eremment le terme \mathversion{bold}{$F_3$}}

On a :
$$
F_3
\,=\,
\sum_{i<j}
\frac{-4}{\lambda_j^2-\lambda_i^2}(\lambda_i\partial_{\lambda_i}-\lambda_j\partial_{\lambda_j}).\phi-E_3
\quad .
$$
o\`u le terme ``correctif'' $E_3$ est donn\'e par:
\begin{eqnarray*}
  E_3
  &=&
  \int_{O(v)}
  e^{i<k.D_2(\Lambda),A>}
  \sum_{i<j}
  \frac{-4}{\lambda_j^2-\lambda_i^2}
  (\lambda_i\partial_{\lambda_i}-\lambda_j\partial_{\lambda_j}).
  f_\phi(k^{-1}.X)
  \;dk \\
  &=&
  \sum_{i<j}
  \frac{-4}{\lambda_j^2-\lambda_i^2}(\alpha_i-\alpha_j).\phi
  \quad ,
\end{eqnarray*}
d'apr\`es (\ref{fder1}).
On voit donc que l'on peut exprimer le terme $F_3$ comme:
\begin{eqnarray*}
  F_3
  &=&
  \left[ \sum_{i<j}
    \frac{-4}{\lambda_j^2-\lambda_i^2}(\lambda_i\partial_{\lambda_i}-\lambda_j\partial_{\lambda_j})
    -
    \sum_{i<j}
    \frac{-4}{\lambda_j^2-\lambda_i^2}(\alpha_i-\alpha_j)
  \right].\phi\\
  &=&
  \left[ \sum_{i<j}
    \frac{-4}{\lambda_j^2-\lambda_i^2}(\lambda_i\partial_{\lambda_i}-\alpha_i -\lambda_j\partial_{\lambda_j}+\alpha_j)
  \right].\phi  
  \quad .
\end{eqnarray*}

\subsubsection{Exprimons diff\'eremment \mathversion{bold}{$F_2$ }}

La somme est sur 
$(m,n)\in I_{i,j}$ et $i<j$ :
\begin{eqnarray*}
  F_2
  &=&
  \sum
  2{\frac{d}{dt}}_{t=0}^2
  \int_{O(v)}
  e^{i<ke^{tD_{k,\Lambda}\psi^{-1}(k.B_{m,n}^{i,j})}.D_2(\Lambda),A>}
  \,
  f_\phi(k^{-1}.X)
  \;dk\\
  &=&
  \sum
  2
  \int_{O(v)}
  e^{i<k.D_2(\Lambda),A>}
  \,
  {\frac{d}{dt}}_{t=0}^2  f_\phi
  ({(ke^{-t.D_{k,\Lambda}\psi^{-1}(k.B_{m,n}^{i,j})}})^{-1}.X)
  \;dk \quad ,
\end{eqnarray*}
o\`u on  a not\'e : (les $\{ E_{m,n} \}$ designent la base canonique des matrices antisym\'etriques)
$$
B_{m,n}^{i,j}
\,=\,
D_{k,\Lambda}\psi^{-1}(k.E_{m,n})
\,=\,
\frac1{\lambda_j^2-\lambda_i^2}
(\epsilon_j \lambda_j E_{m_j,n_j}
+ \epsilon_i \lambda_i E_{m_i,n_i})
\quad,
$$
et o\`u les indices et les signes sont donn\'es par le tableau :
\begin{center}
  \begin{tabular}{l | l | l | l | l}
    $(m,n)$ & $\epsilon_j$ & $(m_j,n_j)$ & 
    $\epsilon_i$ & $(m_i,n_i)$\\
    \hline
    (2i-1,2j)&-& (2i-1,2j)&+&(2i,2j-1)\\
    (2i,2j)&+&(2i,2j-1)&-&(2i-1,2j)\\
    (2i-1,2j)&+&(2i-1,2j-1)&+&(2i,2j)\\
    (2i,2j-1)&-&(2i,2j)&-&(2i-1,2j-1)
  \end{tabular}  .
\end{center}
Fixons $i<j$, et $(m,n)\in I_{i,j}$ et calculons  
$$
{\frac{d}{dt}}_{t=0}^2  f_\phi
({(ke^{-tB})}^{-1}.X) \; ,
\quad B=B_{m,n}^{i,j}\quad.
$$
Pour cela, nous allons effectuer un d\'eveloppement limit\'e \`a l'ordre 2. Commen\c cons par:
\begin{eqnarray*}
  \nn{ \pr p {{(ke^{-tB})}^{-1}.X}}^2
  &=&
  \nn{ \pr p {e^{tB}k^{-1}.X}}^2
  \,=\,
  \nn{ \pr p {(I+tB+
      \frac{t^2}2 B^2)k^{-1}.X}}^2 +o(t^2)\\
  &=&
  \nn{ \pr p {k^{-1}.X}}^2
  +
  2t<\pr p {Bk^{-1}.X}, \pr p {k^{-1}.X}>\\
  &&\;
  +
  t^2(<\pr p {B^2k^{-1}.X}, \pr p {k^{-1}.X}>
  +\nn{ \pr p {Bk^{-1}.X}}^2 )
  +o(t^2) \quad.
\end{eqnarray*}
On obtient donc le d\'eveloppement limit\'e 
(en convenant que lorsque leurs arguments ne sont pas explicit\'es, 
les fonctions $\flsz {l_p} $ 
et leurs d\'eriv\'ees sont prises en l'argument 
$(\lambda_p\frac{\nn{\pr p X }^2}2) $) : 
$$
\flz {l_p}  {\lambda_p\frac{\nn{\pr p {{(ke^{-tB})}^{-1}X} }^2}2}
\,=\,
\flsz {l_p} +ta_p\flsz {l_p}  '+\frac{t^2}2(b_p\flsz {l_p}  '+a_p^2 \flsz {l_p}  '')
+o(t^2)\quad,
$$ 
o\`u on a not\'e :
\begin{eqnarray*}
  a_p
  &=&
  a_p(B)
  \,=\,
  \lambda_p <\pr p {Bk^{-1}.X}, \pr p {k^{-1}.X}>
  \quad ,\\
  b_p
  &=&
  b_p(B)
  \,=\,
  \lambda_p(<\pr p {B^2k^{-1}.X}, \pr p {k^{-1}.X}>
  +\nn{ \pr p {Bk^{-1}.X}}^2 )
  \quad .
\end{eqnarray*}
On voit : 
\begin{itemize}
\item $a_p(B)=b_p(B)=0$ lorsque $p\not =i,j$,
\item dans le cas $v=2v'+1$,      
  $<X_v^*,{(ke^{-tB})}^{-1}.X>=<X_v^*,k^{-1}.X>$.
\end{itemize}
On en d\'eduit :
$$
\begin{array}{l}
  {\frac{d}{dt}}_{t=0}^2
  f_\phi({(ke^{-t.B_{m,n}^{i,j}})}^{-1}.X)\\
  \quad=\,
  \left( (b_j \flsz {l_j}'+a^2_j\flsz {l_j} '')\flsz {l_i} + 
    (b_i \flsz {l_i} '+a^2_i\flsz {l_i} '')\flsz {l_j} + 
    2 a_ia_j\flsz {l_i}  '\flsz {l_j}  '
  \right)\underset{p\not=i,j}{ \Pi} \flsz {l_p} \;
  \left( e^{i <rX_v^*,k^{-1}X>} \right)
  \; .
\end{array}
$$
Pour calculer
\begin{equation}
  \label{formule_sum_mn_der2}
  \sum_{(m,n)\in I_{i,j}}{\frac{d}{dt}}_{t=0}^2
  f_\phi({(ke^{-t.B_{m,n}^{i,j}})}^{-1}.X)
  \quad,  
\end{equation}
il suffit donc  
\begin{enumerate}
\item de calculer les expressions $a_i^2, a_j^2, a_ia_j$,
\item de calculer les expressions $b_i,b_j$,
\item puis de les sommer sur $(m,n)\in I_{i,j}$,
  ces expressions \'etant d\'ependante 
  de la matrice antisym\'etrique
  $B=B_{m,n}^{i,j}$. 
\end{enumerate}

\paragraph{Calculons les expressions \mathversion{bold}{$a_i$} et \mathversion{bold}{$a_j$}.} 
Comme on a:
$$
<\pr p {\vec{E}_{m,n}k^{-1}X},\pr p {k^{-1}X}>
\,=\,
\left\{
  \begin{array}{ll}
    {[k^{-1}X]}_n{[k^{-1}X]}_m
    &\mbox{si}\; p=i \quad ,\\
    -{[k^{-1}X]}_m{[k^{-1}X]}_n
    &\mbox{si}\; p=j \quad ,
  \end{array}\right.\\
$$
on en d\'eduit:
\begin{eqnarray*}
  a_i
  &=&
  \frac{\lambda_i}{\lambda_j^2-\lambda_i^2}
  (\epsilon_j \lambda_j {[k^{-1}.X]}_{n_j}{[k^{-1}.X]}_{m_j}
  +
  \epsilon_i \lambda_i {[k^{-1}.X]}_{n_i}{[k^{-1}.X]}_{m_i}
  ) \quad ,\\
  a_j
  &=&
  -\frac{\lambda_j}{\lambda_j^2-\lambda_i^2}
  (\epsilon_j \lambda_j {[k^{-1}.X]}_{n_j}{[k^{-1}.X]}_{m_j}
  +
  \epsilon_i \lambda_i {[k^{-1}.X]}_{n_i}{[k^{-1}.X]}_{m_i}
  )\quad .
\end{eqnarray*}
On en d\'eduit les expressions $a_i^2, a_j^2, a_ia_j$ : par exemple, 
\begin{eqnarray*}
  a_i^2
  & =&
  \frac{\lambda_i^2}{{(\lambda_j^2-\lambda_i^2)}^2}
  (\lambda_j^2 {[k^{-1}.X]}_{n_j}^2{[k^{-1}.X]}_{m_j}^2
  +\lambda_i^2 {[k^{-1}.X]}_{n_i}^2{[k^{-1}.X]}_{m_i}^2\\
  &&\qquad
  +2\epsilon_i\epsilon_j \lambda_i\lambda_j {[k^{-1}.X]}_{n_j}{[k^{-1}.X]}_{m_j}{[k^{-1}.X]}_{n_i}{[k^{-1}.X]}_{m_i}
  ) \quad .
\end{eqnarray*}
\paragraph{Calculons les expressions \mathversion{bold}{$b_i$} et \mathversion{bold}{$b_j$}.} 
Commen\c cons par le premier terme de $b_p, p=i,j$.
Comme on a d'une part :
$$
B^2
\,=\,
\frac1{{(\lambda_j^2-\lambda_i^2)}^2}
(\lambda_j^2 E_{m_j,n_j}^2+ \lambda_i^2 E_{m_i,n_i}^2)
\quad ,
$$
et d'autre part :
$$
<\pr p {\vec{E}^2_{m,n}k^{-1}X} , \pr p {k^{-1}X}>
\,=\,
\left\{\begin{array}{ll}
    -{[k^{-1}X]}_m^2
    &\mbox{si}\; p=i \quad ,\\
    -{[k^{-1}X]}_n^2
    &\mbox{si}\; p=j \quad ,\\
  \end{array}\right.
$$
on en d\'eduit que 
$  <\pr p {B^2k^{-1}.X}, \pr p {k^{-1}.X}>$
vaut :
\begin{eqnarray*}
  \frac{-1}{{(\lambda_j^2-\lambda_i^2)}^2}
  (\lambda_j^2  {[k^{-1}X]}_{m_j}^2 
  + \lambda_i^2 {[k^{-1}X]}_{m_i}^2)
  &\mbox{si}\; p=i \quad ,\\
  \frac{-1}{{(\lambda_j^2-\lambda_i^2)}^2}
  (\lambda_j^2  {[k^{-1}X]}_{n_j}^2 
  + \lambda_i^2 {[k^{-1}X]}_{n_i}^2)
  &\mbox{si}\; p=j \quad .  
\end{eqnarray*}
Maintenant, calculons le deuxi\`eme terme de $b_p, p=i,j$.
Comme on a d'une part :
$$
\nn{ \pr p {Bk^{-1}.X}}^2 
\,=\,
\frac1{{(\lambda_j^2-\lambda_i^2)}^2}
(\lambda_j^2
\nn{ \pr p {E_{m_j,n_j}k^{-1}.X}}^2 
+
\lambda_i^2
\nn{ \pr p {E_{m_i,n_i}k^{-1}.X}}^2 )
\quad,
$$
et d'autre part :
$$
\nn{\pr p {\vec{E}_{m,n} k^{-1}X} }^2
\,=\,
\left\{\begin{array}{ll}
    {[k^{-1}X]}_n^2
    &\mbox{si}\; p=i \quad ,\\
    {[k^{-1}X]}_m^2
    &\mbox{si}\; p=j \quad , \\
  \end{array}\right.  
$$
on en d\'eduit :
$$
\nn{\pr p {Bk^{-1}.X}}^2 
\,=\,
\frac1{{(\lambda_j^2-\lambda_i^2)}^2}
\left\{\begin{array}{ll}
    (\lambda_j^2 {[k^{-1}X]}_{n_j}^2
    +
    \lambda_i^2 {[k^{-1}X]}_{n_i}^2)
    &\mbox{si}\; p=i \quad ,\\
    (\lambda_j^2 {[k^{-1}X]}_{m_j}^2
    +
    \lambda_i^2 {[k^{-1}X]}_{m_i}^2)
    &\mbox{si}\; p=j \quad .\\
  \end{array}\right.  
$$
Rassemblons les deux termes de $b_p,p=i,j$. 
On obtient : 
\begin{eqnarray*}
  b_i
  &=&
  \frac{\lambda_i}{{(\lambda_j^2-\lambda_i^2)}^2}
  \left(\lambda_j^2 ({[k^{-1}X]}_{n_j}^2-{[k^{-1}X]}_{m_j}^2)
    + \lambda_i^2 ({[k^{-1}X]}_{n_i}^2-{[k^{-1}X]}_{m_i}^2)\right) \quad ,\\
  b_j
  &=&
  -\frac{\lambda_j}{{(\lambda_j^2-\lambda_i^2)}^2}
  \left(\lambda_j^2 ({[k^{-1}X]}_{n_j}^2-{[k^{-1}X]}_{m_j}^2) + \lambda_i^2 ({[k^{-1}X]}_{n_i}^2-{[k^{-1}X]}_{m_i}^2)\right)
  \quad .
\end{eqnarray*}

Nous allons maintenant sommer  sur $(m,n)\in I_{i,j}$ chacune des nouvelles expressions des
$b_i,b_j, a_i^2, a_j^2, a_ia_j$.

\paragraph{Sommons \mathversion{bold}{$a_i^2, a_j^2,a_ia_j$}.} 
On remarque que 
lorsque l'on va sommer sur $(m,n)\in I_{i,j}$,
les doubles produits 
$$
2\epsilon_i\epsilon_j \lambda_i\lambda_j 
{[k^{-1}.X]}_{n_j}{[k^{-1}.X]}_{m_j}
{[k^{-1}.X]}_{n_i}{[k^{-1}.X]}_{m_i}
\quad,
$$ 
vont dispara\^\i tre et l'on va obtenir :
\begin{eqnarray*}
  \sum_{(m,n)} a_i^2(B_{m,n})
  &=&
  \frac{\lambda_i^2}{{(\lambda_j^2-\lambda_i^2)}^2}
  (\lambda_j^2+\lambda_i^2)
  \nn{ \pr j {k^{-1}.X}}^2 \nn{\pr i {k^{-1}.X}}^2
  \quad,\\
  \sum_{(m,n)} a_j^2(B_{m,n})
  &=&
  \frac{\lambda_j^2}{{(\lambda_j^2-\lambda_i^2)}^2}
  (\lambda_j^2+\lambda_i^2)
  \nn{ \pr j {k^{-1}.X}}^2 \nn{\pr i {k^{-1}.X}}^2
  \quad ,\\
  \sum_{(m,n)} a_ia_j(B_{m,n})
  & =&
  -\frac{\lambda_i\lambda_j}{{(\lambda_j^2-\lambda_i^2)}^2}
  (\lambda_j^2+\lambda_i^2)
  \nn{ \pr j {k^{-1}.X}}^2 \nn{\pr i {k^{-1}.X}}^2
  \quad .
\end{eqnarray*}
\paragraph{Sommons \mathversion{bold}{$b_i,b_j$}.} 
On obtient :
\begin{eqnarray*}
  \sum_{(m,n)} b_i
  &=&
  2\lambda_i
  \frac{\lambda_j^2+\lambda_i^2}
  {{(\lambda_j^2-\lambda_i^2)}^2}
  \left(\nn{ \pr j {k^{-1}X}}^2
    -\nn{\pr i {k^{-1}X}}^2\right)
  \quad ,\\
  \sum_{(m,n)} b_j
  &=&
  -2\lambda_j\frac{\lambda_j^2+\lambda_i^2}{{(\lambda_j^2-\lambda_i^2)}^2}
  \left(\nn{ \pr j {k^{-1}X}}^2
    -\nn{\pr i {k^{-1}X}}^2\right)
  \quad .
\end{eqnarray*}

\paragraph{Autre expression de \mathversion{bold}{$F_2$}.}
On en d\'eduit donc que 
la somme~(\ref{formule_sum_mn_der2}) est le produit de 
$$
\frac{\lambda_j^2+\lambda_i^2}{{(\lambda_j^2-\lambda_i^2)}^2}
\Pi_{p\not=i,j} \flsz {l_p} 
\quad\left( e^{i <r X_v^*,k^{-1}.X>} \right)
\quad,
$$
avec:
\begin{eqnarray*}
  -2\lambda_j(\nn{ \pr j {k^{-1}X}}^2-\nn{\pr i {k^{-1}X}}^2)\flsz {l_j}  '\flsz {l_i} 
  + \lambda_j^2\nn{ \pr j {k^{-1}.X}}^2 \nn{\pr i {k^{-1}.X}}^2\flsz {l_j}  ''\flsz {l_i} \\
  +2\lambda_i(\nn{ \pr j {k^{-1}X}}^2-\nn{\pr i {k^{-1}X}}^2) \flsz {l_i}  '\flsz {l_j}
  +\lambda_i^2\nn{ \pr j {k^{-1}.X}}^2 \nn{\pr i {k^{-1}.X}}^2\flsz {l_i}  ''\flsz {l_j} \\
  -      2\lambda_i\lambda_j\nn{ \pr j {k^{-1}.X}}^2 \nn{\pr i {k^{-1}.X}}^2 \flsz {l_i} '\flsz {l_j}  '
  \quad .
\end{eqnarray*}
Gr\^ace aux \'egalit\'es (\ref{propbeta}),
(\ref{propalpha}) et (\ref{propalphagamma}),
on peut r\'e\'ecrire ce qui pr\'ec\`ede de la mani\`ere suivante :
\begin{eqnarray*}
  \frac{\lambda_j^2+\lambda_i^2}{{(\lambda_j^2-\lambda_i^2)}^2}
  (-4\alpha_j +4\lambda_j \gamma_j\frac{\beta_i}{\lambda_i}
  -4\alpha_i-4\lambda_i\gamma_i \frac{\beta_j}{\lambda_j}
  -8\alpha_i\alpha_j)f_\phi(k^{-1}.X)
  \quad .
\end{eqnarray*}
On voit donc que l'on peut exprimer le terme $F_2$ comme:
$$
F_2
\,=\,
\left[4\sum_{i<j}\frac{\lambda_j^2+\lambda_i^2}{{(\lambda_j^2-\lambda_i^2)}^2}
  (-\alpha_j +\lambda_j \gamma_j\frac{\beta_i}{\lambda_i}
  -\alpha_i+\lambda_i\gamma_i \frac{\beta_j}{\lambda_j}
  -2\alpha_i\alpha_j)\right] .\phi
\quad .
$$

Supposons maintenant $v=2v'+1$. Il reste \`a exprimer comme op\'erateur de d\'eriv\'e et de d\'ecalage sur $\phi$ les termes $F_4,F_5$.

\subsubsection{Exprimons diff\'eremment \mathversion{bold}{$F_5$}}

D'apr\`es les \'egalit\'es (\ref{fder1}), on a:
$$
F_5
\,=\,
\sum_m -\frac2{\lambda_m}\partial_{\lambda_m}.\phi
-E_5
\quad ,
$$
o\`u le terme ``correctif'' $E_5$ est donn\'e par:
$$
E_5
\,=\,
\sum_m -\frac2{\lambda_m}
\int_{O(v)}
e^{i<k.D_2(\Lambda),A>}
\quad \partial_{\lambda_m}.f_\phi(k^{-1}.X)
\;dk 
\,=\,
\sum_m -\frac2{\lambda_m}\frac{\alpha_m}{\lambda_m}\;.\phi
\quad ,
$$
gr\^ace \`a (\ref{fder1}).
On voit donc que l'on peut exprimer le terme $F_5$ comme:
$$
F_5
\,=\,
\sum_m \left[
  -\frac2{\lambda_m}(\partial_{\lambda_m}
  +\frac{\alpha_m}{\lambda_m})
\right] .\phi \quad .
$$

\subsubsection{Exprimons diff\'eremment \mathversion{bold}{$F_4$}}

Comme dans le cas du terme  $F_2$, on voit :
$$
F_4
\,=\,
\int_{O(v)}
\sum_{i, \tilde{i}=2i,2i-1}
\frac1{\lambda_i^2}
e^{i<k.D_2(\Lambda),A>}
\frac{d}{dt}^2_{t=0}
f_\phi({(ke^{-t E_{\tilde{i},v}})}^{-1}.X)
\;dk \quad ,
$$

\paragraph{D\'eveloppement limit\'e de
  \mathversion{bold}{$f_\phi({(ke^{-tE_{\tilde{i},v}})}^{-1}.X)$}.} 
On a pour $\tilde{i}=2i$ ou $2i-1$ :
\begin{eqnarray*}
  \nn{\pr p {{(ke^{-tE_{\tilde{i},v}})}^{-1}.X}}^2
  &=&
  \nn{\pr p {k^{-1}X} }^2
  +2t<\pr p {E_{\tilde{i},v}k^{-1}X}, \pr p {k^{-1}X}>\\
  &&
  +t^2(<\pr p {E^2_{\tilde{i},v}k^{-1}X},\pr p {k^{-1}X}>
  +\nn{\pr p {E_{\tilde{i},v} k^{-1}X} }^2)+\ldots
\end{eqnarray*}
Or ici, on voit :
\begin{eqnarray*}
  <\pr p {E_{\tilde{i},v}k^{-1}X},\pr p {k^{-1}X}>
  &=&
  \left\{
    \begin{array}{ll}
      {[k^{-1}X]}_v{[k^{-1}X]}_{\tilde{i}}
      &
      \mbox{si}\; p=i\quad , \\
      0
      &
      \mbox{sinon} \quad , 
    \end{array}\right.\\
  <\pr p {E^2_{\tilde{i},v}k^{-1}X},\pr p {k^{-1}X}>
  &=&
  \left\{
    \begin{array}{ll}
      -{[k^{-1}X]}_{\tilde{i}}^2
      &
      \mbox{si}\; p=i\quad , \\
      0
      &
      \mbox{sinon} \quad ,
    \end{array}\right. \\
  \nn{\pr p {E_{\tilde{i},v} k^{-1}X} }^2
  &=&
  \left\{
    \begin{array}{ll}
      {[k^{-1}X]}_v^2
      &
      \mbox{si}\; p=i\quad , \\
      0
      &
      \mbox{sinon} \quad .
    \end{array}\right. 
\end{eqnarray*}
On peut donc faire un d\'eveloppement limit\'e : 
$$
\begin{array}{l}
  \displaystyle{
    \Pi_i \flz {l_i} 
    {\lambda_i\frac{\nn{\pr p {{(ke^{-t.E_{\tilde{i},v}})}^{-1}X} }^2}2}
    \,=\,
    \left( \flsz {l_i}  
      +\frac{\lambda_i}2 2t {[k^{-1}X]}_v{[k^{-1}X]}_{\tilde{i}} \flsz {l_i} '\right.}\\
  \displaystyle{
    \quad    \left. 
      +\frac{\lambda_i}2 t^2 (-{[k^{-1}X]}_{\tilde{i}}^2+{[k^{-1}X]}_v^2) \flsz {l_i} '
      +\frac12 {(\frac{\lambda_i}2 2t {[k^{-1}X]}_v{[k^{-1}X]}_{\tilde{i}})}^2 \flsz {l_i} ''+o(t^2)\right)\Pi_{p\not =i}\flsz {l_p} \quad.}
\end{array}
$$
On peut aussi faire un d\'eveloppement limit\'e de
$e^{i <rX_v^*,{(ke^{-t.E_{\tilde{i},v}})}^{-1}X>}$. 
Commen\c cons par:
$$  
\begin{array}{l}
  <X_v^*,{(ke^{-t.E_{\tilde{i},v}})}^{-1}X>
  \,=\,
  <X_v^*,e^{t.E_{\tilde{i},v}}k^{-1}X>\\
  \quad=\,
  <X_v^*,k^{-1}X>
  +t<X_v^*,E_{\tilde{i},v}k^{-1}X>
  +\frac{t^2}2<X_v^*,E_{\tilde{i},v}^2k^{-1}X> +o(t^2)
  \quad.
\end{array}
$$
On voit :
\begin{eqnarray*}
  <X_v^*,E_{\tilde{i},v}k^{-1}X>
  &=&
  {[k^{-1}X]}_{\tilde{i}}\\
  <X_v^*,E_{\tilde{i},v}^2k^{-1}X>
  &=&
  -{[k^{-1}X]}_v \quad .
\end{eqnarray*}
On en d\'eduit le d\'eveloppement limit\'e : 
\begin{eqnarray*}
  e^{i <rX_v^*,{(ke^{-t.E_{\tilde{i},v}})}^{-1}X>}
  &=&
  e^{i <rX_v^*,k^{-1}X>}\\
  &&\;
  \left( 1
    +x(t{[k^{-1}X]}_{\tilde{i}}-\frac{t^2}2{[k^{-1}X]}_v)
    +\frac12 {(xt{[k^{-1}X]}_{\tilde{i}})}^2+o(t^2)\right)
  \quad.
\end{eqnarray*}
On peut donc obtenir le d\'eveloppement limit\'e de
$  f_\phi({(ke^{-tE_{\tilde{i},v}})}^{-1}.X)$ 
comme produit des d\'eveloppements de 
$$
\Pi_i \flz {l_i}  
{\lambda_i\frac{\nn{\pr p {{(ke^{-t.E_{\tilde{i},v}})}^{-1}X} }^2}2}
\qquad\mbox{et}\quad
e^{i <rX_v^*,{(ke^{-t.E_{\tilde{i},v}})}^{-1}X>}
\quad.
$$ 
Le terme en $\frac12 t^2$ donne une autre expression de :
$$
\begin{array}{l}
  \displaystyle{
    \frac{d}{dt}^2_{t=0} f_\phi({(ke^{-tE_{\tilde{i},v}})}^{-1}.X)}\\
  \displaystyle{\quad=\,
    \left(\lambda_i  (-{[k^{-1}X]}_{\tilde{i}}^2+{[k^{-1}X]}_v^2)\flsz {l_i} '
      +{(\lambda_i {[k^{-1}X]}_v{[k^{-1}X]}_{\tilde{i}})}^2 \flsz {l_i} ''
      +
      2r\lambda_i{[k^{-1}X]}_v{[k^{-1}X]}_{\tilde{i}}^2\flsz {l_i} '\right.}\\
  \displaystyle{\qquad    \left.
      +
      ({(r{[k^{-1}X]}_{\tilde{i}})}^2-r{[k^{-1}X]}_v)
      \flsz {l_i}  \right) 
    \Pi_{i\not =p} \flsz {l_p}  
    \; e^{i <rX_v^*,k^{-1}X>} 
    \quad.}
\end{array}
$$
\paragraph{Autre expression de \mathversion{bold}{$F_4$}.}
Il reste \`a sommer sur $\tilde{i}$ :
$$
\begin{array}{l}
  \displaystyle{
    \sum_{\tilde{i}=2i,2i-1}\frac{d}{dt}^2_{t=0}
    f_\phi({(ke^{-tE_{\tilde{i},v}})}^{-1}.X)}\\
  \displaystyle{\quad=\,
    \left(
      \lambda_i  (-\nn{\pr i {[k^{-1}X]}}^2+2{[k^{-1}X]}_v^2)\flsz {l_i} '
      +\lambda_i^2 {[k^{-1}X]}_v^2\nn{\pr i {[k^{-1}X]}}^2 \flsz {l_i}  ''\right.}\\
  \displaystyle{\qquad    \left.+ 2r\lambda_i{[k^{-1}X]}_v\nn{\pr i {[k^{-1}X]}}^2 \flsz {l_i}'
      +({r}^2\nn{\pr i {[k^{-1}X]}}^2-2r{[k^{-1}X]}_v)\flsz {l_i} 
    \right) }\\
  \displaystyle{\qquad \qquad   \Pi_{i\not =p} \flsz {l_p} \; 
    e^{i <rX_v^*,k^{-1}X>} }\\
  \displaystyle{\quad=\,
    \left(-2\alpha_i
      -2\lambda_i\gamma_i \partial_{r}^2
      -4ri\partial_{r}\alpha_i
      +({r}^22\frac{\beta_i}{\lambda_i}+2ir\partial_{r})
    \right).f_\phi(k^{-1}.X)
    \quad,}
\end{array}
$$
gr\^ace aux \'egalit\'es (\ref{propalpha}), (\ref{propbeta}) et (\ref{propalphagamma}).
Par lin\'earit\'e des op\'erateurs $\alpha_i,\beta_i,\gamma_i,\partial_{r}$, on a:
$$
F_4
\,=\,
\left[\sum_i 
  \frac2{\lambda_i^2}(
  -\alpha_i-\lambda_i\gamma_i \partial_{r}^2
  -2ri\partial_{r}\alpha_i   
  +{r}^2\frac{\beta_i}{\lambda_i}+ir\partial_{r})
\right].\;\phi
\quad.
$$

\subsubsection{R\'esumons}
On obtient donc l'expression :
\begin{eqnarray*}
  \nn{A}^2\phi
  &:=&
\sum_m \partial_{\lambda_m}^2
    -2\frac{\alpha_m}{\lambda_m}.\partial_{\lambda_m}
    +\frac{\alpha_m^2+\alpha_m}{\lambda_m^2}
  +\sum_{i<j}
  \frac{-4}
  {\lambda_j^2-\lambda_i^2}(\lambda_i\partial_{\lambda_i}-\alpha_i -\lambda_j\partial_{\lambda_j}+\alpha_j)\\
  &&\quad
  +4\sum_{i<j}\frac{\lambda_j^2+\lambda_i^2}{{(\lambda_j^2-\lambda_i^2)}^2}
  (-\alpha_j +\lambda_j \gamma_j\frac{\beta_i}{\lambda_i}
  -\alpha_i+\lambda_i\gamma_i \frac{\beta_j}{\lambda_j}
  -2\alpha_i\alpha_j)\quad,
\end{eqnarray*}
\`a laquelle il faut ajouter si $v=2v'+1$ :
\begin{eqnarray*}
    \sum_i 
    \frac2{\lambda_i^2}(
    -\alpha_i-\lambda_i\gamma_i \partial_{r}^2
    -2ri\partial_{r}\alpha_i   
    +{r}^2\frac{\beta_i}{\lambda_i}+ir\partial_{r})
    +\sum_m -\frac2{\lambda_m}(\partial_{\lambda_m}
    +\frac{\alpha_m}{\lambda_m})\\
    =
    \sum_i 
    \frac2{\lambda_i^2}(-\lambda_i\gamma_i \partial_{r}^2
    -2ri\partial_{r}\alpha_i   
    +{r}^2\frac{\beta_i}{\lambda_i}+ir\partial_{r})
    -\frac2{\lambda_i}\partial_{\lambda_i}
  \end{eqnarray*}

Ceci ach\`eve la d\'emonstration du lemme~\ref{lem_expr_TX_TA}.

\subsection{Propri\'et\'es des op\'erateurs $\Xi,\aleph$}

Nous aurons besoin de conna\^\i tre
une expression manipulable des
puissances de l'op\'erateur~$\Xi^2+\aleph$.
Nous utiliserons toujours dans ce qui suit,
la notation $\nn{a}=\sum_i a_i$
pour un $n$-upplet $a=(a_1,\ldots,a_n)\in \Zb^n$.

\begin{lem}[Expression des puissances de \mathversion{bold}{$\Xi^2+\aleph$}]
  \label{lem_expr_puiss_T}
  Pour $\epsilon \in \Nb$, 
  dans le cas $v=2v'$,
  l'op\'erateur 
  ${(\Xi^2+\aleph)}^{\epsilon} $ 
  peut se mettre sous la forme de la somme 
  des termes suivants :
  \begin{eqnarray*}
    E_P
    :=
    Q_P(\tau)
    \;
    \Pi_{i<j}
    \frac{{(\lambda_i^2+\lambda_j^2 )}^{N_{i,j}}}
    {{(\lambda_j^2-\lambda_j^2)}^{D_{i,j}}}
    \quad
    \Pi_i\;
    \lambda_i^{a_i^+-a_i^-} 
    \partial_{\lambda_i}^{b_i}
    \;
    \{ l_i^{c_i}
    \Delta_i^{d_i} \}
    \quad\left(
      r^e {(i\partial_r)}^f
    \right)\quad,
  \end{eqnarray*}
  o\`u
  $Q_P\in \Qb[X^+_1,X^-_1,\ldots,X^+_{v'},X^-_{v'}]$ 
  et  
  $\tau
  =
  (\tau_1^+,\tau_1^-
  ,\ldots,
  \tau_{v'}^+,\tau_{v'}^-)$.  
  La somme est \`a prendre 
  sur l'ensemble des param\`etres $I_\epsilon$
  qui est d\'ecrit dans ce qui suit :
  \begin{itemize}
  \item $I_\epsilon$ est l'ensemble des param\`etres 
    $P=(A,F)$
    o\`u
    $A=(a,b,c,d)$ si $v=2v'$
    et 
    $A=(a,b,c,d,e,f)$ si $v=2v'+1$,
    et $F=(N,D)$;
  \item les param\`etres 
    $a=(a^+,a^-)\in \Nb^{v'}\times\Nb^{v'}$
    et $b,c,d\in \Nb^{v'}$, 
    et dans le cas $v=2v'+1$, $e,f\in \Nb$ 
    v\'erifient :
    \begin{equation}
      \label{condition_aef_lem_exp_puiss_T}
      \begin{array}{c}
2\nn{N}+\nn{a-}+\nn{a+}\leq 4(\nn{b}+2\nn{d})-2\epsilon
\, ,\quad
e\leq f+2\nn{d}-2\epsilon
\,,\quad \nn{c}\leq 2\nn{d}
\quad,\\
        \left(\mbox{et}\quad 
          \mbox{les entiers}\quad
          e,f \; 
          \mbox{ont m\^eme parit\'e si v=2v'+1} \right)
        \quad,
      \end{array}
    \end{equation}
et :
\begin{equation}
  \label{condition_bdf_lem_exp_puiss_T}
0<\nn{b}+f + \nn{d}\leq 4\epsilon \quad.
\end{equation}
  \item les param\`etres 
    $N=(N_{i,j}),D=(D_{i,j})\in \Nb^q$
    v\'erifient :
    \begin{equation}
      \label{condition_ND_lem_exp_puiss_T}
      \nn{N}\leq \epsilon\; ,
      \qquad
      \nn{D}\leq 2\epsilon 
      \quad;  
    \end{equation}
  \item les param\`etres 
    $a,b,N,D$ et $e,f$ dans le cas $v=2v'+1$
    ``qui sont les param\`etres portant sur $\Lambda$ 
    et $r$ dans le cas $v=2v'+1$'',
    v\'erifient la relation d'homog\'en\'eit\'e :
    \begin{equation}
      \label{condition_homogeneite_lem_exp_puiss_T}    
      2\nn{N}-2\nn{D} 
      +\nn{a^+}-\nn{a^-}
      -\nn{b}
      \qquad\left(+\frac{e-f}2 \right)
      \,=\,
      -2\epsilon \quad.
    \end{equation}
  \end{itemize}
  Les polyn\^omes $Q_P,P\in I_\epsilon$ sont tels que la somme 
  $\sum_P E_P$ 
  soit sym\'etrique en chacun des groupes de termes 
  $$
  \begin{array}{c}
    \lambda_i\; ,\; i=1,\ldots, v'
    \qquad\qquad
    l_i \; ,\; i=1,\ldots, v'
    \qquad\qquad
    \lambda_i^2\pm\lambda_j^2 \; ,\; i<j
    \quad,\\
    \Delta_i \; ,\; i=1,\ldots, v'
    \qquad\qquad
    \partial_{\lambda_i}\; ,\; i=1,\ldots, v'
    \quad.
  \end{array}
  $$
  En particulier, 
  le cardinal de l'ensemble $I_\epsilon$
  est finie, et ne d\'epend que de $\epsilon$.
\end{lem}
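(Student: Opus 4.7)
Le plan est de proc\'eder par r\'ecurrence sur $\epsilon$. D'abord, je pr\'eciserais une forme normale pour les mon\^omes en les op\'erateurs de base, en fixant l'ordre : fonction rationnelle en $\lambda$, puis polyn\^ome en les $\partial_{\lambda_i}$, puis partie discr\`ete $l_i^{c_i}\Delta_i^{d_i}$, et enfin (si $v=2v'+1$) la partie en $r,\partial_r$. Les op\'erateurs $\alpha_i,\beta_i,\gamma_i$ qui apparaissent dans les expressions de $\Xi$ et $\aleph$ doivent \^etre r\'e\'ecrits dans cette forme normale : \`a l'aide des identit\'es~(\ref{propalpha}), (\ref{propbeta}), (\ref{propalphagamma}) et des relations~(\ref{egalite_alpha_2+1}), (\ref{egalite_alpha_1-2}), (\ref{egalite_alpha12_casp=1}), (\ref{egalite_betagamma}), chaque $\alpha_i^p, \beta_i^p, \gamma_i^p$ s'exprime comme combinaison lin\'eaire finie d'op\'erateurs $l_i^{c_i}\Delta_i^{d_i}$ avec $c_i \leq 2d_i$, ce qui correspond exactement \`a la contrainte $\nn{c}\leq 2\nn{d}$ de~(\ref{condition_aef_lem_exp_puiss_T}).

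Pour le cas de base $\epsilon=1$, je d\'evelopperais terme \`a terme $\Xi^2 + \aleph$ : chaque morceau de $\aleph$ (et $\Xi^2$ trait\'e de la m\^eme fa\c{c}on, en utilisant le lemme~\ref{lem_puissance_beta}) donne un mon\^ome qui, apr\`es r\'e\'ecriture en forme normale, satisfait les contraintes~(\ref{condition_aef_lem_exp_puiss_T}), (\ref{condition_ND_lem_exp_puiss_T}), l'homog\'en\'eit\'e~(\ref{condition_homogeneite_lem_exp_puiss_T}) avec $\epsilon=1$ et le fait que chaque terme comporte ``au moins une d\'erivation discr\`ete ou non'' observ\'e dans la remarque~\ref{rem_derivation_TAX} assure~(\ref{condition_bdf_lem_exp_puiss_T}). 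La sym\'etrie globale en les $\lambda_i$, $l_i$, $\partial_{\lambda_i}$, $\Delta_i$ est manifeste sur $\Xi^2+\aleph$.

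Pour le pas de r\'ecurrence, je multiplierais \`a gauche un terme g\'en\'erique $E_P$ par chaque \'el\'ement constitutif de $\Xi^2+\aleph$, et je remettrais le produit en forme normale en utilisant les commutations \'el\'ementaires : $[\partial_{\lambda_i}, \lambda_j^{-1}] = -\delta_{i,j}\lambda_j^{-2}$, $[\partial_{\lambda_i},(\lambda_j^2-\lambda_i^2)^{-1}]$, $[\Delta_i, l_i] = \Delta_i+1$, et la commutation de $\partial_{\lambda_i}$ avec $\Delta_j,l_j$ (nulle car variables s\'epar\'ees). Chacune de ces commutations produit un terme principal plus des ``corrections'' qui soit augmentent $|D|$ d'au plus $2$, soit transforment un facteur $\lambda_i^{a_i^+}$ en $\lambda_i^{a_i^+ - 1}$, soit augmentent $|c|$ contre $|d|$ dans les proportions permises. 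Il faudra v\'erifier syst\'ematiquement que les bornes~(\ref{condition_aef_lem_exp_puiss_T}) et~(\ref{condition_ND_lem_exp_puiss_T}) passent bien de $\epsilon$ \`a $\epsilon+1$, et que l'homog\'en\'eit\'e~(\ref{condition_homogeneite_lem_exp_puiss_T}) gagne exactement $-2$ \`a chaque \'etape (ce qui est automatique puisque chaque op\'erateur \'el\'ementaire de $\Xi^2+\aleph$ a ``poids'' $-2$ pour la graduation en question).

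Le point le plus d\'elicat sera la gestion des termes rationnels $\prod_{i<j}(\lambda_i^2+\lambda_j^2)^{N_{i,j}}/(\lambda_j^2-\lambda_i^2)^{D_{i,j}}$ lors du passage \`a travers $\partial_{\lambda_k}$ : la d\'eriv\'ee de $(\lambda_j^2-\lambda_i^2)^{-D_{i,j}}$ produit un terme avec $(\lambda_j^2-\lambda_i^2)^{-D_{i,j}-1}$, dont il faut v\'erifier qu'il reste dans la bonne classe (c'est-\`a-dire que $|D|$ ne d\'epasse pas $2(\epsilon+1)$), et symm\'etriquement pour les termes $(\lambda_i^2+\lambda_j^2)^{N_{i,j}}$. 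Enfin, la pr\'eservation de la sym\'etrie globale exige de sommer sur toutes les permutations \`a chaque \'etape : en pratique, il suffit de remarquer que $\Xi^2+\aleph$ est invariant par permutation des indices, donc sa composition avec une somme sym\'etrique reste sym\'etrique apr\`es sym\'etrisation des polyn\^omes $Q_P$. La finitude de $|I_\epsilon|$ d\'ecoule alors directement des bornes sur $|a|,|b|,|c|,|d|,|N|,|D|$ (et $e,f$) donn\'ees par les contraintes.
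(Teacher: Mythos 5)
Your proposal is correct and follows essentially the same route as the paper: induction on $\epsilon$, with the base case handled by the explicit identities~(\ref{egalite_puissance_alpha_p=1})--(\ref{egalite_betagamma}) together with the lemmes~\ref{lem_puissance_beta} et~\ref{lem_puissance_der,alpha}, and the induction step carried out by composing each constituent operator of $\Xi^2+\aleph$ with a generic term $E_P$ and recasting it in normal form via the commutation identities~(\ref{egalite_ldeltaPl})--(\ref{egalite_ldelta2Pl}) and the explicit action of $\partial_{\lambda_k}$ on the rational factors ${(\lambda_i^2+\lambda_j^2)}^{N}/{(\lambda_j^2-\lambda_i^2)}^{D}$, the homogeneity and symmetry being tracked exactly as you describe.
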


La condition~(\ref{condition_bdf_lem_exp_puiss_T}) 
exprime le fait que dans chaque terme $E_P$,
il y a au moins un d\'erivation (discr\`ete ou non),
et qu'il ne peut y avoir en tout que $4\epsilon$.
D'apr\`es la remarque~\ref{rem_derivation_TAX},
c'est d\'ej\`a le cas pour $\epsilon=1$.
La preuve est faite par r\'ecurrence sur $\epsilon$.

\begin{proof}[du lemme~\ref{lem_expr_puiss_T}]
  Le pas $\epsilon=1$ est vrai,
  d'apr\`es
  les \'egalit\'es~(\ref{egalite_puissance_alpha_p=1}),
  (\ref{egalite_puissance_beta_p=1}),
  (\ref{egalite_puissance_gamma_p=1}),
  (\ref{egalite_alpha12_casp=1}),
  (\ref{egalite_betagamma})   
  et les lemmes~\ref{lem_puissance_beta}
  et~\ref{lem_puissance_der,alpha}
  donn\'es dans la sous-section~\ref{subsec_prop_fcnlag_0}.
  Supposons le pas $\epsilon$ vrai. 
  Montrons que le pas $\epsilon+1$ est alors vrai.
  Commen\c cons par montrer que l'op\'erateur
  $\Xi^2{(\Xi^2+\aleph)}^\epsilon$
  est de la forme voulue.
  D'apr\`es le lemme~\ref{lem_puissance_beta}
  et l'hypoth\`ese de r\'ecurrence
  en particulier la sym\'etrie des termes,
  il suffit de montrer que
  les op\'erateurs
  $$
  \lambda_{i_0}^{-1}\{l_{i_0}^{q_{i_0}}\Delta_{i_0}^{p_{i_0}+q_{i_0}}\}\;
  \lambda_{j_0}^{-1}\{l_{j_0}^{q_{j_0}}\Delta_{j_0}^{p_{j_0}+q_{j_0}}\}\;
  \left({(i\partial_{r})}^{2p*}\right)
  Q_P(\tau)
  \Pi_i\;
  \lambda_i^{a_i^+-a_i^-} 
  \partial_{\lambda_i}^{b_i}
  \; 
  \{ l_i^{c_i}
  \Delta_i^{d_i} \}
  \quad\left(
    {r}^e {(i\partial_{r})}^f
  \right)\quad,
  $$
  pour les param\`etres :
  $$
  p_{i_0}+p_{j_0}\; (+p^*)=2 
  \quad ,\quad
  p_{i_0},p_{j_0},p^*\in \Nb 
  \quad ,\quad
  0\leq q_{i_0} \leq p_{i_0}
  \quad ,\quad
  0\leq q_{j_0} \leq p_{j_0}
  \quad,
  $$
  et $a,b,c,d(,e,f)$
  v\'erifiant les conditions~(\ref{condition_aef_lem_exp_puiss_T})
et~(\ref{condition_bdf_lem_exp_puiss_T})
  peuvent s'\'ecrire comme une somme d'op\'erateurs de la forme :
  \begin{eqnarray*}
    \tilde{Q}_P(\tau)
    \;\Pi_i\;
    \lambda_i^{\tilde{a^+}_i-\tilde{a^-}_i} 
    \partial_{\lambda_i}^{\tilde{b}_i}
    \;
    \{ l_i^{\tilde{c}_i}
    \Delta_i^{\tilde{d}_i} \}
    \quad\left(
      {r}^{\tilde{e}} {(i\partial_{r})}^{\tilde{f}}
    \right)\quad.
  \end{eqnarray*}
  avec 
  $\tilde{Q}_P\in \Qb[X^+_1,X^-_1,\ldots,X^+_{v'},X^-_{v'}]$,
  $\tilde{a},\tilde{b},\tilde{c},\tilde{d}
  (,\tilde{e},\tilde{f})$  
  v\'erifiant les 
  conditions~(\ref{condition_aef_lem_exp_puiss_T}) 
  pour $\epsilon+1$. 
  Les propri\'et\'es de d\'erivation en $r$, 
  et les 
  \'egalit\'es~(\ref{egalite_ldeltaPl})-(\ref{egalite_ldelta2Pl}) 
  donn\'es dans la sous-section~\ref{subsec_prop_fcnlag_0}
  permettent d'affirmer que c'est vrai.

  Ensuite montrons que l'op\'erateur
  $$
  \sum_m {( \partial_{\lambda_m}
    -\frac{\alpha_m}{\lambda_m} )}^2 
  {(\Xi^2+\aleph)}^\epsilon
  \quad,
  $$
  est de la forme voulue.
  D'apr\`es le lemme~\ref{lem_puissance_der,alpha}
  et l'hypoth\`ese de r\'ecurrence,
  il suffit de montrer que l'op\'erateurs
  $$
  \sum_{i_0}
  \lambda_{i_0}^{-a'}
  \partial_{\lambda_{i_0}}^{b'}
  \{l_{i_0}^{d'}\Delta_{i_0}^{c'}\}
  \;
  Q_P(\tau)
  \;
  \Pi_{i<j}
  \frac{{(\lambda_i^2+\lambda_j^2 )}^{N_{i,j}}}
  {{(\lambda_j^2-\lambda_j^2)}^{D_{i,j}}}
  \;
  \Pi_i\;
  \lambda_i^{a_i^+-a_i^-} 
  \partial_{\lambda_i}^{b_i}
  \; 
  \{ l_i^{c_i}
  \Delta_i^{d_i} \} 
  \quad,
  $$
  pour les param\`etres
  $P=(a,b,c,d(,e,f); N,D)\in I_\epsilon$
  et :
  $$
  (a',b',c',d')\in\Nb^4
  \quad,\quad
  a'+b'=2
  \quad,\quad
  b'+c'\leq 2 
  \quad,\quad
  c'\leq 2d'
  \quad,
  $$
  peuvent s'\'ecrire comme une somme 
  d'op\'erateurs de la forme :
  $$
  \tilde{Q}_P(\tau)
  \;
  \Pi_{i<j}
  \frac{{(\lambda_i^2+\lambda_j^2 )}^{\tilde{N}_{i,j}}}
  {{(\lambda_j^2-\lambda_j^2)}^{\tilde{D}_{i,j}}}
  \quad
  \Pi_i\;
  \lambda_i^{\tilde{a}_i} 
  \partial_{\lambda_i}^{\tilde{b}_i}
  \;
  \{ l_i^{\tilde{c}_i}
  \Delta_i^{\tilde{d}_i} \}
  \quad\left(
    {r}^{\tilde{e}} {(i\partial_{r})}^{\tilde{f}}
  \right)\quad;
  $$
  la somme se fait sur les param\`etres
  $\tilde{P}=
  (\tilde{a},\tilde{b},\tilde{c},\tilde{d}(,\tilde{e},\tilde{f});
  \tilde{N},\tilde{D})\in I_{\epsilon+1}$;
  les polyn\^omes
  $\tilde{Q}_P\in \Qb[X^+_1,X^-_1,\ldots,X^+_{v'},X^-_{v'}]$
  sont tels que la somme est sym\'etrique.
  Les deux \'egalit\'es qui suivent,
  et  celles (\ref{egalite_ldeltaPl})-(\ref{egalite_ldelta2Pl}) 
  permettent d'affirmer que c'est vrai.
  \begin{eqnarray*}
    &&
    \left(\lambda_1^{-1}\partial_{\lambda_1}
      +
      \lambda_2^{-1}\partial_{\lambda_2}\right)
    \frac{{(\lambda_1^2+\lambda_2^2 )}^N}
    {{(\lambda_1^2-\lambda_2^2)}^D}
    \,=\,  
    2N 
    \frac{{(\lambda_1^2+\lambda_2^2 )}^{N-1}}
    {{(\lambda_1^2-\lambda_2^2)}^{D}}\quad, \\
    &&
    \left( \partial_{\lambda_1}^2+\partial_{\lambda_2}^2\right).
    \frac{{(\lambda_1^2+\lambda_2^2 )}^N}
    {{(\lambda_1^2-\lambda_2^2)}^D}
    \,=\,  
    (2N - 8ND )
    \frac{{(\lambda_1^2+\lambda_2^2 )}^{N-1}}
    {{(\lambda_1^2-\lambda_2^2)}^{D}} 
    +
    4D(D+1) 
    \frac{{(\lambda_1^2+\lambda_2^2 )}^N}
    {{(\lambda_1^2-\lambda_2^2)}^{D+1}}
    \quad.
  \end{eqnarray*}

  Les m\^emes arguments montrent que l'op\'erateur
  $$
  \sum_m \frac2{\lambda_m}(\partial_{\lambda_m}
  -\frac{\alpha_m}{\lambda_m}).
  Q_P(\tau)
  \;
  \Pi_{i<j}
  \frac{{(\lambda_i^2+\lambda_j^2 )}^{N_{i,j}}}
  {{(\lambda_j^2-\lambda_j^2)}^{D_{i,j}}}
  \;
  \Pi_i\;
  \lambda_i^{a_i} 
  \partial_{\lambda_i}^{b_i}
  \; 
  \{ l_i^{c_i}
  \Delta_i^{d_i} \} 
  \quad,
  $$
  est \'egalement de la forme voulue.

  On calcule directement :
  $$
  \left(\lambda_1\partial_{\lambda_1}
    +
    \lambda_2\partial_{\lambda_2}\right)
  \frac{{(\lambda_1^2+\lambda_2^2 )}^N}
  {{(\lambda_1^2-\lambda_2^2)}^D}
  \,=\,
  2N
  \frac{{(\lambda_1^2+\lambda_2^2 )}^{N-1}}
  {{(\lambda_1^2-\lambda_2^2)}^{D-1}}
  +
  2D
  \frac{{(\lambda_1^2+\lambda_2^2 )}^N}
  {{(\lambda_1^2-\lambda_2^2)}^D}
  \quad,
  $$
  ce qui permet d'affirmer que l'op\'erateur 
  $$
  \frac1
  {\lambda_{j_0}^2-\lambda_{i_0}^2}
  \left( \lambda_{i_0}\partial_{\lambda_{i_0}}
    -\lambda_{j_0}\partial_{\lambda_{j_0}} \right)
  .{(\Xi^2+\aleph)}^\epsilon
  \quad,
  $$
  est de la forme voulue.

  Enfin, les propri\'et\'es de d\'erivation en $r$, 
  et les 
  \'egalit\'es~(\ref{egalite_ldeltaPl})-(\ref{egalite_ldelta2Pl}) 
  permettent d'affirmer 
  que les autres op\'erateurs de $\aleph$ 
  appliqu\'es \`a ${(\Xi^2+\aleph)}^\epsilon$ 
  sont \'egalement de la forme voulue.

\end{proof}

Ce lemme technique permet d'estimer les normes 
$N_{\iota,\epsilon}$, objet de la 
proposition~\ref{prop_controle_N}.

\subsection{D\'emonstration de la proposition~\ref{prop_controle_N}}

Dans cette preuve, 
$C$ d\'esigne une constante 
qui ne d\'epend que de $v,\epsilon$ et qui peut \'evoluer au cours du calcul.
Nous convenons aussi que le signe $\sim$ entre deux expressions $a$ et $b$ strictement positives,
signifie qu'il existe une constante $C$ telle que 
$C^{-1}b\leq a\leq Cb$.

Soit $\iota\in I$, $\epsilon>0$ et
$g\in L^\infty(m)$.
On suppose que le support de $g$ est inclus dans celui de $\chi_\iota$
(en particulier $g\in L^2(m)$), 
et que $g$ est assez r\'eguli\`ere.
On note $G\in {L^2(N)}^\natural$ 
la fonction dont la transform\'ee
de Fourier est $g$.

D'apr\`es la formule~(\ref{formule_plancherel_radiale})
de Plancherel, on a :
\begin{eqnarray*}
  N_{\iota,\epsilon}(g)^2 
 &\sim&
  \nd{(1+ s_\iota^{2\epsilon} \nn{n}^{4\epsilon} ) G (n)}_{L^2(n\in N)}^2\\
  &&\,=\,
  \int_\Pc 
  \nn{<(1+s_\iota^{2\epsilon} \nn{n}^{4\epsilon} )
    G (n),\phi>}^2 dm(\phi) \quad.
\end{eqnarray*}
Maintenant utilisons les op\'erateurs $\Xi,\aleph$
que nous avons d\'efinis sur les fonctions $g:\Pc\rightarrow \Rb$ assez r\'eguli\`eres.
D'apr\`es le lemme~\ref{lem_expr_TX_TA}, on a :
$$
(1+s_\iota^{2\epsilon} \nn{n}^{4\epsilon} )\phi(n)
\,=\,
(\Id +s_\iota^{2\epsilon}{(\Xi^2+\aleph)}^{\epsilon} ).\phi(n)
\quad,
$$
et dans cette derni\`ere expression,
l'op\'erateur $(\Id +s_\iota^{2\epsilon}{(\Xi^2+\aleph)}^{\epsilon})$
agit sur les param\`etres de la fonction sph\'erique $\phi$
(nous identifions la fonction $\phi$ et ses param\`etres dans $\Pc$).

Par cons\'equent, on a :
\begin{eqnarray*}
<(1+s_\iota^{2\epsilon} \nn{n}^{4\epsilon} )
    G (n),\phi>
&=&
  \int_N    G (n) (1+s_\iota^{2\epsilon} \nn{n}^{4\epsilon} )\phi(n) dn\\
&=&
\int_N    G (n) (\Id +s_\iota^{2\epsilon}{(\Xi^2+\aleph)}^{\epsilon} ).\phi(n) dn\\
&=&
(\Id +s_\iota^{2\epsilon}{(\Xi^2+\aleph)}^{\epsilon}).
\int_N  G (n)\phi(n) dn\\
&=&
(\Id +s_\iota^{2\epsilon}{(\Xi^2+\aleph)}^{\epsilon}).g(\phi)
\quad,
\end{eqnarray*}
puis :
$$
  N_{\iota,\epsilon}(g)^2 
 \,\sim\,
  \nd{(\Id +s_\iota^{2\epsilon}{(\Xi^2+\aleph)}^{\epsilon} )
    g}_{L^2(m)}^2\quad.
$$

Supposons $\epsilon\in\Nb$.
Comme on a $s_\iota\sim \sum_i \lambda_i(2l_i+1)+r^2$,
et d'apr\`es le lemme~\ref{lem_expr_puiss_T}, 
on voit :
$$
  \nd{(\Id +s_\iota^{2\epsilon}{(\Xi^2+\aleph)}^{\epsilon} )
    g}_{L^2(m)}^2
\,\leq\,C^2\,
\left(
  \nd{g}_{L^2(m)}^2
  +\sum_{P\in J_\epsilon}
\nd{F_P. g}_{L^2(m)}^2\right)
\quad,
$$
o\`u :
  \begin{eqnarray*}
    F_P
    :=
    Q_P(\tau)
    \;
    \Pi_{i<j}
    \frac1{{(\lambda_j^2-\lambda_i^2 )}^{D_{i,j}}}
    \quad
    \Pi_i
    \lambda_i^{a_i} 
    \partial_{\lambda_i}^{b_i}
    \{ l_i^{c_i}
    \Delta_i^{d_i} \}
    \quad\left(
      r^e {(i\partial_r)}^f
    \right)\quad,
  \end{eqnarray*}
 o\`u
  $Q_P\in \Qb[X^+_1,X^-_1,\ldots,X^+_{v'},X^-_{v'}]$ 
  et  
  $\tau
  =
  (\tau_1^+,\tau_1^-
  ,\ldots,
  \tau_{v'}^+,\tau_{v'}^-)$.  
  La somme est \`a prendre 
  sur l'ensemble des param\`etres $J_\epsilon$
  qui est d\'ecrit dans ce qui suit :
  \begin{itemize}
  \item $J_\epsilon$ est l'ensemble fini des param\`etres 
    $P=(A,D)$
    o\`u
    $A=(a,b,c,d)$ si $v=2v'$
    et 
    $A=(a,b,c,d,e,f)$ si $v=2v'+1$;
  \item les param\`etres 
    $a\in \Zb^{v'}$
    et $b,c,d\in \Nb^{v'}$, 
    et dans le cas $v=2v'+1$, $e,f\in \Nb$ 
    v\'erifient :
    \begin{equation}
      \label{condition_aef_lem_exp_puiss_T1}
      \begin{array}{c}
2\nn{N}+\nn{a-}+\nn{a+}\leq 4(\nn{b}+2\nn{d})
\, ,\quad
e\leq f+2\nn{d}
\,,\quad \nn{c}\leq 2\nn{d}
        \quad,\\
        \left(\mbox{et}\quad 
          \mbox{les entiers}\quad
          e,f \; 
          \mbox{ont m\^eme parit\'e si v=2v'+1} \right)
        \quad,
      \end{array}
    \end{equation}
et :
\begin{equation}
  \label{condition_bdf_lem_exp_puiss_T1}
0< \nn{b}+f + \nn{d}\leq 4\epsilon \quad.
\end{equation}
  \item le param\`etre
    $D=(D_{i,j})\in \Nb^q$
    v\'erifie :
    \begin{equation}
      \label{condition_D_lem_exp_puiss_T1}
      \nn{D}\leq 2\epsilon 
      \quad;  
    \end{equation}
  \item les param\`etres 
    $a,b,N,D$ et $e,f$ dans le cas $v=2v'+1$
    ``qui sont les param\`etres portant sur $\Lambda$ 
    et $r$ dans le cas $v=2v'+1$'',
    v\'erifient la relation d'homog\'en\'eit\'e :
    \begin{equation}
      \label{condition_homogeneite_lem_exp_puiss_T1}    
      -2\nn{D} 
      +\nn{a}
      -\nn{b}
      \qquad\left(+\frac{e-f}2 \right)
      \,=\,
      0 \quad.
    \end{equation}
  \end{itemize}

\subsubsection{Majoration de \mathversion{bold}{$\nd{F_P. g}^2,P\in J_\epsilon$}}

Les op\'erateurs~$\tau^+$ de translations ne changent pas la somme sur $\Nb^{v'}$;
et les op\'erateurs~$\tau^-$ 
peuvent juste faire dispara\^\i tre quelques termes.
Donc on a :
\begin{eqnarray*}
  \nd{F_P. g}^2
  \,\leq\,C^2\,
  \int_{\Rb}\int_\Lc \sum_l
  \Pi_{i<j}
  \frac1{{(\lambda_j^2-\lambda_i^2)}^{2D_{i,j}}}
  \;
  \Pi_i\;
  \lambda_i^{2a_i} 
  l_i^{2c_i}
  \;
  \nn{r}^{2e} 
  \;\\
  \nn{
    \Pi_i
    \partial_{\lambda_i}^{b_i}
    \Delta_i^{d_i}
    \;
    \partial_{r}^f
    .g}^2
  \frac{d\eta'}{d\Lambda}
  d\Lambda dr
  \quad,  
\end{eqnarray*}
les param\`etres pour les termes en~$r$, 
\'etant omis dans le cas~$v=2v'$, 
$d\eta'/d\Lambda$ d\'esignant
la fonction densit\'e 
de la mesure~$\eta'$ sur le simplexe~$\Lc$ 
par rapport \`a la mesure de Lebesgue~$d\Lambda$.
Or le support de la fonction~$g$ 
est inclus dans celui de~$\chi_\iota$.
Outre l'\'equivalent de~$d\eta'/d\Lambda$
du lemme~\ref{lem_equivalence_densite_eta_supp_chi_iota},
on a sur le support de $\chi_\iota$,
les estimations suivantes :
\begin{itemize}
\item
  $\nn{r}^4\sim  2^{\theta}$, d'o\`u
  $\nn{r}^{2e} \leq C 2^{\theta \frac e2}$,
\item
  $\lambda_i^2\sim  2^{\eta_i}$
  d'o\`u
  $\Pi_i\lambda_i^{2a_i} \leq C
  \Pi_i 2^{\eta_i a_i}$,
\item
  $ l_i\sim  2^{\zeta_i}$
  d'o\`u
  $\Pi_i\; l_i ^{2c_i}\leq C^2
  \Pi_i\; 2^{2 c_i\zeta_i}$,
\item
  $\lambda_j^2-\lambda_i^2 \sim 2^{\delta_{i,j}}$
  d'o\`u
  $\Pi_{i<j}
  {(\lambda_j^2-\lambda_j^2 )}^{2D_{i,j}}
  \geq C^{-2}
  \Pi_{i<j}2^{2 \delta_{i,j}D_{i,j}}$.
\end{itemize}
On obtient la majoration :
$$
\nd{F_{P}. g}^2
\,\leq\,C^2\,
2^{2d(\eta,\delta)}
2^{\exp}
\int_\Lc \int_{\Rb}\sum_l
\nn{\Pi_i
  \partial_{\lambda_i}^{b_i}
  \Delta_i^{d_i}
  \partial_{r}^f
  .g}^2 
dr d\Lambda 
\quad,
$$
o\`u on note momentan\'ement l'exposant :
$$
\exp
:=
-2\delta.D
+a.\eta
+\frac e2 \theta
+2c.\zeta
\quad.
$$

Notons $\delta_\iota^m=\min_{i<j}\delta_{i,j}$ et $\zeta_\iota^M=\max_i\zeta_i$.
On a $\delta.D\geq \delta_\iota^m \nn{D}$.
Utilisons
$P\in J_\epsilon$.
On a 
d'apr\`es la condition~(\ref{condition_homogeneite_lem_exp_puiss_T1}) :
$$
-2\delta.D
\,\leq\,
-2\delta_\iota^m\nn{D}
\,=\,
(\nn{b}-\nn{a}+\frac{f-e}2)\delta_\iota^m
\quad,
$$
d'o\`u :
\begin{eqnarray*}
\exp &\leq&
(\nn{b}-\nn{a}+\frac{f-e}2)\delta_\iota^m
+a.\eta
+\frac e2 \theta
+2c.\zeta\\
&&=\,
\nn{b}\delta_\iota^m
+\sum_ia_i(\eta_i-\delta_\iota^m)
+\frac e2 (-\delta_\iota^m+\theta)
+\frac f2 \delta_\iota^m +2c.\zeta\quad.
\end{eqnarray*}
Or on a gr\^ace \`a la condition~(\ref{condition_aef_lem_exp_puiss_T1}) :
\begin{eqnarray*}
c.\zeta
&\leq&
\zeta_\iota^M\nn{c}
\,\leq\,
\zeta_\iota^M2\nn{d}
\quad,\\
\frac e2 (-\delta_\iota^m+\theta)
&\leq &
\frac12(f+2\nn{d})\max(-\delta_\iota^m+\theta,0)
\quad,\\
\sum_ia_i(\eta_i-\delta_\iota^m)
&\leq&
\sum \nn{a_i}\nn{\eta_i-\delta_\iota^m}
\,\leq\,
4(\nn{b}+2\nn{d})\max_i\nn{\eta_i-\delta_\iota^m}
\quad,\\
\end{eqnarray*}
On obtient donc  que l'exposant $\exp$ est major\'e par :
\begin{eqnarray*}
\nn{b}\delta_\iota^m
+4(\nn{b}+2\nn{d})\max_i\nn{\eta_i-\delta_\iota^m}
+\frac12(f+2\nn{d})\max(-\delta_\iota^m+\theta,0)
+\frac f2 \delta_\iota^m+4\zeta_\iota^M\nn{d}
\quad,\\
=\,
L_\iota\nn{b}+R_\iota f+D_\iota \nn{d}
\end{eqnarray*}
 par d\'efinition de $L_\iota,R_\iota,D_\iota$,
puis :
$$
  \nd{F_{P}. g}^2
  \,\leq\, C^2
  2^{2d(\eta,\delta)}
    2^{L_\iota\nn{b}+R_\iota f+D_\iota \nn{d}}
  \int_\Lc \int_{\Rb} \sum_l
  \nn{
    \Pi_i
      \partial_{\lambda_i}^{b_i}
      \Delta_i^{d_i}
      \partial_{r}^f
    .g}^2 
  dr d\Lambda\quad.
$$
On note encore 
$g:\Rb\times\Rb^{v'}\times\Zb^{v'}\rightarrow \Cb$
la fonction 
$g:\Pc \rightarrow \Cb$
\'etendue de mani\`ere triviale
au sens~(\ref{formule_etendre_triv_fcn_Pc}).
On a obtenu 
la majoration de la norme
pour $P\in J_\epsilon$ :
$$
  \nd{F_{P}. g}^2_{L^2(m)}
  \,\leq\, C^2
  2^{2d(\eta,\delta)}
  \nd{
    \Pi_i
    {(2^{\frac{L_\iota}2}\partial_{\lambda_i})}^{b_i}
    {(2^{\frac{D_\iota}2}\Delta_i)}^{2d_i}
    {(2^{\frac{R_\iota}2}\partial_{r})}^f
g}^2_{L^2(\Rb\times\Rb^{v'}\times\Zb^{v'})}
  \quad.  
$$

\subsubsection{Sommation sur \mathversion{bold}{$P\in J_\epsilon$}}

Gr\^ace \`a la formule de Plancherel pour $g$, on a :
$$
  \nd{(\Id +s_\iota^{2\epsilon}{(\Xi^2+\aleph)}^{\epsilon} ) g}_{L^2(m)}^2
\,\leq\,C^2\,
  2^{2d(\eta,\delta)}
\sum_{P\in J_\epsilon'}
  \nd{F_{P}. g}^2_{L^2(\Rb\times\Rb^{v'}\times\Zb^{v'})}
  \;,
$$
o\`u $J_{\epsilon}'$ est l'ensemble $J_{\epsilon}$,
auquel on a rajout\'e l'\'el\'ement $P=0$
et pour lequel $F_P'=\Id$.

Utilisons la formule de Plancherel sur $L^2(\Rb\times\Rb^{v'}\times\Zb^{v'})$;
nous noterons les variables duales  
de $l\in \Zb^{v'},\Lambda\in \Rb^{v'},r\in \Rb$ par
$\widehat{l}\in \Tb^{v'},\widehat{\Lambda}\in\Rb^{v'}$
et $\widehat{r}\in \Rb$ si $v=2v'+1$.
\begin{eqnarray*}
&&  \nd{
    \Pi_i
    {(2^{\frac{L_\iota}2}\partial_{\lambda_i})}^{b_i}
    {(2^{\frac{D_\iota}2}\Delta_i)}^{2d_i}
    {(2^{\frac{R_\iota}2}\partial_{r})}^f
g}^2_{L^2(\Rb\times\Rb^{v'}\times\Zb^{v'})}\\
&&\qquad=\,
    \int_{\Tb^{v'}} \int_{\Rb^{v'}} \int_{\Rb}
\nn{
      \Pi_i
      {(2^{\frac{L_\iota}2}
        \widehat{\lambda_i})}^{b_i}
      {(2^{\frac{D_\iota}2}
        (\widehat{l_i}-1))}^{d_i}
      \;
      {(2^{\frac{R_\iota}2}
        \widehat{r})}^f
\;\Fc.g}^2 
    d\widehat{r} d\widehat{\Lambda}d\widehat{l} 
    \quad,  
\end{eqnarray*}
avec $0\leq \nn{b}+f +\nn{d} \leq 4\epsilon$.
En sommant sur ces param\`etres, on a :
\begin{eqnarray*}
  &&\sum_{0\leq \nn{b}+f+\nn{d}\leq 4\epsilon}
\nn{
      \Pi_i
      {(2^{\frac{L_\iota}2}
        \widehat{\lambda_i})}^{b_i}
      {(2^{\frac{D_\iota}2}
        (\widehat{l_i}-1))}^{d_i}
      \;
      {(2^{\frac{R_\iota}2}
        \widehat{r})}^f
}^2 \\
  &&\qquad=\,
  {\left(1+2^{\frac {L_\iota}2}\sum_i
      \widehat{\lambda_i}^2
      +2^{\frac{R_\iota}2}
      \widehat{r}^2
      +2^{\frac{D_\iota}2}\sum_i
      \nn{\widehat{l}_i-1}^2
    \right)}^{4\epsilon}\quad.
\end{eqnarray*}
On en d\'eduit : 
$$
\sum_{0\leq \nn{b}+f+\nn{d}\leq 4\epsilon}
  \nd{F_{P}. g}^2
  \,\leq\,C^2
  \int_{\Tb^{v'}} \int_{\Rb^{v'}} \int_{\Rb}
  \widehat{T_\iota}^{4\epsilon}
  \nn{\Fc.g}^2
  d\widehat{r} d\widehat{\Lambda}d\widehat{l} 
  \quad,
$$
o\`u on d\'efinit la fonction 
$$
\widehat{T_\iota}
\; :\;
\left\{
  \begin{array}{rcl}
    \Rb\times\Rb^{v'}\times\Tb^{v'}
    &\longrightarrow&\Rb^{*+}\\
    (\widehat{r},\widehat{\Lambda},\widehat{l})
    &\longmapsto&
1+2^{\frac {L_\iota}2}\sum_i
      \widehat{\lambda_i}^2
      +2^{\frac{R_\iota}2}
      \widehat{r}^2
      +2^{\frac{D_\iota}2}\sum_i
      \nn{\widehat{l}_i-1}^2
  \end{array}\right.
\quad.
$$

Nous avons donc obtenu :
$$
  \nd{(\Id +s_\iota^{2\epsilon}{(\Xi^2+\aleph)}^{\epsilon} ) g}_{L^2(m)}^2
\leq\,C^2\,
  2^{2d(\eta,\delta)}
\nd{  \widehat{T_\iota}^{2\epsilon}\Fc.g}_{L^2(\Rb\times\Rb^{v'}\times \Tb^{v'})}
\quad.
$$

\subsubsection{Majoration de \mathversion{bold}{$N_{\iota,\epsilon}(g)$} obtenue}

Gr\^ace \`a la formule de Plancherel sur 
$\Rb\times\Rb^{v'}\times\Zb^{v'}$,
la transform\'ee de Fourier de l'op\'erateur~$T_\iota$,
est \'egale \`a la multiplication par la 
fonction~$\widehat{T_\iota}$.
Et donc ici, on a :
$$
\nd{  \widehat{T_\iota}^{2\epsilon}\Fc.g}_{L^2(\Rb\times\Rb^{v'}\times \Tb^{v'})}
=\,
\nd{ T_\iota^{2\epsilon}g}_{L^2(\Rb\times\Rb^{v'}\times \Zb^{v'})}
\quad.
$$

R\'esumons. Nous avons montr\'e jusqu'\`a pr\'esent que
pour tout $\epsilon\in\Nb$, on a :
$$
\begin{array}{c}
  \exists\, C=C(p,\epsilon)>0
  \qquad
  \forall\, \iota\in I
  \qquad 
  \forall\, g\in L^2(m)
  \qquad
  \supp\, g
  \subset 
  \supp \, \chi_\iota
  \qquad :\quad  \\
    N_{\iota,\epsilon}(g)
    \,\leq\,C\,
    2^{d(\eta,\delta)}
   \nd{ T_\iota^{2\epsilon} .g }_{L^2( dr,d\Lambda,dl)}
   \quad.
 \end{array}
 $$
Par interpolation, 
ce r\'esultat est vrai pour tout $\epsilon>0$.
Ceci ach\`eve la d\'emonstration de la 
proposition~\ref{prop_controle_N},
et donc du th\'eor\`eme~\ref{thm_multiplicateurs}.


\chapter{Appendice}

La premi\`ere section de ce chapitre est consacr\'ee
aux propri\'et\'es connues de certaines fonctions sp\'eciales.
La seconde section pr\'ecise 
ce que nous avons appel\'e le passage en polaire 
sur les matrices antisym\'etriques.

\section{Fonctions sp\'eciales}
\label{sec_fonctionspeciale}

Dans cette section, 
nous donnons les propri\'et\'es des fonctions
$\Gamma$,
$\fbs \alpha$ de Bessel,
$\fls n \alpha$ de Laguerre,
$h_k$ de Hermite Weber.

\subsection{Fonction $\Gamma$}
\index{Fonction sp\'eciale!$\Gamma$} 
Nous rappelons :

\begin{itemize}
\item l'\'equation fonctionnelle :
  \begin{equation}
    \label{eq_fonc_Gamma}
    \forall \alpha \in \Cb-\{0,-1,\ldots\},
    \quad
    \alpha\Gamma(\alpha)=\Gamma(\alpha+1)   
    \quad;
  \end{equation}
\item pour $p,q>0$,
  \begin{equation}
    \label{eg_int_Gamma}
    \int_0^1 x^{p-1}{(1-x)}^{q-1} dx
    \,=\,
    \frac{\Gamma(p)\Gamma(q)}{\Gamma(p+q)}
  \end{equation}
\item l'estimation uniforme 
  locale en $x>0$ lorsque $y\rightarrow\infty$ 
  \cite{tit} : 
  $$
  \Gamma(x+iy)
  \,  \sim \,\sqrt{2\pi} 
  e^{-\frac \pi 2 y}\nn{y}^{x-\frac12}  
  \quad ,
  $$
  dont on d\'eduit :
  \begin{equation}
    \label{cqtit}
    \forall [a,b]\subset]0,\infty[\quad
    \exists C>0    \quad
    \forall x\in [a,b]\quad
    \forall y\in \Rb\quad
    \nn{\Gamma(x+iy)}
    \,  \geq \;C^{-1}\; 
    e^{- 2y}  
    \quad ,
  \end{equation}
  et lorsque $n\rightarrow\infty$ :
  \begin{equation}
    \label{cqstir}
    {C_{n+\alpha}^n}^{-1}
    \,=\,
    \frac{\Gamma(n+1)\Gamma(\alpha+1)}{\Gamma(n+\alpha+1)}
    \, \sim  \,
    {(\frac{e}{n})}^\alpha
    \Gamma(\alpha+1)     
    \quad.
  \end{equation}
\end{itemize}

\subsection{Fonction $\fbs \alpha$}\label{app_bessel}

Pour $\alpha>0$,
on d\'efinit les \textbf{fonctions de Bessel r\'eduites}
\index{Fonction sp\'eciale!de Bessel $\fbs \alpha$}
(comme \cite{Far}, chapitre II):
$$
\fb \alpha{z}
\,=\,
\Gamma (\alpha+1){(\frac{z}2)}^{-\alpha}J_\alpha(z)
\,=\,
\sum_{\nu =0}^\infty {(-1)}^\nu 
\frac{z^{2\nu}\Gamma (\alpha+1)}
{2^{2\nu}\nu !\Gamma(\nu +\alpha +1)}
\quad .
$$
Souvent, on omettra le qualificatif ``r\'eduit''.

Gr\^ace \`a son expression en s\'erie enti\`ere 
et \`a l'estimation pour $z\rightarrow\infty$ :
$J_\alpha(z)=O(z^{-\frac{1}{2}})$
\cite{szego} \S  1.71,
la fonction $\fbs \alpha$  v\'erifie :
$$
\fbs \alpha'(z) \,=\, -\frac{\Gamma (\alpha+1)}{\Gamma (\alpha+2)} z
\fb {\alpha+1}{z}
\quad\mbox{et}\quad
\fb \alpha z \,=\, O(z^{-\frac{1}{2}-\alpha})\quad ,
$$
et donc par r\'ecurrence sur $k\in \Nb$:
\begin{equation}
  \label{der_bessel}
  \forall\, x>0 \qquad 
  \nn{\fbs \alpha^{(k)}(x)}\leq C_kx^{-\alpha -\frac{1}2} \quad .
\end{equation}

\begin{lem}[Majoration des int\'egrales pour $\fbs \alpha$]
  \label{lem_maj_int_fb}
  Les int\'egrales suivantes :
  $$
  \int_{s=0}^\infty 
  \nn{\fbs \alpha ^{(k)} (s)}^2  
  s^\beta ds
  $$
  sont finies lorsque $\beta>-1$ et
  $\beta<2\alpha$ pour tout $k\in \Nb$.

  Les int\'egrales suivantes :
  $$
  \int_{s=0}^\infty 
  \nn{\fbs \alpha ^{(k)} (s)}^4  
  s^\beta ds
  $$
  sont finies lorsque $\beta>-1$ et
  $\beta-1<4\alpha$ pour tout $k\in \Nb$.

\end{lem}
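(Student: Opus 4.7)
The plan is to split each integral into a neighborhood of the origin and a neighborhood of infinity, then estimate each piece with the tools already on the page: the power series definition of $\fbs\alpha$ near $0$, and the decay estimate (\ref{der_bessel}) at infinity.

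First I would handle the contribution on $[0,1]$. Since $\fbs\alpha(z)=\sum_{\nu\geq 0}(-1)^\nu z^{2\nu}\Gamma(\alpha+1)/\bigl(2^{2\nu}\nu!\,\Gamma(\nu+\alpha+1)\bigr)$ is an entire function of $z$, every derivative $\fbs\alpha^{(k)}$ is continuous on $[0,1]$ and hence bounded there by some constant $M_k$. Consequently, for either the square or the fourth power,
\[
\int_0^1 \bigl|\fbs\alpha^{(k)}(s)\bigr|^{p}s^\beta\,ds \;\leq\; M_k^{p}\int_0^1 s^\beta\,ds,
\]
which is finite as soon as $\beta>-1$, independently of $\alpha$. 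This accounts for the condition $\beta>-1$ appearing in both statements.

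Next I would deal with the tail $[1,\infty)$, which is where the constraint involving $\alpha$ enters. The key input is the pointwise bound (\ref{der_bessel}): for every $k\in\Nb$ there exists $C_k$ with $|\fbs\alpha^{(k)}(s)|\leq C_k s^{-\alpha-1/2}$ for $s>0$. Plugging this in gives
\[
\int_1^\infty \bigl|\fbs\alpha^{(k)}(s)\bigr|^{2}s^\beta\,ds \;\leq\; C_k^{2}\int_1^\infty s^{\beta-2\alpha-1}\,ds,
\]
which converges exactly when $\beta-2\alpha-1<-1$, i.e.\ $\beta<2\alpha$. Similarly,
\[
\int_1^\infty \bigl|\fbs\alpha^{(k)}(s)\bigr|^{4}s^\beta\,ds \;\leq\; C_k^{4}\int_1^\infty s^{\beta-4\alpha-2}\,ds,
\]
which converges iff $\beta-4\alpha-2<-1$, equivalently $\beta-1<4\alpha$. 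Adding the two contributions in each case yields the two claims of the lemma.

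There is no real obstacle here: the statement is essentially a bookkeeping exercise combining analyticity at $0$ with the asymptotic decay at infinity already recorded in (\ref{der_bessel}). The only point requiring a small amount of care is verifying that (\ref{der_bessel}) really holds uniformly for all $k$ up to whatever order is needed; but since this estimate is stated in the text just above the lemma as a consequence of the recursion $\fbs\alpha'(z)=-\tfrac{\Gamma(\alpha+1)}{\Gamma(\alpha+2)}z\,\fb{\alpha+1}{z}$ together with $J_\alpha(z)=O(z^{-1/2})$, I would simply invoke it and let the two elementary integral computations above conclude the proof.
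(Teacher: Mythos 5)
Your proof is correct and follows exactly the paper's argument: the paper likewise notes that $\fbs\alpha$ is entire (so the integral near $0$ converges for $\beta>-1$) and then applies the decay estimate~(\ref{der_bessel}) at infinity to obtain the conditions $2(-\alpha-\tfrac12)+\beta<-1$ and $4(-\alpha-\tfrac12)+\beta<-1$, which are precisely $\beta<2\alpha$ and $\beta-1<4\alpha$. No discrepancy to report.
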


\begin{proof}[du lemme~\ref{lem_maj_int_fb}]
  La fonction $\fbs \alpha$ est enti\`ere. 
  Donc les int\'egrales en $0$ sont finies.
  En $\infty$, d'apr\`es la majoration~(\ref{der_bessel}),
  la premi\`ere int\'egrale est finie lorsque :
  $2(-\alpha-\frac12) +\beta<-1$,
  et la seconde lorsque 
  $4(-\alpha-\frac12) +\beta<-1$.
\end{proof}

Nous avons besoin des propri\'et\'es suivantes \cite{Far} :
\begin{lem}
  \label{lem_fcn_bessel_intsph}
  Pour un param\`etre $\alpha$ 
  de la forme $\alpha=(n-2)/2, n\in\Nb,n\geq 3$, 
  la fonction de Bessel r\'eduite se met sous la forme 
  $$
  \fb {\frac {n-2}2}{\nn{x}}
  \,=\,
  \int_{S_1^{(n)}} e^{i<x,y>} d\tilde{\sigma}_n(y)
  \quad,
  $$
  o\`u $\tilde{\sigma}_n$ d\'esigne la mesure 
  sur la sph\`ere unit\'e euclidienne $S_1^{(n)}$ de $\Rb^n$. 
\end{lem}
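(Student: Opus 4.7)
Mon plan est le suivant. Je commencerais par observer que le membre de droite, vu comme fonction de $x \in \Rb^n$, est invariant par rotation : pour $k \in O(n)$, le changement de variable $y \mapsto k^{-1}.y$ laisse la mesure $\tilde{\sigma}_n$ invariante, d'o\`u $I(k.x) = I(x)$ o\`u l'on pose $I(x) := \int_{S_1^{(n)}} e^{i<x,y>} d\tilde{\sigma}_n(y)$. La fonction $I$ ne d\'epend donc que de $r := \nn{x}$, ce qui autorise le choix privil\'egi\'e $x = r e_n$, o\`u $(e_1,\ldots,e_n)$ d\'esigne la base canonique de $\Rb^n$.

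Je param\`etrerais ensuite la sph\`ere $S_1^{(n)}$ par une coordonn\'ee de latitude $\theta \in [0,\pi]$ par rapport au p\^ole $e_n$ et une direction \'equatoriale $\omega \in S_1^{(n-1)}$, en \'ecrivant $y = (\cos\theta) e_n + (\sin\theta) \omega$. La mesure de probabilit\'e $\tilde{\sigma}_n$ se d\'ecompose alors sous la forme $d\tilde{\sigma}_n = c_n \sin^{n-2}\theta\, d\theta\, d\tilde{\sigma}_{n-1}(\omega)$, la constante $c_n$ \'etant uniquement d\'etermin\'ee par la condition de masse totale $1$ (elle vaut explicitement $\Gamma(n/2)/(\sqrt{\pi}\,\Gamma((n-1)/2))$, via le calcul de $\int_0^\pi \sin^{n-2}\theta\, d\theta$ par la fonction b\^eta). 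Comme $e^{i r<e_n,y>} = e^{ir\cos\theta}$ ne d\'epend pas de $\omega$, l'int\'egration en $\omega$ est triviale et l'on obtient $I(x) = c_n \int_0^\pi e^{ir\cos\theta} \sin^{n-2}\theta\, d\theta$.

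Pour conclure, je d\'evelopperais $e^{ir\cos\theta}$ en s\'erie enti\`ere en $r$ et int\'egrerais terme \`a terme. Les termes de puissance impaire en $\cos\theta$ s'annulent par la sym\'etrie $\theta \mapsto \pi - \theta$ ; pour chaque terme pair d'indice $\nu$, le changement de variable $u = \cos^2\theta$ r\'eduit le calcul \`a une fonction b\^eta $B(\nu + 1/2, (n-1)/2)$, et la formule de duplication de Legendre permet de r\'eorganiser les facteurs $\Gamma$ pour faire appara\^itre le coefficient g\'en\'erique $\Gamma(n/2)/(2^{2\nu}\,\nu!\, \Gamma(\nu + n/2))$. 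On reconna\^it alors exactement la s\'erie enti\`ere d\'efinissant $\fb{\frac{n-2}2}{r}$, c'est-\`a-dire le cas $\alpha + 1 = n/2$.

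La difficult\'e principale sera purement calculatoire : il faudra v\'erifier avec soin que les constantes de normalisation (celle de la d\'ecomposition sph\`ere $\leftrightarrow$ latitude $\times$ sph\`ere \'equatoriale, celle issue de la fonction b\^eta, et celle de la formule de duplication) se combinent exactement pour reconstituer le coefficient figurant dans la d\'efinition de $\fbs{\frac{n-2}2}$. Un contr\^ole utile et imm\'ediat est fourni par l'\'evaluation en $r = 0$ : les deux membres y valent $1$, respectivement par d\'efinition de $\fbs{\alpha}$ et parce que $\tilde{\sigma}_n$ est une mesure de probabilit\'e.
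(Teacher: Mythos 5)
Votre d\'emonstration est correcte et compl\`ete : l'invariance par rotation, la r\'eduction \`a l'int\'egrale en latitude $c_n\int_0^\pi e^{ir\cos\theta}\sin^{n-2}\theta\,d\theta$, puis le d\'eveloppement en s\'erie et le calcul des fonctions b\^eta reconstituent exactement le coefficient $\Gamma(n/2)/(2^{2\nu}\,\nu!\,\Gamma(\nu+n/2))$ de la s\'erie d\'efinissant $\fbs{\frac{n-2}2}$ (le point cl\'e \'etant l'identit\'e $\Gamma(\nu+1/2)/\Gamma(1/2)=(2\nu)!/(2^{2\nu}\nu!)$, c'est-\`a-dire la duplication de Legendre que vous invoquez). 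Notez que la th\`ese ne d\'emontre pas ce lemme : elle le cite directement de la r\'ef\'erence \cite{Far}; votre calcul est pr\'ecis\'ement l'argument classique que l'on y trouve, et votre contr\^ole en $r=0$ via la masse totale $1$ de $\tilde{\sigma}_n$ est coh\'erent avec la convention de normalisation utilis\'ee dans le texte.
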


\begin{lem}
  \label{lem_fcnBessel_equadiff}  
  Pour $n>0$ et $\alpha\in \Cb$,
  le syst\`eme  :
  $$
  \left\{
    \begin{array}{l}
      4x y''+4n y'+\alpha y
      \,=\,
      0\\
      y(0)=1
    \end{array}\right.
  $$
  a pour unique solution $C^\infty$ au voisinage de 0  
  la fonction enti\`ere $y(x)=\fb {n-1}{\mu\sqrt{x}}$ o\`u $\mu^2=\alpha$.
  Elle est born\'ee si et seulement si $\alpha \geq 0$.  
\end{lem}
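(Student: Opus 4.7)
The plan is to proceed in three independent steps: existence of the claimed solution, uniqueness in the class $C^\infty$ near $0$, and the boundedness dichotomy.

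For existence, I would insert the series expansion of $\fbs{n-1}$ into $y(x)=\fb{n-1}{\mu\sqrt x}$, obtaining
$$
y(x)\,=\,\sum_{\nu\geq 0}a_\nu x^\nu,\qquad a_\nu\,=\,(-\alpha)^\nu\frac{\Gamma(n)}{2^{2\nu}\nu!\,\Gamma(\nu+n)}\,,
$$
so that $a_0=1$. Plugging termwise into $4xy''+4ny'+\alpha y$, identifying the coefficient of $x^{\nu-1}$ gives the recurrence $4\nu(\nu+n-1)a_\nu=-\alpha a_{\nu-1}$, which is precisely the one satisfied by the $a_\nu$ above. Hence $y$ solves the equation, and $y(0)=1$ by inspection. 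The series has infinite radius of convergence (it is entire in~$x$), so $y$ is $C^\infty$, actually analytic, near~$0$.

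For uniqueness, the ODE has a regular singular point at~$0$ with indicial roots~$0$ and~$1-n$. I would show that any $C^\infty$ solution $\tilde y$ near~$0$ has all its Taylor coefficients at~$0$ determined by~$\tilde y(0)$: evaluating the ODE at~$0$ gives $4n\tilde y'(0)+\alpha\tilde y(0)=0$, and differentiating $k$ times gives $4(k+1)(k+n)\tilde y^{(k+1)}(0)+\alpha\tilde y^{(k)}(0)=0$, where the coefficient $4(k+1)(k+n)$ is nonzero since $n>0$. Hence the Taylor series of $\tilde y$ at~$0$ coincides with that of the explicit solution above, and two $C^\infty$ solutions of this ODE with the same Taylor series must agree (they differ by a $C^\infty$ solution flat at~$0$, which by the same recurrence is~$0$ to all orders, and then a short continuation argument using standard ODE theory away from the singular point at~$0$ finishes the job).

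For the boundedness dichotomy, I would separate into three cases according to the position of $\mu^2=\alpha\in\Cb$. When $\alpha\geq 0$, choose $\mu=\sqrt\alpha\geq 0$; then $\mu\sqrt x$ ranges over $[0,\infty)$ and the estimate $\fbs{n-1}(z)=O(z^{-(n-1)-1/2})$ from~(\ref{der_bessel}), combined with $\fb{n-1}{0}=1$, shows that $y$ is bounded. When $\alpha<0$, then $\mu\sqrt x$ is purely imaginary, and $J_{n-1}(it)=i^{n-1}I_{n-1}(t)$ with $I_{n-1}(t)\sim e^t/\sqrt{2\pi t}$; the prefactor $(z/2)^{-(n-1)}$ in the definition of $\fbs{n-1}$ is only polynomial in $\sqrt x$, so $y$ grows exponentially in $\sqrt x$. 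When $\alpha\in\Cb\setminus\Rb$, I would use the classical asymptotic $J_\alpha(z)\sim\sqrt{2/(\pi z)}\cos(z-\alpha\pi/2-\pi/4)$ valid for $|\arg z|<\pi$, noting that $\mu\sqrt x$ has nonzero imaginary part growing like $\sqrt x$, which produces exponential growth of $|\cos|$. This is the delicate step: I expect the main obstacle to be a careful treatment of the complex asymptotics uniformly in~$x$, in particular choosing the branch $\mu=\sqrt\alpha$ consistently so that the estimate is well defined and handling small~$x$ separately via the series. Once these three cases are assembled, boundedness on a neighborhood of~$0$ already follows from $C^\infty$ regularity, so boundedness on $\Rb^+$ reduces exactly to the condition $\alpha\geq 0$.
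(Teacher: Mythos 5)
The paper does not actually prove this lemma: it is quoted from the reference \cite{Far} in the appendix on special functions, so there is no in-paper argument to compare yours against. On its own merits your proposal is correct and is the standard Frobenius-type argument. Two small points. First, a harmless slip: differentiating the ODE $k$ times and evaluating at $0$ gives $4(k+n)\,\tilde y^{(k+1)}(0)+\alpha\,\tilde y^{(k)}(0)=0$, not $4(k+1)(k+n)\,\tilde y^{(k+1)}(0)+\alpha\,\tilde y^{(k)}(0)=0$; the extra factor $(k+1)$ belongs to the recurrence on the Taylor coefficients $a_{k+1}=\tilde y^{(k+1)}(0)/(k+1)!$, not on the derivatives themselves. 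Either form gives a nonzero coefficient for $n>0$, so the conclusion is unaffected. Second, your uniqueness step is the only place that genuinely needs care: determining the Taylor series does not by itself kill a flat $C^\infty$ solution, and the continuation argument you sketch must use that on $(0,\epsilon)$ the second Frobenius solution (indicial root $1-n$, behaving like $x^{1-n}$ or carrying a logarithm) is never $C^\infty$ at $0$ for $n>0$ — e.g.\ its first derivative blows up like $x^{-n}$ even when $0<n<1$ and the solution itself stays bounded — so the coefficient of that solution in the difference must vanish, and the value at $0$ then kills the analytic component. You gesture at exactly this, and with that detail filled in the proof is complete; the three-case boundedness discussion is fine (and the cases $\alpha<0$ and $\alpha\in\Cb\setminus\Rb$ could be merged, since both amount to $\Im\mu\neq0$ forcing exponential growth of $|J_{n-1}(\mu\sqrt x)|$ against a merely polynomial prefactor).
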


\subsection{Fonction $\fls n \alpha$}
\label{app_lag}

On note $L_n^{(\alpha)}$ le  polyn\^ome  
de Laguerre de degr\'e $n$ et de param\`etre $\alpha>-1$.
On peut le d\'efinir 
par les conditions d'orthogonalit\'e 
et de normalisation suivantes
\cite{thangavelu,szego} :
\begin{equation}
  \label{cond_orth_norm_poly_lag}
  \forall\; n,m\in \Nb
  \quad:\quad
  \int_{x=0}^\infty 
  e^{-x} x^\alpha L_n^{(\alpha)}(x)dx
  \,=\,
  \Gamma(\alpha+1) 
  C_{n+\alpha}^n \delta_{n,m}
  \quad.
\end{equation}
On appelle \textbf{fonction de Laguerre}
\index{Fonction sp\'eciale!de Laguerre $\fls n \alpha,\flbs n \alpha$} 
de degr\'e $n$ et de param\`etre $\alpha$ 
la fonction not\'ee $\fls n {\alpha}$ sur~$\Rb$ 
donn\'ee par :
$$
\fl{n}{\alpha}{x}=L_n^{\alpha}(x)e^{-\frac{x}{2}}
\quad .
$$    
On aura besoin de la fonction de Laguerre \textbf{normalis\'ee}
de  degr\'e  $n$  et  de param\`etre $\alpha$ 
que l'on note $\flbs n {\alpha}$ et qui est donn\'ee par :
$$
\flbs {n}{\alpha}
\,:=\,
\frac{\fls  {n}{\alpha} }{C_{n+\alpha}^n}
$$

La fonction de Laguerre normalis\'ee 
est born\'ee par sa valeur~1 en~0. 
On omettra souvent le qualificatif ``normalis\'e''. 

Rappelons \cite{szego} :
d'une part le calcul des d\'eriv\'es des polyn\^omes de Laguerre : 
${L_k^{\alpha}}'=-L_{k-1}^{{\alpha}+1}$,
dont on d\'eduit :
\begin{equation}
  \label{derpsi}
  \fls{k}{\alpha }'
  \,=\,
  \frac{-1}{2}\fls{k}{{\alpha}}-\fls{k-1}{{\alpha}+1}
  \quad,
\end{equation}
d'autre part la d\'ecroissance exponentielle 
des fonctions de Laguerre :
$$
\exists \, C>0
\qquad \forall\, k\in\Nb\,,\; x\in\Rb^+
\quad:\quad
\nn{\flb k {\alpha}{x} }
\, \leq \,
Ce^{-\gamma x}
\quad,
$$
dont on d\'eduit
par r\'ecurrence sur l'\'egalit\'e~(\ref{derpsi}), 
que les fonctions de Laguerre normalis\'ees 
$\flbs n {\alpha}$ et leurs d\'eriv\'ees 
sont born\'ees ind\'ependemment du degr\'e $k$ 
(mais pas du param\`etre $\alpha$ et du nombre de d\'eriv\'es).

\begin{lem}[\cite{Far} proposition~V.11]
  \label{lem_fcn_hypergeom}
  \index{Fonction sp\'eciale!$F(\alpha,\gamma;z)$}
  L'\'equation hyperg\'eom\'etrique confluente 
  de param\`etre $\alpha, \gamma \in \Cb$ s'\'ecrit :
  \begin{equation}
    \label{eq_hypergeom_confl}
    zy''+(\gamma-z)y'-\alpha y
    \,=\,0
    \quad.
  \end{equation}
  Pour $\gamma\not= 0,-1,-2,\ldots$,
  on appelle fonction hyperg\'eom\'etrique confluente 
  de param\`etres $\alpha,\gamma$ 
  la fonction enti\`ere not\'ee $F(\alpha,\gamma;.)$ 
  et donn\'ee par :
  $$
  F(\alpha,\gamma;z)
  \,=\,
  \sum_{k=0}^\infty \frac{{(\alpha)}_k}{{(\gamma)}_k}
  \frac{z^k}{k!}
  \quad,
  $$
  o\`u l'on note pour $\beta\in \Cb$ et $k\in \Nb$:
  ${(\beta)}_k=
  \beta(\beta+1)\ldots(\beta+k)$.
  La fonction $F(\alpha,\gamma;.)$  est solution 
  de l'\'equation~(\ref{eq_hypergeom_confl}) 
  de param\`etre $\alpha, \gamma $;
  et toute solution $C^\infty$ au voisinage de $0$ 
  lui est proportionnelle.

  Supposons $\gamma>0$.
  \begin{enumerate}
  \item Si $\alpha$ est un entier n\'egatif :
    $\alpha =-l, l\in \Nb$, 
    alors $F(\alpha,\gamma;.)$ est un polyn\^ome,
    de Laguerre  de degr\'e $l$ \`a un constante pr\`es :
    $$
    F(-l,\gamma;z)
    \,=\,
    \frac1{C_{l+\gamma-1}^l}L_l^{\gamma-1}(z)
    \quad.
    $$
  \item Si $\Re \alpha<\gamma$ 
    et si $\alpha$ n'est pas un entier n\'egatif, 
    alors on a l'estimation suivante pour $z$ grand :
    $$
    F(\alpha,\gamma;z)
    \,\sim\,
    \frac{\Gamma(\gamma)}{\Gamma(\alpha)}
    e^z z^{\alpha-\gamma}
    \quad .
    $$
  \end{enumerate} 
\end{lem}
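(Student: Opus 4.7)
The plan is to handle the three claims of the lemma in the order they are stated, reducing each to a short analytic computation.

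First I would verify that $F(\alpha,\gamma;\cdot)$ is well-defined and entire whenever $\gamma \notin \{0,-1,-2,\ldots\}$ by a ratio test on the coefficients $(\alpha)_k/[(\gamma)_k k!]$, then check term-by-term that the series satisfies the equation~(\ref{eq_hypergeom_confl}); the coefficients of $z^k$ in $zy''+(\gamma-z)y'-\alpha y$ telescope using the identities ${(\alpha)}_{k+1}=(\alpha+k){(\alpha)}_k$ and ${(\gamma)}_{k+1}=(\gamma+k){(\gamma)}_k$. For uniqueness among $C^\infty$ solutions at $0$, I would apply the Frobenius method at the regular singular point $z=0$: the indicial equation is $r(r-1+\gamma)=0$, with roots $0$ and $1-\gamma$. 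The root $0$ yields the series $F(\alpha,\gamma;\cdot)$ itself, and the second root gives a solution behaving like $z^{1-\gamma}$ (possibly with a logarithmic term if $1-\gamma\in\mathbb{N}$); under the assumption $\gamma\notin\{0,-1,\ldots\}$, no nonzero multiple of this second solution is $C^\infty$ at $0$, so any such solution is proportional to $F$.

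For the first case ($\alpha=-l$), I would simply observe that $(-l)_k=0$ for $k>l$, so the series truncates to a polynomial of degree $l$. Since both $F(-l,\gamma;\cdot)$ and $L_l^{\gamma-1}$ solve the same ODE ($zy''+(\gamma-z)y'+ly=0$), by the uniqueness step they are proportional; the constant is determined by evaluation at $z=0$, using $F(-l,\gamma;0)=1$ and the standard value $L_l^{\gamma-1}(0)=C_{l+\gamma-1}^l$ (which follows from~(\ref{cond_orth_norm_poly_lag}) by expanding $L_l^{\gamma-1}$ in the basis of monomials and integrating).

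For the asymptotic, the key tool is the Euler-type integral representation
\[
F(\alpha,\gamma;z)\,=\,\frac{\Gamma(\gamma)}{\Gamma(\alpha)\Gamma(\gamma-\alpha)}\int_0^1 e^{zt}\,t^{\alpha-1}(1-t)^{\gamma-\alpha-1}\,dt,
\]
valid for $0<\Re\alpha<\Re\gamma$ (this is proved by expanding $e^{zt}$ and integrating term by term via the Beta integral~(\ref{eg_int_Gamma})). Then I would apply Laplace's method via the substitution $t=1-s/z$: the integral becomes
\[
e^z z^{\alpha-\gamma}\int_0^z e^{-s}(1-s/z)^{\alpha-1}s^{\gamma-\alpha-1}\,ds,
\]
which tends to $e^z z^{\alpha-\gamma}\Gamma(\gamma-\alpha)$ as $z\to\infty$ by dominated convergence (the integrand is dominated for $z$ large by a multiple of $e^{-s}s^{\gamma-\alpha-1}$ plus boundary contributions one controls). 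Multiplying by the prefactor yields the claimed equivalent $\Gamma(\gamma)\Gamma(\alpha)^{-1}e^z z^{\alpha-\gamma}$.

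The main obstacle is that the integral representation requires $\Re\alpha>0$, whereas the lemma only asks $\Re\alpha<\gamma$ and $\alpha\notin\{0,-1,\ldots\}$. To extend the asymptotic to the strip $\Re\alpha\leq 0$ (minus the excluded integers), I would use the contiguous relation $\gamma F(\alpha,\gamma;z)-\gamma F(\alpha-1,\gamma;z)-zF(\alpha,\gamma+1;z)=0$ (easily checked from the series) to shift $\alpha$ upward by integer steps until its real part is positive; the prefactor $\Gamma(\gamma)/\Gamma(\alpha)$ transforms consistently under such shifts (this is where the exclusion of negative integer $\alpha$ is essential, to avoid poles of $1/\Gamma(\alpha)$), giving the asymptotic in the full claimed range. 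An alternative would be a Hankel-contour representation, but the contiguous-relation route is more elementary and better suited to the present exposition.
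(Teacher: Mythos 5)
The paper does not prove this lemma at all: it is quoted verbatim from \cite{Far}, proposition V.11, so there is no internal argument to compare yours against. Judged on its own merits, your treatment of the first two thirds is fine: the ratio test, the term-by-term verification, the Frobenius analysis at the regular singular point (the second exponent $1-\gamma$ never being a non-negative integer precisely because $\gamma\notin\{0,-1,-2,\ldots\}$), the truncation of the series for $\alpha=-l$, and the identification with $L_l^{\gamma-1}$ via the shared ODE and the value at $0$ are all correct and standard. The Euler integral representation and the Laplace/dominated-convergence computation giving $F(\alpha,\gamma;z)\sim\Gamma(\gamma)\Gamma(\alpha)^{-1}e^zz^{\alpha-\gamma}$ for $0<\Re\alpha<\gamma$ are also correct (the splitting of $[0,z]$ near the endpoint $s=z$ needs the small care you allude to, but it works).

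The genuine gap is in the last step, the extension of the asymptotic to $\Re\alpha\le 0$. Your contiguous relation $\gamma F(\alpha,\gamma;z)-\gamma F(\alpha-1,\gamma;z)-zF(\alpha,\gamma+1;z)=0$ is correct as an identity, but it does not transport the \emph{leading-order} asymptotic downward in $\alpha$. Indeed, rewriting it as $F(\alpha-1,\gamma;z)=F(\alpha,\gamma;z)-\frac{z}{\gamma}F(\alpha,\gamma+1;z)$ and inserting the two equivalents, the right-hand side becomes
$\frac{\Gamma(\gamma)}{\Gamma(\alpha)}e^zz^{\alpha-\gamma}-\frac{z}{\gamma}\cdot\frac{\gamma\Gamma(\gamma)}{\Gamma(\alpha)}e^zz^{\alpha-\gamma-1}$, and the two leading terms cancel \emph{exactly}. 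This is unavoidable: the target equivalent $\frac{\Gamma(\gamma)}{\Gamma(\alpha-1)}e^zz^{\alpha-1-\gamma}$ is one power of $z$ smaller than either term on the right, so knowing only the first term of the expansion of $F(\alpha,\gamma;z)$ and $F(\alpha,\gamma+1;z)$ yields merely $F(\alpha-1,\gamma;z)=o(e^zz^{\alpha-\gamma})$, not the claimed equivalent. To repair the argument you must either (i) extract the two-term expansion $F(\alpha,\gamma;z)=\frac{\Gamma(\gamma)}{\Gamma(\alpha)}e^zz^{\alpha-\gamma}\bigl(1+\frac{(1-\alpha)(\gamma-\alpha)}{z}+O(z^{-2})\bigr)$ from the same Euler integral (by pushing the expansion of $(1-s/z)^{\alpha-1}$ one order further), after which the cancellation resolves and the contiguous relation does deliver the correct leading term one step lower; or (ii) continue the integral representation itself in $\alpha$ (integration by parts in $t$, or a Hankel-type contour around $t=0$), which gives the asymptotic directly on the whole half-plane $\Re\alpha<\gamma$ minus the non-positive integers. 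As written, the final paragraph of your proof does not close.
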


D'apr\`es~\cite{Mark},
lorsque $\alpha,\alpha+\beta>-1$.   
on a les estimations suivantes :
\begin{equation}\label{estimation_Mark}
  \int_{x=0}^\infty
  \nn{\fl{n}{\alpha+\beta}{x}}^2
  x^{\alpha}
  dx
  \sim
  \left|\begin{array}{ll}
      n^{\alpha}&\mbox{si}\; \beta<\frac12\\
      n^{\alpha}\ln n&\mbox{si}\; \beta=\frac12\\
      n^{\alpha+2\beta-1}&\mbox{si}\; \beta>\frac12
    \end{array}\right. \quad .
\end{equation}
On en d\'eduit le lemme suivant :
\begin{lem}
  \label{lem_cq_Mark}
  Pour $j,\alpha\in\Nb-\{0\}$,
  les int\'egrales :
  $$
  \int_{x=0}^\infty
  \nn{\flbs {l}{\alpha}^{(m)}(x)}^2
  x^{2j-1}
  dx
  \quad\mbox{o\`u}\quad 0\leq m \leq j
  \quad,
  $$  
  sont born\'ees ind\'ependemment de $l$ tant que 
  $j\leq \alpha$.
\end{lem}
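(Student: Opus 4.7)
Le plan est de d\'evelopper $\flbs{l}{\alpha}^{(m)}$ comme combinaison lin\'eaire de fonctions de Laguerre normalis\'ees ``d\'ecal\'ees'' en degr\'e et en param\`etre, de se ramener aux int\'egrales \'evalu\'ees par (\ref{estimation_Mark}), puis de contr\^oler les asymptotiques \`a l'aide de (\ref{cqstir}). Plus pr\'ecis\'ement, j'it\`ererais d'abord l'\'egalit\'e~(\ref{derpsi}) pour obtenir une \'ecriture
$$
\fls{l}{\alpha}^{(m)}
\,=\,
\sum_{i=0}^{m} c_{i,m}\, \fls{l-i}{\alpha+i}
\quad,
$$
o\`u les constantes $c_{i,m}$ ne d\'ependent que de $m$ (obtenues par r\'ecurrence sur les deux branches du membre de droite de (\ref{derpsi})). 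En utilisant $\fls{l-i}{\alpha+i} = C_{l+\alpha}^{l-i}\,\flbs{l-i}{\alpha+i}$, on d\'eduit que la norme $L^2$ \`a poids $x^{2j-1}$ de $\flbs{l}{\alpha}^{(m)}$ est major\'ee \`a une constante (de $m$) pr\`es par la somme sur $0\leq i\leq m$ des int\'egrales
$$
\frac{1}{(C_{l+\alpha}^l)^2}
\int_{0}^\infty \nn{\fls{l-i}{\alpha+i}(x)}^2 x^{2j-1}\,dx
\quad.
$$

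J'appliquerais ensuite l'estimation~(\ref{estimation_Mark}) \`a chaque int\'egrale en posant $n=l-i$, ``$\alpha$''$=2j-1$ (l\'egitime car $j\geq 1$) et ``$\beta$''$=\alpha+i-2j+1$. Trois cas seraient \`a s\'eparer : $\beta>1/2$, $\beta<1/2$, et $\beta=1/2$ ; ce dernier n'appara\^\i t jamais car l'\'egalit\'e $\beta=1/2$ demanderait $i=2j-\alpha-1/2$, qui n'est pas entier. Combin\'e avec l'asymptotique $(C_{l+\alpha}^l)^{-2}\sim (e/l)^{2\alpha}\Gamma(\alpha+1)^2$ issue de~(\ref{cqstir}), le $i$-\`eme terme est alors \'equivalent, pour $l$ grand, \`a $l^{2i-2j}$ dans le cas $\beta>1/2$ et \`a $l^{2j-1-2\alpha}$ dans le cas $\beta<1/2$.

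C'est l'\'etape cruciale, et le point d\'elicat est de v\'erifier que l'hypoth\`ese $j\leq \alpha$ suffit \`a rendre les deux exposants n\'egatifs ou nuls simultan\'ement : pour le premier cas, $i\leq m\leq j$ entra\^\i ne $2i-2j\leq 0$ ; pour le second, la contrainte $2j-1-2\alpha\leq 0$ est exactement \'equivalente, pour $j,\alpha\in \Nb$, \`a $j\leq \alpha$. La v\'eritable difficult\'e technique est donc juste ce bilan d'exposants : obtenir la convergence dans le cas ``critique'' $\beta<1/2$ (qui est celui o\`u les fonctions $\flbs{l-i}{\alpha+i}$ ne fournissent pas de d\'ecroissance suppl\'ementaire par rapport \`a la normalisation). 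Une fois ces majorations \'etablies, les int\'egrales en $0$ sont trivialement finies puisque $\flbs{l}{\alpha}^{(m)}$ est analytique en $0$, ce qui ach\`eve la preuve.
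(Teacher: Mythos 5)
Votre preuve suit exactement la m\^eme d\'emarche que celle du texte : it\'eration de~(\ref{derpsi}) pour \'ecrire $\flbs{l}{\alpha}^{(m)}$ comme combinaison des $\fls{l-i}{\alpha+i}/C^l_{l+\alpha}$, application de l'estimation~(\ref{estimation_Mark}) avec le poids $x^{2j-1}$, normalisation par~(\ref{cqstir}), et bilan des exposants $l^{2i-2j}$ et $l^{2j-1-2\alpha}$ donnant la condition $j\leq\alpha$. Elle est correcte, et m\^eme un peu plus soign\'ee sur le cas $\beta=1/2$ (exclu car non entier), que le texte passe sous silence.
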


\begin{proof}[du lemme~\ref{lem_cq_Mark}]
  Gr\^ace \`a une r\'ecurrence sur~(\ref{derpsi}),  
  $\fls l {v'-1}^{(m)}$
  est  une combinaison  lin\'eaire de  $\fls{l-n}{v'-1+n}$ 
  o\`u  $ 0 \leq n \leq m$;
  ainsi il suffit de majorer pour $ 0\leq n\leq m$
  les int\'egrales :
  $$
  I(\alpha,j,l,m,n)
  \,=\,
  \int_{x=0}^\infty
  \nn{\frac{\fls{l-n}{\alpha+n}(x)}  
    {C^l_{l+\alpha}}}^2 
  x^{2j-1} dx
  \quad.
  $$
  D'apr\`es l'estimation~(\ref{estimation_Mark}) 
  et gr\^ace \`a la formule (\ref{cqstir}), 
  on a les \'equivalents pour $l$ grand :
  \begin{itemize}
  \item si $\alpha +n -(2j-1) >\frac 12$,
    $$
    I(\alpha,j,l,m,n)
    \,\sim\,
    {(l-n)}^{2j-1}
    {\left(    {(\frac{e}{l})}^{\alpha}
        \Gamma(\alpha+1)     \right)}^2
    \,\sim \,
    l^{2j-1-2\alpha}
    e^{2\alpha}{\Gamma(\alpha+1)}^2
    \quad;
    $$
  \item si $\alpha +n -(2j-1) >\frac 12$
    \begin{eqnarray*}
      I(\alpha,j,l,m,n)
      &\sim&
      {(l-n)}^{2j-1+2(\alpha +n -(2j-1))-1}
      {\left(    {(\frac{e}{l})}^{\alpha}
          \Gamma(\alpha+1)     \right)}^2\\
      &\sim &
      l^{-2j-1 +2n}
      e^{2\alpha}{\Gamma(\alpha+1)}^2
      \quad.
    \end{eqnarray*}
  \end{itemize}
  Donc les int\'egrales
  $I(\alpha,j,l,m,n), 0\leq n\leq m\leq j$
  sont born\'ees ind\'ependemment de $l$
  tant que $2j-1-2\alpha \leq 0$.
\end{proof}

\subsection{Propri\'et\'es des fonctions $\flsz n=\fls n 0$}
\label{subsec_prop_fcnlag_0}

On adopte la notation : 
$$
\flsz l
\,:=\,
\flbs l 0
\,=\,
\fls l 0
\quad.
$$

D'apr\`es les conditions~(\ref{cond_orth_norm_poly_lag})
d'orthogonalit\'e et de normalisation 
des polyn\^omes de Laguerre, 
les fonctions : 
$$
\Pi_{j=1}^{p'} \flsz {l_j}
\; : \;
x=(x_1,\ldots, x_{p'})\in \Rb^{p'}
\,\mapsto\, \Pi_{j=1}^{p'} \flz {l_j} {x_j}
\; ,\quad l=(l_1,\ldots, l_{p'})\in\Nb^{p'}
\quad,
$$ 
forment une base orthonormale de l'espace $L^2(\Rb^{p'})$.

\subsubsection{Op\'erateurs de D\'ecalage}

\paragraph{Op\'erateurs \mathversion{bold}{$\tau^+,\tau^-, \Delta$}.}
\index{Notation!Op\'erateur!$\Delta,\tau^\pm$}
Pour une fonction $R$ d\'efinie sur $\Nb$, on d\'efinit
les fonctions de d\'ecalage $\tau^+,\tau^-$ par:
$$
\tau^+ R(l)=R(l+1)
\quad\mbox{et}\quad
\tau^- R(l)=
\left\{
  \begin{array}{ll}
    R(l-1) &\mbox{si}\; l\geq 1\\
    0&\mbox{si}\; l=0
  \end{array}
\right.
$$

On d\'efinit l'op\'erateur de diff\'erence  sur les fonctions de $\Nb$ :
$$
\Delta
\,=\,
\tau^+ -\Id
\quad.
$$

L'op\'erateur $\Delta$ commute avec $\tau^+$ et $\tau^-$.

Nous montrons ici que 
gr\^ace aux propri\'et\'es des polyn\^omes de Laguerre, 
certains op\'erateurs de d\'ecalage 
sur la fonction de Laguerre $\flsz l$ 
sont \'egaux \`a des op\'erateurs 
de d\'erivation ou de multiplication par la variable.

\paragraph{Op\'erateur \mathversion{bold}{$\beta$}.}
\index{Notation!Op\'erateur!$\beta$}
D'apr\`es \cite{szego} page 101, on a :
$$
lL_l (x)=(-x+2l-1)L_{l-1}(x)
-(l-1)L_{l-2}(x)
\quad,
$$
donc ici :
$$
x \flz l x
\,=\,
-(l+1)\flz {l+1}  x +(2l+1)\flz l  x  
-l\flz {l-1}  x
\quad.
$$
On pose alors pour une fonction $R: \Nb\rightarrow\Cb$ :
$$
\beta.R 
\,:=\,  
-(l+1)\tau^+.R +(2l+1)R
-l\tau^-.R
\quad,
$$
et on a :
$$
\beta. \flz l x 
\,=\,
x \flz l x
\quad.
$$

\paragraph{Op\'erateur \mathversion{bold}{$\alpha$}.}
\index{Notation!Op\'erateur!$\alpha$}
Encore d'apr\`es \cite{szego} page 102, on a :
$$
xL_l'(x)= l\left(L_l(x) -L_{l-1}(x)\right)    
\quad,
$$
donc ici :
\begin{eqnarray*}
  x\flsz l  '(x)
  &=&
  xe^{-\frac x 2}\left( L_l'(x) -\frac 12 L_l(x)\right)\\
  &=&
  e^{-\frac x 2}\left( l(L_l(x)-L_{l-1}(x)) -\frac 12 xL_l(x)\right)\\
  &=&
  l(\flz l x - \flz {l-1}  x ) -\frac 12 \beta.\flz l  x
  \quad.
\end{eqnarray*}
On pose  alors pour une fonction $R: \Nb\rightarrow\Cb$ :
$$
\alpha.R
\,:=\,  
l( R-\tau^- .R)-\frac12  \beta.R 
\,:=\,  
-\frac12 R
-\frac {l} 2 \tau^-.R
+\frac {l+1}2\tau^+.R
\quad,
$$
et on a :
$$
\alpha. \flz l x 
\,=\,
x \flsz l '(x)
\quad.
$$

\paragraph{Op\'erateur \mathversion{bold}{$\gamma$}.}
Toujours d'apr\`es \cite{szego} page 102,
on a :
$$
L_l^{(\alpha)} 
\,=\,
L_l^{(\alpha+1)}-
L_{l-1}^{(\alpha+1)}
\quad\mbox{et}\quad
{L_l^{(\alpha)}}'
\,=\,
-L_{l-1}^{(\alpha+1)}
\quad,
$$
et donc :
$$
L_l'-L_{l-1}'
\,=\,
-L_{l-1}
\quad.
$$
En utilisant l'expression de $\alpha$ et cette derni\`ere \'egalit\'e, on a :
\begin{eqnarray*}
  \alpha.\flsz l  '
  &=&
  -\frac12\flsz l '
  -\frac l2 \flsz {l-1} '
  +\frac {l+1}2\flsz {l+1} ' \\   
  &=&
  e^{-\frac x2}\left(
    -\frac12 L_l'
    -\frac l2 L_{l-1}'
    +\frac {l+1}2L_{l+1}'\quad
    -\frac12( -\frac12L_l
    -\frac l2 L_{l-1}
    +\frac {l+1}2 L_{l+1})\right) \\  
  &=&
  e^{-\frac x2}\left(
    -\frac l2 L_{l-1}
    -\frac {l+1}2L_l\quad
    -\frac12( -\frac12L_l
    -\frac l2 L_{l-1}
    +\frac {l+1}2 L_{l+1})\right)\\
  &=&
  e^{-\frac x2}\left(
    -\frac{2l+1}4L_l-\frac l4 L_{l-1}-\frac{l+1}4 L_{l+1}
  \right)\quad.
\end{eqnarray*}
On en d\'eduit :
$$
\alpha.\flsz l  '
\,=\,
-\frac{2l+1}4\flsz l 
-\frac l4 \flsz {l-1} 
-\frac{l+1}4 \flsz {l+1} 
\quad.
$$
On d\'efinit l'op\'erateur sur les fonctions de $\Nb$ :
\index{Notation!Op\'erateur!$\gamma$}
$$
\gamma
\,:=\,
-\frac{2l+1}4I-\frac l4 \tau^- -\frac{l+1}4 \tau^+
\quad,
$$
et on a :
$$
\gamma.\flsz l =\alpha.\flsz l  '
\quad.
$$

\subsubsection{Lien entre les op\'erateurs \mathversion{bold}{$\Delta$} et \mathversion{bold}{$\alpha, \beta,\gamma$}}

On v\'erifie directement :
\begin{eqnarray}
  4(\alpha^2+\alpha)
  &=&
  -{\tau^-}^2\left( (l+2)(l+1)\Delta \right)
  -\tau^- \left( (l+1)l \Delta \right)
  +(l+1)(l+2)(\Delta^2+2\Delta)\quad.
  \label{egalite_alpha_2+1}\\
  2\alpha
  &=&
  (\Id+\tau^-)(l\Delta)+\tau^++\tau^-\quad,
  \label{egalite_puissance_alpha_p=1}\\
  \beta
  &=&
  -\tau^-(l\Delta^2)-(2\Id-\tau^-)\Delta\quad,   
  \label{egalite_puissance_beta_p=1}\\
  4\gamma  
  &=&
  -\tau^-(l\Delta^2)-(2\Id-\tau^-)\Delta +2(2l+1)\Id\quad.
  \label{egalite_puissance_gamma_p=1}
\end{eqnarray}

Sur les fonctions de deux variables $l_1,l_2$ enti\`eres, 
on en d\'eduit en particulier :
\begin{eqnarray}
\alpha_1-\alpha_2
&=&
(\tau_1^-+I)(l_1+1)\Delta_1
-(\tau_2^-+I)(l_2+1)\Delta_2 
\label{egalite_alpha_1-2}\\
  2(\alpha_1+\alpha_2+2\alpha_1\alpha_2 )
  &=&
  (\Id+\tau^-_1)(l_1+1)\Delta_1
  +(\Id+\tau^-_2)(l_2+1)\Delta_2 \nonumber\\
  &&\quad
  +(\Id+\tau^-_1)(\Id+\tau^-_2)\;
  (l_1+1)(l_2+1)\; \Delta_1\Delta_2 
  \quad,  \label{egalite_alpha12_casp=1}\\
  4\gamma_1\beta_2
  &=&
  4\beta_1\beta_2
  -2\tau_2^-(2l_1+1)l_2\Delta_2^2-2(2\Id-\tau_2^-)(2l_1+1)\Delta_2
  \quad. \label{egalite_betagamma}   
\end{eqnarray}

On cherche \`a \'ecrire la puissance de certains op\'erateurs 
en fonction de $\Delta$, la multiplication par $l$,
\`a des d\'ecalages pr\`es.
Pr\'ecisons la forme de ces d\'ecalages :
c'est une combinaison lin\'eaire finie \`a
coefficients rationnels 
en les op\'erateurs ${\tau^+}^{k^+}$ et ${\tau^-}^{k^-}$ 
o\`u $k^+,k^-\in \Nb$.
On peut donc l'\'ecrire sous la forme :
$P(\tau^+,\tau^-):=P(\tau^\pm)$,
o\`u $P\in\Qb[X,Y]$ 
est un polyn\^ome 
de deux variables \`a coefficients rationnels.

Comme
on calcule directement :
$[{\tau^+}^p, l] =p{\tau^+}^p$
et
$[{\tau^-}^p, l] =-p{\tau^-}^p$,
on a :
\begin{equation}
  \label{prop_com_poly_tau}
  \forall \, P\in \Qb[X,Y] \, , \quad
  \exists!\, Q_p\in \Qb[X,Y]
  \quad : \quad
  [P(\tau^\pm),l]
  \,=\,
  Q_P(\tau^\pm)
  \quad.  
\end{equation}

On remarque que les  termes suivants 
(pour $P\in \Qb[X,Y]$):
$$
w:=l\Delta P(\tau^\pm) l^q
\quad,\qquad
x:=l\Delta^2 P(\tau^\pm) l^q
\quad,
$$
peuvent se mettre sous la forme d'une somme de terme du type 
$R(\tau^\pm) l^r \Delta^s$
o\`u $R\in \Qb[X,Y]$ et $r,s\in \Nb$.
En effet, d'une part, 
d'apr\`es la propri\'et\'e~(\ref{prop_com_poly_tau})
et la commutativit\'e des op\'erateurs $\Delta, \tau^\pm$, 
il existe un polyn\^ome 
$Q\in \Qb[X,Y]$ tel que
$$
w\,=\,
\left(P(\tau^\pm)l+Q(\tau^\pm)\right)
\Delta l^q  
\quad\mbox{et}\quad
x\,=\,
\left(P(\tau^\pm)l+Q(\tau^\pm)\right)
\Delta^2 l^q  
\quad;
$$
d'autre part,
on connait le commutateur :
$[\Delta,l^q]=\tau^+ (l^q- {(l-1)}^q)$;
calculons l'autre commutateur $[\Delta^2, l^q]$.
en commen\c cant  par :
$$
\Delta^2 l^q
\,=\,
\Delta l^q\Delta - \Delta [l^q,\Delta]
\,=\,
(l^q \Delta - [l^q,\Delta])\Delta 
- \Delta [l^q,\Delta]
\quad;
$$
on d\'eduit de l'expression de $[\Delta, l^q]$ :
\begin{eqnarray*}
  [\Delta^2, l^q]
  &=&
  - [l^q,\Delta]\Delta 
  - \Delta [l^q,\Delta]\\
  &=&
  -\tau^+ \left(l^q- {(l-1)}^q\right) \Delta
  -\Delta \tau^+ \left(l^q- {(l-1)}^q\right)
  \quad ;\\
  \Delta  \left(l^q- {(l-1)}^q\right)
  &=&
  l^q \Delta -  \tau^+ \left(l^q- {(l-1)}^q\right)\\
  &&\quad
  - {(l-1)}^q\Delta
  +\tau^+ \left({(l-1)}^q- {(l-2)}^q\right)\\
  &=&
  \left( l^q- {(l-1)}^q\right) \Delta 
  +\tau^+ \left(-l^q+2 {(l-1)}^q -{(l-2)}^q\right)
  \quad,
\end{eqnarray*}
puis gr\^ace \`a la commutativit\'e des op\'erateurs
$\Delta$ et $\tau^+$ :
$$
[\Delta^2, l^q]
\,=\,
-\tau^+ \left\{
  2\left(l^q- {(l-1)}^q\right) \Delta
  +\tau^+ \left(-l^q+2 {(l-1)}^q -{(l-2)}^q\right)\right\}
\quad;
$$
on en d\'eduit :
\begin{eqnarray}
  w
  &=&
  l\Delta P(\tau^\pm) l^q\nonumber\\
  &=&
  \left(P(\tau^\pm)l+Q(\tau^\pm)\right)
  (l^q\Delta+ \tau^+ (l^q- {(l-1)}^q))  \quad,
  \label{egalite_ldeltaPl}\\
  x
  &=&
  l\Delta^2 P(\tau^\pm) l^q\nonumber\\
  &=&
  \left(P(\tau^\pm)l+Q(\tau^\pm)\right)
  \left( l^q \Delta^2\right.\nonumber\\
  &&\quad
  \left. + \tau^+\left\{
      2\left(l^q- {(l-1)}^q\right) \Delta
      +\tau^+ \left(-l^q+2 {(l-1)}^q -{(l-2)}^q\right)\right\}
  \right)
  \quad.
  \label{egalite_ldelta2Pl}
\end{eqnarray}

\begin{lem}[Puissance de \mathversion{bold}{$\beta$}]
  \label{lem_puissance_beta}
  Pour $p\in \Nb, p\geq 1$, l'op\'erateur 
  $\beta^p$
  est la somme sur $q=0,\ldots, p$ des op\'erateurs :
  $$
  P_q(\tau^\pm)  l^q \Delta^{p+q}
  \quad\mbox{o\`u}\quad
  P_q\in\Qb[X,Y]\quad.
  $$
\end{lem}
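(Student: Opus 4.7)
The plan is to proceed by induction on $p \geq 1$. The base case $p = 1$ is precisely the identity~(\ref{egalite_puissance_beta_p=1}): writing $\beta = -\tau^-(l\Delta^2) - (2\Id - \tau^-)\Delta$ exhibits $\beta^1$ in the desired form, with $P_0(\tau^\pm) = -(2 - \tau^-)$ attached to $l^0 \Delta^1$ and $P_1(\tau^\pm) = -\tau^-$ attached to $l^1 \Delta^2$.

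For the inductive step, assume $\beta^p = \sum_{q=0}^{p} P_q(\tau^\pm)\, l^q\, \Delta^{p+q}$ and compute $\beta^{p+1} = \beta \cdot \beta^p$ by applying $\beta$ to the left of each summand. Three algebraic tools already established in the text suffice: (i) $\Delta$ commutes with every polynomial in $\tau^\pm$, so one may slide $P_q$ past any power of $\Delta$; (ii) property~(\ref{prop_com_poly_tau}), which gives $l \cdot R(\tau^\pm) = R(\tau^\pm)\, l + Q_R(\tau^\pm)$ for a polynomial $Q_R \in \Qb[X,Y]$; and (iii) the commutation relations $\Delta l^q = l^q \Delta + \tau^+(l^q - (l-1)^q)$ and the analogous expansion of $[\Delta^2, l^q]$, both written out just above identities~(\ref{egalite_ldeltaPl})--(\ref{egalite_ldelta2Pl}). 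In fact those two identities already compute $l\Delta P(\tau^\pm) l^q$ and $l\Delta^2 P(\tau^\pm) l^q$, which are exactly the shapes that appear when one applies $-\tau^- l \Delta^2$ and $-(2 - \tau^-)\Delta$ to $P_q(\tau^\pm) l^q$.

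The key invariant to track is the \emph{excess} $e := (\text{degree in }\Delta) - (\text{degree in }l)$. Every term of $\beta^p$ has $e = p$; both summands of $\beta$ have $e = 1$ (namely $2-1$ and $1-0$); the commutation~(iii) replaces $\Delta^k l^q$ by $l^q \Delta^k$ plus correction terms of strictly lower $l$-degree but correspondingly lower $\Delta$-degree, preserving $e$; and~(ii) does not affect $e$ at all. Consequently every term arising in $\beta^{p+1}$ has excess $e = p+1$, while the maximal $l$-degree is $p+1$, attained uniquely from the composition of $-\tau^- l\Delta^2$ with the $q = p$ term of $\beta^p$. Collecting by $l$-degree $q'$ yields the desired decomposition $\beta^{p+1} = \sum_{q' = 0}^{p+1} P'_{q'}(\tau^\pm)\, l^{q'}\, \Delta^{(p+1)+q'}$.

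The one step requiring care is the bookkeeping: one must check that all the ``error'' monomials produced by iterating the commutators~(ii)--(iii) land in the canonical form $P(\tau^\pm) l^{q'} \Delta^{(p+1)+q'}$ with $0 \leq q' \leq p+1$, and that no spurious higher $l$-degree appears. This is purely formal and reduces, after using~(i) to push all $\tau^\pm$-polynomials to the left, to the observation that the commutator $[\Delta^k, l^q]$ is $\tau^+$ times an operator of strictly smaller bidegree whose excess equals that of $\Delta^{k-1} l^{q-1}$; the excess invariant then closes the induction without further calculation.
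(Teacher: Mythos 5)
Your proposal follows the same route as the paper's own proof: induction on $p$, base case read off from~(\ref{egalite_puissance_beta_p=1}), and an inductive step in which $\beta$ is applied on the left of each summand $P_q(\tau^\pm)l^q\Delta^{p+q}$ and the result is renormalised using exactly the ingredients the paper invokes, namely the commutativity of $\Delta$ with polynomials in $\tau^\pm$, the property~(\ref{prop_com_poly_tau}), the relation $[\Delta,l^q]=\tau^+(l^q-(l-1)^q)$, and the identities~(\ref{egalite_ldeltaPl})--(\ref{egalite_ldelta2Pl}). In structure and in the tools used, the two arguments coincide, and your proof is no less complete than the one in the text.

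The one place where your write-up overstates its case is the ``excess'' invariant $e=\deg_\Delta-\deg_l$. The correction $[\Delta,l^q]=\tau^+(l^q-(l-1)^q)$ is $\tau^+$ times a polynomial in $l$ of degree $q-1$; its top monomial $ql^{q-1}$ does have the same excess $1-q$ as $\Delta l^q$, but its lower monomials $l^k$ with $k<q-1$ have excess $-k>1-q$, since the $\Delta$-degree drops by exactly one while the $l$-degree can drop by more. (Already for $p=2$ one finds, e.g., a contribution $(\tau^-)^2\,l\,\Delta^4$ coming from $(l+1)l\,\Delta^4$, which has excess $3$, not $2$.) So it is not true that every term arising in the expansion of $\beta^{p+1}$ has excess exactly $p+1$; the correct statement is that the excess never decreases, i.e.\ every term $R(\tau^\pm)\,l^{j}\Delta^{s}$ produced satisfies $s\geq(p+1)+j$, with $j\leq p+1$. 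To close the induction you then need one further observation, which your argument omits: since $\Delta=\tau^+-\Id$ is itself an element of $\Qb[\tau^+,\tau^-]$, a term with surplus $\Delta$'s, $s>(p+1)+j$, is brought to the canonical shape by commuting $\Delta^{s-(p+1)-j}$ to the left of $l^{j}$ and absorbing it into the polynomial; the new corrections generated by this commutation have strictly smaller $l$-degree and still non-decreased excess, so the procedure terminates. With the invariant weakened from ``preserving $e$'' to ``not decreasing $e$'' and this absorption step added, your bookkeeping is complete; the paper's proof is equally silent on this point.
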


\begin{proof}[du lemme~\ref{lem_puissance_beta}]
  Le cas $p=1$ est \'evident 
  d'apr\`es l'\'egalit\'e~(\ref{egalite_puissance_beta_p=1}).
  Montrons ce lemme par r\'ecurrence. 
  On fixe $p$, et on suppose la propri\'et\'e du lemme vraie au rang~$p$.
  Montrons maintenant que la propri\'et\'e du lemme est alors vraie au rang~$p+1$.
  D'apr\`es l'expression~(\ref{egalite_puissance_beta_p=1}) de $\beta$, 
  et l'hypoth\`ese de r\'ecurrence, on a : 
  \begin{eqnarray*}
    \beta^{p+1}
    &=&
    \left(-\tau^-(l\Delta^2)-(2\Id-\tau^-)\Delta   \right)
    \left( P_q(\tau^-)  l^q \Delta^{p+q}\right)\\
    &=&
    -\tau^-l P_q(\tau^\pm) \Delta^2 l^q \Delta^{p+q}
    -(2\Id-\tau^-)
    \left( P_q(\tau^\pm) \Delta l^q \Delta^{p+q}\right)
    \quad,
  \end{eqnarray*}
  car les op\'erateurs $\tau^-$ et $\Delta$ commutent.
  Le premier terme du membre de gauche est de la forme voulue gr\^ace au 
  calcul~(\ref{egalite_ldelta2Pl});
  le deuxi\`eme l'est \'egalement, car on a d\'ej\`a directement calcul\'e :
  $[\Delta,l^q]=\tau^+ (l^q- {(l-1)}^q)$.
\end{proof}

On aura \'egalement besoin de l'expression des puissances de 
l'op\'erateur~$\partial_\lambda-\alpha/\lambda$
en fonction de $\Delta$ :

\begin{lem}[Puissance de \mathversion{bold}{$\partial_\lambda-\alpha/\lambda$}]
  \label{lem_puissance_der,alpha}
  Pour une fonction 
  $$
  R: 
  \left\{\begin{array}{rcl}
      \Rb\times \Nb
      &\rightarrow& \Cb \\
      \lambda,l
      &\mapsto&
      R(\lambda,l)
    \end{array}\right.
  \quad,
  $$
  l'expression 
  $$
  {\left(\partial_\lambda-\frac{\alpha}{\lambda}\right)}^p.R\, (\lambda,l)
  $$
  peut s'\'ecrire sous la forme d'une somme sur 
  $$
  (a,b,c,d)\in\Nb^4
  \quad\mbox{tels que}\quad
  a+b=p \; ,\qquad
  b+c\leq p \; ,\qquad
  d\leq c
  \quad,
  $$ 
  de termes de la forme :
  $$
  P_{a,b,c,d}(\tau^\pm)
  \lambda^{-a} \partial_\lambda^b (l^d\Delta^c).R\, (\lambda,l)
  \quad,\quad\mbox{o\`u}\quad
  P_{a,b,c,d}\in \Qb[X,Y]
  \quad.
  $$
\end{lem}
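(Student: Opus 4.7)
The plan is to prove this by induction on $p$, adapting the technique that led to Lemma~\ref{lem_puissance_beta} and the identities~(\ref{egalite_ldeltaPl})--(\ref{egalite_ldelta2Pl}). The base case $p=1$ is immediate from the identity (\ref{egalite_puissance_alpha_p=1}), namely $2\alpha = (\Id+\tau^-)(l\Delta) + \tau^+ + \tau^-$. Dividing by $\lambda$, one reads off three terms of the claimed shape: $\partial_\lambda R$ corresponding to $(a,b,c,d)=(0,1,0,0)$; the term $\tfrac{1}{2}\lambda^{-1}(\Id+\tau^-)(l\Delta).R$ corresponding to $(a,b,c,d)=(1,0,1,1)$; and $\tfrac{1}{2}\lambda^{-1}(\tau^++\tau^-).R$ corresponding to $(a,b,c,d)=(1,0,0,0)$. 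All three satisfy $a+b=1$, $b+c\leq 1$, $d\leq c$.

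For the inductive step, I would apply $(\partial_\lambda-\alpha/\lambda)$ to a generic allowed term $T:=P_{a,b,c,d}(\tau^\pm)\,\lambda^{-a}\partial_\lambda^b(l^d\Delta^c).R$ with $a+b=p$, $b+c\leq p$, $d\leq c$. Since $P(\tau^\pm)$, $l$ and $\Delta$ act only on the discrete variable, they commute with $\lambda$ and $\partial_\lambda$. The contribution $\partial_\lambda T$ splits via Leibniz into $P_{a,b,c,d}(\tau^\pm)\lambda^{-a}\partial_\lambda^{b+1}(l^d\Delta^c).R$ (shift $b\mapsto b+1$) and $-aP_{a,b,c,d}(\tau^\pm)\lambda^{-a-1}\partial_\lambda^b(l^d\Delta^c).R$ (shift $a\mapsto a+1$); both satisfy the new constraints with $p$ replaced by $p+1$, since $a+b$ increases by $1$ while $c,d$ are unchanged. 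The contribution $-\lambda^{-1}\alpha\, T$ equals $-\lambda^{-a-1}\partial_\lambda^b\cdot\alpha P_{a,b,c,d}(\tau^\pm)(l^d\Delta^c).R$, and the task reduces to a purely algebraic lemma on the $l$-variable: for any $P\in\Qb[X,Y]$ and any $c,d\in\Nb$ with $d\leq c$, the operator $\alpha\,P(\tau^\pm)\,l^d\,\Delta^c$ can be written as a finite sum of operators $Q_i(\tau^\pm)\,l^{d_i}\,\Delta^{c_i}$ with $c_i\leq c+1$ and $d_i\leq c_i$.

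This algebraic lemma is the heart of the step and where I expect to spend the most care. Substituting the identity for $\alpha$ produces two types of contributions. The shift terms $(\tau^++\tau^-)P(\tau^\pm)l^d\Delta^c$ are harmless: the elementary commutations $\tau^\pm l^d = (l\pm 1)^d\tau^\pm$ combined with binomial expansion transform these into $\tilde Q(\tau^\pm)l^k\Delta^c$ with $k\leq d\leq c$, so the shape is preserved with $(c_i,d_i)=(c,k)$. The genuinely delicate term is $(\Id+\tau^-)(l\Delta)P(\tau^\pm)l^d\Delta^c$. Commuting $P(\tau^\pm)$ leftwards (so $P$ contributes to the final polynomial in shift operators) reduces matters to rewriting $l\Delta\cdot l^d\Delta^c$. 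Using the commutator $[\Delta,l^d] = ((l+1)^d - l^d)\tau^+$ already computed in the proof of~(\ref{egalite_ldeltaPl}), one obtains
$$l\Delta\cdot l^d\Delta^c \;=\; l^{d+1}\Delta^{c+1} \;+\; l\,\bigl((l+1)^d-l^d\bigr)\,\tau^+\,\Delta^c.$$
The first summand gives $(c_i,d_i)=(c+1,d+1)$, and since $d\leq c$ we have $d+1\leq c+1$. The second summand, after expanding $(l+1)^d-l^d$ as a polynomial of degree $\leq d-1$ in $l$, yields terms of shape $\tilde Q(\tau^\pm)l^k\Delta^c$ with $k\leq d\leq c$, again satisfying $d_i\leq c_i$. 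Reintroducing the outer prefactor $-\lambda^{-a-1}\partial_\lambda^b$ shifts $(a,b)\mapsto(a+1,b)$ so that the new parameters satisfy $a'+b'=p+1$, $b'+c_i\leq b+c+1\leq p+1$, and $d_i\leq c_i$, closing the induction.

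The main obstacle, as signalled above, is the bookkeeping that ensures the increment of $c$ by at most one in each application of $\alpha$ is compatible with both $b+c\leq p$ and $d\leq c$: the critical case is $d=c$, where $l\Delta\cdot l^c\Delta^c$ produces the boundary term $l^{c+1}\Delta^{c+1}$, and it is precisely the structural identity $[\Delta,l^d]=((l+1)^d-l^d)\tau^+$ that prevents any term with $d_i>c_i$ from appearing. Once this algebraic lemma is in hand, the induction is mechanical and the sum is finite because each application of $(\partial_\lambda-\alpha/\lambda)$ produces boundedly many new terms.
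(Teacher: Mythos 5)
Your proof is correct and follows essentially the same route as the paper's: induction on $p$ with the base case read off from the identity $2\alpha=(\Id+\tau^-)(l\Delta)+\tau^++\tau^-$, Leibniz for the $\partial_\lambda$ contribution, and the commutator $[\Delta,l^d]$ (the content of the paper's identity~(\ref{egalite_ldeltaPl})) to absorb the $l\Delta$ part while preserving the constraints $a+b=p+1$, $b+c\leq p+1$, $d\leq c$. The only difference is presentational — you rederive the commutation identity in place rather than citing the precomputed formula.
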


\begin{proof}[du lemme~\ref{lem_puissance_der,alpha}]
  Le cas $p=1$ est \'evident 
  d'apr\`es l'\'egalit\'e~(\ref{egalite_puissance_alpha_p=1}).
  Montrons ce lemme par r\'ecurrence. 
  On fixe $p$, et on suppose la propri\'et\'e du lemme vraie au rang~$p$.
  Montrons maintenant que la propri\'et\'e du lemme est alors vraie au rang~$p+1$.
  D'apr\`es l'expression~(\ref{egalite_puissance_alpha_p=1}) de $\alpha$, 
  et l'hypoth\`ese de r\'ecurrence, l'expression  
  $$
  {\left(\partial_\lambda-\frac{\alpha}{\lambda}\right)}^{p+1}.R\,
  (\lambda,l)
  $$
  peut s'\'ecrire sous  la forme d'une somme sur 
  le quadruplet $(a,b,c,d)\in\Nb^4$ tel que $a+b=p$ et
  $b+c\leq p, d\leq c$ de :
  $$
  \left(\partial_\lambda-
    \frac{(\Id+\tau^-)(l\Delta)+\tau^++\tau^-}{2\lambda}\right)
  P_{a,b,c,d}(\tau^\pm)
  \lambda^{-a} \partial_\lambda^b (l^d\Delta^c).R\, (\lambda,l)
  \quad,
  $$
  donc gr\^ace \`a la commutativit\'e des op\'erateurs 
  en $\lambda$ et $l$,
  sous la forme :
  \begin{eqnarray*}
    P_{a,b,c,d}(\tau^\pm)
    \partial_\lambda\lambda^{-a} \partial_\lambda^b (l^d\Delta^c).R\, (\lambda,l)
    +
    \frac12
    (\Id+\tau^-)(l\Delta)
    P_{a,b,c,d}(\tau^\pm)
    \lambda^{-a-1} \partial_\lambda^b (l^d\Delta^c).R\, (\lambda,l)\\
    +
    \frac{\tau^++\tau^-}2
    P_{a,b,c,d}(\tau^\pm)
    \lambda^{-a-1} \partial_\lambda^b (l^d\Delta^c).R\, (\lambda,l)
    \quad,
  \end{eqnarray*}
  Le  dernier terme de la somme pr\'ec\'edente se met sous la forme
  voulue. De m\^eme pour le deuxi\`eme gr\^ace au 
  calcul~(\ref{egalite_ldeltaPl}) et \`a la commutativit\'e des op\'erateurs 
  en $\lambda$ et $l$, ainsi que pour le premier, car on voit :
  $\partial_\lambda\lambda^{-a} \partial_\lambda^b
  =-a\lambda^{-a-1} \partial_\lambda^b
  +\lambda^{-a} \partial_\lambda^{b+1}$.
\end{proof}

\subsection{Fonction de Hermite-Weber}
\label{subsec_fcn_hermiite}

Les \textbf{fonctions de Hermite-Weber}
\index{Fonction sp\'eciale!de Hermite Weber $h_k,h_\alpha$} 
$h_k,k\in\Nb$ sur $\Rb$ sont donn\'ees par :
$$
h_k(x)
\,=\,
{(2^k k!\sqrt{\pi})}^{-\frac k2} e^{-\frac{x^2}2} H_k(x)
\qquad\mbox{o\`u}\qquad 
H_k(s)={(-1)}^k e^{s^2}{(d/ds)}^k e^{-s^2}
\quad,$$ 
$H_k$ est le polyn\^ome de Hermite de degr\'e $k$.

Rappelons (voir section 5.6 de \cite{szego}), que les fonctions de Hermite-Weber $h_k,k\in \Nb$ sur $\Rb$ forment une base orthonormale de $L^2(\Rb)$ et que chaque fonction $h_k$ v\'erifie l'\'equation diff\'erentielle: $y''+(2k+1-x^2)y=0$.

On d\'efinit les \textbf{fonctions de  Hermite-Weber} 
$h_\alpha,\alpha\in\Nb^n$ sur $\Rb^n$ par :
$$
h_\alpha
\,=\,
\Pi_{i=1}^n h_{\alpha_i}
\quad.
$$

\section{Matrices antisym\'etriques}
\label{sec_app_matrice_antisym}

Nous pr\'ecisons ici les coordonn\'ees polaires 
sur l'ensemble $\Ac_v$ des matrices antisym\'etriques de taille $v$. 
\index{Notation!Espace!$\Ac_v$}
On aura \`a distinguer les cas $v=2v'$ et $v=2v'+1$.
On note $O(v)$ le groupe des matrices orthogonales,
et $SO(v)$  le groupe des matrices orthogonales de d\'eterminant 1.

\subsection{R\'eduction}
\label{subsec_reduction}

Le groupe $O(v)$ agit par conjugaison sur $\Ac_v$:
$$
\forall k\in O(v)\; , 
A\in \Ac_v\; :
\quad 
k.A
\,=\,
kAk^{-1}
\quad.
$$
Pour en d\'ecrire les orbites,
on d\'efinit le simplexe $\Lc$,
et son adh\'erence $\bar{\Lc}$ :
  \index{Notation!Ensemble de param\`etres!$\Lc,\bar{\Lc}$}
\begin{eqnarray*}
  \Lc
  &:=&\{\Lambda=(\lambda_1,\ldots,\lambda_{v'})\in\Rb^{v'}
  \; :\;
  \lambda_1\,>\,\ldots\,>\,\lambda_{v'}\,>\,0\, \} \quad,\\
  \bar{\Lc}
  &:=&
  \{\Lambda=(\lambda_1,\ldots,\lambda_{v'})\in\Rb^{v'}
  \; :\;
  \lambda_1\,\geq\,\ldots\,\geq \,\lambda_{v'}\,\geq \,0\, \}
  \quad.
\end{eqnarray*}
\`A un \'el\'ement $\Lambda\in \Rb^{v'}$, on associe 
la matrice antisym\'etrique $D_2(\Lambda)$ de taille $v$ :
\index{Notation!Matrice antisym\'etrique!$D_2(\Lambda)$}
$$
D_2(\Lambda)
\,=\,
\left[
  \begin{array}{cccc}
    \lambda_1 J &&&\\
    0 & \ddots & 0&\\
    &&\lambda_{v'} J&\\
    &&&(0)
  \end{array}\right]
\qquad \mbox{o\`u}\quad
J\,=\,
\left[  \begin{array}{cc}
    0&1\\
    -1&0
  \end{array}\right]
\quad,
$$
\index{Notation!Matrice antisym\'etrique!$J$}
c'est\`a dire :
$$
\mbox{si v=2v'} \, :
\quad 
\left[
  \begin{array}{cccc}
    \lambda_1 J &&&\\
    0 & \ddots & 0&\\
    &&\lambda_{v'} J&
  \end{array}\right]
\qquad
\mbox{et si v=2v'+1} \, :
\quad 
\left[
  \begin{array}{cccc}
    \lambda_1 J &&&\\
    0 & \ddots & 0&\\
    &&\lambda_{v'} J&\\
    &&&0
  \end{array}\right]
\quad.
$$

\begin{prop}[\mathversion{bold}{$\Ac_v/O(v)$}]
  \label{prop_Ac_v/O(v)}
  Toute matrice antisym\'etrique $A\in\Ac_v$
  est orthogonalement semblable \`a une matrice diagonalis\'ee 
  par bloc 2-2.
  En effet, le polyn\^ome caract\'eristique de $A$ est de la forme : 
  $$
  P(x)\,=\,
  \left\{  \begin{array}{ll}
      \Pi_{i=1}^{v'}(x^2-{\lambda_i}^2)
      &\mbox{si}\;v=2v'\\
      x\Pi_{i=1}^{v'}(x^2-{\lambda_i}^2)
      &\mbox{si}\;v=2v'+1
    \end{array}\right. \quad,
  $$ 
  o\`u 
  $\Lambda=(\lambda_1,\ldots,\lambda_{v'})\in \Rb^{v'}$;
  il existe une matrice orthogonale $k\in O(v)$ telle que :
  $A=k^{-1}D_2(\Lambda)k$.
  On peut choisir $\Lambda\in\bar{\Lc}$.
\end{prop}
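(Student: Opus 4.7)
My plan is to reduce the statement to the complex spectral theorem applied to the Hermitian matrix $iA$, and then to extract a real orthonormal basis from the complex eigenvectors by pairing conjugate eigenspaces.

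First, I would observe that since $A \in \Ac_v$ is real antisymmetric, the matrix $iA$ is Hermitian on $\Cb^v$. By the complex spectral theorem, $\Cb^v$ admits an orthonormal basis (for the standard Hermitian product) of eigenvectors of $iA$ with real eigenvalues. Translated back to $A$, this says $A$ is diagonalizable over $\Cb$ with purely imaginary eigenvalues. Because $A$ is a real matrix, its spectrum is stable under complex conjugation, so the non-zero eigenvalues arrange themselves in conjugate pairs $\pm i\lambda_j$ with $\lambda_j>0$, while the remaining eigenvalues are zero. Counting dimensions shows that the number of such pairs is exactly $v'$, where $v=2v'$ or $v=2v'+1$, with at least one zero eigenvalue in the odd case (since the rank of an antisymmetric matrix is always even). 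This already yields the stated form of the characteristic polynomial, up to the sign convention in the factors $x^2 \pm \lambda_j^2$ (coming from $(x-i\lambda_j)(x+i\lambda_j)$).

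Next, for each pair $\pm i\lambda_j$, I would pick a unit eigenvector $\zeta_j \in \Cb^v$ with $A\zeta_j = i\lambda_j \zeta_j$. Then $\bar{\zeta}_j$ is a unit eigenvector for $-i\lambda_j$, and $\zeta_j, \bar{\zeta}_j$ are Hermitian-orthogonal (as eigenvectors of $iA$ for distinct real eigenvalues). I would write $\zeta_j = \tfrac{1}{\sqrt{2}}(u_j - i w_j)$ with $u_j, w_j \in \Rb^v$, so that $u_j = \tfrac{1}{\sqrt{2}}(\zeta_j+\bar{\zeta}_j)$ and $w_j = \tfrac{i}{\sqrt{2}}(\zeta_j-\bar{\zeta}_j)$. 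A direct computation, using $\langle \zeta_j,\zeta_j\rangle = 1$ and $\langle \zeta_j,\bar{\zeta}_j\rangle = 0$, shows that $u_j, w_j$ are real-orthonormal for the Euclidean product, and from $A\zeta_j = i\lambda_j\zeta_j$ I would read off $Au_j = \lambda_j w_j$ and $Aw_j = -\lambda_j u_j$, i.e.\ $A$ acts as the block $\lambda_j J$ on the plane $\Rb u_j \oplus \Rb w_j$ in the basis $(u_j,w_j)$. Similarly, pairs belonging to distinct values of $\lambda_j$ give Euclidean-orthogonal planes, since the corresponding complex eigenvectors are Hermitian-orthogonal. For the kernel of $A$ (which is a real subspace, as $A$ is real), I would simply choose an orthonormal real basis. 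Concatenating all these vectors yields a real orthonormal basis of $\Rb^v$ in which the matrix of $A$ is precisely $D_2(\Lambda)$.

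Finally, to arrange $\Lambda \in \bar{\Lc}$, I would reorder the planes by decreasing $\lambda_j$; this is implemented by a permutation of $2\times 2$ blocks, which extends to an element of $O(v)$ (a signed block-permutation matrix), and it does not alter the block form $D_2$. Putting everything together, if $P$ denotes the orthogonal matrix whose columns are the chosen basis vectors, then $P^{-1}AP = D_2(\Lambda)$ with $\Lambda \in \bar{\Lc}$, which is equivalent to $A = k^{-1}D_2(\Lambda) k$ with $k = P^{-1} \in O(v)$.

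The only mildly delicate point is the normalization of the complex eigenvectors in the middle step: one must ensure that $(u_j,w_j)$ really is orthonormal and that the block appears as $\lambda_j J$ rather than $-\lambda_j J$ (which is why one fixes the eigenvalue to be $+i\lambda_j$ with $\lambda_j>0$, not $-i\lambda_j$). Once this bookkeeping is done, everything follows mechanically. No additional structure beyond the standard spectral theorem for Hermitian matrices is needed.
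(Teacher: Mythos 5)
Your argument is correct, but it takes a different route from the paper, which disposes of this proposition in one line: an induction on the size $v$, using the properties of normal endomorphisms (a real normal endomorphism admits an invariant line or plane; the orthogonal complement is again invariant because $A$ is antisymmetric, and one restricts and iterates). You instead complexify once and for all: $iA$ is Hermitian, the complex spectral theorem gives an orthonormal eigenbasis, and you descend to $\Rb^v$ by pairing conjugate eigenvectors. Both are standard; your version has the advantage of producing the eigenvalue pairing $\pm i\lambda_j$, the evenness of the rank, and the characteristic polynomial as immediate by-products, whereas the inductive version stays entirely inside real linear algebra and needs no Hermitian machinery. Two small points of bookkeeping, neither of which is a gap. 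First, your computation actually gives $(x-i\lambda_j)(x+i\lambda_j)=x^2+\lambda_j^2$, so the factors should indeed read $x^2+\lambda_j^2$; you are right to flag the sign, and the displayed formula $\Pi_i(x^2-\lambda_i^2)$ in the statement is a sign slip (one checks directly that $\det(xI_2-\lambda J)=x^2+\lambda^2$). The residual ambiguity between $\lambda_j J$ and $-\lambda_j J$ on each plane is harmless here since conjugation by $\mathrm{diag}(1,-1)$ lies in $O(2)$ (this is precisely the point that becomes delicate for the $SO(v)$ version, Proposition~\ref{prop_Ac_v/SO(v)}, where the sign $\epsilon$ on the last block cannot always be removed). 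Second, when some $\lambda_j$ is repeated you should say explicitly that you choose the $\zeta_j$ orthonormal \emph{within} each eigenspace of $iA$; the orthogonality of the resulting real planes then still follows, because $\langle\zeta_j,\bar\zeta_k\rangle=0$ holds even for $\lambda_j=\lambda_k\neq 0$ (they are eigenvectors for the distinct eigenvalues $+\lambda$ and $-\lambda$ of the Hermitian matrix $iA$). With these remarks your proof is complete.
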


La d\'emonstration se fait de mani\`ere \'el\'ementaire
par r\'ecurence sur la taille $v$ de la matrice,
et gr\^ace aux propri\'et\'es des endomorphismes normaux.

On d\'efinit plus g\'en\'eralement pour $\Lambda\in \Rb^{v'}$, 
la matrice antisym\'etrique $D_2^{\epsilon}(\Lambda)$ de taille $v$ :
\index{Notation!Matrice antisym\'etrique!$D_2^\epsilon(\Lambda)$}
$$
D_2^{\epsilon}(\Lambda)
\,=\,
\left[
  \begin{array}{cccccc}
    \lambda_1 J &&&&\\
    0 & \ddots & 0&&\\
    &&\lambda_{v'-1} J&&\\
    &&&\epsilon\lambda_{v'} J&\\
    &&&&(0)
  \end{array}\right]
\qquad \mbox{o\`u}\quad
\epsilon\in\{1,-1\}
\quad.
$$
\'Evidemment, on a $D_2^1(\Lambda)=D_2(\Lambda)$,
et $D_2(\lambda_1,\ldots,\lambda_{v'})=D_2(\lambda_1,\ldots,-\lambda_{v'})$.

\begin{prop}[\mathversion{bold}{$\Ac_v/SO(v)$}]
  \label{prop_Ac_v/SO(v)}
  Pour toute matrice antisym\'etrique $A\in\Ac_v$
  il existe $k\in SO(v), \,\Lambda\in\bar{\Lc}$ et $\epsilon=\pm 1$ tels que :
  $A=k^{-1}D^\epsilon_2(\Lambda)k$.
\end{prop}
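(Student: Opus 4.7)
L'id\'ee est de partir du r\'esultat d\'ej\`a acquis dans la proposition~\ref{prop_Ac_v/O(v)}: pour $A\in\Ac_v$, il existe $k_0\in O(v)$ et $\Lambda\in\bar{\Lc}$ tels que $A=k_0^{-1}D_2(\Lambda)k_0$. Si $\det k_0=1$, il n'y a rien \`a faire, on prend $k=k_0$ et $\epsilon=1$. Le probl\`eme se pose uniquement lorsque $\det k_0=-1$, et il faut alors modifier $k_0$ par une matrice orthogonale $P$ de d\'eterminant $-1$ qui laisse l'orbite de $D_2(\Lambda)$ (ou $D_2^\epsilon(\Lambda)$) stable sous conjugaison.

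Je distinguerai donc les deux cas selon la parit\'e de $v$.

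\medskip
Cas $v=2v'+1$: Je prendrai $P=\mathrm{diag}(1,\ldots,1,-1)$ de taille $v$; on a $\det P=-1$ et, puisque la derni\`ere ligne et la derni\`ere colonne de $D_2(\Lambda)$ sont nulles, la conjugaison par $P$ laisse $D_2(\Lambda)$ invariante. Posant $k=Pk_0$, il vient $\det k=1$ et $A=k^{-1}D_2(\Lambda)k$. Donc dans le cas impair, on peut toujours choisir $\epsilon=1$.

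\medskip
Cas $v=2v'$: Je prendrai $P=\mathrm{diag}(1,\ldots,1,-1,1)$ avec le $-1$ en position $2v'-1$; on a $\det P=-1$. Un calcul direct par blocs, en utilisant que $\mathrm{diag}(-1,1)\cdot J\cdot\mathrm{diag}(-1,1)=-J$, donne
$$PD_2(\Lambda)P^{-1}\,=\,D_2^{-1}(\Lambda)\quad,$$
c'est-\`a-dire que seul le dernier bloc $\lambda_{v'}J$ est chang\'e en $-\lambda_{v'}J$. Posant $k=Pk_0$, il vient $\det k=1$ et
$$A\,=\,k_0^{-1}D_2(\Lambda)k_0\,=\,k_0^{-1}P^{-1}D_2^{-1}(\Lambda)Pk_0\,=\,k^{-1}D_2^{-1}(\Lambda)k\quad,$$
et l'on prend $\epsilon=-1$. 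Noter que si $\lambda_{v'}=0$ alors $D_2^{-1}(\Lambda)=D_2(\Lambda)$ et la distinction d'$\epsilon$ est vide, ce qui est coh\'erent avec la remarque faite juste avant l'\'enonc\'e.

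\medskip
Il n'y a pas de difficult\'e essentielle dans cette preuve: tout repose sur la v\'erification directe que la matrice diagonale $P$ choisie a d\'eterminant $-1$ et conjugue $D_2(\Lambda)$ en $D_2^\epsilon(\Lambda)$. Le seul point \`a surveiller est le choix de la matrice $P$ dans le cas pair afin qu'elle n'affecte qu'un seul bloc $J$ (sinon on pourrait obtenir des signes multiples, qui se compensent deux \`a deux d'ailleurs via une rotation $SO(2)$ d\'ej\`a dans $SO(v)$), ce qui justifie qu'un seul param\`etre $\epsilon$ suffit.
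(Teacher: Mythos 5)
Votre preuve est correcte. Le texte ne donne d'ailleurs aucune d\'emonstration de la proposition~\ref{prop_Ac_v/SO(v)} (seule la proposition~\ref{prop_Ac_v/O(v)} est comment\'ee, par r\'ecurrence sur $v$); votre r\'eduction au cas $O(v)$ via une matrice diagonale $P$ de d\'eterminant $-1$ --- qui fixe $D_2(\Lambda)$ si $v$ est impair, et ne renverse que le dernier bloc $\lambda_{v'}J$ en $-\lambda_{v'}J$ si $v$ est pair --- est exactement l'argument attendu, et le calcul $\mathrm{diag}(-1,1)\,J\,\mathrm{diag}(-1,1)=-J$ ainsi que la coh\'erence avec le cas $\lambda_{v'}=0$ sont v\'erifi\'es correctement.
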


\subsection{Isomorphisme $Sp(n)\cap O(n) \sim U_n$}
\label{subsec_SpO}

D\'efinissons pour $n\in\Nb-\{0\}$,
les matrices antisym\'etriques de taille $2n\times 2n$ :
$$
J_n
\,:=\,
J^{+1}_n
\,:=\,
\left[  \begin{array}{ccc}
    J&\ldots&0\\
    &\ddots&\\
    0&\ldots& J
  \end{array}\right]
\quad\mbox{et}\quad
J^{-1}_n
\,:=\,
\left[  \begin{array}{ccc}
    J&\ldots&0\\
    &\ddots&\\
    0&\ldots&-J
  \end{array}\right]
\quad.
$$
\index{Notation!Matrice antisym\'etrique!$J_n,J_n^\pm$}
et la complexification:
\index{Notation!Isomorphisme!$\psi_c^n,\psi_c^{n,\pm}$}
\begin{eqnarray*}
  \psi_c^{(n,+1)}
  =
  \psi_c^{(n)}
  & :&  x_1,y_1,\ldots,x_n,y_n
  \;\rightarrow \;
  x_1+i y_1,
  \ldots,
  x_n+iy_n \quad,\\
  \psi_c^{(n,-1)}
  & :&  x_1,y_1,\ldots,x_n,y_n
  \;\rightarrow \;
  x_1+i y_1,
  \ldots,
  x_{n-1}+iy_{n-1},
  y_n+ix_n \quad.
\end{eqnarray*}

\begin{prop}[Matrices orthogonales commutant avec 
  \mathversion{bold}{$J_n^\epsilon$}]
  \label{prop_matrice_ortho_commutant_J}
  Soit $n\in \Nb-\{0\}$.
  Les matrices orthogonales de taille $2n$ 
  qui commutent avec $J_n^\epsilon$
  ont pour d\'eterminant 1. 
  On a un
  isomorphisme $\psi_1^{(n,\epsilon)}$ entre 
  \begin{itemize}
  \item le groupe
    des matrices $2n-2n$ orthogonales qui commutent avec 
    $J_n^\epsilon$,
  \item le groupe $U_n$ des matrices unitaires de taille $n$. 
  \end{itemize}
  Il v\'erifie :
  $$
  \forall\,k,X
  \quad:\quad 
  \psi_c^{(n,\epsilon)} (k.X) =\psi_1^{(n,\epsilon)}(k).\psi_c^{(n,\epsilon)}(X)
  \quad.
  $$
  On notera $\psi_1^{(n,+1)}=\psi_1^{(n)}$.
\end{prop}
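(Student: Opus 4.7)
The plan is to identify, via the map $\psi_c^{(n,\epsilon)} : \Rb^{2n} \to \Cb^n$, the matrix $J_n^\epsilon$ with multiplication by $i$ on $\Cb^n$, and then to read off the proposition from the standard identification $O(2n) \cap GL_n(\Cb) = U_n$.

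First I would verify by a block-by-block computation the key identity
$$
\psi_c^{(n,\epsilon)}(J_n^\epsilon X) \,=\, i\,\psi_c^{(n,\epsilon)}(X),
\qquad X \in \Rb^{2n}.
$$
For $\epsilon=+1$ this amounts to checking, on each $2\times 2$ diagonal block, that under $(x_j,y_j) \mapsto x_j + i y_j$ the matrix $J$ corresponds to multiplication by $i$ (up to the chosen sign convention for $J$). For $\epsilon=-1$ the last diagonal block of $J_n^{-1}$ is $-J$; here the definition of $\psi_c^{(n,-1)}$ deliberately sends the last pair $(x_n,y_n)$ to $y_n + i x_n$ rather than $x_n + i y_n$, and one checks that this flip converts $-J$ into multiplication by $i$ on that last coordinate. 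This step is pure bookkeeping but is the one place where the sign conventions actually have to be reconciled.

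Second I would use the identification to characterise the commutant. A matrix $k \in O(2n)$ commutes with $J_n^\epsilon$ if and only if, transported to $\Cb^n$ by $\psi_c^{(n,\epsilon)}$, it becomes a real-linear map $K$ satisfying $K(iz) = iK(z)$ for all $z \in \Cb^n$, i.e. $K$ is $\Cb$-linear. Denote its matrix in the standard $\Cb$-basis by $\psi_1^{(n,\epsilon)}(k) \in M_n(\Cb)$. The intertwining relation $\psi_c^{(n,\epsilon)}(k.X) = \psi_1^{(n,\epsilon)}(k).\psi_c^{(n,\epsilon)}(X)$ then holds by construction. It remains to see that $\psi_1^{(n,\epsilon)}(k)$ is unitary. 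For this I would use that the standard Hermitian inner product on $\Cb^n$ decomposes as
$$
\langle z,w \rangle_\Cb \,=\, \langle z,w\rangle_\Rb \,+\, i\,\langle z, iw\rangle_\Rb,
$$
via the identification $\psi_c^{(n,\epsilon)}$ (this is again a block computation). Since $K$ is $\Cb$-linear and preserves $\langle\cdot,\cdot\rangle_\Rb$ (orthogonality of $k$), it automatically preserves both real and imaginary parts of $\langle\cdot,\cdot\rangle_\Cb$, hence belongs to $U_n$. Conversely any $U \in U_n$ yields, via $\psi_c^{(n,\epsilon)}{}^{-1}$, a real-orthogonal $\Cb$-linear map, hence an element of $O(2n)$ commuting with $J_n^\epsilon$; this gives the inverse of $\psi_1^{(n,\epsilon)}$ and shows that $\psi_1^{(n,\epsilon)}$ is a group isomorphism (it is clearly a group homomorphism because matrix multiplication on $\Cb^n$ transports to matrix multiplication on $\Rb^{2n}$).

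Finally, for the determinant statement, I would invoke the classical fact that for $U = A + iB \in U_n$ with $A,B \in M_n(\Rb)$, the real $2n\times 2n$ matrix representing $U$ has determinant $|\det_\Cb(U)|^2$. Since $U$ is unitary we have $|\det_\Cb(U)| = 1$, so its real determinant is $+1$. Combined with the previous step this shows every element of $O(2n)$ commuting with $J_n^\epsilon$ lies in $SO(2n)$. The main obstacle is really only Step~1: keeping the sign conventions straight between the two choices of $\epsilon$; once this is done, Steps~2 and~3 are the standard $O(2n)\cap GL_n(\Cb)=U_n$ argument.
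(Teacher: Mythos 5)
Your proof is correct, and it follows the same basic strategy as the paper (identify $\Rb^{2n}$ with $\Cb^n$ via $\psi_c^{(n,\epsilon)}$ so that $J_n^\epsilon$ becomes scalar multiplication by $\pm i$, and read off the commutant as $U_n$), but the two arguments diverge in their details in ways worth noting. For the determinant claim, you derive $\det k = 1$ \emph{from} the isomorphism, via the classical identity $\det_{\Rb}(U) = \nn{\det_{\Cb}(U)}^2 = 1$; the paper instead proves it first and independently, by observing that the $(-1)$-eigenspace of $k$ is stable under $J_n^\epsilon$ (since $k$ and $J_n^\epsilon$ commute), hence carries a complex structure and is even-dimensional, so $\det k = {(-1)}^{\dim} = 1$. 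Both are valid; the paper's argument is more self-contained, yours is the more standard one once the isomorphism is in hand. For the isomorphism itself, the paper works entirely with explicit $2\times 2$ blocks, defining the entries $u_{l,c}$ of the unitary matrix by hand (with the special-case formulas for the last row and column when $\epsilon=-1$), and asserts unitarity from orthogonality of $k$; your coordinate-free version via $\Cb$-linearity and the polarization identity $\langle z,w\rangle_{\Cb} = \langle z,w\rangle_{\Rb} + i\langle z,iw\rangle_{\Rb}$ buys you the intertwining relation $\psi_c^{(n,\epsilon)}(k.X) = \psi_1^{(n,\epsilon)}(k).\psi_c^{(n,\epsilon)}(X)$ automatically, which the paper's sketch leaves implicit. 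One small point of care: with the paper's convention $J = \bigl(\begin{smallmatrix}0&1\\-1&0\end{smallmatrix}\bigr)$, the block $J$ corresponds under $(x,y)\mapsto x+iy$ to multiplication by $-i$ rather than $i$ (and likewise $-J$ under $(x,y)\mapsto y+ix$); you flag this, and it is harmless since commuting with multiplication by $-i$ is equivalent to $\Cb$-linearity, but the displayed identity in your Step~1 should carry $-i$ to be literally correct.
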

\index{Notation!Isomorphisme!$\psi_1^n,\psi_1^{n,\pm}$}
\begin{proof}[rapide de la proposition~\ref{prop_matrice_ortho_commutant_J}]
  Soit $k\in O(2n)$ qui commute avec $J_n^\epsilon$. 
  Une base de l'espace propre 
  associ\'ee \`a la valeur propre $-1$ pour $k$
  peut s'\'ecrire sous la forme :
  $e_1,J_n^\epsilon.e_1,e_2,J_n^\epsilon.e_2\ldots$ 
  donc la dimension de ce sous-espace propre  est paire et $\det k=1$.

  \'Ecrivons la matrice $k$ par bloc 2-2:
  $$
  k=
  \left[\begin{array}{ccc}
      k'_{1,1}&\ldots&k'_{1,n}\\
      &\ddots&\\
      k'_{n,1}&&k'_{n,n}
    \end{array}\right]
  \quad\mbox{o\`u}\quad
  k'_{l,c}=\left[
    \begin{array}{cc}
      k_{2l-1,2c-1}&k_{2l-1,2c}\\
      k_{2l,2c-1}&k_{2l,2c}
    \end{array}\right]\quad;
  $$
  On d\'efinit les coefficients d'une matrice complexe $u$ 
  $u_{l,c}=k_{2l-1,2c-1}+i k_{2l,2c-1}, 
  1\leq l,c\leq n$, si $\epsilon=1$,
  et si $\epsilon=-1$ :
  $$
  u_{l,c}
  \,=\,
  \left\{
    \begin{array}{ll}
      k_{2l-1,2c-1}+i k_{2l,2c-1},
      &\quad\mbox{si}\quad 
      1\leq i,j < n,\\
      k_{2l-1,2n}-ik_{2l-1,2n-1}
      &\quad\mbox{si}\quad 
      (i,j)=(i,n)\not=(n,n),\\
      k_{2n,2c-1}-ik_{2n,2c}
      &\quad\mbox{si}\quad 
      (i,j)=(n,j)\not=(n,n),\\
      k_{2n,2n}-ik_{2n,2n-1}
      &\quad\mbox{si}\quad 
      (i,j)=(n,n).
    \end{array}\right.
  $$
  
  Comme la matrice $k$ est orthogonale, 
  la matrice $u$ est unitaire.
\end{proof}

\subsection{Passage en coordonn\'ees polaires}

En fait, nous allons nous int\'eress\'es \`a l'ensemble 
$\Dc_v$ des matrices antisym\'etriques 
auxquelles on associe comme dans la proposition~\ref{prop_Ac_v/O(v)}
$\Lambda\in\Lc\subset\overline{\Lc}$.
On obtient le passage en coordonn\'ees polaires :
\begin{lem}[Passage en coordonn\'ees polaires]\label{lem_pass_coord_pol}
  \index{Coordonn\'ee polaire!sur les matrices antisym\'etriques}

  Il existe $\eta$ une mesure sur le simplexe $\Lc\subset \Rb^{v'}$
  donn\'ee par :
  \index{Notation!Mesure!$\eta$}
  $$
  d\eta(\Lambda)=
  \left\{\begin{array}{ll}
      c \Pi_{j<k} {(\lambda_j^2-\lambda_k^2)}^2 
      d\Lambda
      &\mbox{si}\; v=2v'\\
      c \Pi_i \lambda_i^2
      \Pi_{j<k} {(\lambda_j^2-\lambda_k^2)}^2 
      d\Lambda
      &\mbox{si}\; v=2v'+1
    \end{array}\right.
  \qquad (\mbox{o\`u}\quad 
  d\Lambda=d\lambda_1\ldots d\lambda_{v'})
  \quad ;
  $$
  la constante $c$ est telle que l'on ait le passage en coordonn\'ees
  polaires sur l'ensemble des matrices antisym\'etriques $\Ac_v$:
  \begin{equation}
    \label{chgtpolma}
    \int_{\Ac_v} g(A) dA \,=\,
    \int_{O(v)} \int_{\Lc}
    g(k.D_2(\Lambda))
    d\eta (\Lambda) dk
    \quad.
  \end{equation}
\end{lem}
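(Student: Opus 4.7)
The formula is a Weyl-type integration formula for the adjoint action of $O(v)$ on its Lie algebra $\Ac_v=\mathfrak{o}(v)$. The plan is to introduce the map
$$
\Phi \; : \; O(v)\times \Lc \;\longrightarrow\; \Ac_v \; , \qquad (k,\Lambda)\longmapsto k\,D_2(\Lambda)\,k^{-1},
$$
observe via Proposition~\ref{prop_Ac_v/O(v)} that its image is $\Dc_v$, which is the complement of a closed set of Lebesgue measure zero in $\Ac_v$ (the locus where two eigenvalues collide or, in the odd case, some $\lambda_i$ vanishes), and then compute the Jacobian. The fiber over a generic $A=kD_2(\Lambda)k^{-1}$ consists of $(k k_0,\Lambda)$ where $k_0$ ranges through the centralizer $K_\Lambda$ of $D_2(\Lambda)$ in $O(v)$; for $\Lambda\in\Lc$ this centralizer is a torus $SO(2)^{v'}$ in the even case and $SO(2)^{v'}\times\{\pm 1\}$ in the odd case, of total Haar measure depending only on $v$. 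The constant $c$ of the lemma will absorb this volume.

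The Jacobian calculation is the main piece. By left-invariance it suffices to work at $(I,\Lambda)$; the differential of $\Phi$ sends $(Z,\dot\Lambda)\in\mathfrak{o}(v)\oplus\Rb^{v'}$ to $[Z,D_2(\Lambda)]+D_2(\dot\Lambda)$. Decompose $\mathfrak{o}(v)$ orthogonally with respect to the trace form as $\mathfrak{k}_\Lambda\oplus\mathfrak{k}_\Lambda^\perp$, where $\mathfrak{k}_\Lambda=\ker\ad D_2(\Lambda)$ is spanned by the blocks $E_{2i-1,2i}$. The direction $\dot\Lambda$ exactly fills the $v'$-dimensional image $D_2(\Rb^{v'})\subset\mathfrak{k}_\Lambda$, and $\ad D_2(\Lambda)$ is an isomorphism of $\mathfrak{k}_\Lambda^\perp$ onto its image (which is again $\mathfrak{k}_\Lambda^\perp$, by normality). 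Hence the Jacobian is precisely the absolute value of the determinant of $\ad D_2(\Lambda)$ acting on $\mathfrak{k}_\Lambda^\perp$, times a universal constant coming from the volume of $K_\Lambda/\{\pm\Id\text{-effects}\}$.

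To compute this determinant, I would decompose $\mathfrak{k}_\Lambda^\perp$ into the $\ad$-invariant subspaces built from pairs of $2\times 2$ blocks $(i,j)$ with $i<j$ (each contributing a $4$-dimensional subspace on which $\ad D_2(\Lambda)$ has eigenvalues $\pm i(\lambda_i+\lambda_j)$ and $\pm i(\lambda_i-\lambda_j)$, hence determinant $(\lambda_j^2-\lambda_i^2)^2$), and, when $v=2v'+1$, additionally the $2v'$-dimensional space spanned by the last row/column $E_{2i-1,v},E_{2i,v}$ for $i=1,\ldots,v'$ (on each such $2$-plane, $\ad D_2(\Lambda)$ has eigenvalues $\pm i\lambda_i$, giving an extra factor $\lambda_i^2$). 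Multiplying everything together yields the densities stated in the lemma. The passage from $\Phi$ to the integral formula on $\Ac_v$ is then the usual coarea/change-of-variable argument, with the Haar measure of $K_\Lambda$ absorbed into~$c$.

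The main obstacle is organizing the block eigenspace decomposition of $\ad D_2(\Lambda)$ on $\mathfrak{k}_\Lambda^\perp$ cleanly enough to read off the squared Vandermonde factor $\prod_{j<k}(\lambda_j^2-\lambda_k^2)^2$, and, in the odd case, correctly identifying the extra $\prod_i\lambda_i^2$ contribution without double-counting the centralizer direction; the rest is bookkeeping.
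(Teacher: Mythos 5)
Your proposal is correct and follows essentially the same route as the paper: both restrict to the dense open set $\Dc_v$, parametrize it by $O(v)\times\Lc$ modulo the (constant) stabilizer of $D_2(\Lambda)$, and obtain the density as the block determinant of the differential, with each pair $(i,j)$ contributing $(\lambda_j^2-\lambda_i^2)^2$ and each index contributing $\lambda_i^2$ in the odd case. The only cosmetic differences are that the paper quotients by the stabilizer $K_r$ up front to make $\psi$ a genuine diffeomorphism and evaluates each $4\times 4$ block determinant by writing the matrix explicitly, whereas you absorb the fiber volume into $c$ and read the determinants off the eigenvalues $\pm i(\lambda_i\pm\lambda_j)$ of $\ad D_2(\Lambda)$.
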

Ce lemme est d\'ej\`a bien connu :
dans \cite{stric} (page 398), on  
renvoit \`a \cite{helgason} page 382. 
Nous le montrons ici ``\`a la main''. 

Nous avons besoin aussi 
de r\'esultats sur le Laplacien sur $\Ac_v$. 
Rappelons que 
lorsque la base canonique de l'espace vectoriel $\Ac_v$ 
est $E_{i,j}, i<j$ 
o\`u $E_{i,j}$ 
d\'esigne la matrice antisym\'etrique 
dont toutes les entr\'ees sont nulles 
sauf celle de la $i$\`eme ligne 
et $j$\`eme colonne qui vaut 1 
et celle de la $j$\`eme ligne 
et $i$\`eme colonne qui vaut -1, 
le laplacien sur l'espace vectoriel $\Ac_v$ 
est donn\'e par:
$$
\Delta f
\,=\,
\sum_{i<j} D^2f(E_{i,j},E_{i,j})\quad .
$$
Nous aurons en fait besoin 
de l'expression du Laplacien 
en coordonn\'ees polaires 
sur le sous ensemble $\Dc_v$. 
Pour cela, d\'efinissons l'application 
$$
\psi\,:\;
\left\{
  \begin{array}{rcl}
    O(v)\times \Lc
    &\longrightarrow& 
    \Ac_v\\
    k,\Lambda
    &\longmapsto&
    k.D_2(\Lambda)
  \end{array}\right.
\quad,
$$
dont l'image est $\Dc_v$,
le sous groupe $K_r$ de $O(v)$ comme l'ensemble des matrice de la forme :
$$
\left[ \begin{array}{cccc}
    r_{\theta_1}&&&\\
    0&\ddots&0&\\
    &&r_{\theta_{v'}}&\\
    &&&(1)
  \end{array}\right]
\quad\mbox{o\`u}\quad
\begin{array}{c}
  \mbox{la matrice}\;
  r_\theta\; \mbox{d\'esigne une rotation du plan :}\\
  r_\theta =
  \left[
    \begin{array}{cc}
      \cos \theta&-\sin \theta\\
      \sin \theta&\cos\theta
    \end{array}
  \right]
\end{array}
\quad .
$$
On voit :
$$
K_r=\{ k\in O (v)\quad \forall \Lambda\in \Lc\quad 
\psi(k,\Lambda)=D_2(\Lambda)\} 
\quad.
$$ 
On note 
\begin{itemize}
\item l'espace homog\`ene $\tilde{K}=O(v)/K_r$ 
  comme vari\'et\'e quotient de $O(v)$ par le sous groupe $K_r$
  (l'action consid\'er\'ee est celle \`a droite),
\item $p: O(v)\rightarrow\tilde{K}$
  la submersion canonique,
\item $d\tilde{k}$ la mesure sur la vari\'et\'e homog\`ene induite par
  les deux mesure de Haar normalis\'ee de masse 1.
\end{itemize}
On peut donc d\'efinir l'application que l'on note encore 
$\psi$ : 
$$
\psi\,:\;
\left\{
  \begin{array}{rcl}
    \tilde{K}\times \Lc
    &\longrightarrow& 
    \Ac_v\\
    p(k),\Lambda
    &\longmapsto&
    k.D_2(\Lambda)
  \end{array}\right.
\quad,
$$
qui devient une bijection sur $\Dc_v$,
et aussi un diff\'eomorphisme de vari\'et\'es. 

Pr\'ecisons les espaces tangents de ces vari\'et\'es.
\'Evidemment, on identifie $\Tb \Lc\sim \Rb^{v'}$ et $\Tb\Ac_v\sim \Rb^{\frac{v(v-1)}2}$ en tout point. 
On munit l'espace $O(v)$ des cartes locales au point $k\in O(v)$:
$$
{\left(a_{i,j}\right)}_{1\leq i<j\leq v}
\longmapsto
k\exp \left(\sum_{i<j}a_{i,j} E_{i,j}\right)
\quad,
$$
o\`u $\exp$ d\'esigne l'application matricielle exponentielle, 
et $E_{i,j}, 1\leq i<j\leq v$ la base canonique des matrices antisym\'etriques donn\'ee plus haut.
L'espace tangent au point $k$  peut donc s'identifier \`a
$$
\Tb_k O(v)
\,\sim\,
\{ k\vec{A}\; ,\quad \vec{A} 
\;\mbox{matrice antisym\'etrique de taille p}\,
\}
\quad.
$$
Par passage au quotient, 
lorsque l'on a choisi $k\in O(v)$ tel que $p(k)=\tilde{k}$ pour chaque $\tilde{k}$,
la vari\'et\'es $\tilde{K}$ est munie  des cartes locales au point $\tilde{k}$ :
$$
{\left(a_{i,j}\right)}_{1\leq i<j\leq v, (i,j)\not= (2k-1,2k)}
\longmapsto
p\left(k\exp\left( \sum a_{i,j} E_{i,j}\right)\right)
\quad,
$$
et l'espace tangent au point $\tilde{k}$ peut donc s'identifier \`a
$$
\Tb_{\tilde{k}} \tilde{K}
\,\sim\,
\left\{ k\vec{A}\; ,\quad\vec{A} \;
  \begin{array}{c}
    \mbox{matrice antisym\'etrique de taille p,}\\
    \mbox{dont les blocs 2-2 sur la diagonale sont nuls}
  \end{array}
  \,\right\}
\quad,
$$
dont nous fixons la base canonique form\'ee par les vecteurs 
$kE_{i,j}$, pour
$1\leq i<j\leq v$ et $(i,j)\not= (2k-1,2k)$.
Lorsqu'il n'y aura pas de confusion possible, on notera de la m\^eme mani\`ere $k\in O(v)$ et son image $p(k)$, et de m\^eme pour les vecteurs des espaces tangents $\Tb O(v)$ et $\Tb \tilde{K}$. 
On identifie 
$\Tb_{\tilde{k},\Lambda}(\tilde{K}\times \Lc)\sim \Tb_{\tilde{k}} \tilde{K}\oplus \Tb_\Lambda \Lc$
et sa base canonique est donc form\'ee par les vecteurs suivants:
\begin{itemize}
\item les vecteurs $\vec{E}_{i,j}=kE_{i,j}, i<j,(i,j)\not = (2l-1, 2l)$,
\item les vecteurs $\vec{E}_{2l-1,2l}, l=1,\ldots, v'$, le vecteur colonne de $\Tb \Lc\sim \Rb^{v'}$ dont toutes les entr\'ees sont nulles sauf la $l$\`eme qui vaut 1;
  on peut les identifier au champs de vecteurs $\partial_{\lambda_i}$ d\'eriv\'ee en les variables de $\Lambda$. 
\end{itemize}

Dans la suite, on identifie 
la diff\'erentielle  $D_{k,\Lambda}\psi$
\`a l'application lin\'eaire sur 
$\Tb_{\tilde{k}} \tilde{K}\oplus \Tb_\Lambda \Lc$.
Elle est donn\'ee par :
$$
D_{k,\Lambda}\psi(ke^{t\vec{A}},\vec{\Lambda})
\,=\,
{\frac{d}{dt}}_{t=0}
{\frac{d}{du}}_{u=0}
\psi (ke^{t\vec{A}},\Lambda+u\vec{\Lambda})
\quad.
$$

\begin{lem}[Laplacien en coordonn\'ees polaires]\label{lem_laplacien_coord_pol}
  \index{Coordonn\'ee polaire!sur les matrices antisym\'etriques!et laplacien}

  Soit $f$ une fonction sur $\Ac_v, v\geq 1$.
  On pose $g=f\circ\psi$ : c'est une fonction sur $\tilde{K}\times \Lc$. 
  On a  pour $A=\psi(k,\Lambda)$ :
  \begin{eqnarray*}
    \Delta f(A)
    &=&
    \sum_l 
    \partial^2_{\lambda_l}g
    +\sum_{i<j}\sum_{(m,n)\in I_{i,j}}D^2_{k,\Lambda}g(D\psi^{-1}(k.E_{m,n}),D\psi^{-1}(k.E_{m,n}))\\
    &&\quad
    +\sum_{i<j}
    -\frac4{\lambda_j^2-\lambda_i^2}(\lambda_i\partial_{\lambda_i}-\lambda_j\partial_{\lambda_j}).g \\
    &&
    \left\{+\sum_i\sum_{\tilde{i}=2i,2i-1} \frac1{\lambda_i^2}
      D^2.g(\vec{E}_{\tilde{i},v},\vec{E}_{\tilde{i},v})
      -\frac{2}{\lambda_i}\partial_{\lambda_i}.g
      \;\mbox{si}\, v=2v'+1\right\}
    \quad ;
  \end{eqnarray*}
  On a omis l'argument $(k,\Lambda)$ de $g$ et de ses d\'eriv\'ees.
  Les sommes sur un seul indice se font sur l'ensemble $\{ 1, \ldots, v'\}$. 
  Les sommes indic\'ees par $i<j$ se font sur l'ensemble $\{1\leq i<j<v'\}$, qui est vide lorsque $v'=1$. 
  Pour $1\leq i<j \leq v', v'>1$, on a not\'e $I_{i,j}=\{(2i-1,2j-1),(2i,2j),(2i-1,2j),(2i,2j-1)\}$. 

  On a le calcul explicite des 
  $D\psi^{-1}(k.E_{m,n})=D_{k,\Lambda}\psi^{-1}(k.E_{m,n}), (m,n)\in I_{i,j}$ :
  $$
  \begin{array}{@{D\psi^{-1}(}l@{)\,=\,}l}
    k.E_{2i-1,2j-1}
    &
    \frac1{\lambda_j^2-\lambda_i^2}
    (-\lambda_j\vec{E}_{2i-1,2j}+\lambda_i\vec{E}_{2i,2j-1})\\
    k.E_{2i,2j}
    &
    \frac1{\lambda_j^2-\lambda_i^2}
    (-\lambda_i\vec{E}_{2i-1,2j}+\lambda_j\vec{E}_{2i,2j-1})\\
    k.E_{2i-1,2j}
    &
    \frac1{\lambda_j^2-\lambda_i^2}
    (\lambda_j\vec{E}_{2i-1,2j-1}+\lambda_i\vec{E}_{2i,2j})\\
    k.E_{2i,2j-1}
    &
    \frac1{\lambda_j^2-\lambda_i^2}
    (-\lambda_i\vec{E}_{2i-1,2j-1}-\lambda_j\vec{E}_{2i,2j})
    \quad ,
  \end{array}
  $$
  o\`u on a not\'e $\vec{E}_{i,j}=kE_{i,j}, i<j,(i,j)\not = (2l-1, 2l)$.
\end{lem}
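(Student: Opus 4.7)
The strategy is to change variables via the diffeomorphism $\psi \colon \tilde{K} \times \Lc \to \Dc_v$, $\psi(k, \Lambda) = k D_2(\Lambda) k^{-1}$. Since the Euclidean Laplacian on $\Ac_v$ is invariant under conjugation (the $O(v)$-action is by isometries of $\Ac_v$), we can, if desired, reduce to $k = \Id$ at the end by applying the orthogonal substitution $E_{i,j} \mapsto k E_{i,j} k^{-1}$ throughout; equivalently, we use the basis $\{k.E_{i,j}\}_{i<j}$ to compute $\Delta f(A)$, which is more adapted to $\psi$.

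First I would compute the differential $D\psi$. Writing $\psi(k e^{t\vec{A}}, \Lambda + t\vec{\Lambda}) = k e^{t\vec{A}} D_2(\Lambda + t\vec{\Lambda}) e^{-t\vec{A}} k^{-1}$ and differentiating at $t=0$ yields
\[
D_{k,\Lambda}\psi(k\vec{A}, \vec{\Lambda}) \,=\, k \bigl([\vec{A}, D_2(\Lambda)] + D_2(\vec{\Lambda})\bigr) k^{-1}.
\]
Inverting $D\psi$ on a basis vector $k.E_{m,n}$ thus reduces to solving $[\vec{A}, D_2(\Lambda)] + D_2(\vec{\Lambda}) = E_{m,n}$ in the unknowns $(\vec{A}, \vec{\Lambda}) \in \Tb_{\tilde k} \tilde{K} \oplus \Rb^{v'}$. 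A direct computation of the elementary bracket $[E_{m,n}, E_{2l-1,2l}]$ shows that $E_{2l-1,2l}$ is absorbed by $D_2(\vec{\Lambda})$ (giving $D\psi^{-1}(k.E_{2l-1,2l}) = \partial_{\lambda_l}$), while for $(m,n) \in I_{i,j}$ with $i<j$ one obtains a $2 \times 2$ linear system with determinant $\lambda_j^2 - \lambda_i^2$, producing exactly the formulas displayed in the lemma. The odd-dimensional correction terms $\vec E_{\tilde i, v}$ arise from the same principle applied to the last row/column.

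Next I would transport second derivatives by the chain rule. For $V = k.E_{i,j}$, set $c(t) = A + tV$ and $\gamma(t) = \psi^{-1}(c(t))$. Differentiating $\psi \circ \gamma = c$ twice yields $\gamma'(0) = D\psi^{-1}(V)$ and
\[
\gamma''(0) \,=\, -\,D\psi^{-1}\bigl(D^2\psi(\gamma'(0), \gamma'(0))\bigr),
\]
so that $D^2 f(A)(V,V) = D^2 g(\gamma'(0), \gamma'(0)) + Dg(\gamma''(0))$. Summing over $i<j$ reproduces, via Step 2, the pure second-order pieces of the formula: the sum $\sum_l \partial_{\lambda_l}^2 g$ from the pairs $(i,j) = (2l-1, 2l)$, the contribution $\sum_{i<j} \sum_{(m,n) \in I_{i,j}} D^2 g(D\psi^{-1}(k.E_{m,n}), D\psi^{-1}(k.E_{m,n}))$ from the off-diagonal pairs, and in the odd case the additional term $\sum_i \sum_{\tilde i} \lambda_i^{-2} D^2 g(\vec E_{\tilde i, v}, \vec E_{\tilde i, v})$.

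Finally, the first-order terms $-\frac{4}{\lambda_j^2 - \lambda_i^2}(\lambda_i \partial_{\lambda_i} - \lambda_j \partial_{\lambda_j}) g$ and (in the odd case) $-\frac{2}{\lambda_i} \partial_{\lambda_i} g$ arise entirely from collecting the second-order Taylor terms $Dg(\gamma''(0))$, which involve the Hessian of $\psi$. The main obstacle is precisely this bookkeeping: after summing the four pairs $(m,n) \in I_{i,j}$ one must verify that the tangential ($\tilde K$-valued) components of $\gamma''(0)$ average to zero (by the block-antisymmetry of $\{E_{m,n}\}$), while the radial components coalesce into the stated rational functions of $\lambda_i, \lambda_j$. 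I would streamline this step by specializing to $k = \Id$ and a generic $\Lambda \in \Lc$, and as a final sanity check verify the formula on the $O(v)$-invariant test functions $f(A) = \mathrm{tr}(A^{2p})$, $p \in \Nb$, whose pullbacks $g$ depend only on $\Lambda$ through symmetric polynomials in the $\lambda_l^2$, so that only the radial part of the formula plays a role and the coefficients $-4/(\lambda_j^2 - \lambda_i^2)$ and $-2/\lambda_i$ can be pinned down unambiguously.
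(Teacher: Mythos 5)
Your proposal is correct and follows essentially the same route as the paper: compute $D\psi$ via the bracket $k.[\vec A,D_2(\Lambda)]+k.D_2(\vec\Lambda)$, invert it on the adapted orthonormal basis $\{k.E_{m,n}\}$, transport the Hessian by the identity $D^2f=(D^2g-Dg\circ D\psi^{-1}\circ D^2\psi)(D\psi^{-1},D\psi^{-1})$, and evaluate $D^2\psi$ as a double bracket to extract the first-order correction terms. The only cosmetic difference is that in the paper's computation each individual term $D\psi^{-1}\circ D^2\psi(D\psi^{-1}(k.E_{m,n}),D\psi^{-1}(k.E_{m,n}))$ already lies in the radial directions $\partial_{\lambda_i},\partial_{\lambda_j}$ (so no cancellation of tangential components over the four elements of $I_{i,j}$ is needed, contrary to what you anticipate), and your final sanity check on $\mathrm{tr}(A^{2p})$ is a harmless addition.
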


\subsection{D\'emonstrations}

Nous d\'emontrons ici les deux lemmes de la sous section pr\'ec\'edente.

\subsubsection{D\'emonstration de lemme~\ref{lem_pass_coord_pol}}

Comme $\Dc_v$ est un ouvert dense de~$\Ac_v$,
on a :
$$
\int_{\Ac_v} g(A) dA
\,=\,
\int_{\Dc_v} g(A) dA
\,=\,
\int_\Lc \int_{\tilde{K}}
g(k.D_2(\Lambda))
\nn{\det D_{k,\Lambda}\psi} \quad
d\Lambda dk \quad ,
$$
o\`u  $\det D_{k,\Lambda}\psi$ le d\'eterminant de l'application 
lin\'eaire $ D_{k,\Lambda}\psi$ vue dans les bases canoniques des espaces tangents.

On note $I_{i,j}$ est l'ensemble ordonn\'e :
$$
I_{i,j}
\,=\,
\{(2i-1,2j-1),(2i,2j),(2i-1,2j),(2i,2j-1)\}
\quad.
$$

Nous allons expliciter $D_{k,\Lambda}\psi$, son d\'eterminant et ainsi que son inverse $D\psi^{-1}$ dont nous aurons besoin dans la d\'emonstration de lemme~\ref{lem_laplacien_coord_pol}.

On a pour $k\vec{A}\in \Tb_k O(v)$ :
$$
D_{(k,\Lambda)}\psi(k\vec{A})
=
k.[\vec{A},D_2(\Lambda)] 
\quad\mbox{(le crochet est celui des matrices)}
\quad .
$$
On a pour $\vec{\Lambda}\in \Tb \Lc$
$$
D_{(k,\Lambda)}\psi(\vec{\Lambda})
=
k .D_2(\vec{\Lambda})
\quad .
$$

L'application lin\'eaire $D_{(k,\Lambda)}\psi$ envoie 
\begin{itemize}
\item $\vec{E}_{2l-1,2l}$ sur $k.E_{2l-1,2l}$, $l=1,\ldots,v'$. Donc on a:
  $$
  D\psi^{-1} k.E_{2l-1,2l}=\vec{E}_{2l-1,2l}=\partial_{\lambda_l}
  \quad .
  $$
\item pour tout $i<j$ (lorsque $v'>1$), l'espace vectoriel 
  $\vec{V}_{i,j}$ engendr\'e par les vecteurs $\vec{E}_{l,m},(l,m)\in I_{i,j}$ 
  sur l'espace vectoriel 
  $k.V_{i,j}$ engendr\'e par les vecteurs $k.E_{l,m},(l,m)\in I_{i,j}$. 
  De plus l'application $D_{(k,\Lambda)}\psi$ restreinte au d\'epart \`a $\vec{V}_{i,j}$ et \`a l'arriv\'ee \`a $k.V_{i,j}$
  se repr\'esente dans la base de d\'epart $\vec{E}_{l,m},(l,m)\in I_{i,j}$ et d'arriv\'ee
  $k.E_{l,m},(l,m)\in I_{i,j}$ par la matrice:
  $$
  \left[\begin{array}{cccc}
      0&0&-\lambda_j&-\lambda_i\\
      0&0&\lambda_i&\lambda_j\\
      \lambda_j&-\lambda_i&0&0\\
      \lambda_i&-\lambda_j&0&0\\
    \end{array}\right]
  \quad,
  $$
  dont le d\'eterminant est ${(\lambda_i^2-\lambda_j^2)}^2$, et l'inverse est:
  $$
  \frac1{\lambda_j^2-\lambda_i^2}
  \left[\begin{array}{cccc}
      0&0&\lambda_j&-\lambda_i\\
      0&0&\lambda_i&-\lambda_j\\
      -\lambda_j&-\lambda_i&0&0\\
      \lambda_i&\lambda_j&0&0\\
    \end{array}\right]
  \quad .
  $$
  On en d\'eduit le calcul explicite des $D\psi^{-1}(k.E_{m,n}), (m,n)\in I_{i,j}$ donn\'e dans l'\'enonc\'e du lemme~\ref{lem_laplacien_coord_pol}.
\item $\vec{E}_{2i,v}$ sur $-\lambda_i k.E_{2i-1,v}$ et
  $\vec{E}_{2i-1,v}$ sur $\lambda_i k.E_{2i,v}$ (lorsque $v=2v'+1$) pour $i=1,\ldots,v'$. Et donc en restriction \`a $\vec{E}_{2i,v},\vec{E}_{2i-1,v}$ et \`a son image, le d\'eterminant de $D\psi$ est $\lambda_i^2$ et son inverse est donn\'ee par :
  $$
  D\psi^{-1}(k.E_{2i,v})=\frac 1{\lambda_i}\vec{E}_{2i-1,v}
  \quad\mbox{et}\quad
  D\psi^{-1}(k.E_{2i-1,v})=
  -\frac 1{\lambda_i}\vec{E}_{2i,v}
  \quad .
  $$
\end{itemize}
On en d\'eduit :
$$
\nn{\det D_{k,\Lambda}\psi}
\,=\,
\left\{\begin{array}{ll}
    \Pi_{j<k} {(\lambda_j^2-\lambda_k^2)}^2 
    &\mbox{si}\; v=2v'\\
    \Pi_i \lambda_i^2
    \Pi_{j<k} {(\lambda_j^2-\lambda_k^2)}^2 
    &\mbox{si}\; v=2v'+1
  \end{array}\right.
\quad,
$$

\subsubsection{D\'emonstration du lemme~\ref{lem_laplacien_coord_pol}}

Comme le laplacien est \'egale \`a la trace de la matrice hessienne dans la base canonique $\{E_{m,n}\}$, et dans toute autre base orthonorm\'ee $\{k.E_{m,n}\}$ pour tout $k\in O(v)$ et en particulier pour $k$ de la d\'ecomposition en polaire de $A$, on a:
$$
\Delta f (A=k.D_2(\Lambda))
=
\sum_{m<n} D^2f(E_{m,n},E_{m,n})
=
\sum_{m<n} D^2f(k.E_{m,n},k.E_{m,n})
\quad .
$$

Comme $Df\circ D\psi=Dg$, on a:
$$
D^2f(D\psi,D\psi)+DfD^2\psi
= D^2g
\quad ,
$$
et donc sur $\Dc_v$, on a:
\begin{equation}\label{eq_der2}
  D^2f=\left( D^2g -Dg\circ D\psi^{-1}\circ D^2\psi \right)(D\psi^{-1},D\psi^{-1})
  \quad .
\end{equation}

Pour obtenir la formule, il nous suffit d'expliciter l'op\'erateur encore $D^2\psi$ ainsi que les expressions $D\psi^{-1}(A)$ et $D\psi^{-1}\circ D^2\psi(D\psi^{-1}A,D\psi^{-1}A)$ pour des matrices $A=k.E_{m,n}$.

Concernant $D^2\psi$, on voit que:
\begin{eqnarray}
  D^2\psi(k\vec{A}_1,k\vec{A}_2)
  &=&
  k.[\vec{A}_2,[\vec{A}_1,D_2(\Lambda)]]\quad ,
  \label{der2aa} \\
  D^2\psi(\vec{\Lambda}_1,\vec{\Lambda}_2)
  &=&0\label{der2ll} \quad .  
\end{eqnarray}

Explicitons $D\psi^{-1}\circ D^2\psi (D\psi^{-1}(A),D\psi^{-1}(A))$:
\begin{itemize}
\item pour $A=k.E_{2l-1,2l}$, d'apr\`es (\ref{der2ll}):
  $$
  D^2\psi (D\psi^{-1}(k.E_{2l-1,2l}),D\psi^{-1}(k.E_{2l-1,2l}))
  =
  D^2\psi(\vec{E}_{2l-1,2l},\vec{E}_{2l-1,2l})=0
  \quad ,
  $$
  et donc
  $D\psi^{-1}\circ D^2\psi (D\psi^{-1}(A),D\psi^{-1}(A))=0$.
\item pour $A\in k.V_{i,j}$ (lorsque $v'>1$): 
  on pose 
  $k\vec{A}=D\psi^{-1}(A)\in  \vec{V}_{i,j}$ 
  calcul\'e pr\'ec\'edemment. 
  On a donc d'une part :
  $$
  D\psi(k\vec{A})=A=k.[\vec{A},D_2(\Lambda)] \quad ,
  $$
  et d'autre part, d'apr\`es (\ref{der2aa}) :
  \begin{eqnarray*}
    D^2\psi (D\psi^{-1}(A),D\psi^{-1}(A))
    &=&
    D^2\psi (\vec{A},\vec{A})
    =
    k.[\vec{A},[\vec{A},D_2(\Lambda)]]\\
    &=&
    k.[\vec{A},k^{-1}.A]
    =
    [k.\vec{A},A] \quad .
  \end{eqnarray*}

  Dans les 4 cas $A=k.E_{m,n}, (m,n)\in I_{i,j}$,
  on a
  $k.\vec{A}=\vec{E}_{m,n}$ par choix de notation donc :
  $$
  {[} k. \vec{A} , A {]}
  =
  \frac1{\lambda_j^2-\lambda_i^2}
  (-\lambda_jk.E_{2j-1,2j}+\lambda_ik.E_{2i-1,2i})
  \quad ,
  $$
  d'apr\`es ce qui pr\'ec\`ede,
  puis :
  $$
  D\psi^{-1}\circ D^2\psi
  (D\psi^{-1}(A),D\psi^{-1}(A))
  =
  \frac1{\lambda_j^2-\lambda_i^2}
  (-\lambda_j\vec{E}_{2j-1,2j}+\lambda_i\vec{E}_{2i-1,2i})
  \quad .
  $$
\item pour $A=k.E_{l,v}, l=2i,2i-1,1\leq i\leq v'$ (lorsque $v=2v'+1$) dans les deux cas :
  $$
  D^2\psi (D\psi^{-1}(A),D\psi^{-1}(A))
  =[k.\vec{A},A]=
  -\lambda_i^{-1}k.E_{2i-1,2i}
  \quad ,
  $$
  puis
  $$
  D\psi^{-1}\circ D^2\psi
  (D\psi^{-1}(A),D\psi^{-1}(A))  
  =
  -\lambda_i^{-1}\vec{E}_{2i-1,2i}
  \quad .
  $$
\end{itemize}

Et donc, gr\^ace \`a l'\'egalit\'e~(\ref{eq_der2}), 
on obtient les expressions de $D^2f(kE_{m,n})$, puis l'on somme. 
On obtient l'expression de lemme~\ref{lem_laplacien_coord_pol} voulue.


\addcontentsline{toc}{chapter}{\numberline{}Index}

\printindex

\bibliographystyle{alpha}

\bibliography{bibthese}

\end{document}